\numberwithin{equation}{subsection}
\newtheorem{thm}{Theorem}[subsection]
\newtheorem{cor}[thm]{Corollary}
\newtheorem{lem}[thm]{Lemma} 
\newtheorem{prop}[thm]{Proposition}
 \theoremstyle{definition}
 \theoremstyle{definition}
\newtheorem{defn}[thm]{Definition} \theoremstyle{remark}
\newtheorem{rem}[thm]{\bf Remark}
\newtheorem{para}[thm]{\bf} 
\newtheorem{exa}[thm]{\bf Example}
\DeclareMathOperator{\Hom}{Hom}
\DeclareMathOperator{\End}{End}
\DeclareMathOperator{\Ext}{Ext}
\DeclareMathOperator{\im}{im}
\DeclareMathOperator{\coker}{coker}
\DeclareMathOperator{\Spa}{Spa}
\DeclareMathOperator{\Spec}{Spec}
\DeclareMathOperator{\Fil}{Fil}
\DeclareMathOperator{\gr}{gr}
\DeclareMathOperator{\Gal}{Gal}
\DeclareMathOperator{\Lie}{Lie}
\DeclareMathOperator{\Frob}{Frob}
\def\cont{\mathrm{cont}}
\def\dR{\mathrm{dR}}
\def\HT{\mathrm{HT}}
\def\ps{\mathrm{ps}}
\def\sm{\mathrm{sm}}
\def\an{\mathrm{an}}
\def\la{\mathrm{la}}
\def\proet{\mathrm{pro\acute{e}t}}
\def\proket{\mathrm{prok\acute{e}t}}
\def\et{\mathrm{\acute{e}t}}
\newcommand{\N}{\mathbb{N}}
\newcommand{\Z}{\mathbb{Z}}
\newcommand{\F}{\mathbb{F}}
\newcommand{\Q}{\mathbb{Q}}
\newcommand{\R}{\mathbb{R}}
\newcommand{\T}{\mathbb{T}}
\newcommand{\A}{\mathbb{A}}
\newcommand{\bC}{\mathbb{C}}
\newcommand{\tr}{\mathrm{tr}}
\newcommand{\GL}{\mathrm{GL}}
\newcommand{\Fl}{{\mathscr{F}\!\ell}}
\newcommand{\cF}{\mathcal{F}}
\newcommand{\cO}{\mathcal{O}}
\newcommand{\kq}{\mathfrak{q}}
\newcommand{\kp}{\mathfrak{p}}
\newcommand{\kh}{\mathfrak{h}}
\newcommand{\km}{\mathfrak{m}}
\newcommand{\kn}{\mathfrak{n}}
\newcommand{\sC}{\mathscr{C}}
\newcommand{\RNum}[1]{\uppercase\expandafter{\romannumeral #1\relax}}
\newcommand{\overbar}[1]{\mkern 1.5mu\overline{\mkern-1.5mu#1\mkern-1.5mu}\mkern 1.5mu}
\begin{document}

\title[On locally analytic vectors of completed cohomology]{On locally analytic vectors of the completed cohomology of modular curves}
\author{Lue Pan}
\address{Department of Mathematics, University of Chicago, 5734 S. University Ave., Chicago, IL 60637}
\email{lpan@math.uchicago.edu}
\begin{abstract} 
We study the locally analytic vectors in the completed cohomology of modular curves and determine the eigenvectors of a rational Borel subalgebra of $\mathfrak{gl}_2(\Q_p)$. As applications, we prove a classicality result for overconvergent eigenforms of weight one and give a new proof of the Fontaine-Mazur conjecture in the irregular case under some mild hypotheses. For an overconvergent eigenform of weight $k$, we show its corresponding Galois representation has Hodge-Tate-Sen weights $0,k-1$ and prove a converse result.
\end{abstract}

\maketitle

\tableofcontents

\section{Introduction}
Let $p$ be a rational prime. In his pioneering work \cite{Eme06}, Emerton introduced completed cohomology, which $p$-adically interpolates automorphic representations and explained how techniques of locally analytic $p$-adic representation theory may be applied to study it. In this paper, we will focus on the simplest (non-abelian) case: completed cohomology of the modular curves. More precisely, let $K$ be an open subgroup $\GL_2(\A_f)$, where $\A_f$ denotes the ring of finite ad\`eles of $\Q$. We have the modular curve of level $K$
\[Y_K(\bC)=\GL_2(\Q)\backslash (\mathbb{H}^{\pm1}\times\GL_2(\A_f)/K),\]
where $\mathbb{H}^{\pm1}=\bC-\R$ is the union of the usual upper and lower half planes. Denote by $\A^p_f$ the prime-to-$p$ part of $\A_f$. For an open subgroup $K^p$ of $\GL_2(\A^p_f)$, the completed cohomology of tame level $K^p$ is defined as
\[\tilde{H}^i(K^p,\Z_p):=\varprojlim_n \varinjlim_{K_p\subset\GL_2(\Q_p)}H^i(Y_{K^pK_p}(\bC),\Z/p^n).\]
It is $p$-adically complete and equipped with a natural continuous action of $\GL_2(\Q_p)$. Our main tool to study this will be ($p$-adic) Hodge theory. Hence we would like to extend the coefficients $\Q_p$ to a complete, algebraically closed field. So let $C=\bC_p$, the completion of an algebraic closure $\overbar\Q_p$ of $\Q_p$, and $\cO_C$ be its ring of integers. Consider
\[\tilde{H}^i(K^p,\cO_C):=\varprojlim_n \varinjlim_{K_p\subset\GL_2(\Q_p)}H^i(Y_{K^pK_p}(\bC),\cO_C/p^n).\]
Then $\tilde{H}^i(K^p,C):=\tilde{H}^i(K^p,\cO_C)\otimes_{\cO_C} C$ is a $\Q_p$-Banach space representation of $\GL_2(\Q_p)$. As in \cite{ST03}, we can consider its locally analytic vectors $\tilde{H}^i(K^p,C)^{\la}$. Let
\begin{itemize}
\item $\mathfrak{g}=C\otimes_{\Q_p}\mathfrak{gl}_2(\Q_p)$: the ``complexified'' Lie algebra of $\GL_2(\Q_p)$;
\item $B\subset\GL_2(\Q_p)$: the upper-triangular Borel subgroup;
\item $\mathfrak{b}\subset\mathfrak{g}$: the ``complexified'' Lie algebra of $B$.
\end{itemize}
The Lie algebra $\mathfrak{g}$ (in particular $\mathfrak{b}$) acts naturally on  $\tilde{H}^i(K^p,C)^{\la}$ by the infinitesimal action of $\GL_2(\Q_p)$. Given a character $\mu:\mathfrak{b}\to C$, one main goal of this paper is to study the $\mu$-isotypic part $\tilde{H}^i(K^p,C)^{\la}_\mu$. We can pass to the direct limit over all tame levels $K^p$:
\[\tilde{H}^i(C)^{\la}_\mu:=\varinjlim_{K^p\subset\GL_2(\A^p_f)} \tilde{H}^i(K^p,C)^{\la}_\mu.\]
Clearly there is a natural action of $\GL_2(\A^p_f)\times B$ on it. 

To state our result, we need more notation. The open modular curve $Y_K(\bC)$ has a natural compactification $X_K(\bC)$ by adding cusps. This complete curve $X_K(\bC)$ has a natural model $X_{K}$ over $\Q$ and we denote by $X_{K,C}$ its base change to $C$. Let $k$ be an integer. We have the usual automorphic line bundle $\omega^k$ on $X_{K,C}$ whose global sections correspond to level-$K$ modular forms of weight $k$ (with coefficients in $C$). Denote by
\[M_k:=\varinjlim_{K\subset\GL_2(\A_f)}H^0(X_{K,C},\omega^k),\]
\[H^1(\omega^k):=\varinjlim_{K\subset\GL_2(\A_f)}H^1(X_{K,C},\omega^k),\]
where both limits are taken over all open compact subgroups of $\GL_2(\A_f)$. Both spaces have natural actions of $\GL_2(\A_f)$ and relate to automorphic representations of $\GL_2(\A)$ (after choosing an isomorphism $C\cong\bC$). Here $\A$ denotes the ring of ad\`eles of $\Q$.

We also need players which are special in this $p$-adic story: overconvergent modular forms. This will be slightly different from the usual one in the literature as we want an action of the Borel $B$. We denote by $\mathcal{X}_K$ the rigid analytic space associated to $X_{K,C}$. Let $\Gamma(p^n)=1+p^nM_2(\Z_p)\subset\GL_2(\Z_p)$ be the principal congruence subgroup of level $n\geq 2$ and let $K^p$ be a tame level. As explained by Katz-Mazur \cite[13.7.6]{KM85}, $X_{K^p\Gamma(p^n),C}$ has a natural integral model over $\cO_C$ and the irreducible components of its special fiber can be indexed by surjective homomorphisms $(\Z/p^n)^{2}\to\Z/p^n$. We denote by $\mathcal{X}_{K^p\Gamma(p^n),c}\subset\mathcal{X}_{K^p\Gamma(p^n)}$ the tubular neighborhood of the non-supersingular points of irreducible components of indices sending $(1,0)\in(\Z/p^n)^2$ to $0$. Let $M^\dagger_k(K^p\Gamma(p^n))$ be the space of sections of $\omega^k$ defined on a strict neighborhood of $\mathcal{X}_{K^p\Gamma(p^n),c}$. Overconvergent modular forms of weight $k$ are defined as:
\[M^\dagger_k:=\varinjlim_{K^p\subset\GL_2(\A^p_f)}\varinjlim_{n} M^\dagger_k(K^p\Gamma(p^n)).\]
One can check that there is a natural action of $\GL_2(\A^p_f)\times B$ on it.

Since modular curves can be defined over $\Q_p$, the absolute Galois group $G_{\Q_p}=\Gal(\overbar\Q_p/\Q_p)$ acts on $C$ and everything we have defined and commutes with the action of $\GL_2(\A^p_f)\times B$. Our main results describe $\tilde{H}^i(C)^{\la}_\mu$ as a representation of $\GL_2(\A^p_f)\times B\times G_{\Q_p}$.

We need the following characters of $\GL_2(\A^p_f)\times B$ for stating our result. Denote by $\varepsilon:\A_f^\times/\Q^\times_{>0}\to\Z_p^\times$ the $p$-adic cyclotomic character (via class field theory which sends a uniformizer $l\neq p$ to a geometric Frobenius). We define $t:\GL_2(\A_f)\to\Z_p^\times$ as $\varepsilon\circ\det$
and view it as a character of $\GL_2(\A^p_f)\times B$ by restriction. For $i=1,2$, we define $e_i:B\to \Q_p^\times$ by sending $\begin{pmatrix} a_1 & b \\ 0 & a_2 \end{pmatrix}\in B$ to $a_i$
and view them as characters of  $\GL_2(\A^p_f)\times B$ by projecting onto $B$. We write $\cdot$ rather than $\otimes$ when twisting by such a character. One main result of this paper is as  follows. (This is obtained from Theorem \ref{hwvg} by taking direct limit over all $K^p$. There is a small difference as we don't put a Galois action on $e_1,e_2,t$ here.)

\begin{thm} \label{thmkneq1}
Let $k\in\Z$ and let $\mu=\mu_k:\mathfrak{b}\to C$ be the character sending $\begin{pmatrix} a & b \\ 0 & d \end{pmatrix}$ to $kd$. Assume $k\neq 1$. There is a natural decomposition 
\[\tilde{H}^1(C)^{\la}_{\mu_k}=N_{k,1}\cdot e_1^{-k}t^{k}\oplus N_{k,w}(k-1)\cdot e_1^{-1}e_2^{k-1}t,\]
where $(k-1)$ denotes a Tate twist by $k-1$. Moreover, we have the following description of $N_{k,1}$ and $N_{k,w}$. 
\begin{enumerate}
\item $N_{k,w}\cong M^{\dagger}_{2-k}$ for $k\neq 2$. When $k=2$, we have  
\[0\to  M^{\dagger}_0/M_0  \to  N_{k,w} \to M_0\to 0.\]
\item If  $k\leq -1$, then there is an exact sequence
\[0\to H^1(\omega^{k})\to N_{k,1}\to M^\dagger_{k}\to 0.\]
\item If $k=0$, then  there is an exact sequence
\[0\to H^1(\omega^0)\to N_{k,1}\to  M^{\dagger}_0/M_0\to 0.\]
\item If $k\geq 2$, then $N_{k,1}\cong M^\dagger_k/ M_k$.
\end{enumerate}
All the maps and isomorphisms here are  $\GL_2(\A^p_f)\times B\times G_{\Q_p}$-equivariant.
\end{thm}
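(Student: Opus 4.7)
The plan is to realize $\tilde H^1(C)^{\la}$ geometrically through the Hodge--Tate period map $\pi_{\HT}:\mathcal{X}_{K^p,\infty}\to\Fl=\PP^1_C$ from the perfectoid modular curve at infinite level, and then decompose using the $B$-action on the flag variety. Concretely, I would use the sheaf-theoretic framework (presumably established earlier in the paper) identifying $\tilde H^1(K^p,C)^{\la}$ with $H^1$ of a suitable sheaf of locally analytic functions on $\mathcal{X}_{K^p,\infty}$, under which the infinitesimal action of $\mathfrak{b}$ translates into a combination of the natural $B$-action on $\Fl$ and a horizontal component along the $\pi_{\HT}$-fibers. Extracting the $\mu_k$-eigenspace then reduces to a question about $B$-equivariant line bundles on $\Fl$.

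The flag variety $\PP^1$ has exactly two $B$-orbits: the $B$-fixed point and its complement (the ``big cell''), indexed by the Weyl group elements $1$ and $w$. Pushforward by $\pi_{\HT}$ along these two strata produces the two summands $N_{k,1}$ and $N_{k,w}$. The assumption $k\neq 1$ forces the two characters $e_1^{-k}t^{k}$ and $e_1^{-1}e_2^{k-1}t$ to be genuinely distinct, so that $\tilde H^1(C)^{\la}_{\mu_k}$ decomposes as a direct sum and not as a non-split extension. The Tate twist $(k-1)$ on the $w$-piece reflects the fact that the cotangent direction at the $w$-translate of the fixed point carries Hodge--Tate weight $1$, which must appear to the $(k-1)$-st power to match weight $2-k$ forms with $\mu_k$-eigenvectors of weight $k$.

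For $N_{k,w}$, sections on the big cell identify, after restriction to a strict neighborhood of the ordinary locus and matching the $w$-translated stratum with irreducible components whose $(1,0)$-index vanishes, with the overconvergent forms $M^{\dagger}_{2-k}$; when $k=2$ the line bundle is trivial and an additional classical contribution $M_0$ appears, producing the short exact sequence stated. For $N_{k,1}$, the fixed-point stratum contributes a local cohomology term fitting into a support long exact sequence relating $H^0(X_{K,C},\omega^k)=M_k$, $H^1(X_{K,C},\omega^k)$, and the overconvergent space $M^{\dagger}_k$. The case split is then dictated by vanishing: $M_k=0$ for $k\leq -1$ yields the exact sequence with $H^1(\omega^k)$ and $M^{\dagger}_k$; $H^1(\omega^k)=0$ for $k\geq 2$ by Serre duality yields $N_{k,1}\cong M^{\dagger}_k/M_k$; and $k=0$ is the intermediate case where both $M_0$ and $H^1(\omega^0)$ are nonzero.

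The step I expect to be hardest is this last identification: matching the local cohomology along the $B$-fixed stratum at infinite level with the overconvergent structure defined via the Katz--Mazur integral model. One must show that the tubular neighborhoods $\mathcal{X}_{K^p\Gamma(p^n),c}$ are exactly what the $B$-equivariant geometry of $\pi_{\HT}$ produces, and simultaneously verify that all the twists by $e_1,e_2,t$ and by Tate characters line up so that the isomorphisms are truly $\GL_2(\A^p_f)\times B\times G_{\Q_p}$-equivariant. Nailing down the Galois twist $(k-1)$ on $N_{k,w}$ and its absence on $N_{k,1}$ is the most delicate bookkeeping of the argument.
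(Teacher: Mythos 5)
Your geometric setup (Hodge--Tate period map, locally analytic sheaf $\cO^{\la}_{K^p}$ on $\Fl$, case split by vanishing of $M_k$ and $H^1(\omega^k)$) is in line with the paper, but the mechanism you propose for the direct sum decomposition has a genuine gap. You want the two summands to come from ``pushforward by $\pi_{\HT}$ along the two $B$-orbits'' of $\PP^1$. A stratification of $\Fl$ by the closed point and the big cell would at best give you a long exact local-cohomology sequence, not a direct sum, and by itself does not see the Tate twist $(k-1)$ or the regularity constraint $k\neq 1$. What actually produces the decomposition is a weight decomposition, not a geometric one: the differential equation $\mathfrak{n}^0\cdot\cO^{\la}_{K^p}=0$ (Theorem \ref{n0triv}) gives a horizontal Cartan action $\theta_\kh$ on $\cO^{\la}_{K^p}$, Theorem \ref{Senkh} identifies $\theta_\kh\bigl(\begin{smallmatrix}0&0\\0&1\end{smallmatrix}\bigr)$ with the Sen operator, and the $\mu_k$-isotypic part of $H^1$ splits into the two $\theta_\kh$-eigenspaces $M_{\mu,1}\oplus M_{\mu,w}$ via the Harish--Chandra relation of Corollary \ref{khact}. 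The condition $k\neq 1$ is exactly the regularity condition making the two $\theta_\kh$-eigenvalues distinct, so that the generalized eigenspace splits; this is also precisely where the Hodge--Tate-weight separation (and hence the Tate twist) comes from. Without introducing $\theta_\kh$, there is no source for the direct sum.

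A second issue is that your stratum assignment runs contrary to the paper's geometry. You attach $N_{k,w}$ to the big cell and $N_{k,1}$ to the fixed point, but the overconvergent space $M^\dagger_{2-k}$, which is what $N_{k,w}$ is, arises precisely from the skyscraper sheaf $(\cO^{\la,\chi}_{K^p})_\kn$ supported at the $B$-fixed point $\infty$ (Proposition \ref{kncoin} and Definition \ref{ovc}), while $N_{k,1}$ comes from $H^1(\Fl,\cO^{\la,\chi,\kn}_{K^p})$, where the $\kn$-invariant subsheaf $\cO^{\la,\chi,\kn}_{K^p}$ is identified (by multiplication with $t^{-n_1}e_1^{k}$) with the smooth sheaf $\omega^{k,\sm}_{K^p}$ and so recovers the finite-level coherent cohomology $M_k$ and $H^1(\omega^k)$. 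To actually carry out these $\kn$-invariant and $\kn$-coinvariant computations you would need the explicit power-series description of the sections of $\cO^{\la}_{K^p}$ (Theorem \ref{cnijk}), which also controls the $p$-adically non-Liouville phenomenon that governs the non-integral-weight case; your sketch gives no route to those local computations.
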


One may view this as a Hodge-Tate decomposition into weight $0$ and $1-k$ components. Hence this is a $p$-adic analogue of the Shimura isomorphism. Our convention is that the cyclotomic character is of Hodge-Tate weight $-1$. We have a similar (Hodge-Tate-Sen) decomposition if $k\notin\Z$ (Theorem \ref{hwvg}). Unfortunately, we can only determine half of the decomposition, which roughly is given by overconvergent modular forms of weight $2-k$. We will give a definition of this space for non-integral weights in \ref{ovc}.

When $k=1$, we don't get a decomposition because of the existence of non-Hodge-Tate representations with equal Hodge-Tate-Sen weights. Here is our result (which follows fromTheorem \ref{weight1} by taking the direct limit over all $K^p$).

\begin{thm} \label{w1i}
Suppose $\mu=\mu_1:\mathfrak{b}\to C$ sends $\begin{pmatrix} a & b \\ 0 & d \end{pmatrix}$ to $d$.  There is a  natural exact sequence 
\[0\to N_{1}\cdot e_1^{-1}t \to \tilde{H}^1(C)^{\la}_{\mu_1} \to M^{\dagger}_{1}\cdot e_1^{-1}t\to 0,\]
where $N_{1}$ sits inside an exact sequence:
\[0\to  M^{\dagger}_1/M_1\to N_{1}\to H^1(\omega)\to 0.\]
Moreover, let $\tilde{H}^1(C)^{\la,(1,0)}_{\mu_1}\subset  \tilde{H}^1(C)^{\la}_{\mu_1}$ be the pull-back of $M_{1}\cdot e_1^{-1}t\subset M^{\dagger}_{1}\cdot e_1^{-1}t$, so that there is an exact sequence 
\[0\to N_{1}\cdot e_1^{-1}t \to \tilde{H}^1(C)^{\la,(1,0)}_{\mu_1} \to M_{1}\cdot e_1^{-1}t\to 0.\]
Then $(\tilde{H}^1(C)^{\la}_{\mu_1})^{G_{\Q_p}}\subset\tilde{H}^1(C)^{\la,(1,0)}_{\mu_1}$. Again, all the maps here are  $\GL_2(\A^p_f)\times B\times G_{\Q_p}$-equivariant.
\end{thm}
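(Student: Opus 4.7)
The plan is to run the Hodge-Tate period map machinery of Theorem \ref{hwvg} in the case $k=1$, and to track carefully how the collision of the two Hodge-Tate-Sen weights turns the direct sum of Theorem \ref{thmkneq1} into a non-split extension. I work at fixed tame level $K^p$ (this is really the content of Theorem \ref{weight1}, of which Theorem \ref{w1i} is the $K^p$-limit) and take the direct limit over $K^p$ at the end.

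The first exact sequence is constructed geometrically, following the outline used for $k\neq 1$. I compute $\tilde{H}^1(K^p, C)^{\la}_{\mu_1}$ from sections on the perfectoid infinite-level modular curve $\X_{K^p,\infty}$ via the Hodge-Tate period map $\pi_{\HT}:\X_{K^p,\infty}\to\PP^1$. The Borel $B$ preserves the Schubert stratification of $\PP^1 = B \backslash \GL_2(\Q_p)$ into the closed $B$-fixed point $[1:0]$ and its open complement, and this induces a two-step filtration on the $\mu_1$-eigenspace. The open stratum contributes overconvergent modular forms of weight $2-k = 1$, giving the quotient $M^\dagger_1 \cdot e_1^{-1}t$; the closed stratum contributes $N_1 \cdot e_1^{-1}t$. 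For $k\neq 1$ the Sen operator distinguishes the two pieces and splits the extension; at $k=1$ both weights are zero, that Sen argument fails, and only the short exact sequence survives. The internal description $0\to M^\dagger_1/M_1 \to N_1 \to H^1(\omega) \to 0$ of $N_1$ then comes from analyzing the Hodge filtration on de Rham cohomology on a strict neighborhood of the ordinary locus and the associated long exact sequence, exactly parallel to how the $N_{k,1}$ factors are cut out in parts (2)--(4) of Theorem \ref{thmkneq1}.

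The substantive new content, and the step I expect to be hardest, is the inclusion $(\tilde{H}^1(C)^{\la}_{\mu_1})^{G_{\Q_p}} \subset \tilde{H}^1(C)^{\la,(1,0)}_{\mu_1}$: equivalently, a Galois-invariant locally analytic $\mathfrak{b}$-eigenvector of weight $\mu_1$ cannot project to the non-classical part $(M^\dagger_1/M_1)\cdot e_1^{-1}t$ of the quotient. My plan is to exhibit a Sen-type operator on $\tilde{H}^1(C)^{\la,(1,0)}_{\mu_1}$ whose vanishing is equivalent simultaneously to (i) Galois invariance of a lift to $\tilde{H}^1(C)^{\la}_{\mu_1}$ and (ii) classicality of the image in $M^\dagger_1$. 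Because the two relevant graded pieces carry the same Hodge-Tate-Sen weight, this operator has a nontrivial off-diagonal connecting component, which I would compute using the action of the lower-nilpotent element of $\mathfrak{g}$ and identify with a boundary-type map from $M^\dagger_1/M_1$ to $H^1(\omega)$. The delicate matching is to verify that this Sen-theoretic boundary agrees, up to a nonzero constant, with the theta-like obstruction governing classicality of weight-$1$ overconvergent eigenforms; once that identification is in place, Galois invariance forces the image in $M^\dagger_1$ to lie in the kernel of the obstruction, hence in the classical subspace $M_1$. This comparison of two a priori different first-order maps out of $M^\dagger_1/M_1$ is the technical heart of the argument, and it is also what ties Theorem \ref{w1i} to the classicality application advertised in the introduction.
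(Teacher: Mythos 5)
Your high-level plan has the right ingredients (localization via $\pi_{\HT}$ on $\Fl$, a filtration with overconvergent forms in one graded piece, and a nilpotent Sen operator), but the proposed argument for the Galois-invariance inclusion, which you correctly flag as the substantive step, would not close as written. First, the off-diagonal Sen component has the wrong target: the Sen operator on $\tilde{H}^1(C)^{\la}_{\mu_1}$ is $\theta_\kh\bigl(\begin{smallmatrix}0&0\\0&1\end{smallmatrix}\bigr)$, it kills $N_1\cdot e_1^{-1}t$ entirely, and it factors as $\tilde{H}^1(C)^{\la}_{\mu_1}\twoheadrightarrow M^\dagger_1\cdot e_1^{-1}t\to M^\dagger_1/M_1\cdot e_1^{-1}t\hookrightarrow N_1\cdot e_1^{-1}t$, landing in the sub $M^\dagger_1/M_1$ of $N_1$, not mapping to the quotient $H^1(\omega)$. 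If it landed in $H^1(\omega)$ the kernel of the Sen operator would not be the pullback of $M_1$, and the theorem would fail. Second, you claim vanishing of the operator is equivalent to (i) Galois invariance and (ii) classicality; in fact vanishing of the Sen operator is equivalent to (ii) (that is the content of identifying $\tilde{H}^1(C)^{\la,(1,0)}_{\mu_1}$ as the $\theta_\kh$-weight-$(1,0)$ subspace), while (i) is strictly stronger — which is exactly why the conclusion is an inclusion $(\tilde{H}^1(C)^{\la}_{\mu_1})^{G_{\Q_p}}\subset\tilde{H}^1(C)^{\la,(1,0)}_{\mu_1}$, not an equality.

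Structurally, the paper does not need the ``delicate matching of two a priori different first-order maps'' you anticipate. The argument is modular: Theorem~\ref{Senkh} is proved once and for all (using the explicit power-series description of Theorem~\ref{cnijk}) and identifies $\theta_\kh\bigl(\begin{smallmatrix}0&0\\0&1\end{smallmatrix}\bigr)$ as the Sen operator on each $\tilde{H}^i(K^p,C)^{\la}$; separately, the proof of Theorem~\ref{thetaweight1}(2) computes the $\kh_0$-cohomology connecting map $\delta_1\colon M^\dagger_1\to N_1$ directly and finds $\delta_1(f)=-2f\bmod M_1$, showing the weight-$(1,0)$ subspace equals the pullback of $M_1$. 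With these two inputs the Galois statement is formal (Galois invariance $\Rightarrow$ Sen $=0$ $\Rightarrow$ weight $(1,0)$). Two smaller inaccuracies: your Schubert assignment is reversed (in the paper the skyscraper $(\cO^{\la,\chi}_{K^p})_\kn$ at the $B$-fixed point $\infty$ produces the \emph{quotient} $M^\dagger_1$, while $\cO^{\la,\chi,\kn}_{K^p}$, supported away from $\infty$, produces the \emph{sub} $N_1$), and the extension $0\to M^\dagger_1/M_1\to N_1\to H^1(\omega)\to 0$ comes not from a de Rham Hodge filtration but from the long exact sequence of the coherent sheaf sequence $0\to\cO^{\la,\chi,\kn}_{K^p}\xrightarrow{\times e_1}\omega^{1,\sm}_{K^p}\to(i_\infty)_*M^\dagger_1\to 0$.
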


In fact, we will generalize the usual Sen operator to this situation in Definition \ref{Senop}, then show it is nilpotent on $\tilde{H}^1(C)^{\la}_\mu$ and factors through the quotient $M^{\dagger}_{1}\cdot e_1^{-1}t$ and maps it to $M^{\dagger}_1/M_1\cdot e_1^{-1}t\subset N_{1}\cdot e_1^{-1}t$.

Both theorems give a concrete relation between completed cohomology and overconvergent modular forms. As a first application, we are able to prove a classicality result for weight one overconvergent eigenforms. Fix a tame level $K^p=K^SK_{S\setminus\{p\}}$ for some finite set of primes $S$ containing $p$ and suppose $K^S$ is a maximal open compact subgroup of $\GL_2(\A_f^S)$, where $\A_f^S$ is the prime-to-$S$ part of $\A_f$. Denote by $\T_{K^p}:=\Z_p[\GL_2(\A^S_f)//K^S]$ the abstract Hecke algebra of $K^S$-biinvariant compactly supported functions on $\GL_2(\A^S_f)$, where the Haar measure gives $K^S$ measure $1$. Then $\T_{K^p}$ acts on $H^1(Y_{K^pK_p}(\bC),\Z_p)$ and we denote its image in $\End_{\Z_p}(H^1(Y_{K^pK_p}(\bC),\Z_p))$ by $\T(K^pK_p)$ and let $\T:=\varprojlim_{K_p}\T(K^pK_p)$. It can be shown that $\T$ is independent of $S$. Let $\lambda:\T\to\overbar\Q_p$ be a $\Z_p$-algebra homomorphism.  We can associate to $\lambda$ a two-dimensional  semi-simple Galois representation 
\[\rho_\lambda:\Gal(\overbar\Q/\Q)\to \GL_2(\overbar\Q_p)\]
satisfying $\tr(\rho_\lambda(\Frob_l))=\lambda(T_l)l^{-1}$ for $l\notin S$. Here $T_l=[K^S\begin{pmatrix} l & 0 \\ 0 & 1 \end{pmatrix}K^S]$ and $\Frob_l$ is a lift of geometric Frobenius at $l$. See \ref{HA} for more details.

Let $M^\dagger_1(K^p)=\varinjlim_{n}M^\dagger_1(K^p\Gamma(p^n))$ and $N_0=\begin{pmatrix} 1 & \Z_p \\ 0 & 1 \end{pmatrix}\subset\GL_2(\Q_p)$. We have the usual $U_p$-operator acting on $N_0$-invariants by 
\[U_p:=\sum_{i=0}^{p-1}\begin{pmatrix} p & i \\ 0 & 1 \end{pmatrix}.\]
Since $\T$ acts on $\tilde H^1(K^p,\Z_p)$, it follows from Theorem \ref{w1i} that $\T$ acts on $M^\dagger_1(K^p)$. We denote by $\kp_\lambda$ the kernel of $\lambda$ and by  $M^\dagger_1(K^p)[\kp_\lambda]$ the $\lambda$-isotypic part of $M^\dagger_1(K^p)$.

\begin{thm}[Theorem \ref{cl1}]
Suppose 
\begin{itemize}
\item $M^\dagger_1(K^p)[\kp_\lambda]^{N_0}\neq0$ has a non-zero $U_p$-eigenvector;
\item $\rho_\lambda|_{G_{\Q_p}}$ is Hodge-Tate of weights $0,0$.
\end{itemize}
Then $\lambda$ comes from a classical weight one eigenform, i.e. $M_1(K^p)[\kp_\lambda]\neq 0$.
\end{thm}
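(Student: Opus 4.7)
\emph{Plan of proof.} The idea is to use Theorem \ref{w1i} in two steps: first, the surjection $\tilde{H}^1(C)^\la_{\mu_1}\twoheadrightarrow M^\dagger_1\cdot e_1^{-1}t$ lets us lift $f$ to completed cohomology; second, the inclusion $(\tilde{H}^1(C)^\la_{\mu_1})^{G_{\Q_p}}\subset \tilde{H}^1(C)^{\la,(1,0)}_{\mu_1}$ forces the image to be classical once a Galois-fixed lift is found. Passing to the direct limit in $K^p$, I would regard (the appropriate twist of) $f$ as a nonzero $N_0$-invariant, $U_p$-eigen element of the $\kp_\lambda$-isotypic part of $M^\dagger_1\cdot e_1^{-1}t$, and lift it to some $\tilde{f}\in \tilde{H}^1(C)^\la_{\mu_1}[\kp_\lambda]$. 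Modifying $\tilde{f}$ by elements of $N_1\cdot e_1^{-1}t$ (which project to zero in $M^\dagger_1\cdot e_1^{-1}t$) leaves the image $f$ unchanged, so the problem reduces to showing that such a modification can make $\tilde{f}$ invariant under $G_{\Q_p}$.

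To this end, I would use the generalized Sen operator $\Theta$ of Definition \ref{Senop}: it is nilpotent on $\tilde{H}^1(C)^\la_{\mu_1}$, factors through $M^\dagger_1\cdot e_1^{-1}t$, and lands in $M^\dagger_1/M_1\cdot e_1^{-1}t\subset N_1\cdot e_1^{-1}t$. Restricted to the $\kp_\lambda$-component, $\Theta$ should match, up to normalization, the usual Sen operator on $\rho_\lambda|_{G_{\Q_p}}$; the Hodge-Tate hypothesis (weights $0,0$) thus forces the associated Sen operator, and hence $\Theta(\tilde{f})$, to vanish. Combining this Sen-vanishing with the full Hodge-Tate assumption (which is strictly stronger), one would then argue that the obstruction cocycle $g\mapsto g\tilde{f}-\tilde{f}$ in $H^1(G_{\Q_p}, N_1\cdot e_1^{-1}t[\kp_\lambda])$ is a coboundary. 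The extension $0\to M^\dagger_1/M_1\to N_1\to H^1(\omega)\to 0$ reduces this computation to Galois cohomology of each piece individually, each controllable by Tate-type vanishing results under a Hodge-Tate assumption. Once $\tilde{f}$ is Galois-fixed, the inclusion in Theorem \ref{w1i} forces $f\in M_1$, i.e., $M_1(K^p)[\kp_\lambda]\neq 0$.

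The main obstacle I anticipate is the precise identification of $\Theta$ on the $\kp_\lambda$-component with the classical Sen operator of $\rho_\lambda|_{G_{\Q_p}}$, together with the more refined local-global compatibility needed to recognize $N_1\cdot e_1^{-1}t[\kp_\lambda]$ as a concrete $G_{\Q_p}$-module built from $\rho_\lambda$. The $U_p$-eigenvalue hypothesis should play a crucial role here by pinning down a specific $p$-stabilization of the $\lambda$-system, enabling such a geometric recognition; without it the relevant isotypic part could be considerably larger than the piece attached to $\rho_\lambda$ and the obstruction cocycle would no longer live in a Galois cohomology group we can control by Hodge-Tate arguments.
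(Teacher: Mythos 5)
Your strategy is genuinely different from the paper's and, as written, has a gap that I don't think can be filled along the lines you propose. The paper does not attempt to lift $f$ to a Galois-fixed element of completed cohomology. Its proof is a dimension-counting argument: assuming $\lambda$ is \emph{not} classical, it shows on the one hand that $(M^\dagger_1(K^p)[\kp_\lambda]\cdot e_1^{-1}t)^{N_0,U_p=\alpha}$ is one-dimensional (via multiplicity one coming from the $q$-expansion principle, after choosing a suitable $K^p$ and suitable Hecke eigenvectors at ramified places — this is Lemma \ref{keymult1} and the reason the $U_p$-eigenvector hypothesis is needed); on the other hand, by the Eichler--Shimura relation and the main theorem of \cite{BLR91}, $\tilde{H}^1(K^p,C)^{\la}_{\mu_1}[\kp_{\lambda\cdot t}]$ is $\rho_\lambda$-isotypic, and the Hodge--Tate hypothesis together with Theorem \ref{Senkh} forces this entire isotypic piece into $\tilde{H}^1(C)^{\la,(1,0)}_{\mu_1}$. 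Non-classicality collapses the two descriptions onto each other, so the same space must have dimension both $1$ and $\geq \dim\rho_\lambda=2$. You never invoke the $\rho_\lambda$-isotypy, which is the engine of the contradiction, and consequently your use of the $U_p$ hypothesis (to ``pin down a $p$-stabilization'' and ``recognize $N_1$ as a $G_{\Q_p}$-module'') is misattributed: its real role is to trigger multiplicity one for overconvergent forms.

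The concrete gap in your argument is the claim that the obstruction cocycle in $H^1(G_{\Q_p}, N_1\cdot e_1^{-1}t[\kp_\lambda])$ vanishes by ``Tate-type vanishing results under a Hodge--Tate assumption.'' The modules involved are $C$-semilinear $G_{\Q_p}$-representations (not $\Q_p$- or $\overbar\Q_p$-linear ones), and Tate's theorem gives $H^1_{\cont}(G_{\Q_p}, C)\cong \Q_p\neq 0$ already for the trivial one-dimensional case. Vanishing of the Sen operator (which is all that ``Hodge--Tate of weight $0,0$'' buys you) does not give $H^1$-vanishing of anything, and $N_1[\kp_\lambda]$ is in any case an infinite-dimensional $C$-vector space, so the finite-dimensional Sen machinery does not apply directly. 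There is therefore no reason for a $G_{\Q_p}$-fixed lift $\tilde f$ to exist, and indeed the paper sidesteps this entirely: the inclusion $(\tilde H^1(C)^{\la}_{\mu_1})^{G_{\Q_p}}\subset \tilde H^1(C)^{\la,(1,0)}_{\mu_1}$ is stated in Theorem \ref{w1i} for context but plays no role in the proof of Theorem \ref{cl1}. What \emph{is} used is the stronger statement that the whole Hecke eigenspace $\tilde H^1(K^p,C)^{\la}_{\mu_1}[\kp_{\lambda\cdot t}]$ lies in the weight-$(1,0)$ part (because the Sen operator on it is identically zero by $\rho_\lambda$-isotypy plus the Hodge--Tate hypothesis), which requires no Galois cohomology computation at all.
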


In the ordinary case, this reproves a result of Buzzard-Taylor \cite{BT99}. Note that we don't assume the $U_p$-eigenvalue is non-zero. This will allow us to attack the Fontaine-Mazur conjecture in the non-ordinary, irregular case.

\begin{proof}
We give a sketch of the  proof here. We may assume $\rho_\lambda$ is irreducible. Let $\mu_1$ be as in Theorem \ref{w1i}  and denote by $\lambda\cdot t:\T\to\overbar\Q_p$ the map which sends $T_l$ to $\lambda(T_l)l^{-1}$. The key point is that by the Eichler-Shimura relation,  $\tilde H^1(K^p,C)^{\la}_{\mu_1}[\kp_{\lambda\cdot t}]=\rho_\lambda\otimes_{\overbar\Q_p} W$ for some $W$. Since $\rho_{\lambda}$ is Hodge-Tate of weights $0,0$, we have $\tilde H^1(K^p,C)^{\la}_{\mu_1}[\kp_{\lambda\cdot t}]=\tilde H^1(C)^{\la,(1,0)}_{\mu_1}[\kp_{\lambda\cdot t}]$. Now assume $\lambda$ is not classical. Then it follows from the second part of Theorem \ref{w1i} that 
\[(M^\dagger_1(K^p)[\kp_\lambda]\cdot e_1^{-1}t)^{N_0}=\tilde H^1(C)^{\la,(1,0)}_{\mu_1}[\kp_{\lambda\cdot t}]^{N_0}=\tilde H^1(K^p,C)^{\la}_{\mu_1}[\kp_{\lambda\cdot t}]^{N_0}=\rho_\lambda\otimes W^{N_0}.\]
By choosing suitable $K^p$ and taking suitable eigenvectors of Hecke operators $U_l$ and diamond operators at ramified places, the non-zero $U_p$-eigenspace of the first term will be one-dimensional by the $q$-expansion principle. However the corresponding subspace of the last term will always have dimension at least $\dim \rho_\lambda=2$ if it is non-zero! Contradiction. This shows that $\lambda$ has to be classical.
\end{proof}

Thanks to Emerton's work on local-global compatibility, we have a good understanding of $\tilde{H}^1(K^p,C)[\kp_\lambda]$ in terms of the $p$-adic local Langlands correspondence. Colmez's Kirillov model then shows that  a $U_p$-eigenvector always exists (See Corollary \ref{U_pev} below). 

\begin{thm}[Theorem \ref{classicality1}] \label{ovclassicality}
Suppose
\begin{itemize}
\item $\rho_\lambda$ is irreducible;
\item $\rho_\lambda|_{G_{\Q_p}}$ is Hodge-Tate of weight $0,0$. 
\end{itemize}
Then $\lambda$ is classical, i.e. $M_1(K^p)[\kp_\lambda]\neq 0$ for some tame level $K^p$.
\end{thm}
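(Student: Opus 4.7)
The plan is to deduce Theorem \ref{ovclassicality} from Theorem \ref{cl1} by verifying the two hypotheses of the latter under the weaker assumptions in force here. The Hodge-Tate-Sen hypothesis is already present, so the substantive work reduces to exhibiting a tame level $K^p$ such that $M^\dagger_1(K^p)[\kp_\lambda]^{N_0}$ is non-zero and contains a non-zero $U_p$-eigenvector. I would argue by contradiction, assuming $M_1(K^p)[\kp_\lambda]=0$ for every tame level, and then combine Emerton's local-global compatibility with Colmez's Kirillov model for locally analytic vectors.

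For the non-vanishing, I would invoke Emerton's local-global compatibility to identify the $\lambda$-isotypic block of $\tilde H^1(K^p,C)$, for a suitable choice of tame level $K^p$, with $\rho_\lambda\otimes_{\overbar\Q_p}\pi_p^{\oplus m}$ as a $\GL_2(\Q_p)\times G_{\Q_p}$-representation, where $\pi_p$ is the unitary Banach representation of $\GL_2(\Q_p)$ attached to $\rho_\lambda|_{G_{\Q_p}}$ by the $p$-adic local Langlands correspondence, and $m\geq 1$. Passing to locally analytic vectors and then to the $\mu_1$-isotypic part yields $\rho_\lambda\otimes (\pi_p^{\la})_{\mu_1}^{\oplus m}$, and this last tensor factor is non-zero precisely because $\rho_\lambda|_{G_{\Q_p}}$ has Hodge-Tate-Sen weights $0,0$: on the Galois side these weights correspond, via the Sen operator constructed after Theorem \ref{w1i}, to the $\mu_1$-eigenspace on the automorphic side. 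Combining this with the second part of Theorem \ref{w1i} and the assumption that $M_1(K^p)[\kp_\lambda]=0$, one concludes, as in the proof sketch of Theorem \ref{cl1}, that $(M^\dagger_1(K^p)[\kp_\lambda]\cdot e_1^{-1}t)^{N_0}$ is non-zero.

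For the existence of a $U_p$-eigenvector on this space, I would appeal to Corollary \ref{U_pev} from the text, which is the Kirillov-model statement that every non-zero $N_0$-invariant, $\mu_1$-isotypic vector in $\pi_p^{\la}$ can be arranged to be an eigenvector of $U_p$ (the point being that Colmez's Kirillov model realizes $(\pi_p^{\la})^{N_0}$ as a space of functions on which the matrix $\begin{pmatrix} p & 0 \\ 0 & 1 \end{pmatrix}$ acts essentially as a shift, and such an operator always has an eigenvector, including with eigenvalue zero). Transporting this eigenvector through the local-global identification and the exact sequence of Theorem \ref{w1i} supplies the second hypothesis needed in Theorem \ref{cl1}, and the contradiction forces $\lambda$ to be classical.

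The main obstacle I expect is ensuring that all these identifications are compatible: the local-global isomorphism, the passage to locally analytic vectors, the $\mu_1$-Sen decomposition, and the quotient map onto $M^\dagger_1\cdot e_1^{-1}t$ from Theorem \ref{w1i} must all be intertwined in a genuinely $B\times G_{\Q_p}$-equivariant way, so that a $U_p$-eigenvector manufactured on the Kirillov side actually survives to a non-zero $U_p$-eigenvector in $M^\dagger_1(K^p)[\kp_\lambda]^{N_0}$. A secondary delicate point is the case where $\pi_p$ is non-ordinary and not even crystabelline, where Colmez's Kirillov model is less transparent and one has to argue more carefully that the $U_p$-eigenvector does not get absorbed into the ``geometric'' subspace $H^1(\omega)$ appearing in Theorem \ref{w1i}.
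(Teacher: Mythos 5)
Your overall strategy matches the paper's, and the plan of reducing to Theorem~\ref{cl1} is exactly right: Corollary~\ref{promodprocoh} (local-global compatibility plus Pa\v{s}k\={u}nas's theory) gives non-vanishing, Colmez's Kirillov model gives a $U_p$-eigenvector, and Theorem~\ref{w1i} together with the non-classicality hypothesis forces this into $M^\dagger_1(K^p)\cdot e_1^{-1}t$. However, there is a genuine gap in the step that makes the $U_p$-eigenvector $\mu_1$-isotypic. You quote Corollary~\ref{U_pev} as ``the Kirillov-model statement that every non-zero $N_0$-invariant, $\mu_1$-isotypic vector in $\pi_p^{\la}$ can be arranged to be an eigenvector of $U_p$,'' but that corollary asserts only that $\Pi^{\la,N_0}\neq 0$ and has \emph{some} $U_p$-eigenvector, with no control on its $\mathfrak{h}$-weight. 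The tool the paper uses to locate the eigenvector in the $\mu_1=(0,1)$-component is Proposition~\ref{icht}, which you do not invoke: applied to $\lambda\cdot t$ (whose Galois representation $\rho_{\lambda\cdot t}=\rho_\lambda(1)$ has Hodge-Tate-Sen weights $-1,-1$), it gives infinitesimal character $(0,1)$ on $\tilde H^1(K^p,C)^{\la}[\kp_{\lambda\cdot t}]$, whence $(h+1)^2=0$ on the $N_0$-invariants by the Harish-Chandra relation for $\mathfrak{gl}_2$. Since $\mathfrak{h}$ normalizes $N_0$ and commutes with $U_p$ on $N_0$-invariants, the $U_p$-eigenspace produced by Corollary~\ref{U_pev} is $h$-stable, and nilpotence of $h+1$ forces a non-zero $\ker(h+1)$, i.e.\ a $\mu_1$-isotypic $U_p$-eigenvector. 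Without this step --- or a fully developed version of Remark~\ref{csColKir}, which the paper leaves as a sketch and only treats the $U_p=0$ case --- your argument does not produce the vector you need.

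A secondary but real issue is Tate-twist bookkeeping. The space that Theorem~\ref{w1i} maps onto $M^\dagger_1(K^p)\cdot e_1^{-1}t$ is $\tilde H^1(K^p,C)^{\la}_{\mu_1}[\kp_{\lambda\cdot t}]$, not $[\kp_\lambda]$, and the relevant $p$-adic Langlands factor is the one attached to $\rho_\lambda(1)|_{G_{\Q_p}}$ (weights $-1,-1$); it is precisely this twist that makes the Sen/Kirillov weights line up with $\mu_1=(0,1)$. With $\pi_p$ attached to $\rho_\lambda|_{G_{\Q_p}}$ (weights $0,0$), Remark~\ref{csColKir} would give non-vanishing of the $(-1,0)$-component, not the $(0,1)$-component you assert. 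You correctly flag compatibility of identifications as the main obstacle, but these normalizations have to be pinned down before the argument closes.
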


We say a Galois representation $\rho$ is pro-modular if $\rho=\rho_\lambda$ for some $K^p$ and $\lambda$. Combining Theorem \ref{ovclassicality} with  work on promodularity of a Galois representation (\cite[Theorem 1.2.3]{Eme1}, which is based on previous work of B\"ockle, Diamond-Flach-Guo, Khare-Wintenberger and Kisin), we give a new proof of the Fontaine-Mazur conjecture in the irregular case under some mild hypotheses. 

\begin{thm}[Theorem \ref{promodularity} and Corollary \ref{modwt1}]
Let $p>2$ be a prime number and $\rho:\Gal(\overbar\Q/\Q)\to\GL_2(\overbar\Q_p)$  a continuous irreducible two-dimensional representation. Assume 
\begin{enumerate}
\item $\rho$ is unramified outside of finitely many primes;
\item $\det\rho(c)=-1$ for a complex conjugation $c\in \Gal(\overbar\Q/\Q)$;
\item $\rho|_{G_{\Q_p}}$ is Hodge-Tate of weights $0,0$;
\item the residual representation $\bar\rho|_{\Gal(\overbar\Q/\Q(\mu_p))}$ is irreducible;
\item $(\bar\rho|_{G_{\Q_p}})^{ss}$ is either irreducible or of the form $\eta_1\oplus \eta_2$ for some characters $\eta_1,\eta_2$ satisfying $\eta_1/\eta_2\neq 1,\omega^{\pm1}$.
\end{enumerate}
Then $\rho$ is modular in the sense that it is isomorphic to the representation attached to a classical weight one cuspidal eigenform by \cite{DS74}. In particular, $\rho$ has finite image.
\end{thm}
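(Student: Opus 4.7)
The plan is to assemble this result from two main ingredients: first, the promodularity of $\rho$ (i.e.\ $\rho \cong \rho_\lambda$ for some Hecke eigensystem $\lambda$ arising from completed cohomology), and second, the classicality theorem \ref{ovclassicality} applied to that eigensystem. The final assertion that $\rho$ has finite image then follows from the classical theorem of Deligne-Serre on weight-one cusp forms.

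For the first step, I would invoke Emerton's promodularity result, \cite[Theorem 1.2.3]{Eme1}. Hypotheses (1), (2), (4), and (5) are tailored to fit that input: ramification at only finitely many primes, oddness of $\rho$ (from (2) on complex conjugation), residual irreducibility after restriction to $\Gal(\overbar\Q/\Q(\mu_p))$, and the non-degeneracy condition on $\bar\rho|_{G_{\Q_p}}$ ruling out the pathological semisimplifications $\eta_1 \oplus \eta_2$ with $\eta_1/\eta_2 \in \{1, \omega^{\pm 1}\}$. Hypothesis (3) supplies the Hodge-Tate input required by the underlying $R=\T$ machinery of B\"ockle, Diamond-Flach-Guo, Khare-Wintenberger, and Kisin that Emerton's theorem invokes. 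The output is a tame level $K^p$ together with a $\Z_p$-algebra homomorphism $\lambda : \T \to \overbar\Q_p$ and an isomorphism $\rho_\lambda \cong \rho$.

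For the second step, I would apply Theorem \ref{ovclassicality} to this $\lambda$. Both of its hypotheses are satisfied: $\rho_\lambda \cong \rho$ is irreducible by assumption, and is Hodge-Tate of weights $0,0$ by hypothesis (3). Its conclusion is that $\lambda$ is classical, i.e.\ there exists a classical weight-one cuspidal eigenform $f$ of some tame level whose Hecke eigensystem is $\lambda$; by Chebotarev, its attached Galois representation $\rho_f$ coincides with $\rho_\lambda \cong \rho$. The finite-image conclusion is then exactly \cite{DS74}: weight-one cuspidal eigenforms correspond to odd two-dimensional Artin representations.

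The substantive difficulties here have been completely absorbed into the two external inputs (Emerton's promodularity theorem, and Theorem \ref{ovclassicality} whose proof rests on the main results of this paper). For the assembly itself, the main thing to verify carefully is that hypotheses (1)-(5) match the precise input of \cite[Theorem 1.2.3]{Eme1}; hypothesis (5) is exactly what is required to exclude the exceptional configurations in the attendant $R=\T$ arguments, and hypothesis (3) is both the $p$-adic Hodge-theoretic condition needed for the deformation theory to recognize $\rho$ as promodular and the precise input that Theorem \ref{ovclassicality} consumes. No further ingredient beyond these two theorems and Deligne-Serre is needed.
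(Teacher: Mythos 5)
Your proposal is correct and follows exactly the paper's route: first Emerton's promodularity theorem gives $\rho\cong\rho_\lambda$, then Theorem \ref{classicality1} produces a classical weight-one eigenform, and finiteness of the image is \cite{DS74}. One expository correction: you claim hypothesis (3) (Hodge-Tate of weights $0,0$) is ``required by the underlying $R=\T$ machinery\dots that Emerton's theorem invokes'' and ``needed for the deformation theory to recognize $\rho$ as promodular.'' This is not so --- the whole point of Emerton's promodularity result (and of Theorem \ref{promodularity} as stated in the paper, which does not include hypothesis (3)) is that it applies to $\rho$ without any $p$-adic Hodge-theoretic condition; only hypotheses (1), (2), (4), (5) enter there, while the classical $R=\T$ inputs concern Hodge--Tate \emph{auxiliary} deformations used inside the argument, not $\rho$ itself. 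Hypothesis (3) is consumed solely by the classicality step, Theorem \ref{classicality1}.
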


\begin{rem}
This result was already known by the work of Pilloni-Stroh \cite{PS16}. Before their work, much work on this problem was done in the ordinary case: Buzzard-Taylor \cite{BT99}, Calegari-Geraghty \cite{CG18}... and has been generalized to the case of Hilbert modular forms. I hope this paper gives a new perspective on this problem.

If the results of this paper can also be generalized to Hilbert case,  one should be able to remove the last two conditions by invoking our previous work on promodularity \cite{Pan19}. 
\end{rem}

For overconvergent eigenforms of weight not necessarily $1$, we have the following result.
\begin{thm}[Theorem \ref{ocmfwt}] \label{ocmwn1}
Suppose $\rho_\lambda$ is irreducible and $M^\dagger_k(K^p)[\kp_\lambda]\neq 0$, then $\rho_\lambda|_{G_{\Q_p}}$ has Hodge-Tate-Sen weights $0,k-1$. Conversely, suppose
\begin{itemize}
\item $\rho_\lambda|_{G_{\Q_p}}$ is irreducible and has Hodge-Tate-Sen weights $0,k-1$;
\end{itemize}
Then $M^\dagger_k(K^p)[\kp_\lambda]\neq 0$.
\end{thm}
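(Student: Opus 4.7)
The plan is to apply Theorem \ref{thmkneq1} with the character $\mu_{2-k}$ (or Theorem \ref{w1i} if $k=1$), exploiting the fact that $N_{2-k,w}$ recovers $M^\dagger_k$ (up to an extension when $k=0$, i.e.\ $2-k=2$). The translation between the Sen weights of $\rho_\lambda$ and the decomposition of $\tilde H^1(C)^{\la}_{\mu_{2-k}}$ hinges on the observation that the summand $N_{2-k,w}(1-k)\cdot e_1^{-1}e_2^{1-k}t$ carries a Tate twist by $1-k$, which under the convention that $\varepsilon$ has Hodge--Tate weight $-1$ contributes Sen weight $k-1$, while the other summand $N_{2-k,1}\cdot e_1^{-(2-k)}t^{2-k}$ contributes Sen weight $0$.

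For the forward direction, suppose $M^\dagger_k(K^p)[\kp_\lambda]\neq 0$. Using $N_{2-k,w}\cong M^\dagger_k$, this produces a nonzero subspace of $\tilde H^1(K^p,C)^{\la}_{\mu_{2-k}}[\kp_{\lambda'}]$ for the appropriate twisted system $\lambda'$. By Emerton's local-global compatibility, the Hecke eigenspace $\tilde H^1(K^p,C)[\kp_\lambda]$ has the form $\rho_\lambda\otimes_{\overbar\Q_p}W$ for some admissible Banach $\GL_2(\Q_p)$-representation $W$, with the $G_{\Q_p}$-action on $\rho_\lambda$. The nonvanishing of the $N_{2-k,w}(1-k)$-piece then forces $k-1$ to be a Sen weight of $\rho_\lambda$. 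The other Sen weight is pinned down to $0$ by the determinant: in the paper's normalization $\det\rho_\lambda$ is $\varepsilon^{k-1}$ times a finite-order character, so the Sen weights sum to $k-1$.

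For the converse, assume $\rho_\lambda|_{G_{\Q_p}}$ is irreducible with Sen weights $\{0,k-1\}$. By Emerton's theorem we can choose $K^p$ such that $\tilde H^1(K^p,C)[\kp_\lambda]$ contains $\rho_\lambda\otimes W$ with $W\neq 0$. I would pass to locally analytic vectors and extract an $\mathfrak{h}$-eigenvector inside $W^{\la}$ with torus character matching $\mu_{2-k}$ on the Lie algebra, after twisting appropriately; the existence of such a vector follows from Emerton's theory of Jacquet functors applied to the $\GL_2(\Q_p)$-representation $W$ provided by $p$-adic local Langlands. This produces a nonzero piece of $\tilde H^1(K^p,C)^{\la}_{\mu_{2-k}}[\kp_{\lambda'}]$. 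Decomposing via Theorem \ref{thmkneq1}, the Sen weight $k-1$ eigenspace of $\rho_\lambda$ must project nontrivially to the $N_{2-k,w}(1-k)$ summand (the only one carrying Sen weight $k-1$), which yields $M^\dagger_k(K^p)[\kp_\lambda]\neq 0$.

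The main obstacle I anticipate is the converse direction, where one must guarantee that the $\mu_{2-k}$-eigenspace inside $W^{\la}$ is nonzero. For classical weights this is straightforward via Jacquet modules, but in the general (possibly non-classical, non-de Rham) setting it relies on understanding the full locally analytic socle of $W$ through $p$-adic local Langlands and the generalized Sen operator from Definition \ref{Senop}. The case $k=1$ requires a separate argument based on Theorem \ref{w1i}: the non-split structure there means one cannot simply read off the Sen decomposition as a direct sum, but the Sen operator computation described after Theorem \ref{w1i} still allows one to identify the overconvergent piece as the image of the Sen operator's natural map into $N_1$.
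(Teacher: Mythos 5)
Your overall strategy (pair the $N_{2-k,w}$ piece of Theorem \ref{thmkneq1}/\ref{hwvg} with $M^\dagger_k$, and pass through Emerton's local--global compatibility) is the same scaffolding the paper uses for Theorem \ref{ocmfwt}, but both directions have a genuine gap in the crucial step.

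\textbf{Forward direction.} You assert that the second Sen weight is pinned to $0$ because ``in the paper's normalization $\det\rho_\lambda$ is $\varepsilon^{k-1}$ times a finite-order character.'' This is not a normalization; it is part of what needs to be proved, and a priori all we know is that $\det\rho_\lambda$ is determined by the $S_l$ eigenvalues, not by $k$. The paper instead invokes Proposition \ref{icht}, which is the combination of the Harish-Chandra relation between $\theta_\kh$ and $Z(U(\mathfrak{g}))$ (Corollary \ref{khact}) and the identification of $\theta_\kh$ with the Sen operator (Theorem \ref{Senkh}): once $\tilde H^1(K^p,C)^{\la}_\mu[\kp_{\lambda'}]\neq 0$ for a particular $\mu$, the infinitesimal character alone determines \emph{both} Sen weights $\{-n_1,-n_2-1\}$. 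That is the ingredient replacing your determinant claim; without it, nonvanishing of the $N_{2-k,w}(1-k)$ piece only tells you $k-1$ is \emph{a} Sen weight, not that the other one is $0$.

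\textbf{Converse direction.} Here the gap you yourself anticipated is exactly where the proof actually lives. You propose to produce a $\mathfrak{b}$-eigenvector in $W^{\la}$ via ``Emerton's theory of Jacquet functors,'' but Jacquet modules alone will not do this: there is no a priori reason the Jacquet module of an abstract admissible $\GL_2(\Q_p)$-representation has a nonzero $\mu$-eigenspace in the two specific weights needed (and in the irregular case one cannot fall back on classical vectors). The paper's replacement is Colmez's Kirillov model, specifically Remark \ref{csColKir}: for $V=\rho_\lambda|_{G_{\Q_p}}$ irreducible with Sen weights $a,b$, the locally analytic vectors of $\Pi(V)$ satisfy $\Pi(V)^{\la,N_0,U_p=0}\cong D_{\mathrm{Sen}}(V(-1))$, which is free of rank $2$ and in particular forces nonvanishing of both weight components $(\Pi(V)^{\la})_{(-a-1,-b)}$ and $(\Pi(V)^{\la})_{(-b-1,-a)}$. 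Combined with Corollary \ref{promodprocoh} ($\tilde H^1(K^p,\overbar{\Q}_p)[\kp_\lambda]\cong\Pi(\rho|_{G_{\Q_p}})^{\oplus d}$) and then the Sen-weight decomposition of Theorem \ref{hwvg}, this gives $M^\dagger_{(-a,-b-1)}(K^p)[\kp_\lambda]\neq 0$. That Kirillov-model input is the decisive step, and your proposal leaves it open.

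The case $k=1$ (equal Sen weights) is handled in the paper by a twist to $a=b=0$ and a re-run of the argument in the proof of Theorem \ref{classicality1}; your remark about tracing the Sen operator into $N_1$ points in the right direction but again ultimately rests on the Kirillov-model nonvanishing.
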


\begin{rem}
As pointed out by a referee, Gouv\^{e}a in \cite[Conjecture 4]{Gou88} conjectured that the Hodge-Tate weights of the Galois representation associated to an overconvergent eigenform of weight $k$ are $0,k-1$ and guessed that this should be an equivalence. Our result confirms his conjecture and his guess under the given hypotheses.

We note that Sean Howe also proved Gouv\^{e}a's conjecture in \cite{Ho21} independently. Recently, we found a more direct way to  prove Gouv\^{e}a's conjecture  in \cite{Pan20} using Scholze's fake-Hasse invariants. But the method in  \cite{Pan20} does not seem to be enough to give a converse result. Indeed,  one key ingredient in our proof of Theorem \ref{ocmwn1} here is Colmez's Kirillov model \cite[Chap. VI]{Col10}, which is a deep result in $p$-adic local Langlands correspondence for $\GL_2(\Q_p)$. 
\end{rem}

Now we explain our strategy to compute $\tilde{H}^1(C)^{\la}_\mu$. As mentioned before, our main tool is $p$-adic Hodge theory. Fix a tame level $K^p$. By the work of Scholze \cite{Sch15}, there is an adic space $\mathcal{X}_{K^p}\sim\varprojlim_{K_p}\mathcal{X}_{K^pK_p}$ (the modular curve at infinite level) with Hodge-Tate period map  $\pi_{\HT}:\mathcal{X}_{K^p}\to\Fl=\mathbb{P}^1$, where $\Fl$ denotes the adic space associated to the flag variety of $\GL_2/C$. Let $\cO_{K^p}$ be the push-forward of the structure sheaf of $\mathcal{X}_{K^p}$ along $\pi_{\HT}$. Then Scholze shows that there is a natural isomorphism
\[\tilde{H}^i(K^p,C)\cong H^i(\Fl,\cO_{K^p}),\]
where the right hand side is computed on the analytic site of $\Fl$.
In order to study the ($\GL_2(\Q_p)$-)locally analytic vectors, we consider the subsheaf of locally analytic sections $\cO_{K^p}^{\la}\subset\cO_{K^p}$ (\ref{OKpla}). 

\begin{thm}[Theorem \ref{comcccc}] \label{comccccI}
There is a natural isomorphism
\[\tilde{H}^i(K^p,C)^{\la}\cong H^i(\Fl,\cO^{\la}_{K^p}).\]
\end{thm}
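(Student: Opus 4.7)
The plan is to realize both sides as the cohomology of a Čech complex and to reduce the theorem to the statement that the functor of locally analytic vectors commutes with the formation of that Čech complex. First, I would choose a finite affinoid cover $\{U_j\}$ of $\Fl = \mathbb{P}^1$ such that, on every finite intersection $U_J := \bigcap_{j \in J} U_j$, two things hold: higher cohomology of $\cO_{K^p}$ on $U_J$ vanishes (so that Scholze's isomorphism $\tilde H^i(K^p,C) \cong H^i(\Fl, \cO_{K^p})$ is computed by the Čech complex on the cover), and $U_J$ is preserved by a sufficiently large compact open subgroup $K_J \subset \GL_2(\Q_p)$ that acts on $H^0(U_J, \cO_{K^p})$ as a Banach representation. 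A concrete choice is a refinement of the standard two-chart cover of $\mathbb{P}^1$ by translates under a compact open subgroup of $\GL_2(\Z_p)$.

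Next, I would match sections locally. By the definition of $\cO^{\la}_{K^p}$ in \ref{OKpla}, together with the standard fact that the notion of locally analytic vector does not depend on the choice of sufficiently small compact open subgroup used to define it, one expects a natural identification
$$H^0(U_J, \cO^{\la}_{K^p}) \;\cong\; H^0(U_J, \cO_{K^p})^{K_J\text{-}\la}.$$
In parallel one must check that $\cO^{\la}_{K^p}$ is acyclic on each $U_J$, so that its global cohomology is also computed by the Čech complex of $\{U_j\}$. Granting both, the theorem reduces to showing that passing to locally analytic vectors commutes with the Čech differentials on our cover, that is,
$$\bigl(\tilde H^{\bullet}(K^p, C)\bigr)^{\la} \;=\; H^{\bullet}\bigl(C^{\bullet}(\{U_j\}, \cO_{K^p})^{\la}\bigr) \;=\; H^{\bullet}\bigl(C^{\bullet}(\{U_j\}, \cO^{\la}_{K^p})\bigr),$$
where the second equality is the termwise local comparison above.

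The main step, and the one I expect to be the principal obstacle, is the first equality: the exactness (or at least the ability to commute with a finite Čech complex) of the functor $V \mapsto V^{\la}$ on the representations in question. On admissible Banach representations of a compact $p$-adic analytic group, $(-)^{\la}$ is exact by the Schneider--Teitelbaum / Emerton formalism, so the technical heart of the argument is to verify that the Banach $K_J$-representations $H^0(U_J, \cO_{K^p})$ lie in a class for which this exactness is available. The most natural source of such a statement is an orthonormalizable/ON-able Banach module structure on these sections coming from the pro-\'etale description of $\cO_{K^p}$ used in Scholze's construction, possibly combined with an admissibility input coming from the known admissibility of the global $\tilde H^0(K^p,C)$. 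Once the exactness of $(-)^{\la}$ on our Čech complex is established, the theorem follows by a direct comparison of the two Čech complexes, for instance by a short spectral-sequence/five-lemma argument, or simply by observing that both complexes now have the same cohomology in each degree.
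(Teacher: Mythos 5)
Your proposal correctly identifies the overall skeleton: compute both sides by Čech cohomology on a finite cover from $\mathfrak{B}$, and reduce to a statement that taking locally analytic vectors commutes with the Čech complex. That is indeed the architecture of the paper's proof. But there is a genuine gap at exactly the step you yourself flag as the heart of the matter, and the hedge you offer there would not close it.

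The Čech terms $H^0(U_J,\cO_{K^p})$ are \emph{not} admissible Banach representations of $K_J$; they are very large, non-admissible Banach representations (sections over an affinoid perfectoid space). So the Schneider--Teitelbaum exactness of $(-)^{\la}$ on admissible Banach representations simply does not apply to them, and neither does any formal ``ON-able module'' property: orthonormalizability is ubiquitous and carries no information about $R^i\mathfrak{LA}$. Likewise, admissibility of the global $\tilde H^0(K^p,C)$ does not descend to the local sections. The actual mechanism in the paper for the key input --- $\mathfrak{LA}$-acyclicity of $\cO_{K^p}(U)$ for $U\in\mathfrak{B}$, which is Proposition \ref{cOULAacyc} --- is the content of all of \S\ref{LARS}: one uses the Tate--Sen formalism and decompletion (relative Sen theory) to show that the pro-\'etale covering $\pi_{\HT}^{-1}(U)\to V_{K_p}$ is a \emph{locally analytic} covering in the sense of Definition \ref{laedefn}, and then invokes Theorem \ref{BLAacyc}, which says $R^i\mathfrak{LA}(B)=0$ precisely when the covering is locally analytic. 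Verifying the ``locally analytic'' condition ultimately goes through Faltings's extension on modular curves (Corollary \ref{FEana}). None of this is an abstract consequence of a module-theoretic structure; it is substantive $p$-adic Hodge theory.

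There is also a smaller framing issue: $(-)^{\la}$ is only left exact in general, so the right notion is not ``exactness'' but vanishing of the derived functor $R^i\mathfrak{LA}$ defined in \ref{LAacyclic}. To commute $(-)^{\la}$ with cohomology of the Čech complex, the precise input one needs is Lemma \ref{LAlongexa}(2): both the terms \emph{and} the cohomology groups of the complex must be $\mathfrak{LA}$-acyclic. The terms are handled by Proposition \ref{cOULAacyc} as above, while the cohomology groups $\tilde H^i(K^p,C)$ are $\mathfrak{LA}$-acyclic because they are admissible (this is where your admissibility instinct does apply; see Corollary \ref{admLAacyc}). Finally, one must also verify that $\cO^{\la}_{K^p}$ is acyclic on each $U\in\mathfrak{B}$ so that its global cohomology is Čech-computable (step 3 in your sketch); this again runs through the same $\mathfrak{LA}$-acyclicity machinery via Corollary \ref{LAacyw/osm}. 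In short: your outline is the right one, but the crucial acyclicity ingredient cannot be obtained for free and is precisely what requires the relative Sen theory the paper develops.
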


Next we need a careful local study of the sheaf $\cO^{\la}_{K^p}$. This part is largely inspired by the work of Berger-Colmez \cite{BC16}. There are two key ingredients:
\begin{enumerate}
\item $\cO^{\la}_{K^p}$ satisfies a differential equation on $\Fl$;
\item an explicit description of sections of $\cO^{\la}_{K^p}$ (Theorem \ref{cnijk}).
\end{enumerate} 
We will only explain the differential equation in detail here. It turns out to be nothing but the horizontal nilpotent subalgebra. To be more precise, we follow some constructions on the flag variety from  \cite{BB83}. Recall $\mathfrak{g}=\mathfrak{gl}_2(C)$. For a $C$-point $x$ of the flag variety $\mathrm{Fl}$ of $\GL_2/C$, let $\mathfrak{b}_x,\mathfrak{n}_x\subset \mathfrak{g}$ be its corresponding Borel subalgebra and nilpotent subalgebra. Let
\begin{eqnarray*}
\mathfrak{g}^0&:=&\cO_{\mathrm{Fl}}\otimes_{C}\mathfrak{g},\\
\mathfrak{b}^0&:=&\{f\in \mathfrak{g}^0\,| \, f_x\in \mathfrak{b}_x,\mbox{ for all }x\in\mathrm{Fl}(C)\},\\
\mathfrak{n}^0&:=&\{f\in \mathfrak{g}^0\,| \, f_x\in \mathfrak{n}_x,\mbox{ for all }x\in\mathrm{Fl}(C).\}
\end{eqnarray*}
By abuse of notation, we also view these as vector bundles on $\Fl$. Then $\mathfrak{g}^0$ acts on $\cO^{\la}_{K^p}$ in an obvious way.
\begin{thm}[Theorem \ref{n0triv}] \label{n0trivI}
$\mathfrak{n}^0$ acts trivially on $\cO^{\la}_{K^p}$.
\end{thm}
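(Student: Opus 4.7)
By the $\GL_2(\Q_p)$-equivariance of $\pi_{\HT}$, of the sheaves $\mathfrak{n}^0$ and $\cO^{\la}_{K^p}$, and of the $\mathfrak{g}^0$-action, the statement is $\GL_2(\Q_p)$-invariant. Since $\GL_2(\Q_p)$ acts transitively on $\Fl$, it suffices to verify it on an affine chart around a single base point, which I take to be $x_0 = [1:0]$, with stabilizer the upper Borel $B$ and $\mathfrak{n}_{x_0} = C\cdot u^+$, where $\{h,u^+,u^-\}$ is the standard basis of $\mathfrak{sl}_2$. Working on the chart $\{[1:w]\}$, conjugation by a matrix sending $x_0$ to $[1:w]$ trivializes $\mathfrak{n}^0$ by the section
\[
  N \ =\ -w\,h\ +\ u^+\ -\ w^2\, u^-.
\]
The theorem thus reduces to the single differential relation
\[
  -w\,(h\cdot f)\ +\ (u^+\cdot f)\ -\ w^2\,(u^-\cdot f)\ =\ 0
\]
for every local section $f$ of $\cO^{\la}_{K^p}$.

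Next, observe that the anchor map $\mathfrak{g}^0 \to T_\Fl$ (encoding the action of $\mathfrak{g}$ on the base) annihilates $\mathfrak{n}^0$, since by construction $\mathfrak{n}_x \subset \mathfrak{b}_x$ lies in the stabilizer of $x$. Equivariance of $\pi_{\HT}$ then yields $N\cdot \pi_{\HT}^*g = 0$ for every $g\in\cO_\Fl$, and the Leibniz rule together with the vanishing of the anchor implies that the $\mathfrak{n}^0$-action on $\cO^{\la}_{K^p}$ is $\cO_\Fl$-linear: $N\cdot(g\cdot f) = g\cdot(N\cdot f)$ for $g\in\cO_\Fl$. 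It therefore suffices to produce a family of $\mathfrak{n}^0$-invariant locally analytic sections which $\cO_\Fl$-linearly generate $\cO^{\la}_{K^p}$ on the chart.

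For this third, crucial step, the guiding picture is the finite-level toy model $G/N \to G/B$, on which $\mathfrak{n}^0$ visibly acts trivially: at a point $gN\in G/N$, $\mathfrak{n}_{gx_0}=\mathrm{Ad}(g)\mathfrak{n}_{x_0}$ acts by left translation by an element of the unipotent subgroup $N$, hence trivially on the quotient. I expect the proof to mimic this pattern, using Scholze's construction of $\mathcal{X}_{K^p}$ to exhibit, locally near a preimage of $x_0$, ``vertical'' analytic coordinates on $\mathcal{X}_{K^p}$ coming from the $\Gamma(p^n)$-tower on which $u^+$ acts by zero. A locally analytic section then expands as a convergent series in the horizontal coordinate $w$ (pulled back from $\Fl$) with coefficients locally analytic in the vertical coordinates; such coefficients are $\mathfrak{n}^0$-invariant, and $\cO_\Fl$-linearity closes the argument.

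The main obstacle is precisely this last step: passing from the geometric triviality of the anchor of $\mathfrak{n}^0$ to analytic triviality of the action on all of $\cO^{\la}_{K^p}$. It requires a sufficiently fine description of the vertical direction of $\pi_{\HT}$ to ensure that the one-parameter subgroup $\exp(tu^+)$, which stabilizes $x_0$, acts trivially on an appropriate analytic basis of sections of the fiber — essentially the content of the explicit local description of $\cO^{\la}_{K^p}$ alluded to in the introduction. In this sense the present theorem is best regarded as the infinitesimal shadow of that description.
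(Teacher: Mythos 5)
Your first two steps are sound: the reduction to a single affine chart by $\GL_2(\Q_p)$-equivariance, the trivialization of $\mathfrak{n}^0$ by a section $N$ that is quadratic in the horizontal coordinate, and the observation that the vanishing of the anchor on $\mathfrak{n}^0$ forces the $\mathfrak{n}^0$-action on $\cO^{\la}_{K^p}$ to be $\cO_\Fl$-linear are all correct, and they reduce the theorem to exhibiting $\mathfrak{n}^0$-invariant local sections that generate $\cO^{\la}_{K^p}$ as an $\cO_\Fl$-module. But that last step is the entire content of the theorem, and you do not carry it out; as you yourself note, it is the ``main obstacle.'' Concretely: an expansion along the constant element $u^+$ yields $f=\sum_i a_i(x-x_n)^i$ with $u^+\cdot a_i=0$, but then $N\cdot a_i = x(h\cdot a_i)-u^-\cdot a_i$, and you have no control over $h\cdot a_i$ or $u^-\cdot a_i$; iterating along $h$ and $z$ only gives $\mathfrak{b}$-invariant coefficients $c_{i,j,k}$, for which $N\cdot c_{i,j,k}=-u^-\cdot c_{i,j,k}$ is still undetermined. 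The missing input is precisely that these coefficients are smooth (finite-level), which is Lemma \ref{annbg}; but in the paper that lemma is itself a consequence of the differential equation $\theta=0$, which is the statement you are trying to prove. So filling your gap with the paper's explicit local description (Theorem \ref{cnijk}) would be circular.

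The paper's actual route is essentially disjoint from yours. It does not begin from a local expansion of sections at all. It first establishes the general Theorem \ref{pCR} via relative Sen theory in the style of Berger--Colmez: decompletion over a toric covering and the Tate--Sen formalism produce, for any ``log $G$-Galois pro-\'etale perfectoid covering'' of a small one-dimensional affinoid, a canonical element $\theta\in B\otimes_{\Q_p}\Lie(G)$ annihilating $B^{\la}$. It then computes $\theta$ for the modular curve (Theorem \ref{exptheta}) by identifying the relative Hodge--Tate filtration with Faltings's extension, and the answer is visibly a generator of $\mathfrak{n}^0$ after applying $\pi_{\HT}$. Theorem \ref{n0triv} is then a one-line corollary. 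The explicit coordinate expansion you are reaching for (Theorem \ref{cnijk}) is derived downstream from $\theta$, not used to prove it. There is a genuinely alternative, non-circular path hinted at in the remark following Lemma \ref{annbg} — one can argue via Scholze's Tate normalized traces on an $\epsilon$-neighborhood of the anticanonical locus to show $\mathfrak{b}$-invariants are smooth — and that could in principle turn your outline into a proof; but it is a substantial further argument that you neither give nor invoke, and it is not what the paper does.
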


In fact, we will show the existence of such a differential equation in a very general setting.

\begin{thm}[Theorem \ref{pCR}] \label{pCRI}
Suppose
\begin{itemize}
\item $X=\Spa(A,A^+)$: a one-dimensional smooth affinoid adic space (over $\Spa(C,\cO_C)$);
\item $G$: a finite-dimensional compact $p$-adic Lie group;
\item $\Spa(B,B^+)$: a Galois pro-\'etale perfectoid covering  of $X$ with Galois group $G$.
\end{itemize}
Then under some smallness assumption on $X$ (cf. \ref{setup}), there is an element of $B\otimes_{\Q_p}\Lie(G)$ that annihilates the $G$-locally analytic vectors in $B$. It is non-zero if $\Spa(B,B^+)$ is a locally analytic covering in the sense of \ref{laedefn} below.
\end{thm}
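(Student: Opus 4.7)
The strategy is to exhibit the infinitesimal $\Lie(G)$-action on $B^{\la}$ as factoring through a rank-one module of geometric derivations of $B$ coming from the one-dimensional base $X$. Linear algebra over $B$ will then force relations among the actions of any two elements of $\Lie(G)$ and produce the required annihilator.

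Under the smallness assumption of \ref{setup} we may choose an \'etale coordinate $t \in A$, so that $\Omega^{1,\cont}_{A/C} = A\,dt$; let $\partial := d/dt$ be the associated continuous derivation on $A$. Since $G$ is the Galois group of $B/A$, it fixes $A$ pointwise and hence acts $A$-linearly on $B$. Differentiating the orbit map shows that for every $X \in \Lie(G)$ the infinitesimal action $\partial_X \colon B^{\la} \to B^{\la}$ is a continuous $A$-linear derivation. The key technical step is to construct a canonical continuous $\Q_p$-linear derivation $\tilde\partial \colon B^{\la} \to B$ extending $\partial$, and to show that every continuous $A$-linear derivation from $B^{\la}$ into $B$ is a unique $B$-multiple of $\tilde\partial$. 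Granted this, for each $X \in \Lie(G)$ there is a unique $\theta(X) \in B$ such that $\partial_X = \theta(X)\cdot\tilde\partial$, giving a $\Q_p$-linear ``Sen-type'' map $\theta \colon \Lie(G) \to B$.

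The annihilator is now a cross-ratio: for any $X_1, X_2 \in \Lie(G)$ the element
\[
\Omega := \theta(X_2)\otimes X_1 - \theta(X_1)\otimes X_2 \;\in\; B \otimes_{\Q_p} \Lie(G)
\]
acts on $v \in B^{\la}$ as $\theta(X_2)\theta(X_1)\tilde\partial v - \theta(X_1)\theta(X_2)\tilde\partial v = 0$. For the non-vanishing part, the ``locally analytic covering'' condition of \ref{laedefn} is designed precisely to guarantee that $\theta$ is non-zero; then choosing $X_1$ with $\theta(X_1) \neq 0$ and any $X_2 \in \Lie(G)$ which is $\Q_p$-linearly independent from $X_1$, the same independence persists over $B$ in $B \otimes_{\Q_p} \Lie(G)$, so $\Omega \neq 0$.

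The hard part will be the construction of $\tilde\partial$ together with the rank-one statement for continuous $A$-linear derivations of $B^{\la}$: informally, that locally analytic vectors see only the geometric direction along $X$, the perfectoid ``vertical'' direction becoming infinitesimally trivial. I expect this to proceed in Tate--Sen style, by exhausting $B^{\la}$ by the subspaces of $G_n$-analytic vectors for a descending chain of open subgroups $G_n \subset G$, identifying each as sections of an honest smooth rigid-analytic family over $X$, extending $\partial$ level-by-level via functoriality of differentials along the \'etale tower, and then passing to the limit. The one-dimensionality of $X$ is essential at this step: it ensures that at each finite level the sheaf of derivations is locally free of rank one, which is what forces $\theta$ to land in $B$ rather than in a larger $B$-module and thereby yields the desired relations.
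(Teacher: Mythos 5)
There is a genuine gap, and in fact a contradiction, in the central mechanism. You require $\tilde\partial$ to be a $\Q_p$-linear derivation of $B^{\la}$ \emph{extending} the étale coordinate derivation $\partial = d/dt$ on $A$, and simultaneously require $\partial_X = \theta(X)\,\tilde\partial$ for the Lie-algebra action, which you correctly note is $A$-linear. But then for $a\in A$ one has $0 = \partial_X(a) = \theta(X)\,\tilde\partial(a) = \theta(X)\,\partial(a)$; since $\partial$ is a non-zero derivation of the one-dimensional smooth affinoid $A$, taking $a$ with $\partial(a)$ a unit forces $\theta(X)=0$ (in any connected component). So the map $\theta$ you construct is identically zero, and $\Omega$ vanishes. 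The companion claim that $\mathrm{Der}_A(B^{\la},B)$ is free of rank one over $B$ generated by $\tilde\partial$ is also false for the same reason (a $B$-multiple of $\tilde\partial$ is $A$-linear only if the multiple is zero), and more substantively because $B^{\la}$ has \emph{more} "vertical" directions than one: in the modular-curve case Theorem \ref{cnijk} exhibits sections of $\cO_{K^p}^{\la}$ as power series in three formal variables $x-x_n,\ \log(e_1/e_{1,n}),\ \log(t/t_n)$ with coefficients finite étale over $A$, so $\mathrm{Der}_A(B^{\la},B)$ has rank $3 = \dim G - 1$, not $1$. The one-dimensionality of $X$ gives $\Omega^1_{A/C}$ rank one; it does not control $A$-linear derivations of $B^{\la}$.

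Two further structural difficulties: the cross-ratio $\Omega = \theta(X_2)X_1 - \theta(X_1)X_2$ is vacuous when $\dim G = 1$ (there is no $X_2$ independent from $X_1$), yet the one-dimensional case is non-trivial and genuinely included in the theorem — in Example \ref{toricexa}, $\Lie(G)$ annihilates $B^{\la}$ (the smooth vectors) and $\theta$ is a unit multiple of the generator of $\Lie(G)$, something no cross-ratio can produce. And the non-vanishing assertion does not follow from your construction even when $\dim G\geq 2$: the theorem asks for $\theta\neq 0$ precisely under the locally-analytic-covering hypothesis, but nothing in your argument connects that hypothesis to $\theta\neq 0$. The paper's actual route is different in kind: it adjoins an auxiliary $\Gamma\cong\Z_p$-tower $X_\infty/X$ (coming from the toric chart), forms $B_\infty$ with its $G\times\Gamma$-action, runs Tate--Sen decompletion relative to $\Gamma$ to produce a canonical element $\phi_V(\Lie\Gamma)\subset \End_{B_\infty}(B_\infty\otimes V)$ for each finite-dimensional $V$, and then shows — via matrix coefficients and the algebra structure on $\sC^{\an}(G_0,\Q_p)$ — that this operator is a right-invariant \emph{derivation}, hence lands in $B\otimes_{\Q_p}\Lie(G)$. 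The single element $\theta$ comes from $\Lie(\Gamma)$ being one-dimensional, not from $X$ being one-dimensional, and the non-vanishing is proved separately in Proposition \ref{nonvantheta} by relating it to the exactness of Faltings's extension after taking locally analytic vectors.
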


\begin{rem}
This can be viewed as a $p$-adic analogue of the Cauchy-Riemann equation in classical complex analysis. See also Remarks \ref{theSen}, \ref{FAGTLZ} below for the relation with classical and relative Sen theory (also called the $p$-adic Simpson correspondence in the literature). We can also allow some ramification, cf. Theorem \ref{pCR}. 
\end{rem}

\begin{exa}
One simple example is 
\begin{itemize}
\item $X=\Spa(C\langle T^{\pm 1}\rangle, \cO_C\langle T^{\pm1}\rangle)$, the one-dimensional torus;
\item $\widetilde X=\Spa(C\langle T^{\pm \frac{1}{p^{\infty}}}\rangle, \cO_C\langle T^{\pm\frac{1}{p^{\infty}}}\rangle)$, the perfectoid inverse limit of $\Spa(C\langle T^{\pm \frac{1}{p^{m}}}\rangle, \cO_C\langle T^{\pm\frac{1}{p^{m}}}\rangle)$;
\item $G\cong \Z_p$.
\end{itemize}
In this case, the $G$-locally analytic vectors in $C\langle T^{\pm \frac{1}{p^{\infty}}}\rangle$ are the smooth vectors $\bigcup_{m\in\N} C\langle T^{\pm \frac{1}{p^{m}}}\rangle$. Our differential operator is simply a generator of $\Lie(G)$.
\end{exa}

Back to the flag variety. Since $\mathfrak{n}^0$ acts trivially on $\cO^{\la}_{K^p}$, there is an induced action of $\mathfrak{b}^0/\mathfrak{n}^0\cong \kh\otimes\cO_{\Fl}$, where $\kh$ is a Cartan subalgebra of $\mathfrak{g}$. We choose it as the Levi quotient of $\mathfrak{b}$, i.e. the diagonal matrices. Hence we have  an (horizontal) action $\theta_{\kh}$ of $\kh$ on $\cO^{\la}_{K^p}$. By Harish-Chandra's theory, this action is closely related to the action of the centre $Z(U(\mathfrak{g}))$ (\ref{khact}). In the study of cohomology of locally symmetric spaces, the infinitesimal group action is related to Hodge theory. Our next result gives a $p$-adic Hodge-theoretic interpretation of $\theta_\kh$.

\begin{thm}[Theorem \ref{Senkh}] \label{SenkhI}
$\theta_\kh(\begin{pmatrix}0 & 0\\ 0 & 1 \end{pmatrix})$ is the unique Sen operator on the cohomology $H^i(\Fl,\cO_{K^p}^{\la})\cong\tilde{H}^i(K^p,C)^{\la}$ for any $i$.
\end{thm}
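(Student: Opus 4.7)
The plan is to identify $\theta_\kh(\begin{pmatrix}0 & 0\\ 0 & 1 \end{pmatrix})$ with the Sen operator on the locally analytic $C$-semilinear $G_{\Q_p}$-representation $H^i(\Fl,\cO^{\la}_{K^p}) \cong \tilde{H}^i(K^p,C)^{\la}$. On a locally analytic $C$-semilinear $G_{\Q_p}$-representation, the Sen operator is uniquely characterized by the defining property that an open subgroup of $G_{\Q_p}$ acts via the exponential formula $g\cdot v=\exp(\log\chi(g)\phi)v$; so the theorem reduces to exhibiting $\theta_\kh(\begin{pmatrix}0 & 0\\ 0 & 1 \end{pmatrix})$ as an operator satisfying this property.

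Both the Galois action and the horizontal $\kh$-action come from sheaf-level actions on $\cO^{\la}_{K^p}$ (with $\theta_\kh$ being the residual action of $\mathfrak{b}^0/\mathfrak{n}^0$ after Theorem \ref{n0trivI}), so the comparison can be made locally on $\Fl$. I would work on an affinoid chart of $\Fl\cong\mathbb{P}^1_C$ around a basepoint $x=[e_1]$, with coordinate $z$ measuring the position of the moving Hodge line relative to $Ce_1$, and use the explicit description of sections of $\cO^{\la}_{K^p}$ from Theorem \ref{cnijk} to write local sections as series in $z$ with coefficients in locally analytic functions pulled back from the infinite-level modular curve. On such series, both the horizontal action of $\begin{pmatrix}0 & 0\\ 0 & 1 \end{pmatrix}\in\kh$ and the infinitesimal Galois action should be readable off directly.

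For the horizontal side, $\begin{pmatrix}0 & 0\\ 0 & 1 \end{pmatrix}$ acts trivially on $Ce_1$ and with weight $1$ on the transverse line, so under the identification $\mathfrak{b}^0/\mathfrak{n}^0|_x\cong\kh$ it contributes the Euler derivation $z\partial_z$ together with a weight-$1$ contribution in the fiber of $\cO^{\la}_{K^p}\to\cO_{K^p}$. For the Galois side, the coordinate $z$ records the Hodge line inside $T_pE\otimes C$, and Galois acts on $T_pE\otimes C$ with Hodge-Tate weights $0$ on the Hodge line and $1$ on the quotient; expanding $g\cdot z=\chi(g)z+O(z^2)$ via the exponential formula yields exactly the same operator. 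This forces the appearance of the specific element $\begin{pmatrix}0 & 0\\ 0 & 1\end{pmatrix}$ rather than any other Cartan element: it corresponds to the Hodge-Tate weight of the cotangent line of $\Fl$ at $x$.

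The main obstacle will be making the two local computations genuinely parallel in the chosen chart, in particular tracking how the horizontal $\kh$-action manifests on the explicit generators of $\cO^{\la}_{K^p}$ provided by Theorem \ref{cnijk} and matching it term by term with the derivative of the Galois action, including the nontrivial contribution coming from the semilinear twist by $C$. Once both operators are expressed as the same explicit derivation on local sections of $\cO^{\la}_{K^p}$, the identification passes to cohomology since both are induced from sheaf endomorphisms and commute with the Čech resolution, yielding the desired equality on $H^i$.
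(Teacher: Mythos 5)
Your local computation sketch is on the right track and essentially matches the paper's Theorem \ref{Sensf}: on each $U\in\mathfrak{B}$, using the explicit expansion in Theorem \ref{cnijk} along $x, e_1, t$ one directly verifies that $\theta_\kh(\begin{pmatrix}0 & 0\\ 0 & 1 \end{pmatrix})$ agrees with $1\in\Lie(\Gal(\Q_p(\mu_{p^\infty})/\Q_p))$ on the $G_{K(\mu_{p^\infty})}$-fixed, $G_K$-analytic vectors of $\cO^{n}(U)\{x,e_1,t\}$, and the spectral decompletion $M\cong M_{K'}\widehat\otimes_{K'}C$ gives uniqueness. But two points need repair.

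First, your characterization of the Sen operator is not quite what the paper uses. Definition \ref{Senop} does not say an open subgroup of $G_{\Q_p}$ acts by $g\cdot v=\exp(\log\chi(g)\theta_{\mathrm{Sen}})v$ on the whole space; that formula makes no sense for general $C$-semilinear Banach representations (the exponential need not converge, and the operator is $C$-linear while the action is semilinear). The actual defining property is weaker and local: $\theta_{\mathrm{Sen}}$ must extend the natural $\Lie(\Gamma)$-action on the subspace of $G_{K(\mu_{p^\infty})}$-smooth, $G_K$-locally analytic vectors. This subspace can a priori be small, and showing it controls the whole space (so that uniqueness holds) is part of the content. Your exponential formulation papers over exactly where the work is.

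Second, and more seriously, the final step --- ``the identification passes to cohomology since both are induced from sheaf endomorphisms and commute with the \v{C}ech resolution'' --- is a genuine gap. That $\theta_\kh$ induces an endomorphism of $H^1$ is trivial. What is not trivial is that this induced endomorphism \emph{satisfies Definition \ref{Senop} on the cohomology}: one must identify the $G_{K(\mu_{p^\infty})}$-smooth, $G_K$-analytic vectors of $H^1(\Fl,\cO_{K^p}^{\la})$ and show that this subspace decompletes $H^1$, i.e.\ that $H^1\cong W\widehat\otimes_K C$ with $W$ the decompleted subspace, and that $\theta_\kh$ acts there as the $\Lie(\Gamma)$-action. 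Neither commuting with \v{C}ech differentials nor agreement on sections of the sheaf delivers this, because taking cohomology does not obviously commute with taking the decompleted subspace. The paper handles this via strong $\mathfrak{LA}$-acyclicity (Proposition \ref{cOULAacyc} and Corollary \ref{admLAacyc}): it shows that for each $m$, $\tilde{H}^1(K^p,C)^{G_m-\an}$ factors through the largest separated quotient $M^n$ of the truncated \v{C}ech complex built from the $\cO^n(U_?)\{\ldots\}$ spaces, and that $M^n$ has an explicit integral decompletion $M^o$ over $\cO_K$ with $M^n=M^o\widehat\otimes_{\cO_K}C$. Only then does the sheaf-level agreement of the two operators descend to the claimed statement about cohomology. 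Without this control you cannot conclude that $\theta_\kh$ is a (let alone the unique) Sen operator on $\tilde{H}^1(K^p,C)^{\la}$.
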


\begin{rem}
Since the usual Sen operator is defined for a finite-dimensional $C$-semilinear representation of $G_{\Q_p}$, we will generalize this notion to this situation \ref{Senop}. For example, for any finite-dimensional $C$-semilinear subrepresentation $V$ of $G_{\Q_p}$ in $\tilde{H}^i(K^p,C)^{\la}$, this result implies that $\theta_\kh(\begin{pmatrix}0 & 0\\ 0 & 1 \end{pmatrix})$ agrees with usual Sen operator when restricted to $V$. We also sketch a more direct construction of the Sen operator on $\tilde{H}^i(K^p,C)^{\la}$ in Remark \ref{dconSen}.
\end{rem}

\begin{rem}
In Beilinson-Bernstein's theory of localization (see \S C of \cite{Bei84}), $\cO^{\la}_{K^p}$ is a $\tilde{\mathscr{D}}$-module. Note that everything here is also $\GL_2(\Q_p)$-equivariant. Hence we obtain a $(\tilde{\mathscr{D}},\GL_2(\Q_p))$-module on $\Fl$, which is very similar to the picture in the complex analytic setting (for example the work of Kashiwara-Schmid \cite{KS94}).
\end{rem}

As a corollary, this implies that on the locally analytic vectors in the completed cohomology, the infinitesimal character of $\GL_2(\Q_p)$ (i.e. action of $Z(U(\mathfrak{g}))$) and the infinitesimal character of $G_{\Q_p}$ (Sen operator as an element in the centre of``$C\otimes_{\Q_p}\Lie(G_{\Q_p})$'') are closely related. This directly implies the (Hodge-Tate) decomposition in Theorem \ref{thmkneq1}. 

\begin{rem}
The same strategy of this paper should give Theorem \ref{pCRI} for higher dimensional rigid analytic varieties, and Theorem  \ref{n0trivI} for general Hodge-Tate period maps, and Theorem \ref{SenkhI} for Shimura varieties of Hodge type. I plan to report about this in a future paper. We also note that the relation between the action of the centre of the universal enveloping algebra and Sen operator on the locally analytic vectors in the completed cohomology (not necessarily of the modular curves) was already vastly studied by Dospinescu-Pa\v{s}k\={u}nas-Schraen in \cite{DPS20}.
\end{rem}

After writing up this paper, I was informed by Sean Howe that very recently he also obtained some part of Theorem \ref{hwvg} and the easier direction of \ref{ocmwn1} in \cite{Ho21}. 

This paper is organized as follows. We first collect some basic facts about locally analytic vectors in \S\ref{LAV}. One important notion is a derived functor $R^i\mathfrak{LA}$ of taking locally analytic vectors introduced in \ref{LAacyclic}.  

In \S\ref{LARS}, we explain how to derive the differential equation claimed in Theorem \ref{pCRI} from relative Sen theory. Along the way, we also prove a vanishing result for $R^i\mathfrak{LA}$ which later will be used to prove Theorem \ref{comccccI}. Nothing is new in the sense that all of the techniques were probably well-known in Sen's theory.

In \S\ref{lafopmc}, we apply our theory developed in the previous section to modular curves at infinite level. The computation of the differential equation in this case is a simple reinterpretation of an old result of Faltings. We also give an explicit description of the sections of $\cO^{\la}_{K^p}$ and use it to prove Theorem \ref{comccccI}.

In \S\ref{muipocc}, we first give a proof of Theorem \ref{SenkhI}. Using our explicit description of $\cO^{\la}_{K^p}$, it is easy to verify that $\theta_\kh(\begin{pmatrix}0 & 0\\ 0 & 1 \end{pmatrix})$ behaves as a Sen operator on $\cO^{\la}_{K^p}$ and its cohomology. Next we compute $\tilde{H}^1(K^p,C)^{\la}_{\mu}$. Fix a character $\chi\in\kh^*$ and denote by $\cO^{\la,\chi}_{K^p}$  the $\theta_\kh=\chi$-isotypic part. Let $\kn$ be the nilpotent subalgebra of $\mathfrak{b}$. Basically we are reduced to computing the $\kn$-cohomology of the sheaf $\cO^{\la,\chi}_{K^p}$ and the cohomology of the $\kn$-invariants and $\kn$-coinvariants of $\cO^{\la,\chi}_{K^p}$ on $\Fl$ (at least for non-singular weight).

In \S\ref{app}, we give the global applications mentioned earlier. To do this, we need Emerton's work on local-global compatibility and Colmez's work on Kirillov models. It should be mentioned that we slightly generalize Emerton's local-global compatibility result so that it is now valid when localized at a Eisenstein maximal ideal.

\subsection*{Acknowledgement} 
I would like to thank Matthew Emerton for useful discussions and suggesting the possibility of applying Colmez's Kirillov model to the problem of modularity of weight one forms. I would like to thank Richard Taylor for 
a key conversation on possible structure in Theorem \ref{w1i}.  I would also like to thank Tsao-Hsien Chen, Pierre Colmez, Weibo Fu, Kai-Wen Lan, Ruochuan Liu, Jun Su, Xinwen Zhu for answering my questions and helpful discussions. Further, I would like to thank Michael Harris and Kai-Wen Lan for several comments and corrections on an earlier version of this paper.  Finally, I am very grateful to the anonymous referees for their detailed comments on this paper.

\subsection*{Notation} 
Fix an algebraic closure $\overbar\Q$ of $\Q$. Denote by $G_{\Q}$ the absolute Galois group $\Gal(\overbar\Q/\Q)$. For each rational prime $l$, fix an algebraic closure $\overbar\Q_l$ of $\Q_l$ with ring of integers $\overbar\Z_l$, an embedding $\overbar\Q\to\overbar\Q_l$ which determines a decomposition group $G_{\Q_l}\subset G_{\Q}$ at $l$, and a lift of geometric Frobenius $\Frob_l\in G_{\Q_l}$. Our convention for local class field theory sends a uniformizer to a lift of geometric Frobenius. We denote by $\varepsilon$ the $p$-adic cyclotomic character $G_{\Q}\to\Z_p^\times$ and by abuse of notation, also the corresponding character $\A_f^\times/\Q_{>0}^{\times}\to\Z_p^\times$  via global class field theory. Similarly, we denote by $\varepsilon_p$ the $p$-adic cyclotomic character $G_{\Q_p}\to\Z_p^\times$ and also the character  $\Q_p^\times\to\Z_p^\times$ sending $x$ to $x|x|$. Suppose $S$ is a finite set of rational primes. We denote by $G_{\Q,S}$ the the Galois group of the maximal extension of $\Q$ unramified outside $S$ and $\infty$. 

All completed tensor products mean ``$p$-adically completed tensor product''. For example, if $V$ and $W$ are two $\Q_p$-Banach space, then $V\widehat{\otimes}_{\Q_p}W=(\varprojlim_n (V^o\otimes_{\Z_p}W^o/p^n))\otimes_{\Z_p}\Q_p$, where $V^o$ and $W^o$ denote the open unit balls of $V$ and $W$. 

If $\rho:\Gamma\to\GL_n(\overbar\Q_p)$ denotes a continuous representation of a profinite group $\Gamma$, we denote by $\bar\rho^{ss}:\Gamma\to\GL_2(\overbar\F_p)$ the semi-simplification of its residual representation, which is well-defined up to conjugation. If $\bar\rho^{ss}$ is irreducible, we simply write $\bar\rho$. Here $\overbar\F_p$ denotes the residue field of $\overbar\Z_p$.

For an adic space $X$, an open set $V\subset X$ and a sheaf $\mathcal{F}$ on $X$, we sometimes write $\mathcal{F}(V)$ instead of $H^0(\mathcal{V},\mathcal{F})$. For example, $\cO_{X}(V)$ denotes analytic functions on $V$.

For a tuple $\mathbf{k}=(k_1,\cdots,k_d)\in \N^d$, we denote the sum $\sum_{i=1}^d k_i$ by $|\mathbf{k}|$.

Suppose $G$ is a group and $f$ is a function on $G$ (valued in some group). Let $g\in G$. The left translation action of $g$ on $f$ is denoted by ${}^{l}g\cdot f$, which sends $x\in G$ to $f(g^{-1}x)$. Similarly, the right translation action of $g$ on $f$ is denoted by ${}^{r}g\cdot f$ and sends $x\in G$ to $f(xg)$. Note that both actions are left actions.

\section{Locally analytic vectors} \label{LAV}
In this section, we collect some basic results in the theory of locally analytic representations studied by Schneider and Teitelbaum \cite{ST02,ST03}. We will first recall the (naive) definition of locally analytic functions on a compact $p$-adic Lie group, following \S2.1 of \cite{BC16}. Then we will introduce a derived functor of  taking locally analytic vectors.

\subsection{Definition and basic properties}

\begin{para} \label{2ndcoord}
Let $G$ be a  $p$-adic Lie group of dimension $d$. By Theorem 27.1 of \cite{Sch11}, there exists a compact open subgroup $G_0$ of $G$ equipped with an integer valued, saturated $p$-valuation. As a consequence, there exist $g_1,\cdots,g_d\in G_0$ (an ordered basis) such that the map $c:\Z_p^d\to G_0$ defined by $(x_1,\cdots,x_d)\mapsto g_1^{x_1}\cdots g_d^{x_d}$ is a homeomorphism. Let $G_n=G_0^{p^n}=\{g^{p^n},g\in G_0\}$ for $n\geq 1$. This is in fact a subgroup and $c^{-1}(G_n)=(p^n\Z_p)^d$. So $\{G_n\}_n$ forms a fundamental system of open neighborhoods of the identity element $1$ in $G$. We will fix such a $c$ in the following discussion, but all of the definitions we make below won't depend on the choice of $c$. See \S23, \S26 of \cite{Sch11} for more details.

If $B$ is a $\Q_p$-Banach space with norm $\|\cdot\|$, we denote by $\sC(G_n,B)$ the space of $B$-valued continuous functions on $G_n$ and by $\sC^{\an}(G_n,B)$ the subspace of $B$-valued analytic functions. More precisely, $f:G_n\to B$ is analytic if its pull-back under $c$ can be written as 
\[\mathbf{x}=(x_1,\cdots,x_d)\in (p^n\Z_p)^d\mapsto \sum_{\mathbf{k}=(k_1,\cdots,k_d)\in \N^d} b_{\mathbf{k}} x_1^{k_1}\cdots x_d^{k_d}\]
for some $b_{\mathbf{k}}\in B$ such that $p^{n|\mathbf{k}|}b_{\mathbf{k}}\to 0$ as $|\mathbf{k}|\to\infty$. We define $\|f\|_{G_n}=\sup_{\mathbf{k}\in\N^d}\|p^{n|\mathbf{k}|}b_{\mathbf{k}}\|$ and $\sC^{\an}(G_n,B)$ is a $\Q_p$-Banach space under this norm. There is a natural norm-preserving isomorphism $\sC^{\an}(G_n,\Q_p) \widehat\otimes_{\Q_p} B\cong \sC^{\an}(G_n,B)$ induced by the $\Q_p$-linear structure of $B$. When $B=\Q_p$, the usual point-wise multiplication of two functions defines a natural $\Q_p$-algebra structure on $\sC^{\an}(G_n,\Q_p)$

To see this definition is independent of the choice of $g_1,\cdots,g_d$, we introduce another set of coordinates of $G_n$ given by the the exponential map. These coordinates are called coordinates of the first kind in \S 34 of \cite{Sch11}. More precisely, let $\Lie(G)$ be the Lie algebra of $G$ and consider the logarithm map
\[\log:G_0\to\Lie(G),\]
which is a homeomorphism onto its image. In fact, its image $\log(G_0)$ is a free $\Z_p$-module with a basis $\log(g_1),\cdots,\log(g_d)$. Hence 
\[e:\Z_p^{d}\to G_0,\, (y_1,\cdots,y_d)\mapsto \exp(y_1\log(g_1)+\cdots+y_d\log(g_d))\]
defines a homeomorphism. Here $\exp$  is the inverse of $\log$ (exponential map). Under this parametrization, $G_n$ is identified with the image of $(p^n\Z_p)^d$. As before, a function on $G_n$ is called analytic if its pull-back under $e$ can be written as a power series in $y_i$ with certain growth conditions on its coefficients. This definition of analyticity agrees with the previous definition as the transition functions between the coordinates $(x_1,\cdots,x_d)$ and $(y_1,\cdots,y_d)$ are given by convergent power series with coefficients in $\Z_p$, cf. Proposition 34.1 of \cite{Sch11}.

Different choices of ordered bases $g_1,\cdots,g_d$ give rise to different bases of $\log(G_0)$ over $\Z_p$ (as a $\Z_p$-module). It is easy to see now that $\sC^{\an}(G_n,B)$ and $\|\cdot\|_{G_n}$ are independent of choice of an ordered basis. As a corollary, $\sC^{\an}(G_n,B)$ and $\|\cdot\|_{G_n}$ are invariant under both the left and right translation actions of $G_0$. We sketch a proof for the left translation by $G_0$ here. The case for right translation and general $n$ will be exactly the same. Let $x$ be a non-trivial element of $G_0$. It suffices to prove the claim under the assumption that $x$ is not a $p$-th power. Choose an ordered basis $g_1,\cdots,g_d$ with $g_1=x$. This is possible by our assumption on $G_0$, see \S26 of \cite{Sch11}. To compute the left translation action of $x$, we are essentially reduced to the case $G_0=\Z_p$, which can be verified directly. In fact, this argument shows a bit more:
\end{para}

\begin{lem} \label{modpn+1trivial}
For any integer $n\geq0$, let $\sC^{\an}(G_n,\Q_p)^o\subset \sC^{\an}(G_n,\Q_p)$ be the unit ball with respect to the norm $\|\cdot\|_{G_n}$. Then both left and right translation actions of $G_{n+1}$ are trivial on $\sC^{\an}(G_n,\Q_p)^o/p$. 
\end{lem}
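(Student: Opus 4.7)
The plan is to follow the reduction strategy sketched in the paragraph immediately preceding the lemma, sharpening the one-variable computation so as to track the extra power of $p$ that comes from working with $G_{n+1}$ in place of $G_0$. By multiplicativity of the translation action, it suffices to show for every single $g\in G_{n+1}$ that the operators $({}^{l}g-\id)$ and $({}^{r}g-\id)$ both carry $\sC^{\an}(G_n,\Q_p)^{o}$ into $p\cdot\sC^{\an}(G_n,\Q_p)^{o}$. Any non-trivial element of $G_{n+1}$ can be written as $g=h^{p^{n+1}}$ for some $h\in G_0$ which is not a $p$-th power, using the saturated $p$-valuation structure on $G_0$.

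I would next exploit the freedom in the choice of ordered basis. For the left translation, pick an ordered basis $g_1,\ldots,g_d$ of $G_0$ with $g_1=h$, so that $g\cdot c(y_1,\ldots,y_d)=c(y_1+p^{n+1},y_2,\ldots,y_d)$; for the right translation, pick an ordered basis with $g_d=h$, so that $c(y_1,\ldots,y_d)\cdot g=c(y_1,\ldots,y_{d-1},y_d+p^{n+1})$. In either case only one coordinate is shifted, and expanding $f=\sum_{\mathbf k}b_{\mathbf k}y_1^{k_1}\cdots y_d^{k_d}$ reduces the estimate term by term to the one-variable case $G_0=\Z_p$.

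In that one-variable setting the verification is a direct binomial expansion: for $f(x)=\sum_k b_k x^k$ with $v_p(b_k)\geq -nk$ and $y\in p^{n+1}\Z_p$,
$$f(x+y)-f(x)=\sum_{m\geq 0}\Bigl(\sum_{j\geq 1}b_{m+j}\binom{m+j}{j}y^j\Bigr)x^m,$$
and multiplying the $x^m$-coefficient by the norm weight $p^{nm}$ gives terms of $p$-adic valuation at least $nm-n(m+j)+(n+1)j=j\geq 1$, which is exactly the claimed divisibility by $p$. There is no genuine obstacle beyond appealing to \S26 of \cite{Sch11} for the fact that a non-$p$-th-power element of $G_0$ can be placed in any prescribed slot of an ordered basis; the strength of the lemma over the translation invariance already recorded comes entirely from trading one power of $p$ between the ``radius'' $p^{n+1}$ of $G_{n+1}$ and the ``radius'' $p^n$ of $G_n$.
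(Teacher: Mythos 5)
Your proof is correct and takes essentially the same route the paper does (and which it sketches in the paragraph before the lemma): pass to an ordered basis in which the translating element occupies the first or last slot, so the action shifts a single coordinate, and then run a binomial estimate as in the case $G_0=\Z_p$; the multi-variable case does indeed fall out term by term. One small imprecision worth fixing: it is not true that every non-trivial $g\in G_{n+1}$ is $h^{p^{n+1}}$ with $h$ not a $p$-th power (e.g.\ $(p^{n+2},p^{n+2})$ in $\Z_p^2$), but every such $g$ is $h^{p^m}$ for some $m\geq n+1$ and some non-$p$-th-power $h$, and since your valuation bound only improves as $m$ grows the argument is unaffected; alternatively, the multiplicativity reduction you already invoke lets you check only the topological generators $g_i^{p^{n+1}}$, for which your claim holds verbatim.
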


\begin{proof}
One can reduce to the trivial case $G_n=\Z_p$ by the same argument.
\end{proof}

The following density result will be used later. 

\begin{prop} \label{algdense}
For $n$ large enough, there is a dense subspace $\varinjlim_{k\in\N} V_k\subset\sC^{\an}(G_n,\Q_p)$, where each $V_k$ is a finite-dimensional subspace of $\sC^{\an}(G_n,\Q_p)$ stable under both the left and right translation actions of $G_n$ and such that for any $k,l\in\N, f_k\in V_k, f_l\in V_l$, we have $f_k f_l\in V_{k+l}$.

\end{prop}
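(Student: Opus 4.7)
The plan is to embed $G_n$ for $n$ large enough as a closed rigid-analytic subgroup of a matrix group, and to take $V_k$ to be the finite-dimensional subspace of polynomials of bounded degree in matrix coefficients. Concretely, I would start by applying Ado's theorem for $p$-adic Lie algebras to get a faithful $\Q_p$-linear representation $\rho:\Lie(G)\hookrightarrow\mathfrak{gl}_N(\Q_p)$ for some $N$. For $n$ sufficiently large, the matrix exponential converges on $\rho(\log(G_n))$ and lands in $1+p^M M_N(\Z_p)$ for some $M\geq 1$. Using that the exponential chart on $G_n$ from \ref{2ndcoord} and the matrix exponential intertwine via the Baker--Campbell--Hausdorff formula, this yields an injective $\Q_p$-analytic group homomorphism
\[\iota:G_n\hookrightarrow 1+p^M M_N(\Z_p)\subset\GL_N(\Q_p),\]
which is a closed immersion of rigid analytic spaces of dimension $d$.

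Next I would define $m_{ij}(g):=\iota(g)_{ij}$, which are analytic functions on $G_n$, and let $V_k$ be the finite-dimensional $\Q_p$-subspace of $\sC^{\an}(G_n,\Q_p)$ spanned by polynomial expressions of total degree $\leq k$ in the $N^2$ functions $m_{ij}$. Then $V_k\cdot V_l\subset V_{k+l}$ is immediate. For translation invariance, the multiplicativity of $\iota$ gives
\[m_{ij}(gh)=\sum_{k=1}^{N}\iota(g)_{ik}\,m_{kj}(h),\]
so both left and right translation by any fixed element of $G_n$ send each $m_{ij}$ to a $\Q_p$-linear combination of the $m_{kl}$, and therefore preserve $V_1=\mathrm{span}_{\Q_p}(1,\{m_{ij}\})$. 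Since $V_\bullet$ is the polynomial filtration generated by $V_1$, each $V_k$ is stable under both left and right translation by $G_n$.

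Finally, density of $\bigcup_k V_k$ in $\sC^{\an}(G_n,\Q_p)$ would follow from the fact that $1+p^M M_N(\Z_p)$ is the closed polydisk of dimension $N^2$, whose ring of rigid analytic functions is the Tate algebra $\Q_p\langle (g-1)_{ij}\rangle$ in which polynomials are dense. Since $\iota$ is a closed immersion of rigid analytic spaces, pullback along $\iota$ is a surjection onto the ring of rigid analytic functions on $G_n$, hence the $\Q_p$-algebra generated by the $m_{ij}$ is dense. Identifying this ring with $\sC^{\an}(G_n,\Q_p)$ via the independence-of-chart discussion in \ref{2ndcoord} would then complete the argument.

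The main obstacle will be precisely this last identification: matching the Banach algebra $(\sC^{\an}(G_n,\Q_p),\|\cdot\|_{G_n})$ defined via the chart $c$ with the intrinsic Tate algebra of analytic functions on $G_n$ inherited through the closed embedding $\iota$. The paper's \ref{2ndcoord} already establishes the equivalence of the coordinate chart $c$ with the exponential chart, but one still must verify that the pullback of the Gauss norm via $\iota$ is equivalent to $\|\cdot\|_{G_n}$, so that density in one norm transfers to density in the other. This amounts to the standard (but slightly finicky) fact that any two analytic charts on a smooth rigid analytic space yield topologically equivalent Banach norms on the same underlying algebra of analytic functions, perhaps after shrinking $G_n$ (i.e.\ enlarging $n$) to guarantee both sides live in the radius of convergence of the relevant transition series.
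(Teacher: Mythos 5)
Your proposal is essentially the same argument as the paper's: invoke Ado's theorem, integrate the resulting faithful Lie algebra representation to obtain (for $n$ large) a closed embedding $G_n\hookrightarrow 1+p^M M_N(\Z_p)$, and take $V_k$ to be polynomials of degree $\leq k$ in the matrix coefficients, with translation-invariance coming from multiplicativity of the embedding. The only substantive divergence is in how you handle the density step. You frame it as a fact about closed immersions of rigid analytic spaces and then (correctly) worry that you still need to match the pullback of the Gauss norm with the chart-defined norm $\|\cdot\|_{G_n}$. The paper avoids that concern by routing entirely through the exponential chart, whose equivalence with the chart $c$ is already established in \ref{2ndcoord}: polynomials on $M_N(\Q_p)$ are dense in $\sC^{\an}(1+p^2M_N(\Z_p),\Q_p)$; pulling back along $\exp$ identifies this with a dense subspace of $\sC^{\an}(p^2M_N(\Z_p),\Q_p)$; and restriction to the finite free $\Z_p$-submodule $\log(G_n)$ has dense image in $\sC^{\an}(\log(G_n),\Q_p)\cong\sC^{\an}(G_n,\Q_p)$. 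That reformulation reduces your "finicky norm-comparison" step to the elementary statement that restricting convergent power series on $\Z_p^{N^2}$ to a rank-$d$ free $\Z_p$-submodule has dense image; no rigid-geometric input is needed.
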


\begin{proof}
This is essentially proved in the proof of Theorem 6.1 of \cite{BC16}. We recall their argument here. The rough idea is to reduce to the case $G=\GL_N(\Z_p)$, in which case the algebraic functions on $\GL_N$ are dense in the space of analytic functions.

For $n$ large enough, we may find an embedding $G_n$ into $1+p^2 M_N(\Z_p)\subset\GL_N(\Z_p)$ for some $N$, where $M_N(\Z_p)$ denotes the space of $N\times N$ matrices over $\Z_p$. We sketch a proof of this well-known result here because it seems to lack a suitable reference. By Ado's theorem (\S7,  \textnumero 3, Theorem 3 of \cite{Bou60}), there is a faithful representation of $\Lie(G)$ over $\Q_p^{\oplus N}$ for some $N$. Choose $n$ large enough so that $\log(G_n)$ maps to $p^2 M_N(\Z_p)$.  Then this Lie algebra morphism can be integrated to a group homomorphism $G_n\to 1+p^2 M_N(\Z_p)$, which is necessarily a closed embedding. This can be deduced from results in \S31, notably Theorem 31.5 of \cite{Sch11}.

Fix such an embedding. $G_n$ is now viewed as a subset of $M_N(\Q_p)$. For $k\in N$, let $V_k$ be the space of $\Q_p$-valued functions on $G_n$ that are restrictions of polynomials of degree $\leq k$ on $M_N(\Q_p)$. Here we identify $M_N(\Q_p)$ with an affine space of dimension $N^2$ (over $\Q_p$). We claim that the union of all $V_k$ is dense in $\sC^{\an}(G_n,\Q_p)$. To see this, note that the space of polynomial functions on $M_N(\Q_p)$ is dense in $\sC^{\an}(1+p^2M_N(\Z_p),\Q_p)$ when restricted to $1+p^2M_N(\Z_p)$. Hence its pull-back under the exponential map is dense in $\sC^{\an}(p^2M_N(\Z_p),\Q_p)$. As the image of $\sC^{\an}(p^2M_N(\Z_p),\Q_p)\to\sC^{\an}(\log(G_n),\Q_p)$ is dense, this implies our claim.

It's clear from the construction that all $V_k$ are stable under left and right translation actions and closed under multiplications.
\end{proof}

\begin{para} \label{Gnan}
Now if $B$ is a $\Q_p$-Banach space equipped with a continuous action of $G_n$, then we denote by $B^{G_n-\an}\subset B$ the subset of $G_n$-analytic vectors. One can define this as follows: there is a left action on $\sC^{\an}(G_n,B)$ by $(g\cdot f)(h)=g(f(g^{-1}h)),g,h\in G_n,f\in\sC^{\an}(G_n,B)$. We define $B^{G_n-\an}$ as the image of $\sC^{\an}(G_n,B)^{G_n}\to B$ under the evaluation map at the identity element in $G_n$.  Equivalently, $v\in B^{G_n-\an}$ if $f_v:G_n\to B,~f_v(g)=g\cdot v$ lies in $\sC^{\an}(G_n,B)$. It is clear that $\|\cdot\|_{G_n}$ induces a norm on $B^{G_n-\an}$, which we denote by the same notation. Now $G_n$ acts on $B^{G_n-\an}=\sC^{\an}(G_n,B)^{G_n}$ by right translation. One checks easily that this action is unitary and equivariant with respect to the inclusion $B^{G_n-\an}\subset B$.

We say $B$ is an analytic representation of $G_n$ if $B=B^{G_n-\an}$.

One has the following lemma (cf. Lemme 2.4. of \cite{BC16}):
\end{para}

\begin{lem} \label{Gnnorm}
Let $B$ be a Banach representation of $G_n$. If $b\in B^{G_n-\an}$, then
\begin{enumerate}
\item $b\in B^{G_{n+1}-\an}$,
\item $\|b\|_{G_{n+1}}\leq\|b\|_{G_n}$.
\item $\|b\|_{G_m}$ agrees with the norm of $b$ in $B$ for $m$ sufficiently large.
\end{enumerate}
\end{lem}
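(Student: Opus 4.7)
The plan is to unwind the definitions of $\sC^{\an}(G_n, B)$ and of the norm $\|\cdot\|_{G_n}$ from \ref{2ndcoord}--\ref{Gnan}, and then compare the power series expansion of the orbit map $f_b(g) = g \cdot b$ on the nested subgroups $G_n \supset G_{n+1} \supset \cdots$ in the chart $c : (p^n\Z_p)^d \xrightarrow{\sim} G_n$. By hypothesis, $f_b \circ c$ has a convergent expansion $\sum_{\mathbf{k} \in \N^d} b_{\mathbf{k}} \mathbf{x}^{\mathbf{k}}$ on $(p^n\Z_p)^d$ with $p^{n|\mathbf{k}|} b_{\mathbf{k}} \to 0$, and $\|b\|_{G_n} = \sup_{\mathbf{k}} \|p^{n|\mathbf{k}|} b_{\mathbf{k}}\|$. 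Everything reduces to a direct comparison of these Gauss-type norms on the polydiscs of radii $p^{-n}, p^{-n-1}, \ldots$

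For (1) and (2), I would restrict the same power series to the smaller polydisc $(p^{n+1}\Z_p)^d$ without changing the coefficients $b_{\mathbf{k}}$, so that
\[
\|b\|_{G_{n+1}} = \sup_{\mathbf{k}} \|p^{(n+1)|\mathbf{k}|} b_{\mathbf{k}}\| = \sup_{\mathbf{k}} p^{-|\mathbf{k}|}\, \|p^{n|\mathbf{k}|} b_{\mathbf{k}}\| \leq \|b\|_{G_n}.
\]
In particular $p^{(n+1)|\mathbf{k}|} b_{\mathbf{k}} \to 0$, so $b \in B^{G_{n+1}-\an}$, and (2) is the displayed inequality.

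For (3), the constant term of the power series is $b_{\mathbf{0}} = f_b(c(\mathbf{0})) = f_b(1) = b$, so $\|b\|_{G_m} \geq \|b_{\mathbf{0}}\| = \|b\|_B$ for every $m \geq n$. For the reverse inequality I would write $p^{m|\mathbf{k}|} b_{\mathbf{k}} = p^{(m-n)|\mathbf{k}|}(p^{n|\mathbf{k}|} b_{\mathbf{k}})$ and use the bound $\|p^{n|\mathbf{k}|} b_{\mathbf{k}}\| \leq \|b\|_{G_n}$; the nonconstant terms then shrink uniformly like $p^{-(m-n)}$, so once $m$ is large enough every contribution with $|\mathbf{k}| \geq 1$ is dominated by $\|b\|_B$ (the case $b = 0$ being trivial). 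The supremum is therefore attained at $\mathbf{k} = \mathbf{0}$, giving $\|b\|_{G_m} = \|b\|_B$.

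No step appears to be a real obstacle: the lemma is a formal consequence of the way $\|\cdot\|_{G_n}$ is built out of power-series coefficients in the Amice-type chart. The only subtlety worth flagging is independence from the choice of ordered basis $g_1, \ldots, g_d$, but this is already built into the definition of $\|\cdot\|_{G_n}$ in \ref{2ndcoord}, so it suffices to work with a single fixed chart throughout.
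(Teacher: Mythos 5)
Your proof is correct; the paper gives no argument of its own for this lemma (it simply cites Lemme~2.4 of \cite{BC16}), and your direct unwinding of the Gauss-type norm $\|b\|_{G_m}=\sup_{\mathbf{k}}\|p^{m|\mathbf{k}|}b_{\mathbf{k}}\|$ via the fixed Amice chart $c$ — restricting the same power series to the shrinking polydiscs $(p^m\Z_p)^d$ and isolating the constant term $b_{\mathbf{0}}=b$ — is exactly the intended argument.
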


\begin{defn}
Suppose $B$ is a Banach representation of $G_0$. The space of locally analytic vectors in $B$ is $B^{\la}=\bigcup_{n\geq 0} B^{G_n-\an}$ with the direct limit topology.  
\end{defn}

\begin{rem}
If $B$ is a representation of $G$, then $B^{\la}$ can be defined by restricting to the action of $G_0$. It is easy to see that $B^{\la}$ is independent of the choice of $G_0$ and $B^{\la}$ is $G$-stable.
\end{rem}



The Lie algebra $\Lie(G)$ acts on the locally analytic vectors. One can check this in the universal case $\sC^{\an}(G_n,\Q_p)$. Indeed,  one writes $B^{G_n-\an}=(\sC^{\an}(G_n,\Q_p) \widehat\otimes_{\Q_p} B)^{G_n}$. The Lie algebra action of $\Lie(G)$ on $B^{G_n-\an}$ comes from the action on $\sC^{\an}(G_n,\Q_p)$ induced from the right translation action. The following lemma is Lemme 2.6 of \cite{BC16}.

\begin{lem} \label{LieBound}
Suppose $B$ is a Banach representation of $G_0$. If $D\in\Lie(G)$ and $n\geq 1$, then there exists a constant $C_{D,n}$ such that $\|D(x)\|_{G_n}\leq C_{D,n}\|x\|_{G_n}$ for any $x\in B^{G_n-\an}$.
\end{lem}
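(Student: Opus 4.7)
The plan is to translate the Lie algebra action into infinitesimal right translation on $\sC^{\an}(G_n, \Q_p)$ and then bound the resulting first-order differential operator by combining an elementary derivative estimate, the Banach-algebra structure of $\sC^{\an}(G_n, \Q_p)$, and absolute convergence of the Baker--Campbell--Hausdorff series.

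For $x \in B^{G_n-\an}$, the orbit map $f_x(g) := g\cdot x$ lies in $\sC^{\an}(G_n, B)$ with $\|f_x\|_{G_n} = \|x\|_{G_n}$; unwinding the definition of the Lie algebra action gives $f_{D(x)}(g) = \tfrac{d}{dt}\big|_{t=0} f_x(g\exp(tD)) =: (\partial^R_D f_x)(g)$. Since $\sC^{\an}(G_n, B) \cong \sC^{\an}(G_n, \Q_p)\widehat{\otimes}_{\Q_p} B$ and $\partial^R_D$ is $\Q_p$-linear (acting on the first factor only), the problem reduces to showing that $\partial^R_D$ is a bounded operator on $(\sC^{\an}(G_n, \Q_p), \|\cdot\|_{G_n})$. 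By $\Q_p$-linearity in $D$ we may further assume $D = v_i := \log(g_i)$.

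Working in the exponential coordinates $e(y) = \exp(\sum_j y_j v_j)$, the Baker--Campbell--Hausdorff formula writes $e(y)\exp(tv_i) = e(\phi^{(i)}(y,t))$ with $\phi^{(i)}_j(y, t) = y_j + t\,\psi^{(i)}_j(y) + O(t^2)$, where $\psi^{(i)}_j(y)$ is the $v_j$-component of $v_i + \tfrac{1}{2}[\log e(y), v_i] + \tfrac{1}{12}[\log e(y), [\log e(y), v_i]] + \cdots$. The chain rule then yields
\[
(\partial^R_{v_i} f)(y) \;=\; \sum_{j=1}^d \psi^{(i)}_j(y)\cdot \tfrac{\partial f}{\partial y_j}(y).
\]
Two elementary estimates now control $\partial^R_{v_i}$. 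First, each $\partial/\partial y_j$ has operator norm at most $p^n$ on $(\sC^{\an}(G_n, \Q_p), \|\cdot\|_{G_n})$, since for any Taylor coefficient $b_{\mathbf{k}}$ one has $\|p^{n(|\mathbf{k}|-1)} k_j b_{\mathbf{k}}\| \leq p^n \|p^{n|\mathbf{k}|} b_{\mathbf{k}}\|$ using $|k_j|_p \leq 1$. Second, a direct check on Taylor coefficients shows that $\|\cdot\|_{G_n}$ is submultiplicative, so $\sC^{\an}(G_n, \Q_p)$ is a non-archimedean Banach algebra and multiplication by a fixed $\psi^{(i)}_j$ has operator norm at most $\|\psi^{(i)}_j\|_{G_n}$. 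Setting $C_{v_i, n} := p^n \sum_j \|\psi^{(i)}_j\|_{G_n}$ and extending $\Q_p$-linearly in $D$ produces the required constant.

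The principal technical obstacle is verifying that each $\psi^{(i)}_j$ actually lies in $\sC^{\an}(G_n, \Q_p)$ with finite norm, i.e.\ that the BCH series converges absolutely inside $G_n$. This requires simultaneously controlling the polynomial growth in $y$ of the iterated brackets $\mathrm{ad}(\log e(y))^m(v_i)$ against the $p$-adic denominators in the rational BCH coefficients (bounded below by a logarithmic function of $m$). The saturated integer-valued $p$-valuation on $G_0$ is chosen precisely so that these standard $p$-adic BCH estimates apply on $G_n$ for $n\geq 1$; once convergence is established, everything else in the argument above is formal.
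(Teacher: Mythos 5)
Your outline is the natural one: reduce to the universal space $\sC^{\an}(G_n,\Q_p)$ via $B^{G_n-\an}\subset\sC^{\an}(G_n,\Q_p)\widehat\otimes_{\Q_p}B$, express $\partial^R_{v_i}$ in exponential coordinates as $\sum_j\psi^{(i)}_j\,\partial/\partial y_j$, and bound each factor. The two elementary estimates you invoke (submultiplicativity of $\|\cdot\|_{G_n}$, and $\|\partial/\partial y_j\|\le p^n$) are both correct. However, the step that carries all the weight, namely that each $\psi^{(i)}_j$ actually lies in $\sC^{\an}(G_n,\Q_p)$, is not proved, and the one quantitative claim you make about it is wrong: the $m$-th coefficient of the relevant BCH-type series is $\pm B_m/m!$, and $v_p(1/m!)\sim -m/(p-1)$ grows \emph{linearly} in $m$, not logarithmically. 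So the denominators compete at exactly the same order of magnitude as the integrality gains $\mathrm{ad}(\log G_0)^m(\log G_0)\subset p^{?m}\log G_0$ coming from the saturated integer-valued $p$-valuation, and as the weight $p^{-n|\mathbf{k}|}$ built into $\|\cdot\|_{G_n}$. Verifying that this three-way estimate closes \emph{is} the content of the lemma; without it the argument is not complete.

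There is also a route that sidesteps the BCH series and its Bernoulli numbers entirely. In coordinates of the second kind $c(\mathbf{x})=g_1^{x_1}\cdots g_d^{x_d}$, the group law $\sigma(\mathbf{x},\mathbf{y})=c^{-1}(c(\mathbf{x})c(\mathbf{y}))$ has Taylor coefficients in $\Z_p$ and vanishing constant term (this is the fact from \cite[\S 23]{Sch11} that the paper already uses to show the two coordinate systems give the same norm), so each component satisfies $\|\sigma_j\|_{G_n\times G_n}\le p^{-n}$. By your submultiplicativity observation, the comultiplication $\mu^*f(g,h)=f(gh)$ is therefore contractive from $\sC^{\an}(G_n,\Q_p)$ into $\sC^{\an}(G_n,\Q_p)\widehat\otimes_{\Q_p}\sC^{\an}(G_n,\Q_p)$. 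Writing $\mu^*f(\mathbf{x},\mathbf{y})=\sum_{\mathbf{l}}A_{\mathbf{l}}(\mathbf{x})\mathbf{y}^{\mathbf{l}}$ and using $\exp(tv_i)=g_i^t=c(te_i)$, one reads off $\partial^R_{v_i}f=A_{e_i}$, hence $\|\partial^R_{v_i}f\|_{G_n}\le p^n\|\mu^*f\|\le p^n\|f\|_{G_n}$, and $C_{D,n}=p^n\max_i|c_i|$ for $D=\sum_ic_iv_i$. (The paper itself gives no proof and simply cites \cite[Lemme 2.6]{BC16}, so either route would be an addition, but the second one is already complete.)
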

The following example will appear naturally later.

\begin{exa} \label{exaZp}
Suppose $G=G_0=\Z_p$ and $B$ is a $\Q_p$-Banach space representation of $\Z_p$. Let $B^o$ be the open unit ball of $B$. Suppose there exists an integer $m>0$ such that 
\[(\gamma-1)^m\cdot (B^o)\subset p B^o\]
for any $\gamma\in\Z_p$. Then $B$ is an analytic representation of $p^{h}\Z_p$ for $h$ sufficiently large. To see this, it suffices to prove that $B$ is an analytic representation of $\Z_p$ if $(\gamma-1)\cdot (B^o)\subset p^2 B^o$ for any $\gamma\in\Z_p$. Let $\gamma=1\in\Z_p$. For any $f\in B$, consider $g:\Z_p\to B$ defined by
 $g(x)=\sum_{n=0}^{+\infty}{x\choose{n}}(\gamma-1)^n\cdot f$. One checks easily that $g$ is analytic and $g(x)=x\cdot f$. Hence $f\in B^{\Z_p-\an}$ and $B=B^{\Z_p-\an}$.
\end{exa}

\subsection{Derived functor} \label{LAacyclic}
Let $G,G_0,G_n$ be as in the first paragraph of \ref{2ndcoord} and let $B$ be a $\Q_p$-Banach representation of $G$. Recall 
\[B^{\la}=\varinjlim_{n}B^{G_n-\an}=\varinjlim_{n}(B\widehat{\otimes}_{\Q_p}\sC^{\an}(G_n,\Q_p))^{G_n}=\varinjlim_{n}H^0_{\cont}(G_n,B\widehat{\otimes}_{\Q_p}\sC^{\an}(G_n,\Q_p)).\]
Write $\mathfrak{LA}(B)=B^{\la}$. We can also consider the following ``right derived functor'' of $\mathfrak{LA}$:

\begin{defn}
For a $\Q_p$-Banach representation $B$ of $G$, let
\[R^i\mathfrak{LA}(B):=\varinjlim_{n}H^i_{\cont}(G_n,B\widehat{\otimes}_{\Q_p}\sC^{\an}(G_n,\Q_p)).\]
We say $B$ is 
\begin{itemize}
\item $\mathfrak{LA}$-\textit{acyclic} if $R^i\mathfrak{LA}(B)=0,i\geq 1$; 
\item \textit{strongly} $\mathfrak{LA}$-\textit{acyclic} if the direct systems $\{H^i_{\cont}(G_n,B\widehat{\otimes}_{\Q_p}\sC^{\an}(G_n,\Q_p)) \}_n,i\geq 1$ are essentially zero, i.e. for any $n$, the image of $H^i_{\cont}(G_n,B\widehat{\otimes}_{\Q_p}\sC^{\an}(G_n,\Q_p)) \to H^i_{\cont}(G_m,B\widehat{\otimes}_{\Q_p}\sC^{\an}(G_m,\Q_p))$ is zero for sufficiently large $m\geq n$.
\end{itemize}
\end{defn}

We put a double quotation mark here as we don't plan to discuss the abelian category we are working with. Clearly $R^i\mathfrak{LA}$ measures the failure of exactness when taking the locally analytic vectors and does not depend on choices of $G_0$. This \textit{ad hoc} definition will be enough for our purpose in view of the following simple lemma.

\begin{lem} \label{LAlongexa}
\hspace{2em}
\begin{enumerate}
\item Suppose $0\to M^0\to M^1\to M^2\to 0$ is a short exact sequence of $\Q_p$-Banach representations of $G$ with $G$-equivariant continuous homomorphisms (which are necessarily strict by the open mapping theorem). Then there is a long exact sequence:
\[0\to (M^0)^{\la}\to (M^1)^{\la}\to (M^2)^{\la}\to R^1\mathfrak{LA}(M^0)\to R^1\mathfrak{LA}(M^1) \to R^1\mathfrak{LA}(M^2)\to ...\]
\item Let $M^\bullet$ be a bounded chain complex of $\Q_p$-Banach representations of $G$ with $G$-equivariant strict homomorphisms. If $M^q$ and $H^q(M^\bullet)$ are $\mathfrak{LA}$-acyclic for any $q$, then $(H^q(M^\bullet))^{\la}=H^q((M^\bullet)^\la)$.
\item  Let $0\to B\to M^0\to M^1\to\cdots$ be an exact chain complex of $\Q_p$-Banach representations of $G$ with $G$-equivariant strict homomorphisms. Assume $M^q$ is $\mathfrak{LA}$-acyclic for any $q$. Then  $R^i\mathfrak{LA}(B)=H^i((M^\bullet)^\la)$. Moreover if all $M^q$ are strongly $\mathfrak{LA}$-acyclic and the direct systems $\{H^i((M^\bullet)^{G_n-\an})\}_n,i\geq 1$ are essentially zero, then $B$ is strongly $\mathfrak{LA}$-acyclic.
\end{enumerate}
\end{lem}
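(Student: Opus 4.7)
The common ingredient for all three parts is the continuous-cochain complex $C^\bullet_{\cont}(G_n,-)$ applied to the auxiliary Banach coefficient $\sC^{\an}(G_n,\Q_p)$, together with two exactness facts: (i) since $\sC^{\an}(G_n,\Q_p)$ has an orthonormal Schauder basis (the monomials), the functor $-\widehat\otimes_{\Q_p}\sC^{\an}(G_n,\Q_p)$ is exact on strict short exact sequences of $\Q_p$-Banach spaces; and (ii) strict surjections of $\Q_p$-Banach spaces admit continuous set-theoretic sections, so each functor $C^p_{\cont}(G_n,-)$ is exact on strict short exact sequences of Banach $G_n$-modules. For part (1), I plan to tensor the given sequence with $\sC^{\an}(G_n,\Q_p)$, extract the long exact sequence in $H^{\ast}_{\cont}(G_n,-)$ at each level $n$, and pass to $\varinjlim_n$, which is exact on $\Q_p$-vector spaces; by construction the resulting $H^0$-terms recover $(-)^{\la}$ and the higher terms recover $R^i\mathfrak{LA}$.

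Parts (2) and (3) will both be handled via the double complex $C^{p,q}_n:=C^p_{\cont}(G_n,\,M^q\widehat\otimes_{\Q_p}\sC^{\an}(G_n,\Q_p))$ and its two spectral sequences. Taking $M^\bullet$-cohomology first: by (i), together with strictness of the differentials in $M^\bullet$ (which makes each $H^q(M^\bullet)$ a Banach space), one has $H^q(M^\bullet\widehat\otimes\sC^{\an}(G_n,\Q_p))=H^q(M^\bullet)\widehat\otimes\sC^{\an}(G_n,\Q_p)$; using (ii) this commutes with $C^p_{\cont}$, so the $E_2$-page after $\varinjlim_n$ becomes $R^p\mathfrak{LA}(H^q(M^\bullet))$. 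Taking $G_n$-cohomology first: the $E_1$-page is $H^p_{\cont}(G_n,M^q\widehat\otimes\sC^{\an}(G_n,\Q_p))$, whose direct limit is $R^p\mathfrak{LA}(M^q)$. For part (2), the $\mathfrak{LA}$-acyclicity of $M^q$ and $H^q(M^\bullet)$ collapses the two spectral sequences onto the rows/columns $q=0$ and $p=0$ respectively; since each $E_2$-page is then concentrated on a single row or column there are no differentials and no extension problems, and comparison of the two abutments gives $(H^q(M^\bullet))^{\la}\cong H^q((M^\bullet)^{\la})$. For part (3), the exact complex $0\to B\to M^\bullet$ satisfies $H^q(M^\bullet)=B$ for $q=0$ and $0$ otherwise, so already at each finite level $n$ the first spectral sequence collapses to identify the total cohomology with $H^p_{\cont}(G_n,B\widehat\otimes\sC^{\an}(G_n,\Q_p))$, while the second gives $H^p((M^\bullet)^{\la})$ after $\varinjlim_n$; comparison yields $R^i\mathfrak{LA}(B)\cong H^i((M^\bullet)^{\la})$.

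The main obstacle is the strongly-$\mathfrak{LA}$-acyclic refinement of (3), where one must avoid passing to $\varinjlim_n$ and instead track images between finite levels throughout. The plan is to run the second spectral sequence at finite levels and chase images via the filtration it induces. Given $n$ and $i\ge1$, I will choose $m\ge n$ so large that, for each of the finitely many pairs $(p,q)$ with $p+q=i$ and $p\ge1$, the restriction $H^p_{\cont}(G_n,M^q\widehat\otimes\sC^{\an}(G_n,\Q_p))\to H^p_{\cont}(G_m,M^q\widehat\otimes\sC^{\an}(G_m,\Q_p))$ is zero, using strong $\mathfrak{LA}$-acyclicity of each $M^q$. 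The induced map on $E_\infty^{p,i-p}$ then vanishes for $p\ge1$, so the image of $H^i_{\mathrm{tot}}$ at level $n$ inside $H^i_{\mathrm{tot}}$ at level $m$ has trivial associated-graded pieces in bidegrees with $p\ge1$ and therefore embeds into $E_\infty^{0,i}(m)\subset E_2^{0,i}(m)=H^i((M^\bullet)^{G_m-\an})$. By the hypothesis that $\{H^i((M^\bullet)^{G_k-\an})\}_k$ is essentially zero, a further enlargement $m'\ge m$ kills this contribution as well, forcing the map $H^i_{\mathrm{tot}}$ at level $n$ to $H^i_{\mathrm{tot}}$ at level $m'$ to vanish. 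The collapse of the first spectral sequence at each finite level identifies $H^i_{\mathrm{tot}}$ at level $k$ with $H^i_{\cont}(G_k,B\widehat\otimes\sC^{\an}(G_k,\Q_p))$, giving exactly the strong $\mathfrak{LA}$-acyclicity of $B$.
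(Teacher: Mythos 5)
Your double-complex/two-spectral-sequence route for parts (2) and (3) is genuinely different from the paper's, which cuts the complex into short exact sequences and inducts via part (1); parts (1), (2), and the first assertion of (3) go through. The gap is in the strong $\mathfrak{LA}$-acyclicity argument, at the step asserting that once the transition maps on $E_\infty^{p,i-p}$ vanish for $p\geq 1$, the image of $H^i_{\mathrm{tot}}(n)\to H^i_{\mathrm{tot}}(m)$ ``has trivial associated-graded pieces in bidegrees with $p\geq 1$ and therefore embeds into $E_\infty^{0,i}(m)$''. What the vanishing on $E_\infty^{p,i-p}$ actually yields is only $\phi(F^p H^i(n))\subseteq F^{p+1}H^i(m)$ for $p\geq 1$; it says nothing about an element $y$ of filtration degree $0$ whose image $\phi(y)$ lands in $F^1 H^i(m)$, and such an image need not be zero. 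Concretely, take $H^i=\Q_p\oplus\Q_p$ at both levels with $F^1=0\oplus\Q_p$ and $\phi(a,b)=(0,a)$: the induced maps on $E_\infty^{1,i-1}$ and $E_\infty^{0,i}$ both vanish, yet $\mathrm{im}(\phi)=F^1(m)$, whose associated graded sits entirely in degree $1$, so $\mathrm{im}(\phi)$ does not embed into $E_\infty^{0,i}(m)$.

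The argument is salvageable by iterating rather than by a single enlargement. A filtered map $\psi$ between levels strictly shifts the filtration ($\psi(F^p)\subseteq F^{p+1}$ for every $p\geq 0$) exactly when all the induced maps on $E_\infty^{p,i-p}$ vanish, and a composite of $i+1$ strictly-shifting maps is zero because the filtration on $H^i_{\mathrm{tot}}$ has length $i+1$. So, given $n$, build a chain $n=m_0\leq m_1\leq\cdots\leq m_{i+1}$, choosing each $m_j$ so that the transitions $E_1^{p,q}(m_{j-1})\to E_1^{p,q}(m_j)$ vanish for $p\geq 1$, $p+q=i$ (strong $\mathfrak{LA}$-acyclicity of the $M^q$), and also so that $E_2^{0,i}(m_{j-1})\to E_2^{0,i}(m_j)$ vanishes (your hypothesis on $\{H^i((M^\bullet)^{G_n-\an})\}_n$); since the column $p=0$ of the second spectral sequence receives no differentials, $E_\infty^{0,i}\subseteq E_2^{0,i}$ compatibly with transitions, so the $E_\infty^{0,i}$ transition vanishes too. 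Then each consecutive transition strictly shifts, and the composite $H^i_{\mathrm{tot}}(n)\to H^i_{\mathrm{tot}}(m_{i+1})$ is zero, which via the collapse of the first spectral sequence at each finite level is precisely strong $\mathfrak{LA}$-acyclicity of $B$. The paper avoids this bookkeeping entirely by splitting $0\to B\to M^0\to\mathrm{coker}\to 0$ and inducting, which is arguably the cleaner route for this particular refinement.
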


\begin{proof}
For the first part, since all the homomorphisms are strict,  $H^i(M^{\bullet,o})$ can be killed by a bounded power of $p$, where $M^{\bullet,o}$ denotes the open unit ball of $M^{\bullet}$. Therefore, $M^{\bullet}\widehat{\otimes}_{\Q_p}\sC^{\an}(G_n,\Q_p)$ remains exact for any $n$ and we obtain the desired long exact sequence by taking $G_n$-cohomology and passing to the direct limit over $n$.

The second and third parts follow from the first part by a simple induction argument.  For example, one can  prove that the kernel and image of $M^q\to M^{q+1}$ are $\mathfrak{LA}$-acyclic by induction on $q$. This certainly implies that $(H^q(M^\bullet))^{\la}=H^q((M^\bullet)^\la)$. We leave the third part as an exercise.
\end{proof}

A result of Schneider-Teitelbaum says that admissible representations are $\mathfrak{LA}$-acyclic (Theorem 7.1  of \cite{ST03}). In fact, their argument also essentially proves strong $\mathfrak{LA}$-acyclicity. 

\begin{thm}
Any admissible $\Q_p$-Banach representation $B$ of $G$ is strongly $\mathfrak{LA}$-acyclic. 
\end{thm}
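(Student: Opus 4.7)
The plan is to revisit Schneider--Teitelbaum's proof of Theorem 7.1 in \cite{ST03} and observe that it in fact yields strong $\mathfrak{LA}$-acyclicity. The proof has the structure of reducing to a universal case and then handling that case by a Shapiro-type argument; the new point is to keep track of the transition maps in the direct system.

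First, I would invoke the Schneider--Teitelbaum duality between admissible Banach representations and coadmissible modules over the Iwasawa algebra $\Lambda(G_0)_{\Q_p}$. Any admissible $B$ fits in a short exact sequence
\[
0 \to B \to \sC(G_0,\Q_p)^{\oplus n_0} \to Q \to 0
\]
with $Q$ again admissible, obtained by dualizing a presentation of the continuous dual $B'$ as a finitely generated $\Lambda(G_0)_{\Q_p}$-module. Since the Iwasawa algebra has finite global dimension (after possibly shrinking $G_0$ to ensure torsion-freeness), iterating produces a finite exact complex
\[
0 \to B \to M^0 \to M^1 \to \cdots \to M^d \to 0
\]
with each $M^j$ a finite direct sum of copies of $\sC(G_0,\Q_p)$.

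Second, I would verify strong $\mathfrak{LA}$-acyclicity for the universal case $M=\sC(G_0,\Q_p)$. Using a shear trick conjugating the diagonal $G_n$-action on $M\,\widehat{\otimes}_{\Q_p}\,\sC^{\an}(G_n,\Q_p)$ into an action where $G_n$ acts only by left translation on one factor, the tensor product becomes coinduced from the trivial subgroup. A Shapiro-style computation then shows that the continuous $G_n$-cohomology vanishes identically in positive degrees (not merely in the direct limit), so strong $\mathfrak{LA}$-acyclicity is automatic for $M$. By part (1) of Lemma \ref{LAlongexa}, finite direct sums and cohomologically trivial direct summands inherit this property.

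Third, part (3) of Lemma \ref{LAlongexa} applied to the finite resolution above reduces the remaining task to checking that the direct systems $\{H^i((M^\bullet)^{G_n-\an})\}_n$ are essentially zero. Since each $(M^j)^{G_n-\an}$ equals $\sC^{G_n-\an}(G_0,\Q_p)^{\oplus n_j}$, the complex $(M^\bullet)^{G_n-\an}$ is a complex of well-understood locally analytic objects whose cohomology computes $B^{G_n-\an}$ plus contributions that are themselves controlled via admissibility, yielding the required uniform vanishing in each fixed degree $i\geq 1$.

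The main obstacle is precisely the passage from plain $\mathfrak{LA}$-acyclicity to its strong form: each invocation of the long exact sequence in Lemma \ref{LAlongexa}(1) shifts the subgroup depth $m$ required to kill a given class at level $n$, and only the \emph{finiteness} of the resolution lets us absorb all such shifts into a single uniform bound independent of $n$. The essential input is therefore the finite global dimension of $\Lambda(G_0)_{\Q_p}$ for a compact $p$-adic analytic group $G_0$; without it, one could only conclude ordinary $\mathfrak{LA}$-acyclicity by induction, not the strong version.
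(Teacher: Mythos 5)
Your proposal differs from the paper's and has a real gap at the decisive step. The paper does not construct a finite resolution at all; it takes a single presentation $0\to B\to \sC(G_0,\Q_p)^{\oplus d}\to C\to 0$ with $C$ admissible, kills $H^i_{\cont}(G_n,\sC(G_0,\Q_p)\widehat\otimes\sC^{\an}(G_n,\Q_p))$ via Shapiro, and then proves that the transition map $H^1_{\cont}(G_n,\cdot)\to H^1_{\cont}(G_{n+1},\cdot)$ is zero by invoking Corollaire IV.14 of Colmez--Dospinescu (\cite{CD14}). That result gives an exact sequence of the intermediate modules $B^{(m)}$, and combined with the sandwiching $C^{G_n-\an}\subset C^{(n+2)}\subset C^{G_{n+1}-\an}$ it shows that the image of $(\sC(G_0,\Q_p)^{\oplus d})^{G_{n+1}-\an}$ in $C^{G_{n+1}-\an}$ already contains $C^{G_n-\an}$; the $i\geq 2$ cases then follow by induction on degree. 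The heavy lifting is entirely in that cited exactness result for admissibles, not in any finiteness of resolutions.

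Your step 3 is the gap. After assembling the finite complex $0\to B\to M^0\to\cdots\to M^d\to 0$ and proving strong $\mathfrak{LA}$-acyclicity of each $M^j$, Lemma \ref{LAlongexa}(3) requires you to \emph{also} show that the direct systems $\{H^i((M^\bullet)^{G_n-\an})\}_{n}$ are essentially zero for $i\geq 1$. You write that the cohomology ``is controlled via admissibility,'' but that is exactly the assertion that needs a proof — it is not automatic. Concretely, already at the top of the tower one is confronted with $H^1(G_n,\ker(M^{d-1}\to M^d)\widehat\otimes\sC^{\an}(G_n,\Q_p))$, which by the long exact sequence is (up to essentially zero terms) the cokernel of $(M^{d-1})^{G_n-\an}\to(M^d)^{G_n-\an}$; killing its transition maps in $n$ requires a quantitative statement about how $G_n$-analytic vectors propagate along a surjection of admissibles, which is precisely the content of \cite[Cor.~IV.14]{CD14}. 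Finiteness of the global dimension of $\Lambda(G_0)_{\Q_p}$ does not supply this — it only bounds the length of your complex, which the paper's argument never needs. So the proposal correctly identifies the Shapiro step for $\sC(G_0,\Q_p)$ (including the shear trick), but misdiagnoses the essential input and leaves the genuinely hard point unaddressed.
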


\begin{proof}
Since $B$ is admissible, there is a $G_0$-equivariant embedding $B \hookrightarrow \sC(G_0,\Q_p)^{\oplus d}$ for some $d$. Then the quotient $C$ is an admissible $\Q_p$-Banach representation of $G_0$. Note that $ \sC(G_0,\Q_p)\cong  \sC(G_n,\Q_p)^{\oplus d_n}$ as a representation of $G_n$ for some $d_n$. It follows from Shapiro's lemma for continuous cohomology that 
$H_\cont^1(G_n,\sC(G_0,\Q_p)\widehat{\otimes}_{\Q_p}\sC^{\an}(G_n,\Q_p))=0$. 
Hence 
\[(\sC(G_0,\Q_p)^{\oplus d})^{G_n-\an}\to C^{G_n-\an} \to H^1_{\cont}(G_n,B\widehat{\otimes}_{\Q_p}\sC^{\an}(G_n,\Q_p))\to 0.\]
By Corollaire IV.14. of \cite{CD14}, we have an exact sequence 
\[0\to B^{(m)}\to (\sC(G_0,\Q_p)^{\oplus d})^{(m)}\to C^{(m)}\to 0.\] 
See \cite[\S IV]{CD14} for the notation here. One can check $C^{G_n-\an}\subset C^{(n+2)}\subset C^{G_{n+1}-\an}$. This means that the image of $(\sC(G_0,\Q_p)^{\oplus d})^{G_{n+1}-\an}$ in $C^{G_{n+1}-\an}$ contains $C^{G_n-\an}$. Hence $H^1_{\cont}(G_n,B\widehat{\otimes}_{\Q_p}\sC^{\an}(G_n,\Q_p)) \to H^1_{\cont}(G_{n+1},B\widehat{\otimes}_{\Q_p}\sC^{\an}(G_{n+1},\Q_p))$ has zero image. A simple induction on cohomology degree $i\geq 1$ proves the theorem.
\end{proof}

We need the following variant for later applications.
\begin{cor} \label{admLAacyc}
Let $B$ be an admissible $\Q_p$-Banach representation of $G$ and $M$ be a $\Q_p$-Banach space with trivial $G$-action. Then $B\widehat{\otimes}_{\Q_p}M$ is strongly $\mathfrak{LA}$-acyclic.
\end{cor}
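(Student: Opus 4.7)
The plan is to mimic the theorem's proof, tensoring each step with $M$ and exploiting that $M$ has trivial $G$-action. The key formal inputs are, for any $\Q_p$-Banach representation $V$ of $G_n$, the natural identifications $\sC^{\an}(G_n,V\widehat{\otimes}_{\Q_p}M)\cong \sC^{\an}(G_n,V)\widehat{\otimes}_{\Q_p}M$ and $(V\widehat{\otimes}_{\Q_p}M)^{G_n-\an}\cong V^{G_n-\an}\widehat{\otimes}_{\Q_p}M$, together with the analogous identity $(V\widehat{\otimes}_{\Q_p}M)^{(m)}\cong V^{(m)}\widehat{\otimes}_{\Q_p}M$ for the Colmez--Dospinescu filtration of \cite[\S IV]{CD14}. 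These all rest on the triviality of the $G$-action on $M$ and on the exactness of $-\widehat{\otimes}_{\Q_p}M$ on strict exact sequences of $\Q_p$-Banach spaces.

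Starting from an admissible embedding $B\hookrightarrow \sC(G_0,\Q_p)^{\oplus d}$ with admissible cokernel $C$, I would tensor with $M$ to obtain the strict short exact sequence
\[0\to B\widehat{\otimes}_{\Q_p}M\to \sC(G_0,M)^{\oplus d}\to C\widehat{\otimes}_{\Q_p}M\to 0.\]
As in the theorem, $\sC(G_0,M)\cong \sC(G_n,M)^{\oplus d_n}$ as a $G_n$-representation, and $\sC(G_n,M)\widehat{\otimes}_{\Q_p}\sC^{\an}(G_n,\Q_p)\cong \sC(G_n,M\widehat{\otimes}_{\Q_p}\sC^{\an}(G_n,\Q_p))$ is coinduced from the trivial subgroup, so Shapiro's lemma kills $H^i_{\cont}(G_n,\sC(G_0,M)^{\oplus d}\widehat{\otimes}_{\Q_p}\sC^{\an}(G_n,\Q_p))$ for all $i\geq 1$. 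The long exact sequence in continuous cohomology then identifies, for $i\geq 2$, the group $H^i_{\cont}(G_n,(B\widehat{\otimes}_{\Q_p}M)\widehat{\otimes}_{\Q_p}\sC^{\an}(G_n,\Q_p))$ with $H^{i-1}_{\cont}(G_n,(C\widehat{\otimes}_{\Q_p}M)\widehat{\otimes}_{\Q_p}\sC^{\an}(G_n,\Q_p))$, naturally in $n$.

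For the base case $i=1$, the long exact sequence produces a map $(\sC(G_0,M)^{\oplus d})^{G_n-\an}\to (C\widehat{\otimes}_{\Q_p}M)^{G_n-\an}=C^{G_n-\an}\widehat{\otimes}_{\Q_p}M$ whose image is the kernel of the connecting map to $H^1_{\cont}$. Tensoring the exact sequence $0\to B^{(m)}\to (\sC(G_0,\Q_p)^{\oplus d})^{(m)}\to C^{(m)}\to 0$ of \cite[Corollaire IV.14]{CD14} by $M$ and using the sandwich $C^{G_n-\an}\widehat{\otimes}_{\Q_p}M\subset C^{(n+2)}\widehat{\otimes}_{\Q_p}M\subset C^{G_{n+1}-\an}\widehat{\otimes}_{\Q_p}M$ then shows, exactly as in the theorem, that the transition map from level $n$ to level $n+1$ in the $H^1$-direct system has zero image. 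A straightforward induction on $i$ via the dimension-shift identification above yields strong $\mathfrak{LA}$-acyclicity in all positive degrees. The one step requiring genuine verification beyond formal manipulation is the compatibility $(V\widehat{\otimes}_{\Q_p}M)^{(m)}=V^{(m)}\widehat{\otimes}_{\Q_p}M$; I expect this to follow directly from the intrinsic, $G$-action-based definition of $V^{(m)}$ in \cite{CD14}, which is unaffected upon extending scalars by a trivial Banach representation.
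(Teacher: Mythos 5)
Your plan is a genuinely different route from the one the paper takes. The paper's proof never touches the Colmez--Dospinescu filtration: it observes that, since $B$ is admissible, $H^{\bullet}_{\cont}(G_n,B^o)$ is a finitely generated $\Z_p$-module, so the cochain complex $(\sC(G_n^{\bullet},B),d^{\bullet})$ computing $H^{\bullet}_{\cont}(G_n,B)$ is strict; tensoring a strict complex with $M$ commutes with cohomology, giving $H^{\bullet}_{\cont}(G_n,B\widehat{\otimes}_{\Q_p}M)\cong H^{\bullet}_{\cont}(G_n,B)\widehat{\otimes}_{\Q_p}M$, after which the conclusion is read off from the theorem already proved for $B$. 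This sidesteps the need to understand how $(-)^{G_n-\an}$ and $(-)^{(m)}$ interact with $\widehat{\otimes}_{\Q_p}M$, which is exactly where the work lies in your version.

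There is, however, a gap in the justification you offer for your two ``key formal inputs.'' You say both rest on ``the exactness of $-\widehat{\otimes}_{\Q_p}M$ on strict exact sequences,'' but the inclusion $V^{G_n-\an}\hookrightarrow V$ is not a strict morphism of Banach spaces: the topology on $V^{G_n-\an}$ is the $\|\cdot\|_{G_n}$-topology, which is strictly finer than the subspace topology, and $V^{G_n-\an}$ is in general not closed in $V$. So exactness of $-\widehat{\otimes}_{\Q_p}M$ on strict sequences says nothing here. The identity $(V\widehat{\otimes}_{\Q_p}M)^{G_n-\an}\cong V^{G_n-\an}\widehat{\otimes}_{\Q_p}M$ is nevertheless true, but it requires a direct argument: write $M\cong c_0(I,\Q_p)$ via an orthonormal basis, so that $V\widehat{\otimes}_{\Q_p}M\cong c_0(I,V)$, and check that a family $(x_i)_{i\in I}\in c_0(I,V)$ is $G_n$-analytic if and only if each $x_i\in V^{G_n-\an}$ and $\|x_i\|_{G_n}\to 0$; the nontrivial direction uses that for each fixed $|\mathbf{k}|$ there are only finitely many multi-indices, so one can interchange the $\sup$ over $\mathbf{k}$ with the limit over $i$. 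A parallel verification is needed for $(V\widehat{\otimes}_{\Q_p}M)^{(m)}\cong V^{(m)}\widehat{\otimes}_{\Q_p}M$, which you correctly flag as unchecked; here too you cannot lean on strictness, and one must return to the concrete description of $V^{(m)}$ in \cite[\S IV]{CD14}. Once these two identities are in place your argument goes through, but neither is formal, and the second one is exactly the kind of verification the paper's proof is engineered to avoid.
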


\begin{proof}
Let $B^o$ be the unit open ball of $B$. We claim that $H^\bullet_\cont(G_n,B^o)$ is a finitely generated $\Z_p$-module hence has bounded $p$-torsion. Indeed, since $B$ is admissible, there exists a $\Z_p[G_n]$-equivariant injection $B^o \hookrightarrow \sC(G_0,\Z_p)^{\oplus d}$ for some $d$, whose quotient is $p$-torsion free. An induction argument on $i$ implies the finiteness here. It is well-known that  $H^\bullet_\cont(G_n,B)$ can be computed by a cochain complex $(\sC(G_n,B)^{\oplus i},d^i)$. This is a strict complex because $(\sC(G_n,B^o)^{\oplus i},d^i)$ computes $H^\bullet_\cont(G_n,B^o)$. Since $\sC(G_n,B\widehat{\otimes}_{\Q_p}M)\cong \sC(G_n,B)\widehat{\otimes}_{\Q_p}M$, we know that $(\sC(G_n,B)^{\oplus i}\widehat{\otimes}_{\Q_p}M,d^i\otimes \mathbf{1})$ computes $H^\bullet_\cont(G_n,B\widehat{\otimes}_{\Q_p}M)$. Therefore 
\[H^\bullet_\cont(G_n,B\widehat{\otimes}_{\Q_p}M)\cong H^\bullet_\cont(G_n,B)\widehat{\otimes}_{\Q_p}M.\] 
Our claim now follows from the previous theorem.
\end{proof}

\section{Locally analytic vectors and relative Sen theory} \label{LARS}
The main goal of this section is to generalize results of Berger and Colmez \cite{BC16} and Sen \cite{Sen80} in the one-dimensional geometric setting. The main result roughly says that the locally analytic vectors satisfy a differential equation given by a (relative) Sen operator. We claim no originality for most results here. One strong tool in these works is the theory of decompletions. We will review the Tate-Sen formalism of Berger-Colmez \cite{BC08} which is flexible enough to handle our situation. As an application, we will show that taking locally analytic vectors is exact under certain conditions.

The results of this section are closely related to the work of Faltings \cite{Fa05}, Abbes-Gros-Tsuji \cite{AGT}, Liu-Zhu \cite{LZ17} on $p$-adic Simpson correspondences. See Remark \ref{FAGTLZ} below.

\subsection{Statement of the main result}
\begin{para} \label{setup}
Fix a complete algebraically closed non-archimedean field $C$ of characteristic zero. Recall that a non-archimedean field is a topological field whose topology is induced  by a non-archimedean norm $|\cdot|:C\to \R_{\geq 0}$. It is naturally an extension of $\Q_p$ for some $p$ and we assume its norm agrees with the usual $p$-adic norm $|\cdot|_p=p^{-\mathrm{val}_p(\cdot)}$ on $\Q_p$. Let $\cO_C$ be the ring of integers of $C$. Our setup is as follows:
\begin{itemize}
\item $X=\Spa(A,A^+)$: a one-dimensional smooth affinoid adic space over $\Spa(C,\cO_C)$;
\item $G$: a finite-dimensional compact $p$-adic Lie group;
\item $\widetilde X=\Spa(B,B^+)$: an affinoid perfectoid algebra over $\Spa(C,\cO_C)$, which is a ``log $G$-Galois pro-\'etale perfectoid covering'' of $X$. More precisely, this means that there is a finite set $S$ of classical points in $X$ and $\widetilde X\sim\varprojlim_{i\in I} X_i$ in the sense of Definition 7.14 of \cite{Sch12} for some index set $I$, where each $X_i=\Spa(B_i,B_i^+)$ is a finite Galois covering of $X$ unramified outside of $S$, and $B^{+}$ is the $p$-adic completion of $\varinjlim_i B_i^{+}$. Moreover the inverse limit of the Galois group of $X_i$ over $X$ is identified with $G$. When $S$ is non-empty, we further assume for each point $s$ in $S$, the ramification-index $e_i$ of $X_i\to X$ at $s$ is a $p$-power for any $i$ and $\{e_i\}_{i\in I}$ is unbounded;
\item assume $X$ is \textit{small} in the sense that $S$ contains at most one element and there is an \'etale map $X\to \mathbb{T}^1=\Spa(C\langle T^{\pm1}\rangle, \cO_C\langle T^{\pm1}\rangle)$ (resp. $X\to \mathbb{B}^1=\Spa(C\langle T\rangle, \cO_C\langle T\rangle)$ mapping $S$ to $0$) when $S$ is empty (resp. otherwise) which factors as a composite of rational embeddings and finite \'etale maps.
\end{itemize}

Note that $G$ acts continuously on the Banach algebra $B$ and we can consider the locally analytic vectors $B^{\la}\subset B$. Let $\Lie(G)$ be  the Lie algebra of $G$. It acts on $B^{\la}$ and this action can be extended $B$-linearly to a map $B\otimes_{\Q_p}\Lie(G) \times B^{\la} \to B$.
\end{para}

\begin{thm} \label{pCR}
Fix an  \'etale map $X\to \mathbb{T}^1$ (resp. $X\to \mathbb{B}^1$) if $S$ is empty  (resp. non-empty) as above.
For each ``log $G$-Galois pro-\'etale perfectoid covering'' $\widetilde{X}$ of $X$, we can assign an element $\theta=\theta_{\widetilde{X}}\in B\otimes_{\Q_p}\Lie(G)$, satisfying
\begin{enumerate}
\item $\theta$ annihilates $B^{\la}$. In other words, locally analytic vectors satisfy certain (first-order) differential equation.
\item $\theta$ is functorial in $(G,\tilde{X})$: if $H$ is a closed normal  subgroup of $G$ so that $\widetilde{X}'=\Spa(B^H,(B^+)^H)$ is a ``log $G/H$-Galois pro-\'etale perfectoid covering'' of $X$, then
\[\theta_{\widetilde{X}}\equiv \theta_{\widetilde{X}'}\mod B\otimes_{\Q_p}\Lie(H),\] 
where $\theta_{\widetilde{X}'}\in B^H\otimes_{\Q_p}\Lie(G/H)$ is viewed as an element in $B\otimes_{\Q_p}\Lie(G/H)$. 
\item $\theta\neq 0$ if $\widetilde{X}$ is a locally analytic covering in the sense of \ref{laedefn} below.
\end{enumerate}
Moreover, if we start with another  \'etale map $X\to \mathbb{T}^1$ (or  \'etale map $X\to \mathbb{B}^1$), then the element $\theta'\in B\otimes_{\Q_p}\Lie(G)$ obtained using this \'etale map will differ $\theta$ by a unit of $A$. In other words, $A^\times\theta\subset B\otimes_{\Q_p}\Lie(G)$ does not depend on the choice of the   \'etale map.
\end{thm}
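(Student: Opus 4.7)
The plan is to construct $\theta$ by adapting the relative Sen theory of Berger--Colmez \cite{BC16} to this one-dimensional geometric setting. Conceptually, $\theta$ is a Sen-type operator reflecting the fact that the pro-\'etale cover $\widetilde X \to X$ has a canonical ``cyclotomic direction'' coming from the chosen \'etale coordinate, and this direction is compatible with the $G$-action only up to first order.

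The first step is to use the \'etale map $X \to \mathbb{T}^1$ (respectively $X \to \mathbb{B}^1$) to pull back the standard perfectoid tower $\Spa(C\langle T^{\pm 1/p^\infty}\rangle) \to \mathbb{T}^1$ (or its log-ramified analogue), producing a $\Z_p$-Galois pro-\'etale perfectoid cover $X_\infty \to X$. Form the perfectoid compositum $\widetilde Y := \widetilde X \cdot X_\infty$ over $X$; it is again a Galois pro-\'etale perfectoid cover, with Galois group $\widetilde G$ fitting into an exact sequence $1 \to H' \to \widetilde G \to G \to 1$ for some closed subgroup $H' \subset \Z_p$. A compatible system $(\zeta_{p^n})$ of roots of unity fixes a topological generator of the $\Z_p$-direction, so that $T^{1/p^n} \in \Gamma(\widetilde Y, \cO)$ satisfies $\widetilde\gamma(T^{1/p^n}) = \zeta_{p^n}^{c(\widetilde\gamma)} T^{1/p^n}$ for a continuous character $c : \widetilde G \to \Z_p$.

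Given this setup, I would construct $\theta \in B \otimes_{\Q_p} \Lie(G)$ from the cocycle $c$ together with a Lie-algebra section of $\Lie(\widetilde G) \twoheadrightarrow \Lie(G)$: any such section transports the cyclotomic direction into $B_Y \otimes \Lie(G)$, and an element of $B_Y \otimes \Lie(G)$ so obtained descends to $B \otimes \Lie(G)$ because the ambiguity lies in the part killed by locally analytic vectors. To verify that $\theta$ annihilates $B^{\la}$, I would invoke the Tate--Sen formalism of \cite{BC08} together with the analyticity facts of \S\ref{LAV}: any $v \in B^{G_n-\an}$ arises as the value at the identity of an analytic orbit map $G_n \to B$, and lifting this to $\widetilde G_n \to B_Y$, the triviality of the cyclotomic infinitesimal action on $T^{1/p^n}$ for large $n$ forces the linear combination encoded by $\theta$ to vanish on $v$. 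Functoriality under a closed normal subgroup $H \subset G$ is built into the construction: replacing $\widetilde X$ by $\widetilde X/H$ preserves the compositum, and the cocycle and section are compatible with the quotient, giving $\theta_{\widetilde X} \equiv \theta_{\widetilde X'} \pmod{B \otimes \Lie(H)}$. Non-vanishing in the locally analytic case then follows because, by definition, the cocycle $c$ has a non-trivial differential on $\Lie(G)$.

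For independence of the \'etale map, compare two coordinates $T$ and $T' = uT$ with $u \in A^\times$ (with a mild adjustment at $S$ in the $\mathbb{B}^1$ case) by working inside a perfectoid compositum also containing $u^{1/p^\infty}$; the two cocycles differ by the cocycle attached to $u$, whose contribution to the construction is multiplication by $u \in A^\times$, yielding $A^\times \theta_T = A^\times \theta_{T'}$. The principal technical obstacle will be running the decompletion argument uniformly: classical Sen theory treats finite-dimensional $C$-semilinear $G_K$-representations, while here $B$ is an infinite-dimensional Banach $C$-algebra and $G$ is an arbitrary compact $p$-adic Lie group. The smallness assumption on $X$ is precisely what allows this adaptation, by supplying the explicit cyclotomic chart $X_\infty$ and ensuring the cohomological approximation and vanishing statements needed for the Tate--Sen axioms to hold in our relative setting.
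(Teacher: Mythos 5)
Your high-level picture is sound: the construction does proceed through the perfectoid compositum of $\widetilde X$ with the $\Z_p$-cyclotomic tower pulled back from $\mathbb{T}^1$ (or $\mathbb{B}^1$), one does verify the Tate--Sen conditions of \cite{BC08} for this compositum, and the Sen operator is constructed by decompletion. But the proposal has genuine gaps at the steps that actually do the work.

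\textbf{Landing in $B \otimes \Lie(G)$.} You invoke ``a Lie-algebra section of $\Lie(\widetilde G) \twoheadrightarrow \Lie(G)$'' and a ``cocycle $c$'' and assert that these produce an element of $B_Y \otimes \Lie(G)$. This glosses over the central difficulty: the decompletion of \cite[Prop.~3.3.1]{BC08} naturally produces, for each finite-dimensional $G$-representation $V$, an operator $\phi_V \in \End_{B_\infty}(B_\infty \otimes V)$, not an element of $B_\infty \otimes \Lie(G)$. To obtain the latter, the paper applies the decompletion to $V = \sC^{\an}(G_0,\Q_p)$ with the left-translation action (after a limiting argument over the finite-dimensional algebraic pieces $V_k$ of Prop.~\ref{algdense}), and then uses that $\phi$ commutes with right translation and respects multiplication. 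This forces $\phi$ to act by a $G_0$-right-invariant \emph{derivation} of $\sC^{\an}(G_0,\Q_p)$, which is exactly what identifies it with an element of $B_\infty \otimes \Lie(G_0)$. This derivation argument is the pivot of the whole construction and it does not appear in your proposal.

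\textbf{Descent from $B_\infty$ to $B$.} You say the element ``descends to $B \otimes \Lie(G)$ because the ambiguity lies in the part killed by locally analytic vectors.'' This is not a reason. The actual descent uses that $\phi$ commutes with $\Gamma$ and that $(B_\infty)^\Gamma = B$ (Lemma~\ref{Gammainv}, via almost purity and Tate's normalized trace), so a $\Gamma$-invariant element of $B_\infty \otimes \Lie(G_0)$ lives in $B \otimes \Lie(G_0)$.

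\textbf{Non-vanishing.} Your claim that ``the cocycle $c$ has a non-trivial differential on $\Lie(G)$'' is false: with $\widetilde G = G \times \Gamma$, the character $c$ is the projection to $\Gamma$, whose differential vanishes identically on $\Lie(G)$. The correct argument is Prop.~\ref{nonvantheta}: if $\theta = 0$ then $\Lie(\Gamma)$ acts trivially on the $\Gamma$-locally analytic vectors of $(B_\infty)^{G_0\text{-an}}$, which contradicts the existence of a $G$-locally analytic $z \in B_\infty$ with $\gamma(z) = z - 1$; and the existence of such a $z$ is one of the equivalent forms of the locally analytic covering condition (Prop.~\ref{LAETFAE}).

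\textbf{Uniqueness.} You write $T' = uT$ and assert the contribution ``is multiplication by $u$.'' Even granting $u = T'/T \in A^\times$, the operators $\theta$ and $\theta'$ should be compared as log Higgs fields via the ratio $d\log T'/d\log T = 1 + (du/u)/(dT/T)$, which is a unit of $A$ but is not $u$. The paper instead identifies the unit $a \in A^\times$ cohomologically, as the composite $\tau_1 \circ \tau_2^{-1}$ of two Hochschild--Serre identifications $H^1_{\cont}(G'\times\Gamma, B'_\infty) \cong H^1_{\cont}(\Gamma, A)$, and then verifies by an explicit $2\times 2$ matrix computation that $\phi_{\widetilde X'}$ is multiplication by $a$. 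Your shortcut would need to be repaired along these lines.

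In short: the framing is right, but the derivation trick that places $\theta$ in $B \otimes \Lie(G)$, the $\Gamma$-equivariance descent, the correct non-vanishing criterion, and the cohomological uniqueness comparison are all missing or stated incorrectly.
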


\begin{rem}
From the point of view of differential operators, one may regard  $\theta$ as some $p$-adic analogue of Cauchy-Riemann operator in the classical complex analysis.
\end{rem}

\begin{rem} \label{theSen}
In the classical $p$-adic Hodge theory of $p$-adic Galois representations, this $\theta$ is nothing but the Sen operator, cf. Theorem 12 of \cite{Sen80}, Th\'eor\`eme 1.9 of \cite{BC16}. So it is reasonable to call it relative Sen operator.
\end{rem}

\begin{rem} \label{canonicaltheta}
It is natural to ask whether there is a canonical representative of $\theta$ so that one can glue them in some global situation. First, it turns out that the natural place for $\theta$ to live is $B\otimes_A\Omega_{A/C}^{1}(S)\otimes_{\Q_p}\Lie(G)(-1)$, where $\Omega_{A/C}^{1}(S)$ denotes the continuous $1$-forms of $A$ over $C$ with simple poles at $S$ and $(1)$ denotes the usual Tate-twist. So $\theta$ is viewed as a (log) Higgs field. 

Secondly, if $S$ is empty, i.e. $\widetilde X$ is a $G$-Galois pro-\'etale perfectoid covering of $X$, then we can make this element canonical using the functorial isomorphism $\Omega_{A/C}^{1}(-1)\cong H^1_{\cont}(G,B)$ (Proposition 3.23 of \cite{PerfSur}). Here $H^1_{\cont}(G,B)$ denotes the continuous group cohomology group and is identified with $H^1(X_{\mathrm{pro\acute{e}t}},\hat\cO_X)$ by an argument similar to the proof of Lemma 5.6 of \cite{Sch13}. By the functorial property, it is enough to pin down this element $\theta$ for a particular $\widetilde X$, provided that $\theta_{\tilde{X}}\neq 0$. Indeed, if $\widetilde X'$ is a $H$-Galois pro-\'etale perfectoid covering of $X$, we can consider $\widetilde X'':=\widetilde X\times_X \widetilde{X}'$, the fiber product in the category of pro-\'etale coverings of $X$. Then $\widetilde X''$ is a $(G\times H)$-Galois pro-\'etale perfectoid covering of $X$. Once $\theta_{\widetilde{X}}$ is determined, so is $\theta_{\widetilde X''}$ and hence $\theta_{\widetilde X'}$ by the functorial property.

Suppose $G\cong\Z_p$.  Then there is a canonical isomorphism:
\[\Omega_{A/C}^{1}(-1)\otimes_{\Q_p}\Lie(G)\stackrel{\sim}{\longrightarrow} H^1_{\cont}(G,B)\otimes_{\Z_p} G\stackrel{\sim}{\longrightarrow} B_G,\]
where the first map is induced by the exponential map of $G$ and $B_G$ denotes the $G$-coinvariants of $B$. Under this isomorphism, the canonical $\theta$ is given by the image of $1\in B$ in $B_G$. It can be checked that this is independent of the choice of such a $\Z_p$-covering. 

The proof of Proposition 3.23 of \cite{PerfSur} crucially uses the fact that when $X$ is defined over some finite extension of $\Q_p$, then there is the Faltings's extension (Corollary 6.14 of \cite{Sch13}) which produces this canonical isomorphism. Faltings's extension has also been generalized to the log case, cf. Corollary 2.4.5 of \cite{DLLZ2}. Hence it is conceivable that there is a canonical $\theta$ in general.
\end{rem}

\begin{exa} \label{toricexa}
\hspace{2em}
\begin{itemize}
\item $X=\Spa(C\langle T^{\pm 1}\rangle, \cO_C\langle T^{\pm1}\rangle)$, the one-dimensional torus;
\item $\widetilde X=\Spa(C\langle T^{\pm \frac{1}{p^{\infty}}}\rangle, \cO_C\langle T^{\pm\frac{1}{p^{\infty}}}\rangle)$, the inverse limit of $\Spa(C\langle T^{\pm \frac{1}{p^{m}}}\rangle, \cO_C\langle T^{\pm\frac{1}{p^{m}}}\rangle)$;
\item $G=\Z_p$ which acts by $k\cdot T^{\frac{1}{p^m}}=\zeta^{\frac{k}{p^m}}T^{\frac{1}{p^m}},~k\in\Z_p$ where $\{\zeta_{p^m}\}_m$ is a choice of compatible system of $p^m$-th roots of unity.
\end{itemize}
The locally analytic vectors in $C\langle T^{\pm \frac{1}{p^{\infty}}}\rangle$ are just $\bigcup_m C\langle T^{\pm \frac{1}{p^{m}}}\rangle$, the smooth vectors. This can be deduced easily from the existence of Tate's normalized trace in this situation, i.e. there is a continuous left inverse of the inclusion $C\langle T^{\pm \frac{1}{p^{m}}}\rangle\hookrightarrow C\langle T^{\pm \frac{1}{p^{\infty}}}\rangle$. See Lemma \ref{Ainftyla} below or Th\'eor\`eme 3.2 of \cite{BC16}. The differential operator $\theta$ in this case is $\frac{dT}{T}\otimes \mathbf{1}$, where $\mathbf{1}$ is the image of $1\in\Z_p\stackrel{\log}{\longrightarrow} \Lie(\Z_p)$ under the logarithm.  The same result holds for $X=\Spa(C\langle T\rangle, \cO_C\langle T\rangle)$, the one-dimensional unit ball and $\widetilde X=\Spa(C\langle T^{ \frac{1}{p^{\infty}}}\rangle, \cO_C\langle T^{\frac{1}{p^{\infty}}}\rangle)$.
\end{exa}

\begin{rem} \label{FAGTLZ}
As mentioned in Remark \ref{canonicaltheta}, $\theta$ should be considered as a log Higgs field. This is closely related to previous work of Faltings \cite{Fa05}, Abbes-Gros-Tsuji \cite{AGT} and Liu-Zhu \cite{LZ17} on $p$-adic Simpson correspondences.  More precisely, for a finite-dimensional continuous representation $V$ of $G$,  the $p$-adic Simpson correspondence associates a log Higgs field (after extending the coefficients to $B$):
\[\phi_V: B\otimes_{\Q_p}V \to \Omega_{A/C}^{1}(S)\otimes_A B\otimes_{\Q_p}V(-1),\]
which is monoidal in $V$. Hence from the Tannakian point of view, this gives rise to a log Higgs field  which is nothing but our $\theta$. Basically, we will construct $\theta$ by taking $V$ as the space of analytic functions on $G$ (after taking a certain limit). 

To see that $\theta$ annihilates $B^{\la}$, we remark that,  as a $B$-module, $\ker(\phi_V)$ are generated by the $G$-smooth vectors in $B\otimes_{\Q_p} V$. Hence when $V$ is the space of analytic functions on $G$, it is tautological that $B^{\la}$ is in the kernel of $\theta$. I hope this provides some intuition for  the constructions below.
\end{rem}

\subsection{Relative Sen theory}
In this subsection, we will first define a bigger perfectoid algebra $B_\infty$ containing $B$ and construct (functorially) an operator in the endomorphism group $\End_{B_\infty}(B_\infty\otimes_{\Q_p} V)$ for any continuous finite-dimensional $\Q_p$-representation $V$ of $G$. 

\begin{para} \label{smallY}
Fix an \'etale map $f_1:X\to Y$ which factors as a composite of rational embeddings and finite \'etale maps, where $Y$ is either $\mathbb{T}^1=\Spa(C\langle T^{\pm1}\rangle, \cO_C\langle T^{\pm1}\rangle)$ or $\mathbb{B}^1=\Spa(C\langle T\rangle, \cO_C\langle T\rangle)$. In the latter case, we assume the image of $S$ in $Y$ is $0$. We will fix such a choice of $f_1:X\to Y$ from now on.

For any  $n\in\N$, when $Y=\mathbb{T}^1$ (resp. $\mathbb{B}^1$), let 
\[Y_n=\Spa(R_n,R_n^+):=\Spa(C\langle T^{\pm \frac{1}{p^{n}}}\rangle, \cO_C\langle T^{\pm\frac{1}{p^{n}}}\rangle)~ (\mbox{resp. } \Spa(C\langle T^{ \frac{1}{p^{n}}}\rangle, \cO_C\langle T^{\frac{1}{p^{n}}}\rangle)),\] 
\[ Y_\infty=\Spa(R,R^+):=\Spa(C\langle T^{\pm \frac{1}{p^{\infty}}}\rangle, \cO_C\langle T^{\pm\frac{1}{p^{\infty}}}\rangle)~ (\mbox{resp. } \Spa(C\langle T^{ \frac{1}{p^{\infty}}}\rangle, \cO_C\langle T^{\frac{1}{p^{\infty}}}\rangle)).\]
This is the example considered in \ref{toricexa}. Note that $Y_\infty\sim\varprojlim_{i\in\N} Y_i$ is a Galois covering of $Y$ and $R^+$ is the $p$-adic completion of $\varinjlim R^+_n$. We denote the Galois group by $\Gamma$, which can be identified with $\Z_p$ non-canonically. For $n\geq m$, there is the usual trace map
\[\tr_{Y,n,m}:R_n^+\to R_m^+.\]
Concretely, it sends $T^{\frac{l}{p^k}},(l,p)=1$ to $p^{n-m}T^{\frac{l}{p^k}}$ if $k\leq m$ and $0$ otherwise. Clearly, $\frac{1}{p^{n-m}}\tr_{Y,n,m}$ are compatible when $n$ varies and extends to a $R_m^+$-linear map :
\[\overline\tr_{Y,m}^+:R^+\to R^+_m,\]
which is a left inverse of the inclusion $R^+_m\to R^+$. It commutes with the action of $\Gamma$ as taking traces commutes with the Galois action. Moreover, for any $x\in R^+$,
\[\lim_{m\to +\infty}\overline\tr_{Y,m}^+(x)=x.\]
After inverting $p$, we get a $R_m$-linear map $\overline\tr_{Y,m} :R\to R_m$  (Tate's normalized trace). Let $\gamma$ be any topological generator of $p^m\Gamma$. It is easy to see that $\gamma-1$ is invertible on $\ker(\overline\tr_{Y,m})$. Moreover the norm $\|(\gamma-1)^{-1}\|$ on $\ker(\overline\tr_{Y,m})$ equals $|(\zeta_{p^{m+1}}-1)^{-1}|_p=p^{\frac{1}{p^m(p-1)}}$ which converges to $1$ as  $m\to \infty$.
\end{para}

\begin{para} \label{TNT}
The material here should be a consequence of  some general results of Diao-Lan-Liu-Zhu on log affinoid perfectoid spaces. See 5.3 of \cite{DLLZ1}. For our later applications, we give a more explicit presentation.

Now we base change everything along the map $X\to Y$:
\[X_n=\Spa(A_n,A_n^+):=X\times_Y Y_n,\] 
\[X_\infty=\Spa(A_\infty,A_\infty^+):=X\times_Y Y_\infty,\]
where all the fiber products are taken in the category of adic spaces over $C$. We remark that $X\to Y$ is locally of finite type in the sense of \cite[Definition 1.2.1]{Hu96} and the fiber product exists by  \cite[Proposition 1.2.2]{Hu96}. All the fiber products we consider below will exist for the same reason.

\[\begin{tikzcd}
X_\infty \arrow[d] \arrow[r] & Y_\infty  \arrow[d]  \\
X_n  \arrow[d] \arrow[r] & Y_n \arrow[d]  \\
X  \arrow[r] & Y \\
\end{tikzcd}.\]

Recall that $C$ is equipped with an non-archimedean norm $|\cdot|$. For any $|p|^{c}\in |C^\times|\subset\R^\times_{\geq 0}$, we fix a choice of element in $C$, formally written as $p^c$, with norm $|p|^{c}$. 
\end{para}

\begin{lem} \label{AnRn}
Let $(A_n^+\otimes_{R_n^+} R^+)^{\mathrm{tf},\wedge}$ be the $p$-adic completion of the $p$-torsion free quotient of $A_n^+\otimes_{R_n^+} R^+$. 
\begin{enumerate}
\item The natural map $(A_n^+\otimes_{R_n^+} R^+)^{\mathrm{tf},\wedge}\to A_\infty^+$ is injective and its cokernel is killed by $p^{C_n}$ for some constant $C_n$ with $C_n\to 0$ as $n\to 0$. 
\item $A_\infty^+$ is the $p$-adic completion of $\varinjlim_n A^+_n$.
\item $(A_\infty,A_\infty^+)$ is a perfectoid affinoid $(C,\cO_C)$-algebra.
\end{enumerate}
\end{lem}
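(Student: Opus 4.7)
The plan is to decompose the \'etale map $f_1:X\to Y$ as a composite of rational localizations and finite \'etale morphisms (guaranteed by hypothesis), and handle each type of factor separately, combining them by induction on the length of the factorization.

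For a rational localization $X=Y\langle\tfrac{g_1,\ldots,g_k}{h}\rangle$ with $g_1,\ldots,g_k,h$ generating the unit ideal, base change tautologically gives $X_n=Y_n\langle\tfrac{g_1,\ldots,g_k}{h}\rangle$ and $X_\infty=Y_\infty\langle\tfrac{g_1,\ldots,g_k}{h}\rangle$. Part~(3) is then immediate from the stability of the perfectoid property under rational localization (Lemma~6.4 of Scholze's \emph{Perfectoid Spaces}). For parts~(1) and~(2), the key tool is Tate's normalized trace: I would extend $\overline{\tr}^{+}_{Y,n}:R^{+}\to R^{+}_n$ by $A^{+}_n$-linearity and continuity to a left inverse $A_\infty^{+}\to A^{+}_n$ of the inclusion, producing a $R_n^{+}$-linear Banach splitting $A_\infty^{+}=A^{+}_n\oplus\ker$ up to bounded $p$-torsion coming from the integral closure. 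Taking $p$-adic completed tensor product with $R^{+}$ over $R^{+}_n$ then yields the injectivity and the cokernel bound, with the constant $C_n\to0$ inherited from the base case (where it is essentially $1/(p^n(p-1))$, reflecting $|\zeta_{p^{n+1}}-1|$). Part~(2) then follows by passing to the direct limit over~$n$.

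For a finite \'etale map $X\to Y$, the central input is Faltings's almost purity theorem (Theorem~7.9 of Scholze's \emph{Perfectoid Spaces}): the base change $(A_\infty,A_\infty^{+})$ is affinoid perfectoid and $A_\infty^{+a}$ is almost finite \'etale over $R^{+a}_\infty$. In particular $A_\infty^{+}$ is almost equal to the $p$-adic completion of $A^{+}\otimes_{R^{+}}R^{+}_\infty$, with error killed by $p^{\epsilon}$ for every $\epsilon>0$. This gives part~(3) directly, and reduces parts~(1) and~(2) to the already-established statements for the base~$Y$.

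The main technical obstacle is control of the integral structures $A_n^{+}$ and $A_\infty^{+}$, which are defined by integral closure and so are not given by an explicit formula in terms of tensor products. In the finite \'etale step this is exactly what almost purity resolves; in the rational step one has to verify that the trace splitting of $R^{+}$ over $R^{+}_n$ lifts to $A_\infty^{+}$ with only bounded $p$-torsion error which is uniform in $n$, so that the $n$-dependence of the cokernel is inherited entirely from the base case. When composing a rational factor with a finite \'etale factor the error exponents add, but since each individual factor contributes either no error or an arbitrarily small error, the composite still satisfies $C_n\to0$ as $n\to\infty$.
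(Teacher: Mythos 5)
The paper proves this in one line: it cites Lemma~4.5 of \cite{Sch13}, which already packages the reduction to rational embeddings and finite \'etale maps, noting only that the hypothesis there that $Y_n\to Y$ be \'etale fails (at $T=0$ for the ball) but the same argument carries over because $R$ is perfectoid and $R^+=\widehat{\varinjlim}\,R^+_n$. Your proposal re-derives this, and your handling of the finite \'etale factors via almost purity is fine, but there is a gap in the rational localization step.

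You propose to extend $\overline\tr^+_{Y,n}\colon R^+\to R^+_n$ by $A^+_n$-linearity and continuity to a left inverse $A^+_\infty\to A^+_n$, and then deduce the cokernel bound. But the $A^+_n$-linear extension is defined a priori only on $(A^+_n\otimes_{R^+_n}R^+)^{\mathrm{tf},\wedge}$, which, whenever the cokernel of part~(1) is nonzero, is a proper \emph{closed} submodule of $A^+_\infty$ (both are $p$-adically complete and the cokernel is bounded torsion, hence discrete); in particular it is not dense in $A^+_\infty$ and the extension by continuity does not exist at the integral level. You can extend to $A_\infty$, since $A_n\otimes_{R_n}R$ is dense there, but to see that the resulting map carries $A^+_\infty$ into $p^{-C_n}A^+_n$ you would already need the very cokernel bound you are trying to establish. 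As stated, the argument is circular. The non-circular route, and the one Scholze's proof takes, is to compare $(A^+_n\otimes_{R^+_n}R^+)^{\mathrm{tf},\wedge}$ and $A^+_\infty$ directly as subrings of $A_\infty$: both contain the ring $R^+\langle T_i\rangle/(hT_i-g_i)$ (passed to its torsion-free completion) and both lie inside its integral closure in $A_\infty$, and the gap is almost zero by the structure theory of rational subsets of perfectoid affinoids (Lemma~6.4 of \emph{Perfectoid Spaces}). The normalized trace $\overline\tr_{X,m}$ on $A_\infty$ is then a \emph{consequence} of the present lemma --- this is exactly how the paper uses it in the paragraph that follows --- not an ingredient in its proof. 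One more small point: in the base case $X=Y$ the map in part~(1) is an isomorphism and $C_n=0$, so the constants $C_n$ do not come from the trace norm $p^{1/(p^n(p-1))}$ as your parenthetical suggests; they are introduced entirely by the integral-closure errors along the rational and finite \'etale factors.
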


\begin{proof}
As $X\to Y$ can be written as a composite of rational embeddings and finite \'etale maps, we may apply Lemma 4.5 of \cite{Sch13} here. Strictly speaking, Lemma 4.5 assumes $Y_n$ is  \'etale over $Y$ but the same argument works here using that $R$ is a perfectoid $C$-algebra and $R^+$ is the $p$-adic completion of $\varinjlim R^+_n$.
\end{proof}

Hence we may extend ($A_m$-linearly) Tate's normalized trace $\overline\tr_{Y,m}$ to $X_\infty$: 
\[\overline\tr_{X,m}: A_\infty\to A_m,\]
which is a continuous $A_m$-linear left inverse of the inclusion $A_m\to A_\infty$. Moreover, the image of $A_\infty^+$
is contained in $p^{-C_m}A_m^+$ for some constant $C_m$ with $C_m\to 0$ as $m\to 0$. For any $x\in A_\infty$, we still have
\[\lim_{m\to +\infty}\overline\tr_{X,m}(x)=x.\]
Note that $\Gamma$ acts on $X_\infty$ and commutes with $\overline\tr_{X,m}$. For any topological generator $\gamma$ of $p^m\Gamma$, the action of $\gamma-1$ on $\ker(\overline\tr_{X,m})$ is invertible. The norm of its inverse $\|(\gamma-1)^{-1}\|$ is bounded by some $p^{c_m}$ for some constant $c_m>0$, and $c_m\to 0$ as $m\to \infty$. 

\begin{rem}
These properties of Tate's normalized trace appear as (TS2), (TS3) in the Tate-Sen conditions formulated by Berger-Colmez, cf. \cite{BC08} D\'efinition 3.1.3.
\end{rem}

$A_\infty$ is a Banach space representation of $\Gamma$. We can consider its $\Gamma$-locally analytic vectors, cf. \ref{Gnan}. The following lemma is a direct consequence of the existence of Tate's normalized traces. See also Th\'eor\`eme 3.2 of \cite{BC16}.

\begin{lem} \label{Ainftyla}
$(A_\infty)^{p^n\Gamma-\an}=A_n$ for any integer $n\geq 0$. In particular, the subspace of $\Gamma$-locally analytic vectors in $A_\infty$ is $\bigcup_{n\geq 0}A_n$.  
\end{lem}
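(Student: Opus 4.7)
The plan is to prove both inclusions in $(A_\infty)^{p^n\Gamma-\an} = A_n$ directly. The easy inclusion $A_n \subset (A_\infty)^{p^n\Gamma-\an}$ is immediate: $\gamma^{p^n}$ fixes $T^{1/p^k}$ for each $k \leq n$, so $p^n\Gamma$ acts trivially on $R_n$ and hence on $A_n \cong A \otimes_R R_n$ (using Lemma~\ref{AnRn}); vectors with trivial action are in particular analytic.

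For the reverse inclusion, fix $v \in (A_\infty)^{p^n\Gamma-\an}$. A key preliminary observation I would establish first is that $(A_m)^{p^n\Gamma-\an} = A_n$ for every $m \geq n$: the $p^n\Gamma$-action on $A_m$ factors through the finite quotient $p^n\Gamma/p^m\Gamma$, so the orbit map $x \mapsto \gamma^x u$ on $p^n\Z_p$ is constant on the neighborhood $p^m\Z_p$ of $0$; if this map is also globally analytic, analytic continuation forces it to be globally constant, i.e.\ $u \in A_m^{p^n\Gamma} = A_n$. Since $\overline\tr_{X,m}$ is continuous and $\Gamma$-equivariant, it sends $(A_\infty)^{p^n\Gamma-\an}$ into $(A_m)^{p^n\Gamma-\an} = A_n$, so $\overline\tr_{X,m}(v) \in A_n$. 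It therefore suffices to show that the difference $w_m := v - \overline\tr_{X,m}(v) \in \ker(\overline\tr_{X,m})$ vanishes for some $m$ sufficiently large.

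Comparing two competing estimates on $w_m$ will do this. The lower bound from the Tate--Sen hypothesis is $\|(\gamma_m - 1)^k w_m\| \geq p^{-c_m k}\|w_m\|$, where $\gamma_m = \gamma^{p^m}$ and $c_m \to 0$; this is obtained by iterating $\|(\gamma_m - 1)u\| \geq p^{-c_m}\|u\|$ on the $\gamma_m$-stable subspace $\ker(\overline\tr_{X,m})$. For the upper bound, expanding $\gamma^x u = \sum_{k\geq 0}(D^k u/k!)\,x^k$ for any $p^n\Gamma$-analytic $u$ (with $D = \log\gamma$) and reading off the coefficients of $\gamma^x(Du)$ yields the Lie-algebraic estimate $\|D^j u\|_{p^n\Gamma} \leq p^{nj}\|u\|_{p^n\Gamma}$, which is a special instance of Lemma~\ref{LieBound}. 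Substituting into $\gamma_m - 1 = \exp(p^mD) - 1$ and isolating the leading contribution $(p^mD)^k$ yields
\[
\|(\gamma_m - 1)^k w_m\| \;\leq\; C\,p^{(n - m + 1/(p-1))k}\,\|w_m\|_{p^n\Gamma}
\]
for some absolute constant $C$. Combining the two bounds gives $\|w_m\| \leq C\,p^{(c_m + n - m + 1/(p-1))k}\|w_m\|_{p^n\Gamma}$; taking $m - n > c_m + 1/(p-1)$ (which holds for all large $m$) and $k \to \infty$ forces $w_m = 0$, hence $v = \overline\tr_{X,m}(v) \in A_n$.

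The main technical obstacle will be making the iterated upper bound precise, carefully tracking the higher-order terms $(p^mD)^J/J!$ with $J > k$ in the expansion of $(\exp(p^mD) - 1)^k$; for $m$ sufficiently large these terms are dominated by the leading contribution via the same uniform estimate on $\|D^j u\|_{p^n\Gamma}$, so the overall bound is governed by the $J = k$ term. The final assertion $(A_\infty)^{\la} = \bigcup_{n\geq 0}A_n$ then follows by taking the union over $n$.
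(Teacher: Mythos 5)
Your argument is correct, but it takes a markedly heavier route than the paper's at the concluding step. The two proofs share the essential observations: that $\overline\tr_{X,m}$ is continuous and $\Gamma$-equivariant and hence carries $p^n\Gamma$-analytic vectors to $p^n\Gamma$-analytic vectors, and that $(A_m)^{p^n\Gamma\text{-}\an}=(A_m)^{p^n\Gamma}=A_n$ because an analytic function on $p^n\Z_p$ that factors through the finite quotient $p^n\Gamma/p^m\Gamma$ is constant on a neighbourhood of $0$, hence constant. The divergence is what you do next. The paper simply notes that $\overline\tr_{X,m}(v)\to v$ as $m\to\infty$; since each $\overline\tr_{X,m}(v)$ lies in $A_n$ and $A_n$ is closed in $A_\infty$ (being the image of the continuous idempotent $\overline\tr_{X,n}$), this gives $v\in A_n$ at once. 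You instead run a two-sided Tate--Sen estimate on $w_m=v-\overline\tr_{X,m}(v)$: a lower bound from the bound $\|(\gamma_m-1)^{-1}\|\le p^{c_m}$ on $\ker\overline\tr_{X,m}$, and an upper bound from $\|D^ju\|_{p^n\Gamma}\le p^{nj}\|u\|_{p^n\Gamma}$ together with the ultrametric estimate on the expansion of $(\exp(p^mD)-1)^k$, to force $w_m=0$ outright for a single large $m$. This is valid (the constant $p^n$ in your sharpening of Lemma~\ref{LieBound} is correct, and the factorials contribute exactly the $1/(p-1)$ you account for), and it is essentially a re-derivation by hand of a piece of the decompletion argument that underlies Th\'eor\`eme 3.2 of \cite{BC16}, which the paper cites. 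For the statement at hand, though, it buys nothing the soft limiting argument doesn't already give, since $v\in A_n$ trivially implies $\overline\tr_{X,m}(v)=v$ for all $m\ge n$; the paper's proof is shorter, while yours would be the right template if one had no a priori convergence of the normalized traces to appeal to.
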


\begin{proof}
For $m\geq n$, as $\overline\tr_{X,m}$ is continuous and $\Gamma$-equivariant, it can be restricted to the $p^n\Gamma$-analytic vectors
\[(A_\infty)^{p^n\Gamma-\an}\xrightarrow{\overline\tr_{X,m}} (A_m)^{p^n\Gamma-\an}=(A_m)^{p^n\Gamma}=A_n.\]
Note that the action of $p^n\Gamma$ on $A_m$ is trivial on $p^m\Gamma$, hence the analyticity implies the first equality. Now the lemma follows from $\lim_{m\to +\infty}\overline\tr_{X,m}(x)=x$.
\end{proof}

\begin{para} \label{BCtotildeX}
Next we base change the tower $\{X_n\}_n$ to $\widetilde{X}$. Suppose $G_0$ is an open subgroup of $G$. Let
\[X_{G_0}:=\Spa(B^{G_0},(B^+)^{G_0})\]
be the corresponding covering of $X$. We can take the fiber product in the category of adic spaces over $C$
\[X'_{G_0,n}:=X_{G_0}\times_X X_n=X_{G_0}\times_Y Y_n,\]
and take its normalization 
\[X_{G_0,n}=\Spa(B_{G_0,n},B_{G_0,n}^+).\]
Note that by Abhyankar's lemma for rigid analytic spaces (cf. Lemma 4.2.2, 4.2.3 of \cite{DLLZ1}, which is based on earlier work of L\"utkebohmert), $X_{G_0,n}\to Y_n$ is unramified when $n$ is sufficiently large, 
hence $X_{G_0,n}\to X_n$ is finite \'etale and
\[X_{G_0,n}=X_{G_0,m}\times_{X_m} X_n= X_{G_0,m}\times_{Y_m} Y_n\]
for sufficiently large $m<n$. This implies that when $n$ is sufficiently large,
\[X_{G_0,\infty}=\Spa(B_{G_0,\infty},B_{G_0,\infty}^+):=X_{G_0,n}\times_{X_n} X_\infty=X_{G_0,n}\times_{Y_n} Y_\infty\]
is independent of $n$. Since $X_{G_0,n}\to Y_n$ can be written as a composite of rational embeddings and finite \'etale maps, Lemma \ref{AnRn} still holds in this setting. For example, $B_{G_0,\infty}^+$ is the $p$-adic completion of $\varinjlim_n B_{G_0,n}^+$. For sufficiently large $n$, there exist Tate's normalized traces $\overline\tr_{X_{G_0},n}:B_{G_0,\infty}\to B_{G_0,n}$ with same properties as in the case $\{X_n\}_n$. In particular, $(B_{G_0,\infty})^{p^n\Gamma-\an}=B_{G_0,n}$ for $n$ large enough.

\[\begin{tikzcd}
\widetilde{X}_{\infty}\arrow[d] \arrow[r] &X_{G_0,\infty}  \arrow[d] \arrow[r] &X_\infty \arrow[d] \arrow[r] & Y_\infty  \arrow[d]  \\
\widetilde{X}_{n}\arrow[d] \arrow[r] &X_{G_0,n}  \arrow[d] \arrow[r] &X_n  \arrow[d] \arrow[r] & Y_n \arrow[d]  \\
\widetilde{X} \arrow[r] &X_{G_0} \arrow[r] &X  \arrow[r] & Y \\
\end{tikzcd}.\]

Suppose $G'_0$ is another open subgroup of $G$ containing $G_0$. We have compatible maps $X_{G_0,n}\to X_{G'_0,n}$ which are finite \'etale when $n$ is sufficiently large. Hence $X_{G_0,\infty}\to X_{G'_0,\infty}$ is a finite \'etale covering of affinoid perfectoid spaces. Let $B_\infty^+$ be the $p$-adic completion of $\varinjlim_{G_0}B_{G_0,\infty}^+$ over all open subgroups $G_0$ of $G$ and $B_\infty=B_\infty^+[\frac{1}{p}]$ equipped with the norm induced from $B_{G_0,\infty}^+$. Then $(B_\infty, B_\infty^+)$ is again an affinoid perfectoid $(C,\cO_C)$-algebra as it is the completion of a direct limit of perfectoid affinoid $(C,\cO_C)$-algebras.  We denote $\Spa(B_\infty, B_\infty^+)$ by $\widetilde X_\infty$. Note that $\widetilde{X}_\infty$ also agrees with the fiber product of $\tilde{X}=\varprojlim X_{G_0}$ and $X_\infty=\varprojlim X_n$ in  the pro-Kummer \'etale site of $X$  equipped with the natural log structure defined by $S$, cf. \cite{DLLZ2}.

Clearly $G\times \Gamma$ acts on $B_\infty^+$ and $B_\infty$. Then it follows from Faltings's almost purity theorem, cf. \cite[Theorem 7.9 (iii)]{Sch12},  that $(B_\infty^+/(p))^{G_0}$ almost equals $B^{+}_{G_0,\infty}/(p)$. Hence $(B_\infty)^{G_0}=B_{G_0,\infty}$ and $(B_\infty^+)^{G_0}$ almost equals $B_{G_0,\infty}^+$.
\end{para}

\begin{prop}
$B_\infty$ satisfies the Tate-Sen conditions formulated by Berger-Colmez in \cite[D\'efinition 3.1.3]{BC08}  with 
\begin{itemize}
\item $S=\Q_p$;
\item $\tilde\Lambda=B_\infty$ equipped with the valuation induced by its norm;
\item $G_0=G\times\Gamma$, $H_0=G\times\{1\}$ as a subgroup of  $G\times\Gamma$;
\item $\Lambda_{H,n}=B_{H,n}, R_{H,n}=\overline\tr_{X_H,n}$ for any open subgroup $H$ of $H_0=G$.
\end{itemize}
\end{prop}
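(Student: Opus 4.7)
The proof will amount to verifying the three Tate--Sen axioms (TS1), (TS2), (TS3) of \cite[D\'efinition 3.1.3]{BC08} using the structural results already established in \ref{smallY}--\ref{BCtotildeX}, so the task is largely organizational. The plan is to take each axiom in turn and explain which previously-stated fact yields it.

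For (TS1)---the almost-\'etale descent condition producing, for any open normal pair $H_1 \subset H_2$ of subgroups of $H_0 = G$, an element $\alpha \in \tilde\Lambda^{H_1}$ of valuation bounded below by a universal $-c_1$ and satisfying $\sum_{\tau \in H_2/H_1} \tau(\alpha) = 1$---I would invoke Faltings's almost purity theorem as formulated in \cite[Theorem 7.9(iii)]{Sch12}. Indeed $X_{H_1,\infty} \to X_{H_2,\infty}$ is a finite \'etale covering of affinoid perfectoid spaces (\ref{BCtotildeX}), and $(B_\infty^+)^{H_i}$ almost equals $B_{H_i,\infty}^+$; the standard almost-trace argument then yields such an $\alpha$ with valuation $\geq -\epsilon$ for arbitrarily small $\epsilon > 0$. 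In particular $c_1$ can be chosen arbitrarily small.

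For (TS2), the data $\Lambda_{H,n} := B_{H,n}$ (for $n \geq n(H)$ large enough that $X_{H,n} \to Y_n$ is unramified) and $R_{H,n} := \overline{\tr}_{X_H,n}$ have already been constructed in \ref{BCtotildeX}. The axioms required---that $R_{H,n}$ is a continuous, $H$-equivariant, $B_{H,n}$-linear left inverse of the inclusion $B_{H,n} \hookrightarrow B_{H,\infty}$, that $\|R_{H,n}\|$ is controlled so that $R_{H,n}(B_{H,\infty}^+) \subset p^{-C_n} B_{H,n}^+$ with $C_n \to 0$, that the $R_{H,n}$ are compatible as $n$ grows, and that $\bigcup_n B_{H,n}$ is dense in $B_{H,\infty}$ (equivalently $\lim_n R_{H,n}(x) = x$)---are precisely the conclusions that were transported in \ref{BCtotildeX} from the toric base situation via Lemma \ref{AnRn}. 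The identification $(B_\infty)^H = B_{H,\infty}$, needed to make sense of the source of $R_{H,n}$, is the consequence of almost purity recorded at the end of \ref{BCtotildeX}.

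For (TS3), since $G_0/H_0 = \Gamma \cong \mathbb{Z}_p$, any open subgroup $G_1 \subset G_0$ maps onto some $p^m\Gamma$. Let $\gamma$ be a topological generator of this image; then $\gamma$ acts on $\ker R_{H,n}$ and it was verified in \ref{BCtotildeX} that $(\gamma-1)^{-1}$ is bounded on $\ker R_{H,n}$ by $p^{c_m}$ with $c_m \to 0$ as $m \to \infty$. This is exactly what (TS3) requires, with $c_3$ that can be made arbitrarily small by enlarging $n(G_1)$.

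The only real point of care, and the mild technical obstacle, is bookkeeping: one must choose the threshold $n(H)$ uniformly enough so that all axiom statements hold simultaneously as $H$ varies through open subgroups of $G$. This is handled exactly as in \ref{BCtotildeX} by invoking Abhyankar's lemma (\cite[Lemma 4.2.2, 4.2.3]{DLLZ1}) to ensure $X_{H,n} \to X_n$ is finite \'etale for $n \geq n(H)$, after which the constants $C_n$ and $c_n$ are inherited from the toric base $Y_\infty/Y$ and are independent of $H$. With this uniformity in hand, the three axioms follow immediately from the cited statements.
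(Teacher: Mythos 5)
Your proof is correct and follows essentially the same route as the paper's: (TS1) from Faltings's almost purity applied to the finite \'etale covers $X_{H_1,\infty}\to X_{H_2,\infty}$, and (TS2), (TS3) by citing the Tate normalized traces and their quantitative properties already established in \ref{TNT} and \ref{BCtotildeX}. You supply somewhat more bookkeeping detail (the uniformity in $H$ via Abhyankar's lemma, the explicit shape of the constants) than the paper's two-sentence proof, but the substance is the same.
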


\begin{proof}
To see (TS1), for any open subgroups $H_1\subset H_2$ of $G$, we know that $(B_\infty)^{H_2}=B_{H_2,\infty}\to (B_\infty)^{H_1}=B_{H_1,\infty}$ is a finite \'etale map between perfectoid algebras, hence by Faltings's almost purity theorem, $B_{H_2,\infty}^+\to B_{H_1,\infty}^+$ is almost finite \'etale. All the claims in (TS2), (TS3) are essentially verified in \ref{TNT} and above discussion.
\end{proof}

\begin{rem} \label{TSG0}
It follows from the definition directly that $B_\infty$ satisfies the Tate-Sen conditions with respect to the action of $G_0\times\Gamma$ for any open subgroup $G_0$ of $G$.
\end{rem}

\begin{lem} \label{Gammainv}
$(B_\infty)^\Gamma=B$.
\end{lem}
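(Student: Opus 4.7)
The inclusion $B \subset (B_\infty)^\Gamma$ is immediate because the $\Gamma$-action on $\widetilde X_\infty = \widetilde X \times_Y Y_\infty$ comes entirely from the $Y_\infty$-factor and fixes the subring $B \subset B_\infty$ pointwise. The content of the lemma is therefore the reverse inclusion.

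My plan for $(B_\infty)^\Gamma \subset B$ is to apply Faltings's almost purity theorem to the pro-\'etale Galois cover $\widetilde X_\infty \to \widetilde X$ with group $\Gamma$. By construction both $\widetilde X$ and $\widetilde X_\infty$ are affinoid perfectoid (the latter verified in \ref{BCtotildeX}). This cover is the base change of $Y_\infty \to Y$ along $\widetilde X \to Y$; although $Y_\infty \to Y$ is only pro-Kummer-\'etale (with ramification at the origin when $Y = \mathbb{B}^1$), the base change to $\widetilde X$ is genuinely pro-\'etale, because the assumption that the ramification indices of $X_i \to X$ at each point of $S$ are $p$-powers tending to infinity guarantees that any ramification introduced by $Y_n \to Y$ is already absorbed in the perfectoid limit $\widetilde X$. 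Almost purity \cite[Theorem 7.9(iii)]{Sch12} then gives that $(B_\infty^+/p)^\Gamma$ is almost equal to $B^+/p$ and that $H^i_\cont(\Gamma, B_\infty^+/p)$ is almost zero for $i \geq 1$. A standard $p$-adic limit argument using the short exact sequence $0 \to B_\infty^+ \xrightarrow{p} B_\infty^+ \to B_\infty^+/p \to 0$ then upgrades this to the almost isomorphism $(B_\infty^+)^\Gamma \approx B^+$, and inverting $p$ removes the almost qualifier to give $(B_\infty)^\Gamma = B$.

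I expect the main technical subtlety to lie in verifying that $\widetilde X_\infty \to \widetilde X$ really is pro-\'etale (and not merely pro-Kummer-\'etale) when $S$ is non-empty, which requires carefully comparing the Abhyankar-type calculations already invoked in \ref{BCtotildeX}. An alternative route, perhaps more in the spirit of this section, is to stay within the Tate-Sen formalism: the Tate traces $\overline\tr_{X_{G_0},n}: B_{G_0,\infty} \to B_{G_0,n}$ are $R_n$-linear and hence compatible as $G_0$ shrinks, so after direct limit and completion they glue into a continuous $\Gamma$-equivariant operator $B_\infty \to B_{\infty,n}$ whose image has $\Gamma$-invariants contained in $B$; the asymptotic Tate trace identity $\lim_n \overline\tr_{\widetilde X,n}(x) = x$ together with a Galois-averaging identification at each finite level would then force $x \in B$ for any $\Gamma$-invariant $x \in B_\infty$.
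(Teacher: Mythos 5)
Your argument is correct and follows essentially the same route as the paper: both rely on showing that the tower above $\widetilde X$ is genuinely (pro-)finite \'etale — with the ramification at $S$ absorbed by the unbounded $p$-power ramification indices via Abhyankar's lemma — and then invoking Faltings's almost purity to identify $(B_\infty^+)^\Gamma$ with $B^+$ up to almost elements. The paper is slightly more precise in pinning down the tower $\widetilde X_n = \widetilde X \times_{X_{G_0,0}} X_{G_0,n}$ and its independence of $G_0$, which is exactly the "technical subtlety" you flag; your alternative Tate-trace sketch would also work but ultimately relies on the same Abhyankar input to know the traces exist and are compatible.
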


\begin{proof}
Recall that $\widetilde{X}=\Spa(B,B^+)$ is perfectoid and $B^+$ is the $p$-adic completion of the direct limit $\varinjlim_{G_0} B^+_{G_0,0}$ over all open subgroups $G_0$ of $G$. Let $n$ be a positive integer. Note that by our assumption on the ramification-index in \ref{setup} and Abhyankar's lemma (cf. Lemma 4.2.2, 4.2.3 of \cite{DLLZ1}), $X_{G_0,n}\to X_{G_0,0}$ is finite \'etale for sufficiently small $G_0$. Hence for a sufficiently small subgroup $G_0$, the fiber product 
\[\widetilde{X}_n=\Spa(B_n,B_n^+):=\widetilde{X}\times_{X_{G_0,0}} X_{G_0,n}\]
is independent of the choice of $G_0$ and $\widetilde{X}_n\to \widetilde{X}$ is a finite \'etale map between affinoid perfectoid spaces. By the same argument as in the proof of Lemma \ref{AnRn}, $B^+_n$ is the $p$-adic completion of $\varinjlim_{G_0} B^+_{G_0,n}$. Hence it follows from our discussion in \ref{BCtotildeX} that $B_\infty^+$ is the $p$-adic completion of $\varinjlim_n B^+_n$. By Faltings's almost purity theorem, we see that $(B^+_\infty)^\Gamma$ is almost $B^+$ and $(B_\infty)^\Gamma=B$.
\end{proof}

\begin{para}
Now suppose $V$ is a finite-dimensional continuous representation of $G$ over $\Q_p$. We are going to construct a morphism: $\Lie(\Gamma)\to \End_{B_\infty}(B_\infty\otimes_{\Q_p}V)$, i.e. a Lie algebra representation of $\Lie(\Gamma)$ on $B_\infty\otimes_{\Q_p}V$.

As $G$ is compact, there exists a $G$-stable $\Z_p$-lattice $T\subset V$. Let $(B_\infty)^{\circ}\subset B_\infty$ be the  subset of powerbounded elements, equivalently elements with spectral norm at most $1$. Note that  $(B_\infty)^{\circ}\otimes_{\Z_p}T$ carries a diagonal action of $G$ and an action of $\Gamma$ on the first factor. This defines an action of $G\times\Gamma$ on $(B_\infty)^{\circ}\otimes_{\Z_p}T$.
\end{para}

\begin{prop} \label{decompletion}
Fix a constant $c<\frac{1}{2}$ inside $|C^{\times}|\subset \R^{\times}_{\geq 0}$. Suppose $G_0$ is an open subgroup of $G$ acting trivially on $T/pT$. Then there exists a constant $n(G_0)$, which is independent of $T$ and only depends on $c$, such that for any integer $n\geq n(G_0)$, the tensor product $(B_\infty)^{\circ}\otimes_{\Z_p}T$ has a unique free $B_{G_0,n}^{+}$-submodule $D_{G_0,n}^{+}(T)$ of rank $\dim_{\Q_p}V$ with the following properties:
\begin{enumerate}
\item $D_{G_0,n}^{+}(T)$ is fixed by $G_0$ and $G\times\Gamma$-stable;
\item the natural morphism $(B_\infty)^{\circ}\otimes_{B_{G_0,n}^{+}} D_{G_0,n}^{+}(T)\to (B_\infty)^{\circ}\otimes_{\Z_p}T$ is an isomorphism;
\item there exists a basis $\mathfrak{B}$ of $D_{G_0,n}^{+}(T)$ over $B_{G_0,n}^{+}$ such that $(\gamma-1)(\mathfrak{B})\subset p^c D_{G_0,n}^{+}(T)$ for any $\gamma\in \Gamma$;
\item $(\gamma-1)^m (D_{G_0,n}^{+}(T))\subset p D_{G_0,n}^{+}(T)$ for any $\gamma\in\Gamma$ and $m\geq m(c,n)$, a  constant only depending on $c,n$.
\end{enumerate}
\end{prop}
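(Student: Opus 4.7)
The plan is to apply the Tate--Sen descent machinery of Berger--Colmez \cite{BC08} directly. By the preceding proposition and Remark \ref{TSG0}, $B_\infty$ satisfies the Tate--Sen axioms (TS1)--(TS3) with respect to $G_0\times\Gamma$ for every open subgroup $G_0\subset G$, with $H_0=G_0$ in their notation. Consequently Proposition 3.2.5 and Th\'eor\`eme 3.3.1 of \cite{BC08} apply essentially verbatim, and the task is to translate their statements into the precise form demanded here and to verify that the parameters can be chosen uniformly in $T$.

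Concretely, I would proceed as follows. Fix a $\Z_p$-basis of $T$, so that the action of $G$ is encoded by a continuous cocycle $\sigma:G\to\mathrm{GL}_d(\Z_p)\subset\mathrm{GL}_d(B_\infty^+)$, where $d=\dim_{\Q_p}V$. The hypothesis that $G_0$ acts trivially on $T/pT$ means $\sigma|_{G_0}$ takes values in $1+p\cdot M_d(\Z_p)$, so this restricted cocycle is ``close to trivial'' in the Tate--Sen sense independently of $T$. For $n$ at least some threshold $n(G_0)$ depending only on $c$ and on the Tate--Sen constants attached to $G_0\times\Gamma$ (the constants $c_m$ governing $\|(\gamma-1)^{-1}\|$ on $\ker\overline{\tr}_{X_{G_0},m}$, and the $C_m$ in Lemma \ref{AnRn}), the Berger--Colmez successive-approximation argument produces a matrix $U\in 1+p^{c'}M_d(B_\infty^+)$ (with $c'>0$ depending only on $c$) such that $U^{-1}\sigma(g)U\in\mathrm{GL}_d(B_{G_0,n}^+)$ for all $g\in G_0$, and such that, setting $\mathfrak{B}$ to be the image of the original basis under $U$, we have $(\gamma-1)(\mathfrak{B})\subset p^c\cdot B_{G_0,n}^+\mathfrak{B}$ for every $\gamma\in\Gamma$. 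Define $D_{G_0,n}^+(T)$ to be the $B_{G_0,n}^+$-span of $\mathfrak{B}$ inside $B_\infty^\circ\otimes_{\Z_p}T$. Then (2) is automatic since $U$ is invertible over $B_\infty^+$, (3) holds by construction, and (4) follows from (3) together with the bound on $\|(\gamma-1)^{-1}\|$ provided by \ref{TNT}: once $(\gamma-1)\mathfrak{B}\subset p^c D_{G_0,n}^+(T)$, iterating yields factors of $p^{mc}$, so one only needs $mc\geq 1$.

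Uniqueness of $D_{G_0,n}^+(T)$, which in turn yields its stability under the full $G\times\Gamma$-action and hence property (1), is the standard argument: given another such $G_0$-fixed lattice $D'$ with property (3), the change-of-basis matrix $P\in\mathrm{GL}_d(B_\infty^+)$ is automatically $G_0$-fixed, so lies in $\mathrm{GL}_d(B_{G_0,\infty}^+)$; applying $\overline{\tr}_{X_{G_0},n}$ together with the smallness estimates from (3) forces $P\in\mathrm{GL}_d(B_{G_0,n}^+)$, whence $D'=D_{G_0,n}^+(T)$. For the $G\times\Gamma$-stability, any translate $g\gamma\cdot D_{G_0,n}^+(T)$ satisfies the same axioms for the conjugate subgroup $gG_0g^{-1}$, and uniqueness applied after passing to a normal open subgroup contained in $G_0\cap gG_0g^{-1}$ delivers the result. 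The main technical obstacle is keeping careful track of quantitative constants---ensuring $n(G_0)$ is independent of $T$ and depends only on $c$ and $G_0$---but this is precisely what the Berger--Colmez formalism is designed for, since the rank $d$ enters none of the decompletion estimates once the initial cocycle is known to be trivial mod $p$ on $G_0$.
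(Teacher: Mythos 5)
Your approach matches the paper's: both derive properties (1)--(3) by applying the Berger--Colmez d\'ecompl\'etion result (Proposition~3.3.1 of \cite{BC08}) to the Tate--Sen data $(B_\infty, G_0\times\Gamma, H_0=G_0, \Lambda_{G_0,n}=B_{G_0,n}, R_{G_0,n}=\overline{\tr}_{X_{G_0},n})$ established by the preceding proposition and Remark~\ref{TSG0}; the threshold $c<\tfrac12$ comes from the constraint $c_1+2c_2+2c_3<1$ there, taking $c_3=c$ and $c_1,c_2$ arbitrarily small, exactly as the paper does. Your sketch of the successive-approximation construction, the uniqueness via $\overline{\tr}_{X_{G_0},n}$, and the reduction of $G\times\Gamma$-stability to uniqueness over smaller normal subgroups are all in the spirit of \cite{BC08} and of the paper's one-line citation. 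The point where your proposal breaks is the argument for property~(4).

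You claim that ``once $(\gamma-1)\mathfrak{B}\subset p^c D_{G_0,n}^+(T)$, iterating yields factors of $p^{mc}$.'' This fails for general $\gamma\in\Gamma$ because $\gamma$ acts only \emph{semilinearly} over $B_{G_0,n}^+$: writing $v=\sum a_i b_i$ with $a_i\in B_{G_0,n}^+$ and $b_i\in\mathfrak{B}$, one has
\[
(\gamma-1)(v)=\sum \gamma(a_i)\,\bigl((\gamma-1)(b_i)\bigr)+\sum \bigl(\gamma(a_i)-a_i\bigr)b_i,
\]
and the second sum carries no power of $p^c$, since $B_{G_0,n}^+$ is fixed by $p^n\Gamma$ but not by all of $\Gamma$. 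So in general $(\gamma-1)(D_{G_0,n}^+(T))\not\subset p^c D_{G_0,n}^+(T)$ and the naive iteration does not give $p^{mc}$. The correct deduction of (4) from (3) first treats $\gamma'\in p^n\Gamma$, where $\gamma'-1$ \emph{is} $B_{G_0,n}^+$-linear with norm $\leq |p^c|$ on $D_{G_0,n}^+(T)$ by (3), so $(\gamma'-1)^{m'}(D_{G_0,n}^+(T))\subset p\,D_{G_0,n}^+(T)$ whenever $m'c\geq 1$; then for arbitrary $\gamma\in\Gamma$ one uses $\gamma^{p^n}-1=\sum_{k=1}^{p^n}\binom{p^n}{k}(\gamma-1)^k$ and $p\mid\binom{p^n}{k}$ for $1\leq k\leq p^n-1$ to obtain $(\gamma-1)^{p^n}(D_{G_0,n}^+(T))\subset(\gamma^{p^n}-1)(D_{G_0,n}^+(T))+p\,D_{G_0,n}^+(T)$, and iterates using $\gamma^{p^n}\in p^n\Gamma$. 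This produces $m(c,n)$ on the order of $p^n\lceil 1/c\rceil$ --- a genuine dependence on $n$ that your bound $mc\geq 1$ would have erased, and which is exactly why the statement allows $m$ to depend on $n$ as well as $c$. Finally, the bound on $\|(\gamma-1)^{-1}\|$ from \ref{TNT} that you cite lives on $\ker\overline{\tr}_{X_{G_0},m}$ and governs the decompletion step, not the nilpotence estimate in (4); it is not relevant here.
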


\begin{proof}
This follows from Proposition 3.3.1 of \cite{BC08} by choosing $c_3=c$ and $c_1,c_2$ sufficiently small such that $c_1+2c_2+2c_3< 1$. Note that in our setup, we may choose the constants $c_1,c_2,c_3$ in the Tate-Sen conditions to be arbitrarily small. The first three parts are the same as \cite[3.3.1]{BC08}. The last part is a consequence of the third one.

\end{proof}

Now we construct an action of $\Lie(\Gamma)$ on $B_\infty\otimes_{\Q_p}V$. Choose a constant $c$, a sufficiently small open subgroup $G_0$ and $n\geq n(G_0)$ as in the proposition. By Amice's result (see \ref{exaZp}), the last part of the proposition implies that the action of $p^m\Gamma$ on $D_{G_0,n}^{+}(T)\otimes_{\Z_p} \Q_p$ is analytic for sufficiently large $m$. Thus $\Lie(\Gamma)$ acts on $D_{G_0,n}^{+}(T)\otimes_{\Z_p} \Q_p$. Since $B_{G_0,n}$ is fixed by $p^n\Gamma$, this action of $\Lie(\Gamma)$ is $B_{G_0,n}$-linear. By the second part of the proposition, we may extend it $B_\infty$-linearly to an action of $\Lie(\Gamma)$ on $(B_\infty)^{\circ}\otimes_{\Z_p}T\otimes\Q_p=B_\infty\otimes_{\Q_p}V$. This action commutes with the action of $G\times\Gamma$ as the two actions commute when restricted to $D_{G_0,n}^{+}(T)\otimes \Q_p$.

Note that the second part of Proposition \ref{decompletion} implies that 
\[(B_\infty\otimes_{\Q_p} V)^{G_0}=(B_{\infty})^{G_0}\otimes_{B_{G_0,n}^{+}} D_{G_0,n}^{+}(T)=B_{G_0,\infty}\otimes_{B_{G_0,n}^{+}} D_{G_0,n}^{+}(T).\]

\begin{lem} \label{LAm}
For any $m\geq m(c,n)$, the subspace of $p^m\Gamma$-analytic vectors in $(B_\infty\otimes V)^{G_0}$ is $B_{G_0,m}\otimes_{B_{G_0,n}^{+}} D_{G_0,n}^{+}(T)$. In particular, the second part of Proposition \ref{decompletion} implies that there is a natural isomorphism
\[(B_\infty\otimes_{\Q_p}V)^{G_0,p^m\Gamma-\an}\otimes_{B_{G_0,m}}B_\infty\cong B_\infty\otimes_{\Q_p}V. \]
\end{lem}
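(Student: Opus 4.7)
The plan is to prove
\[\bigl((B_\infty\otimes_{\Q_p} V)^{G_0}\bigr)^{p^m\Gamma-\an} \;=\; B_{G_0,m} \otimes_{B_{G_0,n}^{+}} D_{G_0,n}^{+}(T)\]
by verifying both inclusions, exploiting the $B_{G_0,n}^+$-basis of $D := D_{G_0,n}^{+}(T)$ furnished by Proposition \ref{decompletion} to translate analyticity of a vector into analyticity of its coordinates.

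For $\supseteq$, I would first use Proposition \ref{decompletion}(4), which gives $(\gamma-1)^k D \subset p D$ for every $\gamma \in \Gamma$ once $k$ is large. By the Amice-type criterion of Example \ref{exaZp}, this makes $D \otimes_{\Z_p}\Q_p$ an analytic representation of $p^M\Gamma$ for all sufficiently large $M$; after enlarging $m(c,n)$ we may absorb this into the standing hypothesis $m \geq m(c,n)$. Lemma \ref{Ainftyla} (applied to $X_{G_0}$) identifies $(B_{G_0,\infty})^{p^m\Gamma-\an}$ with $B_{G_0,m}$. Since the diagonal $p^m\Gamma$-orbit map of an elementary tensor $a\otimes v$ is the product of the (analytic) orbit maps of $a$ and $v$, every element of $B_{G_0,m}\otimes_{B_{G_0,n}^+} D$ is $p^m\Gamma$-analytic.

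For the reverse containment, fix a $B_{G_0,n}^+$-basis $e_1,\ldots,e_d$ of $D$. By Proposition \ref{decompletion}(2), every $x\in (B_\infty\otimes V)^{G_0}$ has a unique expansion $x=\sum_j a_j e_j$ with $a_j\in B_{G_0,\infty}$. Writing $\gamma(e_j)=\sum_i M_{ij}(\gamma)\, e_i$ produces a matrix $M(\gamma)\in \GL_d(B_{G_0,n})$ and
\[\gamma(x) \;=\; \sum_i \Bigl(\sum_j M_{ij}(\gamma)\, \gamma(a_j)\Bigr) e_i.\]
The entries $M_{ij}(\gamma)$ are $p^m\Gamma$-analytic in $\gamma$ by the previous step, and so is $M(\gamma)^{-1}=M(\gamma^{-1})$. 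If $x$ is $p^m\Gamma$-analytic, each bracketed coefficient in the display is an analytic function of $\gamma\in p^m\Gamma$; multiplying by $M(\gamma)^{-1}$ then shows that $\gamma\mapsto \gamma(a_j)$ is analytic, so $a_j\in (B_{G_0,\infty})^{p^m\Gamma-\an}=B_{G_0,m}$. Hence $x\in B_{G_0,m}\otimes_{B_{G_0,n}^+} D$. The displayed isomorphism at the end of the lemma then follows by tensoring this description with $B_\infty$ over $B_{G_0,m}$ and applying Proposition \ref{decompletion}(2).

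The main delicate point is coordinating the various constants: Proposition \ref{decompletion} requires $n$ sufficiently large before $D$ exists, and Example \ref{exaZp} then forces a further lower bound on $m$ for the $\Gamma$-action on $D$ to be analytic at level $m$; both are absorbed into a single enlarged $m(c,n)$.
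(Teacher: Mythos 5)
Your proof is correct but takes a genuinely different route from the paper's. The paper applies Tate's normalized trace $\overline\tr_{X_{G_0},k}$ to push an analytic vector into $B_{G_0,k}\otimes_{B_{G_0,n}^+}D_{G_0,n}^+(T)$ for each $k\geq m$, decomposes $B_{G_0,k}$ into isotypic pieces for the characters of the finite quotient $p^m\Gamma/p^k\Gamma$ (only the trivial character being an analytic function on $p^m\Gamma$), concludes that the trace lands in $B_{G_0,m}\otimes D$, and lets $k\to\infty$. You instead work directly with the $B_{G_0,n}^+$-basis of $D$ and invert the cocycle matrix $M(\gamma)$ to transfer analyticity of $x$ to analyticity of its coordinate functions $\gamma\mapsto\gamma(a_j)$ in $B_{G_0,\infty}$, thereby reducing to the scalar decompletion $(B_{G_0,\infty})^{p^m\Gamma-\an}=B_{G_0,m}$ (the $B_{G_0,\infty}$-analogue of Lemma \ref{Ainftyla}, recorded in \ref{BCtotildeX}). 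Both arguments rest on the same two inputs — Amice's criterion via Proposition \ref{decompletion}(4) for the analyticity of $D\otimes_{\Z_p}\Q_p$, and the normalized trace for the scalar decompletion — but you shift the trace entirely into the scalar statement, which makes the reduction mechanism more transparent. Two small points worth making explicit: the identity $M(\gamma)^{-1}=M(\gamma^{-1})$ holds because $M(\gamma)$ has entries in $B_{G_0,n}$ and $p^n\Gamma$ fixes $B_{G_0,n}$, so the cocycle $M$ restricts to a genuine homomorphism on $p^n\Gamma$; and your caveat about enlarging $m(c,n)$ to cover both the Amice bound and the scalar decompletion is the same implicit convention the paper relies on, so it is not an extra cost of your method.
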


\begin{proof} \label{Dla}
For any $k\geq m$, the normalized trace $\overline\tr_{X_{G_0},k}:B_{G_0,\infty}\to B_{G_0,k}$ induces a map 
\[(B_\infty\otimes_{\Q_p} V)^{G_0,p^m\Gamma-\an}=(B_{G_0,\infty}\otimes_{B_{G_0,n}^{+}} D_{G_0,n}^{+}(T))^{p^m\Gamma-\an}\to (B_{G_0,k}\otimes_{B_{G_0,n}^{+}} D_{G_0,n}^{+}(T))^{p^m\Gamma-\an}.\]
As the action of $p^m\Gamma$ on $B_{G_0,k}$ is trivial on $p^k\Gamma$, we have a natural decomposition 
\[B_{G_0,k}=\bigoplus_{\chi: p^m\Gamma/p^k\Gamma\to C^\times}B_{G_0,k}[\chi]\]
into the direct sum of $\chi$-isotypic components, where $\chi$ runs through all characters of $p^m\Gamma/p^k\Gamma$. Note that $\chi$ is an analytic function on $p^m\Gamma$ only when $\chi$ is trivial. Hence
\[(B_{G_0,k}\otimes_{B_{G_0,n}^{+}} D_{G_0,n}^{+}(T))^{p^m\Gamma-\an}=(B_{G_0,k})^{p^m\Gamma}\otimes_{B_{G_0,n}^{+}} D_{G_0,n}^{+}(T)=B_{G_0,m}\otimes_{B_{G_0,n}^{+}} D_{G_0,n}^{+}(T).\] 
The rest of the proof is the same as the one of Lemma \ref{Ainftyla}.
\end{proof}

If $G_0$ is moreover a normal subgroup of $G$, we may choose $n$ large enough so that $B_{G,n}\to B_{G_0,n}$ is finite \'etale. Then by Galois descent, there is a $\Gamma$-equivariant isomorphism 
\[(D_{G_0,n}^{+}(T)\otimes_{\Z_p} \Q_p)^G\otimes_{B_{G,n}}B_{G_0,n}=D_{G_0,n}^{+}(T)\otimes_{\Z_p} \Q_p.\]
Let $D_{G,n}(V)= (D_{G_0,n}^{+}(T)\otimes_{\Z_p} \Q_p)^G$.  By the second part of Proposition \ref{decompletion}, there is a natural isomorphism
\[B_\infty\otimes_{B_{G,n}} D_{G,n}(V)\cong B_\infty\otimes_{\Q_p} V\]
and one can repeat all the above discussion with $G_0$ replaced by $G_0$. 
Hence we may reformulate the above construction into the following form, which clearly is independent of the choice of $T,c,G_0,n$.

\begin{prop} \label{relSenop}
For each finite-dimensional continuous representation $V$ of $G$ over $\Q_p$, there exists a (necessarily unique) $B_\infty$-linear action of $\Lie(\Gamma)$ on $B_\infty\otimes_{\Q_p}V$ 
\[\phi_V:\Lie(\Gamma)\to \End_{B_\infty}(B_\infty\otimes_{\Q_p}V)\] 
extending the natural action of $\Lie(\Gamma)$ on the $\Gamma$-locally analytic vectors in $(B_\infty\otimes_{\Q_p}V)^G$. Moreover, it satisfies the following properties:
\begin{enumerate}
\item $\phi_V$ commutes with the action of $G\times\Gamma$;
\item $\phi_V$ is functorial in $V$, i.e. suppose $\psi:V\to W$ is a $G$-equivariant map between $G$-representations, then $\mathbf{1}\otimes \psi: B_\infty\otimes V\to B_\infty\otimes W$ intertwines $\phi_V$ and $\phi_W$.
\item $\phi_V$ commutes with tensor products, i.e. suppose $V_1,V_2$ are two finite-dimensional representations of $G$, then $\phi_{V_1}\otimes \mathbf{1}+\mathbf{1}\otimes\phi_{V_2}=\phi_{V_1\otimes_{\Q_p}V_2}$ on $(B_\infty\otimes_{\Q_p} V_1)\otimes_{B_\infty}(B_\infty\otimes_{\Q_p} V_2)=B_\infty\otimes_{\Q_p}(V_1\otimes_{\Q_p} V_2)$.
\end{enumerate}
\end{prop}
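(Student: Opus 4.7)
The plan is to verify the claims for the operator $\phi_V$ constructed in the discussion immediately preceding the proposition. The only non-formal input needed is the uniqueness assertion in Proposition \ref{decompletion}, which will propagate to give uniqueness, independence of choices, and the three claimed compatibilities.

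First, I would establish uniqueness (and hence independence of the choices of $T$, $c$, $G_0$, $n$). By Lemma \ref{LAm}, applied after Galois descent down to $G$, the $B_{G,n}$-module $D_{G,n}(V)$ sits inside the $\Gamma$-locally analytic vectors of $(B_\infty \otimes_{\Q_p} V)^G$, and the natural map $B_\infty \otimes_{B_{G,n}} D_{G,n}(V) \to B_\infty \otimes_{\Q_p} V$ is an isomorphism. Consequently, a $B_\infty$-linear action of $\Lie(\Gamma)$ extending the intrinsically defined action on locally analytic vectors is completely determined by what it does on $D_{G,n}(V)$, and there it is forced. Any other set of auxiliary choices produces an extension of the same intrinsic datum, so by uniqueness the two constructions agree.

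Next, I would check the three properties. For (1), $D_{G,n}(V)$ is $\Gamma$-stable and fixed by $G_0$, and the $\Lie(\Gamma)$-action on a $\Gamma$-analytic representation always commutes with the $\Gamma$-action; $G$-equivariance is automatic because $G$ and $\Gamma$ commute and $\phi_V$ has been extended $B_\infty$-linearly. For (2), given a $G$-equivariant $\psi\colon V \to W$, choose $G$-stable lattices $T \subset V$, $T' \subset W$ with $\psi(T) \subset T'$, a common open normal subgroup $G_0$ acting trivially on both $T/pT$ and $T'/pT'$, and a common $n$. One verifies that $(\mathbf{1}\otimes\psi)(D_{G_0,n}^+(T))$ satisfies the defining conditions of Proposition \ref{decompletion} for the representation $T'$, so by uniqueness it lands in $D_{G_0,n}^+(T')$. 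Then $\mathbf{1}\otimes\psi$ is $\Lie(\Gamma)$-equivariant on locally analytic vectors, and the equivariance extends $B_\infty$-linearly. For (3), with $V_1, V_2$ and lattices $T_i$, the tensor product $D_{G_0,n}^+(T_1) \otimes_{B_{G_0,n}^+} D_{G_0,n}^+(T_2)$ satisfies the defining conditions for $D_{G_0,n}^+(T_1 \otimes_{\Z_p} T_2)$ (possibly after slightly enlarging $n$ to absorb cross terms), so by uniqueness the two agree. The Leibniz rule for the $\Lie(\Gamma)$-action on analytic vectors in a tensor product then yields $\phi_{V_1 \otimes V_2} = \phi_{V_1} \otimes \mathbf{1} + \mathbf{1} \otimes \phi_{V_2}$.

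The only non-bookkeeping step is checking condition (3) of Proposition \ref{decompletion} for the tensor product in (3) above: if bases $\mathfrak{B}_i$ satisfy $(\gamma - 1)\mathfrak{B}_i \subset p^c D_{G_0,n}^+(T_i)$ for all $\gamma \in \Gamma$, then for $v_i \in \mathfrak{B}_i$ one computes
\[
(\gamma - 1)(v_1 \otimes v_2) = (\gamma v_1 - v_1) \otimes \gamma v_2 + v_1 \otimes (\gamma v_2 - v_2),
\]
which lies in $p^c$ times the tensor-product module, as needed. I expect this tensor-product compatibility, together with verifying that the image module in (2) indeed satisfies all of the Tate–Sen conditions of Proposition \ref{decompletion}, to be the main (mild) technical points; everything else is a direct consequence of uniqueness of the decompletion.
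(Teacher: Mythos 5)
Your overall strategy is the same as the paper's: everything reduces to checking the claimed properties on the $\Gamma$-locally analytic vectors, where the $\Lie(\Gamma)$-action is intrinsic, and then invoking uniqueness of the $B_\infty$-linear extension (which holds because $D_{G,n}(V)$ both consists of $\Gamma$-locally analytic vectors in $(B_\infty\otimes V)^G$ and spans $B_\infty\otimes V$ over $B_\infty$). The uniqueness argument and the Leibniz cross-term computation in (3) are both fine.

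However, the argument you give for property (2) has a genuine gap. You assert that $(\mathbf{1}\otimes\psi)(D_{G_0,n}^+(T))$ \emph{satisfies the defining conditions of Proposition \ref{decompletion}} for the representation $T'$. Unless $\psi$ is an isomorphism, this cannot hold: those conditions require in particular that the module be free of rank $\dim_{\Q_p}W$ over $B_{G_0,n}^+$ and that $(B_\infty)^{\circ}\otimes_{B_{G_0,n}^+}(-)$ recover $(B_\infty)^{\circ}\otimes_{\Z_p}T'$, and the image of a rank-$\dim V$ module under a non-surjective (or non-injective) $\psi$ fails both. Your phrasing ``satisfies the defining conditions $\dots$ so by uniqueness it lands in $D_{G_0,n}^+(T')$'' is also a non sequitur: if it satisfied the conditions it would be equal to $D_{G_0,n}^+(T')$, not merely contained in it. The correct route for (2) bypasses the uniqueness of the decompleted module entirely: $\mathbf{1}\otimes\psi$ is $G\times\Gamma$-equivariant and continuous, hence maps $p^m\Gamma$-analytic vectors in $(B_\infty\otimes V)^{G_0}$ to $p^m\Gamma$-analytic vectors in $(B_\infty\otimes W)^{G_0}$; on analytic vectors the $\Lie(\Gamma)$-action is intrinsic and any continuous $\Gamma$-equivariant map automatically intertwines it. Since $D_{G_0,n}^+(T)\otimes\Q_p$ consists of analytic vectors (Lemma \ref{LAm}) and spans $B_\infty\otimes V$ over $B_\infty$, the intertwining of $\phi_V$ and $\phi_W$ follows by $B_\infty$-linearity. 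For (3) you could run the same cleaner argument (the tensor of two analytic vectors is analytic, and the intrinsic action obeys the Leibniz rule), but your identification of the decompleted module of $T_1\otimes T_2$ also works here, because the tensor product of the two free modules does have the correct rank; you should just make explicit that you pass through the multiplication map $((B_\infty)^\circ\otimes T_1)\otimes_{B_{G_0,n}^+}((B_\infty)^\circ\otimes T_2)\to(B_\infty)^\circ\otimes(T_1\otimes T_2)$ before comparing with $D_{G_0,n}^+(T_1\otimes T_2)$.
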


\begin{proof}
It is easy to check all these properties on the $\Gamma$-locally analytic vectors in $(B_\infty\otimes_{\Q_p}V)^G$. We omit the details here.
\end{proof}


If we fix a generator of $\Lie(\Gamma)$, then this proposition becomes the form claimed in the beginning of this subsection.

Suppose $G_0$ is an open subgroup of $G$. As $B_\infty$ satisfies the Tate-Sen conditions with respect to $G_0\times\Gamma$ (see Remark \ref{TSG0}), Proposition \ref{relSenop} still holds with all $G$ replaced by $G_0$. Thus Proposition \ref{relSenop} can be generalized as follows. 

\begin{prop} 
For each finite-dimensional continuous representation $V$ of $G$ over $\Q_p$, there exists a (necessarily unique) $B_\infty$-linear action of $\Lie(\Gamma)$ on $B_\infty\otimes_{\Q_p}V$, extending its natural action on the $G$-smooth, $\Gamma$-locally analytic vectors in $B_\infty\otimes_{\Q_p}V$. Here an element in  $B_\infty\otimes_{\Q_p}V$ is called $G$-smooth if it is fixed by some open subgroup of $G$.  Moreover, this action satisfies all three properties as in the previous proposition.
\end{prop}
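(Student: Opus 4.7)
The plan is to derive this generalization by applying Proposition \ref{relSenop} separately to each open subgroup $G_0$ of $G$, using Remark \ref{TSG0} which ensures $B_\infty$ satisfies the Tate-Sen conditions with respect to $G_0 \times \Gamma$. For every open $G_0 \subset G$, viewing $V$ as a continuous $G_0$-representation and invoking the previous proposition produces a $B_\infty$-linear action $\phi_{V,G_0} : \Lie(\Gamma) \to \End_{B_\infty}(B_\infty \otimes_{\Q_p} V)$ that extends the natural $\Lie(\Gamma)$-action on the $\Gamma$-locally analytic vectors inside $(B_\infty \otimes_{\Q_p} V)^{G_0}$.

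The first key step is to check that $\phi_{V,G_0}$ is independent of $G_0$. For open subgroups $G_0' \subset G_0$ of $G$, one has the inclusion $(B_\infty \otimes V)^{G_0,\la} \subset (B_\infty \otimes V)^{G_0',\la}$, so $\phi_{V,G_0'}$, which by construction restricts to the natural action on the larger space, also restricts to the natural action on the smaller one. By the uniqueness clause in Proposition \ref{relSenop} applied to $G_0$ --- valid because Lemma \ref{LAm} shows that $(B_\infty \otimes V)^{G_0, p^m\Gamma-\an}$ generates $B_\infty \otimes V$ over $B_\infty$ for $m$ large, so any $B_\infty$-linear extension is pinned down by its restriction there --- one concludes $\phi_{V,G_0'} = \phi_{V,G_0}$. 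Setting $\phi_V := \phi_{V,G_0}$ for any choice of open $G_0$, the resulting action then extends the natural $\Lie(\Gamma)$-action on $\bigcup_{G_0} (B_\infty \otimes V)^{G_0,\la}$, which is precisely the space of $G$-smooth, $\Gamma$-locally analytic vectors; uniqueness of such an extension is immediate from the same generation principle applied to any single open $G_0$.

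The three listed properties transfer from Proposition \ref{relSenop}. Functoriality (2) and tensor compatibility (3) follow directly from their counterparts for each $\phi_{V,G_0}$. The commutation with $G \times \Gamma$ asserted in (1) is the delicate point, since the previous proposition only supplies commutation with $G_0 \times \Gamma$. To upgrade this, I would choose $G_0$ normal in $G$, and observe that the lattice $D_{G_0,n}^{+}(T)$ of Proposition \ref{decompletion} is then $G$-stable by its uniqueness property: for any $g \in G$, the translate $g \cdot D_{G_0,n}^{+}(T)$ satisfies the defining conditions relative to $gG_0g^{-1} = G_0$, hence coincides with $D_{G_0,n}^{+}(T)$. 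The analytic $\Lie(\Gamma)$-action on $D_{G_0,n}^{+}(T) \otimes \Q_p$ is then automatically $G$-equivariant, since the $G$- and $\Gamma$-actions commute on $B_\infty$ and on $V$ separately and the $\Lie(\Gamma)$-action is recovered from the $\Gamma$-action by Amice-type analyticity; passing to the $B_\infty$-semilinear extension preserves this commutation via a direct check on pure tensors $b \otimes d$. I expect the main obstacle in the whole argument to be precisely this last step of promoting $G_0$-equivariance to $G$-equivariance through the uniqueness of the decompletion lattice.
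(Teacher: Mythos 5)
Your proof is correct and follows the same approach as the paper: apply Proposition \ref{relSenop} to each open subgroup $G_0$ (legitimate by Remark \ref{TSG0}) and use the uniqueness clause, via the generation statement in Lemma \ref{LAm}, to conclude that all the resulting actions $\phi_{V,G_0}$ coincide. The detour through decompletion lattices in your final step is unnecessary, though: once you have shown $\phi_{V,G_0}=\phi_{V,G}$ for every open $G_0$, the commutation with $G\times\Gamma$ is simply inherited from Proposition \ref{relSenop}(1) applied with the full group $G$, so there is no genuine obstacle to worry about there.
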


Hence this action of $\Lie(\Gamma)$ on $B_\infty\otimes_{\Q_p}V$ only depends on the restriction of the representation to any open subgroup of $G$. As it commutes with $\Gamma$, it induces a $B$-linear action of $\Lie(\Gamma)$ on $(B_\infty\otimes_{\Q_p} V)^\Gamma=B\otimes_{\Q_p} V$ by Lemma \ref{Gammainv}.

\subsection{Proof of the main result I: construction}  \label{p1construction}
\begin{para} \label{VsCan}
We will first show that the action in Proposition \ref{relSenop} factors through $B\otimes\Lie(G)$. As a byproduct, this will imply Theorem \ref{pCR}. The strategy is to apply Proposition \ref{relSenop} to $V=\sC^{\an}(G,\Q_p)$, the space of analytic functions on $G$. However, as this is an infinite-dimensional vector space over $\Q_p$, it requires a limiting argument plus some extra work.

Let $G_0\subset G$ be a compact open subgroup equipped with an integer valued, saturated $p$-valuation as in \ref{2ndcoord}. By Proposition \ref{algdense}, we may replace $G_0$ by a smaller subgroup and assume that there is a dense sub-algebra $\varinjlim_{k\in\N} V_k\subset\sC^{\an}(G_0,\Q_p)$, where each $V_k$ is a finite-dimensional subspace of $\sC^{\an}(G_0,\Q_p)$ stable under both the left and right translation actions of $G_0$. We will always view $V_k$ as a representation of $G_0$ using the \textit{left} translation action.

Let $\sC^{\an}(G_0,\Q_p)^o$ be the unit ball of $\sC^{\an}(G_0,\Q_p)$ with respect to the norm $\|\cdot\|_{G_0}$  (see \ref{2ndcoord} for the notation here). Then 
\[V_k^o:=V_k\cap \sC^{\an}(G_0,\Q_p)^o\]
is a $G_0$-stable lattice. Moreover, it follows from Lemma \ref{modpn+1trivial} that $V_k^o/p$ is fixed by an open subgroup $G_1$ of $G_0$ for all $k$. Now we apply proposition \ref{decompletion} to $V_k$, with $G=G_0$: fix a constant $c$ as in the proposition and $n\geq n(G_1)$. There is $D^+_{G_1,n}(V_k^o)\subset (B_\infty)^o\otimes V_k^o$ such that 
\[(B_\infty)^{\circ}\otimes_{B_{G_1,n}^{+}} D_{G_1,n}^{+}(V_k^o)=(B_\infty)^{\circ}\otimes_{\Z_p}V_k^o.\] 
By the uniqueness of $D_{G_1,n}^{+}(V_k^o)$, these $\{D_{G_1,n}^{+}(V_k^o)\}_k$ form a direct system. Hence we may take the direct limit of this equality over $k$:
\[(B_\infty)^{\circ}\otimes_{B_{G_1,n}^{+}} \varinjlim_k D_{G_1,n}^{+}(V_k^o)=(B_\infty)^{\circ}\otimes_{\Z_p}\varinjlim_k V_k^o.\] 
By taking the $p$-adic completion and inverting $p$, we obtain
\[B_\infty\widehat{\otimes}_{B_{G_1,n}} D_{G_1,n}=B_\infty\widehat{\otimes}_{\Q_p}\sC^{\an}(G_0,\Q_p).\]
Here $D^+_{G_1,n}$ is the $p$-adic completion of $\varinjlim_k D_{G_1,n}^{+}(V_k^o)$ and $D_{G_1,n}=D^+_{G_1,n}\otimes_{\Z_p}\Q_p$ equipped with the $p$-adic topology. The right hand side becomes $\sC^{\an}(G_0,\Q_p)$ as $\varinjlim_k V_k$ is dense inside of it.

Now we can construct an action of $\Lie(\Gamma)$ as before: there is an integer $m\geq 0$ such that $(\gamma-1)^mD^+_{G_1,n}(V_k^o)\subset pD^+_{G_1,n}(V_k^o)$ for all $k\geq0$ and $\gamma\in\Gamma$. Hence
\[(\gamma-1)^mD^+_{G_1,n}\subset pD^+_{G_1,n}\]
for any $\gamma\in\Gamma$. By \ref{exaZp}, the action of $\Gamma$ on $D_{G_1,n}$ is locally analytic. Its Lie algebra action can be extended $B_\infty$-linearly to an action on $B_\infty\widehat{\otimes}_{B_{G_1,n}} D_{G_1,n}=B_\infty\widehat{\otimes}_{\Q_p}\sC^{\an}(G_0,\Q_p)$:
\[\phi_{G_0}:\Lie(\Gamma)\to\End_{B_\infty}(B_\infty\widehat{\otimes}_{\Q_p}\sC^{\an}(G_0,\Q_p)).\]
Again this action uniquely extends the natural action of $\Lie(\Gamma)$ on the $\Gamma$-locally analytic vectors in $(B_\infty\widehat{\otimes}_{\Q_p}\sC^{\an}(G_0,\Q_p))^{G_0}$, where $G_0$ acts diagonally on $B_\infty$ and $\sC^{\an}(G_0,\Q_p)$. In fact, let $D_{G_0,n}^{+}=(D_{G_1,n}^{+})^{G_0}$ and $D_{G_0,n}=(D_{G_1,n})^{G_0}=D_{G_0,n}^{+}\otimes_{\Z_p}{\Q_p}$. Then by arguments similar to Lemma \ref{LAm} and the paragraph below it, we have the following lemma.
\end{para}

\begin{lem} \label{DG0n}
For $n$ sufficiently large, $D_{G_0,n}$ is the subspace of $p^n\Gamma$-analytic vectors in $(B_\infty\widehat{\otimes}_{\Q_p}\sC^{\an}(G_0,\Q_p))^{G_0}=(B_\infty)^{G_0-\an}$, and $B_\infty\widehat{\otimes}_{B_{G_0,n}} D_{G_0,n}=B_\infty\widehat{\otimes}_{\Q_p}\sC^{\an}(G_0,\Q_p)$.
 \end{lem}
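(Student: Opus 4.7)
The plan is to follow the blueprint of Lemma \ref{LAm} and the Galois-descent paragraph just after Proposition \ref{relSenop}, adapted to the infinite-dimensional Banach setting of $\sC^{\an}(G_0,\Q_p)$. The second assertion $B_\infty\widehat{\otimes}_{B_{G_0,n}}D_{G_0,n}=B_\infty\widehat{\otimes}_{\Q_p}\sC^{\an}(G_0,\Q_p)$ will be extracted from the already-established identity $B_\infty\widehat{\otimes}_{B_{G_1,n}}D_{G_1,n}=B_\infty\widehat{\otimes}_{\Q_p}\sC^{\an}(G_0,\Q_p)$ of \ref{VsCan} by finite Galois descent along $B_{G_0,n}\to B_{G_1,n}$, and the first assertion will then follow by taking $G_0$-invariants and extracting the $p^n\Gamma$-analytic part via a character-decomposition argument essentially identical to the proof of Lemma \ref{LAm}.

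For the descent step, I first arrange that $G_1$ is open and normal in $G_0$ (which is permitted since, by Lemma \ref{modpn+1trivial}, we may replace $G_1$ by the subgroup $G_0^{p^{n+1}}$ in the $p$-valuation of $G_0$, which is normal with finite quotient). For $n$ sufficiently large, Abhyankar's lemma combined with the discussion in \ref{BCtotildeX} makes $B_{G_0,n}\to B_{G_1,n}$ a finite \'etale Galois cover with group $G_0/G_1$. Since the $G_1$-action on each lattice $D_{G_1,n}^{+}(V_k^o)$ is trivial by Proposition \ref{decompletion}(1), the residual $G_0/G_1$-action is $B_{G_1,n}^+$-semilinear; applying classical Galois descent to each finite free piece produces a descent $D_{G_0,n}^+(V_k^o)$ with $B_{G_1,n}^+\otimes_{B_{G_0,n}^+}D_{G_0,n}^{+}(V_k^o)=D_{G_1,n}^+(V_k^o)$, uniformly in $k$. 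Passing to the filtered colimit over $k$ and then to the $p$-adic completion commutes with these finite flat base changes, yielding $B_{G_1,n}\widehat{\otimes}_{B_{G_0,n}}D_{G_0,n}=D_{G_1,n}$, and tensoring with $B_\infty$ gives the second claim.

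For the identification of analytic vectors, I take $G_0$-invariants on both sides of the second claim. The right-hand side becomes $(B_\infty)^{G_0-\an}$ by the universal property of $\sC^{\an}(G_0,\Q_p)$, and since the $G_0$-action on $D_{G_1,n}$ factors through $G_0/G_1$ with $D_{G_0,n}=(D_{G_1,n})^{G_0/G_1}$ trivially acted upon, taking invariants on the left yields $B_{G_0,\infty}\widehat{\otimes}_{B_{G_0,n}}D_{G_0,n}$, so that $(B_\infty)^{G_0-\an}=B_{G_0,\infty}\widehat{\otimes}_{B_{G_0,n}}D_{G_0,n}$. To extract the $p^n\Gamma$-analytic vectors I then mimic the proof of Lemma \ref{LAm}: for $k\geq n$ the trace $\overline{\tr}_{X_{G_0},k}\otimes\id$ is $\Gamma$-equivariant, continuous and converges to the identity as $k\to\infty$; decomposing $B_{G_0,k}$ as a representation of the finite quotient $p^n\Gamma/p^k\Gamma$ into character isotypic pieces and observing that no non-trivial character of $p^n\Gamma/p^k\Gamma$ is the restriction of an analytic function on $p^n\Gamma$ (even after tensoring with the already-analytic $p^n\Gamma$-action on $D_{G_0,n}$, which can only shift the apparent character by the trivial one), only the trivial character survives. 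Hence the image of any $p^n\Gamma$-analytic $v$ in $B_{G_0,k}\widehat{\otimes}_{B_{G_0,n}}D_{G_0,n}$ lies in $(B_{G_0,k})^{p^n\Gamma}\widehat{\otimes}_{B_{G_0,n}}D_{G_0,n}=D_{G_0,n}$, and sending $k\to\infty$ gives $v\in D_{G_0,n}$.

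The main obstacle I anticipate is the descent step in an infinite-dimensional Banach situation. The route above circumvents this by descending each finite-dimensional lattice $D_{G_1,n}^+(V_k^o)$ separately, where classical Galois descent applies directly, and then checking that the descent data and the resulting transition maps have $p$-adically uniform distortions as $k$ varies; this uniformity is what allows the filtered colimit and the subsequent $p$-adic completion to commute with the descent. Once this is in place, the remaining steps reduce cleanly to the Tate-Sen trace argument already used in Lemma \ref{LAm}.
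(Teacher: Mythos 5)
Your proposal follows the route the paper indicates — Galois descent along $B_{G_0,n}\to B_{G_1,n}$ exactly as in the paragraph below Lemma \ref{LAm}, then the Tate-normalized-trace and character-isotypic argument of Lemma \ref{LAm} to extract the $p^n\Gamma$-analytic vectors — and rightly handles the infinite-dimensional descent by going piece by piece over the $V_k$ with uniform $p$-adic bounds. The one imprecision worth noting is that the displayed integral identity $B_{G_1,n}^+\otimes_{B_{G_0,n}^+}D_{G_0,n}^{+}(V_k^o)=D_{G_1,n}^+(V_k^o)$ only holds after inverting $p$ (or up to a cokernel killed by a fixed $p$-power, since $B^+_{G_0,n}\to B^+_{G_1,n}$ is only almost finite \'etale rather than finite \'etale integrally), but as this bound is independent of $k$ your ``uniform distortion'' caveat does account for it.
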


The action of $\Lie(\Gamma)$ via $\phi_{G_0}$ commutes with $\Gamma$ as both actions commute on $D_{G_1,n}$. Moreover, it follows from the functorial property of $\phi_{V_k}$ in proposition \ref{relSenop} that the action of $\Lie(\Gamma)$ on $D_{G_1,n}^{+}(V_k^o)\otimes\Q_p$ commutes with the right translation action of $G_0$. By passing to the limit, we see that $\phi_{G_0}$ also commutes with the right translation action of $G_0$. Recall that the multiplication structure on $\sC^{\an}(G_0,\Q_p)$ induces maps $V_k\otimes V_l\to V_{k+l}$, cf. \ref{algdense}. Hence the last part of proposition \ref{relSenop} implies that for any $\theta\in \phi_{G_0}(\Lie(\Gamma)), f_1,f_2\in\sC^{\an}(G_0,\Q_p)$,
\[\theta(f_1)f_2+f_1\theta(f_2)=\theta(f_1f_2),\]
i.e. $\theta$ is a derivation. Hence we have proved:

\begin{prop}
$\phi_{G_0}(x)$ acts as a $G_0$-right-invariant derivation on $B_\infty\widehat{\otimes}_{\Q_p}\sC^{\an}(G_0,\Q_p)$ for any $x\in\Lie(\Gamma)$. Moreover it commutes with $\Gamma$.
\end{prop}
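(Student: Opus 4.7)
The plan is to verify the three assertions (commutation with $\Gamma$, commutation with right translation by $G_0$, and the Leibniz rule) by reducing them to statements about the finite-dimensional $G_0$-representations $V_k$, and then invoking the functorial and monoidal properties of $\phi_{V_k}$ established in Proposition \ref{relSenop}. By the $B_\infty$-linearity of $\phi_{G_0}$ and the density of $\varinjlim_k D_{G_1,n}^{+}(V_k^o)\otimes\Q_p$ in $D_{G_1,n}$ (whose $B_\infty$-extension equals $B_\infty\widehat\otimes_{\Q_p}\sC^{\an}(G_0,\Q_p)$), it suffices to check each property on each $D_{G_1,n}^{+}(V_k^o)\otimes\Q_p$.

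Commutation with $\Gamma$ is essentially immediate from the construction: on each $D_{G_1,n}^{+}(V_k^o)\otimes \Q_p$, the action of $\Lie(\Gamma)$ is obtained by differentiating the analytic action of an open subgroup of $\Gamma$, and since $\Gamma\cong\Z_p$ is abelian this derived action commutes with the full group $\Gamma$. Extending $B_\infty$-linearly and passing to the limit yields the claim.

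For the right translation, observe that left and right translation on $\sC^{\an}(G_0,\Q_p)$ commute, so for every $g\in G_0$, right translation ${}^r g$ preserves each $V_k$ and is a $G_0$-equivariant endomorphism of $V_k$ (viewed as a $G_0$-representation via the left action). Proposition \ref{relSenop}(2) then produces a commutative diagram intertwining $\phi_{V_k}$ with itself via $\mathbf{1}\otimes {}^r g$. Passing to the limit over $k$ and extending $B_\infty$-linearly shows $\phi_{G_0}(x)$ commutes with ${}^r g$ for all $g\in G_0$, which is the $G_0$-right-invariance.

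For the Leibniz rule, the multiplication $V_k\otimes V_l\to V_{k+l}$ established in Proposition \ref{algdense} is $G_0$-equivariant for the left translation action, and it extends $B_\infty$-bilinearly to a map compatible with the multiplication on $B_\infty\widehat\otimes\sC^{\an}(G_0,\Q_p)$. Proposition \ref{relSenop}(3) gives $\phi_{V_k\otimes V_l}=\phi_{V_k}\otimes\mathbf{1}+\mathbf{1}\otimes\phi_{V_l}$, and functoriality applied to the multiplication map identifies this with $\phi_{V_{k+l}}$ after multiplication. Unwinding, this says $\theta(f_1 f_2)=\theta(f_1)f_2+f_1\theta(f_2)$ for $f_1\in V_k$, $f_2\in V_l$ and $\theta\in\phi_{G_0}(\Lie(\Gamma))$. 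Density of $\varinjlim_k V_k$ in $\sC^{\an}(G_0,\Q_p)$ plus $B_\infty$-linearity extends the identity to the full completed tensor product. The only delicate point is the compatibility of the completed tensor product with the direct-limit identification $B_\infty\widehat\otimes_{B_{G_1,n}}D_{G_1,n}=B_\infty\widehat\otimes_{\Q_p}\sC^{\an}(G_0,\Q_p)$, but this was already verified by passing Proposition \ref{decompletion}(2) through $\varinjlim_k$, $p$-adic completion, and inversion of $p$; so no new difficulty arises. I do not anticipate a serious obstacle, as all the work is contained in Proposition \ref{relSenop}.
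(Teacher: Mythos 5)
Your proof is correct and follows essentially the same approach as the paper's: commutativity with $\Gamma$ is reduced to $D_{G_1,n}$, right-invariance comes from the functorial property of $\phi_{V_k}$ applied to the right-translation endomorphism ${}^r g$ of $V_k$, and the Leibniz rule comes from the monoidal property of $\phi_V$ combined with the multiplicative structure $V_k\otimes V_l\to V_{k+l}$ of Proposition \ref{algdense}. The density/continuity step at the end, which the paper treats implicitly, is handled carefully and correctly.
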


\begin{cor}
$\phi_{G_0}$ factors through $B\otimes_{\Q_p}\Lie(G_0)\subset\End_{B_\infty}(B_\infty\widehat{\otimes}_{\Q_p}\sC^{\an}(G_0,\Q_p))$. Here $\Lie(G_0)$ acts on $\sC^{\an}(G_0,\Q_p)$ by the infinitesimal action of the left translation of $G_0$, and we extend it $B_\infty$-linearly to an action of $B\otimes_{\Q_p}\Lie(G_0)$ on $B_\infty\widehat{\otimes}_{\Q_p}\sC^{\an}(G_0,\Q_p)$.
\end{cor}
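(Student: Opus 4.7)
The plan is to classify the $B_\infty$-linear continuous $G_0$-right-invariant derivations of $B_\infty\widehat{\otimes}_{\Q_p}\sC^{\an}(G_0,\Q_p)$ as being exactly $B_\infty\otimes_{\Q_p}\Lie(G_0)$ acting via the $B_\infty$-linear extension of the infinitesimal left-translation action, and then to use the commutativity of $\phi_{G_0}$ with $\Gamma$ together with Lemma \ref{Gammainv} to descend the coefficients from $B_\infty$ to $B$.

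First, a $B_\infty$-linear derivation $D$ is determined by its restriction to $1\otimes\sC^{\an}(G_0,\Q_p)$. The $G_0$-right-invariance of $D$ means that $D$ commutes with $\id\otimes({}^{r}g\cdot(-))$ for every $g\in G_0$, so that
\[
(\id\otimes\mathrm{ev}_g)(Df)\;=\;(\id\otimes\mathrm{ev}_e)\bigl(D({}^{r}g\cdot f)\bigr),\qquad f\in\sC^{\an}(G_0,\Q_p),
\]
and hence $D$ is completely recovered from the continuous $\Q_p$-linear composition
\[
\delta_D\colon\sC^{\an}(G_0,\Q_p)\xrightarrow{\,D\,}B_\infty\widehat{\otimes}_{\Q_p}\sC^{\an}(G_0,\Q_p)\xrightarrow{\id\otimes\mathrm{ev}_e}B_\infty,
\]
which is a continuous $B_\infty$-valued point derivation at $e\in G_0$. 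Using the analytic coordinates $c:\Z_p^d\xrightarrow{\sim}G_0$ from \ref{2ndcoord}, such a point derivation is determined by its values on the coordinate functions $x_1,\dots,x_d$, so the space of such derivations is canonically $B_\infty\otimes_{\Q_p}\Lie(G_0)$ (a \emph{finite} dimensional module over $B_\infty$, so no completed tensor product appears). The $G_0$-right-invariant derivation of $\sC^{\an}(G_0,\Q_p)$ associated to $X\in\Lie(G_0)$ under this identification is precisely the infinitesimal left-translation by $X$, since left translations correspond to right-invariant vector fields on $G_0$. This yields the desired identification between $B_\infty$-linear, $G_0$-right-invariant continuous derivations of $B_\infty\widehat{\otimes}_{\Q_p}\sC^{\an}(G_0,\Q_p)$ and $B_\infty\otimes_{\Q_p}\Lie(G_0)$ acting as in the statement.

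By the preceding proposition, for every $x\in\Lie(\Gamma)$ the operator $\phi_{G_0}(x)$ is a $B_\infty$-linear, $G_0$-right-invariant derivation, hence corresponds to a unique element $\xi_x\in B_\infty\otimes_{\Q_p}\Lie(G_0)$. Since $\phi_{G_0}(x)$ also commutes with $\Gamma$, and $\Gamma$ acts trivially on both $\Lie(G_0)$ and on $\sC^{\an}(G_0,\Q_p)$ (it acts only on the $B_\infty$-factor), the element $\xi_x$ is $\Gamma$-fixed, and Lemma \ref{Gammainv} then gives $\xi_x\in(B_\infty)^{\Gamma}\otimes_{\Q_p}\Lie(G_0)=B\otimes_{\Q_p}\Lie(G_0)$, as required. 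The main point to verify carefully is that $G_0$-right-invariance really pins down a continuous $B_\infty$-linear derivation by its ``germ at $e$'' inside the finite-dimensional module $B_\infty\otimes_{\Q_p}\Lie(G_0)$ rather than some completed tensor product; the translation identity displayed above together with the continuity of $\mathrm{ev}_e$ is exactly what accomplishes this.
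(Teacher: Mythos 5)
Your proof is correct and follows essentially the same two-step route as the paper: identify $\phi_{G_0}(x)$ as an element of $B_\infty\otimes_{\Q_p}\Lie(G_0)$ using right-invariance, then use $\Gamma$-equivariance and Lemma \ref{Gammainv} to descend to $B\otimes_{\Q_p}\Lie(G_0)$. The only difference is that you supply the explicit classification of $G_0$-right-invariant $B_\infty$-linear derivations via the germ at the identity, which the paper compresses into the word ``clearly.''
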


\begin{proof}
Clearly $\phi_{G_0}$ factors through $B_\infty\otimes_{\Q_p}\Lie(G_0)$. As it commutes with $\Gamma$, it also factors through $(B_\infty\otimes_{\Q_p}\Lie(G_0))^{\Gamma}=B\otimes_{\Q_p}\Lie(G_0)$ by Lemma \ref{Gammainv}. 
\end{proof}

By abuse of notation, we also denote by $\phi_{G_0}:\Lie(\Gamma)\to B\otimes_{\Q_p}\Lie(G_0)$. Suppose we replace $G_0$ by a smaller subgroup $G_0'$ and consider the restriction of $\sC^{\an}(G_0,\Q_p)\to\sC^{\an}(G_0',\Q_p)$. It is easy to see that $\phi_{G_0}=\phi_{G_0'}$ once we identify $\Lie(G_0)=\Lie(G)=\Lie(G_0')$. Hence $\phi_{G_0}$ is independent of the choice of $G_0$ and we denote it by 
\[\phi_{\widetilde{X}}:\Lie(\Gamma)\to B\otimes_{\Q_p}\Lie(G).\]

\begin{cor} \label{pCRB}
The image of $\phi_{\widetilde{X}}$ acts trivially on the $G$-locally analytic vectors in $B$.
\end{cor}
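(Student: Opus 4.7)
\medskip

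The plan is to test $\phi_{\widetilde{X}}(x)$ against the orbit map of a locally analytic vector, exploiting the fact that $\phi_{G_0}$ was constructed from the $\Gamma$-action on a decompleted module whose $\Gamma$-fixed part contains the orbit map.

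Fix $x \in \Lie(\Gamma)$ and $v \in B^{G-\an}$. After shrinking $G_0$ we may assume $v \in B^{G_0-\an}$, so the orbit map $f_v:G_0 \to B,\ g\mapsto g\cdot v$ defines an element of $B\widehat\otimes_{\Q_p}\sC^{\an}(G_0,\Q_p)\subset B_\infty\widehat\otimes_{\Q_p}\sC^{\an}(G_0,\Q_p)$, and I first need to check that $f_v$ lands in $(B_\infty\widehat\otimes_{\Q_p}\sC^{\an}(G_0,\Q_p))^{G_0}=(B_\infty)^{G_0-\an}$ under the orbit-map identification of Lemma \ref{DG0n}: this is immediate since $(h\cdot f_v)(g)=h(f_v(h^{-1}g))=h\cdot(h^{-1}g\cdot v)=g\cdot v=f_v(g)$ for $h\in G_0$.

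The key observation is that $f_v$ is fixed by $\Gamma$: the action of $\Gamma$ on $B_\infty\widehat\otimes_{\Q_p}\sC^{\an}(G_0,\Q_p)$ is trivial on the second tensor factor, and on the first factor $\Gamma$ acts trivially on $B=B_\infty^{\Gamma}$, where $f_v$ actually takes its values. Hence $f_v$ is trivially $p^n\Gamma$-analytic for every $n$ and, for $n$ sufficiently large, lies in $D_{G_0,n}$. On $D_{G_0,n}$ the endomorphism $\phi_{G_0}(x)$ agrees with the $\Lie(\Gamma)$-action obtained by differentiating the $p^n\Gamma$-action (which is $B_{G_0,n}$-linear because $p^n\Gamma$ acts trivially on $B_{G_0,n}$, and is extended $B_\infty$-linearly afterwards), so from $\Gamma$-triviality of $f_v$ we deduce $\phi_{G_0}(x)\cdot f_v=0$.

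It remains to translate this vanishing into $\phi_{\widetilde X}(x)\cdot v=0$. Write $\phi_{\widetilde X}(x)=\sum_i b_i\otimes X_i\in B\otimes_{\Q_p}\Lie(G)$. By the previous corollary, $\phi_{G_0}(x)$ acts on $B_\infty\widehat\otimes_{\Q_p}\sC^{\an}(G_0,\Q_p)$ as $\sum_i b_i\cdot X_{i,L}$, where $X_{i,L}$ is the infinitesimal left translation on $\sC^{\an}(G_0,\Q_p)$. A direct computation gives
\[
(X_{i,L}\, f_v)(g) \;=\; \frac{d}{dt}\Big|_0 f_v(\exp(-tX_i)g) \;=\; \frac{d}{dt}\Big|_0 \exp(-tX_i)g\cdot v,
\]
and evaluating at $g=1$ yields $-X_i\cdot v$, where $X_i$ acts on $v\in B^{G_0-\an}$ via the infinitesimal right-translation action as in \ref{Gnan}. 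Evaluating the identity $\phi_{G_0}(x)\cdot f_v=0$ at $g=1$ therefore gives $-\sum_i b_i\cdot(X_i\cdot v)=-\phi_{\widetilde X}(x)\cdot v=0$, as required.

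The main things to be careful about are the bookkeeping of left versus right translation actions (with the attendant signs) and the assertion that the $B_\infty$-linear extension defining $\phi_{G_0}$ really restricts to the raw $\Lie(\Gamma)$-action on $D_{G_0,n}$; once these are in place the argument is essentially formal, and the functoriality of the construction in $G_0$ ensures the conclusion for all of $B^{\la}=\bigcup_{G_0}B^{G_0-\an}$.
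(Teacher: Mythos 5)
Your proof is correct and follows the same route as the paper: identify $B^{G_0-\an}$ with the $G_0\times\Gamma$-fixed subspace of $B_\infty\widehat\otimes_{\Q_p}\sC^{\an}(G_0,\Q_p)$, where $\phi_{G_0}$ is by construction the derivative of the (now trivial) $\Gamma$-action, hence vanishes. The paper compresses the remaining identification with $\phi_{\widetilde{X}}$ into the phrase ``an easy computation,'' while you spell out the orbit-map formula and the left-versus-right translation sign explicitly, but the underlying argument is the same.
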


\begin{proof}
Recall that (see \ref{2ndcoord})
\[B^{G_0-\an}=(B\widehat{\otimes}_{\Q_p}\sC^{\an}(G_0,\Q_p))^{G_0}=(B_\infty\widehat{\otimes}_{\Q_p}\sC^{\an}(G_0,\Q_p))^{G_0\times\Gamma},\]
where the second equality follows from Lemma \ref{Gammainv}. Hence by our construction of $\phi_{G_0}$, the action of $\Lie(\Gamma)$ is trivial on $B^{G_0-\an}$. An easy computation shows that this action is nothing but $\phi_{\widetilde{X}}$. Note that this argument works for all sufficiently small $G_0$, which clearly implies the claim in the corollary.
\end{proof}

Now if we fix a generator of $\Lie(\Gamma)$ and denote by $\theta_{\widetilde{X}}$ its image in $B\otimes_{\Q_p}\Lie(G)$ under $\phi_{\widetilde{X}}$, then we obtain the form claimed in the first part of Theorem \ref{pCR}.

Our next result implies that $\phi_{\widetilde{X}}$ is universal. Let $V$ be a finite dimensional representation of $G$ over $\Q_p$. The action of $G$ on $V$ is locally analytic, hence there is a natural $B_\infty$-linear action of $B_\infty\otimes_{\Q_p}\Lie(G)$ on $B_\infty\otimes_{\Q_p}V$.  Therefore $\phi_{\widetilde{X}}$ gives an action of $\Lie(\Gamma)$ on $B_\infty\otimes_{\Q_p}V$. On the other hand, we defined another action of $\Lie(\Gamma)$ called $\phi_V$ in Proposition \ref{relSenop}.

\begin{cor} \label{phiV=}
$\phi_V$ agrees with $\phi_{\widetilde{X}}$ for any $V$.
\end{cor}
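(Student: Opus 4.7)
By the uniqueness of $\phi_V$ established in Proposition \ref{relSenop}, it suffices to verify that the action of $\phi_{\widetilde{X}}(x) \in B \otimes_{\Q_p} \Lie(G)$ on $B_\infty \otimes_{\Q_p} V$, via the $\Lie(G)$-representation on $V$, is $B_\infty$-linear and agrees with the natural $\Lie(\Gamma)$-action on $G$-smooth, $\Gamma$-locally analytic vectors. The first property is automatic; for the second, the strategy is to compare $\phi_V$ with $\phi_{\widetilde{X}}$ after embedding $V$ into a representation where $\phi_{\widetilde{X}}$ acts tautologically via infinitesimal left translation on $\sC^{\an}(G_0, \Q_p)$, so that one can then invoke functoriality and pull back.

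Fix $G_0 \subset G$ a small enough open subgroup for $V$ to be a $G_0$-analytic representation, and consider the orbit embedding
\[\alpha : V \longrightarrow \sC^{\an}(G_0, \Q_p) \otimes_{\Q_p} V, \qquad v \mapsto (g \mapsto g^{-1} v).\]
A direct check shows $\alpha$ is $G_0$-equivariant when the target carries the left-translation action on $\sC^{\an}(G_0, \Q_p)$ together with the \emph{trivial} action on $V$. The image of $\alpha$ lies in $W \otimes_{\Q_p} V$ for a finite-dimensional, left-translation-stable subspace $W \subset \sC^{\an}(G_0, \Q_p)$ spanned by the matrix coefficients of $V$, and the evaluation-at-identity map $\mathrm{ev}_1 : W \otimes_{\Q_p} V \to V$, $f \otimes v \mapsto f(1) v$, is a $\Q_p$-linear left inverse of $\alpha$.

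By functoriality (Proposition \ref{relSenop}(2)) applied to $\alpha$ and multiplicativity (Proposition \ref{relSenop}(3)) applied to $W \otimes V$, together with the observation that $\phi_{V_0} = 0$ whenever $G$ acts trivially on $V_0$ (which follows from $B = B_\infty^\Gamma$ by Lemma \ref{Gammainv}), the embedding $\alpha$ intertwines $\phi_V$ with $\phi_W \otimes \id_V$. By the construction of $\phi_{G_0}$ in \ref{VsCan}, its restriction to any finite-dimensional left-translation-stable $W$ agrees with $\phi_W$, and this operator is realized by the element $\phi_{\widetilde{X}} \in B \otimes \Lie(G)$ acting on $W$ through the infinitesimal left translation. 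A direct computation using $\alpha(v)(g) = g^{-1} v$ gives $(X \cdot \alpha(v))(g) = g^{-1}(X \cdot v)$ for every $X \in \Lie(G)$, so that evaluation at $g = 1$ yields
\[\mathrm{ev}_1\bigl((\phi_{\widetilde{X}}(x) \otimes \id_V)(\alpha(v))\bigr) = \phi_{\widetilde{X}}(x) \cdot v,\]
where the right-hand side is the action on $V$ via the $\Lie(G)$-representation. Combining the intertwining relation with $\mathrm{ev}_1 \circ \alpha = \id_V$ then gives $\phi_V(x)(v) = \phi_{\widetilde{X}}(x) \cdot v$, and $B_\infty$-linearity extends this to all of $B_\infty \otimes V$. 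The main bookkeeping challenge is keeping track of the three different $G_0$-actions (the diagonal action on the source, the ``left translation on $\sC^{\an}$, trivial on $V$'' action on the intermediate space, and the action via the $\Lie(G)$-representation on the target $V$) and correctly identifying $\phi_W$ with the $\Lie(G)$-restriction of $\phi_{\widetilde{X}}$ through the limit construction of \ref{VsCan}; however, both reduce to formal manipulations once the orbit map is in place.
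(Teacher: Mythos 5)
Your proof is correct and follows essentially the same route as the paper's: both arguments embed $V$ into the analytic-function space via matrix coefficients and use functoriality and multiplicativity of $\phi$ to transfer the problem to $\sC^{\an}(G_0,\Q_p)$, where $\phi_{G_0}$ is by construction realized by $\phi_{\widetilde{X}}$ acting through infinitesimal left translation. The paper works with $m_V\colon V\otimes V^*\to\sC^{\an}(G_0,\Q_p)$ and closes with a nondegeneracy argument, while your orbit map $\alpha\colon V\to W\otimes V$ with its canonical left inverse $\mathrm{ev}_1$ is the adjoint reformulation of the very same construction; the intermediate claim $\phi_W=\phi_{G_0}|_{B_\infty\otimes W}$ (which one should justify by the uniqueness in Proposition \ref{relSenop}, since $W$ need not be one of the $V_k$ of \ref{algdense}) plays the same role as the paper's direct verification of the intertwining on $G_0$-fixed, $\Gamma$-locally analytic vectors.
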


\begin{proof}
Let $G_0$ be a sufficiently small open subgroup of $G$ as in the beginning of \ref{VsCan}. Moreover we may assume $V$ is a $G_0$-analytic representation. Let $V^*$ be the dual vector space of $V$. Then taking the matrix coefficients of $V$ induces a map:
\[m_V:V\otimes_{\Q_p}V^*\to\sC^{\an}(G_0,\Q_p),\]
\[m_V(v\otimes l)(g)=l(g^{-1}\cdot v),~v\in V,l\in V^*,g\in G_0.\]
This map is $G_0$-equivariant where $G_0$ acts only on the first factor of $V\otimes V^*$ and acts via the left translation on $\sC^{\an}(G_0,\Q_p)$. The induced map 
\[\mathbf{1}_{B_\infty}\otimes m_V:B_\infty\otimes_{\Q_p}V\otimes_{\Q_p}V^*\to B_\infty\widehat{\otimes}_{\Q_p}\sC^{\an}(G_0,\Q_p)\]
intertwines $\phi_{V\otimes V^*}=\phi_V\otimes \mathbf{1}_{V^*}$ and $\phi_{G_0}=\phi_{\widetilde{X}}$. One can check this on the $G_0$-fixed, $\Gamma$-locally analytic vectors. Trivially, this map also intertwines $\phi_{\widetilde{X}}\otimes \mathbf{1}_{V^*}$ and $\phi_{\widetilde{X}}$. Now for any non-zero $v\in V$, there exists $l\in V^*$ such that $m_V(v\otimes l)\neq 0$. From this, it's easy to see that $\phi_V$ has to factor through $\phi_{\widetilde{X}}$.
\end{proof}

\begin{rem} \label{computephi}
This corollary gives a ``practical way'' to compute $\phi_{\widetilde{X}}$: choose a faithful representation $V$ of an open subgroup of $G$. Then $\phi_V$ completely determines $\phi_{\widetilde{X}}$.
\end{rem}

\begin{rem} \label{Bla}
$B\otimes_{\Q_p}\Lie(G)$ acts naturally on the $G$-locally analytic vectors $(B\otimes_{\Q_p} V)^{\la}$ of $B\otimes_{\Q_p} V$. Hence $\phi_{\widetilde{X}}:\Lie(\Gamma)\to B\otimes_{\Q_p}\Lie(G)$ induces an action of $\Lie(\Gamma)$ on $(B\otimes_{\Q_p} V)^{\la}$. Combining Corollary \ref{pCRB} and \ref{phiV=}, it is easy to see that this action is nothing but $\phi_V$. 
\end{rem}

\begin{cor} \label{functoriality}
$\phi_{\widetilde{X}}$ is functorial in the pair $(B,G)$, i.e. suppose that $H$ is a closed normal  subgroup of $G$ such that $\widetilde{X}'=\Spa(B^H,(B^+)^H)$ is a ``log $G/H$-Galois pro-\'etale perfectoid covering'' of $X$ as in \ref{setup}, then $\phi_{\widetilde{X}'}:\Lie(\Gamma)\to B^H\otimes_{\Q_p}\Lie(G/H)\subset B\otimes_{\Q_p}\Lie(G/H)$ can be identified with the composite 
\[\Lie(\Gamma)\xrightarrow{\phi_{\widetilde{X}}} B\otimes_{\Q_p}\Lie(G)\xrightarrow{\mod B\otimes_{\Q_p}\Lie(H)}B\otimes_{\Q_p}\Lie(G/H).\]
\end{cor}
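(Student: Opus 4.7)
The plan is to reduce the identity to the universal property of $\phi_{\widetilde{X}}$ provided by Corollary \ref{phiV=} and Remark \ref{computephi}. First I would invoke Ado's theorem for $\Lie(G/H)$ to fix a finite-dimensional continuous representation $V$ of some open subgroup $\bar{G}_0 \subset G/H$ that is faithful on $\bar{G}_0$, and pull it back along $\pi\colon G\to G/H$ to obtain a representation of $G_0:=\pi^{-1}(\bar{G}_0)$ on which $H$ acts trivially. Applying Proposition \ref{relSenop} (together with the remark below it extending the construction to open subgroups) to both towers yields operators $\phi_V^{\widetilde{X}}$ on $B_\infty\otimes_{\Q_p} V$ and $\phi_V^{\widetilde{X}'}$ on $B'_\infty\otimes_{\Q_p} V$, where $B'_\infty$ denotes the perfectoid algebra built from $\widetilde{X}'$ as in \ref{BCtotildeX}.

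Next I would identify $B'_\infty$ with a subalgebra of $(B_\infty)^H$ inside $B_\infty$. Since open subgroups of $G/H$ correspond bijectively to open subgroups of $G$ containing $H$, the inductive system $\{B_{G_1,\infty}\}_{G_1\supseteq H}$ defining $B'_\infty$ is a sub-system of the one defining $B_\infty$, and Faltings' almost purity (as in the proof of Lemma \ref{Gammainv}) shows the resulting map lands in $(B_\infty)^H$. Under this identification, the decompletion data for $\widetilde{X}'$ producing $\phi_V^{\widetilde{X}'}$ embed into the decompletion data for $\widetilde{X}$ producing $\phi_V^{\widetilde{X}}$, and both operators extend the same intrinsic $\Lie(\Gamma)$-action on the $G$-smooth, $\Gamma$-locally analytic vectors in $B_\infty\otimes V$. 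By Lemma \ref{LAm}, these vectors generate $B_\infty\otimes V$ over $B_\infty$, hence by $B_\infty$-linearity the natural extension of $\phi_V^{\widetilde{X}'}$ coincides with $\phi_V^{\widetilde{X}}$ on all of $B_\infty\otimes V$.

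Finally, Corollary \ref{phiV=} rewrites $\phi_V^{\widetilde{X}}$ as the composition $\Lie(\Gamma)\xrightarrow{\phi_{\widetilde{X}}} B\otimes\Lie(G)\to B\otimes\End(V)$ and $\phi_V^{\widetilde{X}'}$ as $\Lie(\Gamma)\xrightarrow{\phi_{\widetilde{X}'}} B^H\otimes\Lie(G/H)\to B^H\otimes\End(V)$. Because $H$ acts trivially on $V$, the first of these factors through the projection $B\otimes\Lie(G)\to B\otimes\Lie(G/H)$; and because $V$ is faithful on $\bar{G}_0$, the resulting action map $B\otimes\Lie(G/H)\to B\otimes\End(V)$ is injective. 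The equality of operators established in the previous paragraph therefore lifts to the congruence $\phi_{\widetilde{X}'}\equiv\phi_{\widetilde{X}}\pmod{B\otimes\Lie(H)}$ in $B\otimes\Lie(G)$, which is exactly the assertion of the corollary.

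The main technical obstacle is the compatibility in the second step: identifying $B'_\infty$ with the $H$-invariants of $B_\infty$ and matching the decompletion data coming from the two towers carefully enough that the induced $\Lie(\Gamma)$-actions can be recognized as restrictions of the same $B_\infty$-linear operator. Once this compatibility is in place, the rest of the argument is essentially formal, driven by the universality embodied in Corollary \ref{phiV=} and the faithfulness of the auxiliary representation $V$.
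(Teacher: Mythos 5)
Your argument is correct and follows the route the paper only gestures at: the paper's proof reduces the corollary to the assertion that the formation of $\phi_V$ in Proposition \ref{relSenop} is itself functorial in $(B,G)$ and then declares this to "follow from the construction"; your proof verifies exactly this by matching the decompletion data / the uniquely determined $B_\infty$-linear extension on a generating set (Lemma \ref{LAm}), and then makes explicit the final step using a faithful $V$ (Remark \ref{computephi} and Corollary \ref{phiV=}) that the paper leaves implicit. One small point of phrasing: in your second paragraph the comparison of $\phi_V^{\widetilde X}$ with the $B_\infty$-linear extension of $\phi_V^{\widetilde X'}$ is available a priori only on the $G/H$-smooth (i.e.\ fixed by some open $G_1\supseteq H$), $\Gamma$-locally analytic vectors — these already generate $B_\infty\otimes V$ over $B_\infty$ by Lemma \ref{LAm} applied to $\widetilde X'$, so the conclusion is unaffected, but it is cleaner to run the uniqueness argument on that smaller generating set rather than the full set of $G$-smooth vectors.
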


\begin{proof}
It is enough to check that the formulation of $\phi_V$ in Proposition \ref{relSenop}  is functorial in a similar sense. But this basically follows from the construction. We omit the details here.
\end{proof}

We mention the following results concerning the uniqueness of our differential equation. Note that $\widetilde{X}_\infty=\Spa(B_\infty,B_\infty^+)$ is a  ``log $G\times\Gamma$-Galois pro-\'etale perfectoid covering'' of $X$. Let $B_\infty^\la$ be the $G\times\Gamma$-locally analytic vectors in $B_\infty$. 

\begin{prop} \label{nvsh}
If $D\in B_\infty\otimes_{\Q_p}\Lie(G)$ and $D(v)=0$ for any $v\in B_\infty^\la$, then $D=0$.
\end{prop}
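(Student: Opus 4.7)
Fix a basis $X_1, \dots, X_d$ of $\Lie(G)$ and a sufficiently small compact open subgroup $G_0 \subseteq G$ as in \S\ref{VsCan}. Write $D = \sum_{i=1}^{d} b_i X_i$ with $b_i \in B_\infty$; the goal is to conclude that every $b_i = 0$. My plan is to reduce the faithfulness of the action of $D$ on $B_\infty^{\la}$ to the (much more elementary) faithfulness of the right-translation action of $\Lie(G_0)$ on $\sC^{\an}(G_0, \Q_p)$, using the decompletion machinery already in place. Specifically, Lemma \ref{DG0n} identifies $D_{G_0, n}$ with the $p^n\Gamma$-analytic vectors in $(B_\infty)^{G_0-\an} \subseteq B_\infty^{\la}$, and supplies the base-change identity
\[
B_\infty \widehat{\otimes}_{B_{G_0, n}} D_{G_0, n} = B_\infty \widehat{\otimes}_{\Q_p} \sC^{\an}(G_0, \Q_p).
\]
Under the orbit-map identification $v \leftrightarrow \tilde v$, with $\tilde v(g) = g \cdot v$, the infinitesimal action of $X_i$ on $v$ via the $G$-action corresponds to the right-translation action $X_i^r$ acting on $\tilde v$.

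Once the $B_\infty$-linear operator $\sum_i b_i X_i^r$ is known to vanish on the full Banach tensor product $B_\infty \widehat\otimes_{\Q_p} \sC^{\an}(G_0, \Q_p)$, the conclusion is immediate: testing against the linear coordinate functions $y_1, \dots, y_d$ of the first kind near the identity (cf. \ref{2ndcoord}), one has $X_i^r(y_j)(e) = \delta_{ij}$, so the relation $\sum_i b_i X_i^r(y_j)(e) = 0$ forces $b_j = 0$ for every $j$. Thus the whole problem is the translation of the assumption into the vanishing of the universal operator.

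The main obstacle is precisely this last step: the hypothesis ``$D \cdot v = 0$ for all $v \in B_\infty^{\la}$'' only detects the value of $\sum_i b_i X_i^r(\tilde v)$ at the identity of $G_0$, whereas I need the vanishing of $\sum_i b_i X_i^r$ throughout the Banach tensor product. To handle this, I would apply the hypothesis to every translate $g\cdot v$ with $g \in G_0$ in a neighborhood of the identity: the adjoint relation $X_i(g\cdot v) = g \cdot (\mathrm{Ad}(g^{-1})X_i)(v)$ rewrites each resulting equation as a linear identity among $\{X_j v\}$ whose coefficient matrix, coming from the analytic $\mathrm{Ad}$-representation, is invertible for $g$ close to $e$. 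Combining this family of identities (as $v$ and $g$ vary) with the $B_\infty$-linearity of $D$ on $D_{G_0, n}$ and the generation statement of Lemma \ref{DG0n} should promote the pointwise vanishing at $e$ into the desired global vanishing, which, by the preceding paragraph, yields $b_i = 0$.
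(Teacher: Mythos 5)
Your overall strategy is the right one and in fact the one the paper uses: pass via Lemma \ref{DG0n} to the universal Banach space $B_\infty\widehat{\otimes}_{\Q_p}\sC^{\an}(G_0,\Q_p)$, show that a $B_\infty$-linear differential operator built from $D$ vanishes there, and then test against first-kind coordinates $y_j$ to extract $b_j=0$. You have also correctly located where the real content sits: the hypothesis detects the derivative of $\tilde v$ only at the identity, not at an arbitrary $g\in G_0$.

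The $\mathrm{Ad}$-twist, however, does not close this gap. Applying the hypothesis to $g\cdot v$, using the adjoint identity, and moving $g$ across gives
\[
\sum_j\Bigl(\sum_i a_{ji}(g)\,(g^{-1}\cdot b_i)\Bigr)(X_jv)=0,\qquad \mathrm{Ad}(g^{-1})X_i=\sum_j a_{ji}(g)X_j .
\]
These are indeed $B_\infty$-relations among the $\{X_jv\}$, but their coefficients depend on $g$ through two independent mechanisms: the entries $a_{ji}(g)$ \emph{and} the (in general non-trivial) $G_0$-action on the $b_i\in B_\infty$. Invertibility of $\mathrm{Ad}(g^{-1})$ is true but idle: it lets you convert one such relation into another, but never produces the specific relation $\sum_j b_j\,g(X_jv)=0$ that you would need in order to show the right-translation operator $\sum_i b_iX_i^r$ kills $\tilde v$, and the limit $g\to e$ merely recovers the original hypothesis. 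Absent an independent proof that the $\{X_jv\}$ span a free $B_\infty$-module, the phrase ``combining this family of identities \dots should promote the pointwise vanishing'' is where the argument genuinely breaks.

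The correct bridge is shorter and uses no adjoint; the issue is a mismatch of which one-sided derivative to use. Your own observation — apply the hypothesis to every translate $g\cdot v$ — already produces a \emph{function} on $G_0$, namely
\[
g\ \longmapsto\ D(g\cdot v)=\sum_i b_i\,X_i(g\cdot v)=\sum_i b_i\,\frac{d}{dt}\Big|_{t=0}\tilde v(e^{tX_i}g),
\]
and the hypothesis says this function vanishes identically. In other words $\tilde v$ is killed by the operator $\sum_i b_i\,\partial_{X_i}$ on $\sC^{\an}(G_0,B_\infty)$, where $(\partial_X F)(g)=\frac{d}{dt}\big|_{t=0}F(e^{tX}g)$. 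This operator is manifestly $B_\infty$-linear (the derivative touches only the $\sC^{\an}(G_0,\Q_p)$-factor), so by the generation statement of Lemma \ref{DG0n} it annihilates all of $B_\infty\widehat{\otimes}_{\Q_p}\sC^{\an}(G_0,\Q_p)$; applying it to $y_j$ and evaluating at $e$ gives $(\partial_{X_i}y_j)(e)=\delta_{ij}$, hence $b_j=0$. Note that $\partial_{X_i}$ is \emph{not} the operator $X_i^r$ you wrote down (which satisfies $(X_i^r\tilde v)(g)=g\cdot(X_iv)$, the one produced by the natural $G_0$-action on $(B_\infty)^{G_0-\an}$); the hypothesis does not control $X_i^r\tilde v$ off the identity, which is exactly the obstruction you identified. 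Recording $D(g\cdot v)$ as $\partial$-derivatives of $\tilde v$ resolves it directly, and the $\mathrm{Ad}$-conjugation is an unnecessary detour.
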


\begin{proof} 
We will freely use notation introduced in \ref{VsCan}. Let $G_0$ be a compact open subgroup of $G$ considered there. By Lemma \ref{DG0n}, we have an isomorphism for sufficiently large $n$
\[B_\infty\widehat{\otimes}_{B_{G_0,n}} (B_\infty)^{G_0-\an,p^n\Gamma-\an}=B_\infty\widehat{\otimes}_{\Q_p}\sC^{\an}(G_0,\Q_p).\]
It follows from  \ref{Gnan}  that this is equivariant with respect to the following actions of $G_0$: the natural action on $(B_\infty)^{G_0-\an,p^n\Gamma-\an}$, the right translation action on $\sC^{\an}(G_0,\Q_p)$ and trivial actions on both $B_\infty$. Using this action of $G_0$, we get $B_\infty$-linear actions of $B_\infty\otimes_{\Q_p}\Lie(G)$ on both sides. In particular, $D$ annihilates both sides. However if $D\neq 0$, we can always find a $B_\infty$-valued analytic function on $G_0$ which is not annihilated by $D$. Contradiction. This proves our claim.
\end{proof}

\begin{cor} \label{ud}
Suppose $D\in B_\infty\otimes_{\Q_p}\Lie(G\times\Gamma)$ annihilates $B_\infty^\la$. Then $D$ can be written as
\[D=bd,\]
where $b\in B_\infty$ and $d\in\phi_{\widetilde{X}\infty}(\Lie(\Gamma))$.
\end{cor}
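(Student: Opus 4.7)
The plan is to reduce to Proposition \ref{nvsh} by peeling off the $\Lie(\Gamma)$-part of $D$ using a canonical non-zero element of $\phi_{\widetilde{X}_\infty}(\Lie(\Gamma))$. Fix a generator $\gamma_0\in\Lie(\Gamma)$ and set $d:=\phi_{\widetilde{X}_\infty}(\gamma_0)\in B_\infty\otimes_{\Q_p}\Lie(G\times\Gamma)$. Applying the construction of Subsection \ref{p1construction} to the $(G\times\Gamma)$-Galois pro-\'etale cover $\widetilde{X}_\infty\to X$ (with the cyclotomic tower fixed in \ref{smallY}), together with Corollary \ref{pCRB} in that setting, the element $d$ annihilates $B_\infty^{\la}$.

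Using the natural decomposition $B_\infty\otimes_{\Q_p}\Lie(G\times\Gamma)=(B_\infty\otimes_{\Q_p}\Lie(G))\oplus B_\infty\cdot\gamma_0$, write
\[d=d_G+\alpha\gamma_0,\qquad D=D_G+b_0\gamma_0\]
with $d_G,D_G\in B_\infty\otimes_{\Q_p}\Lie(G)$ and $\alpha,b_0\in B_\infty$. The key input is that $\alpha$ is a unit in $B_\infty$. Granting this, setting $b:=b_0/\alpha$ makes $D-bd$ have zero $\gamma_0$-component, so $D-bd\in B_\infty\otimes_{\Q_p}\Lie(G)$. Since both $D$ and $d$ kill $B_\infty^{\la}$, so does $D-bd$, and Proposition \ref{nvsh} forces $D-bd=0$, yielding the claimed decomposition $D=bd$.

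The unit-ness of $\alpha$ is to be extracted from Corollary \ref{functoriality} applied with the closed normal subgroup $G\subset G\times\Gamma$: since $(\widetilde{X}_\infty)^G=X_\infty$, one has $d\equiv\phi_{X_\infty}(\gamma_0)\pmod{B_\infty\otimes_{\Q_p}\Lie(G)}$, so $\alpha\gamma_0\in A_\infty\otimes_{\Q_p}\Lie(\Gamma)$ is precisely the relative Sen operator for the $\Gamma$-cover $X_\infty\to X$. This Sen operator is the pullback along the \'etale map $X\to Y$ of the Sen operator for the basic cyclotomic tower $Y_\infty\to Y$, which by the explicit description in Example \ref{toricexa} and Remark \ref{canonicaltheta} is a unit multiple of $\gamma_0$; since pullback of a unit is a unit, $\alpha$ is a unit in $A_\infty$, hence in $B_\infty$. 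The principal obstacle is this last verification—tracing the construction of $\phi$ through the basic toric example to pin down the exact form of the Sen operator for $Y_\infty\to Y$—after which the rest of the argument is formal.
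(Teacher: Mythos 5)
Your proposal is correct and follows essentially the same route as the paper: take $d=\phi_{\widetilde{X}_\infty}(\gamma_0)$ (which kills $B_\infty^{\la}$ by Corollary~\ref{pCRB} applied to the $(G\times\Gamma)$-cover), subtract a $B_\infty$-multiple of $d$ to land in $B_\infty\otimes_{\Q_p}\Lie(G)$, and finish with Proposition~\ref{nvsh}. The one step you flag as an obstacle---that the $\Lie(\Gamma)$-coefficient $\alpha$ of $d$ is a unit---is handled more cleanly in the paper: the ``pullback along $X\to Y$'' compatibility you invoke is not something the paper establishes (Corollary~\ref{functoriality} is functoriality in the Galois group, not base change in $X$), whereas the paper simply notes that by Corollary~\ref{functoriality} the $\Lie(\Gamma)$-component of $\phi_{\widetilde{X}_\infty}(\gamma_0)$ equals $\phi_{X_\infty}(\gamma_0)$, and $\phi_{X_\infty}$ is the \emph{identity}. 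This is the $f_1=f_2$ case of the lemma in Subsection~3.4 showing $\phi_{\widetilde{X}'}$ is multiplication by the transition unit $a=\tau_1\circ\tau_2^{-1}$, which is $1$ when the two \'etale maps coincide. So $\alpha=1$, $b=b_0$, and the remainder of your argument is exactly the paper's.
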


\begin{proof}
We note that by the functorial property \ref{functoriality}, $\phi_{\widetilde{X}_\infty}\equiv \phi_{{X}_\infty}\mod B_\infty\otimes_{\Q_p}\Lie(G)$, hence the composite of $\phi_{\widetilde{X}_\infty}$ and the projection map
\[\Lie(\Gamma)\xrightarrow{\phi_{\widetilde{X}_\infty}}B_\infty\otimes_{\Q_p}\Lie(G\times\Gamma)\to B_\infty\otimes_{\Q_p}\Lie(\Gamma)\]
is simply the identity map. In particular, we can find  $b\in B_\infty$ and $d\in\phi_{\widetilde{X}\infty}(\Lie(\Gamma))$ such that $D-bd\in B_\infty\otimes_{\Q_p}\Lie(G)$. The rest follows from  Corollary \ref{pCRB} and Proposition \ref{nvsh}.
\end{proof}

\subsection{Proof of the main result II: uniqueness}
\begin{para}
In the previous section, we proved the existence of $\theta$ in Theorem \ref{pCR}. Note that the construction depends on a choice of an \'etale map $f_1:X\to Y=\mathbb{T}^1$ or $\mathbb{B}^1$ in \ref{smallY}. In this section, we will show that in fact $\theta$ is well-defined up to $A^\times$.  People who are interested in global applications can skip reading this part as this part will not play any role later. 
\end{para}

\begin{para} \label{f1f2}
Consider another \'etale  map  $f_2:X\to Y$ which factors as a composite of rational embeddings and finite \'etale maps. Let
\[\widetilde{X}':=X\times_{f_2,Y}Y_\infty\]
be the pull-back of the profinite covering $Y_\infty$ along $f_2$. See \ref{smallY} for the definition of $Y_\infty$. In the notation below, we put a superscript $'$ for everything constructed using $f_2$ instead of $f_1$.
For example, $\widetilde X'=\Spa(B',B'^+)$ is an affinoid  ``log $G'$-Galois pro-\'etale perfectoid covering'' of $X$ with $G'=\Gamma$. We would like to compute $\phi_{\widetilde{X}'}:\Lie(\Gamma)\to B'\otimes_{\Q_p}\Lie(G')=B'\otimes_{\Q_p}\Lie(\Gamma)$ first.  In general, we will use the functorial property to reduce to this computation.

We will freely use the notation introduced in the previous subsections. In particular, $\widetilde{X}_\infty'=\Spa(B_\infty',B_\infty'^+)$ is a $G'\times\Gamma$-covering of $X$. It is also a $G'$-covering of $X_\infty=\Spa(A_\infty,A_\infty^+)$ and a $\Gamma$-covering of $\widetilde{X}'=\Spa(B',B'^+)$. Note that by Abhyankar's lemma, both coverings are profinite \'etale of perfectoid algebras. By the Hochschild-Serre spectral sequence, we have

\[H^1_{\cont}(G'\times\Gamma,B_\infty')\cong H^1_{\cont}(\Gamma,(B_\infty')^{G'\times{\{\mathbf{1}\}}})=H^1_{\cont}(\Gamma,A_\infty)\cong H^1_{\cont}(\Gamma,A).\]
The first isomorphism follows from $H^1_{\cont}(G'\times\{\mathbf{1}\},B_\infty')=0$, which is a consequence of the almost purity theorem. The last isomorphism follows from the existence and properties of the Tate normalized trace $\overline\tr_{X,0}:A_\infty\to A$. We denote by $\tau_1$ the composite isomorphism $H^1_{\cont}(G'\times\Gamma,B_\infty')\xrightarrow{\sim}H^1_{\cont}(\Gamma,A)$.

Symmetrically, we can get another isomorphism $\tau_2:H^1_{\cont}(G'\times\Gamma,B_\infty')\xrightarrow{\sim}H^1_{\cont}(\Gamma,A)$ by 
\[H^1_{\cont}(G'\times\Gamma,B_\infty')\cong H^1_{\cont}(G',(B_\infty')^{{\{\mathbf{1}\}}\times\Gamma})=H^1_{\cont}(G',B')=H^1_{\cont}(\Gamma,B')\cong H^1_{\cont}(\Gamma,A).\]
As $H^1_{\cont}(\Gamma,A)$ is a free $A$-module of rank one, the composite $\tau_1\circ\tau_2^{-1}\in\End_A(H^1_{\cont}(\Gamma,A))=A$ is given by an element $a\in A^\times$. 
\end{para}

\begin{lem}
$\phi_{\widetilde{X}'}:\Lie(\Gamma)\to B'\otimes_{\Q _p}\Lie(\Gamma)$ is multiplication by $a$.
\end{lem}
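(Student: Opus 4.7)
I propose to prove the identity by exhibiting a single cohomology class in $H^1_{\cont}(G'\times\Gamma, B'_\infty)$ whose images under $\tau_1$ and $\tau_2$ compute $\phi_{\widetilde{X}'}$ and $1$ respectively; the lemma will then follow from the definition $\tau_1=a\tau_2$.

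First I would reduce to a scalar equality. Because $\phi_{\widetilde{X}'}:\Lie(\Gamma)\to B'\otimes_{\Q_p}\Lie(G')$ is built from $\Gamma$-invariant data via Lemma \ref{Gammainv} and, by the construction in \ref{VsCan} together with the fact that $\phi_{G_0}$ commutes with the right translation of $G=G'$, also commutes with the natural $G'$-action on $B'$ (the adjoint action on $\Lie(G')$ being trivial since $G'=\Gamma$ is abelian), its image lies in $(B')^{G'}\otimes_{\Q_p}\Lie(G')=A\otimes_{\Q_p}\Lie(G')$. Once generators of $\Lie(\Gamma)$ and $\Lie(G')$ are fixed, $\phi_{\widetilde{X}'}$ is multiplication by some scalar $a'\in A$, and the content of the lemma is $a'=a$.

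Next I would produce a cohomology class $\tilde\beta\in H^1_{\cont}(G'\times\Gamma, B'_\infty)$ which represents $a'$. Concretely, applying Proposition \ref{decompletion} to a one-dimensional faithful character $\chi$ of an open subgroup of $G'=\Gamma$ (the Kummer character of the tower $\widetilde{X}'=X\times_{f_2}Y_\infty$) yields a rank-one decompletion on which $\gamma\in\Gamma$ acts by a cocycle $c(\gamma)\in B'_\infty$; by Corollary \ref{phiV=} this cocycle satisfies $\phi_V(x)=d\chi(x)\cdot a'$, so $[c]=:\tilde\beta$ genuinely encodes $a'$. This is the standard Sen--cohomology dictionary in relative form, and $\tilde\beta$ is naturally a class in $H^1_{\cont}(G'\times\Gamma, B'_\infty)$ because the cocycle is valued under the combined $G'\times\Gamma$-action on $B'_\infty$.

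The core of the argument is then to compute $\tau_1(\tilde\beta)$ and $\tau_2(\tilde\beta)$ separately and match. Applying $\tau_2$ projects $\tilde\beta$ along $\Gamma$ to give a class in $H^1_{\cont}(G', B')$; because $\widetilde{X}'$ is by definition the $f_2$-pullback of $Y_\infty$ and $\chi$ is the corresponding Kummer character, this class is the canonical Kummer class of $\widetilde{X}'/X$, which under the trace normalization of $\tau_2$ (built from $f_2$'s tower) maps to the generator $1\in A\cong H^1_{\cont}(\Gamma,A)$. Applying $\tau_1$ instead projects $\tilde\beta$ along $G'$ to give a class in $H^1_{\cont}(\Gamma, A_\infty)\cong H^1_{\cont}(\Gamma,A)$ which, by construction of $\phi_{\widetilde{X}'}$ through $f_1$'s tower, computes the Sen scalar $a'$. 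Hence $\tau_1(\tilde\beta)=a'\cdot\tau_2(\tilde\beta)$, and comparing with $\tau_1=a\tau_2$ yields $a=a'$.

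The main technical obstacle will be the bookkeeping in the last two paragraphs: verifying that under the Hochschild--Serre projections the cocycle $c$ really does specialize to the Sen-operator computations in each direction with the correct trace normalizations absorbed into $\tau_1$ and $\tau_2$. This amounts to checking that the ramification-index normalizations built into Tate's traces $\overline{\tr}_{Y,m}$ match up consistently for the two different towers, which is essentially contained in classical Tate--Sen theory (cf.\ \cite{Sen80}, \cite{BC08}), although the relative log-affinoid setup requires some care.
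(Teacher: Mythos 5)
Your reduction in the first paragraph (that $\phi_{\widetilde{X}'}$ lands in $A\otimes_{\Q_p}\Lie(G')$, hence is a scalar $a'\in A$) is a correct and useful observation. However, the core of your argument has a genuine gap. You propose to produce a class $\tilde\beta\in H^1_{\cont}(G'\times\Gamma,B_\infty')$ from the decompletion of a one-dimensional faithful character $\chi$ of $G'$, and then deduce the identity from $\tau_1(\tilde\beta)=a'\cdot\tau_2(\tilde\beta)$. Two problems arise. First, for a one-dimensional $V=\chi$ the decompletion $D_{G_0',n}^+(T)$ is rank one, and the $\Gamma$-action on a generator $v$ is recorded by a \emph{multiplicative} cocycle $\gamma\mapsto c(\gamma)\in (B_{G_0',n}')^\times$, not an additive cocycle with values in $B_\infty'$; it is not explained how this gives a class in $H^1_{\cont}(G'\times\Gamma,B_\infty')$, nor how such a class would encode $a'$ (the Sen scalar is the logarithm of $c$, which is a different kind of object from a cohomology class). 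Second, and more seriously, the central identity $\tau_1(\tilde\beta)=a'\cdot\tau_2(\tilde\beta)$ is asserted but never established; it is precisely at this step that all the content of the lemma resides, and "bookkeeping" of the Hochschild--Serre projections and trace normalizations is not sufficient justification.

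The paper's proof is shorter and more explicit, and uses a structurally different representation. Rather than a one-dimensional character, it fixes the two-dimensional \emph{unipotent} representation $\gamma\mapsto\begin{pmatrix}1&1\\0&1\end{pmatrix}$ of $G'=\Gamma$. Unwinding the isomorphisms $\tau_1,\tau_2$ produces an element $b\in B_\infty'$ satisfying $(\gamma,\mathbf{1})\cdot b-b=-1$ and $(\mathbf{1},\gamma)\cdot b-b=a$; this $b$ gives the $G'$-fixed basis $e_1=(1,0)$, $e_2=(b,1)$ of $B_\infty'\otimes_{\Q_p}V$ directly. The unipotence of the $\Gamma$-action on $b$ (since $a\in A$ is $\Gamma$-invariant) lets one read off $t\cdot e_2=ae_1$ from the second relation, so $\phi_V(t)=\begin{pmatrix}0&a\\0&0\end{pmatrix}$ and the lemma follows by comparison with $dV(t)=\begin{pmatrix}0&1\\0&0\end{pmatrix}$. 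The unipotent rank-two representation is the right choice here because the additive Kummer cocycle lives naturally in its matrix entries; there is no need for a separate multiplicative-to-additive passage and no cohomology-class matching to verify.
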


\begin{proof}
Fix a topological generator $\gamma$ of $\Gamma$. Unravelling all the isomorphisms above, we can find an element $b\in B_\infty'$ such that
\begin{eqnarray} 
(\gamma,\mathbf{1})\cdot b-b&=&-1\\ \label{abaction}
(\mathbf{1},\gamma)\cdot b-b&=&a.
\end{eqnarray}
Here $(\gamma,\mathbf{1}), (\mathbf{1},\gamma)$ are elements in $G'\times\Gamma=\Gamma\times\Gamma$. In particular, both actions of $\Gamma$ on $b$ are analytic.

As pointed out in Remark \ref{computephi}, in order to compute $\phi_{\widetilde{X}'}$, we choose a faithful representation of $G'=\Gamma$ on $V=\Q_p^{\oplus 2}$:
\[\Gamma\to\GL_2(\Q_p),\, \gamma\mapsto \begin{pmatrix} 1 & 1 \\ 0 &1 \end{pmatrix}.\]
Then $B_\infty'\otimes_{\Q_p} V=(B_\infty')^{\oplus 2}$ has a $G'$-fixed basis over $B_\infty'$: 
\[e_1=(1,0),e_2=(b,1).\]
Moreover it is easy to see that the action of $\Gamma$ on this basis is (locally) analytic. Therefore $\phi_V$ is obtained by $B_\infty'$-linearly extending the action of $\Lie(\Gamma)$ on $e_1,e_2$. Let $t\in\Lie(\Gamma)$ be the logarithm of $\gamma$. Then $t\cdot e_1=0$ and it follows from \eqref{abaction} that $t\cdot e_2=ae_1$.  Hence,
\[\phi_V:\Lie(\Gamma)\to B_\infty'\otimes_{\Q_p}\mathfrak{gl}_2(\Q_p)\]
\[t\mapsto \begin{pmatrix} 0 & a \\ 0 &0 \end{pmatrix}.\]
Comparing this with the definition of $V$, we see that $\phi_{\widetilde{X}'}$ is just multiplication by $a$.
\end{proof}

\begin{para}
Now we are ready to prove $\phi_{\widetilde{X}}$ is well-defined up to $A^\times$. We will keep using the notation introduced in \ref{f1f2}. Suppose $\widetilde{X}$ is ``log $G$-Galois pro-\'etale perfectoid covering'' of $X$. Now using $f_2:X\to Y$ instead of $f_1:X\to Y$ in the setup \ref{smallY} and redoing everything before, then rather than $\phi_{\widetilde{X}}$, we get
\[\phi'_{\widetilde{X}}:\Lie(\Gamma)\to B\otimes_{\Q_p}\Lie(G).\]
\end{para}

\begin{prop}
$a\phi'_{\widetilde{X}}(x)=\phi_{\widetilde{X}}(x)$, for any $x\in\Lie(\Gamma)$. In particular, $\phi'_{\widetilde{X}}$ and $\phi_{\widetilde{X}}$ differ by a unit of $A$.
\end{prop}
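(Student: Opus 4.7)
The strategy is to apply the functoriality property of the $\phi$-construction (Corollary \ref{functoriality}) to a single intermediate cover encompassing both \'etale maps $f_1$ and $f_2$, reducing the comparison to the scalar computation carried out in the preceding lemma. Specifically, I would introduce the $(G \times G')$-Galois pro-\'etale perfectoid covering
\[
\widetilde{X}'' := \widetilde{X} \times_X \widetilde{X}'
\]
of $X$, where $G' = \Gal(\widetilde{X}'/X)$ is canonically identified with $\Gamma$, so that both \'etale constructions can be read off from this single cover via functoriality.

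First, applying the $\phi$-construction (using $f_1$) to $\widetilde{X}''$ gives a morphism $\phi_{\widetilde{X}''} \colon \Lie(\Gamma) \to B_{\widetilde{X}''} \otimes_{\Q_p} (\Lie(G) \oplus \Lie(G'))$. By Corollary \ref{functoriality} applied to the two quotient maps $\widetilde{X}'' \to \widetilde{X}$ (killing $G'$) and $\widetilde{X}'' \to \widetilde{X}'$ (killing $G$), combined with the preceding lemma identifying $\phi_{\widetilde{X}'}$ with multiplication by $a$, one obtains
\[
\phi_{\widetilde{X}''}(x) = \phi_{\widetilde{X}}(x) + a \cdot x',
\]
where $x' \in \Lie(G')$ corresponds to $x \in \Lie(\Gamma)$ under the canonical identification. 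Symmetrically, applying the $\phi'$-construction (using $f_2$) to the same cover and invoking the analogous functoriality, together with the tautological identity $\phi'_{\widetilde{X}'} = \mathrm{id}$ (obtained by rerunning the proof of the preceding lemma in the degenerate case $f_1 = f_2$, where the two cohomological isomorphisms $\tau_1, \tau_2$ coincide so that the comparison scalar becomes $1$), yields
\[
\phi'_{\widetilde{X}''}(x) = \phi'_{\widetilde{X}}(x) + x'.
\]

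Subtracting $a$ times the second identity from the first cancels the $\Lie(G')$ contributions and produces
\[
\phi_{\widetilde{X}''}(x) - a\,\phi'_{\widetilde{X}''}(x) = \phi_{\widetilde{X}}(x) - a\,\phi'_{\widetilde{X}}(x) \in B \otimes_{\Q_p} \Lie(G).
\]
By Corollary \ref{pCRB} each of $\phi_{\widetilde{X}''}(x)$ and $\phi'_{\widetilde{X}''}(x)$ annihilates the $(G \times G')$-locally analytic vectors in $B_{\widetilde{X}''}$, hence so does the combination on the left. Feeding this annihilation into Proposition \ref{nvsh}, applied after a suitable Sen-ification of $\widetilde{X}''$ so as to land in a $B_\infty$-setting, then forces the combination to vanish, giving $\phi_{\widetilde{X}}(x) = a\,\phi'_{\widetilde{X}}(x)$ as required. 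The principal obstacle is the tautological identity $\phi'_{\widetilde{X}'} = \mathrm{id}$, since the cover's Galois group $G'$ and the Sen direction both originate from $f_2$ and the construction is genuinely degenerate in that case; a secondary technical point is transferring the annihilation hypothesis from $B_{\widetilde{X}''}$ to the fully Sen-ified setting to which Proposition \ref{nvsh} directly applies.
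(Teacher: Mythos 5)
Your proposal is correct and takes a genuinely different route from the paper's. The paper first proves the special case $\widetilde{X}=X_\infty$ (by re-running the Hochschild--Serre calculation with $f_1$ and $f_2$ swapped to get $\phi'_{X_\infty}=a^{-1}\cdot\mathrm{id}$), then handles the general case by comparing the two $B_\infty$-linear actions $a^{-1}\phi_V$ and the one induced by $\phi'_{\widetilde{X}}$ on $B_\infty\otimes_{\Q_p}V$, reducing via the decompletion Proposition \ref{decompletion} to the $\Gamma$-locally analytic vectors in $(B_\infty\otimes V)^{G}$ where both actions are identified with $\phi'_{X_\infty}$. You instead symmetrize up front: pull everything to the fiber product $\widetilde{X}''=\widetilde{X}\times_X\widetilde{X}'$, express both $\phi_{\widetilde{X}''}$ and $\phi'_{\widetilde{X}''}$ via functoriality and the preceding lemma, and subtract so the $\Lie(G')$-components cancel, leaving the difference $\phi_{\widetilde{X}}(x)-a\phi'_{\widetilde{X}}(x)\in B\otimes_{\Q_p}\Lie(G)$ which must vanish by a non-degeneracy argument. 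This avoids the paper's separate special-case computation of $\phi'_{X_\infty}$.

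Both of the ``obstacles'' you flag are in fact already resolved by material in the paper, so there is no real gap. For the identity $\phi'_{\widetilde{X}'}=\mathrm{id}$: observe that $\widetilde{X}'=X'_\infty$ is precisely the $\Gamma$-covering $X_\infty$ when the chart is $f_2$, and the argument of Corollary \ref{ud} (namely the functorial identity $\phi'_{\widetilde{X}'_\infty}\equiv\phi'_{X'_\infty}\bmod B'_\infty\otimes\Lie(G')$ combined with the fact that the $\Lie(\Gamma')$-component is the identity on $\Gamma'$-locally analytic vectors) applies verbatim in the $f_2$-setup to give $\phi'_{X'_\infty}=\mathrm{id}$. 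You do not need to appeal to a ``degenerate $f_1=f_2$'' rerun of the preceding lemma. For the final step: you do not need any further Sen-ification, because $\widetilde{X}''=\widetilde{X}\times_X\widetilde{X}'=\widetilde{X}\times_X X'_\infty$ is, by the remark at the end of \ref{BCtotildeX}, exactly the fiber-product description of the Sen-ified covering $\widetilde{X}_\infty$ built from $\widetilde{X}$ using the chart $f_2$; thus $B_{\widetilde{X}''}$ is the $f_2$-version of $B_\infty$, the $(G\times G')$-locally analytic vectors in $B_{\widetilde{X}''}$ are precisely the $B_\infty^{\la}$ appearing in Proposition \ref{nvsh} for that chart, and the proposition applies directly to the difference $D=\phi_{\widetilde{X}}(x)-a\phi'_{\widetilde{X}}(x)\in B\otimes_{\Q_p}\Lie(G)\subset B_{\widetilde{X}''}\otimes_{\Q_p}\Lie(G)$. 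With those two observations your argument closes cleanly.
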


\begin{proof}
First note that the special case $\widetilde{X}=X_\infty=\Spa(A_\infty,A_\infty^+)$ is essentially proved by the same computation as above (after switching the role of $f_1$ and $f_2$). In general, let $V$ be a finite-dimensional continuous representation of $G$ over $\Q_p$. It is enough to show that $a^{-1}\phi_V$ on $B_\infty\otimes_{\Q_p} V$ agrees with the action of $\Lie(\Gamma)$ induced from $\phi'_{\widetilde{X}}$. Consider $\widetilde{X}_{\infty}:=\Spa(B_\infty,B_\infty^+)$. This is a ``log $G\times\Gamma$-Galois pro-\'etale perfectoid covering'' of $X$. Then by the functorial property \ref{functoriality}, 
\[\phi'_{\widetilde{X}}\equiv \phi'_{\widetilde{X}_\infty} \mod B_\infty\otimes_{\Q_p}\Lie(\Gamma).\]
Hence the actions of $\Lie(\Gamma)$ on $B_\infty\otimes_{\Q_p} V$ induced from $\phi'_{\widetilde{X}}$ and $\phi'_{\widetilde{X}_\infty}$ are the same because $\Gamma$ acts trivially on $V$. So it suffices to compare $a^{-1}\phi_V$ and $\phi'_{\widetilde{X}_\infty}$.

Since both actions are $B_\infty$-linear, it follows from Proposition \ref{decompletion} that we only need to compare two actions on the $\Gamma$-locally analytic vectors in $(B_\infty\otimes_{\Q_p} V)^{G}$. Now on this $G$-fixed subspace, $\phi_V$ acts by the natural Lie algebra action of $\Lie(\Gamma)$, while by Remark \ref{Bla} and the functorial property \ref{functoriality}
\[\phi'_{\widetilde{X}_\infty}\equiv\phi'_{X_\infty}\mod B_\infty\otimes_{\Q_p}\Lie(G),\]
the action on $(B_\infty\otimes_{\Q_p} V)^{G,\Gamma-\la}$ induced  by $\phi'_{\widetilde{X}_\infty}$ is just $\phi'_{X_\infty}$. Hence the desired equality is a direct consequence of the special case $\widetilde{X}=X_\infty$.
\end{proof}

\begin{rem}
One can also use Corollary \ref{ud} to reduce the general case to the special case $\widetilde{X}=X_\infty$.
\end{rem}

\subsection{Locally analytic covering}
\begin{para} \label{adhocFE}
The goal of this subsection is to give a sufficient condition for $\theta$ to be non-zero. We will continue using the notation introduced before. In particular, there is a fixed \'etale map $f_1:X\to Y$ as in \ref{smallY}. First, let's recall Faltings's extension in the rigid analytic variety setting, cf. Corollary 6.14 of \cite{Sch13}, Corollary 2.4.5 of \cite{DLLZ2}. However as both references assume $X$ is defined over a discretely valued complete non-archimedean extension of $\Q_p$, we give a rather \textit{ad hoc} definition here which will be sufficient for our purpose.

Fix a generator $\gamma$ of $\Gamma$ from now on and consider the following unipotent representation of $\Gamma$ on $V=\Q_p^{\oplus 2}$.
\[\Gamma\to\GL_2(\Q_p):~\gamma\mapsto \begin{pmatrix} 1 & 1 \\ 0 &1 \end{pmatrix}.\]
Clearly $V$ is an extension of the trivial representation by itself:
\[0\to\Q_p\to V\to \Q_p\to0.\]
Tensor this exact sequence with $B_\infty$ over $\Q_p$ and take $\Gamma$-invariants with respect to the diagonal actions:
\[
0\to B\to (B_\infty\otimes_{\Q_p} V)^{\Gamma}\to B\to 0. \label{FE} \tag{FE}
\]
Note that this $G$-equivariant sequence is exact by the almost purity theorem. It follows from the discussion in \ref{f1f2} that its extension class is independent of the choice of $f_1$ up to multiplication by a unit of $A$.

The norm on $B_\infty$ induces norms on $B_\infty\otimes V=(B_\infty)^{\oplus 2}$ and its subspaces, and \eqref{FE} is continuous with respect to this topology. One important property of \eqref{FE} is that if we take the continuous $G$-cohomology, the following connecting homomorphism of the long exact sequence is an isomorphism:
\[A=B^G\xrightarrow{\sim} H^1_{\cont}(G,B).\] 
This can be seen by identifying 
\[H^i_{\cont}(G,B)\cong H^i_{\cont}(G\times\Gamma,B_\infty)\cong H^i_{\cont}(\Gamma, A_\infty)\cong H^i_{\cont}(\Gamma, A),\]
where the first two isomorphisms follow from the almost purity theorem  and the last isomorphism is a consequence of the Tate's normalized trace. Then it is easy to compute the connecting homomorphism.

In fact, this holds for any open subgroup $G_0$ of $G$, i.e. if we take the continuous $G_0$-cohomology of \eqref{FE}, the connecting homomorphism $B^{G_0}\to H^1_{\cont}(G_0,B)$ is an isomorphism. To see this, note that $\Spa(B,B^+)$ is a ``log $G_0$-Galois pro-\'etale perfectoid covering'' of $\Spa(B^{G_0},(B^+)^{G_0})$. Hence $H^1_{\cont}(G_0,B)$ is also a free rank-one $B^{G_0}$-module. So in view of what we have proved, the claim follows from the fact that the natural map $B^{G_0}\otimes_{A} H^1_{\cont}(G,B)\to H^1_{\cont}(G_0,B)$ is an isomorphism.
\end{para}

\begin{rem} \label{relFE}
If $X$ is defined over a finite extension of $\Q_p$, then \eqref{FE} recovers (log) Faltings's extension on $\tilde{X}$ defined in \cite{Sch13,DLLZ2} by identifying the quotient $B$ with $B\otimes_{A}\Omega_{A/C}^{1}(S)$ sending $1\in B$ to $f_1^*(\frac{dT}{T})$, where $\Omega_{A/C}^{1}(S)$ denotes the continuous $1$-forms of $A$ over $C$ with a simple pole at $S$. See Proposition 2.3.15 and proof of Corollary 2.4.2 of \cite{DLLZ2}. Here $\tilde{X}$ is viewed as an open set in the pro-Kummer \'etale site of $X$ equipped with the natural log structure defined by $S$ (cf. Example 2.1.2 of \cite{DLLZ2}).
\end{rem}

\begin{prop} \label{LAETFAE}
The following conditions are equivalent:
\begin{enumerate}
\item \eqref{FE} remains exact after taking $G_0$-analytic vectors for some open subgroup $G_0$ of $G$ equipped with an integer valued, saturated $p$-valuation as in \ref{2ndcoord}.\label{G0anaex}
\item \eqref{FE} remains exact after taking $G$-locally analytic vectors of each term. \label{Glaex}
\item There exists a $G$-locally analytic vector $z\in B_\infty$ such that $\gamma(z)=z-1$.  \label{exz}
\end{enumerate}
\end{prop}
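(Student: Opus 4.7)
The proof plan is to use (3) as the pivot and establish $(1) \Leftrightarrow (3)$ and $(2) \Leftrightarrow (3)$. The key observation is a direct computation of the middle term of \eqref{FE}: with respect to the standard basis $e_1, e_2$ of $V$ on which $\gamma$ acts by $\gamma(e_1) = e_1$ and $\gamma(e_2) = e_1 + e_2$, an element $xe_1 + ye_2 \in B_\infty \otimes_{\Q_p} V$ is $\Gamma$-invariant precisely when $y \in B_\infty^\Gamma = B$ and $\gamma(x) = x - y$; the surjection onto $B$ sends $xe_1 + ye_2$ to $y$, and the injection from $B$ sends $b$ to $be_1$. Thus condition (3) is nothing other than the statement that the element $1 \in B$ admits a locally analytic preimage under this surjection.

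For $(1) \Rightarrow (3)$ and $(2) \Rightarrow (3)$: the element $1 \in B$ is fixed by $G$, hence lies in $B^{G_0-\an}$ for every $G_0$ and \emph{a fortiori} in $B^{\la}$. If \eqref{FE} stays exact after taking $G_0$-analytic (resp. locally analytic) vectors, then $1$ lifts to some $ze_1 + 1\cdot e_2$ with $z$ in the corresponding analytic space of $B_\infty$, and the $\Gamma$-invariance equation reads $\gamma(z) = z - 1$.

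For the converse implications $(3) \Rightarrow (2)$ and $(3) \Rightarrow (1)$, the crucial point is that a single such $z$ produces preimages of arbitrary elements by multiplication. Given $z \in B_\infty^{\la}$ with $\gamma(z) = z - 1$ and any $y \in B^{\la}$ (resp. $y \in B^{G_0-\an}$), set $x = zy$; then $\gamma(x) = \gamma(z)\gamma(y) = (z-1)y = x - y$, so $xe_1 + ye_2 \in (B_\infty \otimes V)^\Gamma$ projects to $y$. Since the product of analytic vectors is analytic (using that $\sC^{\an}(G_0,\Q_p)$ is a ring, cf.\ \ref{2ndcoord}), $zy$ is again locally analytic, giving surjectivity for (2). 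For (1), shrink $G_0$ so that $z \in (B_\infty)^{G_0-\an}$ and $G_0$ carries a saturated $p$-valuation; then $zy$ is $G_0$-analytic whenever $y$ is. Exactness in the middle of the lifted sequence is automatic: a $G_0$-analytic element $xe_1$ with trivial image satisfies $\gamma(x) = x$, so $x \in B$, and $G_0$-analyticity as an element of $B_\infty$ coincides with $G_0$-analyticity in $B$ by norm compatibility.

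There is essentially no serious obstacle: the argument is almost tautological once (3) is recognized as the lifting condition for the single distinguished element $1 \in B$, from which all other locally analytic lifts are produced by multiplication. The only technical hygiene needed is the preservation of analyticity under products in $B_\infty$ and the coincidence of $G_0$-analyticity computed in $B$ versus in $B_\infty$ for elements of $B$, both of which are standard consequences of the framework set up in \ref{2ndcoord}.
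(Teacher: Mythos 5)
Your proof is correct and takes essentially the same approach as the paper's: the paper asserts (without spelling it out) that conditions (1) and (2) are each "clearly equivalent" to the existence of a $G$-locally analytic preimage of $1 \in B$, which you make precise via the multiplication argument $x = zy$, and the coordinate computation identifying that lifting condition with (3) is the same in both. Your write-up is more detailed but adds no new idea beyond what the paper's terse proof relies on implicitly.
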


\begin{proof}
The first two parts are clearly equivalent with existence of a $G$-locally analytic vector $x\in (B_\infty\otimes_{\Q_p} V)^{\Gamma}$ mapping to $1\in B$ in \eqref{FE}. Write $B_\infty\otimes_{\Q_p} V=(B_{\infty})^{\oplus 2}$. Then $x=(z,1)$ for some $z\in B_\infty$ which is $G$-locally analytic and satisfies $\gamma(z)=z-1$. This proves the equivalence between these three conditions.
\end{proof}

\begin{defn} \label{laedefn}
We say $\widetilde{X}$ is a locally analytic covering of $X$ if one of the conditions in Proposition \ref{LAETFAE} holds. 
\end{defn}

\begin{rem}
An analysis similar to  \ref{f1f2} shows that this definition is independent of the choice of $f_1:X\to Y$.  There is another intrinsic definition:  $\widetilde{X}$ is a locally analytic covering of $X$ if and only if  the natural map $H^1_{\cont}(G,B^{\la})\to H_\cont^1(G,B)$ is an isomorphism. We sketch a proof here. It follows from Tate's normalized trace that $H^1_\cont(\Gamma,(A_\infty)^\la)\cong H^1(\Gamma,A_\infty)$.
Note that $A_\infty=(B_\infty)^{G}$. We claim that there are natural isomorphisms
\[H^1_\cont(G\times\Gamma,(B_\infty)^{\la})\cong H^1_\cont(\Gamma,(A_\infty)^\la)\cong H^1_\cont(\Gamma,A_\infty)\cong H^1_\cont(G\times\Gamma, B_\infty).\]
The first and the third isomorphisms follow from the Hochschild-Serre spectral sequence and $H^1_\cont(G,B_\infty)=H^1_\cont(G,(B_\infty)^{\la})=0$. The vanishing of $H^1_\cont(G,B_\infty)$ is clear by the almost purity theorem. For the vanishing of $H^1_\cont(G,(B_\infty)^{\la})$, using Lemma \ref{DG0n}, it suffices to show that $\varinjlim_nH^1(G_n,\sC^{\an}(G_n,\Q_p))=0$, which is true. Now we apply  the Hochschild-Serre spectral sequence to the subgroup $\{\mathbf{1}\}\times\Gamma\subset G\times\Gamma$ in the computation of  $H^1_\cont(G\times\Gamma,(B_\infty)^{\la})$ and $H^1_\cont(G\times\Gamma, B_\infty)$. Since $H^1_\cont(\Gamma,B_\infty)=0$  with , it is easy to see that $H^1_{\cont}(G,B^{\la})\cong H_\cont^1(G,B)$ is equivalent with 
\[H^1(\Gamma,(B_\infty)^\la)^G=0,\]
i.e. the $\Gamma$-coinvariants of $(B_\infty)^\la$ has no $G$-invariants. Note that $1\in (B_\infty)^\la$ is  fixed by $G$. Hence the vanishing of $H^1(\Gamma,(B_\infty)^\la)^G$ certainly implies the existence of element $z$ in the third part of Proposition \ref{LAETFAE}. On the other hand, if there exists such an element $z$, the argument in \ref{elex} below will imply that $H^1(\Gamma,(B_\infty)^\la)=0$. This proves the proof.
\end{rem}

\begin{prop} \label{nonvantheta}
$\phi_{\widetilde{X}}:\Lie(\Gamma)\to B\otimes_{\Q_p}\Lie(G)$ is non-zero if $\widetilde{X}$ is a locally analytic covering of $X$.
\end{prop}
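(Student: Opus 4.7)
The plan is to assume $\phi_{\widetilde{X}}=0$ and derive a contradiction from Proposition \ref{LAETFAE}(\ref{exz}), which furnishes a $G$-locally analytic element $z\in B_\infty$ satisfying $\gamma(z)=z-1$. The key step will be to pin down $\phi_{\widetilde X_\infty}(t)$ for a generator $t\in\Lie(\Gamma)$, viewed as an element of $B_\infty\otimes_{\Q_p}\Lie(G\times\Gamma)$ via the $(G\times\Gamma)$-Galois covering $\widetilde X_\infty\to X$.

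First I would invoke the functoriality of Corollary \ref{functoriality} twice. Quotienting $\widetilde X_\infty$ by the normal subgroup $\{1\}\times\Gamma$ yields $\widetilde X$, so
\[\phi_{\widetilde X_\infty}(t)\equiv \phi_{\widetilde X}(t)\pmod{B_\infty\otimes_{\Q_p}\Lie(\Gamma)};\]
under the assumption $\phi_{\widetilde X}(t)=0$ this forces the $\Lie(G)$-component of $\phi_{\widetilde X_\infty}(t)$ to vanish. Quotienting by $G\times\{1\}$ yields $X_\infty$, and as already observed in the proof of Corollary \ref{ud} the projection $\Lie(\Gamma)\xrightarrow{\phi_{\widetilde X_\infty}} B_\infty\otimes_{\Q_p}\Lie(G\times\Gamma)\twoheadrightarrow B_\infty\otimes_{\Q_p}\Lie(\Gamma)$ is the identity map, i.e.\ $\phi_{X_\infty}(t)=1\otimes t$ (this is the $f_1=f_2$ instance of the lemma in \S\ref{f1f2}). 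Combining both congruences gives
\[\phi_{\widetilde X_\infty}(t)\ =\ 1\otimes t\ \in\ B_\infty\otimes_{\Q_p}\Lie(\Gamma)\ \subset\ B_\infty\otimes_{\Q_p}\Lie(G\times\Gamma).\]

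Next I would verify that $z\in B_\infty^{\la}$. By hypothesis $z$ is $G$-locally analytic, so only $\Gamma$-analyticity needs checking. From $\gamma(z)=z-1$ one gets $(\gamma-1)^{2}z=0$, whence for any $s\in\Z_p$,
\[\gamma^s\cdot z\ =\ \sum_{n\geq 0}\binom{s}{n}(\gamma-1)^{n}z\ =\ z-s,\]
which is analytic in $s$. Thus $z$ is $\Gamma$-analytic, and therefore $(G\times\Gamma)$-locally analytic.

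Finally, applying Corollary \ref{pCRB} to the $(G\times\Gamma)$-covering $\widetilde X_\infty\to X$ yields that $\phi_{\widetilde X_\infty}(t)$ annihilates $B_\infty^{\la}$. Evaluating on $z$ gives $(1\otimes t)\cdot z = t\cdot z = -1\neq 0$, a contradiction. The only real point to watch is pinning down the normalization $\phi_{X_\infty}(t)=1\otimes t$ rather than an arbitrary unit of $A$; once this is secured the rest is formal from functoriality and the annihilation property of $\phi$.
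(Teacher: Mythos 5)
Your proof is correct and reaches the same contradiction as the paper (namely that $\Lie(\Gamma)$ would have to annihilate $z$, yet $t\cdot z=-1$), but arrives there by a different packaging. The paper's three-line argument opens up the construction of $\phi_{G_0}$ directly: by \ref{VsCan} and Lemma \ref{DG0n}, $\phi_{G_0}=\phi_{\widetilde X}$ is the $B_\infty$-linear extension of the natural $\Lie(\Gamma)$-action on the $\Gamma$-locally analytic vectors of $(B_\infty)^{G_0-\an}$, so $\phi_{\widetilde X}=0$ forces that action to vanish, which the element $z\in(B_\infty)^{G_0-\an}$ from Proposition \ref{LAETFAE}\eqref{exz} contradicts. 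You instead stay at the level of the formal corollaries: pass to the $(G\times\Gamma)$-covering $\widetilde X_\infty$, pin down $\phi_{\widetilde X_\infty}(t)=1\otimes t$ by two applications of Corollary \ref{functoriality} together with the normalization $\phi_{X_\infty}=\id$ (the $f_1=f_2$ case of the lemma in \S\ref{f1f2}, as you note — this is exactly what the proof of Corollary \ref{ud} asserts), and then apply Corollary \ref{pCRB} to the larger covering. Your route is longer but more modular, never re-entering the decompletion machinery. The one genuine subtlety you need — and correctly address — is that $z$ is $(G\times\Gamma)$-locally analytic rather than merely separately locally analytic in each factor; this is safe here because $\gamma^s\cdot z=z-s$ is polynomial in $s$, and $G$ acts $C$-linearly so $(g,\gamma^s)\cdot z=g\cdot z-s$ is jointly analytic on a neighborhood of the identity.
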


\begin{proof}
We need results established in \ref{VsCan}. Let $G_0$ an open subgroup as in \ref{VsCan}. Suppose $\phi_{\widetilde{X}}=0$. It follows from our construction there and Lemma \ref{DG0n} that $\Lie(\Gamma)$ is acting trivially on the $\Gamma$-locally analytic vectors in $(B_\infty)^{G_0-\an}$. Hence there does not exist an element $z\in (B_\infty)^{G_0-\an}$ such that $\gamma(z)=z-1$. As $G_0$ can be taken arbitrarily small, part \eqref{exz} of Proposition \ref{LAETFAE} implies that $\widetilde{X}$ is not a locally analytic covering of $X$.
\end{proof}


\subsection{Application: acyclicity of taking locally analytic vectors of  \texorpdfstring{$B$}{Lg}}
We keep the notation and setup from \ref{setup}. Our main result of this subsection gives an equivalent condition for $B$ being $\mathfrak{LA}$-acyclic. See \ref{LAacyclic} for more details about this notion. 

\begin{thm} \label{BLAacyc}
Suppose $\widetilde X=\Spa(B,B^+)$ is a ``log $G$-Galois pro-\'etale perfectoid covering'' of $X$ and $X$ is small. Then $R^i\mathfrak{LA}(B)=0$ for all $i\geq 1$ if and only if $\widetilde X$ is a locally analytic covering of $X$ (see definition \ref{laedefn}). When this happens, $B$ is strongly $\mathfrak{LA}$-acyclic.
\end{thm}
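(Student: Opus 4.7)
Both implications will follow from the long exact sequence of Lemma \ref{LAlongexa}(1) applied to the Faltings extension \eqref{FE}:
\[0 \to B \to N \to B \to 0, \qquad N := (B_\infty \otimes_{\Q_p} V)^{\Gamma}.\]
For the direction ($\Rightarrow$), if $R^1\mathfrak{LA}(B)=0$ then the long exact sequence forces $N^{\la}\twoheadrightarrow B^{\la}$. Lifting $1\in B\subset B^{\la}$ to an element $ze_1+e_2\in N^{\la}$ produces $z\in B_\infty^{\la}$ with $\gamma z=z-1$, so $\widetilde X$ is a locally analytic covering of $X$ by Proposition \ref{LAETFAE}(3).

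For the converse, assume $\widetilde X$ is a locally analytic covering, so that $N^{\la}\twoheadrightarrow B^{\la}$ holds by Proposition \ref{LAETFAE}(2). The task reduces to proving that $N$ is strongly $\mathfrak{LA}$-acyclic: once this is known, a diagonal chase through the long exact sequence applied to \eqref{FE} immediately produces $R^i\mathfrak{LA}(B)=0$ for all $i\geq 1$ together with its strong form (the relevant $H^i_{\cont}(G_n,\cdot)$ become essentially zero as $n$ grows). To establish this strong acyclicity of $N$, my plan is to exploit the decompletion data. Applying Proposition \ref{decompletion} to a $G_0$-stable $\Z_p$-lattice $T\subset V$ (for $G_0$ a sufficiently small compact open subgroup of $G$) produces a free $B_{G_0,n}^+$-module $D_{G_0,n}^+(T)$ satisfying
\[B_\infty\otimes_{B_{G_0,n}^+}D_{G_0,n}^+(T)\cong B_\infty\otimes_{\Z_p}T,\]
equipped with compatible $G$- and $\Gamma$-actions. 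The locally analytic covering hypothesis supplies a $G$-locally analytic Faltings element $z\in B_\infty^{\la}$ with $\gamma z=z-1$; combined with Lemma \ref{LAm}, this element should explicitly present $N$ as a finite-rank module over $B_{G_0,n}$ carrying a $G$-action whose $G_0$-invariant part has trivial $G_0$-action. This description exhibits $N$ as an induced-type Banach $G_0$-representation whose higher $\mathfrak{LA}$-cohomology is controlled by that of a Banach space of the form $B\widehat\otimes_{\Q_p}\sC(G_0,\Q_p)^{\oplus d}$, which is strongly $\mathfrak{LA}$-acyclic by Corollary \ref{admLAacyc}.

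The main obstacle is the technical bookkeeping in the last step: carefully tracking the $G$-equivariant structure through the decompletion and through the subsequent $\Gamma$-invariants, and verifying that the Faltings element $z\in B_\infty^{\la}$ provided by the locally analytic covering hypothesis is precisely the ingredient needed to align these two operations at the level of locally analytic vectors. This reflects the broader philosophy of the section: the locally analytic covering condition is the correct avatar of decompletion in the locally analytic setting, and it is exactly what is needed (and only what is needed) for the Tate–Sen decompletion arguments of Proposition \ref{decompletion} to pass to locally analytic vectors.
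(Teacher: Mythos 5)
Your $(\Rightarrow)$ direction is correct and matches the paper's argument: take locally analytic vectors of the Faltings extension \eqref{FE}, use the long exact sequence of Lemma \ref{LAlongexa}(1), and the vanishing of $R^1\mathfrak{LA}(B)$ forces $N^{\la}\twoheadrightarrow B^{\la}$, which by Proposition \ref{LAETFAE} is the locally analytic covering condition.

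The $(\Leftarrow)$ direction has a genuine gap. You reduce everything to the claim that $N=(B_\infty\otimes_{\Q_p}V)^{\Gamma}$ is strongly $\mathfrak{LA}$-acyclic, but the sketched argument does not establish this. First, $N$ is \emph{not} a finite-rank $B_{G_0,n}$-module: it is free of rank two over $B$, and $B$ itself is the $p$-adic completion of a direct limit over all open $G_0\subset G$, hence infinite rank over any $B_{G_0,n}$. The finite-rank object produced by Proposition \ref{decompletion} is $D_{G_0,n}^+(T)$, which decompletes the two-dimensional $V$, not $B$ or $N$. Second, Corollary \ref{admLAacyc} requires an admissible representation $\widehat\otimes$ a Banach space with \emph{trivial} $G$-action; the ring $B$ is neither admissible nor trivially acted on, so ``$B\widehat\otimes_{\Q_p}\sC(G_0,\Q_p)^{\oplus d}$'' is not covered by that corollary. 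Finally, the reduction is essentially circular: once the theorem is known, applying the long exact sequence to \eqref{FE} shows $N$ is $\mathfrak{LA}$-acyclic precisely \emph{because} $B$ is, so proving acyclicity of $N$ first is of comparable difficulty to the original problem and needs its own input. What the paper actually does is bypass $N$ entirely: applying the Hochschild--Serre spectral sequence for $\{\mathbf{1}\}\times\Gamma\subset G_0\times\Gamma$ and then for $G_0\times\{\mathbf{1}\}\subset G_0\times\Gamma$ identifies $H^i_{\cont}(G_0,B\widehat\otimes_{\Q_p}\sC^{\an}(G_0,\Q_p))$ with $H^i_{\cont}(\Gamma,(B_\infty)^{G_0-\an})$, so only $i=1$ survives; then Tate's normalized traces shrink $(B_\infty)^{G_0-\an}$ to a decompleted module $D_{G_0,n}$, and an explicit convergent-series primitive built from the Faltings element $z$ (the element supplied by the locally analytic covering hypothesis, cf.\ \ref{elex}) shows each class eventually dies, giving strong acyclicity directly.
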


\begin{para}
One direction is clear. Suppose $R^i\mathfrak{LA}(B)=0$, $i\geq 1$. Take completed tensor products of \eqref{FE} with $\sC^{\an}(G_n,\Q_p)$ over $\Q_p$ and take the continuous $G_n$-cohomology:
\[0\to B^{G_n-\an}\to (B_\infty\otimes_{\Q_p} V)^{\Gamma,G_n-\an}\to B^{G_n-\an}\to H^1_{\cont}(G_n, \sC^{\an}(G_n,\Q_p)\widehat\otimes_{\Q_p} B).\]
Then by passing to the direct limit with $n\to\infty$, we see that the last term vanishes. Hence \eqref{FE} remains exact when taking $G$-locally analytic vectors.
\end{para}

\begin{para}
The proof of the other direction goes as follows: first  $H^i_{\cont}(G_n,B\widehat{\otimes}_{\Q_p}\sC^{\an}(G_n,\Q_p))$ will be identified with a certain group cohomology of $\Gamma$, then we construct explicit elements to kill these cohomology groups. In particular, it implies strongly $\mathfrak{LA}$-acyclicity. We remark that this is a standard technique in the theory of $(\varphi,\Gamma)$-modules for studying Galois cohomology of $p$-adic Galois representations.

Keep the notation introduced in \ref{smallY}. Let $G_0$ be an open subgroup considered in \ref{VsCan}. We can consider $H^i_{\cont}(G_0\times \Gamma,B_\infty\widehat{\otimes}_{\Q_p}\sC^{\an}(G_0,\Q_p))$. On the one hand, by the almost purity theorem, $H^i_{\cont}(\Gamma,B_\infty^+\otimes_{\Z_p} \sC^{\an}(G_0,\Q_p)^\circ/p)$ almost vanishes for $i\geq1$, hence 
\[H^i_{\cont}(\Gamma,B_\infty\widehat{\otimes}_{\Q_p}\sC^{\an}(G_0,\Q_p))=0.\]
 Applying the Hochschild-Serre spectral sequence to $\{\mathbf{1}\}\times\Gamma \subset G_0\times \Gamma$ and using Lemma \ref{Gammainv}, we get
\[H^i_{\cont}(G_0,B\widehat{\otimes}_{\Q_p}\sC^{\an}(G_0,\Q_p))\xrightarrow{\sim}H^i_{\cont}(G_0\times \Gamma,B_\infty\widehat{\otimes}_{\Q_p}\sC^{\an}(G_0,\Q_p)).\]

On the other hand, if we apply Hochschild-Serre spectral sequence to $G_0\times\{\mathbf{1}\} \subset G_0\times \Gamma$ and note that $(B_\infty\widehat{\otimes}_{\Q_p}\sC^{\an}(G_0,\Q_p))^{G_0}=(B_\infty)^{G_0-\an}$, we get
\[H^i_{\cont}(\Gamma,(B_\infty)^{G_0-\an})\xrightarrow{\sim}H^i_{\cont}(G_0\times \Gamma,B_\infty\widehat{\otimes}_{\Q_p}\sC^{\an}(G_0,\Q_p)).\]
All these isomorphism are functorial in $G_0$. Therefore,
\end{para}

\begin{lem}
There is a natural isomorphism 
\[H^i_{\cont}(G_0,B\widehat{\otimes}_{\Q_p}\sC^{\an}(G_0,\Q_p))\cong H^i_{\cont}(\Gamma,(B_\infty)^{G_0-\an})\]
functorial in $G_0$.
\end{lem}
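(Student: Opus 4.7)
The plan is essentially to package together the two chains of isomorphisms constructed in the two paragraphs immediately preceding the lemma statement. Both of them target the same common object, namely $H^i_\cont(G_0 \times \Gamma, B_\infty \widehat\otimes_{\Q_p} \sC^{\an}(G_0,\Q_p))$, so the proof of the lemma will be a one-line composition once both sides are identified with this.

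For the first identification, I would invoke the Hochschild--Serre spectral sequence associated to the closed normal subgroup $\{\mathbf{1}\} \times \Gamma \subset G_0 \times \Gamma$. The $E_2$-page is $H^p_\cont(G_0, H^q_\cont(\Gamma, B_\infty \widehat\otimes_{\Q_p} \sC^{\an}(G_0,\Q_p)))$, and the almost-purity argument already given in the preceding paragraph (reducing mod $p$ and noting that $\sC^{\an}(G_0,\Q_p)^{\circ}$ is topologically free over $\Z_p$) kills the higher $\Gamma$-cohomology. Combined with $(B_\infty)^{\Gamma} = B$ from Lemma \ref{Gammainv}, the spectral sequence collapses onto the edge $H^i_\cont(G_0, B \widehat\otimes_{\Q_p} \sC^{\an}(G_0,\Q_p)) \xrightarrow{\sim} H^i_\cont(G_0 \times \Gamma, B_\infty \widehat\otimes_{\Q_p} \sC^{\an}(G_0,\Q_p))$.

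For the second identification, I would apply Hochschild--Serre to the complementary normal subgroup $G_0 \times \{\mathbf{1}\}$. The invariants $(B_\infty \widehat\otimes_{\Q_p} \sC^{\an}(G_0,\Q_p))^{G_0}$ for the diagonal action (left translation on $\sC^{\an}$) are by definition $(B_\infty)^{G_0\text{-}\an}$, per \ref{Gnan}. The main subtlety to address is that one needs the vanishing of higher $G_0$-cohomology on $B_\infty \widehat\otimes_{\Q_p} \sC^{\an}(G_0,\Q_p)$ for the spectral sequence to collapse; this is a Shapiro-type statement, since the change-of-variables automorphism $\Phi(f)(g) = g \cdot f(g)$ on analytic subspaces intertwines the diagonal action with the left-translation-only action on the $\sC^{\an}$-factor, and the latter presents the module as coinduced from the trivial subgroup. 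One can formalize this by descending through $G_n$-analytic layers, where the $G_0$-action on the relevant analytic piece of $B_\infty$ is itself analytic so that the transformation is well defined.

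Composing the two isomorphisms yields the natural isomorphism asserted in the lemma. Functoriality in $G_0$ is then automatic: for a smaller open subgroup $G_0' \subset G_0$, restriction of continuous cochains is compatible with both Hochschild--Serre spectral sequences, and the identification $(B_\infty \widehat\otimes \sC^{\an}(G_0,\Q_p))^{G_0} = (B_\infty)^{G_0\text{-}\an}$ is itself functorial in $G_0$ by Lemma \ref{Gnnorm}. I expect the Shapiro-style vanishing in the second step to be the only point requiring genuine argument beyond what is already spelled out in the two preceding paragraphs; the rest is bookkeeping.
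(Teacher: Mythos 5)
Your high-level plan --- passing both sides through the common group $H^i_\cont(G_0\times\Gamma, B_\infty\widehat\otimes_{\Q_p}\sC^{\an}(G_0,\Q_p))$ via the two Hochschild--Serre spectral sequences, using almost purity on the $\Gamma$-side, $(B_\infty)^\Gamma=B$, and identifying the $G_0$-invariants with $(B_\infty)^{G_0-\an}$ --- is exactly the paper's argument, and the functoriality remark is fine. The problem is the proposed justification for the vanishing $H^q_\cont(G_0, B_\infty\widehat\otimes_{\Q_p}\sC^{\an}(G_0,\Q_p))=0$ for $q\geq 1$, which is genuinely needed for the second spectral sequence to degenerate.

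Two things go wrong with the Shapiro route. First, the change of variables $\Phi(f)(g)=g\cdot f(g)$ does not stabilize $\sC^{\an}(G_0,B_\infty)$: applied to a constant $f\equiv v$ it produces the orbit map $g\mapsto g\cdot v$, which is analytic only when $v$ is a $G_0$-analytic vector of $B_\infty$, and $B_\infty$ is a Banach space whose analytic vectors form a proper (dense) subspace. In particular $B_\infty\widehat\otimes\sC^{\an}(G_0,\Q_p)$ is \emph{not} an increasing union of the pieces $(B_\infty)^{G_n-\an}\widehat\otimes\sC^{\an}(G_0,\Q_p)$, so one cannot ``descend through analytic layers.'' Second, and decisively, even if $\Phi$ were available the conclusion would be false: $\sC^{\an}(G_0,\Q_p)$ with left translation is \emph{not} acyclic for $G_0$ --- this is the essential difference from $\sC(G_0,\Q_p)$, which is the module the genuine Shapiro argument in \S\ref{LAacyclic} uses. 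Already $H^1_\cont(\Z_p,\sC^{\an}(\Z_p,\Q_p))\neq 0$: the polynomial solution of $(\gamma-1)f=x^k$ has leading coefficient $\tfrac{1}{k+1}$, so $\gamma-1$ admits no bounded inverse on the Tate algebra (take $k+1$ a high power of $p$) and is not surjective. The vanishing you need is thus an arithmetic statement about the perfectoid tower, not a formal coinduction statement. The fix is to run the same almost-purity device the paper spells out on the $\Gamma$-side: by Lemma \ref{modpn+1trivial}, $G_1$ acts trivially on $\sC^{\an}(G_0,\Q_p)^\circ/p$, so modulo $p$ the diagonal $G_1$-action on $B_\infty^+\otimes\sC^{\an}(G_0,\Q_p)^\circ$ is only through the first factor; Faltings's almost purity then gives almost vanishing of $H^q_\cont(G_1,\cdot)$ for $q\geq 1$, hence $H^q_\cont(G_1,B_\infty\widehat\otimes\sC^{\an}(G_0,\Q_p))=0$ for $q\geq 1$, and passing from $G_1$ to $G_0$ costs nothing since $G_0/G_1$ is a finite $p$-group acting on a $\Q_p$-vector space.
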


\begin{para}
From this, it is clear that $H^i_{\cont}(G_0,B\widehat{\otimes}_{\Q_p}\sC^{\an}(G_0,\Q_p))=0,i\geq 2$ as $\Gamma$ is one-dimensional. So we assume $i=1$ from now on. Our next step is to replace $(B_\infty)^{G_0-\an}$ on the right hand side by a smaller space. Let $D_{G_0,n}$ be the subspace of $p^n\Gamma$-analytic vectors in $(B_\infty)^{G_0-\an}$, i.e. $(B_\infty)^{G_0\times p^n\Gamma-\an}$. It follows from Lemma \ref{DG0n} that for sufficiently large $n$, 
\[(B_\infty)^{G_0-\an}=B_{G_0,\infty}\widehat{\otimes}_{B_{G_0,n}}D_{G_0,n}.\]
Moreover, let $D_{G_0,n}^{+}=B_\infty^{+}\cap D_{G_0,n}$, then by our construction in \ref{VsCan}
\[(\gamma-1)^m(D_{G_0,n}^{+})\subset pD_{G_0,n}^{+}\]
for some $m$ depending on $n$. Fix such $n$ and $m$ for the moment. Recall that from results in \ref{BCtotildeX}, for $k$ large enough, there is Tate's normalized trace $\overline{\tr}_{X_{G_0,k}}:B_{G_0,\infty}\to B_{G_0,k}$ and $\gamma-1$ is invertible on $\ker(\overline{\tr}_{X_{G_0,k}})$ with the norm of its inverse $\|(\gamma-1)^{-1}\|<p^{\frac{1}{2m}}$. We claim that $H^1_{\cont}(\Gamma,\ker(\overline{\tr}_{X_{G_0,k}})\widehat{\otimes}_{B_{G_0,n}}D_{G_0,n})=0$ for such $k$. Assuming this, as $D_{G_0,k}=B_{G_n,k}\otimes_{B_{G_0,n}}D_{G_0,n}$, we get
\[H^1_{\cont}(\Gamma,(B_\infty)^{G_0-\an})=H^1_{\cont}(\Gamma,D_{G_0,k})=H^1_{\cont}(\Gamma,(B_\infty)^{(G_0\times p^k\Gamma)-\an})\]
for $k$ sufficiently large. Therefore,
\[R^1\mathfrak{LA}(B)=\varinjlim_{n,k}H^1_{\cont}(\Gamma,(B_\infty)^{(G_n\times p^k\Gamma)-\an})\]

The vanishing of $H^1_{\cont}(\Gamma,\ker(\overline{\tr}_{X_{G_0,k}})\widehat{\otimes}_{B_{G_0,n}}D_{G_0,n})$ is a consequence of the following easy lemma.
\end{para}

\begin{lem}
For any $a\in \ker(\overline{\tr}_{X_{G_0,k}})^{+}:=\ker(\overline{\tr}_{X_{G_0,k}})\cap B_\infty^{+}$ and $b\in D_{G_0,n}^{+}$, we have $pa\otimes b\in (\gamma-1)(\ker(\overline{\tr}_{X_{G_0,k}})^+\widehat{\otimes}_{B_{G_0,k}^{+}}D_{G_0,n}^{+})$.
\end{lem}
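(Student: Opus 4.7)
The approach is to construct $c$ explicitly as a Neumann series inverting $\gamma-1$ on the completed tensor product, with the prefactor $p$ absorbing the Tate--Sen loss. Writing $\alpha := (\gamma-1)\otimes 1$ and $\beta := 1\otimes(\gamma-1)$ for the two partial operators, the diagonal action of $\gamma$ satisfies
\[\gamma - 1 \;=\; (1+\alpha)(1+\beta) - 1 \;=\; \alpha + \beta + \alpha\beta \;=\; \alpha\bigl(1 + (1+\alpha^{-1})\beta\bigr).\]
Since $\alpha$ is invertible on $\ker(\overline{\tr}_{X_{G_0,k}})\widehat{\otimes}_{B_{G_0,k}}D_{G_0,n}$ with $\|\alpha^{-1}\| < p^{1/(2m)}$ (by the choice of $k$ recorded in the preceding paragraph), one formally inverts the right-hand factor as a geometric series and applies the resulting operator to $pa\otimes b$, obtaining the candidate
\[c \;:=\; p\sum_{j=0}^{\infty} (-1)^j (1+u^{-1})^j\, u^{-1}(a)\ \otimes\ v^j(b),\]
where $u^{-1} := \alpha^{-1}$ acts on the first factor and $v := \gamma - 1$ acts on $D_{G_0,n}^+$.

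The verification then reduces to two quantitative estimates: the operator bound $\|(1+u^{-1})^j u^{-1}\| \leq p^{(j+1)/(2m)}$, which is immediate from $\|u^{-1}\| < p^{1/(2m)}$; and the integral bound $v^j(D_{G_0,n}^+) \subset p^{\lfloor j/m\rfloor} D_{G_0,n}^+$, obtained by iterating the hypothesis $(\gamma-1)^m D_{G_0,n}^+ \subset p D_{G_0,n}^+$. Writing $j = qm + r$ with $0 \leq r < m$, these estimates imply that the norm of the $j$-th summand is at most $p^{(j+1)/(2m) - \lfloor j/m\rfloor - 1} \leq p^{-(q-1)/2}$, which is $\leq 1$ for every $j \geq 0$ and tends to zero as $j \to \infty$. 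Hence each summand lies in $\ker(\overline{\tr}_{X_{G_0,k}})^+ \widehat{\otimes}_{B_{G_0,k}^+} D_{G_0,n}^+$, the series converges there, and a termwise application of $\gamma - 1$ together with the identity $\alpha(1 + (1+\alpha^{-1})\beta) = \gamma - 1$ yields $(\gamma - 1)(c) = p\, a \otimes b$.

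The delicate point, and the main potential obstacle, is the borderline case $j = 0$: the fractional loss $p^{1/(2m)}$ coming from $u^{-1}$ pushes the leading summand barely outside the integral lattice, and this is precisely what the leading factor $p$ compensates. This explains why the statement has $p\cdot a\otimes b$ rather than $a\otimes b$ on the left, and why the sharp form $\|(\gamma-1)^{-1}\| < p^{1/(2m)}$ of the Tate--Sen estimate (rather than a weaker bound) is indispensable. Once the lemma is established, inverting $p$ gives surjectivity of $\gamma - 1$ on $\ker(\overline{\tr}_{X_{G_0,k}}) \widehat{\otimes}_{B_{G_0,n}} D_{G_0,n}$, hence $H^1_{\cont}(\Gamma, \ker(\overline{\tr}_{X_{G_0,k}}) \widehat{\otimes}_{B_{G_0,n}} D_{G_0,n}) = 0$, which completes the chain of reductions to strong $\mathfrak{LA}$-acyclicity of $B$.
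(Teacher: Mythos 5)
Your approach is correct and reproduces the paper's argument: the series you obtain by formally inverting $\gamma - 1 = \alpha\bigl(1 + (1+\alpha^{-1})\beta\bigr)$ is, after rewriting $(1+u^{-1})^ju^{-1}(pa) = (\gamma^{-1}-1)^{-j}\bigl((\gamma-1)^{-1}(pa)\bigr)$, term-by-term identical to the series the paper writes down and verifies by a telescoping computation, and your norm estimates match. (Two small slips worth noting: the intermediate exponent should be $-(q+1)/2$ rather than $-(q-1)/2$, and the extremal summand is at $j=m-1$ rather than $j=0$, but neither affects the bound $\leq 1$ or the conclusion.)
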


\begin{proof}
Let $c=(\gamma-1)^{-1}(pa)\in \ker(\overline{\tr}_{X_{G_0,k}})^{+}$. Consider the series
\[\sum^{+\infty}_{l=0} (\gamma^{-1}-1)^{-l}(c)\otimes (\gamma-1)^{l}(b)=\sum^{+\infty}_{l=0} \gamma^l(1-\gamma)^{-l}(c)\otimes (\gamma-1)^{l}(b)\]
which by our assumption converges to an element $x\in \ker(\overline{\tr}_{X_{G_0,k}})^+\widehat{\otimes}_{B_{G_0,k}^{+}}D_{G_0,n}^{+}$. A direct computation gives $(\gamma-1)(x)=pa\otimes b$. Indeed, let 
\[y_l=(\gamma^{-1}-1)^{-l}(c), z_l=(\gamma-1)^l(b).\]
Then $(\gamma-1)(y_l)=-\gamma(y_{l-1}), (\gamma-1)(z_l)=z_{l+1}$. Hence
\[(\gamma-1)(y_l\otimes z_l)=(\gamma-1)(y_l)\otimes z_l+\gamma(y_l)\otimes(\gamma-1)(z_l)=
-\gamma(y_{l-1})\otimes z_l+\gamma(y_l)\otimes z_{l+1}.\]
When taking the summation of $l$ from $0$ to $+\infty$, only $-\gamma(y_{-1})\otimes z_0$ is not cancelled out. But this is just $(\gamma-1)(c)\otimes b=pa\otimes b$.
\end{proof}

\begin{para} \label{elex}
Now it suffices to show that for any element $x\in (B_\infty)^{(G_0\times p^k\Gamma)-\an}$, we can find $n>0$ and $l\geq k$ such that 
\[x\in(\gamma-1)((B_\infty)^{(G_n\times p^l\Gamma)-\an}).\]
Fix such $k,x$. Let $\phi_{\gamma}\in \Lie(\Gamma)$ be the logarithm of $\gamma$. Then by Lemma \ref{LieBound}, there exists a constant $C>0$ such that
\[\|\phi_\gamma(x)\|_{G_0\times p^k\Gamma}\leq C \|x\|_{G_0\times p^k\Gamma}.\]
Recall that $\|\cdot\|_{G_0\times p^k\Gamma}$ is a norm on the $G_0\times p^k\Gamma$-analytic vectors introduced in \ref{2ndcoord}.

Recall that we assume $\widetilde{X}$ is a locally analytic covering of $X$. Hence by \eqref{exz} of Proposition \ref{LAETFAE}, we may find an element $z\in (B_\infty)^{\la}$ such that $\gamma(z)=z-1$. By our construction of $B_\infty$ in \ref{BCtotildeX}, $\bigcup_{n,l} B_{G_n,l}$ is dense in $B_\infty$. Hence there exists an element $z_0\in B_{G_n,l}$ for some $n,l$ such that $z\in (B_\infty)^{(G_n\times p^l\Gamma)-\an}$ and the norm of $z-z_0$ (as an element in $B_\infty$) satisfies
\[\|z-z_0\|\leq \frac{1}{2Cp^{1/(p-1)}}.\]
Enlarging $n,l$ if necessary, we may assume $\|z-z_0\|_{G_n\times p^l\Gamma}=\|z-z_0\|$ by Lemma \ref{LieBound}. Now consider the series
\[-\sum_{m=0}^{+\infty}\frac{\phi_{\gamma}^{(m)}(x)}{(m+1)!}(z-z_0)^{m+1}.\]
It is easy to see that $\|\frac{\phi_\gamma^{(m)}(x)}{(m+1)!}(z-z_0)^{m+1}\|_{G_n\times p^l\Gamma}\leq \frac{\|x\|_{G_0\times p^k\Gamma}}{2^{m+1}C}$. So this series converges (with respect to $\|\cdot\|_{G_n\times p^l\Gamma}$) to an element $y\in (B_\infty)^{(G_n\times p^l\Gamma)-\an}$. Note that $\phi_{\gamma}(z-z_0)=\phi_{\gamma}(z)=-1$. A direct computation shows
\[\phi_{\gamma}(y)=x.\]
After replacing $l$ by a larger integer, we may assume $(\gamma-1)^m(D_{G_n,l}^+)\subset pD_{G_n,l}^+$ for some $m>0$ by our construction in \ref{VsCan}. Recall that $D_{G_n,l}^+=(B_\infty)^{(G_n\times p^l\Gamma)-\an}\cap B_\infty^+$. From this and $\phi_{\gamma}(y)=x$, we conclude $x\in (\gamma-1)((B_\infty)^{(G_n\times p^l\Gamma)-\an})$ by the following simple lemma.
\end{para}

\begin{lem}
Let $M$ be a unitary $\Q_p$-Banach representation of $\Gamma$ and $M^o$ be its unit ball. Suppose $(\gamma-1)^mM^o \subset pM^o$ for some $m\geq 0$. Then $\phi_{\gamma}(M)\subset (\gamma-1)(M)$.
\end{lem}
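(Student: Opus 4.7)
The plan is to show that the formal identity $\phi_\gamma = \log(\gamma) = \log(1+(\gamma-1))$ holds as an equality of operators on $M$, and then factor out $(\gamma-1)$ from the logarithm series. Concretely, for each $x\in M$ the witness $y$ with $(\gamma-1)(y)=\phi_\gamma(x)$ will be produced as $y=S(x)$, where
\[
S := \sum_{n\geq 1}\frac{(-1)^{n-1}}{n}(\gamma-1)^{n-1}\in \End_{\Q_p}(M).
\]

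First I would check convergence. The hypothesis gives $(\gamma-1)^{km}M^o\subset p^kM^o$, hence $\|(\gamma-1)^n\|_{\mathrm{op}}\leq p^{-\lfloor n/m\rfloor}$ on $M$, while $|1/n|_p=p^{v_p(n)}$ grows at most logarithmically in $n$. Therefore the partial sums of both $S$ and of $L:=\sum_{n\geq 1}\frac{(-1)^{n-1}}{n}(\gamma-1)^{n}$ are Cauchy in $\End_{\Q_p}(M)$ for the operator topology, defining bounded operators $S,L$ which commute with $\gamma$ and satisfy $L=(\gamma-1)\circ S=S\circ(\gamma-1)$.

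The main step is to identify $L$ with $\phi_\gamma$. By Example \ref{exaZp}, the hypothesis implies that $M$ is an analytic representation of $p^h\Gamma$ for $h$ sufficiently large. On a $p^h\Gamma$-analytic representation, the expansion $\gamma^{p^hx}\cdot v=\sum_{n\geq 0}\binom{x}{n}(\gamma^{p^h}-1)^nv$ converges for $x\in\Z_p$, and differentiating at $x=0$ shows that the Lie algebra generator $\phi_{\gamma^{p^h}}=p^h\phi_\gamma$ acts as the convergent series $\log(\gamma^{p^h})=\sum_{n\geq 1}\frac{(-1)^{n-1}}{n}(\gamma^{p^h}-1)^n$. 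On the other hand, the formal power series identity $\log((1+u)^{p^h})=p^h\log(1+u)$ over $\Q$, applied to $u=\gamma-1$, gives $\log(\gamma^{p^h})=p^hL$ as operators on $M$ (both sides converge by Step~1, so the formal identity passes to a genuine one). Combining the two yields $p^h\phi_\gamma=p^hL$, hence $\phi_\gamma=L$ after dividing by $p^h$ in the $\Q_p$-vector space $\End_{\Q_p}(M)$.

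Combining the two steps: for any $x\in M$, setting $y:=S(x)\in M$ gives $(\gamma-1)(y)=L(x)=\phi_\gamma(x)$, so $\phi_\gamma(M)\subset(\gamma-1)(M)$. The only non-routine point is the identification $L=\phi_\gamma$, which is really just the statement that the infinitesimal generator of an analytic $\Z_p$-action is computed by the logarithm series; the factor $p^h$ is a bookkeeping device to stay within the radius of convergence provided by Example~\ref{exaZp}, and it drops out at the end precisely because we work over $\Q_p$.
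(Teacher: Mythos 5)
Your proof is correct and follows the same route as the paper's own one-line argument: write $\phi_\gamma=\log\gamma=\log(1+(\gamma-1))$, factor out $(\gamma-1)$, and note that the remaining series $\sum_{n\geq0}(-1)^n\frac{(\gamma-1)^n}{n+1}$ converges to a bounded operator on $M$ by the hypothesis. The only difference is that you spell out the identification of the Lie algebra action of $\phi_\gamma$ with the convergent logarithm series via the $p^h$-rescaling and Example~\ref{exaZp}, whereas the paper treats this identification as immediate; the essential content is identical.
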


\begin{proof}
The argument here was pointed out by a referee and is much simpler than my previous argument. One only needs to observe that $\phi_\gamma$ is the logarithm of $\gamma$, hence
\[\phi_\gamma=(\gamma-1)\sum_{m=0}(-1)^m\frac{(\gamma-1)^{m}}{m+1}\]
and the second series converges to an operator on $M$ by our assumption.


\end{proof}

\begin{rem}
The argument of this subsection is basically the same as the proof of Th\'eor\`eme 3.4 of \cite{BC16}. Note that in their setting, the assumption  ``locally analytic'' always holds by Lemme 3.6 of \cite{BC16}.
\end{rem}

For later applications, it will be useful to remove the smallness assumption on $X$ in Theorem \ref{BLAacyc}. Unfortunately, we have to make some assumption in order to do this. Let $\widetilde X=\Spa(B,B^+)$ be a ``log $G$-Galois pro-\'etale perfectoid covering'' of $X=\Spa(A,A^+)$ as in  \ref{setup} except that we don't require $X$ to be small anymore. Denote by $\pi:\widetilde X \to X$ and $\widetilde{\cO}=\pi_*\cO_{\widetilde X}$. Then we can consider subsheaf $\widetilde{\cO}^{\la}\subset \widetilde{\cO}$ of $G$-locally analytic sections and subsheaves $\widetilde{\cO}^{n}\subset \widetilde{\cO}$ of $G_n$-analytic sections. Clearly $\varinjlim_n \widetilde{\cO}^{n}=\cO^{\la}$.

\begin{cor} \label{LAacyw/osm}
Suppose $\widetilde X=\Spa(B,B^+)$ is a ``log $G$-Galois pro-\'etale perfectoid covering'' of $X=\Spa(A,A^+)$ and $X$ can be covered by small rational open subsets $X_i,i=1,\cdots,k$, whose preimage $\widetilde{X}_i$ in $\tilde X$ is a locally analytic covering of $X_i$. Then 
\begin{enumerate}
\item $R^i\mathfrak{LA}(B)=H^i(X,\widetilde{\cO}^{\la})=\check{H}^i(X,\widetilde{\cO}^{\la})$ for any $i$, where $\check{H}^i(X,\widetilde{\cO}^{\la})$ denotes the \v{C}ech cohomology. In particular, $B$ is $\mathfrak{LA}$-acyclic if and only if 
\[\check{H}^i(X,\widetilde{\cO}^{\la})=0,\,i\geq 1.\]
\item Write $\mathfrak{U}_0=\{X_1,\cdots,X_k\}$. For any sheaf $F$ on $X$, denote by $\check{H}^i(\mathfrak{U}_0,F)$ the \v{C}ech cohomology of $F$ with respect to the cover $\mathfrak{U}_0$.  Suppose the direct system $\{\check{H}^i(\mathfrak{U}_0,\widetilde{\cO}^{n})\}_{n}$ is essentially zero for any $i>0$. Then $B$ is strongly $\mathfrak{LA}$-acyclic.
\end{enumerate}
\end{cor}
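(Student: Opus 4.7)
The plan is to reduce to the small case covered by Theorem \ref{BLAacyc} via a \v{C}ech argument with respect to $\mathfrak{U}_0 = \{X_1, \ldots, X_k\}$ and its iterated intersections $X_I := X_{i_1} \cap \cdots \cap X_{i_j}$.

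First, I would verify that the hypotheses are stable under rational restriction: if $V \subset X_i$ is a rational open, then $V$ is again small (compose the \'etale structure map of $X_i$ with the rational embedding $V \hookrightarrow X_i$), and the pullback $\widetilde V := V \times_{X_i} \widetilde X_i \to V$ remains a locally analytic covering. The latter holds because Proposition \ref{LAETFAE}(\ref{exz}) supplies an element $z \in \cO_{\widetilde X}(\widetilde X_i)^{\la}$ with $\gamma(z) = z - 1$, and its restriction to $\widetilde V$ remains $G$-locally analytic (restriction is $G$-equivariant and continuous) and satisfies the same equation. Consequently, every iterated intersection $X_I$ satisfies the hypotheses of Theorem \ref{BLAacyc}, so $\widetilde{\cO}(X_I)$ is strongly $\mathfrak{LA}$-acyclic.

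The second step is to apply Lemma \ref{LAlongexa}(3) to the augmented \v{C}ech complex
\[
0 \to \widetilde{\cO}(X) \to \prod_i \widetilde{\cO}(X_i) \to \prod_{i<j} \widetilde{\cO}(X_{ij}) \to \cdots,
\]
which is exact by Tate acyclicity applied to the rational cover of the affinoid perfectoid space $\widetilde X$. All the differentials are $G$-equivariant and strict (the latter by the open mapping theorem), and every term of positive degree is $\mathfrak{LA}$-acyclic by the first step. Thus Lemma \ref{LAlongexa}(3) gives $R^i\mathfrak{LA}(B) = \check H^i(\mathfrak{U}_0, \widetilde{\cO}^{\la})$. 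To identify this with $H^i(X, \widetilde{\cO}^{\la})$, I would run the same computation internally on each intersection $X_I$ with refinements by small rational locally analytic coverings, deducing $H^j(X_I, \widetilde{\cO}^{\la}) = 0$ for $j \geq 1$ (using Theorem \ref{BLAacyc} on $X_I$ and the agreement of \v{C}ech and derived cohomology on affinoids); this makes $\mathfrak{U}_0$ a Leray cover and yields $\check H^i(\mathfrak{U}_0, \widetilde{\cO}^{\la}) = H^i(X, \widetilde{\cO}^{\la})$.

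For part (2), the strong $\mathfrak{LA}$-acyclicity of each $\widetilde{\cO}(X_I)$ together with the essential vanishing hypothesis on $\{\check H^i(\mathfrak{U}_0, \widetilde{\cO}^n)\}_n$ fits exactly the second clause of Lemma \ref{LAlongexa}(3), giving strong $\mathfrak{LA}$-acyclicity of $B$. The main obstacle I anticipate is the stability step: ensuring that restriction to a rational open preserves the locally analytic covering property, and that the resulting bookkeeping (in particular with the log structure at $S$ and the level $G_n$ of analyticity) is coherent across overlaps. This should be essentially formal given the intrinsic characterization of locally analytic coverings from Proposition \ref{LAETFAE}, but it is the one nontrivial input not already packaged in the small-case theorem.
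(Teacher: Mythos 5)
Your overall strategy matches the paper's: reduce to the small case via a \v{C}ech complex for $\mathfrak{U}_0$, apply Lemma \ref{LAlongexa}(3), and identify \v{C}ech with sheaf cohomology. Your first step — checking that smallness and the locally-analytic-covering property pass to rational opens, using condition \eqref{exz} of Proposition \ref{LAETFAE} and the restriction of the witnessing element $z$ — is a useful detail that the paper leaves implicit (note only that $z$ lives in $B_\infty^{\la}$, not $B^{\la}$; the restriction argument still goes through since $B_\infty$ is built functorially from the fixed \'etale map to $\mathbb{T}^1$ or $\mathbb{B}^1$). The computation $R^i\mathfrak{LA}(B) = \check H^i(\mathfrak{U}_0, \widetilde{\cO}^{\la})$ via Lemma \ref{LAlongexa}(3), Tate acyclicity for $\widetilde{X}$, and the small-case $\mathfrak{LA}$-acyclicity is also exactly what the paper does.

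The gap is in your third step, where you pass from $\check H^i(\mathfrak{U}_0, \widetilde{\cO}^{\la})$ to $H^i(X, \widetilde{\cO}^{\la})$. You want $\mathfrak{U}_0$ to be a Leray cover, so you need $H^j(X_I, \widetilde{\cO}^{\la}) = 0$ for $j \geq 1$ on all the intersections, and you propose to get this by \emph{(a)} running the same \v{C}ech computation on $X_I$ and \emph{(b)} citing ``agreement of \v{C}ech and derived cohomology on affinoids.'' But \emph{(b)} is not a general fact: it holds for coherent $\cO$-modules by Tate acyclicity, and $\widetilde{\cO}^{\la}$ is not coherent. And \emph{(a)} alone only gives vanishing of \v{C}ech cohomology $\check H^j(X_I, \widetilde{\cO}^{\la})$ with respect to a further cover; converting that to a statement about sheaf cohomology $H^j(X_I, \widetilde{\cO}^{\la})$ reproduces the very problem you started with, so the argument does not terminate. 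What closes the gap is Cartan's criterion (Corollaire 4, p.~176 of \cite{Gro57}, which the paper cites): for a basis $\mathfrak{B}$ of opens closed under finite intersection, vanishing of $\check H^j(U, F)$ for all $U \in \mathfrak{B}$, $j \geq 1$ implies $H^j(U, F) = 0$ for $U \in \mathfrak{B}$ and the isomorphism $\check H^i(X, F) \xrightarrow{\sim} H^i(X, F)$. The paper takes $\mathfrak{B}$ to be the small rational opens contained in some $X_i$, verifies the \v{C}ech vanishing there by exactly the argument you give (Theorem \ref{BLAacyc} plus Lemma \ref{LAlongexa}(2)), and then invokes Cartan to obtain both the Leray property of $\mathfrak{U}_0$ and the identification $\check H^i(X, \widetilde{\cO}^{\la}) = H^i(X, \widetilde{\cO}^{\la})$ in one stroke. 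You have the \v{C}ech vanishing on the right basis; you are missing the citation that converts it into the sheaf-cohomological statement.
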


\begin{proof}
The argument here is a standard application of \v{C}ech cohomology. Let $\mathfrak{B}$ be the set of small rational open subsets of $X$ contained in some $X_i$. It is easy to see that $\mathfrak{B}$ is closed under finite intersections and forms a basis of open subsets of $X$. Moreover, for any $U\in\mathfrak{B}$, $\pi^{-1}(U)$ is a locally analytic covering of $U$. We claim that for any $i\geq 1, U\in\mathfrak{B}$, the \v{C}ech cohomology
\[\check{H}^i(U,\widetilde{\cO}^{\la})=0.\]
To see this, since $U$ is quasi-compact, any open cover of $U$ can be refined to a finite cover $\mathfrak{U}\subset \mathfrak{B}$. Let $C^{\bullet}(\mathfrak{U},\widetilde{\cO}),C^{\bullet}(\mathfrak{U},\widetilde{\cO}^{\la})$ be the \v{C}ech complex for $\widetilde{\cO}$ and $\widetilde{\cO}^{\la}$ with respect to this cover. Then $B\to C^{\bullet}(\mathfrak{U},\widetilde{\cO})$ is strictly exact by the almost vanishing of higher cohomology (Theorem 1.8.(iv) of \cite{Sch12}). Note that each term in this complex is $\mathfrak{LA}$-acyclic by Theorem \ref{BLAacyc}. Passing to locally analytic vectors, we see that $B^{\la}\to C^{\bullet}(\mathfrak{U},\widetilde{\cO}^{\la})$ is also exact by the second part of Lemma \ref{LAlongexa}. Hence $\check{H}^i(U,\widetilde{\cO}^{\la})=0,i\geq 1$.

Now by Corollaire 4., p.176 of \cite{Gro57}, the vanishing of higher \v{C}ech cohomology for any $U\in\mathfrak{B}$ implies that $H^i(U,\widetilde{\cO}^{\la})=0,i\geq 1$ and ${H}^i(X,\widetilde{\cO}^{\la})=\check{H}^i(X,\widetilde{\cO}^{\la})$. Hence we can compute ${H}^i(X,\widetilde{\cO}^{\la})$ using a \v{C}ech complex with respect to a finite cover in $\mathfrak{B}$. We can use $\mathfrak{U}_0$ here. All the claims in the corollary follow on applying the third part of Lemma \ref{LAlongexa} to $B\to C^{\bullet}(\mathfrak{U}_0,\widetilde{\cO})$.
\end{proof}

\section{Locally analytic functions on perfectoid modular curves} \label{lafopmc}
Now we apply the previous general theory to the case of modular curves of infinite level at $p$. It turns out that in this case, the differential operator in Theorem \ref{pCR} is very classical (see Theorem \ref{n0triv} below) and (twisted) $\mathscr{D}$-modules on the flag variety appear naturally in this setup. Following Berger-Colmez \cite{BC16}, we also give explicit descriptions of the $\GL_2(\Q_p)$-locally analytic functions on the infinite level modular curve. This will be important for our calculations in the next section.

In the first subsection, we will collect some basic facts about modular curves of infinite level at $p$ and the Hodge-Tate period map. This is the simplest case in the theory of perfectoid Shimura varieties developed by Scholze in \cite{Sch15}.

\subsection{Modular curves and the Hodge-Tate period map}
\begin{para}
We define modular curves ad\`elically. Fix a neat open compact subgroup $K\subset\GL_2(\A_f)$ and let $Y_{K}/\Q$ be the moduli space of elliptic curves with level-$K$-structure. Let $\mathbb{H}^{\pm1}$ be the union of upper and lower half-planes. The complex points of $Y_{K}$ are given by the usual double quotient
\[Y_{K}(\bC)=\GL_2(\Q)\backslash (\mathbb{H}^{\pm1}\times\GL_2(\A_f)/K).\]
$Y_{K}$ admits a natural compactification $X_K/\Q$ by adding finitely many cusps. The universal elliptic curve over $Y_{K}$ extends to a semi-abelian variety over $X_K$ and we denote by $\omega$ the sheaf of its invariant differentials. On the complex points, $\omega$ is  the canonical extension of $\omega|_{Y_K(\bC)}$ as defined in Main Theorem 3.1 of \cite{Mum77}.

Fix a complete algebraically closed non-archimedean field extension $C$ of $\Q_p$ as in \ref{setup}. Denote by $\mathcal{X}_{K}$ (resp. $\mathcal{Y}_K$) the adic space associated to $X_{K}\times_{\Q}C$ (resp. $Y_{K}\times_{\Q}C$).
\end{para}

\begin{thm} \label{infperf}
For any tame level $K^p\subset\GL_2(\A^p_f)$ contained in the level-$N$-subgroup $\{g\in\GL_2(\hat{\Z}^p)=\prod_{l\neq p}\GL_2(\Z_l)\,\vert\, g\equiv1\mod N\}$ for some $N\geq 3$ prime to $p$, there exists a unique perfectoid space $\mathcal{X}_{K^p}$ over $C$ such that 
\[\mathcal{X}_{K^p}\sim\varprojlim_{K_p\subset\GL_2(\Q_p)}\mathcal{X}_{K^pK_p},\]
where $K_p$ runs through all open compact subgroups of $\GL_2(\Q_p)$. Therefore there is a natural right action of $\GL_2(\Q_p)$ on $\mathcal{X}_{K^p}$. Moreover, for any open compact subgroup $K_p$ of $\GL_2(\Q_p)$, there is a basis consisting of open affinoid subsets $U$ of $\mathcal{X}_{K^pK_p}$ with affinoid perfectoid preimage $U_{\infty}$ in $\mathcal{X}_{K^p}$, and the map
\[\varinjlim_{K_p'\subset K_p}H^0(U_{K_p'},\cO_{\mathcal{X}_{K^pK_p'}})\to H^0(U_{\infty},\cO_{\mathcal{X}_{K^p}})\]
has dense image. Here $U_{K_p'}$ denotes the preimage of $U$ in $\mathcal{X}_{K^pK_p'}$.
\end{thm}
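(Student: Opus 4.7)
The plan is to follow the strategy of constructing perfectoid Shimura varieties via an ``anticanonical tower'' and then spreading the perfectoid structure out by the $\GL_2(\Q_p)$-action, checking the universal property of the inverse limit at the end.

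First I would isolate, for each sufficiently small $\epsilon > 0$, an affinoid neighborhood $\mathcal{X}_{K^pK_p}^{\geq \epsilon} \subset \mathcal{X}_{K^pK_p}$ of the ordinary locus on which the Hasse invariant has absolute value $\geq |p|^{\epsilon}$, so that a canonical subgroup of order $p^n$ exists for all $n$ as soon as $\epsilon < (p-1)/p^{n}$. Over this locus the tower $\{\mathcal{X}_{K^p\Gamma_0(p^n),\mathrm{a}}^{\geq \epsilon/p^{n}}\}_n$ of \emph{anticanonical} $\Gamma_0(p^n)$-structures has the crucial feature that the transition map is obtained by dividing out the universal elliptic curve by an anticanonical subgroup of order $p$, which modulo a fractional power of $p$ is the absolute Frobenius. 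Using the tilting criterion of Proposition 7.13 and Theorem 7.14 of \cite{Sch12}, the inverse limit along this tower is then affinoid perfectoid. Adding full level structure at $p$ on top of $\Gamma_0(p^\infty)$-level amounts to trivializing the \'etale quotient of $E[p^\infty]$ (the canonical subgroup is already trivialized up to Cartier duality), which is a pro-finite-\'etale cover and therefore preserves perfectoidness.

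Next I would cover $\mathcal{X}_{K^pK_p}$ by finitely many $\GL_2(\Z_p)$-translates of the image of the anticanonical tower. The key geometric input is that, via the Hodge-Tate period map, the anticanonical locus maps to a small neighborhood of a fixed $\Q_p$-rational point of $\Fl = \mathbb{P}^1$, and finitely many $\GL_2(\Z_p)$-translates of any such neighborhood cover $\mathbb{P}^1$. Applying the corresponding elements of $\GL_2(\Q_p)$ to the constructed affinoid perfectoid piece and gluing produces $\mathcal{X}_{K^p}$. Uniqueness follows from the fact that the underlying topological space is forced to be $\varprojlim |\mathcal{X}_{K^pK_p'}|$, together with the tilting equivalence for perfectoid spaces.

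For the affinoid basis and the density statement, any sufficiently small rational open $U \subset \mathcal{X}_{K^pK_p}$ will lie inside the image at finite level of one of the perfectoid pieces $U_\infty^{(i)}$ constructed above; its preimage $U_\infty \subset \mathcal{X}_{K^p}$ is then a rational subset of an affinoid perfectoid, hence affinoid perfectoid itself, and the density of $\varinjlim_{K_p' \subset K_p} H^0(U_{K_p'}, \cO_{\mathcal{X}_{K^p K_p'}})$ inside $H^0(U_\infty, \cO_{\mathcal{X}_{K^p}})$ is immediate from the construction of the perfectoid algebra as a completed direct limit.

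The main obstacle is the control of the anticanonical tower near the boundary of the ordinary locus. The supersingular locus, where the Hasse invariant vanishes and the canonical subgroup does not exist, must be handled by observing that at infinite level the Atkin-Lehner involution exchanges the canonical and anticanonical directions, so that a neighborhood of the supersingular locus becomes a $\GL_2(\Q_p)$-translate of the ordinary anticanonical tower and inherits its perfectoid structure. The cusps require a separate treatment via the Tate curve uniformization, identifying a neighborhood of a cusp at infinite level with a perfectoid polydisk; compatibility of this local model with the global gluing is the technical point that needs the most care.
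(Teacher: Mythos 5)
Your proposal essentially re-derives Scholze's Theorem III.1.2 of \cite{Sch15} from scratch: the anticanonical tower over $\Gamma_0(p^n)$-level via the Frobenius-equals-quotient trick and the tilting criterion, then spreading out by $\GL_2(\Q_p)$-translation and using Atkin–Lehner to handle the supersingular region, with Tate curves at the cusps. This is correct as far as it goes, but the paper simply cites Scholze's theorem for existence (modulo bookkeeping on connected components) rather than re-proving it, and the genuine content of the paper's proof lies elsewhere.

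What your proposal misses is the part of the statement that Scholze's cited theorem does not directly give: the existence of the affinoid basis with affinoid perfectoid preimage and the density property \emph{for every} open compact $K_p \subset \GL_2(\Q_p)$, not merely for $K_p$ sufficiently small. Scholze's Theorem III.1.2(iii) produces such a cover only at sufficiently small level, and your argument (``any sufficiently small rational open $U \subset \mathcal{X}_{K^pK_p}$ will lie inside the image at finite level of one of the perfectoid pieces'') only addresses that case. For a general, possibly large, $K_p$, the rational opens coming from the anticanonical construction live on $\mathcal{X}_{K^p\Gamma_0(p^n)}$ or smaller levels, and their images in $\mathcal{X}_{K^pK_p}$ need not furnish an affinoid cover with the required properties. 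The paper handles this by taking a $\GL_2(\Z_p)$-invariant affinoid cover of the flag variety $\Fl$, pulling it back via $\pi_{\HT}$ to a cover at some small normal level $K'_p \trianglelefteq K_p$, and then descending along the finite quotient $\mathcal{X}_{K^pK'_p} \to \mathcal{X}_{K^pK_p}$ by $K_p/K'_p$ using Hansen's theorem \cite[Theorem 1.3]{Han16} on finite group quotients of rigid spaces. Without some such descent argument, the ``for any open compact subgroup $K_p$'' clause of the theorem is not proved.
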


\begin{proof}
The existence of $\mathcal{X}_{K^p}$ basically follows from Theorem III.1.2 of \cite{Sch15} by taking connected components into account. For the existence of a basis of open subsets of $\mathcal{X}_{K^pK_p}$, by part (iii) of  Theorem III.1.2 of \cite{Sch15}, for sufficiently small $K_p$, we can find an open cover of $\mathcal{X}_{K^pK_p}$ with the desired density property. As taking rational subsets preserves this density property by Lemma 4.5 of \cite{Sch13}, we may find such a basis $\mathfrak{B}_{K_p}$ of open affinoid subsets for sufficiently small $K_p$. In general, we can descend this property to any $K_p$. To see this, it is enough to find an affinoid cover of $\mathcal{X}_{K^pK_p}$ with the desired properties. We may assume $K_p$ is a subgroup of $\GL_2(\Z_p)$. Take an $\GL_2(\Z_p)$-invariant affinoid cover of $\Fl$. Then Scholze's result implies that this cover comes from an affinoid cover of $\mathcal{X}_{K^pK_p'}$ for some $K'_p$ sufficiently small. We may assume $K'_p$ is a normal subgroup of $K_p$. Hence $\mathcal{X}_{K^pK_p}$ can be viewed as the quotient of $\mathcal{X}_{K^pK_p'}$ by $K_p/K'_p$ and  we can descend this cover to $\mathcal{X}_{K^pK_p}$ by \cite[Theorem 1.3]{Han16}.
\end{proof}

\begin{para}
One powerful tool to study the geometry of $\mathcal{X}_{K^p}$ is the Hodge-Tate period map introduced in Theorem III.1.2 of \cite{Sch15}. We'll give an equivalent definition in our setup. 

Fix a $K^p$ as in the theorem and an open subgroup $K_p$ of $\GL_2(\Z_p)$. Let $\mathcal{X}=\mathcal{X}_{K},\mathcal{Y}=\mathcal{Y}_{K}$ with $K=K^pK_p$ and let $f:\mathcal{E}\to\mathcal{Y}$ be the universal elliptic curve. Then $R^1f_*\Z_p$ defines a rank two \'etale $\Z_p$-local system $\underline{V}$.  Our normalization is that $\underline{V}$ corresponds to the standard representation of $K_p\subset\GL_2(\Z_p)$. See for example \S2.4 of \cite{Eme06C}. This $\Z_p$-local system $\underline{V}$ induces a $\hat\Z_p$-local system $\hat{\underline{V}}$ on $\mathcal{Y}_{\proet}$, the pro-\'etale site of $\mathcal{Y}$, see 8.1, 8.2 of \cite{Sch13} for the notation here. On the other hand, we denote by $D_{\dR}$ the relative de Rham cohomology of $\mathcal{E}$ over $\mathcal{Y}$. This is a rank two vector bundle on $\mathcal{Y}$ equipped with a (decreasing) Hodge filtration $\Fil^\bullet$ and Gauss-Manin connection $\nabla_{\underline{V}}$. Its graded components are given by
\[\gr^n(D_{\dR})\cong\left\{
	\begin{array}{lll}
		\omega^{-1}|_{\mathcal{Y}},~n=0\\
		\omega|_{\mathcal{Y}},~n=1\\
		0,~n\neq 0,1
	\end{array}.\right.\]
Recall that there is the structural de Rham sheaf $\cO\mathbb{B}_{\dR}$ on $\mathcal{Y}_{\proet}$ also equipped with a decreasing filtration and a $\mathbb{B}_{\dR}$-linear connection, cf. \S 6 of \cite{Sch13} and \cite{Sch13e}. By the relative de Rham comparison theorem (Theorem 8.8 of \cite{Sch13}), there is a natural isomorphism
\[\hat{\underline{V}}\otimes_{\hat\Z_p}\cO\mathbb{B}_{\dR}\cong D_{\dR}\otimes_{\cO_{\mathcal{Y}}}\cO\mathbb{B}_{\dR}\]
compatible with the filtrations and connections. Here by abuse of notation, $D_{\dR}$ is viewed as a sheaf on $\mathcal{Y}_{\proet}$ by Lemma 7.3 of \cite{Sch13}. 

Next, we extend these results to $\mathcal{X}$ using the theory of log adic spaces. See \cite{DLLZ1,DLLZ2} for more details here. We will view $\mathcal{X}$ as a log adic space by equipping it with the natural log structure defined by the cusps $\mathcal{C}:=\mathcal{X}-\mathcal{Y}$, cf. Example 2.1.2 of \cite{DLLZ2}. Then $\hat{\underline{V}}$ defines a rank two $\hat\Z_p$-local system $\hat{\underline{V}}_{\log}$ on $\mathcal{X}_{\proket}$, the pro-Kummer \'etale site of $\mathcal{X}$. There is also the structural de Rham sheaf $\cO\mathbb{B}_{\dR,\log}$ on  $\mathcal{X}_{\proket}$ equipped with a decreasing filtration and a logarithmic connection:
\[\nabla:\cO\mathbb{B}_{\dR,\log}\to\cO\mathbb{B}_{\dR,\log}\otimes_{\cO_{\mathcal{X}}}\Omega^1_{\mathcal{X}}(\mathcal{C}).\]
Here $\Omega^1_{\mathcal{X}}(\mathcal{C})$ as usual denotes the sheaf of differentials on $\mathcal{X}$ with simple poles at $\mathcal{C}$. As the monodromy of ${\underline{V}}$ along each cusp is \textit{unipotent}, we have the following relative log de Rham comparison isomorphism (Theorem 3.2.12  and its proof of \cite{DLLZ2})
\begin{eqnarray} \label{logdR}
\hat{\underline{V}}_{\log}\otimes_{\hat\Z_p}\cO\mathbb{B}_{\dR,\log}\cong D_{\dR,\log}\otimes_{\cO_{\mathcal{X}}}\cO\mathbb{B}_{\dR,\log}
\end{eqnarray}
compatible with the filtrations and logarithmic connections, where $D_{\dR,\log}$ is a filtered vector bundle equipped with a logarithmic connection $\nabla_{\underline{V}}$.  In fact, by Theorem 1.7 of \cite{DLLZ2}, $(D_{\dR,\log},\nabla_{\underline{V}})$ is the canonical extension of $(D_{\dR},\nabla_{\underline{V}})$. Hence in particular,
\[\gr^n(D_{\dR,\log})\cong\left\{
	\begin{array}{lll}
		\omega^{-1},~n=0\\
		\omega,~n=1\\
		0,~n\neq 0,1
	\end{array}.\right.\]

Now we have a decreasing filtration (the relative Hodge-Tate filtration) on $\hat{\underline{V}}_{\log}\otimes_{\hat\Z_p}\hat\cO_{\mathcal{X}}$ as in \S2.2. of \cite{CS17}. More precisely, let $\cO\mathbb{B}_{\dR,\log}^+$ be the positive structural de Rham sheaf on $\mathcal{X}_{\proket}$ (the geometric de Rham period sheaf in definition 2.2.10 of \cite{DLLZ2}) which also admits a logarithmic connection. Consider
\[\mathbb{M}_0=(D_{\dR,\log}\otimes_{\cO_{\mathcal{X}}}\cO\mathbb{B}_{\dR,\log}^+)^{\nabla=0}.\]
Recall that there is the positive de Rham sheaf $\mathbb{B}_{\dR}^+$ on $\mathcal{X}_{\proket}$ (definition 2.2.3.(2) of \cite{DLLZ2}) and the usual surjective map: $\theta:\mathbb{B}_{\dR}^+\to \hat\cO_{\mathcal{X}}$. Then Proposition 7.9 of \cite{Sch13} implies
\begin{eqnarray} \label{MM0}
\hat{\underline{V}}_{\log}\otimes_{\hat\Z_p}\mathbb{B}_{\dR,\log}^+\supseteq \mathbb{M}_0 \supseteq \hat{\underline{V}}_{\log}\otimes_{\hat\Z_p}\ker(\theta).
\end{eqnarray}
Hence we obtain a filtration on $\hat{\underline{V}}_{\log}\otimes_{\hat\Z_p}\mathbb{B}_{\dR}^+/(\ker\theta)=\hat{\underline{V}}_{\log}\otimes_{\hat\Z_p}\hat\cO_{\mathcal{X}}$, which again by Proposition 7.9 of \cite{Sch13} can be identified with
\[0\to \gr^0(D_{\dR,\log})\otimes_{\cO_{\mathcal{X}}}\hat\cO_{\mathcal{X}}\to \hat{\underline{V}}_{\log}\otimes_{\hat\Z_p}\hat\cO_{\mathcal{X}} \to \gr^1(D_{\dR,\log})\otimes_{\cO_{\mathcal{X}}}\hat\cO_{\mathcal{X}}(-1)\to 0.\]
In other words, we get the following exact sequence of locally free $\hat\cO_{\mathcal{X}}$-modules on $\mathcal{X}_{\proket}$ (\textit{relative Hodge-Tate filtration} of $ \hat{\underline{V}}_{\log}\otimes_{\hat\Z_p}\hat\cO_{\mathcal{X}}$): 
\begin{eqnarray} \label{relHT}
0\to \omega^{-1}\otimes_{\cO_{\mathcal{X}}}\hat\cO_{\mathcal{X}}(1) \to \hat{\underline{V}}_{\log}(1)\otimes_{\hat\Z_p}\hat\cO_{\mathcal{X}} \to \omega\otimes_{\cO_{\mathcal{X}}}\hat\cO_{\mathcal{X}}\to 0.
\end{eqnarray}
\end{para}

\begin{rem}
Strictly speaking, Proposition 7.9 of \cite{Sch13} is only proved when there is no log structure. However, all the arguments work here with the input replaced by the corresponding results in \S2 of \cite{DLLZ2}. For example, the local structure of $\cO\mathbb{B}_{\dR,\log}^+$ in Proposition 6.10 is now replaced by Proposition 2.3.15 of \cite{DLLZ2}. 

There is one minor subtlety here: compared with the definition of $\cO\mathbb{B}_{\dR}$ in \cite{Sch13}, there is an extra completion of $\cO\mathbb{B}_{\dR,\log}^+[t^{-1}]$ in Definition 2.2.10 of \cite{DLLZ2}, where $t\in\mathbb{B}_{\dR}^+$ is a generator of $\ker(\theta)$. I claim that such a completion is unnecessary in our case, i.e. the isomorphism \eqref{logdR} can be restricted to
\[\hat{\underline{V}}_{\log}\otimes_{\hat\Z_p}\cO\mathbb{B}_{\dR,\log}^+[t^{-1}]\cong D_{\dR,\log}\otimes_{\cO_{\mathcal{X}}}\cO\mathbb{B}_{\dR,\log}^+[t^{-1}].\]
In fact, using the local structure of $\cO\mathbb{B}_{\dR,\log}^+$, we may argue as in Theorem 7.2 of \cite{Sch13} that $\mathbb{M}_0$ is a $\mathbb{B}_{\dR}^+$-lattice of $(\hat{\underline{V}}_{\log}\otimes_{\hat\Z_p}\cO\mathbb{B}_{\dR,\log})^{\nabla=0}=\hat{\underline{V}}_{\log}\otimes_{\hat\Z_p}\mathbb{B}_{\dR}$ by the Poincar\'e Lemma (Corollary 2.4.2 of \cite{DLLZ2}). This is enough to deduce our claim.
\end{rem}

\begin{para}
Now we take $K_p=\GL_2(\Z_p)$. Hence $\mathcal{X}=\mathcal{X}_{K^p\GL_2(\Z_p)}$. As $\varprojlim_{K_p\subset\GL_2(\Q_p)}\mathcal{X}_{K^pK_p}$ (equipped with log structures defined by cusps) can be viewed as an open covering of $\mathcal{X}$ in $\mathcal{X}_{\proket}$, we can evaluate the exact sequence \eqref{relHT} on $\varprojlim_{K_p}\mathcal{X}_{K^pK_p}$. Note that $\mathcal{X}_{K^p}$ trivializes the universal Tate module, hence by Theorem \ref{infperf}, we obtain the following exact sequence of vector bundles on $\mathcal{X}_{K^p}$:
\end{para}

\begin{thm} \label{RHT}
\[0\to\omega_{K^p}^{-1}(1)\to V(1)\otimes_{\Q_p} \cO_{\mathcal{X}_{K^p}} \to \omega_{K^p}\to 0,\]
where $\omega_{K^p}$ is the pull-back of $\omega$ as a coherent sheaf from $\mathcal{X}$ to $\mathcal{X}_{K^p}$ and $V=\Q_p^{\oplus 2}$ is the standard representation of $\GL_2(\Q_p)$ and viewed as a constant sheaf on $\mathcal{X}_{K^p}$. This exact sequence is $\GL_2(\Z_p)$-equivariant.
\end{thm}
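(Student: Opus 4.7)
The theorem is stated as essentially a direct consequence of the relative Hodge-Tate filtration \eqref{relHT}, so the proof plan is to make precise the evaluation of that filtration on the perfectoid cover. Here is how I would organize the verification.

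First, I would check that $\varprojlim_{K_p} \mathcal{X}_{K^p K_p}$ is naturally an object of the pro-Kummer-\'etale site $\mathcal{X}_{\proket}$ (with the log structure along the cusps). This uses Theorem \ref{infperf}: the transition maps $\mathcal{X}_{K^p K_p'} \to \mathcal{X}_{K^p K_p}$ are finite \'etale on the open modular curves and Kummer \'etale at the cusps, so the tower forms a pro-Kummer-\'etale cover of $\mathcal{X} = \mathcal{X}_{K^p \GL_2(\Z_p)}$. Next, I would check that the value of $\hat\cO_{\mathcal{X}}$ on this cover is $\cO_{\mathcal{X}_{K^p}}$. This follows from Theorem \ref{infperf} and the standard compatibility between $\hat\cO$ on the pro-Kummer-\'etale site and the structure sheaf of a perfectoid space obtained as an inverse limit (cf. the analogous statement in \cite{Sch13}, adapted to the log case in \cite{DLLZ2}).

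The key point is the trivialization of the local system on the cover. By construction, $\hat{\underline{V}}_{\log}$ restricted to $\mathcal{Y}_{K^p K_p}$ corresponds (via its monodromy) to the tautological representation of $K_p \subset \GL_2(\Z_p)$ on $\Z_p^{\oplus 2}$. On the tower $\varprojlim_{K_p} \mathcal{Y}_{K^p K_p}$ this representation becomes trivial; equivalently, the universal rational Tate module $V_p(E)$ acquires a canonical basis. Thus on $\mathcal{X}_{K^p}$ we obtain a canonical isomorphism $\hat{\underline{V}}_{\log}|_{\mathcal{X}_{K^p}} \otimes_{\hat\Z_p} \hat\cO \cong V \otimes_{\Q_p} \cO_{\mathcal{X}_{K^p}}$. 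For the coherent sheaves, the pull-back of $\omega$ from $\mathcal{X}$ to $\mathcal{X}_{K^p}$ is $\omega_{K^p}$ by definition, and evaluating $\omega \otimes_{\cO_\mathcal{X}} \hat\cO$ on the perfectoid cover then gives $\omega_{K^p}$ by the same structure sheaf identification. Assembling these identifications, the exact sequence \eqref{relHT} specializes to the claimed sequence. Exactness is preserved because the sequence is an exact sequence of locally free $\hat\cO$-modules, and evaluation on an affinoid perfectoid object in $\mathcal{X}_{\proket}$ is exact on such modules (using Theorem 1.8 of \cite{Sch12} and its log analogue for the almost vanishing of higher cohomology).

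Finally, for $\GL_2(\Z_p)$-equivariance: the right action of $\GL_2(\Q_p)$ on $\mathcal{X}_{K^p}$ from Theorem \ref{infperf} lifts the trivial action on $\mathcal{X}$, and the action of $\GL_2(\Z_p)$ on the universal trivialization of $V_p(E)$ is by the standard representation. Since \eqref{relHT} is constructed canonically from $\hat{\underline{V}}_{\log}$ and the de Rham comparison (which is natural in the local system and compatible with the log structure by \cite{DLLZ2}), all identifications above are $\GL_2(\Z_p)$-equivariant once one checks that $\omega_{K^p}$ is the pull-back of a coherent sheaf from the finite-level quotient and therefore carries a trivial extra structure from the group action. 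The main obstacle I anticipate is the bookkeeping of the Tate twist and of the identification of the local system with the standard representation: one must verify the normalization matches the one fixed earlier (the convention that $\underline{V}$ corresponds to the standard representation of $K_p$), and that the twist by $(1)$ in \eqref{relHT} is correctly propagated through the perfectoid evaluation. This is routine but the sign of the twist and the side of the action should be tracked carefully against the conventions recalled in the paragraph above the statement.
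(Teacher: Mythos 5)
Your proposal is correct and takes essentially the same route as the paper: the paper also deduces the theorem by regarding $\varprojlim_{K_p}\mathcal{X}_{K^pK_p}$ (with its log structure at the cusps) as an object of $\mathcal{X}_{\proket}$, evaluating the relative Hodge--Tate filtration \eqref{relHT} on it, using Theorem~\ref{infperf} to identify $\hat\cO_{\mathcal{X}}$ with $\cO_{\mathcal{X}_{K^p}}$, and observing that the universal Tate module is trivialized at infinite level. You have simply written out the steps the paper leaves implicit (pro-Kummer-\'etale membership, exactness via almost vanishing, equivariance via functoriality of the comparison), so there is nothing to correct.
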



Clearly, the position of $\omega_{K^p}^{-1}$ in $V\otimes_{\Q_p} \cO_{\mathcal{X}_{K^p}}$ induces a map (Hodge-Tate period map)
\[\pi_{\HT}:\mathcal{X}_{K^p}\to\Fl,\]
where $\Fl$ is the adic space over $C$ associated to the usual flag variety for $\GL_2$. One can check that our definition of $\pi_{\HT}$ agrees with the one defined in III.3 of \cite{Sch15},
\footnote{there is a slight difference: in our setup, we trivialize the first relative \'{e}tale cohomology the universal elliptic curve, while  \cite{Sch15} trivializes the universal Tate module. As a result, there is a Tate twist in the middle term of the exact sequence in Theorem \ref{RHT}.}
 using Lemma III.3.4 and Corollary III.3.17 in the reference.

There is a right action of $\GL_2(\Q_p)$ on $\Fl$ by acting on the total space (rather than the position of the flag). We will always use this right action.

Theorem III.3.18 of \cite{Sch15} provides an affinoid cover $\{U_1,U_2\}$ of $\Fl$. In our case, $\Fl=\mathbb{P}^1$ and $U_1=\{[x_1:x_2],\|x_1\|\geq\|x_2\|\},U_2=\{[x_1:x_2],\|x_2\|\geq\|x_1\|\}$. 

\begin{thm} \label{piHT}
$\pi_{\HT}$ is $\GL_2(\Q_p)$-equivariant and commutes with Hecke operators away from $p$ (when changing $K^p$), for the trivial action of these Hecke operators on $\Fl$. Moreover, let $\mathfrak{B}$ be the set of  finite intersections of rational subsets of $U_1,U_2$. Then $\mathfrak{B}$ is a basis of open affinoid subsets of $\Fl$ stable under finite intersections and  each $U\in\mathfrak{B}$ has the following properties:
\begin{itemize}
\item its preimage $V_\infty=\pi_{\HT}^{-1}(U)$ is affinoid perfectoid;
\item $V_\infty$ is the preimage of an affinoid subset $V_{K_p}\subset\mathcal{X}_{K^pK_p}$ for sufficiently small open subgroup $K_p$ of $\GL_2(\Q_p)$;
\item the map $\varinjlim_{K_p}H^0(V_{K_p},\cO_{\mathcal{X}_{K^pK_p}})\to H^0(V_\infty,\cO_{\mathcal{X}_{K^p}})$
has dense image;
\item $U$ does not contain all $\Q_p$-rational points of $\Fl$.
\end{itemize}
\end{thm}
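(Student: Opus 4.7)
My plan is to read the claim off the construction of $\pi_{\HT}$ given by Theorem \ref{RHT}, combined with Theorem \ref{infperf} and Scholze's results from \cite{Sch15, Sch13}. The $\GL_2(\Q_p)$-equivariance is intrinsic to the definition: the right action of $g \in \GL_2(\Q_p)$ on $\mathcal{X}_{K^p}$ is the one that permutes the trivializations of the universal Tate module, and under the relative Hodge--Tate filtration this corresponds to the standard right action of $g$ on the position of $\omega_{K^p}^{-1}$ inside $V \otimes_{\Q_p} \cO_{\mathcal{X}_{K^p}}$ (modulo the Tate twist), which is by definition the right action on $\Fl$. The Hecke operators away from $p$ arise from prime-to-$p$ isogenies, which do not affect the $p$-divisible group; hence they preserve the relative Hodge--Tate filtration and must therefore act trivially on the target $\Fl$.

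For the basis property: rational subsets of each affinoid $U_i$ generate the analytic topology of $U_i$, and $\{U_1, U_2\}$ covers $\Fl$, so rational subsets of $U_1, U_2$ together give a basis of $\Fl$. Stability of $\mathfrak{B}$ under finite intersections is immediate from the definition. Note that any $U \in \mathfrak{B}$ is either contained in a single $U_i$, or, if the defining intersection mixes both, is contained in $U_1 \cap U_2$; this observation will be useful for property (iv).

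For the four properties of $U \in \mathfrak{B}$: property (i), affinoid perfectoidness of $V_\infty = \pi_{\HT}^{-1}(U)$, holds for $U \in \{U_1, U_2\}$ by Theorem III.3.18 of \cite{Sch15}, and $U_1 \cap U_2$ is rational in either $U_i$, so its preimage is also affinoid perfectoid; now any $U \in \mathfrak{B}$ is a rational subset of one of these three affinoid perfectoid spaces, so Lemma 4.5 of \cite{Sch13} applies. Property (iii), density of sections at finite level inside $H^0(V_\infty, \cO_{\mathcal{X}_{K^p}})$, follows from the same Lemma 4.5 combined with the density statement in Theorem \ref{infperf}, since rational localization preserves this density. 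Property (iv) is purely geometric: $[0:1] \notin U_1$ and $[1:0] \notin U_2$, so any $U \in \mathfrak{B}$, being contained in $U_1$ or $U_2$, omits at least one $\Q_p$-rational point of $\Fl$. The main obstacle is property (ii), descent of $V_\infty$ to a rational subset $V_{K_p} \subset \mathcal{X}_{K^p K_p}$ at finite level: given defining functions $f_i, g$ for $U$ on $U_1$ (say), one pulls them back to $\mathcal{X}_{K^p}$, approximates the pullbacks by sections coming from some finite level via Theorem \ref{infperf}, and then invokes the standard fact that a rational domain is unchanged under sufficiently small perturbations of its defining functions (provided the approximating inequalities are strict enough). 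The verification that the resulting rational domain at finite level has preimage exactly $V_\infty$ is where the most care is needed, but everything else is a direct unwinding of the perfectoid constructions.
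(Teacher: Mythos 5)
Your proposal is correct and follows the same route as the paper, which simply cites Theorems III.1.2 and III.3.18 of Scholze's paper for all of these claims; you are unwinding what those citations deliver. The one place that genuinely requires care — descending a rational subset $U \in \mathfrak{B}$ to a rational subset $V_{K_p}$ at finite level by perturbing the defining functions — is handled in the standard way, and your observation that every element of $\mathfrak{B}$ is a rational subset of one of the three affinoid perfectoid preimages $\pi_{\HT}^{-1}(U_1)$, $\pi_{\HT}^{-1}(U_2)$, $\pi_{\HT}^{-1}(U_1 \cap U_2)$ correctly reduces (i), (ii), (iii) to Lemma 4.5 of \cite{Sch13} plus the density statement in Theorem \ref{infperf}.
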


\begin{proof}
See Theorem III.1.2 of \cite{Sch15} and Theorem III.3.18 of \cite{Sch15}.
\end{proof}

\subsection{Faltings's extension and computation of  \texorpdfstring{$\theta$}{Lg}}
Let $K_p\subset\GL_2(\Q_p)$ be an open subgroup and $X$ an affinoid subset of $\mathcal{X}=\mathcal{X}_{K^pK_p}$ containing at most one cusp. Suppose its preimage $\tilde{X}$ in $\mathcal{X}_{K^p}$ is a ``log $K_p$-Galois pro-\'etale perfectoid covering'' of $X$ and $X$ is small in the sense of \ref{setup}. Note that by Theorem \ref{infperf} and Lemma 5.2 (and its proof) of \cite{Sch15}, such $X$ form a basis of open subsets of $\mathcal{X}$.

The goal of this subsection is to compute the differential operator $\theta$ in Theorem \ref{pCR} for this Galois covering. It turns out that this follows from a classical result of Faltings which (up to a twist) identifies the relative Hodge-Tate filtration sequence in \ref{RHT} with (log) Faltings's extension, cf. Theorem 5 of \cite{Fa87}. We will give a proof of this result (Theorem \ref{FERHT}) in our setup below. A more conceptual computation of $\theta$ in terms of the $p$-adic Simpson correspondence is given in Remark \ref{pSimtheta}.

\begin{para}
We use notation introduced in the previous subsection. First we recall the log Faltings's extension as defined in Corollary 6.14 of \cite{Sch13} and Corollary 2.4.5 of \cite{DLLZ2}. Taking the first graded piece of the Poincar\'e lemma sequence (Corollary 2.4.2 of \cite{DLLZ2}):
\begin{eqnarray} \label{PL}
0\to \mathbb{B}^+_{\dR}\to \cO\mathbb{B}_{\dR,\log}^+ \xrightarrow{\nabla} \cO\mathbb{B}_{\dR,\log}^+\otimes_{\cO_{\mathcal{X}}}\Omega^1_{\mathcal{X}}(\mathcal{C})\to 0,
\end{eqnarray}
we obtain an exact sequence of $\hat\cO_{\mathcal{X}}$-modules on $\mathcal{X}_{\proket}$ (the log Faltings's extension):
\[0\to \hat\cO_{\mathcal{X}}(1)\to \gr^1\cO\mathbb{B}_{\dR,\log}^+\xrightarrow{\nabla} \hat\cO_{\mathcal{X}}\otimes_{\cO_{\mathcal{X}}}\Omega^1_{\mathcal{X}}(\mathcal{C})\to 0.\]
Its tensor product with $\omega^{-1}$ becomes
\[0\to \omega^{-1}\otimes_{\cO_{\mathcal{X}}}\hat\cO_{\mathcal{X}}(1)\to \omega^{-1}\otimes_{\cO_{\mathcal{X}}}\gr^1\cO\mathbb{B}_{\dR,\log}^+\to \omega^{-1}\otimes_{\cO_{\mathcal{X}}}\Omega^1_{\mathcal{X}}(\mathcal{C})\otimes_{\cO_{\mathcal{X}}}\hat\cO_{\mathcal{X}}\to 0.\]
Recall that there is a Kodaira-Spencer map $\omega\to \omega^{-1}\otimes_{\cO_{\mathcal{X}}}\Omega^1_{\mathcal{X}}(\mathcal{C})$ of coherent sheaves on $\mathcal{X}$ defined as the composite
\[\omega=\Fil^1 D_{\dR,\log}\subset D_{\dR,\log}\xrightarrow{\nabla}D_{\dR,\log}\otimes_{\cO_{\mathcal{X}}}\Omega^1_{\mathcal{X}}(\mathcal{C})\to \gr^0 D_{\dR,\log}\otimes_{\cO_{\mathcal{X}}}\Omega^1_{\mathcal{X}}(\mathcal{C})=\omega^{-1}\otimes_{\cO_{\mathcal{X}}}\Omega^1_{\mathcal{X}}(\mathcal{C}).\]
It is well-known that this is an isomorphism: $\omega\xrightarrow{\sim} \omega^{-1}\otimes_{\cO_{\mathcal{X}}}\Omega^1_{\mathcal{X}}(\mathcal{C})$. Under this isomorphism, $\omega^{-1}\otimes_{\cO_{\mathcal{X}}}\gr^1\cO\mathbb{B}_{\dR,\log}^+$ can be viewed as an element of 
\[\Ext^1_{\mathcal{X}_{\proket}}(\omega\otimes_{\cO_{\mathcal{X}}}\hat\cO_{\mathcal{X}},\omega^{-1}\otimes_{\cO_{\mathcal{X}}}\hat\cO_{\mathcal{X}}(1)).\]

On the other hand,  recall that $\hat{\underline{V}}_{\log}\otimes_{\hat\Z_p}\hat\cO_{\mathcal{X}}$ also defines an element in this extension group by the relative Hodge-Tate filtration \eqref{relHT}.

\end{para}

\begin{thm} \label{FERHT}
There is an isomorphism between $\omega^{-1}\otimes_{\cO_{\mathcal{X}}}\gr^1\cO\mathbb{B}_{\dR,\log}^+$ and $\hat{\underline{V}}_{\log}(1)\otimes_{\hat\Z_p}\hat\cO_{\mathcal{X}}$ as $\hat\cO_{\mathcal{X}}$-modules such that as extension classes, they differ by $-1$.
\end{thm}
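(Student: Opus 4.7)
The strategy is an explicit local comparison: both extensions encode the same Kodaira-Spencer data through the log de Rham comparison \eqref{logdR}, and we compare them on a pro-Kummer-\'etale perfectoid cover where everything trivializes. Fix a small affinoid open $U\subset\mathcal{X}$ containing at most one cusp, trivialize $\omega|_U$ by a section $\omega_0$, and lift to a basis $\{\omega_0, f_2\}$ of $D_{\dR,\log}|_U$ compatible with the Hodge filtration $\omega = \Fil^1 D_{\dR,\log}$, so that $f_2$ reduces to a generator of $\omega^{-1}$. Griffiths transversality forces $\nabla_V(\omega_0) = f_2\otimes\alpha$ with $\alpha = \mathrm{KS}(\omega_0)\in\Omega^1_{\mathcal{X}}(\mathcal{C})|_U$, and one may arrange $\nabla_V(f_2) = 0$ after adjusting $f_2$.

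Pass to a pro-Kummer-\'etale perfectoid cover $\widetilde{U}\to U$ on which $\hat{\underline V}_{\log}$ trivializes. A section $\sigma\in\mathbb{M}_0$ whose image in $\mathbb{M}_0/(\hat{\underline V}_{\log}\otimes t\mathbb{B}_{\dR}^+) = \omega^{-1}\otimes\hat\cO_{\mathcal{X}}$ is a generator can be written $\sigma = p_1\omega_0 + p_2 f_2$ with $p_1\in\mathbb{B}_{\dR}^+$ and $p_2\in\cO\mathbb{B}_{\dR,\log}^+$; the horizontality $\nabla_{\mathrm{tot}}(\sigma) = 0$ for the total connection $\nabla_{\mathrm{tot}} = \nabla_V\otimes 1 + 1\otimes\nabla$ gives by the Leibniz rule
\begin{equation*}
\nabla(p_1) = 0, \qquad \nabla(p_2) = -p_1\,\alpha.
\end{equation*}
The generating condition on $\sigma$ forces $p_1\in t\mathbb{B}_{\dR}^+$, while $p_2$ must be a unit modulo $\Fil^1\cO\mathbb{B}_{\dR,\log}^+$.

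Now extract both extensions from $\sigma$. The Hodge-Tate extension class is encoded by the image of $p_1/t$ in $\mathbb{B}_{\dR}^+/t\mathbb{B}_{\dR}^+ = \hat\cO_{\mathcal{X}}$, viewed as an element of $\gr^1\mathbb{B}_{\dR}^+ \cong \hat\cO_{\mathcal X}(1)$; this records exactly how the Hodge-Tate line $\omega^{-1}(1)\otimes\hat\cO_{\mathcal{X}}$ sits inside $\hat{\underline V}_{\log}(1)\otimes\hat\cO_{\mathcal X}$. The Faltings extension tensored with $\omega^{-1}$ and composed with the Kodaira-Spencer isomorphism $\omega\cong\omega^{-1}\otimes\Omega^1_{\mathcal{X}}(\mathcal{C})$ is encoded by the class of $p_2$ in $\gr^1\cO\mathbb{B}_{\dR,\log}^+$, since by the second horizontality equation $p_2$ is a Poincar\'e primitive of $-p_1\alpha$. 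Matching these two cocycles under the de Rham comparison, one finds them identified \emph{up to the minus sign} in the equation $\nabla(p_2) = -p_1\alpha$; this minus sign, arising directly from the Leibniz rule for $\nabla_{\mathrm{tot}}$, is exactly the factor $-1$ claimed in the theorem.

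The main obstacle is bookkeeping: three filtrations intervene simultaneously (the Hodge filtration on $D_{\dR,\log}$, the de Rham filtration on $\cO\mathbb{B}_{\dR,\log}^+$, and the $t$-adic filtration on $\mathbb{B}_{\dR}^+$), as do the Tate twist $\gr^1\mathbb{B}_{\dR}^+\cong\hat\cO_{\mathcal{X}}(1)$ and the Kodaira-Spencer identification. The delicate point is to verify that under all these identifications the two extension classes match up precisely with discrepancy $-1$ and not some other unit of $\hat\cO_{\mathcal{X}}$; this is tracked by normalizing $\sigma$ appropriately and comparing cocycles carefully through the horizontality equation $\nabla_{\mathrm{tot}}(\sigma) = 0$.
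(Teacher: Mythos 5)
Your core mechanism is exactly the one the paper uses: a horizontal section of $D_{\dR,\log}\otimes\cO\mathbb{B}^+_{\dR,\log}$ has its two components coupled by the Leibniz rule, so that the Kodaira-Spencer part of $\nabla_V$ acting on the $\omega$-component cancels against the $\gr^1\cO\mathbb{B}^+_{\dR,\log}$-part of $\nabla$ acting on the $\omega^{-1}$-component, and the relative sign is forced to be $-1$. So the strategy is right and parallel to the paper's.

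That said, you leave the decisive step unresolved and a few intermediate assertions are shaky. The paper works at the level of $\gr^1(D_{\dR,\log}\otimes\cO\mathbb{B}^+_{\dR,\log})$ and exploits the \emph{canonical} direct sum decomposition $\gr^1(D\otimes\cO\mathbb{B}^+) = \gr^1(D)\otimes\hat\cO_{\mathcal X}\oplus\gr^0(D)\otimes\gr^1\cO\mathbb{B}^+_{\dR,\log}$; the two extension classes are then the two projections of the $\nabla$-horizontal subsheaf onto the first summand, and the sign comparison is a one-line consequence of $\nabla = 0$, with no choices of local trivialization involved. Your version picks local bases, which is valid in principle but introduces the issues you then sidestep: the normalization $\nabla_V(f_2)=0$ is not automatic (it amounts to solving a local ODE and is unnecessary — Griffiths transversality alone gives a strictly upper-triangular form that already suffices); the claim that the generating condition forces $p_1\in t\mathbb{B}^+_{\dR}$ is asserted without argument and I do not see why it holds (the generating condition constrains $\theta(p_2)$, not $p_1$); and treating each extension class as a single element of $\hat\cO_{\mathcal X}$ rather than a cocycle makes the "matching" in the penultimate paragraph too vague to carry the sign. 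The part you defer as "delicate bookkeeping" is precisely the content of the theorem, and the paper's canonical decomposition is what makes it a clean computation rather than a coordinate-dependent one.
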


\begin{proof}
Consider the first graded piece of the tensor product of $\hat{\underline{V}}_{\log}$ and the Poincar\'e lemma sequence \eqref{PL} over $\hat\Z_p$, i.e. the tensor product of $\hat{\underline{V}}_{\log}$ with the log Faltings's extension:
\[0\to \hat{\underline{V}}_{\log}(1)\otimes_{\hat\Z_p} \hat\cO_{\mathcal{X}}\to \hat{\underline{V}}_{\log}\otimes_{\hat\Z_p} \gr^1\cO\mathbb{B}_{\dR,\log}^+ \xrightarrow{\nabla} \hat{\underline{V}}_{\log}\otimes_{\hat\Z_p}\hat\cO_{\mathcal{X}}\otimes_{\cO_{\mathcal{X}}}\Omega^1_{\mathcal{X}}(\mathcal{C})\to 0.\]
Then the inclusion $\gr^1(D_{\dR,\log}\otimes_{\cO_{\mathcal{X}}} \cO\mathbb{B}_{\dR,\log}^+)\subset \gr^1(\hat{\underline{V}}_{\log}\otimes_{\hat\Z_p}\cO\mathbb{B}_{\dR,\log}^+)$ induces
\[0\to \hat{\underline{V}}_{\log}(1)\otimes_{\hat\Z_p} \hat\cO_{\mathcal{X}}\to \gr^1(D_{\dR,\log}\otimes_{\cO_{\mathcal{X}}} \cO\mathbb{B}_{\dR,\log}^+) \xrightarrow{\nabla} \gr^0 D_{\dR,\log}\otimes_{\cO_{\mathcal{X}}}\hat\cO_{\mathcal{X}}\otimes_{\cO_{\mathcal{X}}}\Omega^1_{\mathcal{X}}(\mathcal{C}),   \]
where the first inclusion follows from \eqref{MM0} and the image of $\nabla$ lies in $\gr^0 D_{\dR,\log}\otimes_{\cO_{\mathcal{X}}}\hat\cO_{\mathcal{X}}\otimes_{\cO_{\mathcal{X}}}\Omega^1_{\mathcal{X}}(\mathcal{C})$ by Griffiths transversality. I claim that $\nabla$ is in fact surjective as it has a left inverse: the composite map
\[\gr^1(D_{\dR,\log})\otimes_{\cO_{\mathcal{X}}} \hat\cO_{\mathcal{X}}\subset \gr^1(D_{\dR,\log}\otimes_{\cO_{\mathcal{X}}} \cO\mathbb{B}_{\dR,\log}^+) \xrightarrow{\nabla} \gr^0 D_{\dR,\log}\otimes_{\cO_{\mathcal{X}}}\hat\cO_{\mathcal{X}}\otimes_{\cO_{\mathcal{X}}}\Omega^1_{\mathcal{X}}(\mathcal{C})\]
is nothing but the tensor product of Kodaira-Spencer map with $\hat\cO_{\mathcal{X}}$, hence an isomorphism. This implies that 
\[\hat{\underline{V}}_{\log}(1)\otimes_{\hat\Z_p} \hat\cO_{\mathcal{X}} \xrightarrow{\sim} \gr^1(D_{\dR,\log}\otimes_{\cO_{\mathcal{X}}} \cO\mathbb{B}_{\dR,\log}^+) / \gr^1(D_{\dR,\log})\otimes_{\cO_{\mathcal{X}}} \hat\cO_{\mathcal{X}}.\]
Note that this right hand side can be identified with 
$\gr^0(D_{\dR,\log})\otimes_{\cO_{\mathcal{X}}}\gr^1\cO\mathbb{B}_{\dR,\log}^+=\omega^{-1}\otimes_{\cO_{\mathcal{X}}}\gr^1\cO\mathbb{B}_{\dR,\log}^+$ as there is a canonical decomposition
\[\gr^1(D_{\dR,\log}\otimes_{\cO_{\mathcal{X}}} \cO\mathbb{B}_{\dR,\log}^+) = \gr^1(D_{\dR,\log})\otimes_{\cO_{\mathcal{X}}} \hat\cO_{\mathcal{X}} \oplus \gr^0(D_{\dR,\log})\otimes_{\cO_{\mathcal{X}}}\gr^1\cO\mathbb{B}_{\dR,\log}^+.\]
Hence we obtain an isomorphism $\hat{\underline{V}}_{\log}(1)\otimes_{\hat\Z_p} \hat\cO_{\mathcal{X}} \cong \omega^{-1}\otimes_{\cO_{\mathcal{X}}}\gr^1\cO\mathbb{B}_{\dR,\log}^+$ as claimed in the Theorem. To check their relation as extension classes, one useful observation is that the surjection $\hat{\underline{V}}_{\log}(1)\otimes_{\hat\Z_p} \hat\cO_{\mathcal{X}}\to \omega^{1}\otimes_{\cO_{\mathcal{X}}}\hat\cO_{\mathcal{X}}$ in \eqref{relHT} agrees with the composite map
\[ \hat{\underline{V}}_{\log}(1)\otimes_{\hat\Z_p} \hat\cO_{\mathcal{X}}\subset \gr^1(D_{\dR,\log}\otimes_{\cO_{\mathcal{X}}} \cO\mathbb{B}_{\dR,\log}^+) \to  \gr^1(D_{\dR,\log})\otimes_{\cO_{\mathcal{X}}} \hat\cO_{\mathcal{X}}, \]
where the second map is the projection using the above decomposition. Hence it differs by $-1$ with the composite
\[ \hat{\underline{V}}_{\log}(1)\otimes_{\hat\Z_p} \hat\cO_{\mathcal{X}}\to \gr^1(D_{\dR,\log}\otimes_{\cO_{\mathcal{X}}} \cO\mathbb{B}_{\dR,\log}^+) \to  \gr^0(D_{\dR,\log})\otimes_{\cO_{\mathcal{X}}}\gr^1\cO\mathbb{B}_{\dR,\log}^+\]
\[\xrightarrow{\nabla} \gr^0 D_{\dR,\log}\otimes_{\cO_{\mathcal{X}}}\hat\cO_{\mathcal{X}}\otimes_{\cO_{\mathcal{X}}}\Omega^1_{\mathcal{X}}(\mathcal{C}) \xleftarrow{\sim} \gr^1(D_{\dR,\log})\otimes_{\cO_{\mathcal{X}}} \hat\cO_{\mathcal{X}}, \]
where the second map is the other projection and  the last isomorphism is given by Kodaira-Spencer map. Note that the composite of first row is the isomorphism constructed before. This finishes the proof of theorem.
\end{proof}

\begin{cor} \label{FEana}
Let $X,\tilde{X}$ be as in the beginning of this subsection. Moreover, we assume $\pi_{\HT}(\tilde{X})$ is contained in an affinoid open subset of $\Fl$. Then $\widetilde{X}$ is a locally analytic covering of $X$ in the sense of \ref{laedefn}.
\end{cor}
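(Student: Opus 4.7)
The plan is to verify condition~(3) of Proposition~\ref{LAETFAE}: construct a $K_p$-locally analytic element $z\in B_\infty$ with $\gamma(z)=z-1$ for a topological generator $\gamma$ of $\Gamma$. Since being a locally analytic covering is independent of the \'etale map $f_1$ (by the remark after Definition~\ref{laedefn}) and is a local property on $X$, I may shrink $X$ to a member of the basis in Theorem~\ref{piHT} so that $\pi_{\HT}(\widetilde{X})$ lies in an affinoid $U\subset U_1\subset\Fl$ with affine coordinate $z_{\Fl}=x_2/x_1$. Set $u:=\pi_{\HT}^{*}z_{\Fl}\in B$. The $\GL_2(\Q_p)$-equivariance of $\pi_{\HT}$ from Theorem~\ref{piHT} translates the $K_p$-action on $u$ into the M\"obius action on $z_{\Fl}$, which depends analytically on the group element near the identity; hence $u\in B^{\la}\subset B_\infty^{\la}$.

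Next I transport Faltings' extension (FE) of~\ref{adhocFE} into the shape of the relative Hodge-Tate filtration. By Theorem~\ref{FERHT}, the log Faltings extension tensored with $\omega^{-1}$ equals, up to a sign in the extension class, the relative Hodge-Tate sequence~\eqref{relHT}. Trivializing $\Omega^1_X(\mathcal{C})$ via $f_1^{*}(dT/T)$ (or $f_1^{*}(dT)$) and evaluating on $\widetilde{X}_\infty$ (an object of the pro-Kummer-\'etale site of $\mathcal{X}$), the log Faltings extension becomes precisely (FE). On the other hand, evaluating Theorem~\ref{RHT} on $\widetilde{X}_\infty$ yields the exact sequence of $B_\infty$-modules
\[0\to\omega_{K^p}^{-1}(1)(\widetilde{X}_\infty)\to V(1)\otimes_{\Q_p}B_\infty\to\omega_{K^p}(\widetilde{X}_\infty)\to 0,\]
and the Hodge sub-line on $\pi_{\HT}^{-1}(U)$ is generated, up to a unit pulled back from $\cO_{\Fl}(-1)|_{U_1}$, by $e_1+u\cdot e_2$. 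In particular $e_2\in V$ projects to a generator of $\omega_{K^p}|_{\pi_{\HT}^{-1}(U)}$.

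Splitting the displayed Hodge-Tate sequence by sending this generator of $\omega_{K^p}$ back to $e_2\otimes t$, with $t$ a generator of the Tate twist supplied by the toric cover, is $K_p$-locally analytic: $u\in B^{\la}$ and $e_2$ lies in the finite-dimensional algebraic $K_p$-representation $V$. Translating back through the identification with (FE) produces a $K_p$-locally analytic element of $(B_\infty\otimes V_0)^{\Gamma}$ (where $V_0=\Q_p^{\oplus 2}$ is the representation from~\ref{adhocFE}) that maps to a unit of $B$ in the quotient; after dividing by this unit I get the required $z\in B_\infty^{\la}$ with $\gamma(z)=z-1$, so that $\widetilde{X}$ is a locally analytic covering of $X$. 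The main technical obstacle will be the careful bookkeeping in the middle paragraph: making the sign in Theorem~\ref{FERHT}, the Kodaira-Spencer identification $\omega\cong\omega^{-1}\otimes\Omega^1_X(\mathcal{C})$, and the Tate twist cancel out so that the splitting via $e_2$ yields $\gamma(z)-z=-1$ on the nose rather than only up to a unit of $A$.
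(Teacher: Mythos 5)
Your core idea matches the paper's: use Theorem~\ref{FERHT} to identify the (log) Faltings extension with the relative Hodge--Tate sequence, and exploit the fact that the standard representation $V$ is \emph{algebraic} over $K_p$, so that a constant vector in $V(1)\otimes_{\Q_p}C$ provides a locally analytic section. The actual paper checks condition~(2) of Proposition~\ref{LAETFAE} (exactness after taking locally analytic vectors) rather than constructing the explicit element $z$ from condition~(3), but these are equivalent, and the difference is cosmetic.

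The genuine gap is the opening reduction. You assert that ``being a locally analytic covering is a local property on $X$'' and shrink $X$ so that $\pi_{\HT}(\widetilde{X})\subset U_1$. Neither step is justified. First, locality of Definition~\ref{laedefn} is never established in the paper, and it is not automatic: if $X=X_1\cup X_2$ and each $\widetilde{X}_i$ is a locally analytic covering of $X_i$, one gets $z_1,z_2$ with $\gamma(z_i)=z_i-1$ whose difference $z_1-z_2$ lands in $B(X_{12})^{\la}$, so the obstruction to glueing is a \v{C}ech $H^1$ class of the presheaf of locally analytic sections, which has no a priori reason to vanish. (The glueing result that \emph{is} proved, Corollary~\ref{LAacyw/osm}, goes the other way: from local-analyticity of the pieces to acyclicity of the whole, not back to local-analyticity of the whole covering.) Second, even granting a reduction, the hypothesis only says $\pi_{\HT}(\widetilde{X})$ lies in \emph{some} affinoid $W\subset\mathbb{P}^1$, so $W$ omits some $C$-point $p_0$, but there is no reason $p_0=[0:1]$, i.e.\ no reason $\pi_{\HT}(\widetilde{X})\subset U_1$; a point of $\widetilde{X}$ could land in $U_2\smallsetminus U_1$, where $z_{\Fl}=x_2/x_1$ is not defined. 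The paper sidesteps both problems at once: instead of tying yourself to the standard basis vector $e_2$, pick any constant $\mathbf{e}\in V(1)\otimes_{\Q_p}C$ spanning a line not met by $\pi_{\HT}(\widetilde{X})$; such $\mathbf{e}$ exists exactly because of the affinoid hypothesis, it projects to a trivialization $e$ of $\omega$ on all of $\widetilde{X}$, and it is as algebraic (hence locally analytic) as $e_2$ is. With this replacement your construction works directly on $X$ with no shrinking, and coincides with the paper's proof. The remaining sign/twist bookkeeping you flag is real but benign: a discrepancy by a unit $a\in A^\times\subset B^{\la}$ is harmless since $A$ is $\Gamma$-fixed and $K_p$-smooth, so $z/a$ still verifies $\gamma(z/a)=z/a-1$.
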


\begin{proof}
By Proposition \ref{LAETFAE}, one equivalent definition of locally analytic covering is that if we take the global sections of Faltings's extension on $\tilde{X}$ (viewed as open set in $\mathcal{X}_{\proket}$ by abuse of notation), it remains exact after taking $K_p$-locally analytic vectors. First consider taking the global sections of relative Hodge-Tate filtration \eqref{relHT} on $\tilde{X}$:
\begin{eqnarray} \label{RHTsection}
\;\;\;\;\;\;\;0\to H^0(\tilde{X},\omega^{-1}\otimes\hat\cO_{\mathcal{X}}(1)) \to H^0(\tilde{X},\hat{\underline{V}}_{\log}(1)\otimes\hat\cO_{\mathcal{X}}) \to H^0(\tilde{X},\omega\otimes\hat\cO_{\mathcal{X}})\to 0.
\end{eqnarray}
This is exact as $H^1(\tilde{X},\omega^{-1}\otimes_{\cO_{\mathcal{X}}}\hat\cO_{\mathcal{X}}(1))=0$ by Theorem 5.4.3 of \cite{DLLZ1}. Note that $\hat{\underline{V}}_{\log}$ becomes a trivial local system on $\tilde{X}$, hence the middle term is naturally isomorphic to $V(1)\otimes_{\Q_p} H^0(\tilde{X},\hat\cO_{\mathcal{X}})$ where $V=\Q_p^{\oplus 2}$ is the standard representation of $K_p\subset\GL_2(\Q_p)$. By our assumption on $\pi_{\HT}(\tilde{X})$, there exists an element $\mathbf{e}\in V(1)\otimes_{\Q_p} C\subset V(1)\otimes_{\Q_p} H^0(\tilde{X},\hat\cO_{\mathcal{X}})$ whose image  $e\in H^0(\tilde{X},\omega\otimes_{\cO_{\mathcal{X}}}\hat\cO_{\mathcal{X}})$ is a basis of $H^0(\tilde{X},\omega\otimes_{\cO_{\mathcal{X}}}\hat\cO_{\mathcal{X}})$ as a $H^0(\tilde{X},\hat\cO_{\mathcal{X}})$-module.

As the action of $K_p$ on $V$ is analytic (even algebraic), we conclude that \eqref{RHTsection} remains exact after passing to the $K_p$-locally analytic vectors. The same holds when taking global sections of the tensor product of \eqref{relHT} and $\omega$, which is equivalent with multiplying \eqref{RHTsection} by ${e}$. Now the corollary follows by identifying this exact sequence with Faltings's extension (up to $-1$).
\end{proof}

Now we are ready to calculate $\theta$. We will give an explicit description first and rewrite in a coordinate-free way later. We introduce some notation. Let $\mathbf{e}_1=(1,0),\mathbf{e}_2=(0,1)$ be the standard basis of $V=\Q_p^{\oplus2}$ and denote their images by $e_1,e_2\in H^0(\mathcal{X}_{K^p}, \omega_{K^p})$ (fake-Hasse invariants) using the surjective map in the exact sequence in Theorem \ref{RHT}. We remark that $\mathrm{Aut}(C/\Q_p)$ acts on $e_1,e_2$ via the cyclotomic character because of the Tate-twist in $V(1)$.

Let $X,\tilde{X}$ be as in the previous corollary. In particular, $\omega_{K^p}|_{\tilde X}$ is trivial and generated by some section $e$. Hence we may view $\frac{e_1}{e},\frac{e_2}{e}$ as elements in $\cO_{\mathcal{X}_{K^p}}(\tilde X)$.

\begin{thm} \label{exptheta}
The differential operator $\theta$ associated to $(K_p,\tilde{X})$ in Theorem \ref{pCR} is, up to a unit of $\cO_{\mathcal{X}_{K^p}}(\tilde X)$,
\[\frac{1}{e^2}\begin{pmatrix}e_1e_2 & e_2^2\\ -e_1^2 & -e_1e_2\end{pmatrix}\in \mathfrak{gl}_2(\cO_{\mathcal{X}_{K^p}}(\tilde X))=\cO_{\mathcal{X}_{K^p}}(\tilde X)\otimes_{\Q_p} \Lie(K_p).\]
\end{thm}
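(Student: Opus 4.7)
The proof will use the functorial description of $\phi_{\tilde X}$ together with the splitting of Faltings's extension on locally analytic vectors.

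\textbf{Reduction to the standard representation.} By Corollary \ref{phiV=} and Remark \ref{computephi}, since the standard representation $V = \Q_p^{\oplus 2}$ of $\GL_2(\Q_p) \supset K_p$ is faithful, the operator $\theta = \phi_{\tilde X}(t) \in B \otimes_{\Q_p} \mathfrak{gl}_2(\Q_p)$ (for a fixed generator $t$ of $\Lie(\Gamma)$) is determined by $\phi_V(t) \in \End_{B_\infty}(B_\infty \otimes V)$; reading its matrix in the basis $\{\mathbf{e}_1, \mathbf{e}_2\}$ recovers $\theta$. By the construction in Proposition \ref{relSenop}, $\phi_V(t)$ is the unique $B_\infty$-linear extension of the natural $\Lie(\Gamma)$-action on $(B_\infty \otimes V)^{K_p, \Gamma-\la}$, so it suffices to exhibit $K_p$-fixed, $\Gamma$-locally analytic vectors in $B_\infty \otimes V$ expressed in terms of $\mathbf{e}_1, \mathbf{e}_2$, and differentiate them along $t$.

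\textbf{Geometric construction and computation.} Such vectors are furnished by the splitting of Faltings's extension after passing to locally analytic vectors. By Theorem \ref{FERHT}, the restriction to $\tilde X$ of the relative Hodge-Tate sequence of Theorem \ref{RHT}
\[0 \to \omega^{-1}(1)|_{\tilde X} \to V(1) \otimes_{\Q_p} \cO_{\mathcal X_{K^p}}(\tilde X) \to \omega|_{\tilde X} \to 0\]
agrees (up to sign) with Faltings's extension on $\tilde X$ twisted by $\omega^{-1}$, and by Corollary \ref{FEana} this extension splits once we pass to $K_p$-locally analytic vectors. Explicitly, Proposition \ref{LAETFAE}(3) produces $z \in B_\infty^{K_p-\la}$ with $\gamma(z) = z - 1$, which realizes the splitting. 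In the trivialization $e$ of $\omega|_{\tilde X}$, the Hodge-Tate line $\omega^{-1}(1)|_{\tilde X}$ is generated by $\mathbf{v}_{HT} := (e_2/e)\mathbf{e}_1 - (e_1/e)\mathbf{e}_2$; combining $z$ with $\mathbf{v}_{HT}$ produces $K_p$-fixed, $\Gamma$-locally analytic elements of $B_\infty \otimes V$ whose $V$-components mix $\mathbf{e}_1, \mathbf{e}_2$ and $\mathbf{v}_{HT}$ with $z$-dependent coefficients dictated by the cocycle class of Faltings's extension from \S\ref{adhocFE}. Applying $t$ to these elements, using $t \cdot z = -1$, the $B_\infty$-linearity of $\phi_V(t)$, and the fact that $t$ kills $B = B_\infty^{\Gamma}$ (and in particular the scalar functions $e_i/e$), one extracts
\[\phi_V(t) \cdot \mathbf{e}_i \;=\; (e_i/e) \, \mathbf{v}_{HT}\]
up to a unit of $A^\times$ reflecting the choice of $t$ and of the \'etale map $f_1 : X \to Y$ (cf.\ \S\ref{f1f2}). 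Expanding $\mathbf{v}_{HT}$ in the basis $\{\mathbf{e}_1, \mathbf{e}_2\}$ then recovers the rank-one matrix $\frac{1}{e^2}\begin{pmatrix} e_1 e_2 & e_2^2 \\ -e_1^2 & -e_1 e_2 \end{pmatrix}$ as claimed.

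\textbf{Main obstacle.} The principal technical challenge is constructing the $K_p$-equivariant splitting of Faltings's extension in terms of $z$ with the correct normalization: one must reconcile the geometric realization of the extension class given by Theorem \ref{FERHT} with the cohomological description provided by the isomorphism $A = B^G \xrightarrow{\sim} H^1_{\mathrm{cont}}(G, B)$ of \S\ref{adhocFE}, while tracking carefully the interplay of the $K_p$-action (which mixes $\mathbf{e}_1, \mathbf{e}_2$ via the standard representation), the $\Gamma$-action (which fixes the $\mathbf{e}_i$ but shifts $z$ by a constant), the Tate twist appearing in $V(1)$, and the various sign conventions.
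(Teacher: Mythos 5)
Your proposal is correct and follows essentially the same route as the paper's proof: reduce to the standard representation via Remark \ref{computephi} and Corollary \ref{phiV=}, identify the $K_p$-invariants of the relative Hodge--Tate sequence on $\widetilde X_\infty$ with Faltings's extension via Theorem \ref{FERHT}, deduce that the $\Lie(\Gamma)$-action annihilates the Hodge--Tate line $B\cdot e^{-1}$ and sends the quotient $B\cdot e$ to it, and then read off the matrix from $\phi_V(t)(\mathbf{e}_i)=(e_i/e)\mathbf{f}$ with $\mathbf{f}=\mathbf{v}_{\mathrm{HT}}$. The only substantive difference is cosmetic: the paper invokes the local structure results from \cite{DLLZ2} to identify the sequence \eqref{FEXinfty} with $A_\infty\otimes V_1$ (the nontrivial self-extension of the trivial $\Gamma$-representation from \S\ref{adhocFE}) and computes the $\Lie(\Gamma)$-action abstractly on that model, whereas you route through Corollary \ref{FEana}/Proposition \ref{LAETFAE} and exhibit the splitting explicitly via the element $z$ with $\gamma(z)=z-1$; both encode the same fact, since $z$ is precisely what trivializes $V_1$ on locally analytic vectors. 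One small imprecision: the matrix $\frac{1}{e^2}\begin{pmatrix} e_1e_2 & e_2^2 \\ -e_1^2 & -e_1e_2\end{pmatrix}$ is determined only up to a unit of $B=\cO_{\mathcal X_{K^p}}(\widetilde X)$ (as the theorem states), not up to $A^\times$, because the trivialization $e$ of $\omega_{K^p}|_{\widetilde X}$ is itself ambiguous up to $B^\times$; the $A^\times$ ambiguity you mention governs only the intrinsic operator $\theta$, not its expression relative to $e$.
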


\begin{rem} \label{pSimtheta}
As mentioned in Remark \ref{FAGTLZ}, it is better to view $\theta$ as a log Higgs field. The computation of $\theta$ here is just computing the Higgs bundle associated to $\underline{\hat{V}}_{\log}$ under the $p$-adic Simpson correspondence, which can be deduced from $(D_{\dR,\log},\nabla)$, the other side of the de Rham comparison. This was pointed out to me by Michael Harris.
\end{rem}

\begin{proof}
We will freely use the notation and constructions introduced in Section \ref{LARS}. Hence $X=\Spa(A,A^+),\tilde{X}=\Spa(B,B^+)$. Fix an \'etale map $X\to Y$ as in \ref{smallY}. Let $X_\infty=\Spa(A_\infty,A_\infty^+)$ (resp. $\tilde{X}_\infty=\Spa(B_\infty,B_\infty^+)$) be the pull-back of the perfectoid covering $Y_\infty$ to $X$ (resp. $\tilde X$), cf. \ref{TNT} (resp. \ref{BCtotildeX}).

To compute $\theta$, we follow Remark \ref{computephi}. Recall that $V$ is the standard two-dimensional $\Q_p$-representation of $K_p\subset\GL_2(\Q_p)$. We need to compute $\phi_{V}:\Lie(\Gamma)\to B_\infty\otimes_{\Q_p}V$  in Proposition \ref{relSenop}, which agrees with the snatural action of $\Lie(\Gamma)$ when restricted to $(B_\infty\otimes_{\Q_p}V)^{K_p,\Gamma-\la}$. By Theorem \ref{RHT}, ignoring the Tate-twist, we have
\[0\to B\cdot e^{-1}\to B\otimes_{\Q_p} V\to B\cdot e \to 0,\]
where $e^{-1}\in H^0(\tilde{X},\omega_{K^p}^{-1})$ is dual to $e\in H^0(\tilde{X},\omega_{K^p})$. Take the tensor product with $B_\infty$ over $B$ and then take the $K_p$-invariants:
\begin{eqnarray} \label{FEXinfty}
0\to (B_\infty\cdot e^{-1})^{K_p}\to (B_\infty\otimes V)^{K_p}\to (B_\infty\cdot e)^{K_p} \to 0.
\end{eqnarray}
To compute the action of $\Lie(\Gamma)$ on $(B_\infty\otimes V)^{K_p,\Gamma-\la}$, by Theorem \ref{FERHT}, this sequence can be identified with the global sections of Faltings's extension on $X_\infty$ (viewed as an open set in $\mathcal{X}_{\proket}$), up to $-1$. Hence by results in \S2 of \cite{DLLZ2} (in particular Proposition 2.3.15 and proof of Corollary 2.4.2), \eqref{FEXinfty} is $A_\infty$-linearly and $\Gamma$-equivariant isomorphic to the tensor product over $\Q_p$ of $A_\infty$ with a non-trivial self extension of trivial representation of $\Gamma$:
\[0\to \Q_p \to V_1 \to \Q_p\to0,\]
which we essentially write down in \ref{adhocFE}. Clearly $\Lie(\Gamma)$ acts trivially on $(B_\infty\cdot e^{-1})^{K_p,\Gamma-\la}\cong (A_\infty)^{\Gamma-\la}$ and if we fix a generator $x$ of $\Lie(\Gamma)$, it maps the quotient $(B_\infty\cdot e)^{K_p,\Gamma-\la}$ isomorphically to $(B_\infty\cdot e^{-1})^{K_p,\Gamma-\la}$. Therefore $\phi_V(x)$ is zero on $B\cdot e^{-1}$ and $\phi_V(x)(B\otimes V)=B\cdot e^{-1}$. 

The rest is to write this as an element of $B\otimes_{\Q_p}\Lie(K_p)$ up to $B^\times$. It is easy to see that $\mathbf{f}=\frac{e_2}{e}\mathbf{e_1}-\frac{e_1}{e}\mathbf{e_2}\in B\otimes_{\Q_p} V$ generates $B\cdot e^{-1}$. As $\phi_V(x)|_{B\otimes V}$ factors through $B\cdot e$, we may assume it sends $e$ to $\mathbf{f}$. A direct computation gives the matrix in the theorem.
\end{proof}

\begin{para} \label{OKpla}
To rewrite this theorem in a coordinate-free way, it is better to work with sheaves. Consider the push-forward of $\cO_{\mathcal{X}_{K^p}}$ along $\pi_{\HT}:\mathcal{X}_{K^p}\to\Fl$
\[\cO_{K^p}:={\pi_{\HT}}_{*} \cO_{\mathcal{X}_{K^p}}.\]
We define a subsheaf 
\[\cO_{K^p}^{\la}\subset\cO_{K^p}\] 
as follows: for any quasi-compact open set $U$ of $\Fl$, then $H^0(U,\cO_{K^p}^{\la})$ is the subspace of $K_p$-locally analytic vectors in $H^0(U,\cO_{K^p})$, where $K_p$ is an open subgroup of $\GL_2(\Q_p)$ stabilizing $U$. Note that such $K_p$ always exists by the compactness of $U$ and $H^0(U,\cO_{K^p}^{\la})$ is independent of choice of $K_p$. Since taking locally analytic vectors is a left-exact process (see the construction in section \ref{LAV}) and ${\pi_{\HT}}_*\cO^+_{\mathcal{X}_{K^p}}$ is a sheaf, $\cO_{K^p}^{\la}$ indeed defines a sheaf on $\Fl$. 

Clearly $\cO^{\la}_{K^p}$ is $\GL_2(\Q_p)$-equivariant and there is a natural action of the Lie algebra of $\GL_2(\Q_p)$ on $\cO^{\la}_{K^p}$.  Hecke operators away from $p$ act on it naturally as we only use the action at $p$ in the construction.

We claim that the natural map $\cO_{\Fl}\to\cO_{K^p}$ has image in $\cO_{K^p}^{\la}$. It is enough to check this for a basis of affinoid open subsets $U$ of $\Fl$. Since $\Fl$ is an adic space associated to a variety over $C$ of finite type, we may consider all affinoid subsets $U$ such that $\cO_{\Fl}(U)$ can be written as a continuous quotient of $C\langle T_1\cdots,T_m\rangle$ for some $m$ and the quotient topology on $\cO_{\Fl}(U)$ agrees with its natural topology. Fix such a $U$, then there exists an open subgroup $K_p\subset\GL_2(\Q_p)$ stabilizing $U$ and acting trivially on $\cO^+_{\Fl}(U)/p$. From this, we see that the action of $K_p$ on $\cO_{\Fl}(U)$ is locally analytic (using arguments in \ref{exaZp} for example).

We follow some constructions on the flag variety in  \cite{BB83}. Denote by $\mathfrak{g}$ the tensor product of the Lie algebra of $\GL_2(\Q_p)$ with $C$, i.e. $\mathfrak{gl}_2(C)$. For a $C$-point $x$ of the flag variety $\mathrm{Fl}$ of $\GL_2/C$, let $\mathfrak{b}_x,\mathfrak{n}_x\subset \mathfrak{g}$ be its corresponding Borel subalgebra and nilpotent subalgebra. Let
\begin{eqnarray*}
\mathfrak{g}^0&:=&\cO_{\mathrm{Fl}}\otimes_{C}\mathfrak{g},\\
\mathfrak{b}^0&:=&\{f\in \mathfrak{g}^0\,| \, f_x\in \mathfrak{b}_x,\mbox{ for all }x\in\mathrm{Fl}(C)\},\\
\mathfrak{n}^0&:=&\{f\in \mathfrak{g}^0\,| \, f_x\in \mathfrak{n}_x,\mbox{ for all }x\in\mathrm{Fl}(C)\}
\end{eqnarray*}
be $\GL_2$-equivariant vector bundles on $\mathrm{Fl}$, where $\GL_2$ acts by conjugation on $\mathfrak{g}$. By abuse of notation, we also view these as vector bundles on $\Fl$, the associated adic space of $\mathrm{Fl}$. Then we have a natural action of $\mathfrak{g}^0$ on $\cO^{\la}_{K^p}$: for $f\in \cO_{\Fl},z\in\mathfrak{g},s\in \cO^{\la}_{K^p}$,
\[(f\otimes z)\cdot s=f(z\cdot s).\]
\end{para}

\begin{thm} \label{n0triv}
$\mathfrak{n}^0$ acts trivially on $\cO^{\la}_{K^p}$.
\end{thm}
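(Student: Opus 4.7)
The plan is to combine the differential equation from Theorem \ref{pCR}, made explicit in Theorem \ref{exptheta}, with the observation that the operator $\theta$ there is, locally on $\tilde X$, a generator of the pullback $\pi_{\HT}^*\mathfrak n^0$ as an invertible $\cO_{\tilde X}$-module. Because $\mathfrak n^0$ is a line bundle on $\Fl$, every local section of it is an $\cO_{\Fl}$-multiple of such a generator, so the annihilation of $\cO^{\la}_{K^p}$ by $\theta$ will propagate to the annihilation by all of $\mathfrak n^0$.

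Since the statement is local on $\Fl$, I would first verify it on the basis $\mathfrak B$ from Theorem \ref{piHT}. Refining if necessary, take $U \in \mathfrak B$ contained in $U_1$ (the case $U \subset U_2$ is symmetric), and pick $K_p \subset \GL_2(\Z_p)$ small enough that $\tilde X := \pi_{\HT}^{-1}(U)$ is the preimage of an affinoid $X \subset \mathcal X_{K^p K_p}$ which is small in the sense of \ref{setup} and contains at most one cusp; such refinements exist by combining Theorems \ref{infperf} and \ref{piHT}. Then the setup of \ref{setup} applies, and by Corollary \ref{FEana}, $\tilde X$ is a locally analytic covering of $X$.

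By Theorem \ref{exptheta}, there is (up to a unit) an operator
\[
\theta \;=\; \frac{1}{e^2}\begin{pmatrix} e_1 e_2 & e_2^2 \\ -e_1^2 & -e_1 e_2 \end{pmatrix} \in \cO_{\mathcal X_{K^p}}(\tilde X) \otimes_{\Q_p} \Lie(K_p)
\]
annihilating the $K_p$-locally analytic vectors in $\cO_{\mathcal X_{K^p}}(\tilde X)$. A direct check shows this matrix is traceless, of vanishing determinant, and kills the column vector $(e_2,-e_1)^T$; by Theorem \ref{RHT} and the definition of $\pi_{\HT}$, the line spanned by $(e_2,-e_1)$ inside $V \otimes \cO_{\mathcal X_{K^p}}$ coincides with the pullback of the tautological line $\pi_{\HT}^*\cO(-1) \subset V \otimes \cO_{\Fl}$, so $\theta$ lies fibre-by-fibre in $\mathfrak n_x$. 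Taking $e = e_1$ (invertible on $\pi_{\HT}^{-1}(U_1)$) and setting $z = e_2/e_1 = \pi_{\HT}^*(x_2/x_1)$, the matrix reduces to $\bigl(\begin{smallmatrix} z & z^2 \\ -1 & -z\end{smallmatrix}\bigr)$, which is visibly the pullback of a nowhere-vanishing generator $n_0$ of $\mathfrak n^0|_{U_1}$ as an $\cO_{U_1}$-module.

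To conclude, any $n \in \mathfrak n^0(U)$ may be written as $n = f \cdot n_0$ with $f \in \cO_{\Fl}(U)$; for $s \in \cO^{\la}_{K^p}(U)$, the definition of the $\mathfrak g^0$-action in \ref{OKpla} gives $n \cdot s = f \cdot (n_0 \cdot s)$, and after pullback to $\tilde X$ the expression $n_0 \cdot s$ is exactly $\theta \cdot s$, which vanishes by Theorem \ref{pCR}(1). Since such opens cover $\Fl$, this proves the theorem. The main obstacle in writing this up cleanly is essentially bookkeeping: one must match the abstract operator $\theta \in \cO_{\tilde X} \otimes \Lie(K_p)$ produced by relative Sen theory with a section of the geometrically defined sheaf $\pi_{\HT}^* \mathfrak n^0$, and this identification is made transparent by the explicit formula of Theorem \ref{exptheta} together with the Hodge-Tate sequence of Theorem \ref{RHT}.
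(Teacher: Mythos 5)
Your proposal is correct and follows essentially the same route as the paper's proof: reduce to the local statement on the basis $\mathfrak{B}$, invoke the explicit formula of Theorem \ref{exptheta}, and observe that (up to a unit of $\cO_{\mathcal X_{K^p}}(\tilde X)$) the resulting $\theta$ is a generator of the pullback of $\mathfrak n^0$, since it stabilizes the Hodge--Tate line $\omega_{K^p}^{-1}(1)$ defining $\pi_{\HT}$. The paper's proof records this as ``tautological'' in one line; you have merely spelled out the $\cO_{\Fl}$-linearity step $n = f\cdot n_0 \Rightarrow n\cdot s = f\,(n_0\cdot s)=0$, which is fine.
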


\begin{proof}
Its enough to check this on open subsets as in Theorem \ref{piHT}, for which we may invoke Theorem \ref{exptheta} and our claim follows immediately. In fact, it follows from the proof of Theorem \ref{exptheta} that the action of $\phi_V$ on $V(1)\otimes_{\Q_p} \cO_{\mathcal{X}_{K^p}}$ is trivial on $\omega^{-1}_{K^p}(1)$ and induces an isomorphism $\omega_{K^p}\xrightarrow{\sim} \omega^{-1}_{K^p}(1)$. Since $\pi_{\HT}$ is defined by the position of $\omega^{-1}_{K^p}(1)$ in $V(1)\otimes_{\Q_p} \cO_{\mathcal{X}_{K^p}}$, it is tautological that  $\cO_{\mathcal{X}_{K^p}}\theta\subset\cO_{\mathcal{X}_{K^p}}\otimes_C \mathfrak{g}$ contains $\mathfrak{n}^0$.
\end{proof}

Fix a rational Borel subalgebra $\mathfrak{b}:=\{\begin{pmatrix} * & * \\ 0 & * \end{pmatrix}\}$ and a Cartan subalgebra $\mathfrak{h}:=\{\begin{pmatrix} * & 0 \\ 0 & * \end{pmatrix}\}$ of $\mathfrak{g}$ and let $W$ be the Weyl group. Recall the Harish-Chandra isomorphism $Z(U(\mathfrak{g}))\cong S(\mathfrak{h})^W$, where  $Z(U(\mathfrak{g}))$ is the center of the universal enveloping algebra $U(\mathfrak{g})$ of $\mathfrak{g}$, and $S(\mathfrak{h})$ is the symmetric algebra of $\mathfrak{h}$ equipped with the dot action of $W$:  $w\cdot\mu=w(\mu+\rho)-\rho,\mu\in\mathfrak{h}^*$. Here $\rho$ is the half sum of positive roots as usual. Let $\mathfrak{g}_0:=\mathfrak{sl}_2(C)\subset \mathfrak{g}$ and $\kh_0=\kh\cap \mathfrak{g}_0$. Then similarly $Z(U(\mathfrak{g_0}))\cong S(\mathfrak{h_0})^W$. We also denote by $\mathfrak{z}:=\{\begin{pmatrix} a & 0 \\ 0 & a \end{pmatrix}\}\subset \mathfrak{g}$ the centre of $\mathfrak{g}$. 

It is clear that $\mathfrak{b}^0/\mathfrak{n}^0=\cO_{\mathrm{\Fl}}\otimes_{C}\mathfrak{h}$ by identifying global sections of $\mathfrak{b}^0/\mathfrak{n}^0$ with $\mathfrak{h}$. So we have a a natural embedding $\mathfrak{h}\to\mathfrak{b}^0/\mathfrak{n}^0$. 

\begin{cor} \label{khact}
There is a natural (horizontal) action $\theta_{\kh}$ of $\mathfrak{h}$ on $\cO^{\la}_{K^p}$ commuting with $\mathfrak{g}$. In fact, $-\theta_{\kh}$ induces an action of $S(\mathfrak{h_0})$ extending the one of $Z(U(\mathfrak{g_0}))$ and $\theta_\kh|_{\mathfrak{z}}$ agrees with the action of $\mathfrak{z}\subset\mathfrak{g}$.
\end{cor}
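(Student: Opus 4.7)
The plan is to build $\theta_\kh$ from Theorem~\ref{n0triv} by exploiting the canonical $\GL_2$-equivariant trivialization $\mathfrak{b}^0/\mathfrak{n}^0\cong\cO_{\Fl}\otimes_C\kh$. Every Borel $\mathfrak{b}_x$ is conjugate to $\mathfrak{b}$, and the adjoint action of $B$ on its Levi quotient $\mathfrak{b}/\mathfrak{n}\cong\kh$ is trivial: the unipotent radical $N$ acts trivially on $\mathfrak{b}/\mathfrak{n}$, and the torus $T$ acts trivially on the abelian $\kh$. Hence $\mathfrak{b}^0/\mathfrak{n}^0$ is the trivial $\GL_2$-equivariant vector bundle with fibre the universal Cartan, and the embedding $\kh\hookrightarrow\cO_{\Fl}\otimes_C\kh=\mathfrak{b}^0/\mathfrak{n}^0$ as constant sections is $\GL_2(\Q_p)$-invariant. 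Combining Theorem~\ref{n0triv} (which forces the $\mathfrak{b}^0$-action on $\cO_{K^p}^{\la}$ to factor through $\mathfrak{b}^0/\mathfrak{n}^0$) with this embedding defines $\theta_\kh:\kh\to\End_C(\cO_{K^p}^{\la})$; since $\kh$ is abelian, $\theta_\kh$ extends uniquely to an algebra map $S(\kh)\to\End_C(\cO_{K^p}^{\la})$.

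Commutation with the $\mathfrak{g}$-action will follow from $\GL_2(\Q_p)$-equivariance: the action pairing $(\mathfrak{b}^0/\mathfrak{n}^0)\otimes_{\cO_{\Fl}}\cO_{K^p}^{\la}\to\cO_{K^p}^{\la}$ is $\GL_2(\Q_p)$-equivariant (inherited from that of $\mathfrak{g}^0$), and each $h\in\kh$ is $\GL_2(\Q_p)$-fixed, so $\theta_\kh(h)$ commutes with $\GL_2(\Q_p)$ on $\cO_{K^p}^{\la}$. By differentiation on locally analytic vectors, $\theta_\kh(h)$ then commutes with $\mathfrak{g}$. The claim about $\mathfrak{z}$ is immediate: every $z\in\mathfrak{z}$ lies in each Borel, so the constant section $z$ of $\mathfrak{g}^0$ already lies in $\mathfrak{b}^0$, and its image in $\mathfrak{b}^0/\mathfrak{n}^0=\cO_{\Fl}\otimes\kh$ is precisely the constant section corresponding to $z$; the natural $\mathfrak{g}$-action of $z$ on $\cO_{K^p}^{\la}$ therefore coincides tautologically with $\theta_\kh(z)$, with no sign because $\mathfrak{z}$ produces no vector field on $\Fl$.

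For the identification of the natural $Z(U(\mathfrak{g}_0))$-action with $-\theta_\kh$ composed with the Harish-Chandra isomorphism, I would appeal to the sheaf-theoretic Harish-Chandra picture on the flag variety. By PBW applied pointwise (using $\mathfrak{g}=\mathfrak{b}_x\oplus\mathfrak{n}_x^{-}$ where $\mathfrak{n}_x^{-}$ denotes the opposite nilpotent subalgebra at $x$), the quotient $U(\mathfrak{g}^0)/U(\mathfrak{g}^0)\mathfrak{n}^0$ is identified with $U(\mathfrak{b}^0)/U(\mathfrak{b}^0)\mathfrak{n}^0=S(\mathfrak{b}^0/\mathfrak{n}^0)=\cO_{\Fl}\otimes_C S(\kh)$, and Theorem~\ref{n0triv} says $\cO_{K^p}^{\la}$ is a module over this quotient. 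For $z\in Z(U(\mathfrak{g}_0))$, $\GL_2$-invariance of the image of $z$ in $\cO_{\Fl}\otimes S(\kh)$ exhibits it as a constant element of $S(\kh_0)$, and a direct comparison with the classical Harish-Chandra projection $\gamma(z)$ pins this element down up to the standard sign and $\rho$-shift conventions. The sign is exactly the ``$-$'' in the statement; verifying it (for instance by computing the Casimir of $\mathfrak{sl}_2$ against a highest-weight test) is the one piece of bookkeeping I expect to require care.
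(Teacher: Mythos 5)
Your approach is the same as the paper's: both invoke Beilinson--Bernstein's $\tilde{\mathscr D}$-module picture on $\Fl$, and the paper's own proof is essentially a one-sentence pointer to \cite{BB83}. Your fleshing-out of the first part is correct: the adjoint action of the stabilizer $B_x$ on $\mathfrak{b}_x/\mathfrak{n}_x$ is trivial, so $\mathfrak{b}^0/\mathfrak{n}^0$ is $\GL_2$-equivariantly trivial with $\GL_2$-invariant constant sections, and $\GL_2(\Q_p)$-equivariance of the pairing gives commutation of $\theta_\kh$ with $\mathfrak{g}$. The $\mathfrak z$ observation is also right: the constant section of $\mathfrak{g}^0$ attached to $z\in\mathfrak z$ already lies in $\mathfrak{b}^0$ and maps to the constant section of $\kh$, so $\theta_\kh(z)$ is the $\mathfrak g$-action of $z$ tautologically.

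The gap is in the Harish--Chandra step. The identification $U(\mathfrak{g}^0)/U(\mathfrak{g}^0)\mathfrak{n}^0 \cong U(\mathfrak{b}^0)/U(\mathfrak{b}^0)\mathfrak{n}^0 \cong \cO_{\Fl}\otimes_C S(\kh)$ is false. Pointwise, PBW for $\mathfrak g = \mathfrak n_x^-\oplus\mathfrak b_x$ gives $U(\mathfrak g)/U(\mathfrak g)\mathfrak n_x \cong U(\mathfrak n_x^-)\otimes S(\kh)$, not $S(\kh)$: the left ideal $U(\mathfrak g)\mathfrak n_x$ does not absorb $\mathfrak n_x^-$, and already in degree one $\mathfrak g/\mathfrak n_x \cong \mathfrak n_x^-\oplus\kh$ is two-dimensional over $\kh$. (Concretely, for $f\in\mathfrak n_x^-$ one cannot write $f\in U(\mathfrak b_x)+U(\mathfrak g)\mathfrak n_x$.) The extra factor $U(\mathfrak n_x^-)$ is precisely where differential operators on $\Fl$ live: the sheaf $U(\mathfrak{g}^0)/U(\mathfrak{g}^0)\mathfrak{n}^0$ is (up to twist) Beilinson--Bernstein's $\tilde{\mathscr D}$, which strictly contains $\cO_{\Fl}\otimes S(\kh)$. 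The content of Harish--Chandra's theorem in this setting is that the image of $Z(U(\mathfrak{g}_0))$ in $\Gamma(\Fl,\tilde{\mathscr D})$ happens to land in the subalgebra $S(\kh_0)$ (and agrees, up to the $\rho$-shift and the sign coming from taking $\mathfrak n_x$-coinvariants rather than invariants, with the classical HC projection). Your $\GL_2$-invariance observation is on the right track but by itself only shows the image is a $\GL_2$-invariant global section of $\tilde{\mathscr D}$, which is still a larger space than $S(\kh_0)$ without further input; one really needs the decomposition argument from \S2--3 of \cite{BB83}. You also flag, correctly, that the sign needs checking; the paper identifies its source as the coinvariants-vs-invariants convention.
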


\begin{proof}
The first part follows from the fact that $H^0(\Fl,\mathfrak{b}^0/\mathfrak{n}^0)$ is $\GL_2$-invariant. The second part essentially follows from Harish-Chandra's theorem. Note that there is a  sign here since this construction studies the $\mathfrak{n}_x$-coinvariants rather than invariants. See \S2, \S3 of \cite{BB83}.
\end{proof}

\begin{rem}
See Theorem \ref{Sensf}, \ref{Senkh} below for an interpretation of this action in terms of the classical Sen operator arising from the action of $G_{\Q_p}$.
\end{rem}

\begin{rem} \label{relBB}
In Beilinson-Bernstein's theory of localization (see for example \S C of \cite{Bei84}), $\cO^{\la}_{K^p}$ is a $\tilde{\mathscr{D}}$-module in this analytic setting. Similarly, fix a weight $\chi$ of $\mathfrak{h}$. We denote by $\cO^{\la,\chi}_{K^p}$ the $\chi$-component of $\cO^{\la}_{K^p}$, i.e. the subsheaf of sections of weight $\chi$. Then $\cO^{\la,\chi}_{K^p}$ is a $\tilde{\mathscr{D}}_{\chi}$-module. Recall that $\tilde{\mathscr{D}}_{\chi}$ is a ring of twisted differential operators on the flag variety. Everything here is also $\GL_2(\Q_p)$-equivariant. Hence we obtain a $(\tilde{\mathscr{D}}_{\chi},\GL_2(\Q_p))$-module on $\Fl$, which is very similar to the picture in the complex analytic setting.

In a series of works (see for example \cite{Ard17}), Ardakov proved a $p$-adic analogue of Beilinson-Bernstein's localization theorem. Roughly speaking, he showed that one can realize the dual of an admissible locally analytic representation of $\GL_2(\Q_p)$ as global sections of certain $\GL_2(\Q_p)$-equivariant $\mathscr{D}$-modules on $\Fl$. It is  very natural to ask whether there is  some form of duality  relating his construction with $\cO^{\la,\chi}_{K^p}$ here.
\end{rem}

\subsection{Local structure of \texorpdfstring{$\cO^{\la}_{K^p}$}{Lg}} \label{LS}
So far $\cO^{\la}_{K^p}$ is defined in an abstract way. We give an explicit description of its sections in this subsection. This is largely inspired by the calculations of Berger-Colmez in \S4, \S5 of \cite{BC16}. We will follow their arguments and combine with our differential equation $\theta=0$. To do this, one needs enough sections of $\cO^{\la}_{K^p}$. 

\begin{para} \label{ttn}
Let $\Gamma(p^n)=1+p^n M_2(\Z_p)$. Recall that $\mathcal{Y}_{K^p\Gamma(p^n)}$ parametrizes elliptic curves with certain level structures away from $p$ and a trivialization of its $p^n$-torsion points. In particular, the Weil pairing induces a trivialization of $\mu_{p^n}(C)$. This defines a map $\mathcal{Y}_{K^p\Gamma(p^n)}\to \mathrm{Isom}(\Z/p^n,\mu_{p^n}(C))$, which extends to its compactification $\mathcal{X}_{K^p\Gamma(p^n)}\to \mathrm{Isom}(\Z/p^n,\mu_{p^n}(C))$. Classically, this map can be obtained by looking at the connected components. Taking inverse limits, we obtain $\mathcal{X}_{K^p}\to \mathrm{Isom}(\Z_p,\Z_p(1))$, where $\mathrm{Isom}(\Z_p,\Z_p(1))$ is identified as $\Spa(\sC(\mathrm{Isom}(\Z_p,\Z_p(1)),C),\sC(\mathrm{Isom}(\Z_p,\Z_p(1)),\cO_C))$. Now if we choose a basis of $\Z_p(1)$, i.e. an isomorphism $\mathrm{Isom}(\Z_p,\Z_p(1))\cong\Z_p^\times$, then $\mathcal{X}_{K^p}\to \mathrm{Isom}(\Z_p,\Z_p(1))\cong\Z_p^\times$ defines a function
\[t\in H^0(\mathcal{X}_{K^p},\cO_{\mathcal{X}_{K^p}}),\]
which is well-defined up to $\Z_p^\times$. Fix such a choice from now on. See also (4.2) of \cite{Eme06} for another description. We remark that $G_{\Q_p}$ acts on $t$ via the cyclotomic character because it is essentially an element of $\Z_p(1)$. The group $\GL_2(\Q_p)$ acts on $t$ via $\varepsilon_p\circ\det$, where $\varepsilon_p:\Q_p^\times\to\Z_p^\times$ sends $x$ to $x|x|$, hence the action of $\mathfrak{g}=\mathfrak{gl}_2(\Q_p)$ on $t$ is the trace map:
\[\begin{pmatrix} a & b \\ c & d \end{pmatrix}\cdot t=(a+d)t,\, \begin{pmatrix} a & b \\ c & d \end{pmatrix}\in \mathfrak{g}.\]

If we take the direct limit over all tame levels $K^p$: $\tilde{H}^0(C):=\varinjlim_{K^p}H^0(\mathcal{X}_{K^p},\cO_{\mathcal{X}_{K^p}})$, then $\GL_2(\A_f)$ acts naturally on $\tilde{H}^0(C)$ and acts on $t$ via $\varepsilon\circ \det$. See Notation in the beginning of this paper for the definition of $\varepsilon$. Using this observation, it's easy to figure out how Hecke operators away from $p$ change when multiplying by a power of $t$.

For $n\geq 1$, we fix $t_n\in H^0(\mathcal{X}_{K^p\Gamma(p^n)},\cO_{\mathcal{X}_{K^p\Gamma(p^n)}})$ so that $\|t-t_n\|\leq p^{-n}$.
\end{para}

\begin{para}[A consequence of $\theta=0$] \label{acot0}
We introduced a basis $\mathfrak{B}$ of affinoid open subsets of $\Fl$ in Theorem \ref{piHT}. Fix $U\in\mathfrak{B}$ from now on. Then by our construction, $V_\infty:=\pi_{\HT}^{-1}(U)$ is affinoid perfectoid and is the preimage of an affinoid subset $V_{K_p}\subset\mathcal{X}_{K^pK_p}$. Recall that we have fake-Hasse invariants $e_1,e_2\in H^0(\mathcal{X}_{K^p}, \omega_{K^p})$. Throughout this subsection, we assume 
\begin{itemize}
\item $e_1$ generates $H^0(V_\infty, \omega_{K^p})$.
\end{itemize}
In our later computations, we will always assume one of $e_1,e_2$ generates $H^0(V_\infty, \omega_{K^p})$. When  $e_2$ generates $H^0(V_\infty, \omega_{K^p})$, one can use the action of $\begin{pmatrix} 0 & 1 \\ 1 & 0\end{pmatrix}$ to reduce to the above case. When $e_1$ is a basis, we can take $e=e_1$ in Theorem \ref{exptheta}. Hence up to a unit, $\theta$ on $V_\infty$ is 
\[\begin{pmatrix}
x & x^2\\
-1 & -x 
\end{pmatrix}\in\mathfrak{gl}_2(\cO_{K^p}(U)),\]
where $x:=\frac{e_2}{e_1}\in \cO_{\Fl}(U)$, a standard coordinate on $\Fl=\mathbb{P}^1$. In particular, we have the following important lemma:
\end{para}

\begin{lem} \label{annbg}
Suppose $f\in \cO^{\la}_{K^p}(U)$ is annihilated by $\mathfrak{b}=\{\begin{pmatrix} * & * \\ 0 & * \end{pmatrix}\}\subset \mathfrak{gl}_2(\Q_p)$. Then $f$ is annihilated by $\mathfrak{gl}_2(\Q_p)$, i.e. $f$ is fixed by an open subgroup of $\GL_2(\Q_p)$.
\end{lem}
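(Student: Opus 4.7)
The plan is to extract the conclusion directly from the explicit form of the differential equation established in \ref{acot0}. Recall that, up to a unit in $\cO_{K^p}(U)$, the operator $\theta$ that annihilates $\cO^{\la}_{K^p}$ on $U$ equals
\[\theta = x X_{11} + x^2 X_{12} - X_{21} - x X_{22},\]
where $X_{ij}$ denotes the matrix unit with a $1$ in position $(i,j)$. By hypothesis $f$ is killed by $\mathfrak{b} = \mathrm{span}_{\Q_p}(X_{11}, X_{12}, X_{22})$, so applying $\theta$ to $f$ collapses immediately:
\[0 = \theta \cdot f = -\,X_{21} \cdot f,\]
and hence $X_{21} \cdot f = 0$. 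Combined with the assumption on $\mathfrak{b}$, every element of the basis $\{X_{11}, X_{12}, X_{21}, X_{22}\}$ annihilates $f$, so by $\Q_p$-linearity all of $\mathfrak{gl}_2(\Q_p)$ annihilates $f$, and iterating, the whole augmentation ideal of $U(\mathfrak{gl}_2(\Q_p))$ acts trivially on $f$.

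It remains to translate this into fixation by an open subgroup. By definition of $\cO^{\la}_{K^p}(U)$, there is an open compact subgroup $K_p' \subset \GL_2(\Q_p)$ stabilizing $U$ such that $f$ is $K_p'$-analytic, meaning the orbit map $g \mapsto g \cdot f$ on $K_p'$ is given by a convergent power series (after choice of exponential coordinates) whose coefficients are iterated Lie algebra actions applied to $f$. Since those iterated actions all vanish, the orbit map is the constant function $f$, so $f$ is fixed by $K_p'$. This is precisely what is required.

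The argument is essentially a one-line consequence of the explicit shape of $\theta$, so there is no serious obstacle here; the only point worth flagging is that the ``annihilated by $\mathfrak{gl}_2(\Q_p)$'' conclusion genuinely exponentiates to group-theoretic invariance, which uses only the $K_p'$-analyticity built into the definition of $\cO^{\la}_{K^p}$.
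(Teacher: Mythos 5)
Your proof is correct and follows essentially the same route as the paper's: the paper also observes that $\theta\cdot f = 0$ together with $\mathfrak{b}\cdot f = 0$ forces $\begin{pmatrix}0 & 0 \\ -1 & 0\end{pmatrix}\cdot f = 0$, hence $\mathfrak{gl}_2(\Q_p)\cdot f = 0$. The only addition on your side is the last paragraph, which spells out the exponentiation from $\mathfrak{gl}_2(\Q_p)$-annihilation to fixation by an open subgroup via the Taylor expansion of the orbit map; the paper leaves this implicit in the ``i.e.'' of the statement, and it is a worthwhile point to record.
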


\begin{proof}
By Theorem \ref{pCR}, $\theta(f)=0$. But $\mathfrak{b}\cdot f=0$. Hence $\theta(f)=\begin{pmatrix} 0 & 0 \\ -1 & 0\end{pmatrix}\cdot f=0$. Thus $\mathfrak{gl}_2(\Q_p)\cdot f=0$.
\end{proof}

\begin{rem}
Here is a sketch of another proof without using $\theta$. By using the action of $\begin{pmatrix} p & 0 \\ 0 & 1\end{pmatrix}$, one may assume $V_\infty$ is a rational open subset of some $\epsilon$-neighborhood of the anticanonical locus $\mathcal{X}^*_{\Gamma (p^\infty)}(\epsilon)_a$ for some $\epsilon\in(0,1/2)$. See Chapter III of \cite{Sch15} for notation here.  Since $f$ is annihilated by $\mathfrak{b}$, it is fixed by an open subgroup $\Gamma'_0$ of the upper-triangular Borel in $\GL_2(\Q_p)$. We may assume $\Gamma'_0\subset K_p$ and denote by $V_{\Gamma'_0}$ the preimage of $V_{K_p}$ in $\mathcal{X}^*_{\Gamma'_0}(\epsilon)_a$. Then $f$ comes from a section in $\cO_{\mathcal{X}^*_{\Gamma'_0}(\epsilon)_a}(V_{\Gamma'_0})$. The key point here is that by Corollary III.2.23 of \cite{Sch15}, there exist Tate's normalized traces from $\cO_{\mathcal{X}^*_{\Gamma'_0}(\epsilon)_a}$ to sections on finite levels. Hence a standard argument shows that $f$ comes from some finite level by analyticity, cf. the proof of Lemma \ref{Ainftyla} or  \cite[Th\'eor\`eme 3.2]{BC16}. Thus $f$ is fixed by some open subgroup of $\GL_2(\Q_p)$.
\end{rem}

\begin{para} \label{setupexplicit}
Now we can give an explicit description of $\cO^{\la}_{K^p}(U)$, following \S4.2, 5.2 of \cite{BC16}. The basic idea is to find ``power series expansions along $e_1,e_2,t$''.  Note that the actions of $K_p$ on these elements are analytic. Our setup is as follows. Fix a compact open subgroup $G_0$ of $K_p$ equipped with an integer valued, saturated $p$-valuation. For example, one can take $1+p^m M_2(\Z_p)$ for sufficiently large $m$.  Then we have $G_n=G_0^{p^n}$ as in \ref{2ndcoord}. Let $V_{G_n}\subset\mathcal{X}_{K^pG_n}$ be the preimage of $V_{K_p}$. By our choice of $U$, cf. Theorem \ref{piHT}, the direct limit $\varinjlim_n H^0(V_{G_n},\cO_{\mathcal{X}_{K^pG_n}})\to H^0(V_\infty,\cO_{\mathcal{X}_{K^p}})$ has dense image. Hence for any $n\geq0$, we can find (cf. \ref{Gnan} and Lemma \ref{Gnnorm})
\begin{itemize}
\item an integer $r(n)>r(n-1)>0$;
\item $x_n\in H^0(V_{G_{r(n)}},\cO_{\mathcal{X}_{K^pG_{r(n)}}})$ such that $\|x-x_n\|_{G_{r(n)}}=\|x-x_n\|\leq p^{-n}$ in $H^0(V_\infty,\cO_{\mathcal{X}_{K^p}})$.
\end{itemize}
As in \ref{ttn}, we can find (after replacing $r(n)$ by a larger number if necessary)
\begin{itemize}
\item $t_n\in \cO_{\mathcal{X}_{K^pG_{r(n)}}}(\mathcal{X}_{K^pG_{r(n)}})$ such that $\|t-t_n\|_{G_{r(n)}}=\|t-t_n\|\leq p^{-n}$ in $H^0(\mathcal{X}_{K^p},\cO_{\mathcal{X}_{K^p}})$.
\end{itemize}
We can define a norm on $H^0(V_\infty,\omega_{K^p})$ by identifying it with $H^0(V_\infty,\cO_{\mathcal{X}_{K^p}})$ using $e_1$. It is easy to see $\varinjlim_n H^0(V_{G_n},\omega)\to H^0(V_\infty,\omega_{K^p})$ also has dense image. Here by abuse of notation, $\omega$ denotes  ``the $\omega$ on $\mathcal{X}_{K^pK_p}$''.  Hence we can also find (after enlarging $r(n)$ if necessary)
\begin{itemize}
\item $e_{1,n}\in H^0(V_{G_{r(n)}},\omega)$ such that $\|e_1-e_{1,n}\|_{G_{r(n)}}=\|e_1-e_{1,n}\|\leq p^{-n}$ in $H^0(V_\infty,\omega_{K^p})$.
\end{itemize}
Note that $e_{1,n}$ is also a basis of $H^0(V_\infty,\omega_{K^p})$, if $n>0$.

As before, we denote by $\mathfrak{b}=\{\begin{pmatrix} * & * \\ 0 & * \end{pmatrix}\}\subset \mathfrak{gl}_2(\Q_p)$ the Lie algebra of the upper-triangular Borel subgroup and by $\mathfrak{n}$ its nilpotent subalgebra. Let
\[u^+=\begin{pmatrix} 0 & 1 \\ 0 & 0 \end{pmatrix},\,h=\begin{pmatrix} 1 & 0 \\ 0 & -1 \end{pmatrix},\,z=\begin{pmatrix} 1 & 0 \\ 0 & 1 \end{pmatrix}\]
be a basis of $\mathfrak{b}$.  Then $u^+\cdot x= u^+\cdot \frac{e_2}{e_1}=1$ and $h\cdot e_1=e_1$ and $z\cdot t=2t$.
\end{para}

\begin{para}[Expansion along $\mathfrak{n}$]
Fix $f\in \cO^{\la}_{K^p}(U)$. We are going to write $f$ as a power series. Suppose $f\in \cO_{K^p}(U)^{G_m-\an}$. By Lemma \ref{LieBound}, there is a constant $C_1$ such that $\|u^+ \cdot s\|_{G_m}\leq p^{C_1}\|s\|_{G_m}$, for any $s\in \cO_{K^p}(U)^{G_m-\an}$. Choose $n''\geq\max(C_1+1/(p-1)+1,m)$. Then 
\[\|(x-x_{n''})^l\frac{(u^+)^l\cdot s}{l!}\|_{G_{r(n'')}}\leq p^{-l}\|s\|_{G_{r(m)}}\]
for any $l\geq 0$ and $s\in \cO_{K^p}(U)^{G_m-\an}$ by a simple calculation. Hence 
\[D_x(s):=\sum_{l= 0}^{+\infty}(-1)^l (x-x_{n''})^l\frac{(u^+)^{l}\cdot s}{l!}\]
defines a bounded map $D_x:\cO_{K^p}(U)^{G_{m}-\an}\to\cO_{K^p}(U)^{G_{r(n'')}-\an}$ with norm $\leq 1$. Moreover, since $u^+\cdot (x-x_{n''})=1$, one checks easily $u^+\circ D_x=0$. Let
\[a_i:=D_x\left(\frac{(u^+)^i\cdot f}{i!}\right)=\sum_{l= 0}^{+\infty}(-1)^l (x-x_{n''})^l\frac{(u^+)^{l+i}\cdot f}{l!i!}\in \cO_{K^p}(U)^{G_{r(n'')}-\an}.\]
Its norm $\|a_i\|_{G_{r(n'')}}\leq \|\frac{(u^+)^i\cdot f}{i!}\|_{G_{m}}\leq p^{(n''-1)i}\|f\|_{G_{m}}$. Thus $\sum_{i\geq0}a_i(x-x_{n''})^i$ is convergent in $\cO_{K^p}(U)^{G_{r(n'')}-\an}$ and a direct computation shows (using $\sum_{i=0}^l (-1)^i \frac{1}{i!(l-i)!}=0$ if $l\geq 1$) that in fact
\[f=\sum_{i= 0}^{+\infty}a_i(x-x_{n''})^i\]
in $\cO_{K^p}(U)^{G_{r(n'')}-\an}$. Note that $u^+\cdot a_i=0$ for any $i$ by our construction.
\end{para}

\begin{para}[Expansion along $h$] \label{Exph}
Next we write $a_i$ as a power series. Note that $\|e_1-e_{1,l}\|_{G_{r(l)}}=\|e_1-e_{1,l}\|\leq p^{-l}$. Hence $\|e_{1,l}\|_{G_{r(l)}}=\|e_1\|_{G_{r(l)}}=1$. For $j\geq 1$, the series
\[\log(\frac{e_1}{e_{1,l}})=\log\left(1+\frac{e_1-e_{1,l}}{e_{1,l}}\right):=-\sum_{j=1}^{+\infty} (-1)^{j}\frac{1}{j}(\frac{e_1}{e_{1,l}}-1)^j\]
converges in $\cO_{K^p}(U)^{G_{r(l)}-\an}$ and has norm $\|\log(\frac{e_1}{e_{1,l}})\|_{G_{r(l)}}\leq p^{-l}$. Moreover, since $u^+\cdot e_1=0,h\cdot e_1=e_1$, it follows that $u^+\cdot\log(\frac{e_1}{e_{1,l}})=0, h\cdot \log(\frac{e_1}{e_{1,l}})=1$. Hence we can repeat the previous process with $u^+,x-x_{n''}$ replaced by $h,
\log(\frac{e_1}{e_{1,n'}})$: by Lemma \ref{LieBound}, there is a constant $C_2$ such that $\|h \cdot s\|_{G_{r(n'')}}\leq p^{C_2}\|s\|_{G_{r(n'')}},s\in \cO_{K^p}(U)^{G_{r(n'')}-\an}$. Choose $n'\geq\max\left(C_2+1/(p-1)+1,r(n'')\right)$. Then we have
\[a_i=\sum_{j=0}^{+\infty}b_{i,j}\left(\log(\frac{e_1}{e_{1,n'}})\right)^j\]
in $\cO_{K^p}(U)^{G_{r(n')}-\an}$, where 
\[b_{i,j}=\sum_{l= 0}^{+\infty}(-1)^l \left(\log(\frac{e_1}{e_{1,n'}})\right)^l\frac{h^{l+j}\cdot a_i}{l!j!}\]
is convergent in $\cO_{K^p}(U)^{G_{r(n')}-\an}$ and has norm $\|b_{i,j}\|_{G_{r(n')}}\leq p^{(n'-1)j+(n''-1)i}\|f\|_{G_{m}}$. Also since $[h,u^+]=2u^+$ and $u^+\cdot a_i=0$, we have
\[u^+\cdot b_{i,j}=h\cdot b_{i,j}=0.\]
\end{para}

\begin{para}[Expansion along $z$]
Finally we can expand $b_{i,j}$ using $z$. This is almost the same as in \ref{Exph}.  Note that $t$ is invertible with  norm $1$, hence $t_l$ is also invertible. Thus $\log(\frac{t}{t_l})$ converges in $\cO_{K^p}(U)^{G_{r(l)}-\an}$ with norm $\leq p^{-l}$. It satisfies $u^+\cdot \log(\frac{t}{t_l})=h\cdot \log(\frac{t}{t_l})=0$ and $z\cdot \log(\frac{t}{t_l})=2$. Then for any sufficiently large integer $n\geq r(n')$, we have
\[b_{i,j}=\sum_{k=0}^{+\infty}c_{i,j,k}\left(\log(\frac{t}{t_{n}})\right)^k\]
converges in $\cO_{K^p}(U)^{G_{r(n)}-\an}$, where 
\[c_{i,j,k}=\frac{1}{2^k}\sum_{l= 0}^{+\infty}(-1)^l (\frac{1}{2}\log(\frac{t}{t_{n}}))^l\frac{z^{l+k}\cdot b_{i,j}}{l!k!}\] 
and has norm 
\[\|c_{i,j,k}\|_{G_{r(n)}}\leq p^{(n-1)k+(n'-1)j+(n''-1)i}\|f\|_{G_{m}}.\]
Since $z$ commutes with $u^+,h$, it follows that 
\[u^+\cdot c_{i,j,k}=h\cdot c_{i,j,k}=z\cdot c_{i,j,k}=0,\]
i.e. $c_{i,j,k}$ is annihilated by $\mathfrak{b}$. By Lemma \ref{annbg}, this implies $c_{i,j,k}$ is fixed $G_{r(n)}$. Therefore
\[c_{i,j,k}\in H^0(V_{G_{r(n)}},\cO_{\mathcal{X}_{K^pG_{r(n)}}}).\] 
In particular $\|c_{i,j,k}\|_{G_{r(n)}}=\|c_{i,j,k}\|$.
\end{para}

\begin{thm} \label{cnijk}
Keep the notation in \ref{setupexplicit}. For any $f\in \cO^{\la}_{K^p}(U)$ which is $G_m$-analytic, there exist an integer $N=N(f)$, bounded above by some constant only depending on $m$, and unique elements $c^{(n)}_{i,j,k}(f)\in H^0(V_{G_{r(n)}},\cO_{\mathcal{X}_{K^pG_{r(n)}}})$ for $i,j,k\geq 0,n\geq N$ with norm $\|c^{(n)}_{i,j,k}(f)\|\leq p^{(n-1)(i+j+k)}\|f\|_{G_m}$ for which
\[f=\sum_{i,j,k\geq0}c^{(n)}_{i,j,k}(f)(x-x_n)^i\left(\log(\frac{e_1}{e_{1,n}})\right)^j\left(\log(\frac{t}{t_{n}})\right)^k\]
holds in $\cO_{K^p}(U)^{G_{r(n)}-\an}$. Conversely, given $n>0$ and $\mathrm{c}_{i,j,k}\in H^0(V_{G_{r(n)}},\cO_{\mathcal{X}_{K^pG_{r(n)}}}),i,j,k\geq 0$ such that $\|\mathrm{c}_{i,j,k}\|\leq p^{(n-1)(i+j+k)}C'$ holds for a uniform constant $C'$, then 
\[f':=\sum_{i,j,k\geq0}\mathrm{c}_{i,j,k}(x-x_n)^i\left(\log(\frac{e_1}{e_{1,n}})\right)^j\left(\log(\frac{t}{t_{n}})\right)^k\]
defines an element in $\cO_{K^p}(U)^{G_{r(n)}-\an}$. We can take $N(f')=n$, hence $c^{(n)}_{i,j,k}(f')=\mathrm{c}_{i,j,k}$.
\end{thm}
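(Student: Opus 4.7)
The existence part of Theorem \ref{cnijk} is essentially accomplished by the three successive expansions carried out in the discussion immediately preceding the statement. Starting from $f \in \cO^{\la}_{K^p}(U)^{G_m-\an}$, the iterated application of the projector $D_x$ (using $u^+ \cdot (x - x_n) = 1$ together with $u^+ \cdot Y = u^+ \cdot Z = 0$ for $Y = \log(e_1/e_{1,n})$ and $Z = \log(t/t_n)$), then of the analogous $D_h$ applied to the $\mathfrak{n}$-invariants $a_i$, and finally of $D_z$ applied to the $(\mathfrak{n} \oplus Ch)$-invariants $b_{i,j}$, produces coefficients $c^{(n)}_{i,j,k}(f)$ annihilated by all of $\mathfrak{b}$ and hence, by Lemma \ref{annbg}, by $\mathfrak{g}$. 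They therefore lie in $H^0(V_{G_{r(n)}}, \cO_{\mathcal{X}_{K^p G_{r(n)}}})$, and their spectral norm coincides with $\|\cdot\|_{G_{r(n)}}$. The three-step construction actually gives the sharper bound $\|c^{(n)}_{i,j,k}(f)\| \leq p^{(n-1)k + (n'-1)j + (n''-1)i} \|f\|_{G_m}$, which implies the one stated since $n'' \leq n' \leq n$. The thresholds $n', n''$, and therefore $N(f) = r(n')$, depend on $m$ only through the operator-norm constants $C_1, C_2$ coming from Lemma \ref{LieBound} applied to $u^+$ on $G_m$-analytic vectors and to $h$ on $G_{r(n'')}$-analytic vectors; hence $N(f)$ admits an upper bound depending only on $m$.

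For uniqueness, with $X = x - x_n$, $Y = \log(e_1/e_{1,n})$, $Z = \log(t/t_n)$, the $\mathfrak{b}$-action annihilates every $G_{r(n)}$-fixed section and satisfies $u^+ \cdot X = 1$, $u^+ \cdot Y = u^+ \cdot Z = 0$, $h \cdot Y = 1$, $h \cdot Z = 0$, $z \cdot Z = 2$. A termwise computation shows that the composite operator
\[
f \longmapsto D_z \!\left( \tfrac{z^k}{2^k k!} \cdot D_h \!\left( \tfrac{h^j}{j!} \cdot D_x \!\left( \tfrac{(u^+)^i}{i!} \cdot f \right) \right) \right),
\]
applied to a formal series $\sum \mathrm{c}_{i',j',k'} X^{i'} Y^{j'} Z^{k'}$ with $\mathrm{c}_{i',j',k'}$ in $G_{r(n)}$-smooth sections, returns exactly $\mathrm{c}_{i,j,k}$. (The identity $h \cdot X = -2(X + x_n)$, which would complicate matters, is harmless because $D_x$ is applied first and annihilates the $X$-dependence before $h$ enters.) Hence any two expansions of $f$ with the given coefficient bounds must have identical coefficients, which in particular coincide with the $c^{(n)}_{i,j,k}(f)$ constructed above.

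For the converse, one must check (i) that a series with the stated coefficient bound converges to an element $f' \in \cO_{K^p}(U)^{G_{r(n)}-\an}$, and (ii) that the extraction formulas recover the input $\mathrm{c}_{i,j,k}$. The key input for (i) is that $\cO_{K^p}(U)^{G_{r(n)}-\an}$, equipped with $\|\cdot\|_{G_{r(n)}}$, is a Banach algebra; this follows from the isometric embedding $\cO_{K^p}(U)^{G_{r(n)}-\an} \hookrightarrow \cO_{K^p}(U) \,\widehat{\otimes}_{\Q_p}\, \sC^{\an}(G_{r(n)}, \Q_p)$ of \ref{Gnan} together with the sub-multiplicativity of $\|\cdot\|_{G_{r(n)}}$ on $\sC^{\an}(G_{r(n)}, \Q_p)$. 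Since each $\mathrm{c}_{i,j,k}$ is $G_{r(n)}$-fixed and $\|X\|_{G_{r(n)}}, \|Y\|_{G_{r(n)}}, \|Z\|_{G_{r(n)}} \leq p^{-n}$ by construction, the general term is bounded by $p^{-(i+j+k)} C'$, so the series converges to $f' \in \cO_{K^p}(U)^{G_{r(n)}-\an}$. Part (ii) then follows by applying the termwise extraction computation from the previous paragraph.

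The main obstacle will be the careful bookkeeping needed to (a) pass from the sharper three-variable bound produced by the construction to the symmetric bound $p^{(n-1)(i+j+k)}\|f\|_{G_m}$ while keeping track of $N(f)$, and (b) establish the Banach algebra structure and sub-multiplicativity of $\|\cdot\|_{G_{r(n)}}$ needed for the converse direction. Neither step introduces new ideas beyond those used in \cite{BC16}, but both quantitative estimates must be tracked through the three successive expansions.
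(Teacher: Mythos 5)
Your proposal correctly locates the engine of the existence part in the three-step expansion of \ref{setupexplicit}, and the explicit extraction-operator formula you give for uniqueness is a nice elaboration of what the paper only sketches (``repeat our construction with $n''=n'=n$''). However, there is a genuine gap in the existence argument. The coefficients that the three successive expansions produce are coefficients in the mixed-index coordinates $(x-x_{n''})^i\bigl(\log(e_1/e_{1,n'})\bigr)^j\bigl(\log(t/t_n)\bigr)^k$, where $n''$, $n'$, $n$ are three \emph{different} thresholds fixed one after the other (one per step). The theorem asks for an expansion in the single-index coordinates $(x-x_n)^i\bigl(\log(e_1/e_{1,n})\bigr)^j\bigl(\log(t/t_n)\bigr)^k$. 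Your sentence asserting that the three-variable bound $p^{(n-1)k+(n'-1)j+(n''-1)i}\|f\|_{G_m}$ ``implies the one stated since $n''\leq n'\leq n$'' treats this as a purely numerical simplification, but it is not: the coefficients themselves change when you pass from one coordinate system to the other. One must substitute $x-x_{n''}=(x-x_n)+(x_n-x_{n''})$ and $\log(e_1/e_{1,n'})=\log(e_1/e_{1,n})+\bigl(\log(e_1/e_{1,n'})-\log(e_1/e_{1,n})\bigr)$, re-expand by the binomial theorem, regroup the resulting double series, and then re-verify the bound for the \emph{new} coefficients using the estimates $\|(x-x_n)-(x-x_{n''})\|\leq p^{-n''}$ and $\|\log(e_1/e_{1,n})-\log(e_1/e_{1,n'})\|\leq p^{-n'}$. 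This coordinate change is exactly what the paper's proof carries out, and it is essentially the entire content of that proof; your ``obstacle'' paragraph acknowledges that some bookkeeping is required, but frames it as an estimate-passing issue rather than the re-expansion it actually is. Once you supply that re-expansion, your treatment of uniqueness and of the converse (via sub-multiplicativity of $\|\cdot\|_{G_{r(n)}}$ and the bounds $\|x-x_n\|,\|\log(e_1/e_{1,n})\|,\|\log(t/t_n)\|\leq p^{-n}$) goes through and is consistent with what the paper intends.
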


\begin{proof}
We have written $f=\sum_{i,j,k\geq 0} c_{i,j,k} (x-x_{n''})^i\left(\log(\frac{e_1}{e_{1,n'}})\right)^j\left(\log(\frac{t}{t_{n}})\right)^k$ with $\|c_{i,j,k}\|_{G_{r(n)}}\leq p^{(n-1)k+(n'-1)j+(n''-1)i}\|f\|_{G_m}$. The rest is to change the coordinates from $x-x_{n''}$ to $x-x_{n}$ and from $\log(\frac{e_1}{e_{1,n'}})$ to $\log(\frac{e_1}{e_{1,n}})$. By our construction, both $(x-x_n)-(x-x_{n''}), \log(\frac{e_1}{e_{1,n}})-\log(\frac{e_1}{e_{1,n'}})\in H^0(V_{G_{r(n)}},\cO_{\mathcal{X}_{K^pG_{r(n)}}})$ and 
\[\|(x-x_n)-(x-x_{n''})\|\leq p^{-n''},\, \|\log(\frac{e_1}{e_{1,n}})-\log(\frac{e_1}{e_{1,n'}})\|\leq p^{-n'}.\]
Hence if we write 
\[\sum_{i,j,k\geq 0} c_{i,j,k} (x-x_{n''})^i\left(\log(\frac{e_1}{e_{1,n'}})\right)^j\left(\log(\frac{t}{t_{n}})\right)^k=\sum_{i,j,k\geq 0} c^{(n)}_{i,j,k}(f) (x-x_{n})^i\left(\log(\frac{e_1}{e_{1,n}})\right)^j\left(\log(\frac{t}{t_{n}})\right)^k\]
 for some $c^{(n)}_{i,j,k}(f)\in H^0(V_{G_{r(n)}},\cO_{\mathcal{X}_{K^pG_{r(n)}}})$, a simple computation gives
\[\|c^{(n)}_{i,j,k}(f)\|_{G_{r(n)}}\leq p^{(n-1)k+(n'-1)j+(n''-1)i}\|f\|_{G_m}\leq  p^{(n-1)(i+j+k)}\|f\|_{G_m}\] 
as $n\geq n'\geq n''$. 
For the uniqueness and the converse part, using the bounds on $c^{(n)}_{i,j,k}(f)$, we can repeat our construction (with $n''=n'=n$ this time) and recover $c^{(n)}_{i,j,k}(f)$ from $f$. We omit the details here.
\end{proof}

\begin{defn} \label{OUnijk}
Keep the notation in \ref{setupexplicit}. For any $n>0$, we define $\cO^{n}(U)\{x,e_1,t\}\subset \cO^{\la}_{K^p}(U)$ to be the subset of $f$ which can be written as 
\[f=\sum_{i,j,k\geq0}{c}_{i,j,k}(x-x_n)^i\left(\log(\frac{e_1}{e_{1,n}})\right)^j\left(\log(\frac{t}{t_{n}})\right)^k\]
with ${c}_{i,j,k}\in H^0(V_{G_{r(n)}},\cO_{\mathcal{X}_{K^pG_{r(n)}}}),i,j,k\geq 0$ such that $\|{c}_{i,j,k}\|\leq p^{(n-1)(i+j+k)}C'$ holds for some uniform constant $C'$. It is a Banach algebra over $C$ with norm 
\[\|f\|_n:=\sup_{i,j,k\geq 0}\|c^{(n)}_{i,j,k}(f)p^{(n-1)(i+j+k)}\|.\]
Let $\cO^{n}(U)^+\{x,e_1,t\}$ be its open unit ball. By Theorem \ref{cnijk}, sending $f$ to $c^{(n)}_{i,j,k}(f)p^{(n-1)(i+j+k)}$ induces an isomorphism of topological $\Z_p$-modules
\[\cO^{n}(U)^+\{x,e_1,t\}\cong\prod_{i,j,k\geq 0}H^0(V_{G_{r(n)}},\cO^+_{\mathcal{X}_{K^pG_{r(n)}}}).\]
\end{defn}

\begin{rem}
It is clear from the proof of Theorem \ref{cnijk} that $\cO^{n}(U)\{x,e_1,t\}$ is independent of the choice of $x_n,e_{1,n},t_n$.
\end{rem}

\begin{rem} 
For any $m>0$, by Theorem \ref{cnijk}, we can find $n$ such that there are continuous embeddings (of Banach spaces)
\[\cO_{K^p}(U)^{G_m-\an}\to \cO^{n}(U)\{x,e_1,t\}\to \cO_{K^p}(U)^{G_{r(n)}-\an}.\]
Hence $\varinjlim_n \cO_{K^p}(U)^{G_n-\an}\cong \varinjlim_n\cO^{n}(U)\{x,e_1,t\}$ as topological spaces. It will be clear later that  $\cO^{n}(U)\{x,e_1,t\}$ behaves a lot better in applications.
\end{rem}

\begin{para} \label{stdesc}
Theorem \ref{cnijk} can be rephrased sheaf-theoretically. Recall some construction in the paragraph above Corollary \ref{LAacyw/osm}. We denote by $\cO^+_{r(n)}$ the push-forward of $\cO^+_{V_{G_{r(n)}}}$ from $V_{G_{r(n)}}$ to $V_{K_p}$, by $\widetilde{\cO}$ the push-forward of $\cO_{V_\infty}$ from $V_\infty$ to $V_{K_p}$, by $\widetilde{\cO}^{n}\subset \widetilde{\cO}$ the subsheaf of $G_n$-analytic sections and by $\widetilde{\cO}^{\la}\subset \widetilde{\cO}$ the subsheaf of $K_p$-locally analytic sections. For each $n>0$, we can define a map 
\[\phi^+_n: \prod_{(i,j,k)\in\N^3} \cO^+_{r(n)}\to \widetilde{\cO}^{\la}\]
sending
$(a_{i,j,k})\in \prod_{i,j,k\geq 0} \cO^+_{r(n)}(W)$ to
\[\sum_{i,j,k\geq0}p^{-(n-1)(i+j+k)}a_{i,j,k}(x-x_n)^i\left(\log(\frac{e_1}{e_{1,n}})\right)^j\left(\log(\frac{t}{t_{n}})\right)^k\in \widetilde{\cO}^{\la}(W),\]
for any open set $W$ of $V_{K_p}$. Let $\phi_n:(\prod_{(i,j,k)\in\N^3} \cO^+_{r(n)})\otimes_{\Z_p}\Q_p\to \widetilde{\cO}^{\la}$ be $\phi^+_n\otimes\Q_p$. Then Theorem \ref{cnijk} implies that $\phi_n$ is an isomorphism onto its image $\im(\phi_n)$ and for any $m>0$, we have $\widetilde{\cO}^{m}\subset\im(\phi_n)\subset \widetilde{\cO}^{r(n)}$ for some $n$.  A direct consequence is
\end{para}

\begin{lem} \label{lemdirlim}
Let $\mathfrak{U}$ be a finite cover of $V_{K^p}$ by rational open subsets. We have following assertions for \v{C}ech cohomology with respect to $\mathfrak{U}$.
\begin{enumerate}
\item $\check{H}^i(\mathfrak{U},\prod_{(i,j,k)\in\N^3}\cO^+_{r(n)})\otimes_{\Z_p}\Q_p=0, i\geq 1$.
\item The direct system $\{\check{H}^i(\mathfrak{U},\widetilde\cO^{n})\}_n$ is essentially zero for any $i\geq 1$.
\end{enumerate}
\end{lem}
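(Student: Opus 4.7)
My plan is to establish part (1) first and then deduce part (2) by a sandwich argument using the inclusions $\widetilde\cO^{m}\subset\im(\phi_n)\subset\widetilde\cO^{r(n)}$ recorded at the end of \ref{stdesc}.

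For part (1), I would exploit that for each fixed $n$, the map $V_{G_{r(n)}}\to V_{K_p}$ is a finite Galois covering (\'etale away from cusps), so each rational open $W\subset V_{K_p}$ pulls back to an affinoid open of $V_{G_{r(n)}}$, and the \v{C}ech complex of $\cO^+_{r(n)}$ with respect to $\mathfrak{U}$ coincides with that of $\cO^+_{V_{G_{r(n)}}}$ with respect to the pulled-back cover. After inverting $p$, Tate's acyclicity theorem forces the positive-degree cohomology to vanish; integrally, the complex is a bounded complex of $p$-torsion free Banach lattices whose rationalization is exact, so its higher cohomology is annihilated by a fixed power $p^{c}$ depending only on $\mathfrak{U}$, and crucially not on the multi-index $(i,j,k)$. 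Since $\mathfrak{U}$ is finite and all intersections are quasi-compact, the \v{C}ech complex commutes with the product indexing:
\[
\check{C}^{\bullet}\bigl(\mathfrak{U},\textstyle\prod_{(i,j,k)\in\N^3}\cO^+_{r(n)}\bigr)\;=\;\textstyle\prod_{(i,j,k)\in\N^3}\check{C}^{\bullet}(\mathfrak{U},\cO^+_{r(n)}).
\]
Products are exact in $\mathrm{Ab}$, so taking cohomology in degree $i\geq 1$ yields a product of copies of $\check{H}^i(\mathfrak{U},\cO^+_{r(n)})$, all killed by the single power $p^{c}$. Inverting $p$ gives the desired vanishing.

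For part (2), given $n$, I would invoke \ref{stdesc} to choose $n'$ large enough that $\widetilde\cO^{n}\subset\im(\phi_{n'})\subset\widetilde\cO^{r(n')}$. Since $\phi_{n'}$ is an isomorphism onto its image as a map of sheaves, $\im(\phi_{n'})$ is identified with $\bigl(\prod_{(i,j,k)\in\N^3}\cO^+_{r(n')}\bigr)\otimes_{\Z_p}\Q_p$ as a sheaf on $V_{K_p}$. Part (1) then gives $\check{H}^i(\mathfrak{U},\im(\phi_{n'}))=0$ for $i\geq 1$, so the transition map
\[
\check{H}^i(\mathfrak{U},\widetilde\cO^{n})\;\longrightarrow\;\check{H}^i(\mathfrak{U},\widetilde\cO^{r(n')})
\]
factors through the zero group. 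Hence the direct system $\{\check{H}^i(\mathfrak{U},\widetilde\cO^{n})\}_n$ is essentially zero.

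The step I expect to require the most care is the uniform $p$-torsion bound in part (1): the argument collapses unless one knows that the same $p^{c}$ kills the higher \v{C}ech cohomology of \emph{every} copy of $\cO^+_{r(n)}$ in the infinite product. Fortunately this is automatic because all factors are the same sheaf and the bound only depends on the finite data of $\mathfrak{U}$, but one must be careful that the sheaf-theoretic product is computed section-wise on the quasi-compact rational opens of $\mathfrak{U}$ (which it is) so that the commutation of \v{C}ech complexes with the product is literal rather than up to sheafification.
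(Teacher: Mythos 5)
Your proposal is correct and follows essentially the same route as the paper's proof: identify $C^{\bullet}(\mathfrak{U},\cO^+_{r(n)})$ with the \v{C}ech complex of $\cO^+_{V_{G_{r(n)}}}$ over the pulled-back cover, apply Tate acyclicity plus a uniform $p$-torsion bound (open mapping theorem), pass to the product using AB4* exactness, and deduce part (2) from the sandwich $\widetilde\cO^{n}\subset\im(\phi_{n'})\subset\widetilde\cO^{r(n')}$. You spell out the commutation of products with \v{C}ech complexes and the factoring-through-zero step more explicitly than the paper does, but the underlying argument is the same.
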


\begin{proof}
Let $C^\bullet(\mathfrak{U},\cO^+_{r(n)})$ be the \v{C}ech complex for $\cO^+_{r(n)}$ with respect to $\mathfrak{U}$. Note that if we denote by $\mathfrak{U'}$ the pull-back of $\mathfrak{U}$ to $V_{G_{r(n)}}$, then $C^\bullet(\mathfrak{U},\cO^+_{r(n)})$ is nothing but 
the \v{C}ech complex for $\cO^+_{V_{G_{r(n)}}}$ with respect to $\mathfrak{U}'$. Hence Tate's acyclicity result implies that $H^i(C^\bullet(\mathfrak{U},\cO^+_{r(n)}))\otimes_{\Z_p}\Q_p=0,i\geq 1$. Therefore $H^i(C^\bullet(\mathfrak{U},\cO^+_{r(n)})),i\geq 1$ is annihilated by some $p^k$ by open mapping theorem. From this, we get $H^i(C^\bullet(\mathfrak{U},\prod_{(i,j,k)\in\N^3}\cO^+_{r(n)}))\otimes_{\Z_p}\Q_p=0,i\geq 1$, which is exactly what we want.

The second part is a direct consequence of the first one and the inclusion $\widetilde{\cO}^{m}\subset\im(\phi_n)\subset \widetilde{\cO}^{r(n)}$.
\end{proof}

Since $\cO_{K^p}(U)$  is a Banach space representation of $K_p$, we can talk about its (strongly) $\mathfrak{LA}$-acyclicity (with respect to $K_p$). See \ref{LAacyclic} for more details. 

\begin{prop} \label{cOULAacyc}
$\cO_{K^p}(U)$ is strongly $\mathfrak{LA}$-acyclic for any $U\in\mathfrak{B}$.
\end{prop}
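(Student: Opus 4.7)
The plan is to reduce the claim to the two ingredients already built in this section: the general acyclicity criterion of Corollary \ref{LAacyw/osm}, and the very concrete \v{C}ech-vanishing statement of Lemma \ref{lemdirlim}. Concretely, I want to cover the base $V_{K_p}$ by a finite collection of small rational open subsets whose preimages in $V_\infty$ are locally analytic coverings, and then verify the hypothesis of the second part of Corollary \ref{LAacyw/osm} using the essentially-zero assertion of Lemma \ref{lemdirlim}(2).

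First I would construct the cover. By quasi-compactness of $V_{K_p}$, Theorem \ref{piHT} together with Theorem \ref{infperf} lets me choose a finite cover $\mathfrak{U}_0=\{W_1,\dots,W_k\}$ of $V_{K_p}$ by small rational open affinoid subsets such that each preimage $W_{i,\infty}\subset V_\infty\subset\mathcal{X}_{K^p}$ is affinoid perfectoid and contains at most one cusp. Since the Hodge--Tate period map sends each $W_{i,\infty}$ into $U\in\mathfrak{B}$, which is affinoid open in $\Fl$, Corollary \ref{FEana} applies and tells us that each $W_{i,\infty}$ is a locally analytic covering of $W_i$ in the sense of Definition \ref{laedefn}. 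Thus the pair $(V_{K_p},V_\infty)$ satisfies the hypotheses of Corollary \ref{LAacyw/osm}.

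Next I would apply Corollary \ref{LAacyw/osm}(2): to deduce strong $\mathfrak{LA}$-acyclicity of $\cO_{K^p}(U)=H^0(V_\infty,\cO_{\mathcal{X}_{K^p}})$, it suffices to show that the direct system $\{\check H^i(\mathfrak{U}_0,\widetilde\cO^{\,n})\}_n$ is essentially zero for every $i\ge 1$, where $\widetilde\cO^{\,n}\subset\widetilde\cO$ is the subsheaf of $G_n$-analytic sections. But this is exactly Lemma \ref{lemdirlim}(2), applied to the cover $\mathfrak{U}_0$ (after possibly shrinking the open subgroup $G_0$ so that each $W_i$ falls in the framework of \ref{setupexplicit}; since we have freedom in the choice of $G_0$ inside $K_p$, this is harmless and the resulting $\mathfrak{LA}$-acyclicity notion is the same). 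Combining the two gives the strong $\mathfrak{LA}$-acyclicity of $\cO_{K^p}(U)$.

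The main technical point — and the only place requiring care — is the compatibility of local data used in Lemma \ref{lemdirlim} with the cover chosen for Corollary \ref{LAacyw/osm}: one must check that on each $W_i$ the explicit description of $\widetilde\cO^{\,n}$ via the isomorphism $\phi_n$ of \ref{stdesc} is available (i.e.\ fake-Hasse invariants and the trivialisation $t$ behave well on $W_i$), and that the sandwich $\widetilde\cO^{\,m}\subset\mathrm{im}(\phi_n)\subset\widetilde\cO^{\,r(n)}$ used to prove Lemma \ref{lemdirlim}(2) holds uniformly over the finite cover. Both are straightforward once one uses the action of $\begin{pmatrix}0&1\\1&0\end{pmatrix}$ to reduce to the case where $e_1$ generates $\omega_{K^p}$ on each piece of the cover, as in \ref{acot0}; this is the only bookkeeping required, and no new estimates are needed beyond those already established in Theorem \ref{cnijk}.
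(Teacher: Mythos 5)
Your proof is correct and follows the same route as the paper's: cover $V_{K_p}$ by finitely many small rational opens whose preimages in $V_\infty$ are locally analytic coverings (the paper cites Lemma 5.2 of Scholze's \emph{$p$-adic Hodge theory for rigid-analytic varieties} for this, which is the cleaner reference for producing opens that are ``small'' in the technical sense of \ref{setup}), observe via Corollary \ref{FEana} that each piece is a locally analytic covering, and then feed Lemma \ref{lemdirlim}(2) into Corollary \ref{LAacyw/osm}(2). Your final paragraph on uniformity of the $\phi_n$ sandwich over the finite cover and the reduction via $\begin{pmatrix}0&1\\1&0\end{pmatrix}$ makes explicit a bookkeeping point the paper leaves implicit, but it is the same argument.
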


\begin{proof}
By Lemma 5.2 and its proof of \cite{Sch13}, we can find a cover of $V_{K_p}$ by finitely many rational subsets $U_1,\cdots,U_m$ such that each $U_i$ is small in the sense of \ref{setup} with $S=\{\mbox{cusp in }U_i\}$. By Corollary \ref{FEana}, the preimage of $U_i$ in $V_\infty$ is a locally analytic covering of $U_i$. The proposition now follows from Corollary \ref{LAacyw/osm} and  Lemma \ref{lemdirlim}.
\end{proof}

\subsection{Cohomology of \texorpdfstring{$\cO^{\la}_{K^p}$}{Lg} and completed cohomology}

In this subsection, we compare the coherent cohomology of $\cO^{\la}_{K^p}$ (on $\Fl$) and $\cO_{\mathcal{X}_{K^p}}$ (on $\mathcal{X}_{K^p}$). By Scholze's result \cite{Sch15}, the latter one is closely related to the completed cohomology of modular curves introduced earlier by Emerton \cite{Eme06}. The main result is that the cohomology of $\cO^{\la}_{K^p}$ is more or less the subspace of locally analytic vectors in completed cohomology.

\begin{para}
First recall the construction of completed cohomology. See for example \cite{Eme06,CE12}. For a tame level $K^p\subset\GL_2(\A^p_f)$, let
\[\tilde{H}^i(K^p,\Z/p^n):=\varinjlim_{K_p\subset\GL_2(\Q_p)}H^i(Y_{K^pK_p}(\bC),\Z/p^n).\]
Since $Y_{K^pK_p}(\bC)$ is affine, $\tilde{H}^i=0,i\geq 2$. Note that $\tilde{H}^i(K^p,\Z/p^n)$ can also be defined using the compactified modular curves, i.e. the natural  restriction map
\[\varinjlim_{K_p}H^i(X_{K^pK_p}(\bC),\Z/p^n)\to\varinjlim_{K_p}H^i(Y_{K^pK_p}(\bC),\Z/p^n)\]
is an isomorphism. This is clear when $i=0$. When $i=2$, then both sides are zero. When $i=1$, the cokernel of above comes from cohomology of top degree around each cusp, which vanishes as the ramification degree of each cusp is divisible by arbitrary power of $p$.

The completed cohomology of tame level $K^p$ is defined as
\[\tilde{H}^i(K^p,\Z_p):=\varprojlim_{n}\tilde{H}^i(K^p,\Z/p^n).\]
It has a natural admissible continuous action of $\GL_2(\Q_p)$, i.e. $\tilde{H}^i(K^p,\Z_p)/p$ is a smooth admissible representation of $\GL_2(\Q_p)$ over $\F_p$,  cf. \cite[Theorem 1.16]{CE12}. As a consequence, $\tilde{H}^i(K^p,\Z_p)$ has bounded $p$-torsion, i.e. $p$-power torsion classes in $\tilde{H}^i(K^p,\Z_p)$ have bounded exponent.

The following result (essentially due to Scholze) relates completed cohomology and the cohomology of $\cO_{\mathcal{X}_{K^p}}^+$. Here we say a map is Hecke-equivariant if it commutes with Hecke operators away from $p$.
\end{para}

\begin{thm} 
There is a natural $\GL_2(\Q_p)$ and Hecke-equivariant isomorphism of almost $\cO_{C}$-modules
\[\tilde{H}^i(K^p,\Z/p^n)\otimes_{\Z_p/p^n}\cO_{C}/p^n\cong H^i(\mathcal{X}_{K^p},\cO_{\mathcal{X}_{K^p}}^+/p^n),\]
where the right hand side is computed using the analytic topology of $\mathcal{X}_{K^p}$.
\end{thm}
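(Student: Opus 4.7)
The plan is to reduce to the primitive comparison theorem at finite level and then pass to the direct limit over $K_p$, following Scholze's strategy in \cite{Sch15}. First, I will rewrite the Betti cohomology as $p$-adic \'etale cohomology: as already noted in the excerpt, the compactifications $X_{K^pK_p}(\bC)$ have the same $\Z/p^n$-cohomology as the open modular curves $Y_{K^pK_p}(\bC)$, because the ramification at cusps is divisible by arbitrarily large powers of $p$. Combined with Artin's comparison, this yields
\[\tilde H^i(K^p,\Z/p^n) \;\cong\; \varinjlim_{K_p} H^i_{\et}(\mathcal{X}_{K^pK_p},\Z/p^n).\]

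Next, I would apply the primitive comparison theorem at finite level, in its logarithmic version due to Diao--Lan--Liu--Zhu (cf.\ \cite{DLLZ2}), to the proper smooth log adic space $\mathcal{X}_{K^pK_p}$ equipped with the log structure defined by the cusps. This produces a natural Hecke-equivariant almost isomorphism
\[H^i_{\et}(\mathcal{X}_{K^pK_p},\Z/p^n)\otimes_{\Z_p/p^n}\cO_C/p^n \;\cong\; H^i(\mathcal{X}_{K^pK_p},\cO^+_{\mathcal{X}_{K^pK_p}}/p^n)\]
of almost $\cO_C/p^n$-modules, where one uses that the pro-Kummer-\'etale and ordinary \'etale cohomologies of constant $\Z/p^n$-sheaves agree.

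The crucial step is then to pass to the limit over $K_p$ on the right-hand side. The essential input is Theorem~\ref{piHT}: $\mathcal{X}_{K^p}$ admits a cover by finitely many affinoid open subsets whose preimages in $\mathcal{X}_{K^p}$ are affinoid perfectoid, each pulled back from an affinoid in $\mathcal{X}_{K^pK_p}$ for some sufficiently small $K_p$, with a density property for the direct limit. Scholze's almost vanishing of higher cohomology of $\cO^+/p^n$ on affinoid perfectoid spaces \cite[Theorem 1.8(iv)]{Sch12} allows me to compute both $H^i(\mathcal{X}_{K^pK_p},\cO^+/p^n)$ (for $K_p$ small enough) and $H^i(\mathcal{X}_{K^p},\cO^+/p^n)$ via \v{C}ech cohomology with respect to this cover. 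Each \v{C}ech term at infinite level is the almost colimit of its finite-level counterparts, so exchanging $\varinjlim_{K_p}$ with the finite \v{C}ech complex gives
\[\varinjlim_{K_p}H^i(\mathcal{X}_{K^pK_p},\cO^+/p^n) \;\cong\; H^i(\mathcal{X}_{K^p},\cO^+/p^n)\]
as almost $\cO_C/p^n$-modules.

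Combining these three ingredients delivers the desired almost isomorphism, with $\GL_2(\Q_p)$- and prime-to-$p$ Hecke-equivariance inherited from the functoriality of each step (the group and Hecke actions are constructed on the tower of $\mathcal{X}_{K^pK_p}$ before passing to the limit). The main technical obstacle is the almost-mathematics bookkeeping: one must ensure that the logarithmic primitive comparison theorem is available at finite level and that the bounded $p$-torsion of completed cohomology allows the tensor product with $\cO_C/p^n$ to commute with both the inverse limit defining $\tilde H^i(K^p,\Z_p)$ (not needed here, since we work modulo $p^n$) and the direct limit over $K_p$; this is all handled within the framework of \cite{Sch15,DLLZ2}.
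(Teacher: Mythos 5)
Your proposal is essentially the paper's proof, with useful detail filled in about the passage to the limit over $K_p$ via the affinoid perfectoid cover. One small simplification: since you have already reduced to the compactified curve $\mathcal{X}_{K^pK_p}$, which is proper and smooth, the ordinary primitive comparison theorem \cite[Theorem 1.3]{Sch13} applies directly and there is no need to invoke the logarithmic version from \cite{DLLZ2}; this is exactly what the paper does.
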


\begin{proof}
Basically the same proof of Theorem IV.2.1 of \cite{Sch15} works here: first we may identify $H^i(X_{K^pK_p}(\bC),\Z/p^n)$ with $H^i_{\mathrm{\acute{e}t}}(\mathcal{X}_{K^pK_p},\Z/p^n)$ by the comparison theorem; then using the primitive comparison theorem (Theorem 1.3 of \cite{Sch13}) and taking  the direct limit over all $K_p$, we obtain the desired almost isomorphism.
\end{proof}

\begin{cor} \label{OCcomp}
There is a natural $\GL_2(\Q_p)$ and Hecke-equivariant isomorphism of almost $\cO_{C}$-modules
\[\tilde{H}^i(K^p,\Z_p)\widehat\otimes_{\Z_p}\cO_{C}\cong H^i(\mathcal{X}_{K^p},\cO_{\mathcal{X}_{K^p}}^+).\]
\end{cor}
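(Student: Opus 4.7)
The strategy is to pass to the inverse limit in the mod $p^n$ isomorphism of the preceding theorem. The plan is first to work on a suitable \v{C}ech model so that the limit interacts well with cohomology, and then to identify both sides of the limit with the desired objects.

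First I would choose a finite cover $\mathfrak{U} = \{V_{\infty,1},\dots,V_{\infty,r}\}$ of $\mathcal{X}_{K^p}$ by affinoid perfectoid open subsets; such a cover exists by Theorem~\ref{piHT} (take the preimage under $\pi_{\HT}$ of a finite subcover of $\Fl$ drawn from $\mathfrak{B}$), noting that $\Fl$ is quasi-compact. All finite intersections of the $V_{\infty,i}$ remain affinoid perfectoid. By the almost vanishing of higher cohomology of $\cO^+$ and $\cO^+/p^n$ on affinoid perfectoids (Theorem~1.8(iv) of \cite{Sch12}), the \v{C}ech complexes $C^\bullet(\mathfrak{U},\cO^+_{\mathcal{X}_{K^p}})$ and $C^\bullet(\mathfrak{U},\cO^+_{\mathcal{X}_{K^p}}/p^n)$ compute $H^\bullet(\mathcal{X}_{K^p},\cO^+_{\mathcal{X}_{K^p}})$ and $H^\bullet(\mathcal{X}_{K^p},\cO^+_{\mathcal{X}_{K^p}}/p^n)$, respectively, as almost $\cO_C$-modules.

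Next I would take the inverse limit. Since each term of $C^\bullet(\mathfrak{U},\cO^+_{\mathcal{X}_{K^p}})$ is $p$-adically complete and $p$-torsion free (being $\cO^+$ of an affinoid perfectoid), the short exact sequences $0\to \cO^+_{\mathcal{X}_{K^p}}\xrightarrow{p^n}\cO^+_{\mathcal{X}_{K^p}}\to \cO^+_{\mathcal{X}_{K^p}}/p^n\to 0$ yield
\[C^\bullet(\mathfrak{U},\cO^+_{\mathcal{X}_{K^p}})\;\xrightarrow{\sim}\;\varprojlim_n C^\bullet(\mathfrak{U},\cO^+_{\mathcal{X}_{K^p}}/p^n).\]
To commute this limit with taking cohomology, I need to check that the Mittag-Leffler condition holds for $\{H^{i-1}(\mathfrak{U},\cO^+_{\mathcal{X}_{K^p}}/p^n)\}_n$, so that $R^1\varprojlim$ vanishes (up to almost isomorphism). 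This follows from the preceding theorem combined with the bounded $p$-torsion of $\tilde{H}^i(K^p,\Z_p)$, which implies that the transition maps $H^i(\mathfrak{U},\cO^+_{\mathcal{X}_{K^p}}/p^{n+k})\to H^i(\mathfrak{U},\cO^+_{\mathcal{X}_{K^p}}/p^n)$ have images which stabilize for $k$ large (almost).

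Finally I would identify the limit on the completed-cohomology side. By the boundedness of $p$-torsion in $\tilde H^i(K^p,\Z_p)$, one has $\tilde H^i(K^p,\Z_p)/p^n \cong \tilde H^i(K^p,\Z/p^n)$ for $n$ sufficiently large (after a fixed shift), and hence
\[\tilde{H}^i(K^p,\Z_p)\widehat\otimes_{\Z_p}\cO_{C} \;=\; \varprojlim_n \bigl(\tilde{H}^i(K^p,\Z_p)\otimes_{\Z_p}\cO_{C}/p^n\bigr) \;\cong\; \varprojlim_n \bigl(\tilde{H}^i(K^p,\Z/p^n)\otimes_{\Z_p/p^n}\cO_{C}/p^n\bigr).\]
Combining this with the previous theorem applied to each $n$ yields the asserted almost isomorphism, and the construction is plainly $\GL_2(\Q_p)$- and Hecke-equivariant. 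The main obstacle in executing the plan is the careful handling of the $\varprojlim^1$ vanishing and the passage between almost and honest isomorphisms, but in both cases the bounded $p$-torsion of $\tilde H^i(K^p,\Z_p)$ and the finiteness of the cover $\mathfrak{U}$ keep the error bounded uniformly in $n$.
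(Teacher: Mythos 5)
Your proposal is correct and follows the same overall reduction to \v{C}ech cohomology; the difference lies in how the passage to the $p$-adic limit is handled. The paper isolates this step in Lemma~\ref{ptorfin}, whose proof runs through derived $p$-adic completeness: because the \v{C}ech complex $M^\bullet$ consists of $p$-adically complete, $p$-torsion free modules, each $H^i(M^\bullet)$ is derived $p$-complete, so $\Hom_{\Z_p}(\Q_p,H^i(M^\bullet))=0$; combined with the universal coefficient sequence and the bounded $p$-torsion hypothesis on $\varprojlim_n H^i(M^\bullet/p^n)$, this yields $H^i(M^\bullet)\xrightarrow{\sim}\varprojlim_n H^i(M^\bullet/p^n)$ without ever touching $\varprojlim^1$. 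You instead invoke the Milnor exact sequence and then argue that the system $\{H^{i-1}(\mathfrak{U},\cO^+/p^n)\}_n$ is (almost) Mittag--Leffler. Both routes succeed, but yours requires more careful bookkeeping: the stabilization of images is not an immediate consequence of bounded $p$-torsion of $\tilde H^i(K^p,\Z_p)$ as you assert; one must first transport the problem across the almost isomorphism from the previous theorem and then analyze the Bockstein-type sequences $0\to \tilde H^{i-1}(K^p,\Z_p)/p^n\to\tilde H^{i-1}(K^p,\Z/p^n)\to\tilde H^i(K^p,\Z_p)[p^n]\to 0$, which are themselves established via the same derived-completeness formalism the paper uses. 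Relatedly, your assertion that $\tilde H^i(K^p,\Z_p)/p^n\cong\tilde H^i(K^p,\Z/p^n)$ for $n$ large ``after a fixed shift'' is not correct as stated when there is genuine (bounded) torsion in degree $i+1$: the two differ by the fixed group $\tilde H^{i+1}(K^p,\Z_p)[p^N]$. What saves the argument is that the transition maps on this error term are multiplication by $p$ and therefore eventually vanish, so the inverse limit is still $\tilde H^i(K^p,\Z_p)\widehat\otimes_{\Z_p}\cO_C$; alternatively, for modular curves one can use outright $p$-torsion freeness of $\tilde H^i(K^p,\Z_p)$, as the paper remarks. Overall the plan is sound; the paper's Lemma~\ref{ptorfin} just packages the limit step more cleanly and avoids the $\varprojlim^1$ discussion entirely.
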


\begin{proof}
Since the higher cohomology of $\cO_{\mathcal{X}_{K^p}}^+$ almost vanishes on any affinoid perfectoid open subset (Theorem 1.8.(iv) of \cite{Sch12}), we can compute $H^i(\mathcal{X}_{K^p},\cO_{\mathcal{X}_{K^p}}^+)$ and $H^i(\mathcal{X}_{K^p},\cO_{\mathcal{X}_{K^p}}^+/p^n)$ by \v{C}ech cohomology. Take a finite affinoid perfectoid cover of $\mathcal{X}_{K^p}$ and let $M^\bullet$ be the \v{C}ech complex for $\cO_{\mathcal{X}_{K^p}}^+$ with respect to this cover. Then as almost $\cO_C$-modules, $H^i(M^\bullet)=H^i(\mathcal{X}_{K^p},\cO_{\mathcal{X}_{K^p}}^+)$ and $H^i(M^\bullet/p^n)=H^i(\mathcal{X}_{K^p},\cO_{\mathcal{X}_{K^p}}^+/p^n)$. In view of the previous theorem, the corollary above is reduced to the following lemma.
\end{proof}

\begin{lem} \label{ptorfin}
Let $M^\bullet$ be a bounded above chain complex of $p$-adically complete, $p$-torsion free $\Z_p$-modules. Assume that $\varprojlim_{n}H^i(M^\bullet/p^n)$ has  bounded $p$-torsion for any $i$. Then we have natural isomorphisms
\[H^i(M^\bullet)\xrightarrow{\sim}\varprojlim_{n}H^i(M^\bullet)/p^n\xrightarrow{\sim}\varprojlim_{n}H^i(M^\bullet/p^n).\]
\end{lem}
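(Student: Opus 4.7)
The plan is to reduce both isomorphisms to the vanishing of one $R^1\varprojlim$-term and then to kill that term by a descending induction on $i$. Since each $M^i$ is $p$-adically complete and $p$-torsion free, the tower $\{M^i/p^n\}$ has surjective transition maps, so $M^i = \varprojlim_n M^i/p^n$ with $R^1\varprojlim_n M^i/p^n = 0$. This gives $M^\bullet \simeq R\varprojlim_n M^\bullet/p^n$, and taking cohomology yields the Milnor short exact sequence
\[0 \to R^1\varprojlim_n H^{i-1}(M^\bullet/p^n) \to H^i(M^\bullet) \to \varprojlim_n H^i(M^\bullet/p^n) \to 0. \tag{$\dagger$}\]
Once the $R^1\varprojlim$-term vanishes, the first isomorphism of the lemma is immediate; and the factorization $H^i(M^\bullet) \to \varprojlim_n H^i(M^\bullet)/p^n \hookrightarrow \varprojlim_n H^i(M^\bullet/p^n)$, where the second arrow is injective by left-exactness of $\varprojlim$ applied to the inclusion coming from the sequence $(\star)$ below, then forces both maps in the factorization to be isomorphisms.

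To attack the $R^1\varprojlim$-term, I would use that the $p$-torsion freeness of $M^i$ makes $0 \to M^\bullet \xrightarrow{p^n} M^\bullet \to M^\bullet/p^n \to 0$ into a short exact sequence of complexes, whose long exact sequence gives
\[0 \to H^i(M^\bullet)/p^n \to H^i(M^\bullet/p^n) \to H^{i+1}(M^\bullet)[p^n] \to 0. \tag{$\star$}\]
A direct chase identifies the transition maps on the outer terms as the natural reduction on the left and multiplication by $p$ on the right. Since the tower $\{H^i/p^n\}$ has surjective transitions (so $R^1\varprojlim = 0$), applying $R\varprojlim_n$ to $(\star)$ at degree $i-1$ identifies $R^1\varprojlim_n H^{i-1}(M^\bullet/p^n) \cong R^1\varprojlim_n H^i(M^\bullet)[p^n]$, with the transitions on the right being multiplication by $p$. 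The crucial observation is that if $H^i(M^\bullet)_{\mathrm{tors}} = H^i(M^\bullet)[p^N]$ for some $N$, then $\{H^i(M^\bullet)[p^n]\}_n$ is eventually equal to $H^i(M^\bullet)[p^N]$ with transitions nilpotent of index $N$; the tower is therefore pro-zero, and both its $\varprojlim$ and $R^1\varprojlim$ vanish.

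The proof thus reduces to showing, by descending induction on $i$, that $H^i(M^\bullet)$ has bounded $p$-torsion. The base case is immediate since $M^\bullet$ is bounded above. In the inductive step, the preceding observation applied at degree $i+1$ (available by induction) gives $R^1\varprojlim_n H^i(M^\bullet/p^n) = 0$, and then $(\dagger)$ at degree $i+1$ simplifies to $H^{i+1}(M^\bullet) \cong \varprojlim_n H^{i+1}(M^\bullet/p^n)$, whose bounded $p$-torsion is supplied by the hypothesis, confirming the inductive assumption. The hard part is closing the induction at degree $i$ itself: transferring bounded $p$-torsion from the quotient $\varprojlim_n H^i(M^\bullet/p^n)$ of $(\dagger)$ back to $H^i(M^\bullet)$ is delicate, because the kernel $R^1\varprojlim_n H^{i-1}(M^\bullet/p^n) = R^1\varprojlim_n H^i(M^\bullet)[p^n]$ appearing in $(\dagger)$ a priori could contribute extra $p$-torsion. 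I expect to handle this by identifying this $\varprojlim^1$-term with the $p$-divisible submodule $\bigcap_m p^m H^i(M^\bullet)$ via the factorization of the Milnor map through the $p$-adic completion $\widehat{H^i(M^\bullet)}$, and then exploiting that any $p$-divisible submodule with bounded-torsion-free quotient is torsion-free, together with a rigidity argument across adjacent degrees that rules out a nontrivial $\Q_p$-vector-space contribution. This is the heart of the proof and where the bounded-torsion hypothesis at all degrees simultaneously is really used.
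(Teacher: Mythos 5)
Your setup via the Milnor sequence $(\dagger)$ and the short exact sequence $(\star)$ mirrors the paper's, and you correctly identify that everything hinges on controlling $p$-divisibility in $H^i(M^\bullet)$. But the descending induction does not close, and you acknowledge this yourself: the step ``transferring bounded $p$-torsion from $\varprojlim_n H^i(M^\bullet/p^n)$ back to $H^i(M^\bullet)$'' is precisely where a new input is required and is not supplied. The speculation about identifying the $R^1\varprojlim$-term with the $p$-divisible submodule and then invoking ``a rigidity argument across adjacent degrees'' is not an argument.

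The missing ingredient is that $H^i(M^\bullet)$ is automatically derived $p$-adically complete, equivalently $\Hom_{\Z_p}(\Q_p, H^i(M^\bullet)) = 0$, and that this holds unconditionally, using only that each $M^i$ is $p$-adically complete and $p$-torsion free (no bounded-torsion hypothesis needed). The paper verifies this directly at the cochain level: if $x_0 \neq 0$ sits in a $p$-divisible chain $px_n = x_{n-1}$ in $H^i$, lift to cocycles $\tilde x_n \in M^i$, choose $y_n \in M^{i-1}$ with $dy_n = \tilde x_{n-1} - p\tilde x_n$, and then $y = \sum_{n\geq 1} p^{n-1}y_n$ converges in $M^{i-1}$ with $dy = \tilde x_0$ by telescoping, forcing $x_0 = 0$. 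Once you have $\Hom_{\Z_p}(\Q_p, H^i) = 0$, everything follows cleanly: the consequence $\Hom_{\Z_p}(\Q_p/\Z_p, H^{i+1}) = \varprojlim_n H^{i+1}(M^\bullet)[p^n] = 0$ makes $\varprojlim$ of $(\star)$ collapse to the isomorphism $\varprojlim_n H^i/p^n \cong \varprojlim_n H^i(M^\bullet/p^n)$ by left-exactness alone, with no $R^1\varprojlim$ analysis; and for the first isomorphism, surjectivity of $H^i \to \varprojlim_n H^i/p^n$ comes from $p$-adic completeness of the cocycles, while the kernel $K = \bigcap_n p^n H^i$ is shown to satisfy $pK = K$ using the bounded-torsion hypothesis, whence $K = 0$ precisely because $\Hom_{\Z_p}(\Q_p, H^i) = 0$. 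So the gap is real and sits exactly where you flagged it, but the fix is an elementary completeness observation rather than any cross-degree rigidity.
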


\begin{proof}
Since $M^\bullet$ is $p$-adically complete and $p$-torsion free, we know that $H^i(M^\bullet)$ is derived $p$-adically complete. See  \cite[\href{https://stacks.math.columbia.edu/tag/091N}{Tag 091N}]{SP} for more details. In particular, 
\[\Hom_{\Z_p}(\Q_p,H^i(M^\bullet))=0.\] 
For the reader's convenience, we recall the argument here. Suppose $\Hom_{\Z_p}(\Q_p,H^i(M^\bullet))\neq0$. We can find $x_n\in H^i(M^\bullet),n=0,1,2,\cdots$ satisfying $px_n=x_{n-1}$ and $x_0\neq 0$. Let $\tilde{x}_n\in M^i$ be a lift of $x_n$. Then there exist $y_n\in M^{i-1}$ such that $dy_n=\tilde{x}_{n-1}-p\tilde{x}_n$. Define $y=y_1+py_2+p^2y_3+\cdots\in M^{i-1}$. One checks easily $dy=\tilde{x}_0$. Hence $x_0=0$, contradiction.

Thus $\Hom_{\Z_p}(\Q_p/\Z_p,H^i(M^\bullet))=0$. Now by the universal coefficient theorem, we have 
\[0\to H^i(M^\bullet)/p^n\to H^i(M^\bullet/p^n)\to H^{i+1}(M^\bullet)[p^n]\to 0. \]
When $n$ varies, the transition map $H^{i+1}(M^\bullet)[p^{n+1}]\to H^{i+1}(M^\bullet)[p^n]$ is multiplication by $p$. Hence $\varprojlim_n H^{i+1}(M^\bullet)[p^n]=\Hom_{\Z_p}(\Q_p/\Z_p,H^i(M^\bullet))=0$. We get $\varprojlim_{n}H^i(M^\bullet)/p^n\xrightarrow{\sim}\varprojlim_{n}H^i(M^\bullet/p^n)$ by passing to the limit over $n$ of the above exact sequence.

It remains to show that $H^i(M^\bullet)\to\varprojlim_{n}H^i(M^\bullet)/p^n$ is an isomorphism. This is clearly surjective as $H^i(M^\bullet)$ is a quotient of $\ker(M^i\xrightarrow{d} M^{i+1})$, which is $p$-adically complete. Let $K$ be the kernel of this map. By our assumption, all of the torsion in $\varprojlim_{n}H^i(M^\bullet)/p^n$ can be annihilated by $p^k$ for some $k$. For any $x\in K$, we can find $x'\in H^i(M^\bullet)$ satisfying $p^{k+1}x'=x$. Then $x'$ maps to a torsion element in $\varprojlim_{n}H^i(M^\bullet)/p^n$. Hence $y=p^kx'\in K$ and $py=x$. Therefore $pK=K$, which implies $K=0$ as $\Hom_{\Z_p}(\Q_p,H^i(M^\bullet))=0$. 
\end{proof}

\begin{rem}
In fact, it is well-known that $\tilde{H}^i(K^p,\Z_p)$ is $p$-torsion free because the $p$-adic \'etale cohomology of curves has no torsion. Hence the proof of Corollary \ref{OCcomp} can be greatly simplified in this case. We decide to present this complicated proof here because it works in more general settings.
\end{rem}

We write $\tilde{H}^i(K^p,\cO_C)=\tilde{H}^i(K^p,\Z_p)\widehat\otimes_{\Z_p}\cO_{C}$ and $\tilde{H}^i(K^p,C)=\tilde{H}^i(K^p,\Z_p)\widehat\otimes_{\Z_p}C$. Then 
\[\tilde{H}^i(K^p,C)\cong H^i(\mathcal{X}_{K^p},\cO_{\mathcal{X}_{K^p}})\] 
is a $\Q_p$-Banach representation of $\GL_2(\Q_p)$. Our main result here identifies its subspace of $\GL_2(\Q_p)$-locally analytic vectors.

\begin{thm} \label{comcccc}
For any $i\geq 0$, there are natural $\GL_2(\Q_p)$ and Hecke-equivariant isomorphisms
\begin{itemize}
\item $\tilde{H}^i(K^p,C)\cong H^i(\Fl,\cO_{K^p})$,
\item $\tilde{H}^i(K^p,C)^{\la}\cong H^i(\Fl,\cO^{\la}_{K^p})$.
\end{itemize}
\end{thm}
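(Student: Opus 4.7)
The plan is to reduce both isomorphisms to \v{C}ech-cohomological computations with respect to a good finite cover of $\Fl$, leveraging the acyclicity results of the previous sections. Fix a finite cover $\mathfrak{U} = \{U_1, \ldots, U_k\}$ of $\Fl$ by elements of the basis $\mathfrak{B}$ from Theorem \ref{piHT}. Since $\mathfrak{B}$ is stable under finite intersections, every intersection $U_I = U_{i_1} \cap \cdots \cap U_{i_r}$ lies in $\mathfrak{B}$, and its preimage $V_I := \pi_{\HT}^{-1}(U_I) \subset \mathcal{X}_{K^p}$ is affinoid perfectoid. Let $M^\bullet := \check{C}^\bullet(\mathfrak{U}, \cO_{K^p})$ be the associated \v{C}ech complex, whose terms are finite products of $\cO_{K^p}(U_I) = \cO_{\mathcal{X}_{K^p}}(V_I)$.

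For the first isomorphism, I claim that $M^\bullet$ computes both sides. By Corollary \ref{OCcomp} together with the almost-vanishing of higher coherent cohomology on affinoid perfectoid spaces (Theorem 1.8.(iv) of \cite{Sch12}), the \v{C}ech complex for $\cO_{\mathcal{X}_{K^p}}$ with respect to the affinoid perfectoid cover $\{V_I\}$ of $\mathcal{X}_{K^p}$ computes $\tilde{H}^i(K^p, C)$; by the very definition of $\cO_{K^p} = \pi_{\HT,*}\cO_{\mathcal{X}_{K^p}}$, this \v{C}ech complex is $M^\bullet$. The same almost-vanishing yields $H^i(U_I, \cO_{K^p}) = H^i(V_I, \cO_{\mathcal{X}_{K^p}}) = 0$ for $i \geq 1$, so $\mathfrak{U}$ is a Leray cover for $\cO_{K^p}$ and $M^\bullet$ also computes $H^i(\Fl, \cO_{K^p})$.

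For the second isomorphism, I would apply Lemma \ref{LAlongexa}(2) to $M^\bullet$. Each term $M^q$ is a finite product of modules $\cO_{K^p}(U_I)$, hence strongly $\mathfrak{LA}$-acyclic by Proposition \ref{cOULAacyc}; the cohomology $H^q(M^\bullet) \cong \tilde{H}^q(K^p, C)$ is admissible as a $\Q_p$-Banach representation of $\GL_2(\Q_p)$, hence strongly $\mathfrak{LA}$-acyclic by the Schneider--Teitelbaum theorem recalled in \S\ref{LAacyclic}. Consequently $(H^q(M^\bullet))^{\la} = H^q((M^\bullet)^{\la})$. Directly from the definition of $\cO^{\la}_{K^p}$ in \ref{OKpla}, taking $K_p$-locally analytic vectors termwise (for $K_p$ small enough to stabilize each $U_i$) gives $(M^\bullet)^{\la} = \check{C}^\bullet(\mathfrak{U}, \cO^{\la}_{K^p})$, and hence $\tilde{H}^i(K^p, C)^{\la} \cong \check{H}^i(\mathfrak{U}, \cO^{\la}_{K^p})$.

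The final step, and the main technical obstacle, is to identify $\check{H}^i(\mathfrak{U}, \cO^{\la}_{K^p})$ with $H^i(\Fl, \cO^{\la}_{K^p})$. This requires the vanishing $H^i(U, \cO^{\la}_{K^p}) = 0$ for every $U \in \mathfrak{B}$ and $i \geq 1$, which I would deduce from Corollary \ref{LAacyw/osm}: pull $U$ back to an affinoid $V_{K_p} \subset \mathcal{X}_{K^pK_p}$ (for $K_p$ sufficiently small) and cover $V_{K_p}$ by small rational opens whose preimages in $\mathcal{X}_{K^p}$ are locally analytic coverings in the sense of \ref{laedefn} --- such a cover exists by Corollary \ref{FEana}, exactly as in the proof of Proposition \ref{cOULAacyc}. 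Since the locally analytic structural sheaf on $V_{K_p}$ has the same sections as $\cO^{\la}_{K^p}|_U$ on the opens of this cover (both being the $K_p$-locally analytic vectors in the corresponding sections of $\cO_{\mathcal{X}_{K^p}}$), Corollary \ref{LAacyw/osm} translates the strong $\mathfrak{LA}$-acyclicity of $\cO_{K^p}(U)$ into the desired vanishing. All isomorphisms constructed are $\GL_2(\Q_p)$- and prime-to-$p$-Hecke-equivariant by functoriality of $\pi_{\HT}$ and of the construction of $\cO^{\la}_{K^p}$.
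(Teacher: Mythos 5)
Your argument matches the paper's in outline: reduce to a finite cover $\mathfrak{U}\subset\mathfrak{B}$ of $\Fl$, compute both completed cohomology and sheaf cohomology by the same \v{C}ech complex $M^\bullet$, and commute $\mathfrak{LA}$ past cohomology using Lemma \ref{LAlongexa}(2), Proposition \ref{cOULAacyc}, and the strong $\mathfrak{LA}$-acyclicity of admissible Banach representations. The first two thirds of your argument are correct and in the same spirit as the paper (the paper phrases the first isomorphism via vanishing of $R^j{\pi_{\HT}}_*\cO_{\mathcal{X}_{K^p}}$ rather than through the \v{C}ech complex directly, but these are interchangeable).

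The last step has a genuine gap. You invoke Corollary \ref{LAacyw/osm} to deduce $H^i(U,\cO^{\la}_{K^p})=0$ for $U\in\mathfrak{B}$, $i\geq 1$. But Corollary \ref{LAacyw/osm} identifies $R^i\mathfrak{LA}(\cO_{K^p}(U))$ with $\check{H}^i(V_{K_p},\widetilde\cO^{\la})$, the cohomology of a sheaf living on the finite-level affinoid $V_{K_p}$, not on $U\subset\Fl$. There is no map $V_{K_p}\to U$ along which to compare the two sheaves; the small rational opens covering $V_{K_p}$ are not preimages of opens of $U$, so the parenthetical "(both being the $K_p$-locally analytic vectors in the corresponding sections of $\cO_{\mathcal{X}_{K^p}}$)" does not say anything—there is no corresponding open of $U$ for a small rational open of $V_{K_p}$. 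The correct way to obtain the vanishing on $U$ is to run the argument directly on $\Fl$: any open cover of $U$ refines to a finite cover $\mathfrak{U}'\subset\mathfrak{B}$ (using that $\mathfrak{B}$ is stable under finite intersections); the augmented \v{C}ech complex $\cO_{K^p}(U)\to C^\bullet(\mathfrak{U}',\cO_{K^p})$ is strictly exact by almost-acyclicity of coherent cohomology on affinoid perfectoids; each term is $\mathfrak{LA}$-acyclic by Proposition \ref{cOULAacyc}; so $\cO^{\la}_{K^p}(U)\to C^\bullet(\mathfrak{U}',\cO^{\la}_{K^p})$ is exact by Lemma \ref{LAlongexa}, giving $\check{H}^i(U,\cO^{\la}_{K^p})=0$ and hence $H^i(U,\cO^{\la}_{K^p})=0$ by Grothendieck's criterion. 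This is the argument the paper indicates when it says "in exactly the same way as in the first paragraph of the proof of Corollary \ref{LAacyw/osm}"—but carried out for $\cO^{\la}_{K^p}$ on $\Fl$, not for $\widetilde\cO^{\la}$ on $V_{K_p}$.
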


\begin{proof}
First note that all higher direct images $R^j{\pi_{\HT}}_*\cO_{\mathcal{X}_{K^p}}=0$, $j>0$. One can check this on a basis $\mathfrak{B}$ of open subsets of $\Fl$ in Theorem \ref{piHT} and invoke the acyclicity result on affinoid perfectoid spaces, cf. Theorem 1.8.(iv) of \cite{Sch12}. Hence
\[H^i(\mathcal{X}_{K^p},\cO_{\mathcal{X}_{K^p}})=H^i(\Fl,{\pi_{\HT}}_*\cO_{\mathcal{X}_{K^p}})=H^i(\Fl,\cO_{K^p}).\]
This proves the first isomorphism in the theorem and shows that  $H^i(\Fl,\cO_{K^p})$ can be computed by the \v{C}ech complex of a finite cover of $\Fl$ of open subsets in $\mathfrak{B}$. We claim the same is true for $\cO_{K^p}^{\la}$, i.e.  
\[H^j(U,\cO_{K^p}^{\la})=0,\]
for any $U\in\mathfrak{B}$ and $j>0$. Therefore $H^i(\Fl,\cO_{K^p}^{\la})$ can also be computed using \v{C}ech cohomology. Recall that $\mathfrak{B}$ is stable under finite intersections. By  Corollaire 4., p.176 of \cite{Gro57}, it suffices to show the \v{C}ech cohomology $\check{H}^j(U,\cO_{K^p}^{\la})=0$ for any $U\in\mathfrak{B}$ and $j>0$. This can be proved in exactly the same way as in the first paragraph of the proof of Corollary \ref{LAacyw/osm} using the acyclicity result in Proposition \ref{cOULAacyc}.

Now let $\mathfrak{U}\subset\mathfrak{B}$ be a finite cover of $\Fl$ and $C^{\bullet}(\mathfrak{U},\cO_{K^p}),C^{\bullet}(\mathfrak{U},\cO_{K^p}^{\la})$ be the \v{C}ech complexes for $\cO_{K^p},\cO_{K^p}^{\la}$ using this cover. Then $C^{\bullet}(\mathfrak{U},\cO_{K^p})$ is a strict complex because $H^i(C^{\bullet}(\mathfrak{U},\cO^+_{K^p}))\cong H^i(\mathcal{X}_{K^p},\cO^+_{\mathcal{X}_{K^p}})\cong  \tilde{H}^i(K^p,\cO_C)$ (as almost $\cO_C$-modules) has bounded $p$-power torsion, where $\cO^+_{K^p}={\pi_{\HT}}_*\cO^+_{\mathcal{X}_{K^p}}$. Moreover, each
 $C^{i}(\mathfrak{U},\cO_{K^p})$ is $\mathfrak{LA}$-acyclic by Proposition \ref{cOULAacyc} and $H^i(C^{\bullet}(\mathfrak{U},\cO_{K^p}))=\tilde{H}^i(K^p,C)$ is $\mathfrak{LA}$-acyclic because $\tilde{H}^i(K^p,\Z_p)$ is an admissible representation of $\GL_2(\Q_p)$ and we can apply the result of Schneider-Teitelbaum, cf. Corollary \ref{admLAacyc}. Hence the theorem follows from the second part of Lemma \ref{LAlongexa} as $(C^{\bullet}(\mathfrak{U},\cO_{K^p}))^{\la}=C^{\bullet}(\mathfrak{U},\cO_{K^p}^{\la})$.
\end{proof}

\section{\texorpdfstring{$\mu$}{Lg}-isotypic part of completed cohomology} \label{muipocc}
The goal of this section is to determine the $\mu$-isotypic part of $\tilde{H}^i(K^p,C)^{\la}$. We will give a complete answer for integral weights as described in the introduction. Roughly speaking, the answer is a mixture of coherent cohomology groups of modular curves at finite level and overconvergent modular forms. Also we will give a $p$-adic Hodge-theoretic interpretation of the horizontal action $\theta_\kh$.

From now on, we assume $C=\bC_p$ is the completion of $\overline\Q_p$.  Then $G_{\Q_p}$ acts continuously on $C,\tilde{H}^i(K^p,\Z_p),\tilde{H}^i(K^p,C)$ and commutes with the action of $\GL_2(\Q_p)$ and Hecke operators away from $p$.

\subsection{A \texorpdfstring{$p$}{Lg}-adic Hodge-theoretic interpretation of \texorpdfstring{$\theta_\kh$}{Lg}}
\begin{para} \label{expkh}
We would like to write down the action $\theta_\kh$ introduced in Corollary \ref{khact} on $\cO^{\la}_{K^p}$ using the explicit description in Theorem \ref{cnijk}. So keep the notation in \S\ref{LS}. In particular, $e_1$ generates $H^0(V, \omega_{K^p})$. Then for any $f\in \cO^{\la}_{K^p}(U)$, we can write 
\[f=\sum_{i,j,k\geq0}c^{(n)}_{i,j,k}(f)(x-x_n)^i\left(\log(\frac{e_1}{e_{1,n}})\right)^j\left(\log(\frac{t}{t_{n}})\right)^k\]
for sufficiently large $n$ (as in Theorem \ref{cnijk}). By our construction in Corollary \ref{khact}, a direct computation shows that $\theta_\kh(\begin{pmatrix}a & 0\\ 0 & d \end{pmatrix})$ acts on $\cO^{\la}_{K^p}(U)$ as $\begin{pmatrix} d & (d-a)x \\ 0 & a \end{pmatrix}\in \cO_{\Fl}(U)\otimes_C\mathfrak{g}$. (To see this, recall that $\begin{pmatrix}
x & x^2\\
-1 & -x 
\end{pmatrix}$ is a generator of $H^0(U,\kn^0)$, cf. \ref{acot0}. One computes directly that $[\begin{pmatrix}d & (d-a)x\\ 0 & a \end{pmatrix},\begin{pmatrix}
x & x^2\\
-1 & -x 
\end{pmatrix}]=(a-d)\begin{pmatrix}
x & x^2\\
-1 & -x 
\end{pmatrix}$ and this $(a-d)$ agrees with $[\begin{pmatrix}a & 0\\ 0 & d \end{pmatrix},\begin{pmatrix} 0 & 1\\ 0 & 0\end{pmatrix}]=(a-d)\begin{pmatrix} 0 & 1\\ 0 & 0\end{pmatrix}$.) Hence
\[c^{(n)}_{i,j,k}(\theta_\kh(\begin{pmatrix}a & 0\\ 0 & d \end{pmatrix})\cdot f)=d(j+1)c^{(n)}_{i,j+1,k}(f)+(a+d)(k+1)c^{(n)}_{i,j,k+1}(f).\]
Let $\chi$ be a weight of $\kh$, i.e. a $C$-linear map $\chi:\kh\to C$. We can write $\chi(\begin{pmatrix}a & 0\\ 0 & d \end{pmatrix})=an_1+dn_2$ for some $n_1,n_2\in C$. Fix $N$ sufficiently large so that 
\[(\frac{t}{t_{N}})^{n_1}:=\sum_{l\geq0}{n_1\choose{l}}(\frac{t}{t_N}-1)^l,\]
\[(\frac{e_1}{e_{1,N}})^{n_2-n_1}:=\sum_{l\geq0}{n_2-n_1\choose{l}}(\frac{e_1}{e_{1,N}}-1)^l\]
converge in $\cO_{K^p}^{G_{r(N)}-\an}$. One checks easily that $\theta_\kh(\begin{pmatrix}a & 0\\ 0 & d \end{pmatrix})$ acts as $\chi$ on $(\frac{t}{t_{N}})^{n_1}(\frac{e_1}{e_{1,N}})^{n_2-n_1}$. Denote by $\cO^{\la,\chi}_{K^p}\subset \cO^{\la}_{K^p}$ the subsheaf of sections of weight $\chi$.
\end{para}

\begin{lem} \label{expchi}
For any weight $\chi$ and $U\in\mathfrak{B}$,
\begin{enumerate}
\item $H^i(\kh,\cO^{\la}_{K^p}(U)\otimes \chi)=0,i\geq1$.
\item Suppose $e_1$ is a generator on $V=\pi_{\HT}^{-1}(U)$, then any $f\in \cO^{\la,\chi}_{K^p}(U)$ can be written as 
\[f=(\frac{t}{t_{N}})^{n_1}(\frac{e_1}{e_{1,N}})^{n_2-n_1}\sum_{i\geq0} c^{(n)}_i(f) (x-x_n)^i \]
for some $n>N$ sufficiently large and $c^{(n)}_i(f)\in H^0(V_{G_{r(n)}},\cO_{\mathcal{X}_{K^pG_{r(n)}}})$ with  bound $\|c^{(n)}_i(f)\|\leq C'p^{(n-1)i}$ for a uniform $C'$.
\end{enumerate}
\end{lem}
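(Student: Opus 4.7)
My approach is to reduce both parts to the case $\chi=0$ via a multiplicative twist, and then to exploit the explicit coordinate description of Theorem \ref{cnijk} to compute the relevant cohomology by ``formal integration'' in the variables $y_j=\log(e_1/e_{1,n})$ and $y_k=\log(t/t_n)$.

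The first step is to construct and analyze the twist. Replacing $U$ by a smaller member of $\mathfrak{B}$ and using the interchange $\begin{pmatrix}0 & 1\\ 1 & 0\end{pmatrix}$ if needed, I may assume $e_1$ generates $\omega_{K^p}$ on $V=\pi_{\HT}^{-1}(U)$. For $N$ large enough so that the binomial series defining $\xi:=(t/t_N)^{n_1}(e_1/e_{1,N})^{n_2-n_1}$ converges in $\cO_{K^p}^{G_{r(N)}-\an}(U)$, I will verify using the formulas $h_1\cdot t=h_2\cdot t=t$, $h_1\cdot e_1=e_1$, $h_2\cdot e_1=0$, $u^+\cdot t=u^+\cdot e_1=0$ together with the representation of $\theta_\kh(\mathrm{diag}(a,d))$ as $\begin{pmatrix}d & (d-a)x\\ 0 & a\end{pmatrix}$ from \ref{expkh} that $\theta_\kh(H)\cdot\xi=\chi(H)\xi$ for every $H\in\kh$. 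Since $\xi$ is invertible with inverse $(t_N/t)^{n_1}(e_{1,N}/e_1)^{n_2-n_1}\in\cO^{\la}_{K^p}(U)$, multiplication by $\xi$ is a $\kh$-equivariant (by the Leibniz rule for the $\mathfrak{g}$-action) isomorphism $\cO^{\la}_{K^p}(U)\otimes\chi\overset{\sim}{\to}\cO^{\la}_{K^p}(U)$.

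Part (2) now falls out quickly. Setting $g:=f/\xi\in\cO^{\la,0}_{K^p}(U)$ and expanding $g=\sum c^{(n)}_{i,j,k}(g)(x-x_n)^i y_j^j y_k^k$ by Theorem \ref{cnijk}, the $\kh$-action formulas
\[ c^{(n)}_{i,j,k}(h_1 g)=(k+1)c^{(n)}_{i,j,k+1}(g), \qquad c^{(n)}_{i,j,k}((h_2-h_1) g)=(j+1)c^{(n)}_{i,j+1,k}(g) \]
derived in \ref{expkh} force $c^{(n)}_{i,j,k}(g)=0$ unless $(j,k)=(0,0)$. Multiplying back by $\xi$ yields the desired expansion of $f$, with bound $\|c^{(n)}_i(f)\|\leq C'p^{(n-1)i}$ inherited from Theorem \ref{cnijk}.

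For Part (1) the twist reduces everything to $H^i(\kh,\cO^{\la}_{K^p}(U))=0$ for $i\geq 1$. Since $\kh$ is two-dimensional abelian, the Chevalley--Eilenberg complex is a length-two Koszul complex with differentials $h_1$ and $h_2-h_1$, which by the formulas above act as formal partial derivatives $\partial/\partial y_k$ and $\partial/\partial y_j$ on the expansion. Vanishing of $H^2$ reduces to the surjectivity of $h_1$: given $g$, the element $f$ defined by $c^{(n)}_{i,j,k+1}(f):=c^{(n)}_{i,j,k}(g)/(k+1)$ and $c^{(n)}_{i,j,0}(f):=0$ satisfies $h_1 f=g$, and the worst-case bound $|k+1|_p^{-1}\leq k+1\leq p^k$ gives $\|c^{(n)}_{i,j,k+1}(f)\|\leq C'p^{n(i+j+k+1)}$, so that $f$ lies in the $G_{r(n+1)}$-analytic subspace. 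For $H^1$: given $(g_1,g_2)$ with $(h_2-h_1)g_1=h_1g_2$, lift $g_1$ to $f_0$ with $h_1 f_0=g_1$, observe that $(h_2-h_1)f_0-g_2\in\ker h_1$, and correct by a second integration against $h_2-h_1$ restricted to $\ker h_1$ (on which it is again formal differentiation in $y_j$). The main—but mild—technical obstacle is the one-step enlargement of the analyticity radius $n\rightsquigarrow n+1$ incurred by each integration, which is harmlessly absorbed by the direct limit defining $\cO^{\la}_{K^p}(U)$.
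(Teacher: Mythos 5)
Your argument for Part (2) matches the paper's: twist by the invertible weight-$\chi$ section to reduce to $\chi=0$, then read off from the explicit $\theta_\kh$-formulas in \ref{expkh} that the coefficients with $(j,k)\neq(0,0)$ must vanish. For Part (1), your Koszul complex for $\kh$ is the same formal-integration argument the paper packages via Hochschild--Serre for $\mathfrak{a}\subset\kh$; the difference is cosmetic.

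There is, however, a genuine gap in your convergence estimate in Part (1). You bound $|k+1|_p^{-1}\leq k+1\leq p^k$ and deduce $\|c^{(n)}_{i,j,k+1}(f)\|\leq C'p^{n(i+j+k+1)}$, and then conclude that $f$ lies in $\cO_{K^p}(U)^{G_{r(n+1)}-\an}$. This does not follow. With coefficients bounded only by $C'p^{n(i+j+k)}$, each term $c_{i,j,k}(x-x_n)^i\log(e_1/e_{1,n})^j\log(t/t_n)^k$ has $\|\cdot\|_{G_{r(n)}}$-norm (hence also sup norm on $\pi_{\HT}^{-1}(U)$) controlled only by the constant $C'$, since $\|x-x_n\|_{G_{r(n)}},\|\log(e_1/e_{1,n})\|_{G_{r(n)}},\|\log(t/t_n)\|_{G_{r(n)}}\leq p^{-n}$; you have not shown the terms tend to $0$, so convergence of the series is not established. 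Nor can you invoke the converse of Theorem \ref{cnijk} at level $n+1$: your expansion sits at $x_n$, $e_{1,n}$, $t_n$, not at $x_{n+1}$, $e_{1,n+1}$, $t_{n+1}$. The crude estimate $|k+1|_p^{-1}\leq p^k$ throws away precisely the property that makes the argument work, namely that $|k+1|_p^{-1}\leq k+1$ grows only polynomially in $k$ and hence is $\leq C_\epsilon\,p^{\epsilon(k+1)}$ for every $\epsilon\in(0,1)$. Taking $\epsilon=1/2$ gives $\|c^{(n)}_{i,j,k+1}(f)\|\leq C''p^{(n-1/2)(i+j+k+1)}$, which is exactly the bound the paper records; with it each term has $\|\cdot\|_{G_{r(n)}}$-norm at most $C''p^{-(i+j+k+1)/2}\to 0$, the series converges, and $f$ already lies in $\cO_{K^p}(U)^{G_{r(n)}-\an}$. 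In particular no enlargement $n\rightsquigarrow n+1$ is needed, so the ``one-step enlargement of the analyticity radius'' in your closing remark is not where the difficulty lies: the difficulty is that the bound $p^k$ is too lossy to see convergence at all.
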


\begin{proof}
Using the action of $\GL_2(\Q_p)$, we can reduce to the case considered above, i.e. that $e_1$ generates $H^0(V, \omega_{K^p})$. Note that $(\frac{t}{t_{N}})^{n_1}(\frac{e_1}{e_{1,N}})^{n_2-n_1}$ is invertible. Hence multiplication by it induces an $\kh$-equivariant isomorphism $\cO^{\la}_{K^p}(U)\otimes \chi\xrightarrow{\sim}\cO^{\la}_{K^p}(U)$. Therefore it is enough to prove the case $\chi=0$. The second part is clear in view of the explicit formula for $c^{(n)}_{i,j,k}(\theta_\kh(\begin{pmatrix}a & 0\\ 0 & d \end{pmatrix})\cdot f)$ above. To see the first part, write $\mathfrak{a}=\{\begin{pmatrix} * & 0 \\ 0 & 0 \end{pmatrix}\}\subset \kh$. We claim
\begin{enumerate}
\item $H^1(\mathfrak{a},\cO^{\la}_{K^p}(U))=0$
\item $H^0(\mathfrak{a},\cO^{\la}_{K^p}(U))\subset \cO^{\la}_{K^p}(U)$ is the subset of $f$ such that $c^{(n)}_{i,j,k}(f)=0,k\geq 1$.
\end{enumerate}
Again the second claim is clear by our explicit formula. For the first claim, suppose $f\in \cO^{\la}_{K^p}(U)$ has an expansion as in Theorem \ref{cnijk}. For any $i,j,k\geq0$, let $c'_{i,j,k+1}=\frac{1}{k+1}c^{(n)}_{i,j,k}(f)$.
Then $\|c'_{i,j,k}\|\leq C''p^{(n-0.5)(i+j+k)}$ for some uniform $C''$. Hence
\[f'=\sum_{i,j\geq 0,k\geq1}c'_{i,j,k}(x-x_n)^i\left(\log(\frac{e_1}{e_{1,n}})\right)^j\left(\log(\frac{t}{t_{n}})\right)^k\]
converges in $\cO_{K^p}^{G_{r(n)}-\an}$. One checks  easily $\theta_\kh(\begin{pmatrix}1 & 0\\ 0 & 0 \end{pmatrix})\cdot f'=f$. This proves the vanishing of $H^1(\mathfrak{a},\cO^{\la}_{K^p}(U))$. The same argument also gives
$H^1(\kh/\mathfrak{a},H^0(\mathfrak{a},\cO^{\la}_{K^p}(U)))=0$. By the Hochschild-Serre spectral sequence, we deduce our claim in the lemma.
\end{proof}

We denote by $H^i(\Fl,\cO^{\la}_{K^p})^{\chi}$, the subspace of $H^i(\Fl,\cO^{\la}_{K^p})$ where $\theta_\kh$ acts by $\chi$. Then $H^0(\Fl,\cO^{\la,\chi}_{K^p})=H^0(\Fl,\cO^{\la}_{K^p})^{\chi}$.

\begin{cor} \label{chicomp}
\hspace{2em}
\begin{enumerate}
\item $\theta_\kh(h)\cdot f=0,\,\theta_\kh(z)\cdot f=z\cdot f,\,f\in H^0(\Fl,\cO^{\la}_{K^p})$. In particular, $H^0(\Fl,\cO^{\la,\chi}_{K^p})=0$ if $\chi(h)\neq 0$.
\item If $\chi(h)\neq 0$, then $H^1(\Fl,\cO^{\la,\chi}_{K^p})=H^1(\Fl,\cO^{\la}_{K^p})^{\chi}$. If $\chi(h)=0$, there is a $\mathfrak{g}$-equivariant exact sequence
\[0\to \varinjlim_{K_p\subset \GL_2(\Q_p)}H^0(\mathcal{X}_{K^pK_p},\cO_{\mathcal{X}_{K^pK_p}})\cdot(\frac{t}{t_N})^{n_1}\to H^1(\Fl,\cO^{\la,\chi}_{K^p}) \to H^1(\Fl,\cO^{\la}_{K^p})^{\chi}\to 0,\]
where $n_1=\chi(\begin{pmatrix} 1 & 0 \\ 0 & 0 \end{pmatrix})$ and $t_N\in H^0(\mathcal{X}_{K^pK_p},\cO_{\mathcal{X}_{K^pK_p}})$ sufficiently close to $t$ for some $K_p$.
\end{enumerate}
\end{cor}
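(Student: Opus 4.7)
The proof splits into two parts. Part (1) is a local computation. In any chart $U\in\mathfrak{B}$ where $e_1$ generates $\omega_{K^p}$, the lift in \ref{expkh} gives $\theta_\kh(z)=z$ and $\theta_\kh(h)=-h-2xu^+$ as sections of $\cO_\Fl(U)\otimes_C\mathfrak{g}$; the first identity already holds at the level of $\mathfrak{g}^0$ (since $z$ is central), so $\theta_\kh(z)\cdot f=z\cdot f$ for any $f$, not just for global sections. For the vanishing of $\theta_\kh(h)$ on global sections, I would use Theorem \ref{comcccc} to identify $H^0(\Fl,\cO^{\la}_{K^p})=\tilde{H}^0(K^p,C)^{\la}$ and observe that $\GL_2(\Q_p)$ acts on the set of connected components of $\mathcal{X}_{K^p}$ only through $\det$; consequently the $\mathfrak{g}$-action on $\tilde{H}^0(K^p,C)^{\la}$ factors through $\tr:\mathfrak{g}\to C$, so the trace-zero elements $h,u^+$ act trivially on $f$ and $\theta_\kh(h)\cdot f=-h\cdot f-2xu^+\cdot f=0$. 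The ``in particular'' statement is then immediate, since on $\cO^{\la,\chi}_{K^p}$ the operator $\theta_\kh(h)$ would act as $\chi(h)$, contradicting the vanishing above whenever $\chi(h)\neq 0$.

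For Part (2), the plan is to exploit the twisted Chevalley-Eilenberg (Koszul) resolution of $\cO^{\la,\chi}_{K^p}$ furnished by Lemma \ref{expchi}(1). Setting $\alpha:=\theta_\kh(h)-\chi(h)$, $\beta:=\theta_\kh(z)-\chi(z)$ and $d(f_1,f_2):=\alpha f_2-\beta f_1$, the complex
\[0\to \cO^{\la,\chi}_{K^p}\to \cO^{\la}_{K^p}\xrightarrow{(\alpha,\beta)}(\cO^{\la}_{K^p})^2\xrightarrow{d}\cO^{\la}_{K^p}\to 0\]
is an exact sequence of sheaves, since Lemma \ref{expchi}(1) supplies exactness on each $U\in\mathfrak{B}$. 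Splitting it via $I:=\ker d$ into two short exact sequences, passing to cohomology (and using $H^i(\Fl,\cO^{\la}_{K^p})=0$ for $i\geq 2$ by Theorem \ref{comcccc}), one extracts the $\mathfrak{g}$-equivariant exact sequence
\[0\to\coker\bigl(H^0(\cO^{\la}_{K^p})\xrightarrow{(\alpha,\beta)}H^0(I)\bigr)\to H^1(\Fl,\cO^{\la,\chi}_{K^p})\to\ker\bigl(H^1(\cO^{\la}_{K^p})\to H^1(I)\bigr)\to 0.\]

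By Part (1), $\alpha$ acts on $\tilde{H}^0(K^p,C)^{\la}$ as the scalar $-\chi(h)$ while $\beta$ acts as $z-\chi(z)$. The central technical point is surjectivity of $d:\tilde{H}^0\oplus\tilde{H}^0\to\tilde{H}^0$: this forces $H^1(I)\hookrightarrow H^1(\cO^{\la}_{K^p})\otimes\kh^*$ and identifies the quotient $\ker(H^1(\cO^{\la}_{K^p})\to H^1(I))$ with the $\chi$-isotypic subspace $H^1(\Fl,\cO^{\la}_{K^p})^\chi$. When $\chi(h)\neq 0$, both $d$ and $H^0(\cO^{\la}_{K^p})\to H^0(I)$ are surjective by invertibility of $-\chi(h)$, so the cokernel vanishes and the first assertion of (2) follows. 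When $\chi(h)=0$, surjectivity of $d(f_1,f_2)=-\beta f_1$ reduces to surjectivity of $\beta=z-\chi(z)$ on $\tilde{H}^0(K^p,C)^{\la}$. I would verify this by using that $\tilde{H}^0(K^p,C)^{\la}$ is a locally analytic representation of $\Q_p^\times$ (via $\det$) in which $z$ acts as (twice) a derivation on spaces of locally analytic functions on $\Z_p^\times$, for which surjectivity follows by integration. A short Leibniz computation using $z\cdot(t/t_N)=2(t/t_N)$ then identifies $\ker\beta|_{\tilde{H}^0}$ with $\tilde{H}^0(K^p,C)^{\sm}\cdot(t/t_N)^{n_1}$ via multiplication by $(t/t_N)^{n_1}$, giving the stated exact sequence.

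The main obstacle is the analytic surjectivity of $z-\chi(z)$ on $\tilde{H}^0(K^p,C)^{\la}$ when $\chi(h)=0$; once this ``integration lemma'' is in place, the rest is routine manipulation of the two long exact sequences.
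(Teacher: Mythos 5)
Your proof is correct and takes essentially the same route as the paper's. Your Koszul resolution for part (2) simply unwinds the spectral sequence $\Ext^p_{C[\kh]}(\chi,H^q(\Fl,\cO^{\la}_{K^p}))\Rightarrow H^{p+q}(\Fl,\cO^{\la,\chi}_{K^p})$ that the paper invokes (both rest on Lemma \ref{expchi}(1)), and the ``integration lemma'' you isolate---surjectivity of $z-\chi(z)$ on $\tilde{H}^0(K^p,C)^{\la}$---is exactly the paper's assertion that $H^1(\mathfrak{a},\tilde{H}^0(K^p,C)^{\la})=0$, which it likewise reduces to the explicit description of $\tilde{H}^0$ from \cite{Eme06}.
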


\begin{proof}
The action of $\GL_2(\Q_p)$ on $\tilde{H}^0(K^p,C)^{\la}$ factors through the determinant map, hence for any global section $f\in H^0(\Fl,\cO_{K^p}^{\la})\cong \tilde{H}^0(K^p,C)^{\la}$, we have $\theta_\kh(h)\cdot f=0$ and $\theta_\kh(z)\cdot f={z}\cdot f$ by the explicit expression of $\theta_\kh$ in \ref{expkh}. This also shows that the horizontal action $\theta_\kh$ of $\kh$  agrees with the constant action of $\kh\subset\mathfrak{g}$ on $\tilde{H}^0(K^p,C)^{\la}$.

For the second part, it follows from the first part of  Lemma \ref{expchi} that there is a spectral sequence
\[E_2^{pq}= \Ext^p_{C[\kh]}(\chi,H^q(\Fl,\cO^{\la}_{K^p}))\Rightarrow H^{p+q}(\Fl,\cO^{\la,\chi}_{K^p}).\]
The exact sequence of low degrees reads \footnote{In fact, one can avoid the machinery of spectral sequences in this simple case. It was shown in the proof of Theorem \ref{comcccc} that $H^i(\Fl,\cO^{\la}_{K^p})$ can be computed by the \v{C}ech cohomology.  Hence we can use the cover $\{U_1,U_2\}$ of $\Fl$ introduced  in \ref{xy12} below. Therefore $\cO^{\la}_{K^p}(U_1)\oplus \cO^{\la}_{K^p}(U_2)\to \cO^{\la}_{K^p}(U_1\cap U_2)$ computes $H^i(\Fl,\cO^{\la}_{K^p})$. Now this exact sequence comes from applying $\Ext^{\bullet}_{C[\kh]}(\chi,\cdot)$ to this \v{C}ech complex.}
\[0\to \Ext^1_{C[\kh]}(\chi,H^0(\Fl,\cO^{\la}_{K^p}))\to H^1(\Fl,\cO^{\la,\chi}_{K^p}) \to H^1(\Fl,\cO^{\la}_{K^p})^{\chi}\to \Ext^2_{C[\kh]}(\chi,H^0(\Fl,\cO^{\la}_{K^p})).\]
When $\chi(h)\neq 0$, both $\Ext^1$ and $\Ext^2$ vanish because $\theta_\kh(h)$ acts via zero on $H^0(\Fl,\cO^{\la}_{K^p})$. Now assume $\chi(h)=0$. After multiplying by $(\frac{t}{t_N})^{-n_1}\in H^0(\Fl,\cO^{\la}_{K^p})^{-\chi}$, we may assume $\chi=0$. It suffices to show
\begin{itemize}
\item $H^1(\kh,H^0(\Fl,\cO^{\la}_{K^p}))\cong \varinjlim_{K_p\subset \GL_2(\Q_p)}H^0(\mathcal{X}_{K^pK_p},\cO_{\mathcal{X}_{K^pK_p}})$.
\item $H^2(\kh,H^0(\Fl,\cO^{\la}_{K^p}))=0$.
\end{itemize}
Both claims follow from the Hochschild-Serre spectral sequence and 
\begin{itemize}
\item $H^1(\mathfrak{a},\tilde{H}^0(K^p,C)^{\la})=0$;
\item $H^0(\mathfrak{a},\tilde{H}^0(K^p,C)^{\la})=H^0(\mathfrak{g},\tilde{H}^0(K^p,C)^{\la})=\varinjlim_{K_p\subset \GL_2(\Q_p)}H^0(\mathcal{X}_{K^pK_p},\cO_{\mathcal{X}_{K^pK_p}})$
\end{itemize}
by the explicit description of $\tilde{H}^0(K^p,C)$ in \cite[(4.2)]{Eme06}.  
\end{proof}

\begin{para}
It is interesting to investigate the $p$-adic Hodge-theoretic meaning of $\theta_\kh$. First, we generalize the classical notion of the Sen operator.
\end{para}

\begin{defn} \label{Senop}
Suppose  $W$ is a $C$-Banach space equipped with a semi-linear continuous action of an open subgroup of $G_{\Q_p}$, say $G_K$. We say a continuous $C$-linear endomorphism $\theta_{\mathrm{Sen}}\in\End_C(W)$ is a \textit{Sen operator} if it extends the natural action of $1\in \Q_p\cong \Lie(\Gal(\Q_p(\mu_{p^\infty})/\Q_p))$ on the $G_{K(\mu_{p^\infty})}$-smooth, $G_K$-locally analytic vectors of $W$ (viewed as a $\Q_p$-Banach space). 

If $W=\varinjlim_n W_n$ is an increasing union of $C$-Banach spaces $W_n$ equipped with a semi-linear continuous action of an open subgroup of $G_{\Q_p}$, then we say $\theta\in\End_C(W)$ is a Sen operator if $\theta$ preserves each $W_n$ and acts as a Sen operator on it. We also say $W$ has pure Hodge-Tate-Sen weight $k\in C$ if multiplication by $-k$ is a Sen operator on $W$.
\end{defn}

\begin{rem}
The first part of the definition makes sense as for any $G_{K(\mu_{p^\infty})}$-smooth vector $v$, the action of $G_K$ on $v$ factors through a finite-dimensional $p$-adic Lie group which has an open subgroup naturally isomorphic to an open subgroup of $\Gal(\Q_p(\mu_{p^\infty})/\Q_p)$. Also it is clear that this definition is independent of the choice of $K$.
\end{rem}

\begin{rem}
If $W$ is a finite-dimensional $C$-vector space, then in \cite{Sen80} Sen proves that $\theta_{\mathrm{Sen}}$ exists and is unique. However for a general $W$, to what extent $\theta_{\mathrm{Sen}}$ exists uniquely is not known to the author.
\end{rem}

In our case, we will take $W=\cO_{K^p}(U)^{\la}$ and $\tilde{H}^i(K^p,C)^{\la}$. Note that $V_{K_p}$ can be defined over a finite extension $K$ of $\Q_p$ so $\cO_{K^p}(U)^{\la}$ has a natural action of $G_K$.

\begin{thm} \label{Sensf}
$\theta_\kh(\begin{pmatrix}0 & 0\\ 0 & 1 \end{pmatrix})$ is the unique Sen operator on $\cO_{K^p}(U)^{\la}$.
\end{thm}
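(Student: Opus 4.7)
The plan is to pin down both existence and uniqueness of the Sen operator by explicit computation in the coordinates of Theorem \ref{cnijk}. Using the $\GL_2(\Q_p)$-equivariance of $\theta_\kh$ and the fact (Theorem \ref{piHT}) that $U$ misses at least one $\Q_p$-rational point of $\Fl$, I would first reduce to the case where $e_1$ generates $H^0(V_\infty,\omega_{K^p})$ on $V_\infty=\pi_{\HT}^{-1}(U)$. In this setting the formula from \ref{expkh} yields
\[c^{(n)}_{i,j,k}\bigl(\theta_\kh(\begin{pmatrix} 0 & 0 \\ 0 & 1 \end{pmatrix})\cdot f\bigr)=(j+1)\,c^{(n)}_{i,j+1,k}(f)+(k+1)\,c^{(n)}_{i,j,k+1}(f),\]
so $\theta_\kh(\begin{pmatrix} 0 & 0 \\ 0 & 1 \end{pmatrix})$ acts as the formal differential operator $\partial_{\log(e_1/e_{1,n})}+\partial_{\log(t/t_n)}$ on the expansion. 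This operator is $C$-linear by construction and continuous on each $G_m$-analytic layer by Lemma \ref{LieBound}, so it lives in the target class of Definition \ref{Senop}.

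Next I would produce a dense family of $G_{K(\mu_{p^\infty})}$-smooth, $G_K$-locally analytic vectors on which the natural Sen action is transparent. The individual summands $c^{(n)}_{i,j,k}(f)(x-x_n)^i(\log\tfrac{e_1}{e_{1,n}})^j(\log\tfrac{t}{t_n})^k$ are all such vectors: the finite-level coefficients $c^{(n)}_{i,j,k}(f), x_n, e_{1,n}, t_n$ are fixed by $G_{K'}$ for a common finite extension $K'/\Q_p$; the ratio $x=e_2/e_1$ is $G_{\Q_p}$-fixed since both fake-Hasse invariants transform under $\varepsilon_p$; and the same transformation property forces
\[g(\log(e_1/e_{1,n}))=\log(e_1/e_{1,n})+\log\varepsilon_p(g),\qquad g(\log(t/t_n))=\log(t/t_n)+\log\varepsilon_p(g)\]
for $g$ in a suitable open subgroup of $G_{K'}$. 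Thus both logarithmic factors are $G_{K'(\mu_{p^\infty})}$-smooth and $G_{K'}$-analytic, and the natural action of $1\in\Lie(\Gal(\Q_p(\mu_{p^\infty})/\Q_p))$ sends each of them to $1$. Since $G_{\Q_p}$ acts on $\cO_{K^p}(U)$ by ring automorphisms, the infinitesimal Galois action satisfies the Leibniz rule, and applying it to each summand reproduces precisely the operator $\partial_{\log(e_1/e_{1,n})}+\partial_{\log(t/t_n)}$, matching $\theta_\kh(\begin{pmatrix} 0 & 0 \\ 0 & 1 \end{pmatrix})$. This establishes the extension property, hence existence.

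For uniqueness, any two continuous $C$-linear Sen operators must agree on these smooth+analytic summands, which are dense in each Banach space $\cO_{K^p}(U)^{G_n-\an}$ by Theorem \ref{cnijk}; continuity then forces them to coincide on all of $\cO_{K^p}(U)^{\la}$. The main obstacle will be the bookkeeping of Galois equivariance for the approximating elements --- specifically, verifying that $t_n$ (respectively $e_{1,n}$, $x_n$, $c^{(n)}_{i,j,k}(f)$) really is fixed by a single open subgroup of $G_{\Q_p}$, so that the formal assertion ``$g(\log(t/t_n))=\log(t/t_n)+\log\varepsilon_p(g)$ for $g$ close to identity'' is justified despite the fact that $t$ itself transforms nontrivially under the cyclotomic character. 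Once this integrality and field-of-definition check is settled cleanly, both halves of the theorem reduce to the direct comparison of formulas indicated above.
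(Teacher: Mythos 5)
Your overall strategy is the same as the paper's: pass to the explicit expansion of Theorem \ref{cnijk}, read off $\theta_\kh(\begin{pmatrix}0 & 0\\ 0 & 1 \end{pmatrix})$ as the derivation $\partial_{\log(e_1/e_{1,n})}+\partial_{\log(t/t_n)}$ from the formula in \ref{expkh}, and compare with the natural $\Lie(\Gal(\Q_p(\mu_{p^\infty})/\Q_p))$-action using the cyclotomic transformation law of $e_1$ and $t$. That computational core is correct.

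The gap is in the claim that, for an arbitrary $f\in\cO_{K^p}(U)^{\la}$, the individual summands $c^{(n)}_{i,j,k}(f)(x-x_n)^i(\log\tfrac{e_1}{e_{1,n}})^j(\log\tfrac{t}{t_n})^k$ are $G_{K(\mu_{p^\infty})}$-smooth, $G_K$-locally analytic vectors. The monomial factors are fine once $x_n,e_{1,n},t_n$ are arranged to lie over a finite extension $K$ of $\Q_p$, but the coefficients $c^{(n)}_{i,j,k}(f)$ live in $H^0(V_{G_{r(n)}},\cO_{\mathcal{X}_{K^pG_{r(n)}}})$, an affinoid $C$-algebra, and for a general $f$ they are not fixed by any open subgroup of $G_{\Q_p}$. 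You flag this at the end as ``the main obstacle,'' but you cast it as a bookkeeping issue --- in fact the assertion is simply false as stated, and the argument needs to be restructured, not merely made more careful. The paper's fix is to first reduce to the Banach algebra $M=\cO^n(U)\{x,e_1,t\}$ (which is sandwiched between the $G_m$-analytic layers by Theorem \ref{cnijk}) and then, for each finite extension $K'/K$, introduce the $K'$-subalgebra $M_{K'}\subset M$ consisting of those $f$ with \emph{all} coefficients $c^{(n)}_{i,j,k}(f)$ defined over $K'$. One then proves the two-sided identification: $M_{K'}$ is precisely the set of $G_{K'(\mu_{p^\infty})}$-fixed, $G_{K'}$-analytic vectors in $M$ (the forward inclusion by inspection; the converse by recovering the coefficients from $f$ via the $\mathfrak{g}$-action and applying Tate normalized traces). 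Then $M\cong M_{K'}\widehat\otimes_{K'}C$ supplies both the density needed for uniqueness and the completeness of the dictionary needed for existence, i.e.\ one has verified agreement on the \emph{entire} space on which the natural Sen action is defined, not merely on a dense subset of it. Until you make that replacement --- passing from ``individual summands of any $f$'' to ``elements with all coefficients over $K'$'' --- your existence claim does not close, because the Definition \ref{Senop} of a Sen operator requires agreement with the natural action on all smooth locally analytic vectors, not just a conveniently chosen subfamily.
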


\begin{rem}
Roughly speaking, this result relates an operator in $p$-adic Hodge theory on (the infinite level) modular curves with some group-theoretic operator ($\theta_\kh$). This is very classical  in the study of the cohomology of locally symmetric spaces using complex Hodge theory. See for example Chapt. II \S4 of \cite{BW00}.
\end{rem}

\begin{rem}
This result and Theorem \ref{Senkh} below are all obtained by explicit calculations. It should be possible to avoid these calculations by further decompleting $\cO_{K^p}(U)^{\la}$ with respect to the action of $G_K$ (i.e. usual Sen theory). I plan to come back to this in a future work.
\end{rem}

\begin{proof}
By Theorem \ref{cnijk}, for each $m$, we have continuous embeddings 
\[\cO_{K^p}(U)^{G_m-\an}\to\cO^{n}(U)\{x,e_1,t\} \to \cO_{K^p}(U)^{G_{r(n)}-\an}\]
for some $n$. See Definition \ref{OUnijk}. Since elements defined over a finite extension of $K$ are dense in $H^0(V_{G_{r(n)}},\cO_{\mathcal{X}_{K^pG_{r(n)}}})$, we may assume $x_n,e_{1,n},t_n$ defined over $K$ after enlarging $K$ if necessary. Hence $G_K$ preserves $\cO^{n}(U)\{x,e_1,t\}$ and  it is enough to show that the action of $\theta_\kh(\begin{pmatrix}0 & 0\\ 0 & 1 \end{pmatrix})$ is a Sen operator on $\cO^{n}(U)\{x,e_1,t\}$ and is the unique one. 

For simplicity, we write $M$ for $\cO^{n}(U)\{x,e_1,t\}$ and for any finite extension $K'$ of $K$, we denote by $M_{K'}\subset M$ the subspace of $f$ with all $c^{(n)}_{i,j,k}(f)$ defined over $K'$. It is clear that 
\[M\cong M_{K'}\widehat{\otimes}_{K'} C.\] 
One useful fact is  
\begin{itemize}
\item $G_{\Q_p}$ acts trivially on $x$  and acts via cyclotomic character on $e_1,t$.
\end{itemize}
From this, one can check that $M_{K'}$ is $G_{K'(\mu_{p^\infty})}$-fixed and $G_{K'}$ acts analytically on it. Conversely, any such element $f$ of $M$ is contained in $M_{K'}$ because $c^{(n)}_{i,j,k}(f)$ can be computed from $f$ using the action of $\mathfrak{g}$ as in \ref{LS}, hence is $G_{K'}$-analytic and an argument using Tate's normalized trace implies that it is in fact fixed by $G_{K'}$. Now a direct computation (using results in \ref{expkh}) shows that $\theta_\kh(\begin{pmatrix}0 & 0\\ 0 & 1 \end{pmatrix})$  agrees with the natural action of $1\in \Q_p\cong \Lie(\Gal(\Q_p(\mu_{p^\infty})/\Q_p))$ on  $M_{K'}$. The uniqueness follows from $M\cong M_{K'}\widehat{\otimes}_{K'} C$.
\end{proof}

\begin{thm} \label{Senkh}
$\theta_\kh(\begin{pmatrix}0 & 0\\ 0 & 1 \end{pmatrix})$ is the unique Sen operator on $H^i(\Fl,\cO_{K^p}^{\la})=\tilde{H}^i(K^p,C)^{\la}$ for any $i$.
\end{thm}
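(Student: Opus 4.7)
The plan is to globalize Theorem \ref{Sensf} using \v{C}ech cohomology. First I would choose a finite cover $\mathfrak{U}=\{U_1,\ldots,U_r\}\subset\mathfrak{B}$ of $\Fl$ with each $U_i$ defined over some finite extension $K$ of $\Q_p$, which exists by Theorem \ref{piHT}. By the proof of Theorem \ref{comcccc}, the \v{C}ech complex $C^\bullet(\mathfrak{U},\cO_{K^p}^{\la})$ computes $H^i(\Fl,\cO_{K^p}^{\la})$. Because $\theta_\kh$ comes from global sections of $\mathfrak{b}^0/\mathfrak{n}^0$ (Corollary \ref{khact}), it commutes with restriction maps and acts termwise on this complex, inducing an endomorphism of the cohomology; since it commutes with the $\GL_2(\Q_p)$-action, it preserves the filtration of $\tilde H^i(K^p,C)^{\la}$ by $G_n$-analytic Banach subspaces $W_n$.

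Next I would verify the Sen property on each $W_n$ by installing a $K$-rational structure modeled on the proof of Theorem \ref{Sensf}. For each intersection $U_I\in\mathfrak{B}$ and each sufficiently large $n$, enlarge $K$ so that the data $(x_n,e_{1,n},t_n,V_{G_{r(n)}})$ underlying Definition \ref{OUnijk} is defined over $K$, and let $M_{K,n}(U_I)\subset \cO^n(U_I)\{x,e_1,t\}$ be the subspace of those $f$ whose expansion coefficients $c^{(n)}_{i,j,k}(f)$ are $K$-rational; then $M_{K,n}(U_I)\widehat\otimes_K C$ recovers the full Banach space. These $K$-subspaces are stable under the restriction maps of $\mathfrak{U}$ (after possibly enlarging $K$ and replacing $n$ by $r(n)$), so they form a \v{C}ech subcomplex $C^\bullet_{K,n}$ whose completed base change to $C$ returns the \v{C}ech complex of $\cO^n\{x,e_1,t\}$. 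Using the integral presentation in \ref{stdesc} together with Lemma \ref{lemdirlim} and a bounded-$p$-torsion argument as in Lemma \ref{ptorfin}, the cohomology of $C^\bullet_{K,n}$ furnishes a $K$-form $W_{n,K}$ of $W_n$ with $W_{n,K}\widehat\otimes_K C=W_n$.

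The conclusion then follows by exactly the scalar-decompletion argument used in Theorem \ref{Sensf}: on each $M_{K,n}(U_I)$, hence on $W_{n,K}$, the subgroup $G_{K(\mu_{p^\infty})}$ acts trivially (since $x$ is $G_{\Q_p}$-fixed while $e_1$ and $t$ transform by the cyclotomic character, whose logarithm kills $G_{K(\mu_{p^\infty})}$), the action of $G_K$ is analytic, and $\theta_\kh(\begin{pmatrix}0 & 0\\ 0 & 1\end{pmatrix})$ agrees with the derivative of the $\Gal(K(\mu_{p^\infty})/K)$-action. Passing to cohomology establishes the Sen property on each $W_n$, and uniqueness is automatic from $W_n\cong W_{n,K}\widehat\otimes_K C$. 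The main obstacle I anticipate is the base-change step in the middle paragraph: ensuring that the \v{C}ech differentials of $C^\bullet_{K,n}$ are strict with uniformly bounded $p$-torsion in cohomology, so that $\widehat\otimes_K C$ commutes with taking cohomology and genuinely yields a $K$-form of $W_n$ compatible with the inclusion $W_n\hookrightarrow \tilde H^i(K^p,C)^{\la}$. Once these technicalities are handled, the theorem reduces cleanly to Theorem \ref{Sensf} applied termwise.
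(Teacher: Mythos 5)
Your overall architecture (globalize Theorem \ref{Sensf} along a \v{C}ech complex, install a $K$-rational structure, conclude by scalar decompletion) matches the paper's, but there is a genuine gap in the middle paragraph. You assert that the cohomology of the $K$-rational subcomplex $C^\bullet_{K,n}$ furnishes a $K$-form $W_{n,K}$ of $W_n=\tilde H^1(K^p,C)^{G_n-\an}$ with $W_{n,K}\widehat\otimes_K C=W_n$. This identification is false at any fixed $n$. The Banach spaces $\cO^n(U_I)\{x,e_1,t\}$ are only sandwiched between $\cO_{K^p}(U_I)^{G_{m}-\an}$ and $\cO_{K^p}(U_I)^{G_{r(n)}-\an}$; the three direct systems agree in the colimit (this is how Theorem \ref{comcccc} is proved) but not at any fixed level. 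Consequently, the cohomology of the \v{C}ech complex built from $\cO^n\{\cdot\}$ is not $W_n$ — it is an a priori different object $M^n$ with no reason to coincide with, or even surject onto, the $G_n$-analytic vectors. Trying to identify the two is precisely the ``main obstacle'' you flag, and no amount of strictness or torsion control will make it true; the strategy as stated cannot produce a $K$-form of $W_n$ itself.

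The paper avoids this by not attempting to decomplete $W_n$ directly. Using strong $\mathfrak{LA}$-acyclicity of the two-term \v{C}ech complex for the cover $\{U_1,U_2\}$, it shows the inclusion $W_m\hookrightarrow W_{r(n)}$ factors through the largest Hausdorff quotient $M^n$ of $\coker\bigl(\cO^n(U_1)\{x,e_1,t\}\oplus\cO^n(U_2)\{y,e_2,t\}\to\cO^n(U_{12})\{x,e_1,t\}\bigr)$, and it is this intermediate space that carries the $K$-form $M^o\widehat\otimes_{\cO_K}C$. Since the factorization makes $W_m\hookrightarrow M^n$ a closed $G_K$-equivariant embedding, verifying the Sen-operator identity on $M^n$ automatically verifies it on the subspace $W_m$ (the $G_{K(\mu_{p^\infty})}$-smooth $G_K$-analytic vectors of $W_m$ sit inside those of $M^n$), and uniqueness likewise descends. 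To repair your argument you should replace the claim ``$W_{n,K}\widehat\otimes_K C=W_n$'' with this sandwiching step, prove that $W_m\to M^n$ is well-defined and injective (which needs $\cO_{K^p}(U_I)^{G_{m'}-\an}\subset\cO^n(U_I)\{\cdot\}$ on all three open sets, plus the $\mathfrak{LA}$-acyclicity input you didn't mention), and only then invoke the decompletion of $M^n$. The specialization to the two-element cover $\{U_1,U_2\}$ is also more than cosmetic: it replaces a general cohomology group by a single cokernel, which is what makes the torsion-free quotient and the $\widehat\otimes$-compatibility checkable.
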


\begin{para} \label{xy12}
The case $i=0$ follows from Theorem \ref{Sensf}. So it suffices to prove the case $i=1$. We introduce some notation first.

We have shown in the proof of Theorem \ref{comcccc} that $H^i(\Fl,\cO^{\la}_{K^p})$ can be computed by \v{C}ech cohomology using a finite cover of $\Fl$ in $\mathfrak{B}$. In particular, we can use the cover $\{U_1,U_2\}$, where $U_1$ (resp. $U_2$) is the subset $|x|\leq 1$ (resp. $|x|\geq 1$). See Theorem \ref{piHT}. Denote by $U_{12}=U_1\cap U_2$. Then
\[\cO_{K^p}^{\la}(U_1)\oplus \cO_{K^p}^{\la}(U_2)\to \cO_{K^p}^{\la}(U_{12})\]
computes $H^i(\Fl,\cO_{K^p}^{\la})$. Denote by $V_?=\pi_{\HT}^{-1}(U_?)$ with $?=1,2,12$. Fix an open subgroup $G_0=1+p^lM_2(\Z_p)$ for some sufficiently large $l\geq 2$ so that  $V_?$ is the preimage of some affinoid subset $V_{?,G_0}$ of  $\mathcal{X}_{K^pG_0}$. As before, we write $G_n=G_0^{p^n}$. Since $e_1$ is a basis on $U_1$, we can find $x_n,e_{1,n},t_n$ as in \ref{setupexplicit} and define $\cO^{n}(U_1)\{x,e_1,t\}$ as in \ref{OUnijk}. Note that by restricting $x_n,e_{1,n},t_n$ on $U_{12}$, we can define $\cO^{n}(U_{12})\{x,e_1,t\}$ similarly with unit ball $\cO^{n}(U_{12})^+\{x,e_1,t\}$.  Now $e_1$ is not a basis on $U_2$. So we work with $\frac{1}{x},e_2,t$ instead of $x,e_1,t$. Write $y=\frac{1}{x}=\frac{e_1}{e_2}$. Since the action of $w=\begin{pmatrix} 0 & 1 \\ 1 & 0\end{pmatrix}$ interchanges $U_1,U_2$, we obtain $y_n:=w^*x_n,e_{2,n}:=w^*e_{1,n}$ on $U_2$. Using $y_n,e_{2,n},t_n$, we can define $\cO^{n}(U_2)\{y,e_2,t\}\subset\cO^{\la}_{K^p}(U_2)$ on $U_2$.
\end{para}

\begin{lem} \label{ye2t}
For any $n>2$, the restriction from $U_2$ to $U_{12}$ induces a map 
\[\cO^{n}(U_2)\{y,e_2,t\}\to \cO^{n}(U_{12})\{x,e_1,t\}\]
preserving norms of $y-y_n$ and $\log(\frac{e_2}{e_{2,n}})$. See \ref{OUnijk} for the definition of norm $\|\cdot\|_n$. 
\end{lem}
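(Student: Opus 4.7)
The plan is to write $y - y_n$ and $\log(e_2/e_{2,n})$ as explicit convergent expansions in the coordinates $x - x_n$, $\log(e_1/e_{1,n})$, $\log(t/t_n)$ on $U_{12}$, verify that they lie in $\cO^{n}(U_{12})\{x,e_1,t\}$ with the expected $\|\cdot\|_n$-norms, and then extend the restriction to arbitrary elements of $\cO^{n}(U_2)\{y,e_2,t\}$ using the Banach algebra structure established in Definition \ref{OUnijk}.

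For $y - y_n$: on $U_{12}$ we have $y = 1/x$ with $|x| = |x_n| = 1$, so the geometric series gives
\[ y - y_n = (x_n^{-1} - y_n) + \sum_{k \geq 1} (-1)^k x_n^{-(k+1)} (x - x_n)^k. \]
Both $x_n^{-1}$ and $y_n$ approximate $y = 1/x$ to within $p^{-n}$ on $U_{12}$, so $\|x_n^{-1} - y_n\| \leq p^{-n}$ by the ultrametric inequality, while the higher coefficients have norm $1$. Hence $y - y_n \in \cO^{n}(U_{12})\{x,e_1,t\}$ with $\|y - y_n\|_n = p^{-(n-1)}$, achieved by the linear term and matching its norm in $\cO^{n}(U_2)\{y,e_2,t\}$. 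For $\log(e_2/e_{2,n})$, set $a := e_{2,n}/e_{1,n} \in H^0(V_{12,G_{r(n)}}, \cO_{\mathcal{X}_{K^p G_{r(n)}}})$. Since $e_2/e_1 = x$ and $e_{1,n}, e_{2,n}$ approximate $e_1, e_2$ within $p^{-n}$, we have $\|a - x\| \leq p^{-n}$, hence $\|\log(x_n/a)\| \leq p^{-n}$. Combining this with the identity
\[ \log(e_2/e_{2,n}) = \log(x/a) + \log(e_1/e_{1,n}) = \log(x_n/a) + \log(x/x_n) + \log(e_1/e_{1,n}), \]
and the expansion $\log(x/x_n) = \sum_{k \geq 1} (-1)^{k-1} (x-x_n)^k/(k x_n^k)$, the coefficients of $\log(e_2/e_{2,n})$ in $\cO^{n}(U_{12})\{x,e_1,t\}$ are: a constant of norm $\leq p^{-n}$, a coefficient $1$ of $\log(e_1/e_{1,n})$, and coefficients $(-1)^{k-1}/(k x_n^k)$ of $(x-x_n)^k$ having norm $p^{v_p(k)}$. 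For $n > 2$, the elementary estimate $v_p(k) \leq \log_p k \leq k - 1$ gives $(n-1)k \geq v_p(k) + (n-1)$, yielding the required bound $\|\cdot\|_n$-contribution $\leq p^{-(n-1)}$ uniformly in $k$. Thus $\|\log(e_2/e_{2,n})\|_n = p^{-(n-1)}$, again matching its norm on $U_2$.

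Finally, because $\cO^{n}(U_{12})\{x,e_1,t\}$ is a Banach algebra with submultiplicative norm (from its presentation as a product of $H^0(V_{12,G_{r(n)}},\cO^+)$-modules scaled by $p^{-(n-1)(i+j+k)}$) and the images of $y - y_n$, $\log(e_2/e_{2,n})$, and $\log(t/t_n)$ all have norm $p^{-(n-1)}$, any element $f = \sum c_{i,j,k} (y-y_n)^i (\log(e_2/e_{2,n}))^j (\log(t/t_n))^k \in \cO^{n}(U_2)\{y,e_2,t\}$ with $\|c_{i,j,k}\| \leq C' p^{(n-1)(i+j+k)}$ has image whose series converges in $\cO^{n}(U_{12})\{x,e_1,t\}$ to an element of norm $\leq C'$. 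Since the coefficients $c_{i,j,k}$ together with $x_n, e_{1,n}, e_{2,n}, t_n, a$ are all defined at level $G_{r(n)}$, the reindexed coefficients land in $H^0(V_{12,G_{r(n)}}, \cO_{\mathcal{X}_{K^p G_{r(n)}}})$, so we stay in $\cO^{n}(U_{12})\{x,e_1,t\}$ and do not need to pass to a higher level. The main technical point is the bound on the logarithm series in the second paragraph, where the denominators $1/k$ have to be dominated by the scaling factors $p^{(n-1)k}$; this is precisely where the hypothesis $n > 2$ enters.
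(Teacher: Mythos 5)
Your first two paragraphs are correct and take the same approach as the paper: the paper proves the claim for $y-y_n$ by exactly the geometric series you write, and for $\log(e_2/e_{2,n})$ it simply says ``can be proved in a similar way,'' so your explicit bookkeeping via $a=e_{2,n}/e_{1,n}$, the identity $\log(e_2/e_{2,n})=\log(x_n/a)+\log(x/x_n)+\log(e_1/e_{1,n})$, and the estimate $v_p(k)\leq k-1$ is a useful elaboration of a detail the paper omits.

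There is a gap in the third paragraph. Submultiplicativity gives $\|c_{i,j,k}\cdot(\text{image of monomial})\|_n\leq C'$ for each term, but this bound is only \emph{bounded}, not tending to $0$; in a nonarchimedean Banach space a series $\sum a_m$ converges in the norm topology iff $\|a_m\|\to 0$, so the stated reason does not establish that the rearranged series converges to an element of $\cO^n(U_{12})\{x,e_1,t\}$. What one actually needs is a coefficient-wise argument: show that for each $(I,J,K)$ the contributions to $c^{(n)}_{I,J,K}(f|_{U_{12}})$ from $(i,j,k)$ with $i+j+k$ large decay $p$-adically and that their sum satisfies $\|c^{(n)}_{I,J,K}\|\leq C' p^{(n-1)(I+J+K)}$. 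For instance, the coefficient of $(x-x_n)^I$ receives contributions from $(y-y_n)^i$ with $i>I$ only through the small constant term $x_n^{-1}-y_n$ (norm $\leq p^{-n}$) raised to at least the $(i-I)$-th power, yielding an extra factor $\leq p^{-(i-I)}$ that makes the sum converge and keeps the required bound; a similar accounting handles the $\log(e_2/e_{2,n})^j$ factor. To be fair, the paper's own proof addresses only the two displayed norm estimates and leaves the passage to general elements entirely implicit, so this is a gap the paper also leaves to the reader. A small further inaccuracy: the bound $v_p(k)\leq (n-1)(k-1)$ you invoke already holds for every $n\geq 2$, so it is not ``precisely where the hypothesis $n>2$ enters.''
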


\begin{proof}
On $U_{12}$, we have 
\[y-y_n=\frac{1}{x}-y_n=\frac{1}{x_n}\cdot\frac{1}{1+\frac{x-x_n}{x_n}}-y_n=(\frac{1}{x_n}-y_n)-\sum_{i\geq 1}\frac{1}{(-x_n)^{i+1}}(x-x_n)^i.\]
Since $\|x_n\|=\|x\|=1$ and $\frac{1}{x_n}-y_n=(\frac{1}{x_n}-\frac{1}{x})+(y-y_n)$ has norm $\leq p^{-n}$ on $U_{12}$, 
hence 
\[p^{-(n-1)}(y-y_n)\in -\frac{1}{x_n^2}p^{-(n-1)}(x-x_n)+ p\cO^{n}(U_{12})^+\{x,e_1,t\}.\]
The claim for $\log(\frac{e_2}{e_{2,n}})$ can be proved in a similar way.
\end{proof}

\begin{proof}[Proof of Theorem \ref{Senkh}]
Fix an integer $m>2$. Recall that $\tilde{H}^1(K^p,C)= \coker(\cO_{K^p}(U_1)\oplus \cO_{K^p}(U_2)\to \cO_{K^p}(U_{12}))$. By Proposition \ref{cOULAacyc} and Corollary \ref{admLAacyc}, all the terms in the following exact sequence
\[0\to \tilde{H}^0(K^p,C)\to \cO_{K^p}(U_1)\oplus \cO_{K^p}(U_2)\to \cO_{K^p}(U_{12})\]
are strongly $\mathfrak{LA}$-acyclic with respect to the action of $G_0$. From this, it is easy to see that there exists an integer $m'\geq m$ such that $\tilde{H}^1(K^p,C)^{G_{m}-\an}$ is contained in the image of $\cO_{K^p}(U_{12})^{G_{m'}-\an}$. Then by Theorem \ref{cnijk}, we can find $n\geq m'$ so that $\cO_{K^p}(U_{12})^{G_{m'}-\an}\subset \cO^n(U_{12})\{x,e_1,t\}\subset \cO_{K^p}(U_{12})^{G_{r(n)}-\an}$. As a consequence, the inclusion $\tilde{H}^1(K^p,C)^{G_m-\an}\subset \tilde{H}^1(K^p,C)^{G_{r(n)}-\an}$ factors through the largest separated quotient $M^n$ of
\[  \coker( \cO^n(U_{1})\{x,e_1,t\}\oplus \cO^n(U_{2})\{y,e_2,t\}\to \cO^n(U_{12})\{x,e_1,t\})    \]
(using the quotient topology on the cokernel). It suffices to show that $\theta_\kh(\begin{pmatrix} 0 & 0 \\ 0 & 1 \end{pmatrix})$ acts as the unique Sen operator on $M^n$.

Now let $K$ be a finite extension of $\Q_p$ so that $V_{1,G_0},x_n,e_{1,n},t_n$ are all defined over $K$. Denote by $M_1\subset \cO^{n}(U_{1})\{x,e_1,t\}$ the subspace of $G_{K(\mu_{p^\infty})}$-fixed, $G_{K}$-analytic vectors. This is a $K$-Banach algebra with norm $\|\cdot\|_n$ and we denote its unit ball by $M_1^o$. Similarly we can define $M_2\subset \cO^{n}(U_{2})\{y,e_2,t\}$ and $M_{12}\subset\cO^{n}(U_{12})\{x,e_1,t\}$ and their unit balls $M^o_2,M^o_{12}$. Then as in the proof of Theorem \ref{Sensf}, we have $M^o_1\widehat\otimes_{\cO_K} C\cong \cO^{n}(U_{1})\{x,e_1,t\}$, and similar results hold for $M_{12}$ and $M_2$. The previous lemma implies that $\cO_{K^p}^{\la}(U_1)\oplus \cO_{K^p}^{\la}(U_2)\to \cO_{K^p}^{\la}(U_{12})$ has a subcomplex $M_1^o\oplus M_2^o\to M_{12}^o$. Moreover let $M^o$ be the quotient of $\coker(M_1^o\oplus M_2^o\to M_{12}^o)$ by its torsion subgroup. Then
\[M^n=M^o\widehat\otimes_{\cO_K} C\]
and  $M=M^o\otimes_{\Z_p} \Q_p$ is the subspace of $G_{K(\mu_{p^\infty})}$-fixed $G_K$-analytic vectors in $M^n$. As in the proof of Theorem \ref{Sensf}, we know that $\theta_\kh(\begin{pmatrix} 0 & 0 \\ 0 & 1 \end{pmatrix})$ agrees with $1\in \Lie(\Gal(\Q_p(\mu_{p^\infty})/\Q_p))$ on $M_{12}$, hence also agrees on its quotient $M$. This verifies our definition of Sen operator.
\end{proof}

\begin{rem}
It can be proved by explicit calculations that $\coker(M_1^o\oplus M_2^o\to M_{12}^o)$ is already torsion-free, so taking the largest separated quotient is in fact unnecessary.
\end{rem}

\begin{rem}
It is clear from the proof that there is a close relation between two analytic aspects of the completed cohomology: one comes from the group action of $\GL_2(\Q_p)$, and one comes from the Galois action of $G_{\Q_p}$. Also Theorem \ref{Senkh} implies that the two infinitesimal characters are closely related. Both are actually deep theorems in the $p$-adic local Langlands for $\GL_2(\Q_p)$. See Th\'eor\`{e}me V.3 of \cite{CD14} and  Th\'eor\`{e}me 1.2 of \cite{Dos12}.
\end{rem}

\begin{rem} \label{dconSen}
It is natural to ask whether we can show the existence of Sen operator on  $\tilde{H}^i(K^p,C)^{\la}$ in a more direct way. The answer is affirmative. We sketch a construction here based on the Tate-Sen formalism of Berger-Colmez \cite{BC08}. Let $G=G_n$ for some $n$. The key point here is that 
\begin{itemize}
\item the action of $G_{\Q_p}$ on $\tilde{H}^i(K^p,\Q_p)^{G-\an,o}/p$ is trivial on an open subgroup, i.e. the image of $G_{\Q_p}\to \End\left((\tilde{H}^i(K^p,\Q_p)^{G-\an})^{o}/p\right)$ is finite, where $\tilde{H}^i(K^p,\Q_p)^{G-\an,o}$ denotes the unit open ball of $\tilde{H}^i(K^p,\Q_p)^{G-\an}$.
\end{itemize}
The argument is almost the same as the proof of \cite[Theorem 6.1]{Pan20}. Let $W^o$ be the unit open ball of $\tilde{H}^i(K^p,\Q_p)$. Then
\begin{eqnarray*}
\tilde{H}^i(K^p,\Q_p)^{G-\an,o}/p&=&\left(W^o\widehat{\otimes}_{\Z_p}\sC^{\an}(G_n,\Q_p)^o\right)^{G_n}/p\\
&\subseteq&  \left(W^o\otimes_{\Z_p}\sC^{\an}(G_n,\Q_p)^o/p\right)^{G_n}\\
&\subseteq&  \left(W^o/pW^o\otimes_{\F_p}\sC^{\an}(G_n,\Q_p)^o/p\right)^{G_{n+1}}\\
&=& (W^o/pW^o)^{G_{n+1}} \otimes_{\F_p}\sC^{\an}(G_n,\Q_p)^o/p,
\end{eqnarray*}
where the last equality follows from Lemma \ref{modpn+1trivial}. All maps are $G_{\Q_p}$-equivariant. Note that $(W^o/pW^o)^{G_{n+1}}$ is a finite-dimensional $\F_p$-vector space by the admissibility of the completed cohomology. Hence the action of $G_{\Q_p}$ on $\tilde{H}^i(K^p,\Q_p)^{G-\an,o}/p$  necessarily factors through a finite quotient of $G_{\Q_p}$.

Now we can apply Proposition 3.3.1 of \cite{BC08} (with $G_0=G_{\Q_p}, H_0=G_{\Q_p(\mu_{p^\infty})}$) to obtain the Sen operator on $\tilde{H}^i(K^p,\Q_p)^{G-\an}\widehat\otimes_{\Q_p}C$ except that $\tilde{H}^i(K^p,\Q_p)^{G-\an}$ is not finite-dimensional. To get around this, one can argue in a similar way to \ref{VsCan} by finding a dense subspace of $\tilde{H}^i(K^p,\Q_p)^{G-\an}$ which can be written as  a union of finite-dimensional $G_{\Q_p}$-invariant subspaces. For example, one can take the subspace of $\GL_2(\Z_p)$-algebraic vectors. Indeed, the density is clear when $i$=0. When $i=1$, the proof of Lemma \ref{density} below implies that $\GL_2(\Z_p)$-algebraic vectors are dense in  $\tilde{H}^i(K^p,\Q_p)$, hence also dense in $\tilde{H}^i(K^p,\Q_p)^{G-\an}$. Therefore one gets the desired unique Sen operator on  $\tilde{H}^i(K^p,\Q_p)^{G-\an}\widehat\otimes_{\Q_p}C$. This defines the Sen operator on 
\[\varinjlim_n\left(\tilde{H}^i(K^p,\Q_p)^{G_n-\an}\widehat\otimes_{\Q_p}C\right)\cong \tilde{H}^i(K^p,C)^{\la}.\]
To see this isomorphism as LB-spaces, we claim that $\varinjlim_n \left(W^{G_n-\an}\widehat\otimes_{\Q_p}C\right)\cong (W\widehat\otimes_{\Q_p}C)^{\la}$ for any admissible $\Q_p$-Banach representation $W$ of $G_0$.  This is clear for $W=\sC(G_0,\Q_p)$ because we even have $(W\widehat\otimes_{\Q_p}C)^{G_n-\an}\cong W^{G_n-\an}\widehat\otimes_{\Q_p}C$ in this case. The general case can be proved by embedding $W$ into $\sC(G_0,\Q_p)^{\oplus d}$ for some $d$ and applying the acyclicity result \ref{admLAacyc}. We omit the details here.
\end{rem}

\subsection{\texorpdfstring{$\mathfrak{n}$}{Lg}-cohomology (I)} 
\begin{para}
We start to compute the $\mathfrak{n}$-cohomology of $H^1(\Fl,\cO_{K^p}^{\la,\chi})$. Since $\mathfrak{n}$ is one-dimensional, $H^0(\mathfrak{n},\bullet)$ (resp. $H^1(\mathfrak{n},\bullet)$) is the $\mathfrak{n}$-invariants (resp. $\mathfrak{n}$-coinvariants). Denote by $\cO_{K^p}^{\la,\chi,\kn}$ (resp. $(\cO_{K^p}^{\la,\chi})_\kn$) the $\kn$-invariants (resp. $\kn$-coinvariants) of $\cO_{K^p}^{\la,\chi}$. For the purpose of introduction, we assume $\chi(h)\neq 0$. Then $H^0(\Fl,\cO^{\la,\chi}_{K^p})=0$ and we have  $B$-equivariant maps \footnote{Again, these follow from some standard spectral sequences, but one can avoid them here. Using the cover $\{U_1,U_2\}$, we have the exact sequence $0\to \cO^{\la,\chi}_{K^p}(U_1)\oplus \cO^{\la,\chi}_{K^p}(U_2)\to \cO^{\la,\chi}_{K^p}(U_{12})\to H^1(\Fl,\cO_{K^p}^{\la,\chi})\to 0$. Our claim follows on applying $H^i(\kn,\cdot)$ to this exact sequence.}
\begin{eqnarray} \label{ss1}
0\to H^1(\Fl,\cO_{K^p}^{\la,\chi,\kn})\to H^1(\Fl,\cO_{K^p}^{\la,\chi})^{\kn}&\to& H^0(\Fl,(\cO_{K^p}^{\la,\chi})_\kn)\otimes_{C} \kn^*\to 0,\\ \label{ss2}
H^1(\Fl,\cO_{K^p}^{\la,\chi})_{\kn}&\cong& H^1(\Fl,(\cO_{K^p}^{\la,\chi})_\kn).
\end{eqnarray}
Here $\kn^*=\Hom_C(\kn,C)$.  We will compute $\cO_{K^p}^{\la,\chi,\kn},(\cO_{K^p}^{\la,\chi})_\kn$ in this subsection.
\end{para}

\begin{para}
For a weight $\chi$, we write $\chi(\begin{pmatrix} a & 0\\ 0 & d \end{pmatrix}) =n_1a+n_2d$ for some $n_1,n_2\in C$ and sometimes identify $\chi$ with an ordered pair $(n_1,n_2)\in C^2$.

First we compute $(\cO_{K^p}^{\la,\chi})_\kn$. It turns out that for generic $\chi$, this is essentially the space of overconvergent modular forms. We need some notation here. Let $\infty\in \Fl$ be the point where $e_1$ vanishes.  We can consider the fiber of $\cO_{K^p}^{\la,\chi}$ at $\infty$ (as a sheaf of $\cO_{\Fl}$-modules), i.e. $\cO_{K^p}^{\la,\chi}/\km_\infty\cO_{K^p}^{\la,\chi}$. Here $\km_\infty$ denotes the ideal sheaf defined by $\infty$.
\end{para}

\begin{defn} \label{ovc}
For a weight $\chi$, we define 
\[M_\chi^\dagger(K^p):=H^0(\Fl,\cO_{K^p}^{\la,\chi}/\km_\infty\cO_{K^p}^{\la,\chi}),\]
 the fiber of $\cO_{K^p}^{\la,\chi}$ at $\infty$, and call it the space of overconvergent modular forms of weight $\chi$ of tame level $K^p$. There are natural actions of $G_{\Q_p}$, the Borel subgroup $B$ and Hecke operators away from $p$ on this space.
\end{defn}

\begin{para} \label{comf}
To justify its name, we can compare this definition with other existing definitions of overconvergent modular forms in the literature \cite{Katz73,CM98,Pil13,AIS14,CHJ17}... We will only focus on integral weights to illustrate the main difference here.

Let $k$ be an integer. We first introduce overconvergent modular forms with full level at $p$. Let $\Gamma(p^n)=1+p^nM_2(\Z_p)$. We  define the canonical locus $\mathcal{X}_{K^p\Gamma(p^n),c}\subset \mathcal{X}_{K^p\Gamma(p^n)}$ as follows:  using the integral model and the index of Igusa components in Theorem 13.7.6 of \cite{KM85}, $\mathcal{X}_{K^p\Gamma(p^n),c}$ is the tubular neighborhood of the non-singular points ($=$ non-supersingular points) of irreducible components of indices $(\Z/p^n)^{2}\to\Z/p^n,~(1,0)\mapsto 0$. Equivalently, on the ordinary locus of these irreducible components, the canonical subgroup (of level $n$) corresponds to $(*,0)\subset(\Z/p^n)^2$ under the level structure. Sections of $\omega^k$ on any strict neighborhood of $\mathcal{X}_{K^p\Gamma(p^n),c}$ are called overconvergent modular forms of weight $k$ of level $K^p\Gamma(p^n)$, which we denote by $M_k^\dagger(K^p\Gamma(p^n))$. Here a strict neighborhood of $\mathcal{X}_{K^p\Gamma(p^n),c}$ means an open set containing the closure $\bar{\mathcal{X}}_{K^p\Gamma(p^n),c}$. Clearly $M_k^\dagger(K^p\Gamma(p^n))$ forms a direct system when $n$ varies.
\end{para}

\begin{defn} \label{omfk}
Let $k\in \Z$. We define the space of overconvergent modular forms of weight $k$ of tame level $K^p$ as 
\[M^{\dagger}_k(K^p):=\varinjlim_n M_k^\dagger(K^p\Gamma(p^n)).\]
The Galois group $G_{\Q_p}$, Borel subgroup $B$ and Hecke operators away from $p$ act naturally on it.
\end{defn} 

Recall that classical overconvergent modular forms are defined as follows. Let $\Gamma_1(p^n)=\{\begin{pmatrix} a & b \\ c & d \end{pmatrix}\in\GL_2(\Z_p)~|~a-1,d-1,c\in p^n\Z_p\}$. Then we have the canonical locus $\mathcal{X}_{K^p\Gamma_1(p^n),c}\subset \mathcal{X}_{K^p\Gamma_1(p^n)}$, which is defined as the image of $\mathcal{X}_{K^p\Gamma(p^n),c}$ under the natural map $\mathcal{X}_{K^p\Gamma(p^n)}\to \mathcal{X}_{K^p\Gamma_1(p^n)}$. Using Katz-Mazur's integral model, this can also be defined as tubular neighborhood of non-supersingular points of irreducible components whose ordinary points classify ordinary elliptic curves with level structure at $p$ given by the canonical subgroup (of level $n$). We define $M_k^\dagger(K^p\Gamma_1(p^n))$ similarly as sections defined in a strict neighborhood of $\mathcal{X}_{K^p\Gamma_1(p^n),c}$. Note that this is slightly different from the usual definition as $\mathcal{X}_{K^p\Gamma_1(p^n)}$ is not connected.

We can obtain $M^{\dagger}_k(K^p)$  from $\varinjlim_n M_k^\dagger(K^p\Gamma_1(p^n))$ by inverting the action of $\begin{pmatrix} p^{-1} & 0 \\ 0 & 1 \end{pmatrix}$. We can also recover $\varinjlim_n M_k^\dagger(K^p\Gamma_1(p^n))$ from $M^{\dagger}_k(K^p)$ by taking invariants of $N_0=\begin{pmatrix} 1 & \Z_p\\ 0 & 1 \end{pmatrix}$:
\[\varinjlim_n M_k^\dagger(K^p\Gamma_1(p^n))=M^{\dagger}_k(K^p)^{N_0}.\]

\begin{prop} \label{defnocmf}
Suppose $\chi(\begin{pmatrix} a & 0 \\ 0 & d \end{pmatrix})=n_1a+n_2d$ with $n_1,n_2\in\Z$. Let $k=\chi(h)=n_1-n_2$. There is a canonical isomorphism induced by multiplication by $e_2^{-k}t^{n_1}$:
\[\phi_\chi:M^{\dagger}_k(K^p) \xrightarrow{\sim} M_\chi^\dagger(K^p)\]
satisfying 
\[(g_1,g_2)\cdot\phi_{\chi}(f)=\phi_{\chi}\left((g_1,g_2)\cdot f\right)d^{-k}\varepsilon_p(ad)^{n_1}\varepsilon_p(g_2)^{n_1-k},\] 
$f\in M^{\dagger}_k(K^p),~g_1=\begin{pmatrix} a & b \\ 0 & d \end{pmatrix}\in B,~g_2\in G_{\Q_p}$. Recall that $\varepsilon_p:G_{\Q_p}\to\Z_p^\times$ is the $p$-adic cyclotomic character and regarded as a character $\Q_p^\times\to\Z_p^\times$ sending $x$ to $x|x|$ via local class field theory. 
\end{prop}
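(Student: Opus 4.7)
The plan is to define $\phi_\chi$ by multiplication with $e_2^{-k}t^{n_1}$ on overconvergent forms and then to apply the local expansion of Lemma \ref{expchi}(2) on a chart near $\infty \in \Fl$ to verify bijectivity. All three of well-definedness, equivariance, and bijectivity reduce to direct calculations with the fake-Hasse invariants and $t$.

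First I would verify that $e_2^{-k}t^{n_1}$ is a $\GL_2(\Z_p)$-locally analytic section of $\cO_{K^p}^{\la,\chi}$ defined on a neighborhood of $\pi_{\HT}^{-1}(\infty)$. Since $e_1$ vanishes at $\infty$ while $e_2$ does not, $e_2$ is a basis of $\omega_{K^p}$ on such a neighborhood, and $t$ is a global unit in $\cO_{K^p}$. The $\theta_\kh$-weight is computed from the formulas of \ref{expkh}: a direct calculation yields $\theta_\kh(\diag(a,d))\cdot x = 0$, so the identity $\log(e_2/e_{2,N}) = \log(x/x_N) + \log(e_1/e_{1,N})$ (valid on $U_1 \cap U_2$ for a suitable choice of $e_{2,N}$) gives $\log(e_2/e_{2,N})$ the same weight $(0,1)$ as $\log(e_1/e_{1,N})$, and combined with the weight $(1,1)$ of $\log(t/t_N)$ identifies the weight of $e_2^{-k}t^{n_1}$ as $-k(0,1) + n_1(1,1) = (n_1,n_2) = \chi$. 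The pullback of $f \in M_k^\dagger(K^p\Gamma(p^n))$ to $\mathcal{X}_{K^p}$ is a section of $\omega_{K^p}^k$ on the preimage of a strict neighborhood of $\mathcal{X}_{K^p\Gamma(p^n),c}$, which is a neighborhood of $\pi_{\HT}^{-1}(\infty)$; the product $e_2^{-k}t^{n_1}f$ then lies in $\cO_{K^p}^{\la,\chi}$ there, and its class modulo $\mathfrak{m}_\infty$ defines $\phi_\chi(f)$.

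The equivariance follows immediately: for $g_1 = \begin{pmatrix}a & b \\ 0 & d\end{pmatrix}$ one has $g_1 \cdot e_2 = be_1 + de_2 \equiv de_2 \pmod{\mathfrak{m}_\infty\omega_{K^p}}$ (since $e_1 = y\cdot e_2$ with $y \in \mathfrak{m}_\infty$) and $g_1 \cdot t = \varepsilon_p(ad)t$, while for $g_2 \in G_{\Q_p}$ both $e_2$ (via the Tate twist in Theorem \ref{RHT}) and $t$ (via its Weil-pairing construction in \ref{ttn}) transform by $\varepsilon_p(g_2)$. Multiplying through yields the twisting factor $d^{-k}\varepsilon_p(ad)^{n_1}\varepsilon_p(g_2)^{n_1-k}$ of the statement.

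For bijectivity, the analog of Lemma \ref{expchi}(2) on a chart $U \ni \infty$ (obtained by the $w$-symmetry swapping $e_1 \leftrightarrow e_2$) asserts that every $g \in \cO_{K^p}^{\la,\chi}(U)$ admits a unique expansion
\[g = (t/t_N)^{n_1}(e_2/e_{2,N})^{-k}\sum_{i\geq 0} c_i^{(n)}(g)(y-y_n)^i\]
with $c_i^{(n)}(g) \in H^0(V_{G_{r(n)}},\cO_{\mathcal{X}_{K^pG_{r(n)}}})$. Rewriting this as $g = e_2^{-k}t^{n_1}\tilde f$ with $\tilde f := t_N^{-n_1}e_{2,N}^k\sum c_i^{(n)}(g)(y-y_n)^i$, a section of $\omega^k$ on the finite-level affinoid $V_{G_{r(n)}}$, and observing that $V_{G_{r(n)}}$ is a strict neighborhood of the canonical locus $\mathcal{X}_{K^pG_{r(n)},c}$ (being the preimage of a neighborhood of $\infty$), gives $\tilde f \in M_k^\dagger(K^p)$ with $\phi_\chi(\tilde f) = g \bmod \mathfrak{m}_\infty$, hence surjectivity. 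For injectivity, if $\phi_\chi(\tilde f) \in \mathfrak{m}_\infty \cO_{K^p}^{\la,\chi}$, uniqueness of the expansion forces $\tilde f$ to be divisible by $y = e_1/e_2$ in the finite-level ring, so $\tilde f$ vanishes on the open canonical locus $\mathcal{X}_{K^pG_{r(n)},c} \subset \pi_{\HT}^{-1}(\infty)$; analytic continuation on the connected components of the strict neighborhood forces $\tilde f = 0$. The main obstacle is carefully justifying this last step, which requires checking that the canonical locus meets every connected component of the chosen strict neighborhood.
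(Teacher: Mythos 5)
Your construction of $\phi_\chi$ by multiplication with $e_2^{-k}t^{n_1}$, the weight computation, and the equivariance check all match what the paper does; that part is sound. The bijectivity argument is where the proposal diverges, and where it has two genuine gaps.

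For surjectivity, you set $\tilde f := t_N^{-n_1}e_{2,N}^k\sum_i c_i^{(n)}(g)(y-y_n)^i$ and call it ``a section of $\omega^k$ on the finite-level affinoid $V_{G_{r(n)}}$'', but it is not: $y=e_1/e_2$ lives only on the perfectoid modular curve and does not descend to any finite level. What does descend is the specialization modulo $\km_\infty$, obtained by replacing $(y-y_n)^i$ by $(-y_n)^i$; that series in the finite-level element $y_n$ converges only on the smaller affinoid $V''_{G_{r(n)}}\subset V_{G_{r(n)}}$ cut out by $|p^{-n}y_n|\leq 1$, and one then has to check separately that $V''_{G_{r(n)}}$ is still a strict neighborhood of $\mathcal{X}_{K^pG_{r(n)},c}$. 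The paper packages this into the map $\varphi_n$ and does the strict-neighborhood check as an extra step; your sketch skips the convergence-radius issue entirely.

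For injectivity, your appeal to ``analytic continuation across the connected components'' is exactly the unresolved issue you flag at the end, and I don't see how to close it with the stated tools. Indeed the inclusion $\mathcal{X}_{K^pG_{r(n)},c}\subset\pi_\HT^{-1}(\infty)$ you invoke does not typecheck (the left side lives in $\mathcal{X}_{K^pG_{r(n)}}$, the right side in $\mathcal{X}_{K^p}$), and Lemma \ref{preimrt} only says the preimage of the non-cusp canonical locus maps to the full orbit $\Gamma(p^m)\cdot\infty$, not to $\infty$ alone, so divisibility by $y$ gives vanishing along only a fragment of the canonical locus, not all of it. The paper avoids the whole density question by a cofinality argument, extracted into Lemma \ref{twotopcomp}: the directed system of strict neighborhoods of $\mathcal{X}_{K^p\Gamma(p^n),c}$ at finite level and the directed system of $\pi_\HT$-preimages of neighborhoods of $\infty\in\Fl$ are cofinal, so the two colimits are canonically identified in one stroke, giving injectivity and surjectivity simultaneously without any connectedness or analytic-continuation input. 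That cofinality lemma is the missing ingredient in your proposal.
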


\begin{rem} \label{e1e2tact}
For a $B\times G_{\Q_p}$-representation $W$ and integers $i,j,k$, we write $W\cdot e_1^ie_2^jt^k$ to denote the twist of $W$ by the character sending $(\begin{pmatrix} a & b\\ 0 & d \end{pmatrix},g)\in B\times G_{\Q_p}$ to $a^id^j\varepsilon_p(ad)^k\varepsilon_p(g)^{i+j+k}$. Therefore, we can rewrite the isomorphism in Proposition \ref{defnocmf} as 
\[M^{\dagger}_k(K^p)\cdot e_2^{-k}t^{n_1}\cong M_\chi^\dagger(K^p).\]
\end{rem}

\begin{proof}
For any $U\in\mathfrak{B}$, a neighborhood of $\infty$ not containing the zero of $x$, since $e_1$ is not a basis now, as in \ref{xy12}, we can use $e_2$ instead. More precisely, let $y=\frac{1}{x}=\frac{e_1}{e_2}$. We can find $y_n,e_{2,n},t_n$ as in \ref{setupexplicit} and define $\cO^n(U)\{y,e_2,t\}$. Denote by $\cO^n(U)\{y\}\subset \cO^n(U)\{y,e_2,t\}$ the subset on which $\theta_\kh$ acts by zero. Then $\cO^{\la,\chi}_{K^p}(U)=\varinjlim_n \cO^n(U)\{y\}\cdot {t}^{n_1}(\frac{e_2}{e_{2,n}})^{n_2-n_1}$. Equivalently, as in Lemma \ref{expchi}, any element in $\cO^n(U)\{y\}\cdot {t}^{n_1}(\frac{e_2}{e_{2,n}})^{n_2-n_1}$ can be written as
\[{t}^{n_1}(\frac{e_2}{e_{2,n}})^{n_2-n_1}\sum_{i\geq0} c_i (y-y_n)^i, \]
where $c_i\in H^0(V_{G_{r(n)}},\cO_{\mathcal{X}_{K^pG_{r(n)}}})$ with  bound $\|c_i\|\leq C'p^{(n-1)i}$ for a uniform $C'$. There is no need to put $t_n^{n_1}$ here because $n_1$ is an integer.

 We can take $G_{r(n)}$ to be some $\Gamma(p^m)$. We claim
\begin{itemize}
\item $V_{G_{r(n)}}$ is a strict neighborhood of $\mathcal{X}_{K^pG_{r(n)},c}$.
\end{itemize}
Since both are affinoid subsets, it is enough to check that $V_{G_{r(n)}}$ contains the closure of the non-cusp classical points of $\mathcal{X}_{K^pG_{r(n)},c}$. We check this relation for their preimages in $\mathcal{X}_{K^p}$. Note that $U$ is a $G_{r(n)}$-invariant open neighborhood of $\infty$. In particular, $U$ contains $G_{r(n)}\cdot \infty$ and $V_\infty$ contains the closed set $\pi^{-1}_{\HT}(G_{r(n)}\cdot \infty)$. Our claim follows from the following lemma.

\begin{lem} \label{preimrt}
The preimages of non-cusp points of $\mathcal{X}_{K^p\Gamma(p^m),c}$ in $\mathcal{X}_{K^p}$ map to $\Gamma(p^m)\cdot \infty$ under the Hodge-Tate period map for any $m\geq 1$. 
\end{lem}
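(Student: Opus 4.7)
The plan is to interpret $\pi_{\HT}$ at a classical preimage $\tilde x \in \mathcal{X}_{K^p}$ via the $p$-adic Hodge theory of the associated elliptic curve over $C$, match the canonical locus condition to the Tate module of the canonical subgroup, and then read off the image in $\mathbb{P}^1$ as a point of $\Gamma(p^m)\cdot\infty$.

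Let $x \in \mathcal{X}_{K^p\Gamma(p^m),c}$ be a non-cusp classical point. By the integral-model description recalled in \ref{comf}, $x$ corresponds to an elliptic curve $E_x/C$ with ordinary reduction equipped with a $K^p$-level structure and a $\Z/p^m$-basis $(f_1,f_2)$ of $E_x[p^m]$, and moreover the canonical subgroup of level $m$ is $C_m = \Z/p^m \cdot f_1$. A preimage $\tilde x \in \mathcal{X}_{K^p}$ refines $(f_1,f_2)$ to a $\Z_p$-basis $(\tilde f_1,\tilde f_2)$ of $T_p E_x$; I will use this to identify $T_p E_x \cong V = \Z_p^{\oplus 2}$. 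Evaluating the exact sequence of Theorem \ref{RHT} at $\tilde x$ recovers the usual Hodge--Tate sequence for $E_x$, and the $C$-line $\omega_{E_x}^{-1}(1) \otimes C \subset V \otimes C$ then computes $\pi_{\HT}(\tilde x) \in \mathbb{P}^1(C)$. The crucial input I will invoke is the classical theorem of Tate for $p$-divisible groups, applied to the connected--\'etale sequence $0 \to E_x[p^\infty]^\circ \to E_x[p^\infty] \to E_x[p^\infty]^{\et} \to 0$ over $\cO_C$: it identifies the Hodge--Tate line $\omega_{E_x}^{-1}(1)\otimes C$ with $T_p(E_x[p^\infty]^\circ)\otimes C$, i.e.\ with the Tate module of the canonical subgroup tensored up to $C$.

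Since the $p^m$-torsion of $E_x[p^\infty]^\circ$ is $C_m = \Z/p^m \cdot f_1$, the rank-one $\Z_p$-submodule $T_p(E_x[p^\infty]^\circ) \subset V$ reduces to $\Z/p^m \cdot (1,0)$ modulo $p^m$ and thus admits a generator of the form $(1,c)$ with $c \in p^m\Z_p$. Under the convention $\infty = [1:0]$ (the line through $\mathbf{e}_1$, where the fake-Hasse invariant $e_1$ vanishes), this line is the $\Q_p$-point $[1:c] = \infty \cdot \begin{pmatrix} 1 & c \\ 0 & 1 \end{pmatrix}$ of $\mathbb{P}^1$, and $\begin{pmatrix} 1 & c \\ 0 & 1 \end{pmatrix} \in \Gamma(p^m)$, so $\pi_{\HT}(\tilde x) \in \Gamma(p^m)\cdot\infty$ as wanted. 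The main point requiring care will be fixing conventions so that the Hodge--Tate filtration from Theorem \ref{RHT} and its Tate twist match those in Tate's theorem, and similarly for the right action of $\GL_2(\Q_p)$ on $\Fl$; once those are aligned, the argument reduces to the one-line matrix computation above. This handles all non-cusp classical points, which is what the application in the proof of Proposition \ref{defnocmf} requires before passing to closures of affinoid subsets.
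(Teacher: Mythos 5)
Your proposal is correct, but it takes a genuinely different route from the paper. The paper's own proof is a two-line citation: for $m=1$ it invokes Lemma III.3.14 of \cite{Sch15} (which characterizes the $\pi_{\HT}$-image of the $\epsilon$-near-anticanonical locus), noting that the canonical locus is disjoint from the anticanonical one, and for $m\geq 2$ it reduces to $m=1$ by translating by $\begin{pmatrix} p^{m-1} & 0 \\ 0 & 1 \end{pmatrix}$. You instead argue pointwise and directly: at a non-cusp classical point $x$ of the canonical locus the elliptic curve $E_x$ has good ordinary reduction, Tate's theory of $p$-divisible groups (in this paper's setting the analogue of Lemma III.3.4 of \cite{Sch15}, which the paper invokes elsewhere to identify its $\pi_{\HT}$ with Scholze's) identifies the Hodge--Tate line in $T_pE_x\otimes C$ with $T_p(E_x[p^\infty]^\circ)\otimes C$, the Katz--Mazur indexing of the canonical locus pins down $T_p(E_x[p^\infty]^\circ)$ modulo $p^m$, and the matrix computation $[1:0]\cdot\begin{pmatrix}1 & c\\ 0 & 1\end{pmatrix}=[1:c]$ (using the paper's right action and the convention $\infty=[1:0]$, i.e.\ the vanishing locus of $e_1$) finishes the argument. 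The trade-offs: your argument is uniform in $m$ and self-contained, but must keep track of the Tate twist in Theorem \ref{RHT} and the duality between $R^1f_*\Z_p$ and the Tate module — you flag this, and it is genuinely where the care lies, since an inconsistent convention here would send the canonical locus to $[0:1]$ rather than $[1:0]$. Finally, you are right to scope to classical points and to observe that this suffices: the only use of the lemma, in the proof of Proposition \ref{defnocmf}, is exactly for the closure of the non-cusp classical points, and $\pi_{\HT}^{-1}(\Gamma(p^m)\cdot\infty)$ is closed.
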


\begin{proof}
If $m=1$, this follows from Lemma III.3.14 of \cite{Sch15} by noting our canonical locus $\mathcal{X}_{K^p\Gamma(p),c}$ does not intersect the anticanonical locus in the reference. The general case $m\geq 2$ can be reduced to this case by using the action of $\begin{pmatrix} p^{m-1} & 0 \\ 0 & 1 \end{pmatrix}$.
\end{proof}

Now fix such a $U$. Note that $\infty$ is defined by $y=0$. We need to construct an isomorphism 
\[\cO^{\la,\chi}_{K^p}(U)/(y)\xrightarrow{\sim}\varinjlim_n M_k^\dagger(K^p\Gamma(p^n)).\]
Since $\cO^n(U)\{y\}\cong \cO_{V_{G_{r(n)}}}^+(V_{G_{r(n)}})[[p^{-(n-1)}(y-y_n)]]\otimes_{\Z_p}\Q_p$, there is a natural $\cO_{V_{G_{r(n)}}}(V_{G_{r(n)}})$-algebra homomorphism
\[\varphi_n:\cO^n(U)\{y\}/(y)\to\cO_{V_{G_{r(n)}}}(V''_{G_{r(n)}})\]
sending $p^{-(n-1)}(y-y_n)$ to $-p^{-(n-1)}y_n\in \cO_{V_{G_{r(n)}}}(V''_{G_{r(n)}})$, where $V''_{G_{r(n)}}\subset V_{G_{r(n)}}$ is the rational subset defined by $|p^{-n}y_n|\leq 1$. The same argument as above shows that $V''_{G_{r(n)}}$ is a strict neighborhood of $\mathcal{X}_{K^pG_{r(n)},c}$. Let $V'_{G_{r(n)}}\subset V_{G_{r(n)}}$ defined by $|p^{-(n-1)}y_n|\leq 1$. We claim that 
\begin{itemize}
\item the image of $\varphi_n$ contains analytic functions convergent on $V'_{G_{r(n)}}$.
\end{itemize}
Indeed, since $\cO_{V_{G_{r(n)}}}(V'_{G_{r(n)}})\cong \cO_{V_{G_{r(n)}}}(V_{G_{r(n)}})\langle p^{-(n-1)}y_n\rangle$, this claim is clear in view of the definition of $\varphi_n$.

Hence we have a map
\[\cO^n(U)\{y\}\cdot t^{n_1}(\frac{e_2}{e_{2,n}})^{n_2-n_1} /(y) \to \omega^k(V''_{G_{r(n)}})\] 
by sending $ft^{n_1}(\frac{e_2}{e_{2,n}})^{n_2-n_1} $ to $\varphi_n(f)e_{2,n}^{n_1-n_2}$, i.e. multiplication by $t^{-n_1}e_2^{n_1-n_2}$. Clearly this map is compatible when $n$ varies and induces a map $\cO^{\la,\chi}_{K^p}(U)/(y)\to\varinjlim_n M_k^\dagger(K^p\Gamma(p^n))$. 

To prove this is an isomorphism, it suffices to show that for any $n$ and strict neighborhood $W$ of $\mathcal{X}_{K^p\Gamma(p^n),c}$, we can find $m\geq n$ such that the preimage of $W$ in $\mathcal{X}_{K^pG_{r(m)}}$ contains $V'_{G_{r(m)}}$. Let $\tilde{W}$ be the preimage of $W$ in $\mathcal{X}_{K^p}$. Then it is an open neighborhood of $\pi_{\HT}^{-1}(\infty)=\{x\in\mathcal{X}_{K^p}\,|\,|y(x)|=0\}$. Therefore $\{x\in\mathcal{X}_{K^p}\,|\,|p^{-m}y(x)|\geq 1\},m=0,1,\cdots$ and $\tilde{W}$ form an open cover of $V_\infty$.
Note that $V_\infty$ is quasi-compact because it is affinoid, hence $\{z\in V_\infty\,|\,|p^{-(m-1)}y(z)|\leq 1\}\subset \tilde{W}$ for some $m$. Using $\|y-y_m\|\leq p^{-m}$ on $V_\infty$, it is easy to see that this $m$ works here.

The claim for the $B$-action follows from the construction and the fact that 
\[\begin{pmatrix} a & b\\ 0 & d \end{pmatrix}\cdot e_2=be_1+d e_2\]
 whose reduction modulo $\km_\infty$ is $de_2$ and that $\begin{pmatrix} a & b\\ 0 & d \end{pmatrix}\cdot t=ad|ad|$. The claim for $G_{\Q_p}$-action is clear as $G_{\Q_p}$ acts via the cyclotomic character on $e_2,t$.
\end{proof}

We record the following result obtained in this proof.
\begin{lem} \label{twotopcomp}
Given an open subset $U$ of $\Fl$ containing $\infty$, there exists a strict neighborhood of $\mathcal{X}_{K^p\Gamma(p^n),c}$ for some $n$, whose preimage in $\mathcal{X}_{K^p}$ is contained in $\pi_{\HT}^{-1}(U)$. Conversely, for any $n>0$ and any strict neighborhood $W$ of $\mathcal{X}_{K^p\Gamma(p^n),c}$, there exists an open subset of $\Fl$ containing $\infty$ whose preimage in $\mathcal{X}_{K^p}$ is contained in the preimage of $W$.
\end{lem}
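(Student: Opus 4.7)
The plan is to extract both directions from arguments already appearing in the proof of Proposition \ref{defnocmf}, using Lemma \ref{preimrt} as the key geometric input together with a quasi-compactness argument.

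For the first direction, given an open $U \subset \Fl$ containing $\infty$, I would shrink $U$ to a smaller neighborhood $U'$ drawn from the basis $\mathfrak{B}$ of Theorem \ref{piHT}. Then $V'_\infty := \pi_{\HT}^{-1}(U')$ is affinoid perfectoid and arises as the preimage of an affinoid $V'_{G_{r(n)}} \subset \mathcal{X}_{K^p G_{r(n)}}$ for a sufficiently small open subgroup which, after shrinking, we may take of the form $G_{r(n)} = \Gamma(p^m)$. Since $U'$ is $G_{r(n)}$-stable and contains $\infty$, it contains $\Gamma(p^m) \cdot \infty$; by Lemma \ref{preimrt}, the preimage in $\mathcal{X}_{K^p}$ of the non-cusp classical points of $\mathcal{X}_{K^p\Gamma(p^m),c}$ is precisely $\pi_{\HT}^{-1}(\Gamma(p^m)\cdot\infty)$. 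Thus $V'_{G_{r(n)}}$ contains the closure of the non-cusp classical points of $\mathcal{X}_{K^p\Gamma(p^m),c}$ and, being affinoid, is a strict neighborhood of $\mathcal{X}_{K^p\Gamma(p^m),c}$. This is exactly the verification used in the paragraph following Lemma \ref{preimrt} in the proof of Proposition \ref{defnocmf}.

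For the converse, let $W$ be a strict neighborhood of $\mathcal{X}_{K^p\Gamma(p^n),c}$ and let $\tilde W \subset \mathcal{X}_{K^p}$ denote its preimage. By Lemma \ref{preimrt}, $\tilde W$ is an open neighborhood of $\pi_{\HT}^{-1}(\infty)$. To produce a neighborhood of $\infty$ in $\Fl$, I would fix an affinoid $U_\infty \in \mathfrak{B}$ containing $\infty$ and work inside the quasi-compact open $V_\infty := \pi_{\HT}^{-1}(U_\infty)$. Writing $y = e_1/e_2$ for the standard affine coordinate on $\Fl$ around $\infty$ (so $\{y = 0\}$ cuts out $\infty$), the open sets $\tilde W$ together with the rational subsets $\{|y\circ\pi_{\HT}| \geq p^{-m}\}$ for $m = 0, 1, 2, \ldots$ form an open cover of $V_\infty$. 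By quasi-compactness of $V_\infty$, there is a finite subcover, and hence an integer $m$ for which $V_\infty \cap \{|y\circ\pi_{\HT}| < p^{-m}\} \subset \tilde W$. Taking $U := U_\infty \cap \{|y| < p^{-m}\} \subset \Fl$ then gives an open neighborhood of $\infty$ with $\pi_{\HT}^{-1}(U) \subset \tilde W$, as required.

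Neither direction is technically serious once Lemma \ref{preimrt} is granted: the first uses only that the given $U$ already contains enough of the $\Gamma(p^m)$-orbit of $\infty$, and the second is a standard quasi-compactness argument on an affinoid in $\mathcal{X}_{K^p}$. The only mild subtlety is the bookkeeping between the subgroups $G_{r(n)}$ introduced in \ref{setupexplicit} and the principal congruence subgroups $\Gamma(p^m)$, but this causes no genuine difficulty since one can arrange $G_{r(n)} = \Gamma(p^m)$ for a suitable $m$. I do not expect any substantial obstacle beyond correctly recycling the arguments already made in the proof of Proposition \ref{defnocmf}.
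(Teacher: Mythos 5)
Your proof is correct and follows exactly the same route as the paper, which for this lemma simply back-references the two arguments you have unfolded: the first direction is the verification (around Lemma \ref{preimrt}) that $V_{G_{r(n)}}$ is a strict neighborhood of $\mathcal{X}_{K^pG_{r(n)},c}$, and the converse is the quasi-compactness argument from the penultimate paragraph of the proof of Proposition \ref{defnocmf}. Your handling of the bookkeeping between $G_{r(n)}$ and $\Gamma(p^m)$, and the choice of the open set $U_\infty\cap\{|y|<p^{-m}\}$ in place of the paper's $\{|y|\leq p^{-(m-1)}\}$, are both fine.
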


\begin{proof}
For the first claim, we may assume $U\in\mathfrak{B}$. Then this was proved around Lemma \ref{preimrt}. The second claim follows from the argument in the second to last paragraph of the proof of Theorem \ref{defnocmf}.
\end{proof}

We are going to compare $M^\dagger_\chi(K^p)$ with the $\kn$-coinvariants $(\cO_{K^p}^{\la,\chi})_\kn$ of $\cO^{\la,\chi}_{K^p}$. Note that there is a natural action of $\kh$ on $(\cO_{K^p}^{\la,\chi})_\kn$ induced from the action of $\mathfrak{b}$.  We will always use this constant $\kh$-action from now on, unless otherwise specified (to distinguish with the horizontal action $\theta_\kh$). Since  $M^\dagger_\chi(K^p)$ is the fiber of $\cO_{K^p}^{\la,\chi}$ at $\infty$,  the action of $\mathfrak{b}$ on it factors through $\kh$. It is easy to see that 
\begin{itemize}
\item $\kh$ acts on  $M^\dagger_\chi(K^p)$ via $\chi$.
\end{itemize}

We denote by $i_\infty$ the natural embedding $\infty\hookrightarrow \Fl$.

\begin{prop} \label{kncoin}
Let $\chi=(n_1,n_2)$ be a weight. 
\begin{enumerate}
\item $(\cO_{K^p}^{\la,\chi})_\kn$ is a sky-scrapper sheaf supported at $\infty$, hence has no $H^1$. 
\item $\kn(\cO_{K^p}^{\la,\chi})\subset\km_\infty\cO_{K^p}^{\la,\chi}$. Thus we get a natural exact sequence
\[0\to (i_{\infty})_*N^{\chi}\to (\cO_{K^p}^{\la,\chi})_\kn\to (i_{\infty})_*M^\dagger_\chi(K^p)\to 0\]
for some $N^\chi$. Moreover $\kh$ acts on $N^\chi$ via $(n_2+1,n_1-1)$. In particular, this exact sequence splits naturally if $\chi(h)\neq 1$, i.e. $\chi|_{\kh_0}\neq \rho|_{\kh_0}$. Recall that $\rho$ denotes the half-sum of positive roots.
\item $(\cO_{K^p}^{\la,\chi})_\kn=(i_{\infty})_*M^\dagger_\chi(K^p)$, i.e. $N^\chi=0$ if $\chi(h)\neq 0,-1,-2,\cdots$ and is $p$-adically non-Liouville (See below for  a definition).
\item $(\cO_{K^p}^{\la,\chi})_\kn\cong (i_{\infty})_*M^\dagger_\chi(K^p)\oplus (i_{\infty})_*M^\dagger_\chi(K^p)\cdot (e_1/e_2)^{1-\chi(h)}$ if $\chi(h)\in\{0,-1,-2,\cdots\}$. \end{enumerate} 
\end{prop}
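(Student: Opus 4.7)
The strategy is to compute $u^+\in\kn$ explicitly on the power-series description of sections provided by Lemma \ref{expchi}. The key ingredients I will use are: $u^+\cdot x=1$, $u^+\cdot e_2=e_1$, $u^+\cdot e_1=0=u^+\cdot t$, and the crucial fact that $u^+$ annihilates every $G_{r(n)}$-fixed function (so in particular $x_n, y_n, e_{1,n}, e_{2,n}, t_n$), since $u^+$ lies in the Lie algebra of $G_{r(n)}$ up to scalar.

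To prove part (1), I would fix $U\in\mathfrak{B}$ with $\infty\notin U$, so that $e_1$ is a basis of $\omega_{K^p}$ on $\pi_{\HT}^{-1}(U)$. Then Lemma \ref{expchi} writes every $f\in\cO^{\la,\chi}_{K^p}(U)$ as $(\text{unit})\cdot\sum_i c_i(x-x_n)^i$, and the ingredients above identify $u^+$ with $\partial_{x-x_n}$ acting on this sum; its term-by-term primitive $c_i\mapsto c_{i-1}/i$ defines a continuous inverse, so $u^+$ is surjective and $(\cO^{\la,\chi}_{K^p})_\kn$ vanishes on $U$. Hence the sheaf is a sky-scraper concentrated at $\infty$, which automatically has vanishing $H^1$. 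For part (2), I would switch on a neighborhood $U\ni\infty$ to the dual coordinate $y=e_1/e_2$ and basis $e_2$; sections then take the shape $s=t^{n_1}(e_2/e_{2,N})^{n_2-n_1}f$ with $f=\sum_i c_i(y-y_n)^i$, and the same ingredients (now using $u^+\cdot y=-y^2$ and $u^+\cdot(e_2/e_{2,N})=y(e_2/e_{2,N})$) yield
\[u^+\cdot s=t^{n_1}(e_2/e_{2,N})^{n_2-n_1}\bigl[(n_2-n_1)\,yf-y^2\partial_{y-y_n}f\bigr].\]
Both terms in the bracket are divisible by $y$, which proves $u^+(\cO^{\la,\chi}_{K^p})\subset\km_\infty\cO^{\la,\chi}_{K^p}$ and defines $N^\chi$. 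For the $\kh$-weight, I would combine the constant weight $(1,-1)$ of $y=e_1/e_2$ with the conclusion, established in the next step, that the cokernel is supported in degree $y^{1-\chi(h)}$; this gives total weight $\chi+(1-\chi(h))(1,-1)=(n_2+1,n_1-1)$. When $\chi(h)\neq 1$ the weights $\chi$ and $(n_2+1,n_1-1)$ are distinct, so the extension splits by the $\kh$-eigenspace decomposition, automatically $B\times G_{\Q_p}\times\mathrm{Hecke}$-equivariantly.

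For parts (3) and (4), the bracket in the formula above defines an operator $P=(n_2-n_1)y-y^2\partial_y$ with $P(y^k)=(n_2-n_1-k)y^{k+1}$, and after identifying the fiber at $\infty$ with $M^\dagger_\chi(K^p)$, the obstruction $N^\chi$ reduces to the cokernel of $P$ in positive $y$-degrees. When $\chi(h)\notin\{0,-1,-2,\ldots\}$ every coefficient $n_2-n_1-k$ is nonzero, so $P$ is formally invertible by dividing the $k$-th coefficient by $n_2-n_1-k+1$; the $p$-adic non-Liouville hypothesis on $\chi(h)$ is precisely what I will use to upgrade this formal inverse into a continuous operator $\cO^n(U)\{y\}\to\cO^{n'}(U)\{y\}$ for some $n'\geq n$, which suffices because $\cO^{\la,\chi}_{K^p}$ is the direct limit over $n$, giving $N^\chi=0$. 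When $\chi(h)=-k_0\in\{0,-1,-2,\ldots\}$, the operator $P$ vanishes on the $y^{k_0}$-line and fails to reach $y^{k_0+1}$, so its cokernel is naturally $M^\dagger_\chi(K^p)\cdot y^{k_0+1}=M^\dagger_\chi(K^p)\cdot(e_1/e_2)^{1-\chi(h)}$, and the distinctness of $\kh$-weights again produces the splitting.

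The main technical hurdle will be the convergence step in part (3): I need to translate the $p$-adic non-Liouville hypothesis on $\chi(h)$ into quantitative lower bounds of the form $|n_2-n_1-k|\geq C^{-1}p^{-\epsilon k}$ for arbitrarily small $\epsilon>0$, and then match these against the Banach bound $\|c_i\|\leq p^{(n-1)i}\|f\|_n$ on $\cO^n(U)\{y\}$ so that the formal inverse of $P$ actually takes values in $\cO^{n'}(U)\{y\}$ after enlarging $n$ to some $n'$. Once this analytic step is in place, the remainder of the argument is essentially bookkeeping with weights and verifying $B\times G_{\Q_p}\times\mathrm{Hecke}$-equivariance of all the identifications.
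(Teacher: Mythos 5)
Your overall strategy matches the paper's: compute $u^+$ explicitly on the power-series expansions of Lemma \ref{expchi}, first on opens avoiding $\infty$ (giving part 1), then in the $y$-coordinate near $\infty$ (giving the inclusion $\kn(\cO^{\la,\chi}_{K^p})\subset\km_\infty\cO^{\la,\chi}_{K^p}$ and reducing parts 3--4 to the operator $P$ with $P(y^k)=(n_2-n_1-k)y^{k+1}$). Your convergence argument for part (3) -- absorb the loss from dividing by $n_2-n_1-i$ by passing to a larger radius index $n'$, using the non-Liouville lower bound -- is exactly what the paper does.

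There is, however, a genuine gap in your treatment of the $\kh$-weight of $N^\chi$ in part (2). You deduce the weight $(n_2+1,n_1-1)$ from ``the conclusion, established in the next step, that the cokernel is supported in degree $y^{1-\chi(h)}$.'' That conclusion is only established when $\chi(h)\in\{0,-1,-2,\dots\}$ (part 4) or when $N^\chi=0$ (part 3, non-Liouville case). The proposition asserts the weight for \emph{all} $\chi$, and this matters: when $\chi(h)$ is $p$-adically Liouville and not a non-positive integer, $N^\chi$ may be nonzero, it is not concentrated in any single $y$-degree ($\km_\infty\cO^{\la,\chi}_{K^p}$ is not graded for the constant $\kh$-action, since it contains convergent sums $\sum_i c_iy^i$), and yet the weight claim is precisely what produces the natural splitting, which in turn is what Theorem \ref{wdec} uses to separate the $M^\dagger_\chi\otimes\kn^*$ and $N^\chi\otimes\kn^*$ contributions for arbitrary (including Liouville) $\chi$ with $\chi(h)\neq 0,1$. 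The paper closes this gap with a short, $\chi$-independent computation: writing $f=(\text{unit})\sum_i c_i(y-y_n)^i$ as you do, one checks directly that
\[(h-(n_2-n_1+2))\cdot(yf)=-2\,u^+\cdot f,\]
so that in $N^\chi=\km_\infty\cO^{\la,\chi}_{K^p}/u^+(\cO^{\la,\chi}_{K^p})$ (which is spanned by classes of $yf$), the operator $h$ acts as the scalar $n_2-n_1+2$, and $z$ acts as $n_1+n_2$, no matter what $N^\chi$ looks like. This uses only $h\cdot y=2y$, $h\cdot e_2=-e_2$, $h\cdot t=0$ and the Leibniz rule, all of which you already have in hand, so the omission is fixable; but as written, your weight derivation is circular in the cases you treat and silent in the case that actually needs it.
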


\begin{rem}
Recall that according to Clark \cite{Clark66}, we say $\alpha\in C$ is  $p$-adically non-Liouville if 
\[\lim\inf_{n\to\infty}|\alpha+n|^{1/n}>0,\]
i.e. it cannot be well approximated by rational integers. It appears naturally in the study of $p$-adic differential equations. Suppose $\alpha\notin \Z$ and consider the inhomogeneous differential equation around $x=0$
\[(x\frac{d}{dx}+\alpha)y=\frac{1}{1-x}.\]
It has a unique formal power series solution $\sum_{n\geq 0}\frac{1}{n+\alpha}x^n$, whose convergence is equivalent with $\alpha$ being non-Liouville. In fact, it will be clear to the reader that a variant of this differential equation will show up in our case.
\end{rem}

\begin{proof}
Let $U\in \mathfrak{B}$. We first show $(\cO_{K^p}^{\la,\chi}(U))_\kn=0$ if $e_1$ is a basis on $U$. Clearly this implies that $(\cO_{K^p}^{\la,\chi})_\kn$ is supported at $\infty$. On such a $U$, using the notation in Lemma \ref{expchi}, any element in $f\in \cO_{K^p}^{\la,\chi}(U)$ can be written as 
\[f=(\frac{t}{t_{N}})^{n_1}(\frac{e_1}{e_{1,N}})^{n_2-n_1}\sum_{i\geq0} c^{(n)}_i(f) (x-x_n)^i .\]
Since $u^+\in\kn$ acts trivially on $e_1,t$ and acts as the usual derivation on $(x-x_n)$, an argument similar to the proof of Lemma \ref{expchi} shows that $\kn$ is a surjection on $\cO_{K^p}^{\la,\chi}(U)$.

Now let $U\in \mathfrak{B}$ containing $\infty$ and on which $e_2$ is a basis. 
As in the proof of Proposition \ref{defnocmf}, any $f\in \cO_{K^p}^{\la,\chi}(U)$ can be written as 
\[f=(\frac{t}{t_{N}})^{n_1}(\frac{e_2}{e_{2,N}})^{n_2-n_1}\sum_{i\geq0} c_i (y-y_n)^i\]
for some $n$ and $c_i$. Using $u^+\cdot e_2=e_1=ye_2$ and $u^+\cdot y=-y^2$, we get 
\begin{eqnarray*}
u^+\cdot f&=&  y(\frac{t}{t_{N}})^{n_1}(\frac{e_2}{e_{2,N}})^{n_2-n_1}(\sum_{i\geq0} (n_2-n_1)c_i(y-y_n)^i -y\sum_{i\geq 1}ic_i (y-y_n)^{i-1})\\ 
&=& y(\frac{t}{t_{N}})^{n_1}(\frac{e_2}{e_{2,N}})^{n_2-n_1}\sum_{i\geq0} ((n_2-n_1-i)c_i-(i+1)c_{i+1}y_n ) (y-y_n)^i\in y\cO_{K^p}^{\la,\chi}(U).
\end{eqnarray*}
Hence $\kn(\cO_{K^p}^{\la,\chi})\subset\km_\infty\cO_{K^p}^{\la,\chi}$ and we can define $N^{\chi}$ as in the lemma. Moreover, one can compute
\begin{eqnarray*}
(h-(n_2-n_1+2))\cdot(yf)&=& y(\frac{t}{t_{N}})^{n_1}(\frac{e_2}{e_{2,N}})^{n_2-n_1}(h+2(n_1-n_2))\cdot(\sum_{i\geq 0}c_i(y-y_n)^i)\\
&=& -2u^+\cdot f.
\end{eqnarray*}
Also it is easy to check that $z$ acts on everything via $\chi(z)=n_1+n_2$. Hence we conclude that $\kh$ acts via $(n_2+1,n_1-1)$ on $N^\chi$.

The last two claims require a bit more work. We denote by $U_n\subset U$ the rational subset defined by $|p^{-n}y|\leq 1$. Then $U_n\in\mathfrak{B}$ and contains $\infty$. Note that $\pi_\HT^{-1}(U_n)$ is also the preimage of $V''_{G_{r(n)}}=\{x\in V_{G_{r(n)}},\,|y_n(x)p^{-n}|\leq 1\}$ because $\|y-y_n\|\leq p^{-n}$. For $n$ sufficiently large so that both  $(\frac{t}{t_n})^{n_1}$, $(\frac{e_2}{e_{2,n}})^{n_2-n_1}$ converge, we denote by $C_n\subset\cO_{K^p}^{\la}(U_n)$ the subset of elements of the form
\[(\frac{t}{t_n})^{n_1}(\frac{e_2}{e_{2,n}})^{n_2-n_1}\sum_{i\geq0} c_i y^i, \]
where $c_i\in H^0(V''_{G_{r(n)}},\cO_{\mathcal{X}_{K^pG_{r(n)}}})$ with  bound $\|c_i\|\leq C'p^{(n-1)i}$ for a uniform $C'$. Note that as $\|y\|\leq p^{-n}$ in $H^0(\pi_{\HT}^{-1}(U_n),\cO_{\mathcal{X}_{K^p}})$, we can choose $y_n=0$ here. As before, $p^{(n-1)i}c_i$ defines an isomorphism $C_n\cong(\prod_{i\geq 0}H^0(V''_{G_{r(n)}},\cO^+_{\mathcal{X}_{K^pG_{r(n)}}}))\otimes_{\Z_p}\Q_p$ and $C_n$ forms a direct system when $n$ varies. It is easy to see that 
\[(\cO_{K^p}^{\la,\chi})_\kn=(i_{\infty})_*(\varinjlim_n (C_n)_\kn).\]

Assume that $n_1-n_2=\chi(h)\neq 0,-1,-2,\cdots$ and is $p$-adically non-Liouville. For any $f\in C_n$ and $n$ sufficiently large, we need to show that $yf\in u^+(C_m)$ for some $m\geq n$. Write 
\[f=(\frac{t}{t_n})^{n_1}(\frac{e_2}{e_{2,n}})^{n_2-n_1}\sum_{i\geq0} c_i y^i,\]
for some $c_i\in H^0(V''_{G_{r(n)}},\cO_{\mathcal{X}_{K^pG_{r(n)}}})$ and $\|c_i\|\leq C'p^{(n-1)i}$ for some $C'$. By our assumption, we can find $m\geq n$ such that 
\[|n_1-n_2+i|\geq p^{-i(m-n)}\]
for $i$ sufficiently large. Then $\|\frac{c_i}{-n_1+n_2-i}\|\leq C'p^{(m-1)i}$ for $i$ large enough and 
\[(\frac{t}{t_n})^{n_1}(\frac{e_2}{e_{2,n}})^{n_2-n_1}\sum_{i\geq0} \frac{c_i}{-n_1+n_2-i} y^i\]
defines an element $g\in C_m$. A simple computation gives that $u^+\cdot g=yf$. Hence $N^\chi=0$ and $(\cO_{K^p}^{\la,\chi})_\kn=(i_{\infty})_*M^\dagger_\chi(K^p)$ in this case.

Now assume $n_1-n_2=\chi(h)\in \{0,-1,-2,\cdots\}$. Recall that for $f\in C_n$ as above, we have
\[u^+\cdot f=(\frac{t}{t_n})^{n_1}(\frac{e_2}{e_{2,n}})^{n_2-n_1}\sum_{i\geq0} (n_2-n_1-i)c_i y^{i+1}.\]
Hence the $y^{n_2-n_1+1}$ term in the summation always has zero coefficient. Conversely, if  $f=(\frac{t}{t_n})^{n_1}(\frac{e_2}{e_{2,n}})^{n_2-n_1}\sum_{i\geq0} c_i y^i\in C_n$ with $c_{n_2-n_1}=0$, we have $|n_2-n_1-i|\geq p^{-i}$ for $i$ sufficiently large and $(\frac{t}{t_n})^{n_1}(\frac{e_2}{e_{2,n}})^{n_2-n_1}\sum_{0\leq i\neq n_2-n_1} \frac{c_i}{n_2-n_1-i} y^i$ converges to an element $g\in C_{n+1}$. Again it is easy to check that $yf=u^+\cdot g$. Thus we can define an isomorphism 
\[M^\dagger_\chi(U^p)\xrightarrow{\sim} N^\chi\]
by sending $f$ to $\tilde fy^{n_2-n_1+1}$, where $\tilde f\in \cO_{K^p}^{\la,\chi}(U)$ is a lifting of $f\in  \cO_{K^p}^{\la,\chi}(U)/(y)$. Since $y=e_1/e_2$, this isomorphism becomes $B$-equivariant if we twist the left hand side by $(e_1/e_2)^{1-n_1+n_2}$.
\end{proof}

Next we compute $\cO^{\la,\chi,\kn}_{K^p}$, the $\kn$-invariants of $\cO^{\la,\chi}_{K^p}$.

\begin{prop} \label{kninv}
\hspace{2em}
\begin{enumerate}
\item Suppose $U$ does not contain $\infty$, i.e. $e_1$ is a basis on $U$. Fix $N$ large enough so that $(\frac{t}{t_{N}})^{n_1},(\frac{e_1}{e_{1,N}})^{n_2-n_1}$ converge. Then any $f\in \cO^{\la,\chi,\kn}_{K^p}(U)$ can be written as 
\[f=(\frac{t}{t_{N}})^{n_1}(\frac{e_1}{e_{1,N}})^{n_2-n_1}c \]
for some $n\geq N$ sufficiently large and $c\in H^0(V_{G_{r(n)}},\cO_{\mathcal{X}_{K^pG_{r(n)}}})$. 
\item Suppose $e_2$ is a basis on $U\in\mathfrak{B}$ and $\chi(h)=n_1-n_2\in \{0,-1,-2,\cdots\}$. Fix $N$ large enough so that $(\frac{t}{t_{N}})^{n_1}$ converges. Then any $f\in \cO^{\la,\chi,\kn}_{K^p}(U)$ can be written as 
\[f=(\frac{t}{t_{N}})^{n_1}(\frac{e_2}{e_{2,N}})^{n_2-n_1}cy^{n_2-n_1}=(\frac{t}{t_{N}})^{n_1}(\frac{e_1}{e_{2,N}})^{n_2-n_1}c \]
for some $n\geq N$ and $c\in H^0(V_{G_{r(n)}},\cO_{\mathcal{X}_{K^pG_{r(n)}}})$.
\item $\kh$ acts on $\cO^{\la,\chi,\kn}_{K^p}(U)$ via $(n_2,n_1)$ for any open subset $U\subset\Fl$.
\item Suppose $\chi(h)\neq 0,-1,-2,\cdots$. Then 
\[\varinjlim_{U\ni\infty} \cO^{\la,\chi,\kn}_{K^p}(U)=0,\]
 i.e. the stalk of $\cO^{\la,\chi,\kn}_{K^p}$ at $\infty$ is zero.
\end{enumerate}
\end{prop}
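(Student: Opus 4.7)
The plan is to prove all four parts by direct calculation using the explicit power series expansions of sections in $\cO^{\la,\chi}_{K^p}$ given in Lemma \ref{expchi} (when $e_1$ is a basis) and in the proof of Proposition \ref{defnocmf} (when $e_2$ is a basis near $\infty$), combined with the fact that $u^+$ acts as a derivation killing $e_1$ and $t$.

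For part (1), I would start from the unique expansion of any $f \in \cO^{\la,\chi}_{K^p}(U)$ furnished by Lemma \ref{expchi}. Since $u^+ \cdot e_1 = 0$, $u^+ \cdot t = 0$, and $u^+ \cdot x = 1$, applying $u^+$ term-by-term gives
\[
u^+ \cdot f = (\tfrac{t}{t_N})^{n_1}(\tfrac{e_1}{e_{1,N}})^{n_2-n_1}\sum_{i\geq 1} i\,c^{(n)}_i(f)(x-x_n)^{i-1}.
\]
By the uniqueness of such expansions (Theorem \ref{cnijk}), the vanishing of $u^+ \cdot f$ forces $c^{(n)}_i(f) = 0$ for all $i \geq 1$, and only the constant term survives. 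For part (2), I would use the expansion with $y, e_2, t$ together with the identities $u^+ \cdot e_2 = e_1 = y e_2$ and $u^+ \cdot y = -y^2$; the computation of $u^+ \cdot f$ already carried out in the proof of Proposition \ref{kncoin} yields the recursion $(n_2-n_1-i)c_i = (i+1)c_{i+1}y_n$. Shrinking $U$ to a sub-neighborhood of $\infty$ of the form $|p^{-n}y|\leq 1$ (so that we may take $y_n = 0$, as in the argument at the end of the proof of Proposition \ref{defnocmf}), this recursion collapses to $(n_2-n_1-i)c_i = 0$; since $n_2 - n_1 \geq 0$ by assumption, only the coefficient $c_{n_2-n_1}$ can be non-zero, which gives the asserted form after substituting $e_2 y = e_1$.

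Part (3) is the conceptual heart of the proposition and should be deduced from the explicit formula for $\theta_\kh$ derived in \ref{expkh}: locally (on any affinoid where $e_1$ is a basis, and then everywhere by $\GL_2(\Q_p)$-equivariance),
\[
\theta_\kh(\begin{pmatrix} a & 0 \\ 0 & d \end{pmatrix}) = d \begin{pmatrix} 1 & 0 \\ 0 & 0 \end{pmatrix} + a \begin{pmatrix} 0 & 0 \\ 0 & 1 \end{pmatrix} + (d-a) x \, u^+
\]
as operators on $\cO^{\la}_{K^p}$. On any section killed by $\kn = \Q_p u^+$, the last term disappears, so the identity $\theta_\kh(\begin{pmatrix} a & 0 \\ 0 & d \end{pmatrix}) \cdot f = (n_1 a + n_2 d) f$ becomes $(d \cdot \mathrm{diag}(1,0) + a \cdot \mathrm{diag}(0,1)) \cdot f = (n_1 a + n_2 d) f$, yielding the swapped weight $(n_2, n_1)$ as claimed.

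Finally, for part (4), I would repeat the recursion analysis of part (2) \emph{without} the hypothesis $\chi(h) \leq 0$: shrinking $U$ so that we may take $y_n = 0$, the recursion becomes $(n_2 - n_1 - i) c_i = 0$ for all $i \geq 0$, and the hypothesis $\chi(h) = n_1 - n_2 \notin \{0, -1, -2, \ldots\}$ says precisely that $n_2 - n_1 - i \neq 0$ for every $i \geq 0$, forcing $c_i = 0$ for all $i$ and hence $f = 0$. The main technical point to check carefully is that $y_n$ can indeed be chosen to vanish after shrinking $U$, which I would justify using the same approximation argument with the rational subset $U_n = \{|p^{-n}y|\leq 1\}$ that appears in the proof of Proposition \ref{kncoin}; beyond this, the argument is purely a bookkeeping exercise in the power series expansions.
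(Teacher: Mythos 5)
Parts (1), (3), and (4) are handled correctly. Your treatment of part (3) is actually a different (and cleaner) route than the paper's: the paper obtains the $\kh$-weight $(n_2,n_1)$ by inspecting the explicit formulas from parts (1) and (2), whereas you read it off directly from the local formula $\theta_\kh(\begin{pmatrix}a&0\\0&d\end{pmatrix}) = \begin{pmatrix} d & (d-a)x \\ 0 & a\end{pmatrix}$ in \ref{expkh}: on $\kn$-invariants the $xu^+$-term dies and $\theta_\kh$ and the constant $\kh$-action are exchanged by the transpose, so the weight swaps. This is a valid argument and generalizes more transparently.

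For part (2), however, there is a real gap. You shrink $U$ to the rational subdomain $U_n = \{|p^{-n}y|\leq 1\}$ so that you can take $y_n = 0$ and the recursion $(n_2-n_1-i)c_i = (i+1)c_{i+1}y_n$ collapses, and from this you conclude that $f|_{U_n}$ has the asserted form. But the statement of part (2) is about $U\in\mathfrak{B}$ itself: it asserts that $f = (\tfrac{t}{t_N})^{n_1}(\tfrac{e_1}{e_{2,N}})^{n_2-n_1}c$ with $c\in H^0(V_{G_{r(n)}},\cO_{\mathcal{X}_{K^pG_{r(n)}}})$ a section over the \emph{entire} affinoid $V_{G_{r(n)}}$ lying over $U$, not merely over $U_n$. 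On the full $U$ the approximant $y_n$ cannot be taken to be $0$ (the expansion is only valid when $\|y-y_n\|\leq p^{-n}$, but $|y|=1$ near the boundary of $U$), so your recursion doesn't collapse there. The paper closes this by also analyzing $f$ on the complementary rational subset $U^n = \{|yp^{-n}|\geq 1\}\subset U$ — where $e_1$ is a basis, so part (1) applies and, after the harmless change of gauge $(\tfrac{e_1}{e_{1,N}})^{n_2-n_1}\rightsquigarrow(\tfrac{e_1}{e_{2,N}})^{n_2-n_1}$ (legitimate since $n_2-n_1\in\Z_{\geq 0}$), yields $f|_{U^n} = (\tfrac{t}{t_N})^{n_1}(\tfrac{e_1}{e_{2,N}})^{n_2-n_1}c'$ — and then gluing the two resulting sections $c, c'$ over $U_n\cap U^n$ to a single $c$ over all of $V_{G_{r(n)}}$. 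Your proposal skips this gluing step entirely, so as written it only proves the statement on a proper sub-open; for part (4) shrinking is harmless because the claim there is about the stalk, but for part (2) it is not.
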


\begin{rem}
The overconvergent modular forms of weight $\chi$ introduced in \cite{Pil13,AIS14,CHJ17} are essentially the stalk of $\cO^{\la,\chi,\kn}_{K^p}$ at a $\Q_p$-rational point of $\Fl\setminus\{\infty\}$. For example, the $\infty$ in \cite[Theorem 1.1]{CHJ17} corresponds to the locus where $e_2=0$, i.e. $x=0$ in our setup.
\end{rem}

\begin{proof}
For the first part, as in Lemma \ref{expchi}, any $f\in \cO_{K^p}^{\la,\chi}(U)$ can be written as 
\[f=(\frac{t}{t_{N}})^{n_1}(\frac{e_1}{e_{1,N}})^{n_2-n_1}\sum_{i\geq0} c^{(n)}_i(f) (x-x_n)^i \]
for some $n$ and $c^{(n)}_i(f)$ are unique for such a $n$. Since $u^+$ acts as the usual derivation on $(x-x_n)$, we see that $u^+\cdot f=0$ if and only if $c^{(n)}_i(f)=0$, $i>0$. This proves the first part.

Suppose  $\infty\in U$ and $e_2$ is a basis on $U$. As in the proof of Proposition \ref{kncoin}, for any $f\in \cO_{K^p}^{\la,\chi}(U)$, we can find $n$ so that $f|_{U_n}$ can be written as $f=(\frac{t}{t_{N}})^{n_1}(\frac{e_2}{e_{2,N}})^{n_2-n_1}\sum_{i\geq0} c_i y^i$ for some $c_i \in H^0(V''_{G_{r(n)}},\cO_{\mathcal{X}_{K^pG_{r(n)}}})$. Then
\[u^+\cdot f = (\frac{t}{t_{N}})^{n_1}(\frac{e_2}{e_{2,N}})^{n_2-n_1}\sum_{i\geq0} (n_2-n_1-i)c_i y^{i+1}.\]
Hence that $u^+\cdot f=0$ is equivalent with 
\[(n_2-n_1-i)c_i=0,\, i\geq 0.\]
This implies all $c_i=0$, i.e. $f|_{U_n}=0$ if $\chi(h)=n_1-n_2\neq 0, -1,-2,\cdots$. When $\chi(h)\in \{0, -1,-2,\cdots\}$, we see that all $c_i=0$ except $i=n_2-n_1$ and 
\[y|_{U_n}=(\frac{t}{t_{N}})^{n_1}(\frac{e_2}{e_{2,N}})^{n_2-n_1}c_iy^{n_2-n_1}=(\frac{t}{t_{N}})^{n_1}(\frac{e_1}{e_{2,N}})^{n_2-n_1}c_i.\]
We have shown that $f$ on $U^n:=\{z\in U\,|\, |y(z)p^{-n}|\geq 1\}$ has the form $(\frac{t}{t_{N}})^{n_1}(\frac{e_1}{e_{2,N}})^{n_2-n_1}c'_i$ for some $c'_i\in H^0(V'''_{G_{r(n)}},\cO_{\mathcal{X}_{K^pG_{r(n)}}})$, where $V'''_{G_{r(n)}}\subset V_{G_{r(n)}}$ defined by $|y_np^{-n}|\geq 1$. It is clear that $c_i,c'_i$ can glue to a section $c\in H^0(V_{G_{r(n)}},\cO_{\mathcal{X}_{K^pG_{r(n)}}})$ and $f=(\frac{t}{t_{N}})^{n_1}(\frac{e_1}{e_{2,N}})^{n_2-n_1}c$. This proves the second and fourth part of the proposition.

The third part follows directly from the first two parts. 
\end{proof}

\begin{rem}
One can reinterpret these computations from the point of view of spectral theory. Consider the stalk of $\cO^{\la,\chi}_{K^p}$ at $\infty$:
\[A^{\chi}:=\varinjlim_{U\ni\infty} \cO^{\la,\chi}_{K^p}(U),\]
where the direct limit runs through all open subsets $U$ containing $\infty$. We equip $A^\chi$ with the direct limit topology. Note that if fix $U\in\mathfrak{B}$ containing $\infty$ and $(\frac{t}{t_n})^{n_1}(\frac{e_2}{e_{2,n}})^{-\chi(h)}\in \cO^{\la,\chi}_{K^p}(U)$, then multiplication by $(\frac{t}{t_n})^{-n_1}(\frac{e_2}{e_{2,n}})^{\chi(h)}$ induces an isomorphism $A^\chi\xrightarrow{\sim}A^0$. Here $0$ denotes the weight $0$. Clearly, this is not $\mathfrak{n}$-equivariant and a simple computation shows that the action of $u^+$ on $A^\chi$ becomes 
\[u^+-\chi(h)y \mbox{ on }A^0.\]
Now Proposition \ref{kncoin} and  Proposition \ref{kninv} can be rephrased as follows.
\begin{enumerate}
\item  $u^+/y$ is a well-defined operator on $A^0$.
\item  (outside of spectrum) $u^+/y-\chi(h)$ is invertible if $\chi(h)\neq 0,-1,-2,\cdots$ and is $p$-adically non-Liouville. 
\item  (continuous spectrum) $u^+/y-\chi(h)$ is injective and has dense image if $\chi(h)$ is $p$-adically Liouville.
\item  (point spectrum) $u^+/y-\chi(h)$ has kernel the stalk of $\cO^{\la,\chi,\kn}_{K^p}$ at $\infty$ if $-\chi(h)\in\N$.
\end{enumerate}
\end{rem}

Summarizing the results we have obtained so far.
\begin{thm} \label{wdec}
Let $\chi=(n_1,n_2)$ be a weight. Then
\begin{enumerate}
\item $(H^1(\Fl,\cO^{\la}_{K^p})^{\chi})_\kn=H^1(\Fl,\cO^{\la,\chi}_{K^p})_\kn=0$ if $\chi(h)\neq 0$.
\item Suppose $\chi(h)\neq0 ,1$. There is a natural Hecke-equivariant weight decomposition of $C[B]$-modules
\[H^1(\Fl,\cO^{\la}_{K^p})^{\chi,\kn}=H^1(\Fl,\cO^{\la,\chi}_{K^p})^{\kn}=M^\dagger_{\chi}(K^p)\otimes_{C}\kn^*\oplus W_2,\]
where  $W_2$ has weight $(n_2,n_1)$ and $M^\dagger_{\chi}(K^p)\otimes_C \kn^*$ has weight $(n_1-1,n_2+1)$.
\end{enumerate}
\end{thm}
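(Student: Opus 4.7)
The first claim is a direct consequence of Corollary \ref{chicomp} and the first part of Proposition \ref{kncoin}. Indeed, when $\chi(h) \neq 0$, Corollary \ref{chicomp} identifies $H^1(\Fl,\cO^{\la,\chi}_{K^p})$ with $H^1(\Fl,\cO^{\la}_{K^p})^\chi$, so the two $\kn$-coinvariants in the statement coincide. The isomorphism \eqref{ss2} then identifies the right-hand side with $H^1(\Fl,(\cO^{\la,\chi}_{K^p})_\kn)$, and the latter vanishes because $(\cO^{\la,\chi}_{K^p})_\kn$ is a skyscraper sheaf supported at $\infty$ (Proposition \ref{kncoin}(1)).

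For the second claim, I will analyze the short exact sequence \eqref{ss1} using the horizontal $\kh$-action and the fact that under our hypothesis $\chi(h)\neq 0,1$, every module appearing decomposes as a sum of at most two distinct $\kh$-weight spaces. Since $\chi(h) \neq 1$, Proposition \ref{kncoin}(2) gives a natural ($B$- and Hecke-equivariant) splitting
\[
H^0(\Fl,(\cO^{\la,\chi}_{K^p})_\kn)=N^\chi\oplus M^\dagger_\chi(K^p),
\]
where $\kh$ acts via $(n_2+1,n_1-1)$ on $N^\chi$ and via $\chi=(n_1,n_2)$ on $M^\dagger_\chi(K^p)$. Tensoring with $\kn^*$, which is of $\kh$-weight $(-1,1)$, these become weight $(n_2,n_1)$ and $(n_1-1,n_2+1)$ respectively, and the hypothesis $\chi(h)\neq 1$ guarantees that these two weights are distinct.

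On the kernel side of \eqref{ss1}, Proposition \ref{kninv}(3) shows that $\cO^{\la,\chi,\kn}_{K^p}$ is of $\kh$-weight $(n_2,n_1)$, and hence so is $H^1(\Fl,\cO^{\la,\chi,\kn}_{K^p})$. Combining these observations, $H^1(\Fl,\cO^{\la,\chi}_{K^p})^\kn$ carries only the two weights $(n_1-1,n_2+1)$ and $(n_2,n_1)$. Since they are distinct, the $\kh$-action is semisimple and provides a canonical $B$-equivariant (hence also Hecke-equivariant) weight decomposition
\[
H^1(\Fl,\cO^{\la,\chi}_{K^p})^\kn=H^1(\Fl,\cO^{\la,\chi}_{K^p})^{\kn,(n_1-1,n_2+1)}\oplus W_2,
\]
where $W_2$ denotes the weight-$(n_2,n_1)$ summand. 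Chasing \eqref{ss1} under this decomposition: the weight-$(n_1-1,n_2+1)$ part has trivial intersection with the kernel $H^1(\Fl,\cO^{\la,\chi,\kn}_{K^p})$ (which lies in $W_2$), so it injects into the quotient $H^0(\Fl,(\cO^{\la,\chi}_{K^p})_\kn)\otimes\kn^*$; inspecting weights, its image is exactly $M^\dagger_\chi(K^p)\otimes\kn^*$, and since the surjection to this summand is split by weight, the induced map is an isomorphism. This yields the desired decomposition, and furnishes an exact sequence $0\to H^1(\Fl,\cO^{\la,\chi,\kn}_{K^p})\to W_2\to N^\chi\otimes\kn^*\to 0$ as a byproduct.

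There is no real obstacle here beyond bookkeeping: the substantive input has already been extracted in Propositions \ref{kncoin} and \ref{kninv} and in Corollary \ref{chicomp}. The only delicate point is to verify that the weights $(n_1-1,n_2+1)$ and $(n_2,n_1)$ really are distinct precisely under the assumption $\chi(h)\neq 1$ (their difference in the first coordinate is $\chi(h)-1$), so that the semisimplicity argument producing the splitting applies.
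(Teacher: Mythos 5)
Your treatment of part (1) is correct and matches the paper: the identification of the two coinvariants comes from Corollary \ref{chicomp}, and the vanishing follows from \eqref{ss2} together with the skyscraper nature of $(\cO^{\la,\chi}_{K^p})_\kn$ from Proposition \ref{kncoin}.

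For part (2), however, there is a genuine gap in the step ``Since they are distinct, the $\kh$-action is semisimple.'' What you actually know from \eqref{ss1} and Propositions \ref{kninv}, \ref{kncoin} is that the left term $H^1(\Fl,\cO^{\la,\chi,\kn}_{K^p})$ is killed by $(h-(n_2-n_1))$ and the right term $H^0(\Fl,(\cO^{\la,\chi}_{K^p})_\kn)\otimes\kn^*$ is killed by $(h-(n_2-n_1))(h-(n_1-n_2-2))$; this only gives you that the middle term $H^1(\Fl,\cO^{\la,\chi}_{K^p})^\kn$ is killed by the \emph{cubic} polynomial $(h-(n_2-n_1))^2(h-(n_1-n_2-2))$, i.e.\ a decomposition into \emph{generalized} weight spaces. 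The $(n_1-1,n_2+1)$-piece is forced to be genuine because it injects into a genuine weight space of the quotient, so the asserted identification with $M^\dagger_\chi(K^p)\otimes\kn^*$ holds. But the generalized $(n_2,n_1)$-piece $W_2$ is an extension of $N^\chi\otimes\kn^*$ by $H^1(\Fl,\cO^{\la,\chi,\kn}_{K^p})$, and nothing in the exact sequence rules out a nonsplit $\kh$-module extension, i.e.\ a Jordan block for $h$. So ``$W_2$ has weight $(n_2,n_1)$'' in the genuine (semisimple) sense is not established by your argument.

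The paper sidesteps this by invoking Corollary \ref{khact} \emph{before} touching \eqref{ss1}: since $-\theta_\kh$ extends $Z(U(\mathfrak{g}_0))$ via Harish-Chandra and $\theta_\kh=\chi$ on $\cO^{\la,\chi}_{K^p}$, one gets directly that $(h+\chi(h))(h-w\cdot(-\chi)(h))=0$ on $H^1(\Fl,\cO^{\la,\chi}_{K^p})^\kn$, a \emph{quadratic} relation with distinct roots when $\chi(h)\neq 1$. This yields the semisimple decomposition $W_1\oplus W_2$ at the outset, and only then does the paper chase \eqref{ss1} to pin down $W_1=M^\dagger_\chi(K^p)\otimes\kn^*$. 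You should therefore insert the appeal to Corollary \ref{khact} to obtain the degree-two annihilating polynomial and then run your weight-chasing argument; as it stands, the semisimplicity of $W_2$ is asserted rather than proved.
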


\begin{proof}
Assume $\chi(h)\neq 0$. Then by our discussion in the beginning of this subsection \eqref{ss2}, $H^1(\Fl,\cO_{K^p}^{\la,\chi})_{\kn}\cong H^1(\Fl,(\cO_{K^p}^{\la,\chi})_\kn)$, which is zero as pointed out in Proposition \ref{kncoin}.

Suppose $\chi(h)\neq 1$, then $-\chi(h)\neq w\cdot (-\chi)(h)$. It follows from Corollary \ref{khact} that $(h+\chi(h))(h-w\cdot (-\chi)(h))=0$ on $H^1(\Fl,\cO^{\la,\chi}_{K^p})^{\kn}$. Therefore we have a natural weight decomposition $H^1(\Fl,\cO^{\la,\chi}_{K^p})^{\kn}=W_1\oplus W_2$ such that $\kh$ acts on $W_1$ (resp. $W_2$) via$(n_1-1,n_2+1)$ (resp. $(n_2,n_1)$). Since $H^1(\Fl,\cO^{\la,\chi,\kn}_{K^p})$ has no weight-$(n_1-1,n_2+1)$ part by Proposition \ref{kninv} and the  weight-$(n_1-1,n_2+1)$ part of $H^0(\Fl,(\cO^{\la,\chi}_{K^p})_{\kn})\otimes_C \kn^*$ is $M^\dagger_\chi(K^p)\otimes_C \kn^*$ by Proposition \ref{kncoin}, our claim for $W_1$ now follows from \eqref{ss1}.
\end{proof}

The case $\chi(h)=0$ or $1$ will be treated in the next subsection.

\subsection{\texorpdfstring{$\mathfrak{n}$}{Lg}-cohomology (II)}
\begin{para}
In this subsection, we completely determine the $\kn$-cohomology of $H^1(\Fl,\cO^{\la,\chi}_{K^p})$ for  integral weight, i.e. $\chi(h)\in\Z$. Write $k=\chi(h)$. We will distinguish $4$ cases: 
\begin{enumerate}
\item $k\geq 2$;
\item $k=1$;
\item $k=0$;
\item $k\leq -1$.
\end{enumerate}
One key step is to understand $H^1(\Fl,\cO^{\la,\chi,\kn}_{K^p})$. To do this, we need to introduce some auxiliary sheaves. Recall that $\omega_{K^p}$ is defined as the pull-back of $\omega$ (as a coherent sheaf)  from some finite level to the infinite level. For an integer $k$, we write $\omega_{K^p}^{k}:=\omega_{K^p}^{\otimes k}$.
\end{para}

\begin{defn}
We define $\omega^{k,\sm}_{K^p}\subset {\pi_{\HT}}_{*} \omega_{K^p}^{k}$ as the subsheaf of $\GL_2(\Q_p)$-smooth sections. More precisely, for any quasi-compact open subset $U\subset \Fl$, we can find an open subgroup $K_p$ of $\GL_2(\Q_p)$ stabilizing $U$. Then $\omega^{k,\sm}_{K^p}(U)\subset \omega_{K^p}^{k}(\pi_{\HT}^{-1}(U))$ is the subspace of $K_p$-smooth vectors, i.e. vectors fixed by some open subgroup of $K_p$. It is easy to see that this definition is independent of the choice of $K_p$.
\end{defn}

\begin{rem}
For $U\in\mathfrak{B}$, using the notation in Theorem \ref{piHT}, we have
\[\omega^{k,\sm}_{K^p}(U)=\varinjlim_{K_p}\omega^{k}(V_{K_p}).\]
Hence compared with Proposition \ref{kninv}, it is clear that 
\[\omega^{0,\sm}_{K^p}=\cO^{\la,0,\kn}_{K^p}.\]
From the point of view of classical Riemann-Hilbert correspondence, it seems better to think of $\omega^{k,\sm}_{K^p}$ as ``local system on the analytic site of $\Fl$''.
\end{rem}

\begin{rem}
An equivalent definition is $\omega^{k,\sm}_{K^p}={\pi_{\HT}}_{*} (\varinjlim_{K_p\subset \GL_2(\Q_p)}(\pi_{K_p})^{-1} \omega^{ k})$. Here $\pi_{K_p}:\mathcal{X}_{K^p}\to\mathcal{X}_{K^pK_p}$ denotes the natural projection and $\pi_{K_p}^{-1}$ denotes pull-back as sheaf of abelian groups. In fact, the cohomology of $\omega^{k,\sm}_{K^p}$ is closely related to the cohomology of $\omega^{k}$.
\end{rem}

\begin{lem} \label{okcc}
There is a natural isomorphism 
\[H^i(\Fl,\omega^{k,\sm}_{K^p})\cong \varinjlim_{K_p\subset\GL_2(\Q_p)}H^i(\mathcal{X}_{K^pK_p},\omega^k).\]
\end{lem}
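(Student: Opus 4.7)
The plan is to compute both sides by \v{C}ech cohomology for a finite cover of $\Fl$ drawn from the basis $\mathfrak{B}$ of Theorem \ref{piHT}, and to interchange the filtered colimit in $K_p$ with the finite \v{C}ech complex. The remark displayed right before the statement gives, for $U\in\mathfrak{B}$, the identification
\[
\omega^{k,\sm}_{K^p}(U)\;=\;\varinjlim_{K_p\subset\GL_2(\Q_p)}\omega^{k}(V_{K_p}),
\]
and this is what makes the colimit exchange possible.

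First I would prove the following acyclicity: for any $U\in\mathfrak{B}$ and any $i\geq 1$, the \v{C}ech cohomology $\check H^i(U,\omega^{k,\sm}_{K^p})=0$. Since $\mathfrak{B}$ is stable under finite intersections, any open cover of $U$ refines to a finite cover $\mathfrak{U}\subset\mathfrak{B}$. Shrinking if necessary, I can pick $K_p$ small enough so that the preimage $W_{K_p}\subset\mathcal{X}_{K^pK_p}$ of every $W\in\mathfrak{U}$ and of every (finite) intersection of elements of $\mathfrak{U}$ is an \emph{affinoid} subset of $\mathcal{X}_{K^pK_p}$ (Theorem \ref{piHT} provides such $K_p$ for each element, and finiteness lets me unify the choice). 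Since finite \v{C}ech complexes commute with filtered colimits, the above identification yields
\[
C^\bullet(\mathfrak{U},\omega^{k,\sm}_{K^p})\;=\;\varinjlim_{K_p'\subset K_p}C^\bullet(\mathfrak{U}_{K_p'},\omega^{k}),
\]
where $\mathfrak{U}_{K_p'}$ is the induced finite affinoid cover of $U_{K_p'}\subset\mathcal{X}_{K^pK_p'}$. Tate's acyclicity for the coherent sheaf $\omega^k$ on the affinoid $U_{K_p'}$ kills the right-hand cohomology in positive degrees, and taking the colimit gives the claimed vanishing.

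Having this vanishing on a basis closed under finite intersection, Corollaire 4, p.~176 of \cite{Gro57} (exactly as used in the proof of Theorem \ref{comcccc}) implies $H^i(U,\omega^{k,\sm}_{K^p})=0$ for $U\in\mathfrak{B}$, $i\geq 1$, and that the global cohomology can be computed by \v{C}ech cohomology of any finite cover $\mathfrak{U}_0\subset\mathfrak{B}$ of $\Fl$. One more application of the colimit interchange gives
\[
H^i(\Fl,\omega^{k,\sm}_{K^p})\;=\;\varinjlim_{K_p}\check H^i(\mathfrak{U}_{0,K_p},\omega^{k}),
\]
where $\mathfrak{U}_{0,K_p}$ is the induced finite affinoid cover of $\mathcal{X}_{K^pK_p}$ (it is a cover because $\pi_{\HT}$ is surjective and $\mathfrak{U}_0$ covers $\Fl$). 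Tate's acyclicity once more identifies this \v{C}ech cohomology with $H^i(\mathcal{X}_{K^pK_p},\omega^k)$ for each $K_p$, proving the lemma.

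The main obstacle I foresee is the bookkeeping in step one: arranging a single $K_p$ that simultaneously makes all finitely many intersections affinoid and allows the colimit exchange on the \v{C}ech complex. Once that uniform choice is made, everything else is formal, and the naturality of the construction renders the resulting isomorphism automatically equivariant for the Hecke operators away from $p$.
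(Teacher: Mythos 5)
Your proposal is correct and follows the same route as the paper's own proof: compute both sides by a \v{C}ech complex for a finite cover inside the basis $\mathfrak{B}$, pick a uniform $K_p$ making all relevant preimages affinoid at finite level, exchange the filtered colimit over level with the finite \v{C}ech complex, apply Tate's acyclicity at each finite level, and conclude the \v{C}ech-to-derived comparison via Corollaire 4, p.~176 of \cite{Gro57}. One small cosmetic slip: in the final parenthetical the relevant surjection is the projection $\mathcal{X}_{K^p}\to\mathcal{X}_{K^pK_p}$ rather than $\pi_{\HT}$ itself, but the intended point (the preimages form a cover at each finite level) is the same and the argument is unaffected.
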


\begin{proof}
The rough idea is that  the Hodge-Tate period map $\pi_\HT$ behaves as a finite map. Let $\mathfrak{U}=\{U_1,\cdots,U_r\}$ be a subset of $\mathfrak{B}$ (not necessarily a cover of $\Fl$), then for sufficiently small $K_p$ and any $i=1,\cdots,r$, we know that $\pi_{\HT}^{-1}(U_i)$ is the preimage of an affinoid open subset $V_{i,K_p}\subset \mathcal{X}_{K^pK_p}$ . Denote by $\mathfrak{V}_{K_p}=\{V_{1,K_p},\cdots,V_{r,K_p}\}$. Let $C^{\bullet}(\mathfrak{U},\omega^{k,\sm}_{K^p})$ be the \v{C}ech complex of $\omega^{k,\sm}_{K^p}$ with respect to $\mathfrak{U}$ and define $C^{\bullet}(\mathfrak{V}_{K_p},\omega^k)$ in a similar way. Clearly, 
\[C^{\bullet}(\mathfrak{U},\omega^{k,\sm}_{K^p})=\varinjlim_{K_p} C^{\bullet}(\mathfrak{V}_{K_p},\omega^k).\]
In particular, if $\mathfrak{U}$ is an open cover of some $U\in\mathfrak{B}$, we conclude from the usual Tate acyclicity result that $H^i(C^{\bullet}(\mathfrak{U},\omega^{k,\sm}_{K^p}))=0,i\geq 1$. Hence by Corollaire 4., p.176 of \cite{Gro57}, we have $H^i(U,\omega^{k,\sm}_{K^p})=0,i\geq 1$ and $H^i(\Fl,\omega^{k,\sm}_{K^p})$ can be computed by \v{C}ech complex. Now taking $\mathfrak{U}$ as a cover of $\Fl$, we get our claim.
\end{proof}

\begin{cor} \label{okv}
 $H^1(\Fl,\omega^{k,\sm}_{K^p})=0$ if $k\geq 2$.
\end{cor}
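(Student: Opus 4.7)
The plan is to reduce the vanishing to classical coherent cohomology on finite-level modular curves via Lemma \ref{okcc}, then apply Serre duality together with the Kodaira--Spencer isomorphism.

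First, Lemma \ref{okcc} gives
\[
H^1(\Fl,\omega^{k,\sm}_{K^p}) \cong \varinjlim_{K_p\subset\GL_2(\Q_p)} H^1(\mathcal{X}_{K^pK_p},\omega^k),
\]
so it suffices to show that each $H^1(\mathcal{X}_{K^pK_p},\omega^k)=0$ for $k\geq 2$, at least for a cofinal system of $K_p$ (so that the direct limit is zero). Since $\mathcal{X}_{K^pK_p}$ is a smooth projective curve over $C$, Serre duality provides a natural isomorphism
\[
H^1(\mathcal{X}_{K^pK_p},\omega^k)\cong H^0(\mathcal{X}_{K^pK_p},\Omega^1\otimes\omega^{-k})^*.
\]

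Next I would invoke the Kodaira--Spencer isomorphism on the compactified modular curve, which identifies $\Omega^1_{\mathcal{X}_{K^pK_p}}(\log D) \cong \omega^2$, where $D$ denotes the reduced divisor of cusps. Equivalently, $\Omega^1_{\mathcal{X}_{K^pK_p}}\cong \omega^2(-D)$. Plugging this in yields
\[
H^1(\mathcal{X}_{K^pK_p},\omega^k) \cong H^0(\mathcal{X}_{K^pK_p},\omega^{2-k}(-D))^*.
\]

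Finally, for $K_p$ sufficiently small, every connected component of $\mathcal{X}_{K^pK_p}$ contains at least one cusp (this is standard for modular curves) and $\deg(\omega)\geq 0$ on each component. Then on every component, the line bundle $\omega^{2-k}(-D)$ has degree $(2-k)\deg(\omega)-n$, with $n\geq 1$ the number of cusps on that component; since $2-k\leq 0$, this is strictly negative, so the $H^0$ vanishes identically. This gives $H^1(\mathcal{X}_{K^pK_p},\omega^k)=0$ for all such $K_p$, and since these $K_p$ are cofinal in the direct system, the limit is zero. The only potential subtlety is ensuring every component of $\mathcal{X}_{K^pK_p}$ has a cusp, but this holds for any level occurring in a cofinal subsystem, which is all that is needed.
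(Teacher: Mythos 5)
Your proof is correct and follows essentially the same route as the paper: reduce via Lemma \ref{okcc} to vanishing of $H^1(\mathcal{X}_{K^pK_p},\omega^k)$, then use Serre duality together with the Kodaira--Spencer isomorphism and a degree count. The subtlety you flag at the end is in fact harmless: every component of $\mathcal{X}_{K^pK_p}$ has a non-empty cusp divisor (and $\deg\omega>0$ on each), so no passage to a cofinal subsystem is required.
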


\begin{proof}
By our previous lemma, it suffices to prove $H^1(\mathcal{X}_{K^pK_p},\omega^k)=0$ when $k\geq 2$. This follows from the positivity of $\omega$ and the Kodaira-Spencer isomorphism $\omega^2\cong \Omega^1_{\mathcal{X}_{K^pK_p}}(\mathcal{C})$, where $\mathcal{C}$ denotes the cusps in $\mathcal{X}_{K^pK_p}$ and is non-empty.
\end{proof}

The following lemma implies that the stalk of $\omega^{k,\sm}_{K^p}$ at $\infty$ is the space of overconvergent forms of weight $k$ introduced in \ref{omfk}.

\begin{lem}
$\varinjlim_{U\ni\infty}\omega^{k,\sm}_{K^p}(U)=M^\dagger_k(K^p)$.
\end{lem}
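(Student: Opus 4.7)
The plan is to show that both sides are computed by the same cofinal system of open neighborhoods once we pass through the Hodge--Tate period map. Concretely, I will exhibit a natural identification of the two direct systems: open neighborhoods $U\ni\infty$ in $\mathfrak{B}$ on one side, and strict neighborhoods of the canonical loci $\mathcal{X}_{K^p\Gamma(p^n),c}$ on the other.

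First I would fix $U\in\mathfrak{B}$ containing $\infty$. By Theorem \ref{piHT}, $\pi_{\HT}^{-1}(U)$ is the preimage of an affinoid $V_{K_p}\subset \mathcal{X}_{K^pK_p}$ for some sufficiently small open $K_p\subset \GL_2(\Q_p)$, and taking $K_p$ smaller only shrinks the relevant affinoid at the corresponding finite level. By the very definition of $\omega^{k,\sm}_{K^p}$ and the compatibility with pull-back of $\omega$,
\[
\omega^{k,\sm}_{K^p}(U)\;=\;\varinjlim_{K_p}\omega^k(V_{K_p}),
\]
where $K_p$ ranges over sufficiently small open subgroups of $\GL_2(\Q_p)$. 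Restricting the cofinal system to $K_p=\Gamma(p^n)$, the left-hand side of the lemma becomes
\[
\varinjlim_{U\ni\infty}\omega^{k,\sm}_{K^p}(U)\;=\;\varinjlim_{U\ni\infty,\,n}\omega^k\bigl(V_{\Gamma(p^n)}\bigr),
\]
where $V_{\Gamma(p^n)}\subset \mathcal{X}_{K^p\Gamma(p^n)}$ denotes the image of $\pi_{\HT}^{-1}(U)$ at level $\Gamma(p^n)$.

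The core of the argument is then the comparison of the two topologies, which is exactly Lemma \ref{twotopcomp}. On the one hand, for $U\ni\infty$ and $n$ large enough, the affinoid $V_{\Gamma(p^n)}$ is a strict neighborhood of $\mathcal{X}_{K^p\Gamma(p^n),c}$ (this uses Lemma \ref{preimrt} together with the fact that $U$ contains $\Gamma(p^n)\cdot \infty$, as was already observed in the proof of Proposition \ref{defnocmf}). Hence there is a canonical map
\[
\omega^k\bigl(V_{\Gamma(p^n)}\bigr)\longrightarrow M_k^\dagger(K^p\Gamma(p^n))
\]
for suitable $n$, which is compatible with varying $U$ and $n$. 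On the other hand, the converse part of Lemma \ref{twotopcomp} says that given any strict neighborhood $W$ of $\mathcal{X}_{K^p\Gamma(p^n),c}$, one can find $U\in\mathfrak{B}$ containing $\infty$ (and, after shrinking $U$, possibly a larger level $\Gamma(p^m)$) such that $V_{\Gamma(p^m)}\subset W$. Restricting sections of $\omega^k$ from $W$ to $V_{\Gamma(p^m)}$ gives a map in the opposite direction at the level of the direct systems.

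Putting these two directions together shows that the two filtered systems are mutually cofinal, so the colimits agree and
\[
\varinjlim_{U\ni\infty}\omega^{k,\sm}_{K^p}(U)\;=\;\varinjlim_{n}M_k^\dagger(K^p\Gamma(p^n))\;=\;M^\dagger_k(K^p).
\]
There is no real obstacle beyond bookkeeping: the only substantive input is the topological comparison provided by Lemma \ref{twotopcomp}, and the rest is matching the two definitions of ``neighborhood'' and applying the already-established identification $\omega^{k,\sm}_{K^p}(U)=\varinjlim_{K_p}\omega^k(V_{K_p})$. The slight subtlety is ensuring that as one shrinks $U$ about $\infty$, one is permitted to simultaneously pass to deeper levels $\Gamma(p^n)$; this is harmless since the colimit defining $M^\dagger_k(K^p)$ runs over all $n$ anyway.
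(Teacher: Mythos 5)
Your proof is correct and takes exactly the same route as the paper, which simply says the lemma ``follows from Lemma \ref{twotopcomp} directly.'' You have essentially unwound that one-line proof by making explicit the cofinality of the two filtered systems (neighborhoods of $\infty$ pulled back along $\pi_{\HT}$ on one side, strict neighborhoods of $\mathcal{X}_{K^p\Gamma(p^n),c}$ on the other), which is precisely what Lemma \ref{twotopcomp} provides.
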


\begin{proof}
This follows from Lemma \ref{twotopcomp} directly.
\end{proof}

\begin{defn}
We denote by 
\[M_k(K^p):=H^0(\Fl,\omega^{k,\sm}_{K^p})= \varinjlim_{K_p\subset\GL_2(\Q_p)}H^0(\mathcal{X}_{K^pK_p},\omega^k),\] 
the space of classical weight $k$ modular forms of tame level $K^p$.
\end{defn}

\begin{para}[non-integral powers of $t$] \label{twisttn1}
Recall that in \ref{ttn}, we defined $t$ as the coordinate function on $\mathrm{Isom}(\Z_p,\Z_p(1))\cong \Z_p^\times$ by choosing a basis of $\Z_p(1)$. For any $n_1\in C$, we can find a continuous character $\Z_p^\times\to C^\times$ whose derivative sends $1\in\Q_p=\Lie(\Z_p^\times)$ to $n_1\in C=\Lie(C^\times)$. When $n_1$ is an integer, we can simply take $t^{n_1}$. For non-integer $n_1$, we \textbf{fix} one choice from now on and denote it by $t^{n_1}$ and  view it as an invertible element of $H^0(\mathcal{X}_{K^p},\cO_{\mathcal{X}_{K^p}})^{\la}$ by abuse of notation. It follows from the discussion in \ref{ttn} that $\GL_2(\A_f)$ acts on $t^{n_1}$ via $t^{n_1}\circ\varepsilon\circ\det$ and $G_{\Q_p}$ acts on it via $t^{n_1}\circ\varepsilon_p$. Hence we can allow non-integral power of $t$ in \ref{e1e2tact}.

Let $\chi=(n_1,n_2)$. Then multiplication by $t^{-n_1}$ induces an isomorphism $\cO^{\la,\chi}_{K^p}\xrightarrow{\sim} \cO^{\la,\chi'}_{K^p}$, where $\chi'=(0,n_2-n_1)$. It is easy to see how the actions of $\GL_2(\Q_p)\times G_{\Q_p}$ and Hecke operators away from $p$ change accordingly. From this, we can reduce computations to the case $n_1=0$. 
\end{para}
 
\begin{para}[\texorpdfstring{$k\geq 2$}{Lg}] \label{kgeq2}
Now we are ready to treat the case $n_1-n_2=k\geq 2$. By Proposition \ref{kncoin}, we have $(\cO_{K^p}^{\la,\chi})_\kn\cong (i_{\infty})_*M^\dagger_\chi(K^p)$ in this case and it suffices to determine $H^1(\Fl,\cO^{\la,\chi,\kn}_{K^p})$. Note that multiplication by ${t}^{-n_1}e_1^{k}$ induces an injection:
\[\cO^{\la,\chi,\kn}_{K^p}\to\omega^{k,\sm}_{K^p}.\]
More precisely, since the stalk of $\cO^{\la,\chi,\kn}_{K^p}$ at $\infty$ is zero, it is enough to define this map outside of $\infty$. So let $U\in\mathfrak{B}$ be an open subset not containing $\infty$. It follows from the first part of Proposition \ref{kninv} that this map is nothing but sending $f={t}^{n_1}(\frac{e_1}{e_{1,N}})^{n_2-n_1}c\in \cO^{\la,\chi,\kn}_{K^p}(U)$ to $e_{1,N}^{k}c\in\omega^{k,\sm}_{K^p}(U)$. This is in fact an isomorphism $\cO^{\la,\chi,\kn}_{K^p}(U)\xrightarrow{\sim}\omega^{k,\sm}_{K^p}(U)$. Hence the quotient of $\cO^{\la,\chi,\kn}_{K^p}\to\omega^{k,\sm}_{K^p}$ is simply the stalk of $\omega^{k,\sm}_{K^p}$ at $\infty$, i.e.
\[0\to \cO^{\la,\chi,\kn}_{K^p}\xrightarrow{\times{t}^{-n_1}e_1^{k}}\omega^{k,\sm}_{K^p}\to (i_\infty)_*M^\dagger_k(K^p)\to 0.\]
Take the cohomology of this exact sequence. It follows from Corollary \ref{okv} and Lemma \ref{okcc} that
\[0\to M_k(K^p)\to M^\dagger_k(K^p)\to H^1(\Fl,\cO^{\la,\chi,\kn}_{K^p})\to 0. \]
Hence $H^1(\Fl,\cO^{\la,\chi,\kn}_{K^p})\cong M^\dagger_k(K^p)/ M_k(K^p)$, i.e. the  quotient of overconvergent modular forms of weight $k$ by classical forms. 
\end{para}

\begin{thm} \label{thmkgeq2}
Suppose $\chi(h)=k\in\{2,3,\cdots\}$. There is a Hecke and $B\times G_{\Q_p}$-equivariant weight decomposition (into weight $(n_1-1,n_2+1)$ and $(n_2,n_1)$ components):
\[H^1(\Fl,\cO^{\la}_{K^p})^{\chi,\kn}=H^1(\Fl,\cO^{\la,\chi}_{K^p})^{\kn}=M^\dagger_k(K^p)\cdot e_1^{-1}e_2^{-k+1}t^{n_1} \oplus  (M^\dagger_k(K^p)/ M_k(K^p))\cdot e_1^{-k}t^{n_1}.\]
\end{thm}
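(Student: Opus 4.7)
The strategy is to run Theorem~\ref{wdec}(2) (which applies since $k=\chi(h)\geq 2$, in particular $k\neq 0,1$) and then separately identify each of the two summands as a $B\times G_{\Q_p}$-module with Hecke action away from $p$. The equality $H^1(\Fl,\cO^{\la}_{K^p})^{\chi,\kn}=H^1(\Fl,\cO^{\la,\chi}_{K^p})^{\kn}$ follows from Corollary~\ref{chicomp}(2), because $\chi(h)=k\neq 0$ makes $\mathrm{Ext}^1_{C[\kh]}(\chi,H^0(\Fl,\cO^{\la}_{K^p}))$ vanish. Thus Theorem~\ref{wdec}(2) gives a canonical decomposition
\[
H^1(\Fl,\cO^{\la,\chi}_{K^p})^{\kn}=M^{\dagger}_{\chi}(K^p)\otimes_C\kn^{*}\oplus W_2,
\]
with $W_2\cong H^1(\Fl,\cO^{\la,\chi,\kn}_{K^p})$ of $\theta_\kh$-weight $(n_2,n_1)$.

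For the first summand, I will apply Proposition~\ref{defnocmf} to rewrite $M^{\dagger}_{\chi}(K^p)\cong M^{\dagger}_k(K^p)\cdot e_2^{-k}t^{n_1}$ and then compute the $B\times G_{\Q_p}$-character of $\kn^{*}$. Since $\mathrm{Ad}(\begin{pmatrix}a & b\\ 0 & d\end{pmatrix})u^{+}=(a/d)u^{+}$ and Galois acts trivially on $\kn$, one has $\kn^{*}\cong e_1^{-1}e_2$ in the notation of Remark~\ref{e1e2tact} (the Galois twist is $\varepsilon_p^{-1+1}=1$). Combining gives the first summand $M^{\dagger}_k(K^p)\cdot e_1^{-1}e_2^{-k+1}t^{n_1}$ with weight $(n_1-1,n_2+1)$, matching the weight bookkeeping in Theorem~\ref{wdec}(2).

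For the second summand I will use the exact sequence from~\ref{kgeq2},
\[
0\to \cO^{\la,\chi,\kn}_{K^p}\xrightarrow{\times t^{-n_1}e_1^{k}}\omega^{k,\sm}_{K^p}\to (i_\infty)_{*}M^{\dagger}_k(K^p)\to 0,
\]
and take cohomology on $\Fl$. The term $H^0(\Fl,\cO^{\la,\chi,\kn}_{K^p})$ vanishes since it injects into $H^0(\Fl,\cO^{\la,\chi}_{K^p})=0$ (Corollary~\ref{chicomp}(1), using $\chi(h)\neq 0$); we have $H^0(\Fl,\omega^{k,\sm}_{K^p})=M_k(K^p)$ by definition; and $H^1(\Fl,\omega^{k,\sm}_{K^p})=0$ by Corollary~\ref{okv} since $k\geq 2$. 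Hence the resulting long exact sequence collapses to
\[
0\to M_k(K^p)\to M^{\dagger}_k(K^p)\to H^1(\Fl,\cO^{\la,\chi,\kn}_{K^p})\to 0,
\]
yielding $W_2\cong M^{\dagger}_k(K^p)/M_k(K^p)$. Because the isomorphism in the sheaf sequence is implemented by multiplication by $t^{-n_1}e_1^{k}$, the induced isomorphism on $H^1$ is $B\times G_{\Q_p}$-equivariant only after twisting by the inverse character $e_1^{-k}t^{n_1}$; this produces the claimed second summand $(M^{\dagger}_k(K^p)/M_k(K^p))\cdot e_1^{-k}t^{n_1}$ of weight $(n_2,n_1)$.

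There is no serious obstacle here: all ingredients were set up in the preceding subsections. The only delicate point is the careful tracking of the $B\times G_{\Q_p}$-characters—ensuring that the twist $e_1^{-k}t^{n_1}$ appearing through the multiplication map in~\ref{kgeq2} is recorded correctly, and that $\kn^{*}$ is identified with the character $e_1^{-1}e_2$ using the adjoint $B$-action on $u^{+}$. Hecke-equivariance away from $p$ follows automatically because the entire construction (the Hodge--Tate period map, the sheaf $\cO^{\la,\chi}_{K^p}$, the sections $e_1,e_2,t$, and the identifications in Propositions~\ref{defnocmf} and \ref{kncoin}) only uses the $\GL_2(\Q_p)$-structure at $p$ and is functorial in the tame level.
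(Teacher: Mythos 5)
Your proof is correct and follows essentially the same path as the paper's: reduce via Corollary~\ref{chicomp}(2), apply the weight decomposition of Theorem~\ref{wdec}(2), identify $W_2$ with $H^1(\Fl,\cO^{\la,\chi,\kn}_{K^p})$ from the exact sequence~\eqref{ss1}, and compute the latter via the sheaf sequence from~\ref{kgeq2} together with Corollary~\ref{okv} and Lemma~\ref{okcc}. The only cosmetic difference is that the paper first twists by $t^{-n_1}$ to reduce to $n_1=0$ whereas you track the $t^{n_1}$-factor throughout; one tiny slip in wording is that the weight $(n_2,n_1)$ of $W_2$ is the constant $\kh$-weight rather than the $\theta_{\kh}$-weight (the latter being $\chi=(n_1,n_2)$ by definition of $\cO^{\la,\chi}_{K^p}$), but the weight you actually record is the correct one.
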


\begin{proof}
We can multiply $t^{-n_1}$ to reduce to the case $n_1=0$. Assume $n_1=0$. Note that $\kn^*=e_1^{-1}e_2$ as $B$-representations. Hence by Proposition \ref{defnocmf}, the weight $(-1,-k+1)$-part is isomorphic to $M^\dagger_k(K^p)\cdot e_1^{-1}e_2^{-k+1}$. Since the isomorphism $H^1(\Fl,\cO^{\la,\chi,\kn}_{K^p})\cong M^\dagger_k(K^p)/ M_k(K^p)$ is essentially induced by $\times e_1^k$, hence we can twist by $e_1^{-k}$ to make it $B\times G_{\Q_p}$-equivariant.
\end{proof}

\begin{para}[\texorpdfstring{$k\leq -1$}{Lg}] 
Another relatively simple case is when $\chi(h)=k\leq -1$. In this case, Proposition \ref{kninv} implies that multiplication by $t^{-n_1}e_1^{k}$ induces an isomorphism:
\[\cO^{\la,\chi,\kn}_{K^p}\xrightarrow{\sim}\omega^{k,\sm}_{K^p}.\]
Therefore by Lemma \ref{okcc}, $H^1(\Fl,\cO^{\la,\chi,\kn}_{K^p})\cong \varinjlim_{K_p} H^1(\mathcal{X}_{K^pK_p},\omega^{k})$. Combining this with  Proposition \ref{kncoin}, we get
\end{para}

\begin{thm} \label{thmkleq-1}
Suppose $\chi(h)=k\in\{-1,-2,\cdots\}$. There is a weight decomposition (into weight $(n_1-1,n_2+1)$ and $(n_2,n_1)$ components):
\[H^1(\Fl,\cO^{\la,\chi}_{K^p})^{\kn}=M^\dagger_k(K^p)\cdot e_1^{-1}e_2^{-k+1}t^{n_1} \oplus N_{k,1}\cdot e_1^{-k}t^{n_1},\]
where  $N_{k,1}$ sits inside the exact sequence 
\[0\to\varinjlim_{K_p} H^1(\mathcal{X}_{K^pK_p},\omega^{k})\to N_{k,1}\to M^\dagger_{k}(K^p)\to 0.\]
All the maps are Hecke and $B\times G_{\Q_p}$-equivariant.
\end{thm}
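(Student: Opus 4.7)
The plan is to assemble the decomposition directly from the short exact sequence \eqref{ss1} together with the computations already carried out in Propositions \ref{kncoin} and \ref{kninv}; this is the argument sketched in the paragraph preceding the theorem statement. Since $k=\chi(h)\leq -1$, in particular $k\neq 0,1$, so Theorem \ref{wdec}.(2) already provides a Hecke-, $B$-, and $G_{\Q_p}$-equivariant weight decomposition
\[H^1(\Fl,\cO^{\la,\chi}_{K^p})^{\kn}=\bigl[M^\dagger_\chi(K^p)\otimes_C\kn^*\bigr]\oplus W_2,\]
where the first summand has weight $(n_1-1,n_2+1)$ and $W_2$ has weight $(n_2,n_1)$. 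Using Proposition \ref{defnocmf} together with the identification $\kn^*\cong e_1^{-1}e_2$ of $B$-modules, the first summand rewrites as $M^\dagger_k(K^p)\cdot e_1^{-1}e_2^{-k+1}t^{n_1}$, which is the first factor claimed in the theorem.

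Next I would identify $W_2$ by reading off the weight $(n_2,n_1)$ piece of \eqref{ss1}. On the right hand side, Proposition \ref{kncoin} parts (2) and (4) (applicable since $k\in\{-1,-2,\dots\}$) yield $(\cO^{\la,\chi}_{K^p})_\kn\cong (i_\infty)_\ast M^\dagger_\chi(K^p)\oplus(i_\infty)_\ast N^\chi$ with $N^\chi\cong M^\dagger_\chi(K^p)\cdot(e_1/e_2)^{1-k}$ of weight $(n_2+1,n_1-1)$; after tensoring with $\kn^*$ and applying Proposition \ref{defnocmf}, a short character computation gives $N^\chi\otimes\kn^*\cong M^\dagger_k(K^p)\cdot e_1^{-k}t^{n_1}$, which is exactly the weight $(n_2,n_1)$ part of $H^0(\Fl,(\cO^{\la,\chi}_{K^p})_\kn)\otimes\kn^*$. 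On the left, Proposition \ref{kninv}.(3) shows that $H^1(\Fl,\cO^{\la,\chi,\kn}_{K^p})$ is of pure weight $(n_2,n_1)$, while Proposition \ref{kninv} parts (1) and (2) combine to show that multiplication by $t^{-n_1}e_1^k$ patches to a global sheaf isomorphism $\cO^{\la,\chi,\kn}_{K^p}\xrightarrow{\sim}\omega^{k,\sm}_{K^p}$; hence Lemma \ref{okcc} gives
\[H^1(\Fl,\cO^{\la,\chi,\kn}_{K^p})\cong\varinjlim_{K_p}H^1(\mathcal{X}_{K^pK_p},\omega^k)\cdot e_1^{-k}t^{n_1}.\]
Plugging both ends of the exact sequence into the weight $(n_2,n_1)$ component of \eqref{ss1} and factoring out the common twist $e_1^{-k}t^{n_1}$ yields the desired extension defining $N_{k,1}$.

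Since all the analytic work has already been absorbed into Propositions \ref{kncoin}, \ref{kninv} and Theorem \ref{wdec}, there is no serious obstacle here (the paper rightly calls this case ``relatively simple''). The only genuine care is bookkeeping: one must verify that the local trivializations in Proposition \ref{kninv} parts (1) and (2) glue into a single $B\times G_{\Q_p}$-equivariant sheaf isomorphism on $\Fl$, and then carefully track the action of $B\times G_{\Q_p}$ on each twist by powers of $e_1$, $e_2$, and $t$ so that the final identifications agree, as $B\times G_{\Q_p}$-modules, with the formulas stated in the theorem.
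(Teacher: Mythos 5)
Your argument is correct and follows essentially the same route as the paper's own proof: the sheaf isomorphism $\cO^{\la,\chi,\kn}_{K^p}\xrightarrow{\sim}\omega^{k,\sm}_{K^p}$ obtained by patching the local trivializations in Proposition~\ref{kninv}, Lemma~\ref{okcc} for its cohomology, Proposition~\ref{kncoin} for the coinvariants, and Theorem~\ref{wdec} for the weight decomposition; the character bookkeeping you carry out (in particular $N^\chi\otimes\kn^*\cong M^\dagger_k(K^p)\cdot e_1^{-k}t^{n_1}$) checks out. You merely unfold the paper's one-sentence ``Combining this with Proposition~\ref{kncoin}'' into the explicit comparison of weight $(n_2,n_1)$-components of the three terms of \eqref{ss1}, which is a faithful expansion of the intended argument rather than a different one.
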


\begin{para}[\texorpdfstring{$k=0$}{Lg}] 
Now we consider the case $\chi(h)=0$. As before, we can multiply by $t^{-n_1}$ and assume $n_1=0$, i.e. $\chi=0$. First we determine $H^1(\Fl,\cO^{\la,0}_{K^p})^{\kn}$. Let $\mathcal{F}$ be $\kn(\cO^{\la,0}_{K^p})\subset \cO^{\la,0}_{K^p}$.  Hence the composite 
\[H^1(\Fl,\cO^{\la,0}_{K^p})\xrightarrow{u^+}H^1(\Fl,\mathcal{F})\otimes\kn^*\to H^1(\Fl,\cO^{\la,0}_{K^p})\otimes\kn^*\] 
is the endomorphism $u^+$ on $H^1(\Fl,\cO^{\la,0}_{K^p})$ and it suffices to determine the kernels of both maps. Consider 
\[0\to \cO^{\la,0,\kn}_{K^p}\to \cO^{\la,0}_{K^p}\xrightarrow{u^+}\mathcal{F}\otimes \kn^*\to 0.\]
By Proposition \ref{kncoin}, we know that $\mathcal{F}\subset\km_\infty\cO^{\la,0}_{K^p}$. Hence $H^0(\Fl,\mathcal{F})=0$ because the global sections of $\cO^{\la,0}_{K^p}$ are $\varinjlim_{K_p\subset\GL_2(\Q_p)}H^0(\mathcal{X}_{K^pK_p},\cO_{\mathcal{X}_{K^pK_p}})$. Therefore
\[H^1(\Fl,\cO^{\la,0,\kn}_{K^p})=\ker(H^1(\Fl,\cO^{\la,0}_{K^p})\xrightarrow{u^+}H^1(\Fl,\mathcal{F})\otimes \kn^*)\]
and $H^1(\Fl,\cO^{\la,0}_{K^p})\xrightarrow{u^+}H^1(\Fl,\mathcal{F})\otimes \kn^*$ is surjective because $\Fl$ is one-dimensional.
Note that $\cO^{\la,0,\kn}_{K^p}=\omega^{0,\sm}_{K^p}$. Hence its $H^1$ is $\varinjlim_{K_p}H^1(\mathcal{X}_{K^pK_p},\cO_{\mathcal{X}_{K^pK_p}})$. On the other hand, consider 
\[0\to\mathcal{F}\to \cO^{\la,0}_{K^p}\to(\cO^{\la,0}_{K^p})_\kn\to 0.\]
By Proposition \ref{kncoin}, $H^0(\Fl,(\cO^{\la,0}_{K^p})_\kn)\cong M^{\dagger}_0(K^p)\oplus  M^{\dagger}_0(K^p)\otimes_C\kn$.  It is easy to see that $H^0(\Fl,\cO^{\la,0}_{K^p})=M_0(K^p)$ maps to the first factor. Hence 
\[\ker(H^1(\Fl,\mathcal{F})\to H^1(\Fl,\cO^{\la,0}_{K^p}))\cong M^{\dagger}_0(K^p)/M_0(K^p)\oplus  M^{\dagger}_0(K^p)\otimes_C\kn.\]
Also since $(\cO^{\la,0}_{K^p})_\kn$ has no $H^1$, we have $H^1(\Fl,\mathcal{F})\to H^1(\Fl,\cO^{\la,0}_{K^p})$ is surjective. Summarizing our discussions, we have the following result.
\end{para}

\begin{prop}
Suppose $\chi=0$. Then
\begin{enumerate}
\item $H^1(\Fl,\cO^{\la,0}_{K^p})_\kn=0$;
\item There is a Hecke and $B\times G_{\Q_p}$-equivariant exact sequence:
\[0\to \varinjlim_{K_p}H^1(\mathcal{X}_{K^pK_p},\cO_{\mathcal{X}_{K^pK_p}})\to H^1(\Fl,\cO^{\la,0}_{K^p})^{\kn}\to \kn^*\otimes M^{\dagger}_0(K^p)/M_0(K^p)\oplus   M^{\dagger}_0(K^p)\to 0.\]
\end{enumerate}
\end{prop}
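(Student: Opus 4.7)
I will formalize the sketch given in the paragraph preceding the proposition. Set $\mathcal{F} := u^+(\cO^{\la,0}_{K^p})$, so that we have two short exact sequences of sheaves on $\Fl$:
\[0\to \cO^{\la,0,\kn}_{K^p}\to \cO^{\la,0}_{K^p}\xrightarrow{u^+}\mathcal{F}\otimes_C\kn^*\to 0, \qquad 0\to\mathcal{F}\to \cO^{\la,0}_{K^p}\to(\cO^{\la,0}_{K^p})_\kn\to 0.\]
The first key input is $H^0(\Fl,\mathcal{F})=0$: by Proposition \ref{kncoin} we have $\mathcal{F}\subset \km_\infty\cO^{\la,0}_{K^p}$, and $H^0(\Fl,\cO^{\la,0}_{K^p})=M_0(K^p)$ consists of locally constant functions on $\varprojlim_{K_p}\pi_0(\mathcal{X}_{K^pK_p})$; these cannot vanish at $\infty$ unless they are zero, because by Lemma \ref{preimrt} the preimage $\pi_{\HT}^{-1}(\infty)$ meets every connected component. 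The second key input is $H^1(\Fl,(\cO^{\la,0}_{K^p})_\kn)=0$, which is automatic since Proposition \ref{kncoin} shows this sheaf is a skyscraper at $\infty$.

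The next step is to take cohomology of the two sequences. Since $\Fl$ is one-dimensional, $H^2(\Fl,-)=0$ for any abelian sheaf, so the first sequence yields
\[0\to H^1(\Fl,\cO^{\la,0,\kn}_{K^p})\to H^1(\Fl,\cO^{\la,0}_{K^p})\xrightarrow{\delta}H^1(\Fl,\mathcal{F})\otimes_C\kn^*\to 0,\]
and the second sequence, together with the two vanishing inputs, gives
\[0\to M_0(K^p)\to H^0(\Fl,(\cO^{\la,0}_{K^p})_\kn)\to H^1(\Fl,\mathcal{F})\xrightarrow{\psi}H^1(\Fl,\cO^{\la,0}_{K^p})\to 0,\]
so $\psi$ is surjective and $\ker(\psi) = \bigl(M^{\dagger}_0(K^p)/M_0(K^p)\bigr)\oplus M^{\dagger}_0(K^p)\otimes_C\kn$ using Proposition \ref{kncoin}(4). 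For part (1), the action of $u^+$ on $H^1(\Fl,\cO^{\la,0}_{K^p})$ factors as $(\psi\otimes\id_{\kn^*})\circ\delta$; both factors are surjective, so $u^+$ is surjective on $H^1$ and $H^1(\Fl,\cO^{\la,0}_{K^p})_\kn=0$.

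For part (2), observe that $H^1(\Fl,\cO^{\la,0}_{K^p})^\kn = \delta^{-1}\bigl(\ker(\psi)\otimes_C\kn^*\bigr)$. Combining this with the surjectivity of $\delta$ and the identification $\ker(\delta)=H^1(\Fl,\cO^{\la,0,\kn}_{K^p})$ gives the exact sequence
\[0\to H^1(\Fl,\cO^{\la,0,\kn}_{K^p})\to H^1(\Fl,\cO^{\la,0}_{K^p})^{\kn}\to \ker(\psi)\otimes_C\kn^*\to 0.\]
Lemma \ref{okcc} identifies $H^1(\Fl,\cO^{\la,0,\kn}_{K^p})=H^1(\Fl,\omega^{0,\sm}_{K^p})$ with $\varinjlim_{K_p}H^1(\mathcal{X}_{K^pK_p},\cO_{\mathcal{X}_{K^pK_p}})$, while the canonical pairing $\kn\otimes_C\kn^*\cong C$ rewrites $\ker(\psi)\otimes_C\kn^*$ as $\bigl(M^{\dagger}_0(K^p)/M_0(K^p)\bigr)\otimes_C\kn^*\oplus M^{\dagger}_0(K^p)$, yielding the claimed sequence. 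The equivariance for $\GL_2(\A^p_f)$-Hecke operators and $B\times G_{\Q_p}$ is inherited from the fact that every sheaf and map in sight was constructed functorially from $\cO^{\la,0}_{K^p}$ together with the $\kh$-weight decomposition of Corollary \ref{khact}. No step poses serious difficulty; the only mildly subtle point is justifying $H^0(\Fl,\mathcal{F})=0$, which reduces to the geometric fact that $\pi_{\HT}^{-1}(\infty)$ meets every component of $\mathcal{X}_{K^pK_p}$.
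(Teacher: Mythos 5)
Your proof follows the same structure as the paper's: same choice of $\mathcal{F}=u^+(\cO^{\la,0}_{K^p})$, the same two short exact sequences of sheaves, the same two vanishing inputs $H^0(\Fl,\mathcal{F})=0$ and $H^1(\Fl,(\cO^{\la,0}_{K^p})_\kn)=0$, the same factorization of $u^+$ on $H^1$ through $(\psi\otimes\id)\circ\delta$, and the same final assembly of the exact sequence. The only place you deviate is the justification of $H^0(\Fl,\mathcal{F})=0$: you appeal to Lemma \ref{preimrt} to conclude that $\pi_{\HT}^{-1}(\infty)$ meets every connected component, but that lemma only describes the preimage of the canonical locus at level $\Gamma(p^m)$ and does not by itself give that global statement about components. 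The cleaner route (consistent with the paper's terse remark) is simply to note that a global section of $\mathcal{F}$ lies in $H^0(\Fl,\cO^{\la,0}_{K^p})=M_0(K^p)$ and also in $\km_\infty\cO^{\la,0}_{K^p}$, hence maps to zero in the fiber $M^\dagger_0(K^p)$, and the natural map $M_0(K^p)\hookrightarrow M^\dagger_0(K^p)$ is injective. Aside from this small imprecision the argument is correct.
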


By Corollary \ref{chicomp}, there is an exact sequence
\begin{eqnarray} \label{chi0}
0\to M_0(K^p)\to H^1(\Fl,\cO^{\la,0}_{K^p}) \to H^1(\Fl,\cO^{\la}_{K^p})^{0}\to 0.
\end{eqnarray}
Note that $M_0(K^p)$ has trivial $\kn$-action. It is crucial to know its image in $H^1(\Fl,\cO^{\la,0}_{K^p})^{\kn}$. A direct computation shows that the composite of
\[M_0(K^p)\to H^1(\Fl,\cO^{\la,0}_{K^p})^{\kn}\to \kn^*\otimes M^{\dagger}_0(K^p)/M_0(K^p)\oplus   M^{\dagger}_0(K^p)\]
is the natural inclusion  $M_0(K^p)\to M^{\dagger}_0(K^p)$ (up to a sign). Here the second map comes from the previous proposition.
From this, taking the $\kn$-cohomology of \eqref{chi0}, we obtain the following description in the case $\chi(h)=0$.

\begin{thm} \label{thmk=0}
Suppose $\chi=(n_1,n_1)$, i.e. $k=0$. Then
\begin{enumerate}
\item $(H^1(\Fl,\cO^{\la}_{K^p})^\chi)_\kn=0$;
\item There is a weight decomposition (into weight $(n_1-1,n_1+1)$ and $(n_1,n_1)$ components):
\[H^1(\Fl,\cO^{\la}_{K^p})^{\chi,\kn}=N_{2,w}\cdot e_1^{-1}e_2t^{n_1}\oplus N_{0,1}\cdot t^{n_1},\]
where $N_{2,w}$ and $N_{0,1}$ sit inside exact sequences
\[0\to  M^{\dagger}_0(K^p)/M_0(K^p)  \to N_{2,w} \to M_0(K^p)\to 0,\]
\[0\to \varinjlim_{K_p}H^1(\mathcal{X}_{K^pK_p},\cO_{\mathcal{X}_{K^pK_p}})\to N_{0,1}\to  M^{\dagger}_0(K^p)/M_0(K^p)\to 0.\]
All the maps are Hecke and $B\times G_{\Q_p}$-equivariant. 
\end{enumerate}
\end{thm}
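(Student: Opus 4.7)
The plan is to take the $\kn$-cohomology long exact sequence of \eqref{chi0} and then decompose by constant $\kh$-weight, using only input already assembled in the two paragraphs immediately preceding the theorem.

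First I would reduce to the case $n_1=0$. By \ref{twisttn1}, multiplication by $t^{-n_1}$ gives a $G_{\Q_p}$-equivariant isomorphism $\cO^{\la,\chi}_{K^p}\xrightarrow{\sim}\cO^{\la,0}_{K^p}$ that twists the $B$-action by the character corresponding to $t^{n_1}$, so it suffices to prove the decomposition for $\chi=0$ and twist back, which accounts precisely for the factor $t^{n_1}$ in each summand of the claimed answer. Since $\kn$ is one-dimensional and acts trivially on $M_0=M_0(K^p)$, the six-term $\kn$-cohomology sequence of \eqref{chi0} reads
\[0\to M_0\to H^1(\cO^{\la,0}_{K^p})^{\kn}\to (H^1(\cO^{\la}_{K^p})^{0})^{\kn}\xrightarrow{\delta} \kn^*\otimes M_0\to \kn^*\otimes H^1(\cO^{\la,0}_{K^p})_{\kn}\to \kn^*\otimes (H^1(\cO^{\la}_{K^p})^{0})_{\kn}\to 0.\]
The proposition stated just before the theorem gives $H^1(\cO^{\la,0}_{K^p})_{\kn}=0$, which yields part~(1) at once and forces $\delta$ to be surjective. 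I therefore obtain the short exact sequence
\[0\to H^1(\cO^{\la,0}_{K^p})^{\kn}/M_0\to (H^1(\cO^{\la}_{K^p})^{0})^{\kn}\to \kn^*\otimes M_0\to 0.\]

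To prove part~(2), I would decompose by the constant $\kh$-weight. On $(H^1(\cO^{\la}_{K^p})^{0})^{\kn}$ the centre $\mathfrak{z}$ acts by the fixed scalar $\chi(z)=0$, while $h$ satisfies $h(h+2)=0$ by Corollary \ref{khact} (applied with $\chi=0$, giving the two eigenvalues $-\chi(h)=0$ and $w\cdot(-\chi)(h)=-2$); since these eigenvalues are distinct, the module splits canonically into a weight-$(0,0)$ summand and a weight-$(-1,1)$ summand. The description
\[0\to \varinjlim_{K_p}H^1(\mathcal{X}_{K^pK_p},\cO_{\mathcal{X}_{K^pK_p}})\to H^1(\cO^{\la,0}_{K^p})^{\kn}\to \kn^*\otimes M^{\dagger}_0/M_0\oplus M^{\dagger}_0\to 0\]
from the previous proposition, combined with the identification (recalled just before the theorem, up to a sign) of the composite $M_0\hookrightarrow H^1(\cO^{\la,0}_{K^p})^{\kn}\twoheadrightarrow \kn^*\otimes M^{\dagger}_0/M_0\oplus M^{\dagger}_0$ with the tautological inclusion $M_0\hookrightarrow M^{\dagger}_0$ into the second summand, identifies the weight-$(0,0)$ piece of $H^1(\cO^{\la,0}_{K^p})^{\kn}/M_0$ as an extension of $M^{\dagger}_0/M_0$ by $\varinjlim H^1(\mathcal{X}_{K^pK_p},\cO)$ and its weight-$(-1,1)$ piece as $\kn^*\otimes M^{\dagger}_0/M_0$. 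Since $\kn^*\otimes M_0$ is pure of weight $(-1,1)$, the weight-$(0,0)$ part of $(H^1(\cO^{\la})^{0})^{\kn}$ defines $N_{0,1}$ and sits in the desired extension, while its weight-$(-1,1)$ part, after stripping the canonical isomorphism $\kn^*\cong e_1^{-1}e_2$ of $B$-characters, defines $N_{2,w}$ and sits in the other extension. Twisting back by $t^{n_1}$ recovers the full statement, and equivariance for Hecke operators and $B\times G_{\Q_p}$ is automatic because every step is natural.

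The only nontrivial input is the explicit identification of the composite $M_0\to H^1(\cO^{\la,0}_{K^p})^{\kn}\twoheadrightarrow M^{\dagger}_0$ with the standard inclusion, which controls how the $\kn$-coboundary $\delta$ couples the two weight summands; this has already been recorded in the text preceding the theorem, so the main potential obstacle has in effect been overcome, and what remains is formal homological algebra together with the weight-eigenvalue bookkeeping just described.
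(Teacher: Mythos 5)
Your proposal is correct and follows the same route as the paper: take the $\kn$-cohomology long exact sequence of the sequence \eqref{chi0}, feed in the vanishing $H^1(\Fl,\cO^{\la,0}_{K^p})_\kn=0$ and the structure of $H^1(\Fl,\cO^{\la,0}_{K^p})^\kn$ from the preceding proposition, use the (sign-ambiguous) identification of $M_0\to H^1(\Fl,\cO^{\la,0}_{K^p})^\kn\to M^\dagger_0$ with the tautological inclusion, and decompose by constant $\kh$-weight via $h(h+2)=0$; you have merely made explicit the weight-decomposition bookkeeping that the paper compresses into ``taking the $\kn$-cohomology of \eqref{chi0}.'' (One tiny imprecision: multiplication by $t^{-n_1}$ twists the $G_{\Q_p}$-action as well as the $B$-action, so it is not $G_{\Q_p}$-equivariant as you wrote, but this is harmless since the twist is exactly absorbed by the $t^{n_1}$ factor you reinsert at the end.)
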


\begin{para}[\texorpdfstring{$k=1$}{Lg}]  \label{weightk1}
Finally, we treat the case of singular weight, i.e. $\chi(h)=1$. Again we may assume $n_1=0$ by a twist. Note that by Proposition \ref{kncoin}, we have  $(\cO_{K^p}^{\la,\chi})_\kn=(i_{\infty})_*M^\dagger_\chi(K^p)$.  Similar to the case $k\geq 2$ in \ref{kgeq2}, there is an exact sequence 
\[0\to \cO^{\la,\chi,\kn}_{K^p}\xrightarrow{\times e_1}\omega^{1,\sm}_{K^p}\to (i_\infty)_*M^\dagger_1(K^p)\to 0\]
induced by multiplication by $e_1$. The difference here is that $\omega^{1,\sm}_{K^p}$ have both non-trivial $H^0$ and $H^1$. More precisely, taking the cohomology of this exact sequence, we get
\[0\to M_1(K^p)\to M^{\dagger}_1(K^p)\to H^1(\Fl,\cO^{\la,\chi,\kn}_{K^p})\to \varinjlim_{K_p}H^1(\mathcal{X}_{K^pK_p}, \omega^{1})\to 0.\]
Then by our discussion in \ref{ss1}, we have the following result.
\end{para}

\begin{thm} \label{thetaweight1}
Suppose $\chi=(n_1,n_1-1)$, i.e. $\chi(h)=1$. All the maps below are Hecke and $B\times G_{\Q_p}$-equivariant.
\begin{enumerate}
\item There is a short exact sequence:
\begin{eqnarray}\label{weight11}
0\to   N_1\cdot e_1^{-1}t^{n_1}\to H^1(\Fl,\cO^{\la}_{K^p})^{\chi,\kn}\to M^\dagger_1(K^p)\cdot e_1^{-1}t^{n_1}\to 0,
\end{eqnarray}
where $N_1$ sits inside the exact sequence:
\[0\to  M^{\dagger}_1(K^p)/M_1(K^p)\to N_1\to \varinjlim_{K_p}H^1(\mathcal{X}_{K^pK_p}, \omega^{1})\to 0.\]
\item Under \eqref{weight11}, the weight-$(n_1-1,n_1)$ part $H^1(\Fl,\cO^{\la}_{K^p})^{\chi,\kn}_{(n_1-1,n_1)}$ of $H^1(\Fl,\cO^{\la}_{K^p})^{\chi,\kn}$ is the pull-back of $M_1(K^p)\cdot e_1^{-1}t^{n_1}\subset M^\dagger_1(K^p)\cdot e_1^{-1}t^{n_1}$, i.e. it sits inside the exact sequence
\[0\to   N_1\cdot e_1^{-1}t^{n_1}\to H^1(\Fl,\cO^{\la}_{K^p})^{\chi,\kn}_{(n_1-1,n_1)}\to M_1(K^p)\cdot e_1^{-1}t^{n_1}\to 0.\]
\end{enumerate}
\end{thm}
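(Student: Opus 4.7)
The plan is to mimic the strategy of the case $k \geq 2$ (Theorem \ref{thmkgeq2}) for part (1), and then to analyze the failure of semisimplicity of the natural $\mathfrak{h}$-action carefully for part (2). First, multiplication by $t^{-n_1}$ is a $B \times G_{\Q_p}$-equivariant isomorphism reducing to the case $n_1=0$, so $\chi=(0,-1)$ with $\chi(h)=1$. Since $1$ is $p$-adically non-Liouville and not in $\{0,-1,-2,\dots\}$, Proposition \ref{kncoin}(3) yields $(\cO^{\la,\chi}_{K^p})_{\mathfrak{n}}\cong (i_\infty)_* M^\dagger_\chi(K^p)$, a sky-scraper sheaf. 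Applying the exact sequence \eqref{ss1} (whose hypothesis $\chi(h)\neq 0$ holds) together with the vanishing $H^0(\Fl,\cO^{\la,\chi}_{K^p})=0$ from Corollary \ref{chicomp}, and using the same corollary to identify $H^1(\Fl,\cO^{\la,\chi}_{K^p})=H^1(\Fl,\cO^{\la}_{K^p})^\chi$, produces the short exact sequence
\[
0\to H^1(\Fl,\cO^{\la,\chi,\mathfrak{n}}_{K^p})\to H^1(\Fl,\cO^{\la}_{K^p})^{\chi,\mathfrak{n}}\to M^\dagger_\chi(K^p)\otimes_C\mathfrak{n}^*\to 0.
\]
To compute the first term I will exploit multiplication by $e_1$: as in \ref{kgeq2}, Proposition \ref{kninv}(1) shows this is an isomorphism away from $\infty$, and the fiber at $\infty$ is $M^\dagger_1(K^p)$ by Proposition \ref{defnocmf}, giving
\[
0\to \cO^{\la,\chi,\mathfrak{n}}_{K^p}\xrightarrow{\,\cdot\, e_1\,}\omega^{1,\sm}_{K^p}\to (i_\infty)_* M^\dagger_1(K^p)\to 0.
\]
Taking cohomology and using Lemma \ref{okcc} to identify $H^\bullet(\omega^{1,\sm}_{K^p})$ with $M_1(K^p)$ and $\varinjlim_{K_p} H^1(\mathcal{X}_{K^pK_p},\omega^1)$, then retwisting by $e_1^{-1}t^{n_1}$ and using $\mathfrak{n}^*=e_1^{-1}e_2$ together with Proposition \ref{defnocmf} to rewrite $M^\dagger_\chi\otimes\mathfrak{n}^*$ as $M^\dagger_1(K^p)\cdot e_1^{-1}t^{n_1}$, yields part (1) with $N_1$ defined by the resulting extension.

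For part (2), the key tool is the local formula for $\theta_{\mathfrak{h}}$ from \ref{expkh}: on $U_1$ (where $e_1$ is a basis) one has $\theta_{\mathfrak{h}}(h)=-h-2xu^+$ as an element of $\cO(U_1)\otimes\mathfrak{g}$, and since $\theta_{\mathfrak{h}}(h)$ acts by $\chi(h)=1$ on $\cO^{\la,\chi}_{K^p}$ we obtain the sheaf-theoretic identity
\[
(h+1)f=-2x\cdot(u^+ f),\qquad f\in\cO^{\la,\chi}_{K^p}(U_1).
\]
In particular $h+1$ vanishes on $\cO^{\la,\chi,\mathfrak{n}}_{K^p}$, hence on $H^1(\Fl,\cO^{\la,\chi,\mathfrak{n}}_{K^p})=N_1\cdot e_1^{-1}t^{n_1}$, which therefore lies in the honest $(n_1-1,n_1)$-eigenspace. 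The quotient $M^\dagger_\chi(K^p)\otimes\mathfrak{n}^*$ is also an honest $h=-1$ eigenspace (its natural $\mathfrak{h}$-weight is $\chi-\alpha=(n_1-1,n_1)$). Consequently $h+1$ is nilpotent of order $\leq 2$ on the middle and induces a well-defined $\mathfrak{h}$-equivariant map
\[
\phi:M^\dagger_1(K^p)\cdot e_1^{-1}t^{n_1}\longrightarrow N_1\cdot e_1^{-1}t^{n_1},
\]
whose kernel is exactly the image in the quotient of the honest $(n_1-1,n_1)$-eigenspace. I claim that $\phi$ coincides, up to a nonzero scalar, with the boundary map $\partial:M^\dagger_1(K^p)\to H^1(\Fl,\cO^{\la,\chi,\mathfrak{n}}_{K^p})$ of the multiplication-by-$e_1$ sequence; since $\ker\partial=M_1(K^p)$ by the long exact sequence, this gives $\ker\phi=M_1(K^p)\cdot e_1^{-1}t^{n_1}$ and hence the desired exact sequence for part (2).

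The principal obstacle is the identification $\phi = c\cdot\partial$. The approach is a direct \v{C}ech computation using the cover $\{U_1,U_2\}$ from \ref{xy12}: given a class $[f]\in H^1(\Fl,\cO^{\la}_{K^p})^{\chi,\mathfrak{n}}$ represented by $f\in\cO^{\la,\chi}_{K^p}(U_{12})$ with $u^+ f=a_1-a_2$ (where $a_i\in\cO^{\la,\chi}_{K^p}(U_i)$), the formula $(h+1)f=-2x(u^+ f)$ gives $[(h+1)f]=[2xa_2]$ in $H^1$, using that $-2xa_1\in\cO(U_1)$ is a coboundary while $x$ is not defined on $U_2$. A bracket computation $[u^+,h+1]=-2u^+$ ensures this class is $\mathfrak{n}$-invariant and lies in $H^1(\Fl,\cO^{\la,\chi,\mathfrak{n}}_{K^p})$. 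One then matches this \v{C}ech representative against the standard \v{C}ech formula for $\partial$ applied to the germ at $\infty$ recovered from $f$ via the identification $(\cO^{\la,\chi}_{K^p})_{\mathfrak{n}}\cong (i_\infty)_*M^\dagger_\chi(K^p)$ of Proposition \ref{kncoin} and the isomorphism $M^\dagger_\chi\cong M^\dagger_1\cdot e_2^{-1}t^{n_1}$ of Proposition \ref{defnocmf}. The main difficulty is the bookkeeping of the twists $e_1,e_2,t$ needed to verify $B\times G_{\Q_p}$-equivariance and the matching of normalizations between the two \textit{a priori} distinct formalisms—the $\mathfrak{n}$-cohomology sequence \eqref{ss1} and the multiplication-by-$e_1$ sequence—after which part (2) follows immediately.
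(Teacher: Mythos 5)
Your proposal is essentially correct and follows the same route as the paper's proof. Part (1) is identical to what the paper does in \S\ref{weightk1}: multiplication by $e_1$ gives $0\to\cO^{\la,\chi,\kn}_{K^p}\to\omega^{1,\sm}_{K^p}\to(i_\infty)_*M^\dagger_1(K^p)\to 0$, taking cohomology gives the four-term sequence defining $N_1$, and \eqref{ss1} then yields \eqref{weight11}.

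For part (2), your framing — showing that the map $\phi$ induced by $h+1$ agrees, up to scalar, with the boundary map of the $e_1$-multiplication sequence — is a harmless reformulation of what the paper does, which is to compute the $\kh_0$-cohomology connecting homomorphism $\delta_1$ of \eqref{weight11} directly. Both reduce to the same explicit \v{C}ech cocycle computation, and indeed your local identity $(h+1)f=-2x\,(u^+ f)$ (valid on $\cO^{\la,\chi}_{K^p}(U_1)$, which suffices since the stalk of $\cO^{\la,\chi,\kn}_{K^p}$ at $\infty$ vanishes by Proposition \ref{kninv}(4)) is exactly the content of the paper's $h\cdot\log(x/x_n)=-2$. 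Concretely, the paper picks the lift $-g$ with $g=\tfrac{f}{e_1}\log(x/x_n)$, so that $u^+g=fe_2^{-1}$ and $h\cdot g=-g-\tfrac{2f}{e_1}$, giving $\delta_1(f)=-2f\in M^\dagger_1(K^p)/M_1(K^p)\subset N_1$ — this pins down your scalar $c$ to $-2$ and makes the "bookkeeping" you deferred fully explicit in a single line. So what your approach leaves as a matching of two formalisms, the paper sidesteps by never introducing $\partial$ separately: it just evaluates $\delta_1$ on the standard cocycle. If you carry out the last step of your plan you will recover exactly this.
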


\begin{proof}
Only the second part requires extra explanation. To see this, we tensor \eqref{weight11} with $e_1t^{-n_1}$ and take $\kh_0$-cohomology. Recall that $\kh_0\subset\kh$ is the subalgebra of elements with trace zero. Again we may assume $n_1=0$. We need to understand the kernel of the  connecting homomorphism
\[\delta_1:M^\dagger_1(K^p)\to (N_1)_{\kh_0}\cong N_1.\]
Let $f\in M^\dagger_1(K^p)$. Since $H^1(\Fl,\cO^{\la}_{K^p})$ can be computed by \v{C}ech complex,  we can take an cover $\{U'_0,U'_1\}\subset\mathfrak{B}$ of $\Fl$ such that only $U'_1$ contains  $\infty$ and the pull-back of $f$ to $\mathcal{X}_{K^p}$ is defined on $\pi_{\HT}^{-1}(U'_1)$. On $U'_{01}:=U'_0\cap U'_1$, we can find $g\in \cO^{\la,\chi}_{K^p}(U'_{01})$ such that $u^+\cdot g=fe_2^{-1}$. Then $-g$ can be viewed as a $1$-cocycle of  the \v{C}ech complex of $\cO^{\la,\chi}_{K^p}$ with respect to the cover $\{U'_0,U'_1\}$, and it maps to $f\otimes 1\in M^\dagger_1(K^p)\otimes e_1^{-1}=M^\dagger_1(K^p)\cdot e_1^{-1}$ in \eqref{weight11}. (Here we view $e_1=C$ as a character of $B$.) Using the notation in Lemma \ref{expchi}, we can take $g$ of the form
\[g=\frac{f}{e_1}\log(\frac{x}{x_n}).\]
If we view $g\otimes 1$ as an element of $\cO^{\la,\chi}_{K^p}(U'_{01})\otimes e_1$, then
\[h\cdot (g\otimes 1)= h\cdot g\otimes 1+g\otimes 1=(-g\otimes 1-\frac{2f}{e_1}\otimes 1)+g\otimes 1=-\frac{2f}{e_1}\otimes 1.\]
In view of discussion in \ref{weightk1}, this means $\delta_1(f)=-2f\in M^{\dagger}_1(K^p)/M_1(K^p)\subset N_1$. Hence $\delta_1(f)=0$ if and only if $f\in M_1(K^p)$. From this, we easily deduce our claim in the Theorem.
\end{proof}
 
\subsection{\texorpdfstring{$\mu$}{Lg}-isotypic part}
\begin{para}
Let $\mu\in\kh^*$ be a weight, viewed as a character of $\mathfrak{b}$. The goal of this subsection is to determine the $\mu$-isotypic part of $\tilde{H}^1(K^p,C)^{\la}\cong H^1(\Fl,\cO^{\la}_{K^p})$ which we denote by
\[\tilde{H}^1(K^p,C)^{\la}_\mu\cong H^1(\Fl,\cO^{\la}_{K^p})_{\mu}. \]
Write $\mu=(k_1,k_2)$. Then by Corollary \ref{khact} and Harish-Chandra's theory\footnote{Note $w\cdot\mu=(k_2-1,k_1+1)$. Hence we are looking for $(a,b)\in C$ such that $a+b=k_1+k_2$ and $-(a-b)=k_1-k_2$ or $(k_2-1)-(k_1+1)$.}, we have
\[H^1(\Fl,\cO^{\la}_{K^p})_\mu=H^1(\Fl,\cO^{\la}_{K^p})^{(k_2,k_1)}_\mu\oplus H^1(\Fl,\cO^{\la}_{K^p})^{(k_1+1,k_2-1)}_\mu\]
if $(k_2,k_1)\neq (k_1+1,k_2-1)$, i.e. $\mu(h)\neq -1$. (This is opposite to the singular weight in the previous section as there is a sign in Corollary \ref{khact}.) Since the right hand side is computed in Theorem \ref{wdec}, \ref{thmkgeq2}, \ref{thmkleq-1}, \ref{thmk=0} (which is complete in the integral weight case), we obtain the following theorem by writing 
\[M_{\mu,1}:=H^1(\Fl,\cO^{\la}_{K^p})^{(k_2,k_1)}_\mu, M_{\mu,w}:=H^1(\Fl,\cO^{\la}_{K^p})^{(k_1+1,k_2-1)}_\mu.\] 
Note that by Theorem \ref{Senkh}, the horizontal action $\theta_\kh$ of $\kh$ essentially agrees with the Sen operator. Hence $M_{\mu,1}$ has pure Hodge-Tate-Sen weight $-k_1$ and $M_{\mu,w}$ has pure Hodge-Tate-Sen weight $1-k_2$ in the sense of \ref{Senop}. Our convention is that cyclotomic character has Hodge-Tate weight $-1$. 
\end{para}

\begin{thm} \label{hwvg}
Suppose $\mu=(k_1,k_2)\in\kh^*$ and $k:=-\mu(h)\neq 1$. There is a natural (Hodge)-decomposition
\[\tilde{H}^1(K^p,C)^{\la}_\mu=M_{\mu,1}\oplus M_{\mu,w},\]
into Hodge-Tate-Sen weight $-k_1$ and $1-k_2$ components. We have  $M_{\mu,w}= M^{\dagger}_{\mu+2\rho}(K^p)\otimes_C \kn^*$ unless $k=2$. Moreover, if $k\in\Z$ and 
\[M_{\mu,1}=N_{k,1}\cdot e_1^{-k}t^{k_2},\]
\[M_{\mu,w}=N_{k,w}\cdot e_1^{-1}e_2^{k-1}t^{k_1+1},\]
we have the following description of $N_{k,1},N_{k,w}$. All the isomorphisms and maps below are Hecke and $B\times G_{\Q_p}$-equivariant.
\begin{enumerate}
\item $N_{k,w}\cong M^{\dagger}_{2-k}(K^p)$ unless $k\neq 2$. When $k=2$, we have 
\[0\to  M^{\dagger}_0(K^p)/M_0(K^p)  \to  N_{2,w} \to M_0(K^p)\to 0.\]
\item If  $k\leq -1$, then there is an exact sequence
\[0\to\varinjlim_{K_p} H^1(\mathcal{X}_{K^pK_p},\omega^{k})\to N_{k,1}\to M^\dagger_{k}(K^p)\to 0.\]
\item If $k=0$, then  there is an exact sequence
\[0\to \varinjlim_{K_p}H^1(\mathcal{X}_{K^pK_p},\cO_{\mathcal{X}_{K^pK_p}})\to N_{0,1}\to  M^{\dagger}_0(K^p)/M_0(K^p)\to 0.\]
\item If $k\geq 2$, then $N_{k,1}\cong M^\dagger_k(K^p)/ M_k(K^p)$.
\end{enumerate}
\end{thm}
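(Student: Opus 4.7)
The plan is to extract the decomposition from the horizontal action $\theta_\kh$ of Corollary \ref{khact} and then read off each summand from the $\kn$-cohomology computations of the previous subsections. Write $\chi_1 := (k_2, k_1)$ and $\chi_w := (k_1+1, k_2-1) = \mu + 2\rho$; by direct inspection these are precisely the two $\theta_\kh$-weights $\chi = (n_1, n_2)$ for which either the $(n_2, n_1)$-piece (coming from $\kn$-invariant sections) or the $(n_1-1, n_2+1)$-piece (coming from $\kn$-coinvariants tensored with $\kn^*$) of $H^1(\Fl, \cO^{\la, \chi}_{K^p})^{\kn}$ can carry a vector of constant $\kh$-weight $\mu = (k_1, k_2)$. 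The hypothesis $k \neq 1$ ensures $\chi_1 \neq \chi_w$.

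The first main step is to show that only these two $\theta_\kh$-eigenvalues occur on $\tilde{H}^1(K^p,C)^{\la}_\mu$. Since $\theta_\kh$ commutes with the constant $\mathfrak{g}$-action by Corollary \ref{khact}, it preserves $\tilde{H}^1(K^p,C)^{\la}_\mu$, and the same corollary gives $\theta_\kh|_{\mathfrak{z}} = $ constant $\mathfrak{z}$-action, so $\theta_\kh(z)$ acts by $\mu(z) = k_1 + k_2$. The image of the $\mathfrak{sl}_2$-Casimir under $-\theta_\kh$ is a quadratic polynomial in $\theta_\kh(h)$ which, on the $\mu$-isotypic part, must equal the Harish-Chandra evaluation of the Casimir at $\mu$; a direct computation identifies its two roots as $\chi_1(h) = k$ and $\chi_w(h) = 2-k$. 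Combined with the scalar action of $\theta_\kh(z)$, this forces $\theta_\kh$ to have eigenvalues in $\{\chi_1,\chi_w\}$, with diagonalizability following from their distinctness.

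Define $M_{\mu,1}$ (resp.\ $M_{\mu,w}$) to be the $\chi_1$-eigenspace (resp.\ $\chi_w$-eigenspace) of $\theta_\kh$. By Theorem \ref{Senkh} the Sen operator on $\tilde{H}^1(K^p,C)^{\la}$ is $\theta_\kh(\begin{pmatrix}0 & 0\\ 0 & 1\end{pmatrix})$, so $M_{\mu,1}$ has pure Hodge-Tate-Sen weight $-k_1$ and $M_{\mu,w}$ has weight $1-k_2$, as claimed. By construction $M_{\mu,j}$ is the constant $\mu$-weight subspace of $H^1(\Fl, \cO^{\la, \chi_j}_{K^p})^{\kn}$. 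Extracting the $(n_1-1, n_2+1)$-piece from Theorems \ref{wdec}, \ref{thmkgeq2}, and \ref{thmkleq-1}, and unwinding the twist via Proposition \ref{defnocmf}, identifies $M_{\mu,w}$ with $M^\dagger_{\chi_w}(K^p) \otimes_C \kn^* = M^\dagger_{2-k}(K^p)\cdot e_1^{-1}e_2^{k-1}t^{k_1+1}$ whenever $\chi_w(h) = 2-k \neq 0$; this yields $M_{\mu,w} \cong M^\dagger_{\mu+2\rho}(K^p)\otimes_C\kn^*$ and, for integer $k \neq 2$, $N_{k,w}\cong M^\dagger_{2-k}(K^p)$. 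The remaining integer cases proceed analogously: for the $(n_2, n_1)$-piece of $H^1(\Fl, \cO^{\la, \chi_1}_{K^p})^{\kn}$ one applies Theorem \ref{thmkgeq2} when $k \geq 2$ (giving $N_{k,1}\cong M^\dagger_k(K^p)/M_k(K^p)$), Theorem \ref{thmk=0} when $k=0$, or Theorem \ref{thmkleq-1} when $k \leq -1$, while the $k=2$ case of $M_{\mu,w}$ comes from the $\chi_w(h)=0$ instance of Theorem \ref{thmk=0}.

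The main obstacle is the infinitesimal-character argument in the first step: one must track the signs and $\rho$-shifts implicit in Corollary \ref{khact} carefully enough to verify that the Casimir-induced polynomial factors exactly as $(\theta_\kh(h) - k)(\theta_\kh(h) - (2-k))$ on the $\mu$-isotypic part. Once this is in place, the remainder of the proof is a routine bookkeeping exercise collecting the case-by-case results already established in the previous subsections.
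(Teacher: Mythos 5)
Your proposal is correct and follows the paper's argument precisely: apply Corollary \ref{khact} and Harish-Chandra theory to split $\tilde{H}^1(K^p,C)^{\la}_\mu$ into the $\theta_\kh$-eigenspaces for $(k_2,k_1)$ and $(k_1+1,k_2-1)$, read off the Sen weights from Theorem \ref{Senkh}, and substitute into the case-by-case computations of Theorems \ref{wdec}, \ref{thmkgeq2}, \ref{thmkleq-1}, \ref{thmk=0}. One minor imprecision worth flagging: when $\chi_j(h)=0$ (i.e.\ $k=0$ for $\chi_1$, or $k=2$ for $\chi_w$), $M_{\mu,j}$ is the $\mu$-weight part of $H^1(\Fl,\cO^{\la}_{K^p})^{\chi_j,\kn}$ rather than of $H^1(\Fl,\cO^{\la,\chi_j}_{K^p})^{\kn}$ --- these differ by the extra term in Corollary \ref{chicomp} --- but since Theorem \ref{thmk=0} is stated for the former, your argument goes through unchanged.
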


\begin{para}
Now assume $\mu(h)=-1$. Write $\mu=(k_1,k_1+1)$. In this case, it follows from Corollary \ref{khact} that
\[H^1(\Fl,\cO^{\la}_{K^p})_\mu=H^1(\Fl,\cO^{\la,(k_1+1,k_1)^2}_{K^p})_\mu\]
by considering the action of the centre of $U(\mathfrak{g})$. Here $\cO^{\la,(k_1+1,k_1)^2}_{K^p}\subset \cO^{\la}_{K^p}$ is the subsheaf of sections annihilated by $(\theta_\kh(h)-1)^2$ and $\theta_\kh(z)-\mu(z)=z-(2k_1+1)$. We compute the $\kn$-invariants of $H^1(\Fl,\cO^{\la,(k_1+1,k_1)^2}_{K^p})$ first and then determine the $\mu$-component. Note that $\theta_\kh(h)-1$ induces an exact sequence
\[0\to\cO^{\la,(k_1+1,k_1)}_{K^p}\to\cO^{\la,(k_1+1,k_1)^2}_{K^p}\xrightarrow{\theta_\kh(h)-1} \cO^{\la,(k_1+1,k_1)}_{K^p} \to 0,\]
where the third map is surjective by the first part of Lemma \ref{expchi}. For simplicity, we write $\mathcal{F}_1=\cO^{\la,(k_1+1,k_1)}_{K^p}$ and $\mathcal{F}_2=\cO^{\la,(k_1+1,k_1)^2}_{K^p}$. Then $\cF_2$ has no $H^0$ because $\mathcal{F}_1$ has no global section (Corollary \ref{chicomp}). Hence 
\[0\to H^1(\Fl,(\cF_2)^{\kn})\to H^1(\Fl,\cF_2)^{\kn}\to H^0(\Fl,(\cF_2)_\kn)\otimes_{C} \kn^*\to 0.\]
By Proposition \ref{kncoin} and \ref{kninv}, we know that the support of $(\cF_1)^{\kn}$ and $(\cF_1)_{\kn}$ do not intersect. Therefore $\theta_\kh(h)-1$ induces  exact sequences:
\[0\to (\cF_1)^{\kn}\to (\cF_2)^{\kn}\xrightarrow{\theta_\kh(h)-1}(\cF_1)^{\kn}\to 0;\]
\[0\to (\cF_1)_{\kn}\to (\cF_2)_{\kn}\xrightarrow{\theta_\kh(h)-1}(\cF_1)_{\kn}\to 0.\]
Taking $H^1$ of the first sequence, we obtain
\[0\to H^1(\Fl,(\cF_1)^{\kn})\to H^1(\Fl,(\cF_2)^{\kn})\xrightarrow{\theta_\kh(h)-1}H^1(\Fl,(\cF_1)^{\kn})\to 0\]
as $(\cF_1)^{\kn}\subset \cF_1$ has no $H^0$. Note that $h$ acts as $-1$ on $H^1(\Fl,(\cF_1)^{\kn})$ by Proposition \ref{kninv}. 
\end{para}

\begin{lem} \label{h=-thetah}
$\theta_{\kh}(h)-1=-(h+1)$ on $(\cF_2)^{\kn}$. In particular, the kernel of $h+1$ on $H^1(\Fl,(\cF_2)^{\kn})$ is $H^1(\Fl,(\cF_1)^{\kn})$ and $h+1$ induces an isomorphism 
\[h+1:H^1(\Fl,(\cF_2)^{\kn})/H^1(\Fl,(\cF_1)^{\kn})\xrightarrow{\sim}H^1(\Fl,(\cF_1)^{\kn}).\]
\end{lem}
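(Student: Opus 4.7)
The plan is to reduce the identity $\theta_{\kh}(h)-1=-(h+1)$ on $(\cF_2)^{\kn}$ to the explicit formula for the horizontal action $\theta_{\kh}$ established in \ref{expkh}. Recall that for any open subset $U\in\mathfrak{B}$ on which $e_1$ is a basis (and by $\GL_2(\Q_p)$-equivariance the identity need only be checked on such $U$, since they cover the complement of $\infty$, and by $\kn$-invariance the behavior at $\infty$ is controlled by the constant action), one has
\[
\theta_{\kh}\!\left(\begin{pmatrix} a & 0 \\ 0 & d\end{pmatrix}\right) \;=\; \begin{pmatrix} d & (d-a)x \\ 0 & a\end{pmatrix} \;\in\; \cO_{\Fl}(U)\otimes_C\mathfrak{g}.
\]
Specialising to $h=\mathrm{diag}(1,-1)$, this gives $\theta_{\kh}(h)=\begin{pmatrix}-1 & -2x\\ 0 & 1\end{pmatrix}=-h-2x\,u^{+}$ as an operator on $\cO_{K^p}^{\la}(U)$.

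On sections killed by $u^{+}$ the second term vanishes, so $\theta_{\kh}(h)=-h$ on $(\cF_2)^{\kn}(U)$, whence $\theta_{\kh}(h)-1=-(h+1)$ on $(\cF_2)^{\kn}$. Since this identity is local on $\Fl$ and both sides are defined globally, the claim follows.

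For the ``in particular'' statement, I would combine this identity with the short exact sequence
\[
0\to H^1(\Fl,(\cF_1)^{\kn})\to H^1(\Fl,(\cF_2)^{\kn})\xrightarrow{\theta_{\kh}(h)-1}H^1(\Fl,(\cF_1)^{\kn})\to 0
\]
established just above the lemma. By Proposition \ref{kninv} applied to $\chi=(k_1+1,k_1)$, the constant action of $\kh$ on $(\cF_1)^{\kn}$ is via $(k_1,k_1+1)$, so $h$ acts as $-1$ on $H^1(\Fl,(\cF_1)^{\kn})$, i.e.\ $h+1=0$ there. Consequently the operator $h+1$ on $H^1(\Fl,(\cF_2)^{\kn})$ factors through the quotient by $H^1(\Fl,(\cF_1)^{\kn})$ and, under the above exact sequence, coincides with $-(\theta_{\kh}(h)-1)$, which is an isomorphism from the quotient onto $H^1(\Fl,(\cF_1)^{\kn})$. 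This simultaneously identifies the kernel of $h+1$ with $H^1(\Fl,(\cF_1)^{\kn})$ and yields the induced isomorphism. No step here is subtle: the only thing to notice is the sign conversion between the horizontal action $\theta_{\kh}$ and the constant action on $\kn$-invariants, which is exactly what the explicit matrix formula encodes.
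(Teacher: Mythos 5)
Your proof is correct, and it takes a cleaner route to the main identity than the paper does. The paper verifies $\theta_\kh(h)-1=-(h+1)$ by explicitly decomposing $(\cF_2)^{\kn}(U)=(\cF_1)^{\kn}(U)+(\cF_1)^{\kn}(U)\cdot\log(e_1/e_{1,N})$ and then checking the two sides agree on each piece via Leibniz-rule calculations; you instead rewrite the explicit matrix for $\theta_\kh(h)$ from \ref{expkh} as the operator identity $\theta_\kh(h)=-h-2x\,u^+$ and observe that the $u^+$ term dies on $\kn$-invariants, giving $\theta_\kh(h)=-h$ on $(\cF_2)^{\kn}(U)$ outright. This avoids the log-expansion entirely and is arguably the right way to see why the Jordan-block structure contributes nothing once you restrict to $\kn$-invariants. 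Both proofs ultimately rest on the same ingredient (the formula $\theta_\kh\!\bigl(\begin{smallmatrix}a&0\\0&d\end{smallmatrix}\bigr)=\bigl(\begin{smallmatrix}d&(d-a)x\\0&a\end{smallmatrix}\bigr)$), and the ``in particular'' deduction from the short exact sequence plus the weight formula in Proposition \ref{kninv} is the same in both. One small imprecision: your parenthetical that ``the behavior at $\infty$ is controlled by the constant action'' is not quite the point. What one actually needs is that the stalk of $(\cF_2)^{\kn}$ at $\infty$ vanishes (via Proposition \ref{kninv}(4) applied to $\cF_1$, since $\chi(h)=1$, and the two-step filtration of $\cF_2$ by $\cF_1$), which is what the paper states; this makes the identity hold trivially at $\infty$ and reduces the check to open sets avoiding it. Your reduction to $U$ not containing $\infty$ is otherwise exactly right.
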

\begin{proof} 
This is proved  by explicit computation. We only need to check $U\in\mathfrak{B}$ not containing $\infty$ since the stalk of $(\cF_1)^{\kn}$ hence also $(\cF_2)^{\kn}$ at $\infty$ is zero. Using the notation in \ref{Exph}, we have:
\[(\cF_2)^{\kn}(U)=(\cF_1)^{\kn}(U)+(\cF_1)^{\kn}(U)\cdot\log(\frac{e_1}{e_{1,N}})\]
for some $e_{1,N}$. Since $\theta_\kh(h)$ acts as $\begin{pmatrix} -1 & -2x \\ 0& 1 \end{pmatrix}$ on $U$, cf. \ref{expkh}, for $f\in(\cF_1)^{\kn}(U)$, 
\[(\theta_\kh(h)-1)\cdot (f\log(\frac{e_1}{e_{1,N}}))=\log(\frac{e_1}{e_{1,N}})(\theta_\kh(h)-1)\cdot f+f\theta_\kh(h)\cdot \log(\frac{e_1}{e_{1,N}})=-f.\]
On the other hand, by Proposition \ref{kninv}, we know that $(h+1)\cdot f=0$. Therefore 
\[(h+1)\cdot (f\log(\frac{e_1}{e_{1,N}}))=f h\cdot \log(\frac{e_1}{e_{1,N}})=f.\]
Hence $\theta_{\kh}(h)-1=-(h+1)$ on $(\cF_2)^{\kn}(U)$. 
\end{proof}

\begin{para}
Similarly, we consider 
\[0\to H^0(\Fl,(\cF_1)_{\kn})\to H^0(\Fl,(\cF_2)_{\kn})\xrightarrow{\theta_\kh(h)-1}H^0(\Fl,(\cF_1)_{\kn})\to 0.\]
Recall that $(\cF_1)_{\kn}\cong (i_{\infty})_*M^\dagger_{(k_1+1,k_1)}(K^p)$ in this case and we may identify 
\[(\cF_2)_{\kn}=(i_{\infty})_*(M^\dagger_{(k_1+1,k_1)}(K^p)\oplus M^\dagger_{(k_1+1,k_1)}(K^p)\cdot \log(\frac{e_2}{e_{2,N}}))\]
for some $e_{2,N}$. Then a similar computation shows that $\theta_\kh(h)-1=h-1$ on $(\cF_2)_{\kn}$. Hence the kernel of $h+1$ on $H^0(\Fl,(\cF_2)_\kn)\otimes_{C} \kn^*$ is $H^0(\Fl,(\cF_1)_\kn)\otimes_{C} \kn^*\subset H^0(\Fl,(\cF_2)_\kn)\otimes_{C} \kn^*$. 

Now to determine the kernel of $h+1$ on $H^1(\Fl,\cF_2)^{\kn}$, we claim that the surjection
\[H^1(\Fl,\cF_2)^{\kn}\to H^0(\Fl,(\cF_2)_\kn)\otimes_{C} \kn^*\]
remains surjective when passing to the kernel of $h+1$, i.e. there is an exact sequence 
\[0\to H^1(\Fl,(\cF_2)^{\kn})_\mu \to H^1(\Fl,\cF_2)^{\kn}_{\mu}\to (H^0(\Fl,(\cF_2)_\kn)\otimes_{C} \kn^*)_\mu\to 0. \]
In fact, this is really a formal consequence of the previous lemma. Consider the following commutative diagram:
\[\begin{tikzcd}
& 0  \arrow[d] & 0  \arrow[d] &0 \arrow[d]& \\
0 \arrow[r]  & H^1(\Fl,(\cF_1)^{\kn})  \arrow [d] \arrow[r] & H^1(\Fl,\cF_1)^{\kn} \arrow[d] \arrow[r] & H^0(\Fl,(\cF_1)_\kn)\otimes_{C} \kn^* \arrow[d] \arrow[r] & 0 \\
0 \arrow[r] & H^1(\Fl,(\cF_2)^{\kn})  \arrow [d,"\theta_\kh(h)-1=-(h+1)"] \arrow[r] & H^1(\Fl,\cF_2)^{\kn} \arrow[d,"\theta_\kh(h)-1"] \arrow[r] &  H^0(\Fl,(\cF_2)_\kn)\otimes_{C} \kn^*  \arrow[d,"\theta_\kh(h)-1=h+1"] \arrow[r] & 0\\
0 \arrow[r] & H^1(\Fl,(\cF_1)^{\kn}) \arrow[d] \arrow[r] & H^1(\Fl,\cF_1)^{\kn}\arrow[d]\arrow[r] & H^0(\Fl,(\cF_1)_\kn)\otimes_{C} \kn^*\arrow[d]\arrow[r] & 0 \\
& 0   & 0 &0& 
\end{tikzcd}.\]
Let $f\in H^0(\Fl,(\cF_1)_\kn)\otimes_{C} \kn^*\subset H^0(\Fl,(\cF_2)_\kn)\otimes_{C} \kn^*$. We can find a preimage $\tilde{f}\in H^1(\Fl,\cF_1)^{\kn}$. Then $(h+1)\cdot \tilde f\in H^1(\Fl,(\cF_1)^{\kn})$. By Lemma \ref{h=-thetah}, there exists $\tilde{g}\in H^1(\Fl,(\cF_2)^{\kn})$ such that $(h+1)\cdot \tilde{g}=(h+1)\cdot \tilde f$. Now $g:=\tilde{f}-\tilde{g}$ defines an element in $H^1(\Fl,\cF_2)^{\kn}$ lifting $f$ and annihilated by $h+1$. 

Since $H^1(\Fl,\cF_1)^{\kn}$ is computed in \ref{weightk1}, we obtain the following result. The last part is a restatement of Theorem \ref{thetaweight1}.
\end{para}

\begin{thm} \label{weight1}
Suppose $\mu=(k_1,k_1+1)$, i.e. $\mu(h)=-1$. There exists an exact sequence 
\[0\to N_{1}\cdot e_1^{-1}t^{k_1+1} \to \tilde{H}^1(K^p,C)^{\la}_\mu \to M^{\dagger}_{1}(K^p)\cdot e_1^{-1}t^{k_1+1}\to 0,\]
where $N_{1}$ sits inside the exact sequence:
\[0\to  M^{\dagger}_1(K^p)/M_1(K^p)\to N_{1}\to \varinjlim_{K_p}H^1(\mathcal{X}_{K^pK_p}, \omega^{1})\to 0.\]
Moreover, $\tilde{H}^1(K^p,C)^{\la,(k_1+1,k_1)}_\mu\subset  \tilde{H}^1(K^p,C)^{\la}_\mu$ is identified with the pull-back of $M_{1}(K^p)\cdot e_1^{-1}t^{k_1+1}\subset M^{\dagger}_{1}(K^p)\cdot e_1^{-1}t^{k_1+1}$, i.e. there is an exact sequence 
\[0\to N_{1}\cdot e_1^{-1}t^{k_1+1} \to \tilde{H}^1(K^p,C)^{\la,(k_1+1,k_1)}_\mu \to M_{1}(K^p)\cdot e_1^{-1}t^{k_1+1}\to 0.\]
All the maps are  Hecke and $B\times G_{\Q_p}$-equivariant.
\end{thm}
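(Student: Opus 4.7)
The statement is essentially an assembly of the machinery just developed, and I expect the proof to be mostly careful bookkeeping of twists and weights rather than any new substantive argument. The plan is to identify $\tilde{H}^1(K^p,C)^{\la}_\mu$ with $H^1(\Fl,\cF_2)^{\kn}_\mu$ for the sheaf $\cF_2:=\cO^{\la,(k_1+1,k_1)^2}_{K^p}$, then apply the three-term exact sequence constructed in the paragraph preceding the theorem, and identify its kernel and quotient separately.

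The first reduction runs as follows. Since $\mu$ is a character of $\mathfrak{b}$, its isotypic part in $\tilde{H}^1(K^p,C)^{\la}=H^1(\Fl,\cO^{\la}_{K^p})$ is the $\kh$-weight $\mu$ subspace of the $\kn$-invariants. By Corollary \ref{khact}, the central character determined by $\mu$ translates to $\theta_\kh(z)=2k_1+1$ and $(\theta_\kh(h)-1)^2=0$, the two Weyl-conjugate weights collapsing to $(k_1+1,k_1)$ precisely because $\mu(h)=-1$. Hence $\tilde{H}^1(K^p,C)^{\la}_\mu=H^1(\Fl,\cF_2)^{\kn}_\mu$, and the exact sequence
\[0\to H^1(\Fl,(\cF_2)^{\kn})_\mu\to H^1(\Fl,\cF_2)^{\kn}_\mu\to (H^0(\Fl,(\cF_2)_\kn)\otimes_C\kn^*)_\mu\to 0\]
from the preceding paragraph reduces the problem to computing the outer two terms, which should assemble into the first two displayed exact sequences of the theorem.

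For the quotient, I would use the description of $(\cF_2)_\kn$ as a self-extension of $(\cF_1)_\kn=(i_\infty)_*M^\dagger_\chi(K^p)$ (with $\chi=(k_1+1,k_1)$, $\cF_1:=\cO^{\la,\chi}_{K^p}$), together with the identity $\theta_\kh(h)=h$ on coinvariants: $h$ has generalized eigenvalue $1$ on $(\cF_2)_\kn$ with genuine $h=1$ subspace equal to $(\cF_1)_\kn$. Since $\kn^*$ has $h$-weight $-2$, the $\mu(h)=-1$ eigenspace of $(\cF_2)_\kn\otimes\kn^*$ coincides with $(\cF_1)_\kn\otimes\kn^*$, whose $H^0$ becomes $M^\dagger_1(K^p)\cdot e_1^{-1}t^{k_1+1}$ after Proposition \ref{defnocmf} and the identification $\kn^*\cong e_1^{-1}e_2$ as a $B$-character. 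For the kernel, Lemma \ref{h=-thetah} gives $\theta_\kh(h)-1=-(h+1)$ on $(\cF_2)^{\kn}$, so the $h=-1$ subspace of $H^1(\Fl,(\cF_2)^{\kn})$ is $H^1(\Fl,(\cF_1)^{\kn})$; this was already computed in \ref{weightk1} from the sequence $0\to\cO^{\la,\chi,\kn}_{K^p}\xrightarrow{\times e_1}\omega^{1,\sm}_{K^p}\to(i_\infty)_*M^\dagger_1(K^p)\to 0$ together with Lemma \ref{okcc}, and fits into the sequence defining $N_1$ once the twist by $e_1^{-1}t^{k_1+1}$ is taken into account.

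For the "moreover" part, $\tilde{H}^1(K^p,C)^{\la,(k_1+1,k_1)}_\mu$ is by definition the subspace on which $\theta_\kh$ acts semisimply by weight $(k_1+1,k_1)$, i.e. $H^1(\Fl,\cF_1)^{\kn}_\mu=H^1(\Fl,\cO^{\la}_{K^p})^{\chi,\kn}_{(n_1-1,n_1)}$ with $n_1=k_1+1$; Theorem \ref{thetaweight1}(2) says exactly that this equals the pull-back of $M_1(K^p)\cdot e_1^{-1}t^{k_1+1}$ under the projection to $M^\dagger_1(K^p)\cdot e_1^{-1}t^{k_1+1}$. Hecke and $B\times G_{\Q_p}$-equivariance is automatic at every stage, since every map we construct is such. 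The main obstacle is simply keeping track of the three $\kh$-actions in play, namely the constant action from $\mathfrak{b}\subset\mathfrak{g}$, the horizontal $\theta_\kh$, and the Borel characters coming from $e_1,e_2,t$, together with the distinction between the semisimple $\theta_\kh$-part (living in $\cF_1$) and the full $\mu$-isotypic part which requires the nilpotent extension inside $\cF_2$.
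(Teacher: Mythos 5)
Your proposal matches the paper's own argument essentially step for step: the reduction to $H^1(\Fl,\cF_2)^{\kn}_\mu$ via the Harish-Chandra constraint $(\theta_\kh(h)-1)^2=0$, the use of the three-term sequence whose outer terms are computed via Lemma \ref{h=-thetah} and the identity $\theta_\kh(h)=h$ on coinvariants, the identifications $H^1(\Fl,(\cF_1)^{\kn})=N_1\cdot e_1^{-1}t^{k_1+1}$ from \ref{weightk1} and $H^0(\Fl,(\cF_1)_\kn)\otimes\kn^*=M^\dagger_1(K^p)\cdot e_1^{-1}t^{k_1+1}$ from Proposition \ref{defnocmf}, and the reduction of the "moreover" clause to Theorem \ref{thetaweight1}(2). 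The only point you gloss over is that passing to the $h=-1$ eigenspace in the three-term sequence preserves surjectivity of the last map, which the paper verifies via a snake-lemma-style diagram chase using the second half of Lemma \ref{h=-thetah}; since you correctly locate that the sequence with subscripts $\mu$ is established in the preceding paragraph, this is not a gap.
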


\section{Applications} \label{app}
In this section, we present several applications of the main results of previous section to the study of overconvergent modular forms and the  Fontaine-Mazur conjecture in the irregular case. One main ingredient is Emerton's local-global compatibility result which allows us to study completed cohomology using the ($p$-adic) representation theory of $\GL_2(\Q_p)$.

\subsection{Hecke algebra}
\begin{para} \label{HA}
First we recall the definition of  the (big) Hecke algebra associated to completed cohomology. Let $K^p=\prod_{l\neq p} K_l\subset\GL_2(\A_f^p)$ be a tame level and $S$  a finite set of rational primes containing $p$ and all places $l$ for which $K_l$ is not maximal. As usual, we denote the double coset action of
\[[K_l\begin{pmatrix}l&0\\0&1\end{pmatrix}K_l]\]
by $T_l$ and the double coset action of 
\[[K_l\begin{pmatrix}l&0\\0&l\end{pmatrix}K_l]\]
by $S_l$. For any open compact subgroup $K_p$ of $\GL_2(\Q_p)$, we define
\[\T(K^pK_p)\subset \End_{\Z_p} (H^1_{\et}(\mathcal{Y}_{K^pK_p},\Z_p))\]
as the $\Z_p$-subalgebra generated by Hecke operators $T_l,S_l^{\pm 1}$ at places $l\notin S$. As the notation suggests, this does not depend on the choice of $S$ because of the existence of continuous $2$-dimensional determinants of $G_{\Q,S}$ over $\T(K^pK_p)$ (see below) and Chebatorev's density theorem.  Now we define the Hecke algebra of tame level $K^p$ as 
\[\T(K^p):=\varprojlim_{K_p\subset\GL_2(\Q_p)}\T(K^pK_p).\]
This is a complete semi-local $\Z_p$-algebra and $\T(K^p)/\km$ is a finite field for any maximal ideal $\mathfrak{m}$. It acts faithfully on $\tilde{H}^1(K^p,\Z_p)$ and commutes with the action of $\GL_2(\Q_p)\times G_{\Q_p}$. Moreover there is a continuous $2$-dimensional determinant $D_S$  of $G_{\Q,S}$ valued in $\T(K^p)$ in the sense of Chenevier \cite{Che14} satisfying the following property:\footnote{The existence of $D_S$ implies that $\T(K^p)$ is noetherian. Note that $T_l,S_l^{\pm1},l\notin S$ generate $\T(K^p)$ topologically, hence $\T(K^p)$ receives a surjective map from a finite product of some universal deformation rings of $2$-dimensional determinants of $G_{\Q,S}$, which are noetherian by the work of Chenevier.} for any $l\notin S$, the characteristic polynomial of $D_S(\Frob_l)$ is
\[X^2-l^{-1}T_lX+l^{-1}S_l.\]
Note that its twist by inverse of the cyclotomic character is also the Eichler-Shimura congruence relation (\cite[Th\'eor\`eme 4.9]{De71}), i.e. 
\[\Frob_l^2-T_l\Frob_l+lS_l=0\]
on $\tilde{H}^1(K^p,\Z_p)$. This can be checked first on finite levels and then by passing to the limit over $K_p$. Note that $T_l,S_l^{\pm1},l\notin S$ generate $\T(K^p)$ topologically. 

Let $\lambda:\T(K^p)\to \overbar{\Q}_p$ be a $\Z_p$-algebra homomorphism. We can associate an odd semi-simple Galois representation (unique up to conjugation)
\[\rho_\lambda:G_{\Q,S}\to \GL_2(\overbar{\Q}_p)\]
whose determinant is $\lambda\circ D_S$. Here odd means $\det(\rho_\lambda(c))=-1$ for any complex conjugation $c\in G_{\Q,S}$. Let $\tilde{H}^1(K^p,\overbar{\Q}_p)=\tilde{H}^1(K^p,\Q_p)\otimes\overbar{\Q}_p$ and let $\kp_\lambda:=\ker(\lambda\otimes\overbar\Q_p)\in\Spec\T(K^p)\otimes_{\Q_p}\overbar\Q_p$. If $\rho_\lambda$ is irreducible, by  the Eichler-Shimura relation, we have 
\[\Hom_{G_{\Q}}(\rho_\lambda(-1),\tilde{H}^1(K^p,\overbar{\Q}_p))=\Hom_{G_{\Q}}(\rho_\lambda(-1),\tilde{H}^1(K^p,\overbar{\Q}_p)[\kp_\lambda]),\]
where $\rho_\lambda(-1)=\rho_\lambda\otimes \varepsilon^{-1}$ is the twist of $\rho_\lambda$ by  inverse of the $p$-adic cyclotomic character and  $\tilde{H}^1(K^p,\overbar{\Q}_p)[\kp_\lambda]$ denotes the $\lambda$-isotypic subspace. By the main result of \cite{BLR91}, $\tilde{H}^1(K^p,\overbar{\Q}_p)[\kp_\lambda]$ is $\rho_\lambda(-1)$-isotypic in the sense that 
\[\tilde{H}^1(K^p,\overbar{\Q}_p)[\kp_\lambda]=\rho_\lambda(-1)\otimes_{\overbar\Q_p} \Hom_{G_{\Q}}(\rho_\lambda(-1),\tilde{H}^1(K^p,\overbar{\Q}_p)).\]
We remark that the centre $\A_f^\times$ of $\GL_2(\A_f)$ acts via $\det(\rho_\lambda)\varepsilon^{-1}$ on $\tilde{H}^1(K^p,\overbar{\Q}_p)[\kp_\lambda]$ via global class field theory.

\end{para}

\begin{defn}
Let
\[\rho:G_\Q\to\GL_2(\overline\Q_p)\]
be a continuous two-dimensional $p$-adic Galois representation. We say $\rho$ is 
\begin{itemize}
\item pro-modular if there exists a tame level $K^p$ and $\lambda:\T(K^p)\to \overbar{\Q}_p$ such that $\rho\cong\rho_\lambda$;
\item pro-cohomological if $\Hom_{G_{\Q}}(\rho(-1),\tilde{H}^1(K^p,\overbar{\Q}_p))\neq 0$ for some tame level $K^p$. 
\end{itemize}
\end{defn}

Clearly $\rho$ is pro-modular if it is pro-cohomological and irreducible by our previous discussion. Conversely,  we have the following result.

\begin{lem}
Let $\rho=\rho_\lambda$ be an irreducible pro-modular representation for some $K^p$ and $\lambda$. The following statements are equivalent.
\begin{enumerate}
\item $\rho$ is pro-cohomological.
\item $\tilde{H}^1(K^p,\overbar{\Q}_p)[\kp_\lambda]\neq 0$.
\item $\tilde{H}^1(K^p,C)[\kp_\lambda]\neq 0$.
\end{enumerate}
\end{lem}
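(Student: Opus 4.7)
My plan is to break the three-way equivalence into the easy implications $(2)\Rightarrow(1)$ and $(2)\Rightarrow(3)$, the scalar-extension argument $(3)\Rightarrow(2)$, and the genuine content $(1)\Rightarrow(2)$. The implication $(2)\Rightarrow(1)$ is immediate: take $K^p$ itself as the witnessing tame level and apply the BLR-type identification $\tilde H^1(K^p,\overbar\Q_p)[\kp_\lambda] = \rho_\lambda(-1)\otimes \Hom_{G_\Q}(\rho_\lambda(-1),\tilde H^1(K^p,\overbar\Q_p))$ recalled just above the lemma. The implication $(2)\Rightarrow(3)$ is likewise immediate from the Hecke-equivariant inclusion $\tilde H^1(K^p,\overbar\Q_p)\hookrightarrow\tilde H^1(K^p,C)$ obtained by base change.

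For $(3)\Rightarrow(2)$, I would pick a finite extension $E/\Q_p$ inside $\overbar\Q_p$ containing $\lambda(\T(K^p))$, which is possible by continuity of $\lambda$ together with the noetherian structure of $\T(K^p)$ noted in the footnote of \ref{HA}. Noetherianness gives a finite set of generators $T_1,\dots,T_r$ of $\kp_\lambda$, so $\tilde H^1(K^p,C)[\kp_\lambda]$ is cut out from $\tilde H^1(K^p,C)\cong\tilde H^1(K^p,E)\widehat\otimes_E C$ by the closed $E$-linear equations $(T_i-\lambda(T_i))v=0$. Restricting to $K_p$-fixed parts (which are finitely generated over $\T(K^p)\otimes_{\Z_p}\cO_E$ by admissibility of completed cohomology) and then applying flat base change to $C$ over $E$, one obtains
\[
\tilde H^1(K^p,C)[\kp_\lambda]\cong \tilde H^1(K^p,E)[\kp_\lambda]\,\widehat\otimes_E C.
\]
Non-vanishing of the left side then forces $\tilde H^1(K^p,E)[\kp_\lambda]\neq 0$, which via $E\hookrightarrow\overbar\Q_p$ yields~(2).

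The main content is $(1)\Rightarrow(2)$. Assuming pro-cohomologicality via some tame level $K'^p$, I would choose a normal open subgroup $K''^p\trianglelefteq K^p$ contained in $K^p\cap K'^p$, so that $\tilde H^1(K^p,\overbar\Q_p)=\tilde H^1(K''^p,\overbar\Q_p)^{K^p/K''^p}$. The eigensystem $\lambda'$ of $\rho$ on $\T(K'^p)$ restricts, via agreement of Satake parameters at all primes outside $S_{K^p}\cup S_{K'^p}$ (using the determinant $D_S$ and Chebotarev), to a cohomological eigensystem $\lambda''$ on $\T(K''^p)$ with $\tilde H^1(K''^p,\overbar\Q_p)[\kp_{\lambda''}]\neq 0$. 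By BLR, this equals $\rho(-1)\otimes_{\overbar\Q_p}W$ for some nonzero multiplicity space $W$ carrying commuting actions of $K^p/K''^p$ and $\GL_2(\Q_p)$; passing to $K^p/K''^p$-invariants gives $\tilde H^1(K^p,\overbar\Q_p)[\kp_\lambda]\cong \rho(-1)\otimes W^{K^p/K''^p}$.

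The hard part, and the main obstacle in the whole proof, is to show $W^{K^p/K''^p}\neq 0$. This is where the hypothesis that $\rho$ is pro-modular at the \emph{specific} level $K^p$ enters essentially: $\lambda$ defines a continuous character of $\T(K^p)$ itself (and not merely of $\T(K''^p)$), so $\rho_\lambda$ is unramified outside $S_{K^p}$, and the Hecke eigenvalues $\lambda(T_\ell),\lambda(S_\ell)$ for each $\ell\notin S_{K^p}$ agree with the Satake parameters determined by $\rho|_{G_{\Q_\ell}}$. This adelic compatibility means that at each ramified prime $\ell\in S_{K^p}\setminus\{p\}$, the local component of the smooth $\GL_2(\A_f^p)$-representation $\varinjlim_{K^p}\tilde H^1(K^p,\overbar\Q_p)[\rho]$ admits nonzero $K_\ell$-fixed vectors, so that $W^{K^p/K''^p}\neq 0$. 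The technical heart of the argument is thus an adelic level-compatibility statement for the multiplicity space, which one can either establish directly from the factorization of $W$ under the restricted tensor product decomposition away from $p$, or deduce from Emerton's local–global compatibility (as used in the subsequent applications) to convert the existence of the Hecke character $\lambda$ at level $K^p$ into the non-vanishing of $K^p$-fixed vectors.
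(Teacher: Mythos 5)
Your treatment of $(2)\Leftrightarrow(3)$ reproduces the paper's Lemma~\ref{kpC}: choose finitely many generators of $\kp_\lambda$, cut out $[\kp_\lambda]$ by finitely many strict equations, and observe that the resulting exact sequence stays exact under completed base change to $C$. That part is fine and essentially identical to the paper. The direction $(2)\Rightarrow(1)$ is also handled the same way.

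Where you diverge is $(1)\Rightarrow(2)$, and here the proposal has a genuine problem. You read ``pro-cohomological'' literally as existential over \emph{all} tame levels $K'^p$, descend to a common sublevel $K''^p$, and reduce to showing $W^{K^p/K''^p}\neq 0$. You then claim this follows because ``$\lambda$ defines a continuous character of $\T(K^p)$ itself, so \dots the local component \dots admits nonzero $K_\ell$-fixed vectors.'' This inference is not justified: $\T(K^p)$ is generated by Hecke operators at primes \emph{outside} the ramification set $S$, so the mere existence of a continuous $\lambda:\T(K^p)\to\overbar\Q_p$ carries no information about local behavior at the ramified primes. It also does not by itself place $\kp_\lambda$ among the associated primes of $\tilde H^1(K^p,\overbar\Q_p)$ (faithfulness of the Hecke action only puts $\kp_\lambda$ in the support, and for a non-finitely-generated module $\mathrm{Supp}$ is strictly larger than $\mathrm{Ass}$). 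So the crux $W^{K^p/K''^p}\neq 0$ is simply restated, not proved. The paper's proof instead reads the definition with the tame level fixed to the same $K^p$ that witnesses pro-modularity --- this is how ``pro-cohomological'' is actually used in Proposition~\ref{icht} and Corollary~\ref{promodprocoh} --- under which reading $(1)\Leftrightarrow(2)$ is an immediate restatement via the Eichler--Shimura relation together with the BLR $\rho_\lambda(-1)$-isotypicity recalled just above the lemma, and no level descent is needed. The fact that the conditions hold \emph{at all} for pro-modular irreducible $\rho$ is what requires real input, and that is supplied later by Corollary~\ref{promodprocoh} via Emerton's local--global compatibility --- which is also the reference you fall back on, so your approach to this direction would in any case be circular if the lemma were needed before \S5.3.
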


\begin{proof}
The first two are equivalent by the Eichler-Shimura relation. The equivalence between the last two is a consequence of the following lemma.  
\end{proof}

\begin{lem} \label{kpC}
Let $\rho=\rho_\lambda$ be a pro-modular representation for some $K^p$ and $\lambda$. Then
\[\tilde{H}^1(K^p,\overbar{\Q}_p)[\kp_\lambda]\widehat\otimes_{\overbar\Q_p}C\cong \tilde{H}^1(K^p,C)[\kp_\lambda],\]
where $\tilde{H}^1(K^p,\overbar{\Q}_p)[\kp_\lambda]$ is endowed with a norm with unit ball $(\tilde{H}^1(K^p,\Z_p)\otimes\overbar\Z_p)[\kp_\lambda]$.
\end{lem}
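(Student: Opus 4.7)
The plan is to reduce the statement to finite level at $p$ by combining flat base change with admissibility of completed cohomology. First, choose a finite extension $E/\Q_p$ through which $\lambda$ factors, so that $\kp_E:=\ker(\lambda:T_E\to E)\subset T_E:=\T(K^p)\otimes_{\Z_p}E$ is a maximal ideal with residue field $E$ and $\kp_\lambda=\kp_E\cdot(\T(K^p)\otimes_{\Z_p}\overbar\Q_p)$. Since $T_E$ is Noetherian (cf.\ the footnote in \ref{HA}), $\kp_E$ is finitely generated, so flat base change over $E$ identifies $\tilde H^1(K^p,\overbar\Q_p)[\kp_\lambda]=\tilde H^1(K^p,E)[\kp_E]\otimes_E\overbar\Q_p$, with the unit ball as described in the statement by the analogous flat base change over $\cO_E$. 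Writing $H:=\tilde H^1(K^p,\Z_p)$, $H_E:=H\otimes_{\Z_p}E$, and $H_C:=\tilde H^1(K^p,C)\cong H_E\widehat\otimes_E C$, the lemma reduces to $H_E[\kp_E]\widehat\otimes_E C\cong H_C[\kp_\lambda]$.

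Since $H$ is an admissible unitary $\GL_2(\Q_p)$-Banach representation and the Hecke action commutes with $\GL_2(\Q_p)$, $H_E[\kp_E]$ is a closed (hence admissible) subrepresentation of $H_E$, and likewise $H_C[\kp_\lambda]\subset H_C$. For each open compact $K_p\subset\GL_2(\Q_p)$, the invariants $H^{K_p}$ are finitely generated $\Z_p$-modules, so $H_E^{K_p}$ is a finite-dimensional $E$-vector space on which the finitely many generators of $\kp_E$ act by commuting endomorphisms; finite-dimensionality then makes base change to $C$ commute with passage to $[\kp_E]$, giving $H_E^{K_p}[\kp_E]\otimes_E C=H_C^{K_p}[\kp_\lambda]$ for every $K_p$. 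Taking the union over $K_p$ identifies the subspaces of smooth vectors: $H_E[\kp_E]^{\sm}\otimes_E C\cong H_C[\kp_\lambda]^{\sm}$.

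To conclude, it remains to verify that smooth vectors are dense in both $H_E[\kp_E]$ and $H_C[\kp_\lambda]$. The key input is that $H/p^n=\varinjlim_{K_p}H^1(Y_{K^pK_p},\Z/p^n)$ is a smooth $\GL_2(\Q_p)$-representation, together with the fact that the unit ball $H_E[\kp_E]^o$ is $p$-saturated in $H_{\cO_E}$ (if $pv\in H_E[\kp_E]^o$ for $v\in H_{\cO_E}$ then $\kp_Ev=0$ by $p$-torsion-freeness of $H_{\cO_E}^n$), so that $H_E[\kp_E]^o/p^n$ is itself a smooth $\GL_2(\Q_p)$-representation; the vanishing of $H^2$ on open modular curves then permits lifting smooth mod-$p^n$ classes (possibly after shrinking the level) to genuine smooth classes in $H_E[\kp_E]^o$, and analogously for $H_C[\kp_\lambda]$. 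Combining these density statements with the isomorphism of smooth subspaces above and completing with respect to the norm from $H_C$ yields the desired $H_E[\kp_E]\widehat\otimes_E C\cong H_C[\kp_\lambda]$. The main technical point to verify carefully is precisely this density of smooth vectors in the closed subrepresentation $H_E[\kp_E]$, which I expect to follow by a standard lifting argument on the unit balls modulo $p^n$; no other serious obstacle is anticipated.
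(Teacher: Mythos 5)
The reduction to a finite extension $E$, and the identification of smooth vectors via admissibility and flat base change on the finite-dimensional $K_p$-invariants, are both fine. The fatal gap is the final density claim: smooth vectors are \emph{not} dense in $H_E[\kp_E]$ in general. The $\GL_2(\Q_p)$-smooth vectors of $\tilde H^1(K^p,\Q_p)$ are exactly $\varinjlim_{K_p} H^1(Y_{K^pK_p}(\bC),\Q_p)$, i.e.\ classical cohomology with trivial coefficients, whose Hecke eigensystems all come from weight-two automorphic forms (plus Eisenstein series). If $\lambda$ does not arise from a weight-two form --- for instance if $\rho_\lambda$ has Hodge--Tate--Sen weights $0,0$ as in the weight-one classicality application, or is any non-classical overconvergent eigensystem --- then $H_E[\kp_E]^{\sm}=0$, so the closure of the smooth vectors is certainly not all of $H_E[\kp_E]$. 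This is precisely the regime the lemma is needed for. Your lifting argument cannot repair this: a class in $H_E[\kp_E]^o/p^n$ is indeed smooth and lifts to a smooth class of $H_{\cO_E}$ by vanishing of $H^2$, but such a lift has no reason to lie in $H_E[\kp_E]^o$, while a lift chosen inside $H_E[\kp_E]^o$ has no reason to be smooth.

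The paper's proof sidesteps smooth vectors entirely. Take generators $g_1,\dots,g_s$ of the finitely generated ideal $\kp_\lambda$ (noetherianity of $\T(K^p)$, as in your first paragraph). Then
\[
0\to\tilde H^1(K^p,\overbar\Q_p)[\kp_\lambda]\to\tilde H^1(K^p,\overbar\Q_p)\xrightarrow{(g_1,\dots,g_s)}\bigoplus_{i=1}^s\tilde H^1(K^p,\overbar\Q_p)
\]
is an exact sequence of $\overbar\Q_p$-Banach spaces, with the norm on the first term given by the stated unit ball, and the second map is \emph{strict}. This strictness is where admissibility actually enters: morphisms of admissible Banach $\GL_2(\Q_p)$-representations are automatically strict (Schneider--Teitelbaum), equivalently the image of $(g_1,\dots,g_s)$ is closed. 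A strictly exact sequence of Banach spaces remains exact after the completed tensor $\widehat\otimes_{\overbar\Q_p}C$, which is exactly the lemma. You should replace your density step by a strictness argument of this type; the rest of your setup is compatible with it but largely unnecessary once strictness is in hand.
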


\begin{proof}
Choose generators $g_1,\cdots,g_s$ of $\kp_\lambda$ as a $\T(K^p)\otimes\overbar\Q_p$-module. We have an exact sequence
\[0\to\tilde{H}^1(K^p,\overbar{\Q}_p)[\kp_\lambda]\to \tilde{H}^1(K^p,\overbar\Q_p)\xrightarrow{(g_1,\cdots,g_s)} \bigoplus_{i=1}^s\tilde{H}^1(K^p,\overbar\Q_p)\]
which is continuous with respect to the $p$-adic topology, i.e. defined by the unit open ball $\tilde{H}^1(K^p,\Z_p)\otimes\overbar\Z_p$. Note that the second map is strict. This sequence remains exact after taking $p$-adic completion, which is exactly what we want.
\end{proof}

We conclude this subsection by the following result on infinitesimal characters.

\begin{prop} \label{icht}
Suppose $\rho=\rho_\lambda$ is pro-cohomological and irreducible and has Hodge-Tate-Sen weights $(a,b)$. Then as a representation of $\GL_2(\Q_p)$, the locally analytic vectors $\tilde{H}^1(K^p,C)^{\la}[\kp_\lambda]\neq 0$ and has infinitesimal character $\{(-a-1,-b),(-b-1,-a)\}\subset \kh^*$.
\end{prop}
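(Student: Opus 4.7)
The plan is to establish non-vanishing via Schneider--Teitelbaum density, and then pin down the infinitesimal character by combining Theorem \ref{Senkh} (which identifies the Sen operator with $\theta_\kh\bigl(\begin{smallmatrix}0&0\\0&1\end{smallmatrix}\bigr)$) with Corollary \ref{khact} (which relates $\theta_\kh$ to the Harish-Chandra image of $Z(U(\mathfrak{g}))$).

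First I would show $\tilde H^1(K^p,C)^{\la}[\kp_\lambda]\ne 0$. By Lemma \ref{kpC} together with the pro-cohomological assumption, $\tilde H^1(K^p,C)[\kp_\lambda]$ is non-zero, and since $\rho_\lambda$ is irreducible, the Eichler--Shimura relation gives $\tilde H^1(K^p,\overline\Q_p)[\kp_\lambda]\cong\rho_\lambda(-1)\otimes_{\overline\Q_p}W_0$ for some non-zero admissible Banach $\GL_2(\Q_p)$-representation $W_0$ over $\overline\Q_p$. Schneider--Teitelbaum's density theorem (Theorem 7.1 of \cite{ST03}) then guarantees $W_0^{\la}\ne 0$, and extending scalars to $C$ produces a non-zero subspace of $\tilde H^1(K^p,C)^{\la}[\kp_\lambda]$.

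Next I would compute the action of $Z(U(\mathfrak{g}))$ on $W:=\tilde H^1(K^p,C)^{\la}[\kp_\lambda]$. By Corollary \ref{khact}, the central element $z$ acts as $\theta_\kh(z)$ while $Z(U(\mathfrak{g}_0))$ acts through the Harish-Chandra image of $-\theta_\kh|_{\kh_0}$. The value of $\theta_\kh(z)$ on $W$ is the derivative at $1$ of the central character of $\tilde H^1(K^p,C)[\kp_\lambda]$, which by global class field theory equals $\det\rho_\lambda\cdot\varepsilon^{-1}$ restricted to $\Q_p^\times$; since a Galois character of Hodge--Tate weight $h$ corresponds (via the normalisation $\varepsilon_p(x)=x|x|$) to a $\Q_p^\times$-character with derivative $-h$ at $1$, this gives $\theta_\kh(z)=-a-b-1$. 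On the other hand, by Theorem \ref{Senkh} the operator $\theta_\kh\bigl(\begin{smallmatrix}0&0\\0&1\end{smallmatrix}\bigr)$ is the Sen operator on $W$, and since $W$ is $\rho_\lambda(-1)$-isotypic with $\rho_\lambda(-1)$ of Hodge--Tate weights $(a+1,b+1)$, this Sen operator has generalized eigenvalues $(-a-1,-b-1)$. Subtracting, $\theta_\kh\bigl(\begin{smallmatrix}1&0\\0&0\end{smallmatrix}\bigr)$ has generalized eigenvalues $(-b,-a)$, so $-\theta_\kh(h)$ has generalized eigenvalues $b-a-1$ and $a-b-1$ on $W$. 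These are exchanged by the $\mathfrak{sl}_2$ dot action $s\cdot\lambda=-\lambda-2$, so $Z(U(\mathfrak{g}_0))$ acts on $W$ by a single Harish-Chandra character, and combined with $\theta_\kh(z)=-a-b-1$ the infinitesimal character of $Z(U(\mathfrak{g}))$ is $\chi_\mu$ for $\mu=(k_1,k_2)$ with $k_1+k_2=-a-b-1$ and $k_1-k_2\in\{b-a-1,a-b-1\}$; solving yields $\mu\in\{(-a-1,-b),(-b-1,-a)\}$, a single orbit under the dot action $w\cdot(k_1,k_2)=(k_2-1,k_1+1)$, as claimed.

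The main obstacle will be the careful alignment of sign and twist conventions rather than any new input: the identification "Hodge--Tate--Sen weight $k$ $\Leftrightarrow$ Sen operator is multiplication by $-k$", the fact that $\rho_\lambda(-1)$ shifts Hodge--Tate weights by $+1$ (since cyclotomic has Hodge--Tate weight $-1$), the sign in the derivative of the central character through local class field theory, and the $\rho$-shift implicit in Corollary \ref{khact}'s use of $-\theta_\kh$ rather than $+\theta_\kh$ in the Harish-Chandra isomorphism. Once these are reconciled, the argument reduces to the short linear-algebra computation above.
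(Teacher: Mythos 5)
Your proposal is correct and takes essentially the same route as the paper: non-vanishing via Schneider--Teitelbaum applied to an admissible Banach representation, and the infinitesimal character by combining the Sen-operator identification of Theorem \ref{Senkh} with Corollary \ref{khact} and the central-character computation; your explicit sign and dot-action bookkeeping matches what the paper's terser proof leaves implicit. One small technical fix: Schneider--Teitelbaum (Theorem 7.1 of \cite{ST03}) requires an admissible Banach representation over a complete discretely valued field, so before applying it you should descend to a finite extension $E$ of $\Q_p$ over which $\rho_\lambda$ is defined and work with $\tilde{H}^1(K^p,\overbar\Q_p)[\kp_\lambda]\cap\tilde{H}^1(K^p,\Q_p)\otimes E$ (as the paper does), rather than speaking of a Banach representation over $\overbar\Q_p$, which is not complete.
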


\begin{proof}
Note that $\rho$ can be defined over a finite extension $E$ of $\Q_p$. Hence $\tilde{H}^1(K^p,\overbar{\Q}_p)[\kp_\lambda]\cap \tilde{H}^1(K^p,\Q_p)\otimes E\neq 0$ and is an admissible representation of $\GL_2(\Q_p)$. It has non-zero locally analytic vectors by the main result of \cite{ST03}. Since $\det\rho$ has Hodge-Tate-Sen weight $a+b$, the centre $Cz$ of $\mathfrak{g}$ acts on $\tilde{H}^1(K^p,C)^{\la}[\kp_\lambda]$ via $z\mapsto -(a+b)-1$. Now the claim for infinitesimal character follows directly from Corollary \ref{khact} and Theorem \ref{Senkh}.
\end{proof}

\begin{rem}
A higher dimensional generalization of this result was obtained  by Dospinescu-Pa\v{s}k\={u}nas-Schraen in  \cite[Theorem 1.4]{DPS20}. 
\end{rem}

\subsection{A classicality result for overconvergent weight one forms}

\begin{para}
Suppose $\rho=\rho_\lambda$ has equal Hodge-Tate-Sen weights $0,0$. (It's easy to reduce to this case by twisting by a character if weights are $a,a$.) There are two possibilities for $\rho$: either $\rho|_{G_{\Q_p}}$ is Hodge-Tate, or not. In other words, $(\rho\otimes_{\Q_p} C)^{G_{\Q_p}}$ is a $\overbar{\Q}_p$-vector space of dimension $2$ or $1$. Recall that by Deligne-Serre \cite{DS74}, the $2$-dimensional Galois representation associated to a classical weight $1$ eigenform is Hodge-Tate at $p$. The main result of this subsection gives a converse.

Recall that there is a natural action of $B$ on $M^{\dagger}_{1}(K^p)$. Let $N_0\subset B$ be $\begin{pmatrix} 1 & \Z_p\\ 0 &  1 \end{pmatrix}$. Then we have the usual operator 
\[U_p:=\sum_{i=0}^{p-1}\begin{pmatrix} p & i \\ 0 & 1 \end{pmatrix}\]
acting on $M^{\dagger}_{1}(K^p)^{N_0}$. Note that Theorem \ref{hwvg} implies that Hecke algebra $\T(K^p)$ also acts on the space of overconvergent modular forms. 
\end{para}

\begin{thm} \label{cl1}
Suppose $\rho=\rho_\lambda$ is pro-modular for some tame level $K^p$.  If $M^{\dagger}_{1}(K^p)^{N_0}[\kp_\lambda]$ has a non-zero $U_p$-eigenvector and $\rho|_{G_{\Q_p}}$ is Hodge-Tate of weights $0,0$, then $\rho$ comes from a classical weight $1$ eigenform, i.e. $M_1(K^p)[\kp_{\lambda}]\neq 0$.
\end{thm}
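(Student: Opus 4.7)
The plan is to follow the author's sketch by computing $\tilde H^1(K^p,C)^{\la}_{\mu_1}[\kp_{\lambda\cdot t}]$ in two ways---via the Eichler--Shimura relation on the Galois side and via the structural Theorem \ref{weight1} on the geometric side---and to derive a dimension contradiction from the assumption that $\lambda$ is not classical. First I would reduce to the case that $\rho_\lambda$ is irreducible; the reducible case is dispatched by standard Eisenstein arguments and class field theory.

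On the Galois side, the hypothesis that $\rho_\lambda|_{G_{\Q_p}}$ is Hodge--Tate of weights $0,0$ means that the usual Sen operator on $\rho_\lambda\otimes_{\overbar\Q_p} C$ vanishes identically. By Theorem \ref{Senkh}, $\theta_{\kh}(\begin{pmatrix} 0 & 0 \\ 0 & 1 \end{pmatrix})$ is the unique Sen operator on $\tilde H^1(K^p,C)^{\la}$, and combined with the Eichler--Shimura decomposition of \ref{HA} this forces $\tilde H^1(K^p,C)^{\la}_{\mu_1}[\kp_{\lambda\cdot t}]$ into the weight-$(1,0)$ subspace, producing an isomorphism
$$\tilde H^1(K^p,C)^{\la,(1,0)}_{\mu_1}[\kp_{\lambda\cdot t}] \cong \rho_\lambda \otimes_{\overbar\Q_p} W$$
for some $\GL_2(\A_f^p)\times B$-module $W$. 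Theorem \ref{weight1} (with $k_1=0$) supplies on the geometric side the pull-back sequence
$$0 \to N_1\cdot e_1^{-1}t \to \tilde H^1(K^p,C)^{\la,(1,0)}_{\mu_1} \to M_1(K^p)\cdot e_1^{-1}t \to 0,$$
with $N_1$ fitting into $0 \to M^\dagger_1(K^p)/M_1(K^p) \to N_1 \to H^1(\omega) \to 0$. Non-classicality of $\lambda$ gives $M_1(K^p)[\kp_\lambda]=0$, whence by Serre duality on the compactified modular curve (identifying $H^1(\omega)$ with the Hecke-dual of $M_1$) also $H^1(\omega)[\kp_\lambda]=0$. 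Chasing these sequences at $\kp_{\lambda\cdot t}$ identifies the middle with $M^\dagger_1(K^p)[\kp_\lambda]\cdot e_1^{-1}t$, yielding
$$M^\dagger_1(K^p)[\kp_\lambda]\cdot e_1^{-1}t \cong \rho_\lambda \otimes_{\overbar\Q_p} W.$$

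Finally I take $N_0$-invariants and restrict to a $U_p=\alpha$ eigenspace for some eigenvalue $\alpha$ whose existence on the left is guaranteed by hypothesis. The right-hand side then has dimension at least $2 \cdot \dim_{\overbar\Q_p}(W^{N_0})_{U_p=\alpha} \geq 2$. The main obstacle is to bound the left-hand side's eigenspace by one, which is where the $q$-expansion principle enters: by passing to a simultaneous eigenspace for the Hecke operators $U_l$ at $l\in S\setminus\{p\}$ and the diamond operators at the ramified primes (enlarging $S$ and refining $K^p$ as necessary, which is harmless since classicality persists under such refinement), one pins down a single connected component, after which the $q$-expansion of a weight-one overconvergent eigenform is determined by its constant term, hence by the $U_p$-eigenvalue together with the ambient Hecke eigensystem. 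This forces dimension $\leq 1$ on the left, contradicting the dimension lower bound and thus the non-classicality of $\lambda$.
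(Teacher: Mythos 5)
Your proposal follows the same strategy as the paper's proof: reduce to $\rho_\lambda$ irreducible, use Theorem \ref{Senkh} together with the Hodge--Tate hypothesis to force $\tilde H^1(K^p,C)^{\la}_{\mu_1}[\kp_{\lambda\cdot t}]$ into the weight-$(1,0)$ subspace, localize the exact sequences of Theorem \ref{weight1} at $\kp_\lambda$ using $M_1(K^p)_{\kp_\lambda}=0$ (via semi-simplicity of the Hecke action) and $H^1(\omega)_{\kp_\lambda}=0$ (via Serre duality and Kodaira--Spencer) to identify the eigenspace with $M^\dagger_1(K^p)[\kp_\lambda]\cdot e_1^{-1}t$, and then derive a dimension contradiction on $N_0$-fixed $U_p$-eigenvectors after restricting to auxiliary $U_l$-eigenspaces and diamond eigenspaces. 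One small imprecision: you write that the $q$-expansion of a weight-one overconvergent eigenform is ``determined by its constant term''; what the $q$-expansion principle actually gives (and what the paper's Lemma \ref{keymult1} uses) is that a Hecke eigenform---including $U_p$---on a single connected component is determined up to scalar by its Hecke eigensystem via the coefficients $a_n$, $n\geq 1$, so the eigenspace has dimension at most one, which is the point.
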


\begin{rem}
Theorem \ref{cl1} implies that an overconvergent weight one modular form of finite slope is classical if its associated Galois representation is Hodge-Tate. In particular, this gives a different proof of the main result of Buzzard-Taylor \cite{BT99} in the ordinary case.

Note that we don't assume the eigenvalue of $U_p$ is non-zero. In fact, using Colmez's Kirillov model, we will see that  the kernel of $U_p$ is always non-zero if we know the local-global compatibility at $p$ (in the sense of Emerton).
\end{rem}

\begin{para}
The rest of this subsection is devoted to the proof of Theorem \ref{cl1}. As mentioned in the introduction, we will need Hecke operators at bad places in the proof. After making a right translation by some element in $\GL_2(\A^p_f)$ and shrinking the level, we can find a finite set of rational primes $S$ containing $p$, an integer $m>0$ and assume the following: $K^p=\prod_{l\neq p} K_l\subset\prod_{l\neq p}\GL_2(\Z_l)$, where $K_l=\GL_2(\Z_l)$ for $l\notin S$, and 
\[K_l=\Gamma_1(l^m)=\{\begin{pmatrix} a & b \\ c & d \end{pmatrix},~a-1,c,d-1\in l^m\Z_l\},\,l\in S\setminus \{p\}.\]
Moreover, $M^{\dagger}_{1}(K^p\Gamma_1(p^m))[\kp_\lambda]$ has a non-zero eigenvector of $U_p$. See \ref{comf} for the notation here. Let $K_p=\Gamma_1(p^n)$ for some $n\geq m$ and $K=K^pK_p$. Let $A_S=\prod_{l\in S}\Z_l^\times\subset\prod_{l\in S}\GL_2(\Z_l)$ be the subgroup of the form $\begin{pmatrix} * & 0 \\ 0 & 1 \end{pmatrix}$. Then $A_S$ is in the normalizer of $\prod_{l\in S}K_l$, hence $A_S$ acts on $\mathcal{X}_{K}$ by right translation, which induces an action of $A_S$ on $M^{\dagger}_{1}(K)$. Clearly this action factors through a finite group and we can find a (finite-order) character $\psi:A_S\to \overbar\Q_p^\times$ such that the $\psi$-isotypic component 
\[M^{\dagger}_{1}(K)^{\psi}[\kp_\lambda]\]
contains a non-zero eigenvector of $U_p$. For $l\in S$, we denote by $U_l$ the abstract double coset action of 
\[[K_l\begin{pmatrix}l&0\\0&1\end{pmatrix}K_l].\]
These operators act naturally on $M^{\dagger}_{1}(K)$. If moreover $l\neq p$, then $U_l$ acts naturally on $\tilde{H}^1(K^p,C)$. We denote by 
\[\tilde\T(K^p)\subset\End(\tilde{H}^1(K^p,C))\] 
the $\T(K^p)$-subalgebra generated by $U_l,l\in S\setminus\{p\}$.
\end{para}

\begin{lem}
$\tilde\T(K^p)$ is a finite $\T(K^p)$-module, i.e. each $U_l$ is integral over $\T(K^p)$, $l\in S\setminus\{p\}$.
\end{lem}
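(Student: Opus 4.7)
The statement to prove is that each $U_l$ with $l\in S\setminus\{p\}$ is integral over $\T(K^p)$; once known, since these operators commute among themselves and with $\T(K^p)$, the algebra $\tilde\T(K^p)$ they generate is automatically a finite $\T(K^p)$-module. Fix $l\in S\setminus\{p\}$. Since $\T(K^p)$ is a finite product of complete noetherian local rings, I would first reduce to a single factor $R:=\T(K^p)_{\mathfrak m}$ and work over it using the universal two-dimensional determinant $D_S$; the task becomes producing a monic polynomial $P_l(X)\in R[X]$ with $P_l(U_l)=0$ on the corresponding direct summand of $\tilde{H}^1(K^p,C)$.

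The plan is to read $P_l$ off the universal Weil--Deligne parameter of $D_S$ at $l$. Restricting $D_S$ to a decomposition group at $l$ and applying Grothendieck's $l$-adic monodromy theorem in families, one obtains (after passing, if necessary, to a finite normal cover $R'/R$) a Weil--Deligne datum $(\tau,N)$ with coefficients in $R'$. For each classical $\overbar{\Q}_p$-point $\lambda$ of $R$ with $\rho_\lambda$ absolutely irreducible, the local Langlands correspondence then predicts the eigenvalues of $U_l$ on the $(K_l,\psi_l)$-isotypic line inside $\pi_l(\rho_\lambda)^{K_l}$ as explicit algebraic functions of $\tr(\tau)$, $\det(\tau)$, and the vanishing/nonvanishing of $N$: an unramified principal series yields an eigenvalue $\alpha_l$ with $\alpha_l+\beta_l=l^{-1}T_l,\ \alpha_l\beta_l=l^{-1}S_l$; a twisted Steinberg yields a single eigenvalue determined by the unramified twist; and supercuspidals (and deeply ramified principal series) force $U_l=0$. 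Packaging these finitely many possible shapes into a single monic polynomial over $R'$ and descending to $R$ via a norm map produces the desired $P_l$.

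To verify that $P_l(U_l)=0$ identically on $\tilde{H}^1(K^p,C)$, not merely pointwise at classical $\lambda$, I would invoke the density of locally algebraic vectors in completed cohomology (a theorem of Emerton). Each classical Hecke-eigensubspace at an absolutely irreducible $\lambda$ has local $\GL_2(\Q_l)$-component identified with copies of $\pi_l(\rho_\lambda)^{K_l,\psi_l}$ by the $l\neq p$ local--global compatibility, which is unconditional and follows from the Eichler--Shimura relation together with the classification of admissible representations of $\GL_2(\Q_l)$, so $P_l(U_l)$ annihilates this component by construction. Reducible/Eisenstein classical points are handled by Jordan--H\"older on the Hecke module, together with the fact that $U_l$ preserves the integral lattice $\tilde{H}^1(K^p,\Z_p)$. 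Continuity then propagates the identity to all of $\tilde{H}^1(K^p,C)$.

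The hard part will be the uniform construction of $(\tau,N)$, and hence of $P_l$, over all of $\Spec R$ --- especially across the Eisenstein locus, where the Galois representation is only a pseudo-representation and an honest local Weil--Deligne datum may fail to exist. One resolution is to stay on the finite cover $R'/R$ chosen so that the local representation lifts genuinely, construct $P_l$ there, and descend by a norm argument. A more conceptual alternative, which I would pursue if the covering approach becomes intricate, is to bypass $(\tau,N)$ entirely and extract $P_l$ directly from $D_S|_{G_{\Q_l}}$ via Chenevier's determinant formalism, exploiting that at the prescribed level $\Gamma_1(l^m)$ with central character $\psi$ only finitely many shapes of local representations $\pi_l$ contribute, so only finitely many shapes of eigenvalue data appear and a single monic polynomial covers them all.
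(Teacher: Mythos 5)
Your high-level plan matches the paper's — density of locally algebraic vectors, a monic polynomial read off the universal Galois determinant, verification at classical points via local--global compatibility at $l$ — but the middle of your argument is substantially over-engineered and leaves a genuine gap that you yourself flag. The paper's proof avoids all of it with a single uniform choice.

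First, there is no need to construct a Weil--Deligne datum $(\tau,N)$ over $\Spec R$, pass to finite covers, or run a norm-descent argument. The paper simply takes $Q_l(X)\in\T(K^p)[X]$ to be the characteristic polynomial of $l\Frob_l$ computed directly from the determinant $D_S$ (this is globally defined with no covering or localization issues), and sets $P_l(X)=X^{m+1}Q_l(X)$. The point is twofold: (i) by the theory of newforms, on any $(\pi_l)^{K_l}$ (with $K_l=\Gamma_1(l^m)$) the operator $U_l$ has nilpotency order at most $m+1$, because $\dim(\pi_l)^{K_l}\leq m+1$, so the factor $X^{m+1}$ kills the entire generalized $0$-eigenspace uniformly; (ii) a nonzero generalized eigenvalue of $U_l$ only occurs when $\pi_l$ is special or principal series, where $U_l$ acts semisimply on those eigenspaces (Hida), and local--global compatibility forces such an eigenvalue to be a root of $Q_l(X)$. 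Your "packaging finitely many shapes" step is exactly replaced by this one monic polynomial, and the hard case you identify — supercuspidal or deeply ramified $\pi_l$, where there is no finite-slope eigenvalue — is handled by the $X^{m+1}$ factor alone.

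Second, your worry about the Eisenstein locus (pseudo-representations not lifting to honest local representations, so no $(\tau,N)$) is real for your approach but simply does not arise in the paper's: since $\mathcal{X}_{K^pK'_p}$ is the compactified modular curve, $H^1_{\et}(\mathcal{X}_{K^pK'_p},\overbar\Q_p)$ decomposes into $(\pi_f)^{K^pK'_p}$ for \emph{cuspidal} automorphic $\pi$ only, and the density is asserted for $\varinjlim_{K'_p} H^1_{\et}(\mathcal{X}_{K^pK'_p},\Q_p)$, using that completed cohomology of $X$ and of $Y$ agree at infinite level. So no Eisenstein Jordan--H\"older step, no continuity-propagation step, and no integral-lattice preservation argument is needed. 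In short: your second alternative (extract $P_l$ directly from $D_S|_{G_{\Q_l}}$ via Chenevier's formalism) was the right instinct, but you stopped short of realizing how explicit and simple the answer is, and meanwhile the first half of your proposal introduces difficulties (monodromy in families, finite covers, descent) that the actual proof never has to confront.
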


\begin{proof}
Fix $l\in S\setminus\{p\}$. Since $\varinjlim_{K'_p}H^1_{\et}(\mathcal{X}_{K^pK'_p},\Q_p)$ is dense in $\tilde{H}^1(K^p,\Q_p)$ (Theorem 2.2.16 (iv) of \cite{Eme06}), it suffices to find a monic polynomial $P_l(X)\in \T(K^p)[X]$ such that $P_l(U_l)$ acts as zero on $H^1_{\et}(\mathcal{X}_{K^pK'_p},\Q_p)$ for any open subgroup $K'_p$ of $\GL_2(\Q_p)$. Fix a lift of geometric Frobenius $\Frob_l\in G_{\Q_l}$ whose image in $G_{\Q_l}^{\mathrm{ab}}$ corresponds to $l\in\Q^\times_l$ via the local Artin map. Recall that there is a determinant $D_S$ of $G_{\Q,S}$ valued in $\T(K^p)$. We denote by $Q_l(X)\in \T(K^p)[X]$ the characteristic polynomial of $l\Frob_l\in\T(K^p)[G_{\Q,S}]$. We claim that
\[P_l(X):=X^{m+1}Q_l(X)\]
works. To see this, since $H^1_{\et}(\mathcal{X}_{K^pK'_p},\overbar\Q_p)$ can be decomposed as a direct sum of $(\pi_f)^{K^pK'_p}$ for some cuspidal automorphic representations $\pi=\pi_f\otimes\pi_\infty$ on $\GL_2(\A)$, it is enough to show $P_l(U_l)=0$ on each non-zero $(\pi_f)^{K^pK'_p}$. By the theory of newforms, $(\pi_l)^{K_l}$ has dimension at most $m+1$ and $U_l$ is not nilpotent only when $\pi_l$ is special or a principal series. Moreover,  $U_l$ acts semi-simply on generalized eigenspaces associated to non-zero eigenvalues. See for example Corollary 2.2 of \cite{Hid89}. The local-global compatibility then implies that a non-zero eigenvalue of $U_l$ must be a root of $Q_l(X)$. Hence $P_l(U_l)=0$ on $(\pi_f)^{K^pK'_p}$.
\end{proof}

Now we consider the action of $\tilde\T(K^p)[U_p]$ on $M^{\dagger}_{1}(K)^{\psi}$.

\begin{lem} \label{keymult1}
$\lambda:\T(K^p)\to\overbar\Q_p$ can be extended to $\lambda':\tilde\T(K^p)[U_p]\to C$ such that 
\[M^{\dagger}_{1}(K^p\Gamma_1(p^n))^{\psi}[\kp_{\lambda'}]\neq 0,\]
where $\kp_{\lambda'}$ denotes the kernel of $\lambda'$. Moreover, for any such $\lambda'$, we have
\[\dim_C M^{\dagger}_{1}(K^p\Gamma_1(p^n))^{\psi}[\kp_{\lambda'}]=1.\]
In particular, we obtain the following after taking direct limit over all $n$, 
\[\dim_C \varinjlim_n M^{\dagger}_{1}(K^p\Gamma_1(p^n))^{\psi}[\kp_{\lambda'}]=\dim_C M^{\dagger}_{1}(K^p)^{\psi,N_0}[\kp_{\lambda'}]=1.\]
\end{lem}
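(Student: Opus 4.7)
The plan is to separate the statement into (i) existence of an extension $\lambda'$ such that $M^\dagger_1(K^p\Gamma_1(p^n))^\psi[\kp_{\lambda'}] \neq 0$, and (ii) the sharp bound $\dim_C M^\dagger_1(K^p\Gamma_1(p^n))^\psi[\kp_{\lambda'}] \leq 1$; the ``in particular'' clause is then an easy consequence of taking direct limits.

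For (i), I would start from the given non-zero $U_p$-eigenvector $f_0 \in M^\dagger_1(K^p\Gamma_1(p^m))^\psi[\kp_\lambda]$ and pull it back along the natural inclusion $M^\dagger_1(K^p\Gamma_1(p^m)) \hookrightarrow M^\dagger_1(K^p\Gamma_1(p^n))$ for any $n \geq m$. The operators $\{U_l\}_{l\in S\setminus\{p\}}$ all commute with each other, with $U_p$, with $\T(K^p)$, and with the $A_S$-action defining the $\psi$-isotypic decomposition; hence each such $U_l$ preserves the subspace of $M^\dagger_1(K^p\Gamma_1(p^n))^\psi[\kp_\lambda]$ on which $U_p$ acts by the eigenvalue on $f_0$. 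By the previous lemma $U_l$ satisfies a monic polynomial with coefficients in $\T(K^p)$, and after reducing modulo $\kp_\lambda$ this becomes a monic polynomial over $\overline\Q_p$ annihilating the action of $U_l$. Thus $U_l$ has finitely many possible eigenvalues on this subspace; I would pick a non-zero generalized eigenspace and, inside it, extract an honest eigenvector by applying a suitable power of $U_l - \beta_l$. Iterating over all $l \in S \setminus \{p\}$ produces a simultaneous eigenvector for $\T(K^p)$, $U_p$ and all the $U_l$, and this defines $\lambda':\tilde\T(K^p)[U_p] \to C$ with the required non-vanishing property.

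For (ii), I would apply the $q$-expansion principle. Under the chosen tame level, $\mathcal{X}_{K^p\Gamma_1(p^n)}$ is isomorphic (as a rigid space over $C$) to the modular curve $\mathcal{X}_1(M)$ for $M = p^n\prod_{l\in S\setminus\{p\}} l^m$, which is geometrically connected, so any overconvergent form is determined by its $q$-expansion at the cusp $\infty$. For $f \in M^\dagger_1(K^p\Gamma_1(p^n))^\psi[\kp_{\lambda'}]$, the standard formulas expressing the Hecke operators on Fourier expansions — namely
\[a_{ln}(f) + \psi_{\mathrm{nb}}(l)\,a_{n/l}(f) \;=\; l^{-1}\lambda(T_l)\,a_n(f) \quad (l \notin S), \qquad a_{ln}(f) \;=\; \lambda'(U_l)\,a_n(f) \quad (l \in S),\]
together with the nebentype determined by $\psi$ — determine every $a_n(f)$ in terms of $a_1(f)$ by the usual multiplicativity recursion. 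Hence the $q$-expansion, and therefore $f$ itself, is determined by the single scalar $a_1(f) \in C$, giving the upper bound $\dim_C \leq 1$. Combined with (i), equality holds. The ``in particular'' clause is then immediate: since $M^\dagger_1(K^p)^{N_0} = \varinjlim_n M^\dagger_1(K^p\Gamma_1(p^n))$ and the $\psi$- and $\kp_{\lambda'}$-isotypic functors commute with this direct limit, and since the transition maps preserve the pulled-back $f_0$, the colimit of the one-dimensional spaces at each level $n \geq m$ is again one-dimensional.

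The main point that requires care is step~(ii): one must verify that the $q$-expansion map is injective on the space $M^\dagger_1(K^p\Gamma_1(p^n))$, i.e.\ that overconvergent forms on a strict neighborhood of the canonical locus are determined by their $q$-expansions at $\infty$. This reduces to connectedness of $\mathcal{X}_1(M)_C$ together with the fact that the $q$-expansion embedding from a strict neighborhood into $C[[q]]$ has domain target, hence is injective. Everything else is book-keeping with the commuting actions of Hecke operators, diamond operators, and the $\GL_2(\Q_p)$-action appearing through Theorem~\ref{weight1}.
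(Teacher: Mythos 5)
Your proof of part~(i), the existence of an extension $\lambda'$ with $M^\dagger_1(K^p\Gamma_1(p^n))^\psi[\kp_{\lambda'}]\neq 0$, is correct and is essentially the paper's argument in disguise: you extract simultaneous eigenvectors by hand after noting each $U_l$ is integral over $\T(K^p)$, while the paper packages the same content as the existence of a character of the Artinian algebra $\tilde\T(K^p)[U_p]/(\kp_\lambda)$. Either route is fine.

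Part~(ii) has a genuine gap. You claim that $\mathcal{X}_{K^p\Gamma_1(p^n)}$ is isomorphic to a geometrically connected modular curve $\mathcal{X}_1(M)_C$, and base the $q$-expansion multiplicity-one argument on this. But the $\Gamma_1$ in this paper is the \emph{small} congruence subgroup
\[
\Gamma_1(l^m)=\{\begin{pmatrix} a & b \\ c & d \end{pmatrix}\,:\, a-1,\,c,\,d-1\in l^m\Z_l\},
\]
which imposes $d\equiv 1$ as well as $a\equiv 1$. Consequently $\det\Gamma_1(l^m)=1+l^m\Z_l\subsetneq\Z_l^\times$, and $\det(K)\subsetneq\hat\Z^\times$, so $\mathcal{X}_{K^p\Gamma_1(p^n)}$ has $\prod_{l\in S}|\Z_l^\times/(1+l^{m_l}\Z_l)|$ geometric connected components --- not one. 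Thus a section is \emph{not} determined by its $q$-expansion at a single cusp, and your argument as stated does not give the bound $\dim_C\le 1$.

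This is exactly what the auxiliary group $A_S$ (and the $\psi$-isotypic decomposition) is there to repair, and it is the key observation in the paper's proof: $A_S$ normalizes $\prod_{l\in S}K_l$ and acts \emph{transitively} on the connected components of $\mathcal{X}_{K}$, so a $\psi$-eigenform for $A_S$ is determined by its restriction to a single component. On that component the data of being a simultaneous eigenvector for $T_l$, $S_l$ ($l\notin S$) and $U_l$ ($l\in S$) plus the $q$-expansion principle then give multiplicity one. So you should insert the transitivity-of-$A_S$ step before invoking the $q$-expansion principle; once this is done the rest of your argument and the passage to the direct limit over $n$ go through as you wrote them.
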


\begin{proof}
Let $\alpha\in C$ be an eigenvalue of $U_p$ on $M^{\dagger}_{1}(K)^{\psi}[\kp_\lambda]$. Then $\tilde{\T}(K^p)/(\kp_\lambda)$ acts on the $\alpha$-eigenspace of $M^{\dagger}_{1}(K)^{\psi}[\kp_\lambda]$. Since $\tilde{\T}(K^p)/(\kp_\lambda)$ is Artinian by our previous lemma, we can find $\lambda':\tilde\T(K^p)[U_p]\to C$ which sends $U_p$ to $\alpha$ and extends $\lambda$ and  $M^{\dagger}_{1}(K)^{\psi}[\kp_{\lambda'}]\neq 0$. To see the multiplicity one claim, we remark that $A_S$ acts transitively on all connected components of $\mathcal{X}_K$. Hence any element of $M^{\dagger}_{1}(K)^{\psi}$ is determined by its value on one component. Since elements in $M^{\dagger}_{1}(K)^{\psi}[\kp_{\lambda'}]$ are eigenvectors of $T_l,S_l,l\notin S$ and $U_l,l\in S$, we conclude from usual $q$-expansion principle that this space has dimension one.
\end{proof}

\begin{proof}[Proof of Theorem \ref{cl1}] \label{proofcl1}
If $\rho$ is reducible, then it is a sum of two finite order characters. The theory of Eisenstein series shows that it comes from a classical modular form. Hence we may assume $\rho$ is irreducible. 

Assume $\rho$ is not classical, i.e. $M^1(K^p)[\kp_\lambda]=0$. Since $M^1(K^p)=\varinjlim_{K'_p}H^0(\mathcal{X}_{K^pK'_p},\omega)$ is a direct sum of $(\pi_f)^{K^p}$ for some automorphic representations $\pi$ of $\GL_2(\A)$, the localization 
\[M^1(K^p)_{\kp_\lambda}=0.\]
Similarly, by Serre duality and the Kodaira-Spencer isomorphism, we also have 
\[(\varinjlim_{K'_p}H^1(\mathcal{X}_{K^pK'_p}, \omega^{1}))_{\kp_\lambda}=0.\] 
Hence by the second part of Theorem \ref{weight1}, $(\tilde{H}^1(K^p,C)^{\la,(1,0)}_{(0,1)})_{\kp_{\lambda\cdot t}}=M^{\dagger}_{1}(K^p)_{\kp_\lambda}\cdot e_1^{-1}t$ and therefore
\[\tilde{H}^1(K^p,C)^{\la,(1,0)}_{(0,1)}[\kp_{\lambda\cdot t}]=M^{\dagger}_{1}(K^p)[{\kp_\lambda}]\cdot e_1^{-1}t,\]
where $\lambda\cdot t:\T(K^p)\to \overbar\Q_p$ denotes ``the twist of $\lambda$ by $t$'', i.e. sends $T_l$ to $\lambda(T_l)l^{-1}$. It can be checked that $\rho_{\lambda\cdot t}(-1)=\rho_\lambda$. Hence $\tilde{H}^1(K^p,\overbar\Q_p)[\kp_{\lambda\cdot t}]$ is $\rho$-isotypic.
On the other hand, by our assumption that $\rho$ is Hodge-Tate and Theorem \ref{Senkh}, we have 
\[\tilde{H}^1(K^p,C)^{\la}_{(0,1)}[\kp_{\lambda\cdot t}]=\tilde{H}^1(K^p,C)^{\la,(1,0)}_{(0,1)}[\kp_{\lambda\cdot t}].\]
If we write $W=\Hom_{G_\Q}(\rho,\tilde{H}^1(K^p,\overbar\Q_p))$, then by Lemma \ref{kpC} and our discussion in \ref{HA},
\[\tilde{H}^1(K^p,C)_{(0,1)}^{\la}[\kp_{\lambda\cdot t}]=\rho\otimes_{\overbar\Q_p}(W\widehat\otimes_{\overbar\Q_p}C)_{(0,1)}^{\la},\]
where $W$ is endowed a norm with unit ball $\Hom_{G_\Q}(\rho^o,\tilde{H}^1(K^p,\Z_p)\otimes \overbar\Z_p)$ and $\rho^o\subset \rho$ is any $G_{\Q}$-stable lattice. Note that $A_S\times\tilde\T(K^p)$ acts naturally on $W$ and $U_p$ acts on its $N_0$-fixed vectors. Fix $\lambda'$ as in Lemma \ref{keymult1} and consider its ``twist by $e_1^{-1}t$'' 
\[\lambda'':\tilde{T}(K^p)[U^p]\to C\] 
which sends $T_l$ to $\lambda'(T_l)l^{-1}$ for $l\notin S$ and sends $U_l$ to $\lambda'(U_l)l^{-1}$ for $l\in S$. Then we have
\[M^{\dagger}_{1}(K^p)^{\psi,N_0}[{\kp_{\lambda'}}]\cdot e_1^{-1}t=\tilde{H}^1(K^p,C)_{(0,1)}^{\la,\psi,N_0}[\kp_{\lambda''}]=\rho\otimes_{\overbar\Q_p}(W\widehat\otimes_{\overbar\Q_p}C)_{(0,1)}^{\la,\psi,N_0}[\kp_{\lambda''}].\]
The left hand side has dimension $1$ over $C$ by Lemma \ref{keymult1}. However the last term in the equality has dimension at least two because $\rho$ is $2$-dimensional! Thus we get a contradiction. This proves that $\rho$ has to be classical.
\end{proof}

\subsection{Local-global compatibility}
\begin{para}
In order to better understand $\tilde{H}^1(K^p,\overbar\Q_p)[\kp_\lambda]$ for a pro-modular $\lambda$, we need Emerton's local-global compatibility conjecture, which gives a description of $\tilde{H}^1(K^p,\overbar\Q_p)[\kp_\lambda]$ in terms of $p$-adic local Langlands for $\GL_2(\Q_p)$ as established by Breuil, Colmez, Berger, Kisin, Pa\v{s}k\={u}nas. In \cite{Eme1,Eme06C}, Emerton formulated and proved his conjecture assuming the residual representation is irreducible and generic. Building upon the work of Pa\v{s}k\={u}nas, we gave a proof (which is complete when $p\geq 5$) in \cite{Pan19} in the case of definite quaternion algebras. We will mostly follow the argument in \S3 of \cite{Pan19} and also \S5 of \cite{Pas18}. One key step is to prove the density of algebraic vectors when we are localizing at an Eisenstein maximal ideal. This part might be of some independent interest.
\end{para}

\begin{para}
We begin by recalling work of Pa\v{s}k\={u}nas on representations of $\GL_2(\Q_p)$. See \cite{Pas13} and also the introduction of \cite{Pas16}. Fix $E$ a finite extension of $\Q_p$ with ring of integers $\cO$, residue field $\F$ and fix a uniformizer $\varpi$. Write $G=\GL_2(\Q_p) $ and let $\zeta:\Q_p^\times\to\cO^\times$ be a continuous character. Following Pa\v{s}k\={u}nas, we denote by $\mathrm{Mod}^{\mathrm{l\, adm}}_{G,\zeta}(\cO)$ the category of smooth, locally admissible $G$-representations on $\cO$-torsion modules with central character $\zeta$. 

Let $\mathrm{Irr}^{\mathrm{adm}}_G$ be the set of $G$-irreducible representations in $\mathrm{Mod}^{\mathrm{l\, adm}}_{G,\zeta}(\cO)$. There is a natural equivalence relation $\sim$ on $\mathrm{Irr}^{\mathrm{adm}}_G$ defined as follows: $\pi\sim\tau$ if $\pi=\tau$ or there exists a sequence $\pi_0=\pi,\cdots,\pi_n=\tau\in \mathrm{Irr}^{\mathrm{adm}}_G$ such that $\Ext^1(\pi_i,\pi_{i+1})\neq 0$ or $\Ext^1(\pi_{i+1},\pi_{i})\neq 0$. An equivalence class $\mathfrak{B}\in \mathrm{Irr}^{\mathrm{adm}}_G/\sim$ is called a block of $\mathrm{Mod}^{\mathrm{l\, adm}}_{G,\zeta}(\cO)$. There is a natural decomposition 
\[\mathrm{Mod}^{\mathrm{l\, adm}}_{G,\zeta}(\cO)\cong\prod_{\mathfrak{B}\in \mathrm{Irr}^{\mathrm{adm}}_G/\sim}\mathrm{Mod}^{\mathrm{l\, adm}}_{G,\zeta}(\cO)[\mathfrak{B}],\]
where $\mathrm{Mod}^{\mathrm{l\, adm}}_{G,\zeta}(\cO)[\mathfrak{B}]$ denotes the full subcategory of representations with irreducible constituents in $\mathfrak{B}$. A complete list of blocks containing an absolutely irreducible representation can be found on the beginning of page 3 of \cite{Pas16}.

The semi-simple mod $p$ correspondence gives a bijection between isomorphism classes of two-dimensional absolutely semi-simple $\F$-representations $\bar\rho_{\mathfrak{B}}$ of $G_{\Q_p}$ and blocks containing an absolutely irreducible representation.
It should be mentioned that $\det\bar\rho_{\mathfrak{B}}\equiv\zeta\varepsilon_p\mod \varpi$. Let $R^{\ps,\zeta\varepsilon_p}_{\bar\rho_\mathfrak{B}}$ be the deformation ring parameterizing all the $2$-dimensional continuous determinants (in the sense of \cite{Che14}) of $G_{\Q_p}$ lifting $(\tr\bar\rho_{\mathfrak{B}},\det\bar\rho_{\mathfrak{B}})$ with (usual) determinant $\zeta\varepsilon_p$. Here is a summary of results of Pa\v{s}k\={u}nas and Pa\v{s}k\={u}nas-Tung that we will use later.
\end{para}

\begin{thm} \label{Pasr}
Suppose $\mathfrak{B}$ contains an absolutely irreducible representation. Let $Z_{\mathfrak{B}}$ be the Bernstein centre of $\mathrm{Mod}^{\mathrm{l\, adm}}_{G,\zeta}(\cO)[\mathfrak{B}]$.
\begin{enumerate}
\item There is a finite $\cO$-algebra homomorphism 
\[R^{\ps,\zeta\varepsilon_p}_{\bar\rho_\mathfrak{B}}\to Z_{\mathfrak{B}}\]
compatible with Colmez's functor (See \cite[Theorem 1.2]{PT21} for the precise statement).
Hence $\mathrm{Mod}^{\mathrm{l\, adm}}_{G,\zeta}(\cO)[\mathfrak{B}]$ admits a $R^{\ps,\zeta\varepsilon_p}_{\bar\rho_\mathfrak{B}}$-module structure from this map.
\item There is a faithful $R^{\ps,\zeta\varepsilon_p}_{\bar\rho_\mathfrak{B}}$-linear contravariant exact functor $\mathrm{m}$ from $\mathrm{Mod}^{\mathrm{l\, adm}}_{G,\zeta}(\cO)[\mathfrak{B}]$ to the category of $R^{\ps,\zeta\varepsilon_p}_{\bar\rho_\mathfrak{B}}$-modules sending admissible representations to finitely generated $R^{\ps,\zeta\varepsilon_p}_{\bar\rho_\mathfrak{B}}$-modules.
\end{enumerate}

\end{thm}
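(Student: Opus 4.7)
The plan is to reproduce, in outline, the construction of Pa\v{s}k\={u}nas in \cite{Pas13} together with its refinement by Pa\v{s}k\={u}nas--Tung in \cite{PT21}. First I would pass to the Pontryagin dual category $\mathrm{Mod}^{\mathrm{pro}}_{G,\zeta}(\cO)$ of profinite augmented $\cO[\![K]\!]$-modules (with $K=\GL_2(\Z_p)$) carrying a compatible action of $G$ and central character $\zeta$. Under Pontryagin duality, $\mathrm{Mod}^{\mathrm{l\, adm}}_{G,\zeta}(\cO)[\mathfrak{B}]$ corresponds to a subcategory with enough projectives. Let $\pi_1,\ldots,\pi_r$ be representatives of the irreducible objects of $\mathfrak{B}$; in the dual category one constructs a projective envelope $\widetilde{P}_{\mathfrak{B}}$ of $\bigoplus_i \pi_i^{\vee}$, and by general nonsense $Z_{\mathfrak{B}}$ is canonically isomorphic to $\End_{G}(\widetilde{P}_{\mathfrak{B}})^{\mathrm{op}}$, which turns out to be commutative.

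Next I would feed $\widetilde{P}_{\mathfrak{B}}$ into Colmez's Montr\'eal functor $\check{\mathbf{V}}$, which sends admissible continuous unitary $G$-representations to continuous $G_{\Q_p}$-representations. The key input, established by Pa\v{s}k\={u}nas block by block (supersingular; principal series with $\eta_1/\eta_2\neq 1,\omega^{\pm1}$; the ``generic reducible'' block; and the exceptional blocks treated in \cite{PT21}), is that $\check{\mathbf{V}}(\widetilde{P}_{\mathfrak{B}})$ realises a deformation of $\bar\rho_{\mathfrak{B}}$ whose associated pseudo-character (or determinant, in the sense of Chenevier) is the universal one with determinant $\zeta\varepsilon_p$. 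By the universal property of $R^{\ps,\zeta\varepsilon_p}_{\bar\rho_{\mathfrak{B}}}$ this produces a canonical $\cO$-algebra homomorphism
\[
R^{\ps,\zeta\varepsilon_p}_{\bar\rho_{\mathfrak{B}}}\longrightarrow \End_{G}(\widetilde{P}_{\mathfrak{B}})=Z_{\mathfrak{B}},
\]
which is tautologically compatible with $\check{\mathbf{V}}$. The functor in (2) is then defined by
\[
\mathrm{m}(\pi):=\Hom_{\mathrm{Mod}^{\mathrm{pro}}_{G,\zeta}(\cO)}\!\bigl(\widetilde{P}_{\mathfrak{B}},\pi^{\vee}\bigr),
\]
which is contravariant and exact because $\widetilde{P}_{\mathfrak{B}}$ is projective, and faithful because every simple object of $\mathfrak{B}$ occurs as a quotient of $\widetilde{P}_{\mathfrak{B}}$. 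If $\pi$ is admissible then $\pi^{\vee}$ is a finitely generated $\cO[\![K]\!]$-module, and a Nakayama/compactness argument shows $\mathrm{m}(\pi)$ is a finitely generated $\End_{G}(\widetilde{P}_{\mathfrak{B}})$-module; via the map above this promotes it to a finitely generated $R^{\ps,\zeta\varepsilon_p}_{\bar\rho_{\mathfrak{B}}}$-module.

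The hard part, and the reason the theorem is assembled from several papers, is the last sentence of (1): showing that $R^{\ps,\zeta\varepsilon_p}_{\bar\rho_{\mathfrak{B}}}\to Z_{\mathfrak{B}}$ is \emph{finite} and, in the reducible/non-generic cases, even well-defined with the right universal target. For supersingular blocks and generic principal series blocks Pa\v{s}k\={u}nas proves an isomorphism with the universal deformation ring of $\bar\rho_{\mathfrak{B}}$ by computing $\Ext^{1}$'s and using Colmez's exact functor to match tangent spaces. In the remaining (Eisenstein/non-generic) blocks the endomorphism ring is no longer a deformation ring of a genuine Galois representation but only of a pseudo-character, and one must glue the contributions of the different irreducible constituents; this is precisely the content of \cite{PT21}, where compatibility with Colmez's functor is upgraded at the level of Chenevier's determinants so that finiteness over $R^{\ps,\zeta\varepsilon_p}_{\bar\rho_{\mathfrak{B}}}$ follows from the Noetherianity of the pseudo-deformation ring together with the topological generation of $Z_{\mathfrak{B}}$ by traces of Colmez's functor. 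Once this structural input is in hand, (2) is then a formal consequence of the construction of $\widetilde{P}_{\mathfrak{B}}$.
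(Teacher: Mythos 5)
Your outline reproduces exactly the structure that the paper invokes by citing Pa\v{s}k\={u}nas--Tung \cite{PT21}: pass to the dual category, take a projective envelope $\widetilde{P}_{\mathfrak{B}}$, set $E_{\mathfrak{B}}=\End_G(\widetilde{P}_{\mathfrak{B}})^{\mathrm{op}}$, use Colmez's functor to produce the map from $R^{\ps,\zeta\varepsilon_p}_{\bar\rho_\mathfrak{B}}$, and define $\mathrm{m}(\pi)=\Hom(\widetilde{P}_{\mathfrak{B}},\pi^{\vee})$, which is then finitely generated over $R^{\ps,\zeta\varepsilon_p}_{\bar\rho_\mathfrak{B}}$ because $E_{\mathfrak{B}}$ is. One correction: you assert that $Z_{\mathfrak{B}}$ is canonically $\End_G(\widetilde{P}_{\mathfrak{B}})^{\mathrm{op}}$ and that this ring is commutative. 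That is true only for blocks with a single irreducible constituent; as soon as the block contains two or more irreducibles (e.g.\ the generic reducible block $\{\pi_1,\pi_2\}$), the endomorphism ring $E_{\mathfrak{B}}$ of the projective envelope is genuinely non-commutative, and the Bernstein centre $Z_{\mathfrak{B}}$ is its \emph{centre}, a proper subring. The map from $R^{\ps,\zeta\varepsilon_p}_{\bar\rho_\mathfrak{B}}$ lands in $Z_{\mathfrak{B}}\subseteq E_{\mathfrak{B}}$, exactly as stated in the paper's proof. This does not derail your argument --- $\mathrm{m}(\pi)$ is still naturally an $E_{\mathfrak{B}}$-module, hence an $R^{\ps,\zeta\varepsilon_p}_{\bar\rho_\mathfrak{B}}$-module via the centre, and finiteness of $E_{\mathfrak{B}}$ over $R^{\ps,\zeta\varepsilon_p}_{\bar\rho_\mathfrak{B}}$ (a nontrivial theorem of \cite{PT21}) gives the finite generation in (2) --- but the identification $Z_{\mathfrak{B}}=E_{\mathfrak{B}}$ should be replaced by $Z_{\mathfrak{B}}=Z(E_{\mathfrak{B}})$.
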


\begin{proof}
Both claims follow from \cite[Theorem 1.2, 1.3]{PT21}. In fact,  Pa\v{s}k\={u}nas and Tung show that $\mathrm{Mod}^{\mathrm{l\, adm}}_{G,\zeta}(\cO)[\mathfrak{B}]$ is anti-equivalent to the category of right pseudo-compact $E_{\mathfrak{B}}$-modules for some pseudo-compact ring $E_{\mathfrak{B}}$ and construct a map $R^{\ps,\zeta\varepsilon_p}_{\bar\rho_\mathfrak{B}}\to Z_{\mathfrak{B}}\subseteq E_{\mathfrak{B}}$ which makes $E_{\mathfrak{B}}$ into a finitely-generated $R^{\ps,\zeta\varepsilon_p}_{\bar\rho_\mathfrak{B}}$-module. The functor $\mathrm{m}$ is defined by passing the action of $E_\mathfrak{B}$ to $R^{\ps,\zeta\varepsilon_p}_{\bar\rho_\mathfrak{B}}$.
\end{proof}

\begin{para}
Next we turn to the global side. Fix a tame level $K^p$ from now on. Let $\km$ be a maximal ideal of $\T(K^p)\otimes_{\Z_p}\cO$. We may assume the residue field of $\km$ is $\F$ by enlarging $E$. Recall that there is a $2$-dimensional determinant of $G_{\Q}$ valued in $\T(K^p)$ and we extend it to a determinant valued in $\T(K^p)\otimes_{\Z_p}\cO$. Denote by $D_p$ its restriction to $G_{\Q_p}$ and by $\bar{D}_p$ its reduction modulo $\km$. Again enlarging $E$ if necessary, we may assume that $\bar{D}_p$ arises from a $2$-dimensional semi-simple representation $\bar\rho_{\km,p}:G_{\Q_p}\to\GL_2(\F)$. Fix a continuous character $\zeta':\A_f^\times/\det(K^p)\Q^{\times}_{>0}\to \cO^\times$ such that  $\zeta=\zeta'|_{\Q_p^\times}:\Q_p^\times\to \cO^\times$ is congruent to $(\det\bar\rho_{\km,p})\omega^{-1}$ modulo $\varpi$ via local class field theory.  For $?=E,\cO,E/\cO$, let $\tilde{H}^1(K^p,?)_{\zeta',\km}$ be the subspace of $(\tilde{H}^1(K^p,\Z_p)\otimes_{\Z_p} ?)_\km$ on which the centre $\A_f^\times$ acts via $\zeta'$ and let $\T$ be the image of $(\T(K^p)\otimes\cO)_\km$ inside $\End(\tilde{H}^1(K^p,E/\cO)_{\zeta',\km})$. Then $D_p$ induces a natural map
\[R^{\ps,\zeta\varepsilon_p}_{\bar\rho_{\km,p}}\to\T.\]
Hence the faithful Hecke action of $\T$ induces an action of $R^{\ps,\zeta\varepsilon_p}_{\bar\rho_{\km,p}}$ on $\tilde{H}^1(K^p,E/\cO)_{\zeta',\km}$ by our discussion. We denote this action by $\tau^{p}$.

On the other hand, $\bar\rho_{\km,p}$ determines a block $\mathfrak{B}_\km$ of $\mathrm{Mod}^{\mathrm{l\, adm}}_{G,\zeta}(\cO)$. Now we can state the main result of this subsection.
\end{para}

\begin{thm} \label{LGC}
Let $\km$ be a maximal ideal of $\T(K^p)\otimes_{\Z_p}\cO$. 
\begin{enumerate}
\item $\tilde{H}^1(K^p,E/\cO)_{\zeta',\km}\in \mathrm{Mod}^{\mathrm{l\, adm}}_{G,\zeta}(\cO)[\mathfrak{B}_\km]$. Hence by Theorem \ref{Pasr}, there is a natural action  $\tau_p$ of $R^{\ps,\zeta\varepsilon_p}_{\bar\rho_\mathfrak{B_\km}}=R^{\ps,\zeta\varepsilon_p}_{\bar\rho_{\km,p}}$ on $\tilde{H}^1(K^p,E/\cO)_{\zeta',\km}$.
\item Two actions $\tau^p$ and $\tau_p$ of $R^{\ps,\zeta\varepsilon_p}_{\bar\rho_{\km,p}}$ are the same.
\end{enumerate}
\end{thm}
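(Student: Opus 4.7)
The plan is to follow the strategy of Emerton \cite{Eme1,Eme06C} and the author's previous work \cite{Pan19}, with the modifications needed to handle an arbitrary (possibly Eisenstein) maximal ideal $\km$. The whole argument reduces to checking an equality of two $R^{\ps,\zeta\varepsilon_p}_{\bar\rho_{\km,p}}$-actions on a dense subspace, together with a block-membership statement that is again deduced by density from the classical case.

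For part (1), I would argue that every irreducible subquotient of the smooth admissible $G$-representation $\tilde H^1(K^p,E/\cO)_{\zeta',\km}[\varpi]$ lies in the block $\mathfrak{B}_\km$. Concretely, via Colmez's Montr\'eal functor each such subquotient $\pi$ has an associated two-dimensional semi-simple $G_{\Q_p}$-representation $\bar\rho_\pi$, and the content of block-membership is the identity $\bar\rho_\pi\cong \bar\rho_{\km,p}$. This is verified by lifting $\pi$ to a characteristic zero generic point of $\Spec \T$, where the corresponding automorphic representation realizes $\pi|_G$ as a locally algebraic principal series (or special representation), and invoking Saito's theorem (classical local-global compatibility at $p$) together with reduction mod $\varpi$.

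For part (2), both $\tau^p$ and $\tau_p$ are continuous actions of the noetherian complete local ring $R^{\ps,\zeta\varepsilon_p}_{\bar\rho_{\km,p}}$ on the $\varpi$-adically separated Pontryagin dual of $\tilde H^1(K^p,E/\cO)_{\zeta',\km}$. It therefore suffices to check they agree after tensoring with $E$ on a dense subspace of $\tilde H^1(K^p,E)_{\zeta',\km}$. I would take this dense subspace to be the locally $\GL_2(\Z_p)$-algebraic vectors, which by Emerton's spectral sequence relating locally algebraic vectors in completed cohomology to classical coherent cohomology can be written as a direct sum of $(\pi_f)^{K^p}\otimes W$ for classical cuspidal automorphic representations $\pi$ and algebraic representations $W$ of $\GL_2(\Q_p)$. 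On each such piece the Montr\'eal functor of the locally algebraic vector reproduces the Hodge-Tate--semistable/$p$-adic Langlands representation attached to $\pi_p$, whose associated determinant is by Saito exactly the one coming from the global Galois representation $\rho_{\pi,p}|_{G_{\Q_p}}$. This gives the equality $\tau^p=\tau_p$ on the dense subspace and hence globally.

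The main obstacle is the density of locally algebraic vectors in $\tilde H^1(K^p,E)_{\zeta',\km}$ when $\km$ is Eisenstein. In the non-Eisenstein case this follows from standard comparison with \'etale cohomology and the Eichler--Shimura relation; in the Eisenstein case there can be ``extra'' classes coming from the boundary and from $H^2_c$-contributions, and one must check these do not ruin the density. Here I would exploit the fact that completed cohomology of modular curves is torsion free together with Emerton's interior-vs.-completed cohomology comparison to show that the locally algebraic vectors still surject, after localization at $\km$, onto a Zariski-dense set of characteristic zero points of $\Spec \T$ (including Eisenstein points coming from limits of cuspidal families, which exist by Hida theory and the construction of $p$-adic families of Eisenstein series). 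Once this density is in place, the rest of the argument proceeds as in \cite[\S 3]{Pan19} without change.
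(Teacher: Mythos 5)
Your overall strategy is the same as the paper's: reduce both parts to an equality on a dense subspace of locally $\GL_2(\Z_p)$-algebraic vectors, identify those vectors via classical local-global compatibility (Saito), and conclude by continuity. And you correctly identify that the density of locally algebraic vectors at an Eisenstein $\km$ is the key new issue. But your proposed resolution of that issue has a gap, and you omit another needed ingredient.

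The gap is in the density argument. You propose showing that the locally algebraic vectors ``surject onto a Zariski-dense set of characteristic zero points of $\Spec\T$,'' and you invoke Hida families of Eisenstein series. But the density you need is $p$-adic density in the Banach space $\tilde H^1(K^p,E)_{\zeta',\km}$, not Zariski density of supporting primes in $\Spec\T$; these are different notions and the latter does not obviously give the former (a proper closed Banach subspace can support all characteristic zero systems of eigenvalues). Moreover there is no case distinction Eisenstein/non-Eisenstein in the paper's argument at all. The paper proves density (Lemma \ref{density}) uniformly, by exhibiting $\tilde H^1(K^p,\Q_p)_k$ as a $\GL_2(\Z_p)$-equivariant quotient of $\sC(\GL_2(\Z_p),\Q_p)_k^{\oplus d_1}$: one takes the Poincaré duality spectral sequence relating $\tilde H^\bullet$ to $\tilde H^1_c$, realizes $\tilde H^1_c(K^p,\Z_p)^d$ by a finite free resolution over $\Lambda=\Z_p[[\GL_2(\Z_p)]]$, and dualizes. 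Since algebraic vectors are dense in $\sC(\GL_2(\Z_p),\Q_p)_k$, they are dense in the quotient, full stop. This is simpler, cleaner, and requires no auxiliary family constructions.

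A second omission: you write the locally algebraic vectors as a sum of $(\pi_f)^{K^p}\otimes W$ over cuspidal $\pi$, but at an Eisenstein $\km$ the Eisenstein contribution is present and cannot be ignored. More importantly, to run the Pa\v{s}k\={u}nas-style argument (as in \cite[Prop.\ 5.5]{Pas18}) you need the Hecke action on the algebraic vectors to be semi-simple so that the algebraic vectors decompose into eigenspaces for $\T$; this is the paper's Lemma \ref{ssH}, and its proof in the Eisenstein-inclusive setting needs the Manin--Drinfeld theorem to split the $H^1_c\to H^1$ sequence Hecke-equivariantly. Your proposal does not address semi-simplicity at all, so even granting your density claim, the reduction to classical local-global compatibility on eigenspaces would not go through as stated.
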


Before giving a proof, we show that this result implies a (weak) form of local-global compatibility which will be enough for our applications. Recall that by \cite[Theorem 1.1]{CDP14}, there is a bijection between the isomorphism classes of the following two sets:
\begin{itemize}
\item $2$-dimensional absolutely irreducible representation  of $G_{\Q_p}$ over $E$;
\item non-ordinary admissible unitary $E$-Banach representations  of $\GL_2(\Q_p)$. 
\end{itemize}
Here, a Banach representation is non-ordinary if it is not a subquotient of a parabolic induction of a unitary character. This bijection is compatible with taking finite extensions of $E$. Hence we can extend this bijection $\overbar\Q_p$-linearly to a map (not bijection) from the isomorphism classes of $2$-dimensional irreducible representation of $G_{\Q_p}$ over $\overbar\Q_p$ to the isomorphism classes of $\overbar\Q_p$-representations of $\GL_2(\Q_p)$. We denote this map by $\rho\mapsto \Pi(\rho)$.

\begin{cor} \label{promodprocoh}
Suppose $\rho=\rho_\lambda$ is pro-modular and irreducible for some tame level $K^p$. 
Then $\tilde{H}^1(K^p,\overbar\Q_p)[\kp_\lambda]\neq 0$, i.e. $\rho$ is pro-cohomological. Moreover, we have the following description of $\tilde{H}^1(K^p,\overbar\Q_p)[\kp_\lambda]$ as a representation of $\GL_2(\Q_p)$: 
\begin{itemize}
\item If $\rho|_{G_{\Q_p}}$ is irreducible, then $\tilde{H}^1(K^p,\overbar\Q_p)[\kp_\lambda]\cong \Pi(\rho|_{G_{\Q_p}})^{\oplus d}$ for some $d$.
\item If $\rho|_{G_{\Q_p}}$ is reducible, then $\tilde{H}^1(K^p,\overbar\Q_p)[\kp_\lambda]$ has an ordinary subrepresentation. 
\end{itemize}
\end{cor}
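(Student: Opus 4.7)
The plan is to derive Corollary \ref{promodprocoh} from Theorem \ref{LGC} together with the Pa\v{s}k\={u}nas machinery summarized in Theorem \ref{Pasr}. After enlarging $E$, I would pick a maximal ideal $\km\subset\T(K^p)\otimes\cO$ containing $\kp_\lambda$, so that $\bar\rho_{\km,p}$ is the semi-simple reduction of $\rho_\lambda|_{G_{\Q_p}}\otimes\F$. Setting $\zeta'=(\det\rho_\lambda)\varepsilon^{-1}$ and $\zeta=\zeta'|_{\Q_p^\times}$, Theorem \ref{LGC} then places $\tilde H^1(K^p,E/\cO)_{\zeta',\km}$ inside $\mathrm{Mod}^{\mathrm{l\,adm}}_{G,\zeta}(\cO)[\mathfrak{B}_\km]$ and identifies the two $R^{\ps,\zeta\varepsilon_p}_{\bar\rho_{\km,p}}$-actions $\tau^p$ and $\tau_p$ on it.

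The first step will be to establish non-vanishing. Let $\kq_\lambda\subset R^{\ps,\zeta\varepsilon_p}_{\bar\rho_{\km,p}}$ be the prime cut out by $\lambda\circ D_p$, i.e.\ by the determinant of $\rho_\lambda|_{G_{\Q_p}}$; then $\kp_\lambda$ lies over $\kq_\lambda$ under the finite map $R^{\ps,\zeta\varepsilon_p}_{\bar\rho_{\km,p}}\to\T_\km$. Applying Pa\v{s}k\={u}nas's faithful exact functor $\mathrm{m}$ of Theorem \ref{Pasr} to $\tilde H^1(K^p,E/\cO)_{\zeta',\km}$ produces a non-zero finitely generated $R^{\ps,\zeta\varepsilon_p}_{\bar\rho_{\km,p}}$-module $N$, on which the $R^{\ps}$-action agrees (by Theorem \ref{LGC}) with the one induced from the Hecke action. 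In particular $\kq_\lambda\in\mathrm{Supp}_{R^{\ps}}(N)$, and by the finiteness of $R^{\ps}\to\T_\km$ one obtains $\kp_\lambda\in\mathrm{Supp}_{\T_\km}(N)$. Faithfulness of $\mathrm{m}$, together with the fact that $\mathrm{m}$ is $R^{\ps}$-linear and exact, then forces $\tilde H^1(K^p,E)_{\zeta',\km}[\kp_\lambda]\neq 0$; tensoring with $\overbar\Q_p$ gives the desired non-vanishing.

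For the structural claim I would write $\Pi:=\tilde H^1(K^p,\overbar\Q_p)[\kp_\lambda]$, viewed as an admissible unitary Banach representation of $G$ with central character $\zeta'|_{\Q_p^\times}$ lying in the block $\mathfrak{B}_\km\otimes\overbar\Q_p$. The equality $\tau^p=\tau_p$ forces the $R^{\ps}$-action on $\Pi$ to factor through $\kq_\lambda$. When $\rho|_{G_{\Q_p}}$ is irreducible, $\Pi(\rho|_{G_{\Q_p}})$ is the unique non-ordinary absolutely irreducible admissible unitary Banach representation of $G$ in $\mathfrak{B}_\km\otimes\overbar\Q_p$ whose associated $R^{\ps}$-point is $\kq_\lambda$, by the $p$-adic local Langlands correspondence \cite{CDP14} combined with Pa\v{s}k\={u}nas's classification of the irreducible members of the block. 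Hence every irreducible subquotient of $\Pi$ is isomorphic to $\Pi(\rho|_{G_{\Q_p}})$, and admissibility together with the vanishing of self-$\Ext^1$ at an irreducible point of the block upgrades this to $\Pi\cong\Pi(\rho|_{G_{\Q_p}})^{\oplus d}$ for some finite $d$. If instead $\rho|_{G_{\Q_p}}$ is reducible, the block $\mathfrak{B}_\km$ contains ordinary admissible unitary members whose associated $R^{\ps}$-point is $\kq_\lambda$, and the same analysis produces an ordinary subrepresentation of $\Pi$.

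The main obstacle I foresee is the bookkeeping in the first step: Pa\v{s}k\={u}nas's functor lives naturally on the Pontryagin-dual side, whereas the statement concerns the Banach completion $\tilde H^1(K^p,\overbar\Q_p)[\kp_\lambda]$. One has to pass carefully between the two pictures, and, crucially, argue that specialization at the \emph{Hecke} ideal $\kp_\lambda$ — which a priori carries strictly more information than its image $\kq_\lambda$ in $R^{\ps,\zeta\varepsilon_p}_{\bar\rho_{\km,p}}$ — does not annihilate the cohomology. This is precisely where the full strength of the compatibility $\tau^p=\tau_p$ from Theorem \ref{LGC} has to be exploited.
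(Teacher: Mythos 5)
Your structural paragraph is essentially the paper's argument: localize at $\km$, apply $\tau^p=\tau_p$ from Theorem \ref{LGC}, and cite Pa\v{s}k\={u}nas's Banach-representation results (though the phrase ``vanishing of self-$\Ext^1$'' is not the right mechanism — $\Ext^1(\Pi(V),\Pi(V))$ does not vanish; what Pa\v{s}k\={u}nas actually proves, via the module structure of $\mathrm{m}(\Pi/\Pi^o)$ over $R^{\ps}_{\bar\rho_{\km,p}}$, is precisely the direct-sum decomposition, so this is a citation you should lean on rather than re-derive).

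The non-vanishing step, however, has a real gap, and you have correctly sensed it in your closing remark but not closed it. Your chain is: $N=\mathrm{m}_0\neq 0$, hence $\kq_\lambda\in\mathrm{Supp}_{R^{\ps}}(N)$, hence $\kp_\lambda\in\mathrm{Supp}_{\T_\km}(N)$. Neither implication holds as stated. First, $N\neq 0$ only tells you $\mathrm{Supp}_{R^{\ps}}(N)\neq\emptyset$, not that the particular prime $\kq_\lambda$ lies in it. Second, even granting $\kq_\lambda\in\mathrm{Supp}_{R^{\ps}}(N)$, the support over $R^{\ps}$ is the \emph{image} of the support over $\T_\km$ under $\Spec\T_\km\to\Spec R^{\ps}$; knowing $\kq_\lambda$ is in the image only tells you \emph{some} prime of $\T_\km$ lying over $\kq_\lambda$ is in $\mathrm{Supp}_{\T_\km}(N)$, and when $R^{\ps}\to\T_\km$ is not an isomorphism there can be several such primes besides $\kp_\lambda$. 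The finiteness of $R^{\ps}\to\T_\km$ does not select $\kp_\lambda$ for you.

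The point you need — and the one the paper uses — is stronger and sidesteps $\kq_\lambda$ altogether: $\T$ acts faithfully on $\tilde H^1(K^p,E/\cO)_{\zeta',\km}$ by its very definition (it is defined as the image of $(\T(K^p)\otimes\cO)_\km$ in the endomorphism ring), and the functor $\mathrm{m}$ is faithful, so $\T$ also acts faithfully on $\mathrm{m}_0$. Since $\mathrm{m}_0$ is moreover a finitely generated $\T$-module, $\mathrm{Ann}_\T(\mathrm{m}_0)=0$ gives $\mathrm{Supp}_\T(\mathrm{m}_0)=V(\mathrm{Ann}_\T(\mathrm{m}_0))=\Spec\T$. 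Thus $\mathrm{m}_0\otimes_\T k(\kp)\neq 0$ for \emph{every} $\kp\in\Spec\T$, in particular for $\kp_\lambda$; exactness of $\mathrm{m}$ then forces $\tilde H^1(K^p,E/\cO)_{\zeta',\km}[\kp_\lambda]\neq 0$ (dually). This is where $\tau^p=\tau_p$ is genuinely used: it is what makes $\mathrm{m}_0$ a $\T$-module in the first place, so that the faithful $\T$-action argument applies rather than having to factor through $R^{\ps}$.
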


\begin{rem}
For later application (Theorem \ref{classicality1}), all we need is that $\tilde{H}^1(K^p,\overbar\Q_p)[\kp_\lambda]$ is non-zero and contains an irreducible admissible representation.
\end{rem}

\begin{proof}
We denote by $\mathrm{m}_0$ the $R^{\ps,\zeta\varepsilon}_{\bar\rho_{\km,p}}$-module obtained by applying the faithful functor $\mathrm{m}$ in Theorem \ref{Pasr} to $\tilde{H}^1(K^p,E/\cO)_{\zeta',\km}$. Then the admissibility of $\tilde{H}^1(K^p,E/\cO)_{\zeta',\km}$ implies that $\mathrm{m}_0$ is a finitely generated $R^{\ps,\zeta\varepsilon}_{\bar\rho_{\km,p}}$-module. Since the action $\tau_p$ agrees with $\tau^p$ which factors through $\T$ by Theorem \ref{LGC},  we conclude that $\mathrm{m}_0$ is a faithful finitely generated $\T$-module. In particular, $\mathrm{m}_0\otimes_{\T} k(\kp)\neq0$ for any $\kp\in\Spec\T$, where $k(\kp)$ denotes the residue field of $\kp$. From this, we easily deduce $\tilde{H}^1(K^p,\overbar\Q_p)[\kp_\lambda]\neq 0$ in the corollary. Using $\tau^p=\tau_p$, our description of $\tilde{H}^1(K^p,\overbar\Q_p)[\kp_\lambda]$ follows from Pa\v{s}k\={u}nas's work on Banach representations \cite[Theorem 1.10, 1.11]{Pas13}, \cite[Corollary 2.24]{Pas16}.
\end{proof}

\begin{proof}[Proof of Theorem \ref{LGC}]
We can replace $K^p$ by an open subgroup and assume $K^p\GL_2(\Z_p)$ is sufficiently small. After twisting a character of $\A_f^\times/\Q^\times_{>0}$, we may assume $\zeta(x)=x^k,x\in\Z_p^\times$ for some integer $k$. We denote by $\tilde{H}^1(K^p,\Q_p)_k\subset\tilde{H}^1(K^p,\Q_p)$ the subspace on which the centre $\Z_p^\times\subset\GL_2(\Q_p)$ acts via $k$-th power. It has a norm induced from $\tilde{H}^1(K^p,\Z_p)$. Then after enlarging $E$ if necessary, there is a natural decomposition 
\[\tilde{H}^1(K^p,\Q_p)_k\otimes_{\Q_p} E=\bigoplus_{(\km',\zeta'')}\tilde{H}^1(K^p,E)_{\zeta'',\km'},\]
where $\km'$ runs through all maximal ideals of $\T(K^p)\otimes\cO$ and $\zeta'':\A_f^\times/\det(K^p)\Q^{\times}_{>0}$ runs through all continuous characters satisfying $\zeta''(x)=x^k,x\in\Z_p^{\times}$. In particular, $(\km,\zeta')$ appears inside this decomposition. Note that there are only finitely many $(\km',\zeta'')$.

\begin{lem} \label{density}
The $\GL_2(\Z_p)$-algebraic vectors in $\tilde{H}^1(K^p,\Q_p)_k$ are dense in $\tilde{H}^1(K^p,\Q_p)_k$.
\end{lem}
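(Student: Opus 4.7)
Proof Proposal for Lemma \ref{density}:

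The plan is to exhibit $\GL_2(\Z_p)$-algebraic vectors in $\tilde{H}^1(K^p,\Q_p)_k$ coming from classical cohomology with coefficients in algebraic local systems, and then to prove density via a mod-$p^n$ approximation argument reducing to finite-level cohomology.

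First, I would produce sufficiently many algebraic vectors as follows. For any $k\in\Z$, there exist pairs $(a,b)\in\Z_{\geq 0}\times\Z$ with $a+2b=k$ (take, for example, $a\in\{0,1\}$ depending on the parity of $k$); for each such pair, the algebraic representation $V_{a,b}:=\Sym^a(\Q_p^2)\otimes\det^b$ of $\GL_2(\Q_p)$ has central character $x\mapsto x^{a+2b}=x^k$. Letting $\mathcal{V}_{a,b}$ denote the associated local system on $Y_{K^pK_p}$, classical Eichler--Shimura-type comparison produces a natural $\GL_2(\Q_p)$-equivariant map
\[ V_{a,b}^\vee\otimes_{\Q_p}\varinjlim_{K_p\subset \GL_2(\Q_p)} H^1(Y_{K^pK_p},\mathcal{V}_{a,b}) \;\longrightarrow\; \tilde{H}^1(K^p,\Q_p)_k, \]
whose image consists of vectors lying in the $V_{a,b}$-isotypic subspace; in particular every vector in the image is $\GL_2(\Z_p)$-algebraic and has central character $x^k$.

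Second, to prove density, I would work with the integral lattice $\tilde{H}^1(K^p,\Z_p)_k$ and show that for every $v$ in this lattice and every $n\geq 1$ there is a $\GL_2(\Z_p)$-algebraic vector $w\in\tilde{H}^1(K^p,\Z_p)_k$ with $v\equiv w\pmod{p^n}$. By the definition of completed cohomology, the reduction $\bar v\bmod p^n$ is represented by some $\alpha\in H^1(Y_{K^pK_p'},\Z/p^n)$ at a sufficiently small level $K_p'$. One can shrink $K_p'$ so that simultaneously (i)~the character $x\mapsto x^k$ of $\Z_p^\times$ is trivial modulo $p^n$ on $K_p'\cap\Z_p^\times$, so that $\alpha$ automatically lies in the $x^k\bmod p^n$-isotypic part; and (ii)~$K_p'$ acts trivially modulo $p^n$ on a chosen $V_{a,b}$, so that the local system $\mathcal{V}_{a,b}\otimes\Z/p^n$ on $Y_{K^pK_p'}$ is trivializable and yields an identification
\[ H^1(Y_{K^pK_p'},\mathcal{V}_{a,b})\otimes\Z/p^n \;\cong\; V_{a,b}\otimes H^1(Y_{K^pK_p'},\Z/p^n). \]
Through this identification together with the map of the previous paragraph, $\alpha$ lifts to an algebraic class, giving the desired approximation.

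The main obstacle will be the third step, namely verifying that the algebraic mod-$p^n$ lift constructed via the trivialization of $\mathcal{V}_{a,b}$ can actually be promoted to an integral element of $\tilde{H}^1(K^p,\Z_p)_k$ (rather than merely its reduction) and that the approximation is compatible under varying $K_p'$ and $(a,b)$. This is essentially an integrality analysis of the Eichler--Shimura comparison map between classical and completed cohomology, and should follow from results in Emerton's foundational work on completed cohomology---specifically the density of locally algebraic vectors in $\tilde{H}^1$, as in Theorem 2.2.11 and Proposition 2.2.24 of \cite{Eme06}. An alternative approach is to invoke directly the density of locally $\GL_2(\Q_p)$-algebraic vectors in the admissible Banach representation $\tilde{H}^1(K^p,\Q_p)$, and then combine it with a Mahler-type expansion of $x\mapsto x^k$ on $\Z_p^\times$ (expressing the continuous character $x^k$ as a $p$-adic limit of polynomial functions) to project to the central isotypic component without losing algebraicity.
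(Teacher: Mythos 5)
Your proposal takes a genuinely different route from the paper, and it has real gaps that go beyond the ``main obstacle'' you flag.

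The paper's proof does not go through a mod-$p^n$ approximation at all. It uses the Poincar\'e duality resolution of $\tilde{H}^1_c(K^p,\Z_p)^d$ by finite free $\Lambda=\Z_p[[\GL_2(\Z_p)]]$-modules; dualizing produces an explicit complex $C^\bullet$ whose terms are $\sC(\GL_2(\Z_p),\Q_p)^{\oplus d_i}$ and whose cohomology is $\tilde{H}^i(K^p,\Q_p)$. The whole argument then reduces to two clean facts: (a) passing to the central character $x^k$ commutes with forming cohomology of this complex, which is verified by showing $H^j_{\cont}(\Z_p^\times,-\otimes z^{-k})=0$ for $j\geq 1$ on $\sC(\GL_2(\Z_p),\Q_p)$ and on $\tilde{H}^0(K^p,\Q_p)$ (both are of the form $\sC(H\times R,\Q_p)$ as representations of an open subgroup $H\subset\Z_p^\times$, so Shapiro applies); and (b) algebraic vectors are dense in each $\sC(\GL_2(\Z_p),\Q_p)_k$. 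Since $\tilde{H}^1(K^p,\Q_p)_k$ is thereby exhibited as a $\GL_2(\Z_p)$-equivariant topological quotient of a finite sum of copies of $\sC(\GL_2(\Z_p),\Q_p)_k$, density is immediate. No Eichler--Shimura embedding, no local systems, no integral lattices at finite level.

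Concretely, here is why your approximation scheme does not close up as stated. You want, for $v\in\tilde{H}^1(K^p,\Z_p)_k$ and each $n$, an algebraic $w$ with $v\equiv w\pmod{p^n}$. Your construction runs through the isomorphism $\tilde{H}^1(K^p,\Z_p)/p^n\cong\varinjlim_{K_p'}H^1(Y_{K^pK_p'},\Z/p^n)$ and a trivialization of $\mathcal{V}_{a,b}\bmod p^n$. But two things are being conflated. First, the target of your approximation is $\tilde{H}^1(K^p,\Z_p)_k/p^n$, a quotient of the \emph{integral} central-character-$k$ piece, and this is not the same as the ``$x^k\bmod p^n$-isotypic part'' of $\varinjlim H^1(Y_{K^pK_p'},\Z/p^n)$: modulo $p^n$ the characters $x^k$ and $x^{k'}$ are indistinguishable for many $k'$, so the slogan ``$\alpha$ automatically lies in the $x^k$-isotypic part'' is not meaningful. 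Second, even granting a lift $w$ in the Eichler--Shimura image of $V_{a,b}^\vee\otimes H^1(Y_{K^pK_p'},\mathcal{V}_{a,b})$, you need $w$ to land in $\tilde{H}^1(K^p,\Z_p)_k$ \emph{and} satisfy $w\equiv v\pmod{p^n}$ for the \emph{given} lattice, not merely for some lattice depending on $n$ and $(a,b)$; tracking this uniformly as $n\to\infty$ is precisely the integrality analysis you defer. So the scheme, as written, reduces the lemma to an unproved assertion of the same strength.

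Your ``alternative approach'' in the last paragraph has a different but equally serious gap. Projecting onto the central-character-$k$ subspace is exactly the nontrivial point: over $\Q_p$ there is no Haar-measure averaging over $\Z_p^\times$, so a continuous $\Z_p^\times$-equivariant projection onto the $x^k$-isotypic component of an admissible Banach representation is not free. A Mahler expansion of $z\mapsto z^k$ does not by itself produce such a projection (the obvious candidate integrals do not converge). In effect, the existence of the desired projection, equivalently the exactness of the functor of $x^k$-invariants, is what the paper's $H^j_{\cont}(\Z_p^\times,-)$-vanishing accomplishes, and is what your alternative outsources without proof. If you want to salvage your second approach, the route is exactly the paper's: show that the relevant $\Z_p^\times$-cohomology vanishes, so that passing to $x^k$-isotypic pieces commutes with forming $\tilde{H}^1$ from a resolution by copies of $\sC(\GL_2(\Z_p),\Q_p)$, and then use density of algebraic functions in that function space.
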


\begin{lem} \label{ssH}
The Hecke action of $\T(K^p)$ on the $\GL_2(\Z_p)$-algebraic vectors in $\tilde{H}^1(K^p,\Q_p)$ is semi-simple.
\end{lem}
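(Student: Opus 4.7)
The plan is to identify the $\GL_2(\Z_p)$-algebraic vectors in $\tilde{H}^1(K^p,\Q_p)$ with classical cohomology of modular curves with coefficients in local systems of algebraic origin, and then invoke strong multiplicity one for $\GL_2/\Q$.

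First I would argue that the space of $\GL_2(\Z_p)$-algebraic vectors decomposes as a direct sum over (isomorphism classes of) irreducible algebraic representations $W$ of $\GL_2$, since algebraic representations of $\GL_2(\Z_p)$ are completely reducible. For each such $W$, the $W$-isotypic component can be identified, via Emerton's comparison between completed cohomology and classical cohomology (see \cite[Theorem 2.2.17]{Eme06} and the spectral sequence relating $\tilde H^1(K^p,\Q_p)^{W\text{-alg}}$ to cohomology of finite-level modular curves with coefficients in the local system $\mathcal{V}_W$ attached to $W$), with $\varinjlim_{K_p}H^1(Y_{K^pK_p}(\bC),\mathcal{V}_W)\otimes_{\Q_p}W^\vee$ in a Hecke-equivariant manner. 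In particular, it suffices to prove semisimplicity of $\T(K^p)$ on each $H^1(Y_{K^pK_p}(\bC),\mathcal{V}_W)$.

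Next I would invoke the Eichler–Shimura decomposition, which splits $H^1(Y_{K^pK_p}(\bC),\mathcal{V}_W)\otimes\bC$ (after choosing $C\cong\bC$) into its cuspidal part (which equals the interior cohomology for $k\geq 1$, i.e.\ when $W$ is not the trivial representation) and the Eisenstein part, in a manner compatible with the prime-to-$S$ Hecke action. For the cuspidal part, Matsushima's formula and the decomposition of cuspidal automorphic representations of $\GL_2(\A)$ of the given infinity type show that it decomposes as a direct sum indexed by cuspidal automorphic representations $\pi$, each contributing a multiple of $\pi_f^{K^pK_p}$; since the $\T(K^p)$-action on $\pi_f^{K^pK_p}$ factors through scalars (by strong multiplicity one and the fact that $\T(K^p)$ is generated by Hecke operators at places $l\notin S$ where $\pi_l$ is unramified), the action is obviously semisimple on the cuspidal part. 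For the Eisenstein part, one uses the explicit description of Eisenstein cohomology in terms of principal series parabolically induced from unramified Hecke characters at places outside $S$; away from $S$ the Satake parameters are determined by the inducing characters, so again $\T(K^p)$ acts through a semisimple quotient.

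The main subtlety lies in the Eisenstein contribution and in the trivial-weight case $W=\Q_p$, where one must show that the Hecke action on $H^1(Y_{K^pK_p}(\bC),\Q_p)$ (which includes boundary classes supported near the cusps) is semisimple for Hecke operators away from $S$. This is handled by the fact that the boundary cohomology is a direct sum of $H^0$'s of the cusps twisted by unramified characters of $\A_f^\times$, and the prime-to-$S$ Hecke operators $T_l,S_l$ act on each summand through the specified unramified Hecke character—hence diagonalizably. Combining these cases gives the semisimplicity of $\T(K^p)$ on the entire space of $\GL_2(\Z_p)$-algebraic vectors.
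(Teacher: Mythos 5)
Your proposal follows essentially the same route as the paper's first (``well-known'') sketch of this lemma: reduce, via Emerton's description of the $\GL_2(\Z_p)$-algebraic vectors, to the Hecke action on $H^1(Y_{K^pK_p}(\bC),\mathcal{V}_W)$ for irreducible algebraic $W$; change scalars to $\bC$; and invoke the Eichler--Shimura/Shimura isomorphism to reduce to semi-simplicity of the prime-to-$S$ Hecke action on classical modular forms. You handle the cuspidal part by noting that $\T(K^p)$ acts through scalars on each $\pi_f^{K^pK_p}$ (the paper instead uses normality with respect to the Petersson inner product), and the Eisenstein part by Satake parameters (the paper cites Weisinger's thesis for an Eisenstein eigenbasis). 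Same strategy, mildly different citations.

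One step in your sketch is imprecise and worth tightening. In the trivial-weight case you flag the concern that $H^1(Y,\Q_p)$ mixes interior and boundary classes, but you then only observe that the boundary piece $\partial H^1$ has a diagonalizable prime-to-$S$ Hecke action. That alone does not resolve the concern you raised: semi-simplicity of a $\T(K^p)$-action on a stable subspace and on the corresponding quotient does not imply semi-simplicity on the full module (a single Jordan block is the obstruction). What is actually needed is that the exact sequence $0\to H^1(X)\to H^1(Y)\to\partial H^1$ splits Hecke-equivariantly. You can get this either from the Eichler--Shimura isomorphism itself, which already presents $H^1(Y,\mathcal{V}_W\otimes\bC)$ as a Hecke-stable direct sum $M_k\oplus\overline{S_k}$ (you invoke this decomposition earlier, so the argument is salvageable; just make it carry the weight), or from the Manin--Drinfeld theorem, or by observing that Deligne's bound makes the cuspidal and Eisenstein eigenvalue systems disjoint over $\bC$, which forces the splitting. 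The paper's second, ``more conceptual'' argument addresses exactly this: reduce by Poincar\'e duality to $H^1_c(Y,\bC)$, note $T_l$ ($l\notin S$) is normal with respect to the Riemann form on $H^1(X,\bC)$ and the standard Hermitian form on the cuspidal boundary term, and cite Manin--Drinfeld for a natural Hecke-equivariant splitting of $H^1_c(Y,\bC)\to H^1(X,\bC)$. You did not mention this route; it is cleaner and worth knowing.
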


We will give proofs of both lemmas at the end. Assuming these results, we deduce from the natural decomposition above that the $\GL_2(\Z_p)$-algebraic vectors in $\tilde{H}^1(K^p,E)_{\zeta',\km}$ are dense. Now one can argue in exactly the same way as proof of Proposition 5.5 of \cite{Pas18}. We only give a sketch here. It should be mentioned that these algebraic vectors correspond to cohomology of certain standard local systems on $\mathcal{Y}_{K^p\GL_2(\Z_p)}$. Let $\mathcal{S}\subset\Spec\T[\frac{1}{p}]$ be the subset of maximal ideals $\kp$ for which $\tilde{H}^1(K^p,E)_{\zeta',\km}[\kp]$ has non-zero $\GL_2(\Z_p)$-algebraic vectors. We write $\Pi(\kp)$ as the smallest $E[G]$-stable closed subspace containing the $\GL_2(\Z_p)$-algebraic vectors in $\tilde{H}^1(K^p,E)_{\zeta',\km}[\kp]$. Let $\Pi(\kp)^o\subset\Pi(\kp)$ be its open unit ball. By our density result and the semi-simplicity of Hecke action on the $\GL_2(\Z_p)$-algebraic vectors in $\tilde{H}^1(K^p,E)_{\zeta',\km}$, it suffices to prove that for any $\kp\in\mathcal{S}$,
\[\Pi(\kp)/\Pi(\kp)^o\in \mathrm{Mod}^{\mathrm{l\, adm}}_{G,\zeta}(\cO)[\mathfrak{B}_\km]\]
and both actions $\tau^p,\tau_p$ are the same on it. Both claims are really formal consequences of classical local-global compatibility and uniqueness of the universal unitary completion in this case (\cite[Corollarie 5.3.4]{BB10}, \cite[Proposition 2.2.1]{BE10}) and the compatibility between $p$-adic and classical local Langlands correspondence \cite[Theorem 1.3]{CDP14}. This finishes the proof of  Theorem \ref{LGC}.
\end{proof}

\begin{proof}[Proof of Lemma \ref{density}]
As pointed out by a referee, the following argument was sketched by Matthew Emerton in \cite[Remark 5.4.2]{Eme1}.

We begin the proof by recalling a complex computing $\tilde{H}^{i}(K^p,\Q_p)$. Let $\tilde{H}_c^i(K^p,\Z_p)$ be the completed cohomology with compact support, which is defined by 
\[\tilde{H}_c^i(K^p,\Z_p):=\varprojlim_n\varinjlim_{K_p} H^i_c(Y_{K^pK_p}(\mathbb{C}),\Z/p^n).\]
Since $Y_{K^pK_p}$ is a non-compact Riemann surface, it is easy to see that $\tilde{H}_c^i(K^p,\Z_p)=0,i\neq 1$.  There is a spectral sequence (coming from the Poincar\'e duality) relating $\tilde{H}^\bullet$ and $\tilde{H}^1_c$ (\S1.3 of \cite{CE12}) as follows.  Let $\Lambda$ be the completed group ring $\Z_p[[\GL_2(\Z_p)]]$. Then $\tilde{H}_c^1(K^p,\Z_p)^d:=\Hom_{\Z_p}(\tilde{H}_c^1(K^p,\Z_p),\Z_p)$ admits a resolution by finite free $\Lambda$-modules:
\[\cdots\to \Lambda^{\oplus d_{-1}}\to \Lambda^{\oplus d_0}\to \Lambda^{\oplus d_1}\to \tilde{H}_c^1(K^p,\Z_p)^d\to 0.\]
Taking $\Hom_{\Lambda}(\bullet,\Lambda)$ of this resolution, where $\Lambda$ is viewed as a left module of itself, we obtain a complex of right $\Lambda$-modules (using the right $\Lambda$-module structure of $\Lambda$)
\[\cdots\leftarrow \Lambda^{\oplus d_{-1}}\leftarrow \Lambda^{\oplus d_0}\leftarrow \Lambda^{\oplus d_1}.\]
We can view this sequence as a complex of left $\Lambda$-modules by applying the main involution $g\mapsto g^{-1}$ of $\Lambda$. Now taking $\Hom^{\cont}_{\Z_p}(\bullet,\Q_p)$, we get a $\GL_2(\Z_p)$-equivariant complex
\[C^{\bullet}=\cdots\to \sC(\GL_2(\Z_p),\Q_p)^{\oplus d_{-1}}\to  \sC(\GL_2(\Z_p),\Q_p)^{\oplus d_0}\to  \sC(\GL_2(\Z_p),\Q_p)^{\oplus d_1},\]
where $\sC(\GL_2(\Z_p),\Q_p)$ denotes the space of $\Q_p$-valued continuous functions on $\GL_2(\Z_p)$. The result in \S1.3 of \cite{CE12} says that this complex computes $\tilde{H}^{i}(K^p,\Q_p)$ as a $\GL_2(\Z_p)$-representation. For a $\Z_p[\GL_2(\Z_p)]$-module $M$, we denote by $M_k$ the subspace on which the centre $\Z_p^\times$ acts via $k$-th power. We claim that 
\[C^{\bullet}_k=\cdots\to \sC(\GL_2(\Z_p),\Q_p)_k^{\oplus d_{-1}}\to  \sC(\GL_2(\Z_p),\Q_p)_k^{\oplus d_0}\to  \sC(\GL_2(\Z_p),\Q_p)_k^{\oplus d_1}\]
computes $\tilde{H}^{i}(K^p,\Q_p)_k$. Assuming this, $\tilde{H}^{1}(K^p,\Q_p)_k$ is a $\GL_2(\Z_p)$-equivariant quotient of $\sC(\GL_2(\Z_p),\Q_p)_k^{\oplus d_1}$. Hence we get the density statement in Lemma \ref{density} because $\GL_2(\Z_p)$-algebraic vectors are dense in $\sC(\GL_2(\Z_p),\Q_p)_k$ (\cite[Proposition 3.2.9]{Pan19}).

To see that $C^{\bullet}_k$ computes $\tilde{H}^{i}(K^p,\Q_p)_k$, it is enough to show that 
\[H_\cont^j(\Z_p^\times,\sC(\GL_2(\Z_p),\Q_p)\otimes z^{-k})=H_\cont^j(\Z_p^\times,\tilde{H}^{0}(K^p,\Q_p)\otimes z^{-k})=0,\,j\geq1,\]
where $z^{-k}=\Q_p$ with $\Z_p^\times$ acting via ($-k$)-th power. Since everything is over a characteristic zero field, it suffices to prove this after replacing $\Z_p^\times$ by an open subgroup. Let $H=1+p^2\Z_p\subset\Z_p^\times$. Using the description of $\tilde{H}^{0}(K^p,\Q_p)$ in \cite[4.2]{Eme06}, it is easy to see that as Banach representations of $H$, both $\sC(\GL_2(\Z_p),\Q_p)$ and $\tilde{H}^{0}(K^p,\Q_p)$ are of the form $\sC(H\times R,\Q_p)$ for some profinite set $R$ with trivial $H$-action. Hence, their tensor products with $z^{-k}$ have no higher cohomology.
\end{proof}

\begin{proof}[Proof of Lemma \ref{ssH}]
By Emerton's description \cite[4.3.4]{Eme06} of  the $\GL_2(\Z_p)$-algebraic vectors  in  $\tilde{H}^1(K^p,\Q_p)$, it suffices to show that the Hecke action on $H^1(Y_{K^p\GL_2(\Z_p)}(\bC),\mathcal{V_W})$ is semi-simple for any irreducible algebraic representation $W$ of $\GL_2$ over $\Q_p$, where $\mathcal{V_W}$ denotes the local system corresponding to $W$. This result is presumably well-known for experts. We sketch a proof here. We may change the coefficient field of the local system to $\bC$. Then using the Shimura isomorphism, it suffices to prove the semi-simplicity of the Hecke action on the space of classical modular forms. This is clear for cusp forms because  $T_l,\,l\notin S$ are normal operators with respect to the Petersson inner product. The theory of Eisenstein series then provides an eigenbasis for the orthogonal complement of cusp forms. See for example Weisinger's thesis \cite[Chap. 1]{Wei77}. 

Here is a more conceptual argument suggested to me by Matthew Emerton using polarizations. For simplicity, we assume $W$ is the trivial local system. But the argument can be generalized for any $W$. Let $Y=Y_{K^p\GL_2(\Z_p)}(\bC)$ and $X=X_{K^p\GL_2(\Z_p)}(\bC)$. By the Poincar\'e duality, it is enough to consider $H^1_c(Y,\bC)$. There is a natural exact sequence 
\[\bC^{\oplus |\mathcal{C}|}\to H^1_c(Y,\bC)\to H^1(X,\bC)\to 0.\]
(Recall that $\mathcal{C}$ denotes the set of cusps in $X$.) The Hecke operators $T_l,l\notin S$ are normal on $H^1(X,\bC)$  (resp. $\bC^{\oplus |\mathcal{C}|}$) with respect to the Riemann form (resp. standard Hermitian form on $\bC^{\oplus |\mathcal{C}|}$). Hence the Hecke actions of $\T(K^p)$ on both spaces are semi-simple. Now our claim follows from the Manin-Drinfeld theorem as it implies that  $H^1_c(Y,\bC)\to H^1(X,\bC)$ has a natural splitting, cf. \cite{El90}. 
\end{proof}

\subsection{Colmez's Kirillov model}
\begin{para}
Let $\rho=\rho_\lambda$ be as in Corollary \ref{promodprocoh}. Suppose $L$ is a finite extension of $\Q_p$ containing $\lambda(\T(K^p))$. Then Corollary \ref{promodprocoh} implies that $\tilde{H}^1(K^p,L)[\kp_\lambda]=\tilde{H}^1(K^p,\overbar\Q_p)[\kp_\lambda]\cap \tilde{H}^1(K^p,L)$ contains an irreducible admissible Banach representation $\Pi$ of $\GL_2(\Q_p)$ over $L$. To apply our previous results (Theorem \ref{hwvg}, Theorem \ref{cl1}), it is crucial to know whether its locally analytic vectors $\Pi^{\la}$ admit non-zero $\kn$-invariants and whether its $N_0$-invariants have a $U_p$-eigenvector. Thanks to the beautiful work of Colmez, both questions have affirmative answers by his Kirillov model \cite[Chap. VI]{Col10}. We first give a brief review of his work. A nice summary can be found in \S7.3, 7.4 of \cite{DLB17}. I would like to thank Matthew Emerton for sharing his unpublished notes \cite{EPW} with Robert Pollack and Tom Weston on this subject.
\end{para}

\begin{para}
Let $E$ be a finite extension of $\Q_p$ and $V$ a two-dimensional  representation of $G_{\Q_p}$ over $E$. 
By Fontaine's work \cite{Fo04}, we can study $V\otimes_{\Q_p}B_{\dR}^+$ by Sen's method. As a result, we obtain a free $E[[t]]\otimes_{\Q_p}\Q_p(\mu_{p^\infty})$-module $D^+_{\mathrm{dif}}(V)$  of rank $2$ equipped with a semi-linear action of $\Gamma=\Gal(\Q_p(\mu_{p^\infty})/\Q_p)$. Let $D_{\mathrm{dif}}(V):=D^+_{\mathrm{dif}}(V)[\frac{1}{t}]$ and 
\[D^-_{\mathrm{dif}}(V):=D_{\mathrm{dif}}(V)/D^+_{\mathrm{dif}}(V),\]
a torsion $\Q_p(\mu_{p^\infty})[t]$-module. We denote by $\sC_c(\Q_p^\times,D^-_{\mathrm{dif}}(V))^\Gamma$ the space of functions $\phi:\Q_p^\times\to D^-_{\mathrm{dif}}(V)$ with compact support satisfying
\[\phi(ax)=\sigma_a(\phi(x)),a\in\Z_p^\times,\,x\in\Q_p^\times,\]
where $\sigma_a\in\Gamma$ is inverse image of $a$ under $\varepsilon_p:\Gamma\xrightarrow{\sim}\Z_p^\times$. In particular, $\phi$ is completely determined by $\{\phi(p^n)\}_{n\in\Z}$. Let $P=\begin{pmatrix} \Q_p^\times & \Q_p \\ 0 & 1\end{pmatrix}$, the mirabolic subgroup of $\GL_2(\Q_p)$. One can define an action of $P$ on  $\sC_c(\Q_p^\times,D^-_{\mathrm{dif}}(V))^\Gamma$ by the formula:
\[(\begin{pmatrix} a & b \\ 0 & 1\end{pmatrix}\phi)(x)=\varepsilon'(bx)e^{tbx}\phi(ax),\]
where $\varepsilon':\Q_p\to \mu_{p^\infty}$ is an additive character with kernel $\Z_p$, and $e^{tbx}$ is understood as $\sum_{n\geq0}\frac{(bx)^n}{n!}t^n$. (One can view $\Q_p\to (\Q_p[[t]]\otimes_{\Q_p}\Q_p(\mu_{p^\infty}))^\times,\,x\mapsto \varepsilon'(x)e^{tx}$ as an additive character.) This is Colmez's Kirillov model. 

Now we assume 
\begin{itemize}
\item $V$ is absolutely irreducible.
\end{itemize} 
We denote by $\Pi(V)_E$ the irreducible unitary $E$-Banach representation of $\GL_2(\Q_p)$ associated to $V$ via the $p$-adic local Langlands correspondence (\cite[Theorem 1.1]{CDP14}) and by $\Pi(V)_E^{\la}$ its locally analytic vectors. 
\end{para}

\begin{thm}
There is a $P$-equivariant embedding $\sC_c(\Q_p^\times,D^-_{\mathrm{dif}}(V))^\Gamma\to \Pi(V)_E^{\la}$. 
\end{thm}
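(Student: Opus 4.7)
The plan is to construct the embedding via Colmez's module $D \boxtimes \mathbb{P}^1$, which realizes (the dual of) $\Pi(V)_E^{\mathrm{la}}$ in terms of the $(\varphi,\Gamma)$-module attached to $V$. Writing $D=D_{\mathrm{rig}}(V)$ for the $(\varphi,\Gamma)$-module over the Robba ring associated to $V$, Colmez's construction realizes $\Pi(V)_E^{\mathrm{la}}$ (or rather its topological dual twisted appropriately) as global sections of an equivariant sheaf on $\mathbb{P}^1(\Q_p)$ whose sections on $\Z_p\subset\mathbb{P}^1(\Q_p)$ recover $D$ itself. First I would recall this setup and in particular the ``restriction to $\Q_p^\times$'' map $\mathrm{Res}_{\Q_p^\times}: D \boxtimes \mathbb{P}^1 \to D \boxtimes \Q_p^\times := \varprojlim_n \mathrm{Res}_{p^{-n}\Z_p^\times}$, together with the explicit $P$-action given on $D \boxtimes \Q_p^\times$ by the unipotent $\begin{pmatrix}1&b\\0&1\end{pmatrix}$ acting as multiplication by $(1+T)^{bx}$ in the appropriate local coordinate and the torus acting through $\varphi$ and $\Gamma$.

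Next I would bring in the localization maps of Berger: for each $n\gg 0$, there is a $\Gamma$-equivariant map
\[
\iota_n : D^{(0,r_n]} \longrightarrow D^+_{\mathrm{dif},n} := \Q_p(\mu_{p^n})[[t]] \otimes_{\Q_p} V,
\]
which for $V$ de Rham recovers the usual $D_{\mathrm{dR}}^+$-structure after taking $\Gamma$-invariants, and in general identifies $D^+_{\mathrm{dif}} = \varinjlim_n D^+_{\mathrm{dif},n}$ as a ``generic fiber'' of $D$. The quotient $D^-_{\mathrm{dif}} = D_{\mathrm{dif}}/D^+_{\mathrm{dif}}$ encodes precisely the ``Kirillov'' behaviour: multiplication by $t$ is topologically nilpotent on it, which matches the factor $e^{tbx}$ showing up in the formula for the $P$-action in the statement.

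The key step is then to construct, for a given $\phi \in \sC_c(\Q_p^\times, D^-_{\mathrm{dif}}(V))^\Gamma$, an element $v_\phi \in D \boxtimes \Q_p^\times$ (or better in $D \boxtimes \mathbb{P}^1$) whose image under $\iota_n$, suitably interpreted, is exactly $\phi$. Concretely, for each $n \in \Z$ one lifts $\phi(p^n) \in D^-_{\mathrm{dif}}$ to an element of $D$ supported in a neighborhood of $p^n\Z_p^\times$ inside $\Q_p$ (using $\varphi^{-n}$ to go between scales, plus the fact that $\iota_n$ is surjective modulo $D^+_{\mathrm{dif},n}$); the $\Gamma$-equivariance of $\phi$ matches the $\Gamma$-equivariance of $\iota_n$, and the compact support guarantees that these lifts can be glued into a well-defined element of $D \boxtimes \Q_p^\times$ extending by zero to $D \boxtimes \mathbb{P}^1$. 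A direct check using the formulas for $\varphi, \psi, \Gamma$ and $u^+$ acting as $(1+T)-1$ shows that the action of $\begin{pmatrix}a&b\\0&1\end{pmatrix}$ on $v_\phi$ matches, after applying $\iota_n$, the prescribed action on $\phi$; the factor $\varepsilon'(bx)e^{tbx}$ is precisely the image of $(1+T)^{bx}$ under $\iota_n$ modulo $D^+_{\mathrm{dif}}$. Finally, the map $\phi \mapsto v_\phi$ is injective because the system $\{\iota_n\}$ separates elements of $D \boxtimes \Q_p^\times$ modulo $D^+_{\mathrm{dif}}$ contributions, and compactly supported $\phi$ have no such contributions.

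The main obstacle will be to do the gluing step cleanly: one must show that the individual lifts at each scale $p^n$ can be chosen compatibly, that is, that the resulting element actually lies in $D \boxtimes \Q_p^\times$ (which imposes a $\psi$-compatibility as $n$ varies), and that extension by zero to $\mathbb{P}^1$ lands in the locally analytic (not merely continuous) vectors; for the latter one invokes the characterization of $\Pi(V)_E^{\mathrm{la}}$ within $D \boxtimes \mathbb{P}^1$ given by Colmez in \cite[Chap.~VI]{Col10}, and the fact that compactly supported sections on $\Q_p^\times$ are automatically locally analytic on $\mathbb{P}^1$ away from $\infty$, while near $\infty$ they vanish by the compact support assumption. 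Since the whole construction is exactly what Colmez carries out in loc.~cit., the proposal is really to recall his construction and verify that the resulting map is $P$-equivariant and injective, rather than to produce a new argument.
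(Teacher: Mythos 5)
The paper gives no self-contained proof of this statement: it simply cites \cite[Prop.\ 7.6]{DLB17}, which in turn packages Colmez's Kirillov-model construction from \cite[Chap.\ VI]{Col10}. Your sketch unpacks exactly that construction, so you are following the same route as the cited sources rather than a genuinely different one.

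One conceptual imprecision is worth flagging, because it affects the shape of the argument. You speak of ``the characterization of $\Pi(V)_E^{\la}$ within $D\boxtimes\mathbb{P}^1$'' and of your sections ``landing in the locally analytic vectors,'' but $\Pi(V)_E^{\la}$ does not sit inside $D_{\mathrm{rig}}\boxtimes\mathbb{P}^1$ as a subobject; it arises as a \emph{quotient}, the kernel being a twisted dual of $\Pi^{\an}$. So once you have manufactured a class $v_\phi \in D_{\mathrm{rig}}\boxtimes\mathbb{P}^1$ you must still push it forward along that quotient map. This matters because, as you yourself note, the lift of each $\phi(p^n)$ from $D^-_{\mathrm{dif}}$ back to $D$ is only defined modulo $D^+_{\mathrm{dif}}$, so $v_\phi$ is not canonical: one has to verify that the ambiguity dies in the kernel of the projection, and then that the composite $\phi\mapsto v_\phi\mapsto \text{(image in }\Pi^{\la})$ is injective. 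That injectivity is precisely where the hypothesis that $V$ is absolutely irreducible is used (so that $\Pi(V)$ is not a parabolic induction and the Kirillov model is faithful), and your sketch never indicates where this hypothesis enters. None of this is a defect of the strategy---it is all carried out in \cite[Chap.\ VI]{Col10}---but a self-contained write-up along your lines would need to address these points explicitly rather than treating them as routine.
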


\begin{proof}
See Proposition 7.6 of \cite{DLB17}.
\end{proof}

The following corollary and remark were first pointed out to me by Matthew Emerton. 

\begin{cor} \label{ColKir}
$\Pi(V)_E^{\la,N_0}\neq 0$ and has a non-zero vector annihilated by $U_p$. 
\end{cor}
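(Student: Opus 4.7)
The plan is to exhibit the required vector explicitly inside Colmez's Kirillov model, using the $P$-equivariant embedding $\sC_c(\Q_p^\times,D^-_{\mathrm{dif}}(V))^\Gamma \hookrightarrow \Pi(V)_E^{\la}$ provided by the preceding theorem. The crucial geometric input is the submodule
\[
D^-_{\mathrm{dif}}(V)[t] \;=\; t^{-1}D^+_{\mathrm{dif}}(V)/D^+_{\mathrm{dif}}(V) \;\subset\; D^-_{\mathrm{dif}}(V),
\]
which is nonzero (indeed, it is isomorphic to $D^+_{\mathrm{dif}}(V)/tD^+_{\mathrm{dif}}(V)$, a free rank-two module over $E\otimes_{\Q_p}\Q_p(\mu_{p^\infty})$). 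The point is that on this submodule the operator $e^{tbx}$ acts as the identity for every choice of $b,x$, because $e^{tbx}-1=tbx\cdot u(tbx)$ with $u$ a unit in $E[[t]]$. So I will look for $\phi$ taking values in $D^-_{\mathrm{dif}}(V)[t]$ and supported in a carefully chosen compact open subset of $\Q_p^\times$.

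The first step is $N_0$-invariance. The matrix $\begin{pmatrix}1&b\\0&1\end{pmatrix}$ acts on the Kirillov model by $\phi\mapsto[x\mapsto\varepsilon'(bx)e^{tbx}\phi(x)]$. If $\phi$ is supported inside $\Z_p$, then $bx\in\Z_p$ whenever $\phi(x)\neq 0$, so $\varepsilon'(bx)=1$; if moreover $\phi$ takes values in $D^-_{\mathrm{dif}}(V)[t]$, then $e^{tbx}$ acts trivially by the observation above. Hence any such $\phi$ is automatically $N_0$-fixed.

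The second step is to arrange $U_p\phi=0$, which is where the support matters. The action is $(U_p\phi)(x)=\sum_{i=0}^{p-1}\varepsilon'(ix)e^{tix}\phi(px)$. If I restrict support from $\Z_p$ to $\Z_p^\times$, then $\phi(px)$ vanishes unless $x\in p^{-1}\Z_p^\times$. For such $x$ of valuation exactly $-1$, the $t$-torsion argument again gives $e^{tix}\phi(px)=\phi(px)$, so
\[
(U_p\phi)(x) \;=\; \Bigl(\sum_{i=0}^{p-1}\varepsilon'(ix)\Bigr)\phi(px),
\]
and the character sum is zero since $\{\varepsilon'(ix)\}_{i=0}^{p-1}$ ranges through all $p$-th roots of unity when $x\in p^{-1}\Z_p^\times$. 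The trick here, which one might see as the only substantive point, is that the enlargement of the support from $\Z_p^\times$ to $\Z_p$ would reintroduce a nonzero contribution $p\phi(px)$ on $\Z_p^\times$ itself; confining to $\Z_p^\times$ kills this.

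To conclude, choose any nonzero $v\in D^-_{\mathrm{dif}}(V)[t]$ and let $\phi$ be the unique function supported on $\Z_p^\times$ determined by $\phi(1)=v$ and the $\Gamma$-equivariance $\phi(a)=\sigma_a(v)$ for $a\in\Z_p^\times$. Then $\phi$ is a nonzero element of $\sC_c(\Q_p^\times,D^-_{\mathrm{dif}}(V))^\Gamma$, and by the two steps above it lies in the $N_0$-fixed, $U_p$-annihilated subspace. Transporting along the injective embedding into $\Pi(V)_E^{\la}$ yields the desired vector.
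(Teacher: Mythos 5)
Your proof is correct and takes essentially the same approach as the paper: pick a nonzero $t$-torsion element of $D^-_{\mathrm{dif}}(V)$ and extend it to a function supported on $\Z_p^\times$, then verify $N_0$-invariance and vanishing under $U_p$ via the identity $e^{tbx}=1$ on $t$-torsion together with the vanishing of the sum of $p$-th roots of unity. You spell out a couple of points that the paper leaves to the reader (full $N_0$-invariance rather than just invariance under the single matrix with $b=1$, and the full valuation-$(-1)$ annulus rather than just $x=1/p$), but the construction and the key observations are identical.
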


\begin{proof}
This is a standard formal consequence of the Kirillov model. More precisely, let $v\in D^-_{\mathrm{dif}}(V)$ be a non-zero element annihilated by $t$. Consider $\phi\in\sC_c(\Q_p^\times,D^-_{\mathrm{dif}}(V))^\Gamma$ defined by $\phi(1)=v$ and $\phi(x)=0,x\notin\Z_p^\times$. Clearly $\phi$ is fixed by $\begin{pmatrix} 1 & 1 \\ 0 & 1 \end{pmatrix}$. Moreover, 
\[(U_p\cdot \phi)(x)=\sum_{i=0}^{p-1}(\begin{pmatrix} p & i \\ 0 & 1\end{pmatrix}\phi)(x)=\sum_{i=0}^{p-1}\varepsilon'(ix)\phi(px).\]
If $px\notin \Z_p^\times$, this is zero  for trivial reason. If $x=\frac{1}{p}$, this is essentially the sum over all $p$-th roots of unity, which is again zero. Hence $\phi$ is annihilated by $U_p$.
\end{proof}

\begin{rem} \label{csColKir}
It's easy to see that all the vectors in $\sC_c(\Q_p^\times,D^-_{\mathrm{dif}}(V))^{\Gamma,N_0}$ annihilated by $U_p$ (which we denote by $\sC_c(\Q_p^\times,D^-_{\mathrm{dif}}(V))^{\Gamma,N_0,U_p=0}$) are of this form, i.e. 
\[\sC_c(\Q_p^\times,D^-_{\mathrm{dif}}(V))^{\Gamma,N_0,U_p=0}=D^+_{\mathrm{dif}}(V)\cdot\frac{1}{t}/D^+_{\mathrm{dif}}(V)=D_{\mathrm{Sen}}(V(-1)).\] 
Here $D_{\mathrm{Sen}}(V(-1))$ is a free $E\otimes_{\Q_p}\Q_p(\mu_{p^\infty})$-module of rank $2$ equipped with a semi-linear action of $\Gamma$ associated to $V(-1)$ in Sen's theory.

In Colmez's work, it's also possible to describe the image of $\sC_c(\Q_p^\times,D^-_{\mathrm{dif}}(V))^\Gamma$ inside $\Pi(V)_E^{\la}$. In fact, one can show that this image contains $(\Pi(V)_E^{\la})^{N_0,U_p=0}$. Hence 
\[\Pi(V)_E^{\la,N_0,U_p=0}\cong D_{\mathrm{Sen}}(V(-1)),\]
which identifies the action of $\begin{pmatrix} \Z_p^\times & 0 \\ 0 & 1 \end{pmatrix}$ on the left hand side with the action of $\Gamma$ on the right hand side. Therefore, let $a,b$ be the Hodge-Tate-Sen weights of $V$. Since $\Pi(V)$ has central character $\det(V)\varepsilon_p^{-1}$ (\cite[Corollary 1.2]{CDP14}), we conclude that both
\[(\Pi(V)_E^{\la})_{(-a-1,-b)},(\Pi(V)_E^{\la})_{(-b-1,-a)}\]
are non-zero. Moreover, let $P_0=\begin{pmatrix} \Z_p^\times & \Z_p \\ 0 & 1 \end{pmatrix}$. We have
\[\dim_E \Pi(V)_E^{\la,P_0,U_p=0}=\dim_E (V(-1)\otimes_{\Q_p}C)^{G_{\Q_p}},\]
which gives a representation-theoretic way to determine whether $V(-1)$ is Hodge-Tate of weight $0,0$ or not. We will use this to give another proof of Corollary \ref{modwt1} below.
\end{rem}

\begin{cor} \label{U_pev}
Suppose $\Pi$ is an irreducible admissible unitary Banach representation of $\GL_2(\Q_p)$ over some finite extension of $\Q_p$. Then $\Pi^{\la,N_0}\neq 0$ and has a non-zero $U_p$-eigenvector.
\end{cor}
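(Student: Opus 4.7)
The plan is to split into two cases according to whether $\Pi$ is non-ordinary or ordinary, in the sense recalled in the paragraph before Corollary \ref{ColKir}. Throughout, I will feel free to enlarge the coefficient field $E$, since both the formation of $\Pi^{\la,N_0}$ and the condition of having a $U_p$-eigenvector commute with finite scalar extension.

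First, suppose $\Pi$ is non-ordinary. By \cite[Theorem 1.1]{CDP14}, possibly after enlarging $E$, there is an absolutely irreducible two-dimensional continuous representation $V$ of $G_{\Q_p}$ over $E$ with $\Pi \cong \Pi(V)_E$. Corollary \ref{ColKir} then produces a nonzero vector of $\Pi^{\la,N_0}$ annihilated by $U_p$, which is in particular a $U_p$-eigenvector (with eigenvalue $0$), and this case is done.

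Next, suppose $\Pi$ is ordinary. Then $\Pi$ is an irreducible subquotient of some continuous parabolic induction $\Ind_B^G(\chi)^{\cont}$, where $\chi = \chi_1 \otimes \chi_2$ is a continuous unitary character of the diagonal torus $T$. If $\Pi$ is one-dimensional then $\Pi = \eta\circ\det$, so $N_0$ acts trivially and $U_p$ acts by the scalar $p\,\eta(p)$; any nonzero vector works. Otherwise $\Pi$ is an infinite-dimensional irreducible subquotient of $\Ind_B^G(\chi)^{\cont}$ for some such $\chi$. The plan is to analyze the locally analytic principal series $\Ind_B^G(\chi)^{\la}$ through the Iwasawa decomposition $G = BK$ with $K = \GL_2(\Z_p)$: restriction to $K$ identifies $\Ind_B^G(\chi)^{\la}$ with a space of locally analytic functions on $K$ transforming appropriately under $B \cap K$, and passing to $N_0$-invariants cuts out a finite-dimensional subspace stable under $U_p$. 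A direct matrix computation will then exhibit explicit $U_p$-eigenvectors whose eigenvalues can be written in terms of $\chi_1(p)$ and $\chi_2(p)$. Finally, one traces these eigenvectors through the $B$-equivariant filtration of $\Ind_B^G(\chi)^{\la}$ by its Jordan--H\"older constituents to produce a surviving $U_p$-eigenvector in the given subquotient $\Pi^{\la,N_0}$.

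The main obstacle I foresee is this last step: when $\Ind_B^G(\chi)^{\cont}$ is reducible, ensuring that a chosen $U_p$-eigenvector has nonzero image in the prescribed irreducible subquotient requires careful bookkeeping of which Jordan--H\"older factor each eigenvector lives on. A conceptually cleaner alternative would be to invoke Emerton's locally analytic Jacquet functor: for any admissible $\Pi^{\la}$ the module $J_B(\Pi^{\la})$ is nonzero, and a $T$-character vector $\psi$ appearing there lifts canonically to an $N_0$-invariant $U_p$-eigenvector in $\Pi^{\la}$ with eigenvalue $\psi(\diag(p,1))$. This bypasses the case analysis at the cost of importing a substantial piece of machinery from outside the paper.
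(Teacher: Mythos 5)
Your proposal matches the paper's proof in its overall strategy: both split into the non-ordinary and ordinary cases, and in the non-ordinary case both invoke the identification $\Pi\cong\Pi(V)_E$ from \cite[Theorem 1.1]{CDP14} and feed it into Corollary~\ref{ColKir}. One small omission: the paper first cites \cite[Theorem 1.1]{DS13} to justify that after enlarging $E$ one may assume $\Pi$ is \emph{absolutely} irreducible; you say ``possibly after enlarging $E$'' but do not supply the reference that makes this legitimate.

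The genuine divergence is in the ordinary case, and here the paper's route is substantially shorter than yours. You set out to analyse a general irreducible subquotient of $\Ind_B^G(\chi)^{\cont}$ via the Iwasawa decomposition and a Jordan--H\"older bookkeeping argument. But for $\GL_2(\Q_p)$ there is a clean classification of absolutely irreducible ordinary admissible unitary Banach representations: up to a twist by $\eta\circ\det$, such a $\Pi$ is the trivial representation $\mathbf{1}$, the unitary Steinberg $\widehat{\mathrm{St}}=(\Ind_B^G\mathbf{1})^{\cont}/\mathbf{1}$, or an irreducible unitary parabolic induction of a unitary character. In each of these three explicit cases it is a direct computation that $\Pi^{\la,N_0}\neq 0$ and contains a $U_p$-eigenvector, and there is no filtration to chase. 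The bookkeeping you anticipate as the ``main obstacle'' is, in effect, just rediscovering this classification; importing it up front is what makes the argument short.

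Be more cautious about your proposed ``cleaner alternative.'' The statement that $J_B(\Pi^{\la})\neq 0$ for every admissible locally analytic $\Pi^{\la}$ is not a formal property of Emerton's Jacquet functor; in general $J_B$ can vanish. For irreducible unitary Banach representations of $\GL_2(\Q_p)$ the nonvanishing of $J_B(\Pi^{\la})$ is true, but it is proved by essentially the same dichotomy (Colmez's Kirillov model in the supersingular case, explicit computation in the principal series case) as the corollary itself, so invoking it as a black box here is close to circular.
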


\begin{proof}
We may enlarge the field of coefficients $E$ and assume that $\Pi$ is absolutely irreducible by \cite[Theorem 1.1]{DS13}. If $\Pi$ is non-ordinary, then this follows Corollary \ref{ColKir} as $\Pi=\Pi(V)_E$ for some absolutely irreducible Galois representation $V$ by \cite[Theorem 1.1]{CDP14}. If $\Pi$ is ordinary, i.e. a subquotient of a parabolic induction of a unitary character, then up to twist by a character $\eta\circ\det$, the representation $\Pi$ is either trivial representation $\mathbf{1}$, unitary Steinberg representation $\widehat{\mathrm{St}}$ or $\Pi$ is a unitary parabolic induction of a unitary character. Recall that $\widehat{\mathrm{St}}=(\mathrm{Ind}^{\GL_2(\Q_p)}_B\mathbf{1})/\mathbf{1}$, the universal unitary completion of the usual Steinberg representation. In all these cases, it is easy to check that $\Pi^{\la,N_0}\neq 0$ and has a non-zero $U_p$-eigenvector.
\end{proof}

We obtain the following classicality result by combining Corollary \ref{U_pev}, Corollary \ref{promodprocoh} and Theorem \ref{cl1}.

\begin{thm} \label{classicality1}
Suppose $\rho=\rho_{\lambda}$ is pro-modular and irreducible. Assume  
\begin{itemize}
\item $\rho|_{G_{\Q_p}}$ is Hodge-Tate of weight $0,0$. 
\end{itemize}
Then $\rho$ is classical, i.e. $M_1(K^p)[\kp_\lambda]\neq 0$ for some tame level $K^p$.
\end{thm}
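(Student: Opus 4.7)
The plan is to verify the hypothesis of Theorem \ref{cl1}, namely the existence of a non-zero $U_p$-eigenvector in $M^\dagger_1(K^p)[\kp_\lambda]^{N_0}$ for some tame level $K^p$; the Hodge-Tate hypothesis is already provided, so Theorem \ref{cl1} then yields the desired classicality. I would argue by contradiction, assuming $\lambda$ is not classical at any tame level. Since the reducible case of $\rho$ comes from a classical weight-one Eisenstein series, one may assume $\rho$ is absolutely irreducible. By Corollary \ref{promodprocoh}, the space $\tilde{H}^1(K^p, \overbar\Q_p)[\kp_\lambda]$ is non-zero for some $K^p$ and contains an irreducible admissible unitary $\GL_2(\Q_p)$-Banach subrepresentation $\Pi$. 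Colmez's Kirillov model, packaged as Corollary \ref{U_pev}, then produces a non-zero $U_p$-eigenvector $v \in \Pi^{\la, N_0}$.

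Next, multiplication by $t \in \tilde{H}^0$ gives a $\GL_2(\Q_p)$-equivariant embedding $\Pi \cdot t \hookrightarrow \tilde{H}^1(K^p, C)[\kp_{\lambda \cdot t}]$, where $\lambda \cdot t : T_l \mapsto \lambda(T_l)l^{-1}$, so $v \cdot t$ is a non-zero $U_p$-eigenvector in the $N_0$-invariants of $\tilde{H}^1(K^p, C)^{\la}[\kp_{\lambda \cdot t}]$. Since $\rho_{\lambda \cdot t} = \rho_\lambda(1)$ has Hodge-Tate weights $-1,-1$, Proposition \ref{icht} pins down the infinitesimal character on this space as the singular weight $\mu_1 = (0,1)$, so $v \cdot t \in \tilde{H}^1(K^p, C)^{\la}_{\mu_1}[\kp_{\lambda \cdot t}]^{N_0}$. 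Furthermore, by the $\rho_\lambda(-1)$-isotypicity of $\tilde{H}^1(K^p, \overbar\Q_p)[\kp_\lambda]$ coming from the Eichler-Shimura relation, the space $\tilde{H}^1(K^p, C)[\kp_{\lambda \cdot t}]$ is Galois-isomorphic to a multiple of $\rho_\lambda$, whose Hodge-Tate weights are $0,0$. Combined with Theorem \ref{Senkh}, this forces the Sen operator $\theta_\kh(\begin{pmatrix} 0 & 0 \\ 0 & 1 \end{pmatrix})$ to vanish on $\tilde{H}^1(K^p, C)^{\la}_{\mu_1}[\kp_{\lambda \cdot t}]$, placing $v \cdot t$ inside the weight-$(1,0)$ subspace $\tilde{H}^1(K^p, C)^{\la, (1,0)}_{\mu_1}[\kp_{\lambda \cdot t}]$ defined in Theorem \ref{weight1}.

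Under the standing non-classicality assumption, $M_1(K^p)[\kp_\lambda] = 0$, and by Serre duality together with the Kodaira-Spencer isomorphism, also $\varinjlim_{K_p} H^1(\mathcal{X}_{K^p K_p}, \omega)[\kp_\lambda] = 0$. The exact sequences of Theorem \ref{weight1} then collapse after $\kp_{\lambda \cdot t}$-localization to
\[
\tilde{H}^1(K^p, C)^{\la, (1,0)}_{\mu_1}[\kp_{\lambda \cdot t}] \cong M^\dagger_1(K^p)[\kp_\lambda] \cdot e_1^{-1} t.
\]
Taking $N_0$-invariants, $v \cdot t$ yields a non-zero $U_p$-eigenvector in $M^\dagger_1(K^p)[\kp_\lambda]^{N_0}$. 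Theorem \ref{cl1} now applies (after possibly shrinking $K^p$ and decomposing under $A_S$-characters at ramified places, as in its proof) and forces $M_1(K^p)[\kp_\lambda] \neq 0$, contradicting non-classicality.

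The main technical subtlety is the second step: propagating the Hodge-Tate hypothesis from $\rho_\lambda$ to force the \emph{entire} $\mu_1$-isotypic, $\kp_{\lambda \cdot t}$-localized piece of $\tilde{H}^1(K^p, C)^{\la}$ into the weight-$(1,0)$ subspace, rather than only its $G_{\Q_p}$-invariants as in the second assertion of Theorem \ref{weight1}. This hinges on the Eichler-Shimura structure theorem identifying $\tilde{H}^1[\kp_{\lambda \cdot t}]$ as a $\rho_\lambda$-isotypic $G_\Q$-module, combined with Theorem \ref{Senkh}'s identification of $\theta_\kh(\begin{pmatrix} 0 & 0 \\ 0 & 1 \end{pmatrix})$ with the Sen operator. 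The various twists by $t$ and the shift $\kp_\lambda \leadsto \kp_{\lambda \cdot t}$ also require careful bookkeeping.
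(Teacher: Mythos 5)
Your proof follows essentially the same route as the paper's: use Corollary \ref{promodprocoh} and Corollary \ref{U_pev} to produce a non-zero $U_p$-eigenvector, twist by $t$, use Proposition \ref{icht} and Theorem \ref{Senkh} together with the Hodge-Tate hypothesis to land in the $\mu_1$-isotypic weight-$(1,0)$ piece, collapse Theorem \ref{weight1} under the non-classicality assumption to identify this piece with overconvergent weight-one forms, and then contradict Theorem \ref{cl1}. Your observation that the Eichler--Shimura $\rho_\lambda$-isotypicity forces the Sen operator to vanish on the whole $[\kp_{\lambda\cdot t}]$-localized piece (not merely on its $G_{\Q_p}$-invariants) is indeed the key step that upgrades the second assertion of Theorem \ref{weight1} to what is needed, and it is exactly how the paper's proof of Theorem \ref{cl1} argues.

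One small imprecision: after writing ``Proposition \ref{icht} pins down the infinitesimal character \dots so $v\cdot t \in \tilde{H}^1(K^p,C)^{\la}_{\mu_1}[\kp_{\lambda\cdot t}]^{N_0}$,'' you are overclaiming. Knowing the infinitesimal character (together with the $z$-action) gives $(h+1)^2 = 0$ on the $N_0$-invariant $U_p$-eigenspace, not that $h+1$ annihilates $v\cdot t$ itself; $v\cdot t$ could a priori have a non-trivial generalized $(-1)$-eigenvector component. The correct statement, and the one the paper uses, is that the $\mu_1$-isotypic subspace of $\tilde{H}^1(K^p,C)^{\la}[\kp_{\lambda\cdot t}]^{N_0,U_p=\alpha}$ is non-zero (take $v\cdot t$ if $(h+1)(v\cdot t)=0$, otherwise take $(h+1)(v\cdot t)$, noting $h$ normalizes $N_0$ and commutes with $U_p$ on $\mathfrak{n}$-annihilated vectors). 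The rest of your argument then goes through unchanged with that non-zero element in place of $v\cdot t$.
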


\begin{proof}
Suppose $\rho$ is pro-modular for some tame level $K^p$. We have 
\[\tilde{H}^1(K^p,\overbar\Q_p)[\kp_{\lambda\cdot t}]=\tilde{H}^1(K^p,\overbar\Q_p)[\kp_\lambda]\cdot t\neq 0\]
by Corollary \ref{promodprocoh}. Recall that $t\in \tilde{H}^0(K^p,\Z_p)$ is an invertible function, so $\tilde{H}^1(K^p,\overbar\Q_p)[\kp_\lambda]\cdot t$ is understood as a subspace of $\tilde{H}^1(K^p,\overbar\Q_p)$. Here $\lambda\cdot t$ was defined in proof of Theorem \ref{cl1}. Corollary \ref{U_pev} then implies that $\tilde{H}^1(K^p,C)^{\la}[\kp_{\lambda\cdot t}]^{N_0,U_p=\alpha}\neq0$ for some $\alpha\in \overbar\Q_p$. By our result on the infinitesimal character (Proposition \ref{icht}), $(h+1)^2=0$ on this space. Hence the weight-$(0,1)$ subspace
\[\tilde{H}^1(K^p,C)^{\la}_{(0,1)}[\kp_{\lambda\cdot t}]^{N_0,U_p=\alpha}\neq 0.\]
Now assume $\rho$ is not classical. As in the proof of Theorem \ref{cl1}, we have 
\[\tilde{H}^1(K^p,C)^{\la}_{(0,1)}[\kp_{\lambda\cdot t}]=M^{\dagger}_{1}(K^p)[{\kp_\lambda}]\cdot e_1^{-1}t.\]
However, this means that $M^{\dagger}_{1}(K^p)[{\kp_\lambda}]^{N_0,U_p=p\alpha}\neq 0$, which contradicts Theorem \ref{cl1}. Hence $\rho$ has to be classical.
\end{proof}

We quote the following result (\cite[Theorem 1.2.3]{Eme1}, which is based on previous work of B\"ockle, Diamond-Flach-Guo, Khare-Wintenberger, Kisin) on promodularity of a Galois representation. For simplicity, we exclude the case $\bar\rho|_{G_{\Q_p}}\cong\eta\otimes\begin{pmatrix} \omega & * \\ 0 & 1 \end{pmatrix}$, a non-split extension. As a corollary, we give a new proof of the Fontaine-Mazur conjecture in the irregular case under conditions below.

\begin{thm} \label{promodularity}
Let $p>2$ be a prime number and $\rho:G_{\Q}\to\GL_2(\overbar\Q_p)$ be a continuous two-dimensional irreducible odd representation which is unramified outside of finitely many primes. Assume 
\begin{enumerate}
\item $\bar\rho|_{G_{\Q(\mu_p)}}$ is irreducible;
\item $(\bar\rho|_{G_{\Q_p}})^{ss}$ is either irreducible or of the form $\eta_1\oplus \eta_2$ for some characters $\eta_1,\eta_2$ satisfying $\eta_1/\eta_2\neq 1,\omega^{\pm1}$.
\end{enumerate}
Then $\rho$ is pro-modular.
\end{thm}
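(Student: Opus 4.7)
The plan is to deduce this from Emerton's promodularity theorem \cite[Theorem 1.2.3]{Eme1}, which is the result cited in the lead-in to the statement. Hypotheses (1)--(5) are chosen precisely to match the input of Emerton's machinery, so the argument is really an assembly of inputs rather than a new construction. First I would invoke Serre's modularity conjecture, proved by Khare--Wintenberger (with input from Kisin), to conclude that $\bar\rho$ is modular: since $p>2$, $\rho$ is odd by (2), and by (4) $\bar\rho|_{\Gal(\overbar{\Q}/\Q(\mu_p))}$ is irreducible (hence $\bar\rho$ is absolutely irreducible and odd), so Serre's conjecture applies and identifies $\bar\rho$ with the semisimplification of the mod $p$ reduction of the Galois representation attached to some classical cuspidal Hecke eigenform.

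Next I would verify that the local hypothesis (5) at $p$ is the correct ``generic'' condition for the Pa\v{s}k\={u}nas block decomposition recalled in Theorem \ref{Pasr} and for the local-global compatibility Theorem \ref{LGC} to yield an honest ``big $R = T$'' statement for the Hecke algebra acting on completed cohomology localized at $\bar\rho$. The excluded residual shapes $\eta_1/\eta_2 \in \{1,\omega^{\pm 1}\}$ correspond exactly to the blocks whose Bernstein centre, in the language of Theorem \ref{Pasr}, does not suffice to transfer the universal pseudo-deformation ring onto the full Hecke algebra. Assumptions (1) and (2) are the standard global deformation-theoretic inputs (unramification outside a finite set and oddness of the determinant).

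With modularity of $\bar\rho$ in hand and the local-global compatibility of Theorem \ref{LGC} (itself a key refinement in this paper, making Emerton's framework available even at Eisenstein maximal ideals via the Pa\v{s}k\={u}nas--Tung results), one then applies Emerton's theorem to conclude that every continuous odd deformation of $\bar\rho$ unramified outside a finite set is pro-modular, hence $\rho = \rho_\lambda$ for some tame level $K^p$ and some $\lambda : \T(K^p) \to \overbar{\Q}_p$. The main obstacle is essentially external to this paper: it is the patching argument underlying Emerton's theorem, which combines Kisin's potential modularity techniques, Diamond--Flach--Guo's work, and B\"ockle's unobstructedness results, all embedded in the completed-cohomology framework and the Taylor--Wiles--Kisin patching method. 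Note that condition (3) (Hodge--Tate with weights $0,0$) plays no role in promodularity itself; it enters only in Corollary \ref{modwt1}, where Theorem \ref{classicality1} converts promodularity plus the Hodge--Tate hypothesis into the classicality needed to realize $\rho$ as the Galois representation of a classical weight-one eigenform.
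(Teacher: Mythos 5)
The paper does not prove this theorem at all; it is stated with the words ``We quote the following result (\cite[Theorem 1.2.3]{Eme1}, which is based on previous work of B\"ockle, Diamond--Flach--Guo, Khare--Wintenberger, Kisin)'' and carries no proof. You correctly identify this quotation as the core of the matter, but your write-up goes astray in two concrete ways.

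First, you claim that Theorem~\ref{LGC} and Theorem~\ref{Pasr} enter the proof --- that they ``make Emerton's framework available even at Eisenstein maximal ideals'' and that the excluded residual shapes ``correspond exactly to the blocks whose Bernstein centre... does not suffice.'' Neither claim is accurate for this statement. Theorem~\ref{promodularity} is an unmodified quotation of Emerton's Theorem 1.2.3; it does not depend on anything proved in the present paper. The paper's local-global compatibility refinement (Theorem~\ref{LGC}) and the Pa\v{s}k\={u}nas--Tung input (Theorem~\ref{Pasr}) are used \emph{elsewhere} --- specifically to derive Corollary~\ref{promodprocoh}, which converts pro-modularity into pro-cohomologicality, i.e.\ into non-vanishing of $\tilde{H}^1(K^p,\overbar\Q_p)[\kp_\lambda]$. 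That step is needed for Theorem~\ref{classicality1} and Corollary~\ref{modwt1}, not for the promodularity statement itself. The excluded shape $\bar\rho|_{G_{\Q_p}}\cong\eta\otimes\begin{pmatrix}\omega & * \\ 0 & 1\end{pmatrix}$ is, as the paper says, excluded merely ``for simplicity,'' a constraint tied to the deformation-theoretic hypotheses of Emerton's theorem, not to the Bernstein-centre mechanism of Theorem~\ref{Pasr}.

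Second, your hypothesis numbering is taken from the combined five-condition version stated in the introduction, not from the theorem as actually stated here, which has only conditions (1) and (2) together with the preamble assumptions ($p>2$, $\rho$ irreducible, odd, unramified outside finitely many primes). Thus ``by (2)'' does not give oddness (that is a preamble hypothesis), ``by (4)'' should read ``by (1),'' and there is no condition (3) to say ``plays no role.'' The closing observation --- that the Hodge--Tate-weight-$0,0$ hypothesis enters only in Corollary~\ref{modwt1} via Theorem~\ref{classicality1} --- is correct in substance, but it is not a condition of the present theorem.
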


\begin{cor} \label{modwt1}
Let $\rho$ be as in Theorem \ref{promodularity}. If $\rho$ is Hodge-Tate of weight $0,0$, then $\rho$ comes from a cuspidal eigenform of weight one.
\end{cor}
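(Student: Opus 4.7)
The plan is to deduce Corollary \ref{modwt1} as a direct consequence of the two preceding results, Theorem \ref{promodularity} and Theorem \ref{classicality1}, chained together by a standard Eisenstein/cuspidal dichotomy.

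First, I would apply Theorem \ref{promodularity} to $\rho$. The hypotheses of Corollary \ref{modwt1} (namely: $\rho$ irreducible and odd, unramified outside finitely many primes, with the residual irreducibility of $\bar\rho|_{G_{\Q(\mu_p)}}$ and the hypothesis on $(\bar\rho|_{G_{\Q_p}})^{ss}$) are precisely those required by Theorem \ref{promodularity}. Hence $\rho$ is pro-modular: there exist a tame level $K^p$ and a $\Z_p$-algebra homomorphism $\lambda:\T(K^p)\to\overbar\Q_p$ with $\rho\cong\rho_\lambda$.

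Next, I would invoke Theorem \ref{classicality1}. By construction $\rho_\lambda$ is irreducible, and by hypothesis $\rho_\lambda|_{G_{\Q_p}}$ is Hodge--Tate of weights $0,0$. Therefore Theorem \ref{classicality1} applies and yields $M_1(K'^p)[\kp_\lambda]\neq 0$ for some (possibly smaller) tame level $K'^p$; that is, there is a non-zero classical weight-one modular form $f$ of some tame level on which $\T$ acts via $\lambda$.

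Finally, to upgrade this to a \emph{cuspidal} eigenform, I would use the standard fact that Galois representations attached to weight-one Eisenstein eigenforms are reducible (they decompose as a direct sum of two finite-order characters). Since $\rho\cong\rho_\lambda$ is irreducible by hypothesis, the eigenform $f$ produced above cannot be Eisenstein, so it must be cuspidal. Deligne--Serre \cite{DS74} then attaches to $f$ a continuous Galois representation $\rho_f:G_\Q\to\GL_2(\overbar\Q_p)$ with $\rho_f\cong\rho_\lambda\cong\rho$, and this representation has finite image. There is essentially no obstacle in this argument; the entire substance has been absorbed into the two black-box inputs, Theorem \ref{promodularity} (the pro-modularity step, which is itself deep and uses work of many authors) and Theorem \ref{classicality1} (the classicality step, which is where our main results on locally analytic vectors and Colmez's Kirillov model enter).
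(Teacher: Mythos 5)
Your proposal is correct and is essentially identical to the paper's own (one-line) proof, which simply cites the combination of Theorem \ref{promodularity} and Theorem \ref{classicality1}. You have usefully spelled out the (implicit) final step that the resulting weight-one eigenform must be cuspidal because $\rho$ is irreducible and Eisenstein forms give reducible Galois representations.
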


\begin{proof}
A combination of previous two results.
\end{proof}

\begin{rem}
The assumptions in this result could possibly be removed in the following way. Let $\rho$ be  a continuous two-dimensional irreducible odd representation $\rho:G_{\Q,S}\to \GL_2(\overbar\Q_p)$ which is unramified outside of finitely many primes. In \cite{Pan19}, it is proved that $\rho|_{G_F}$ is irreducible and  pro-modular for a solvable totally real field extension $F/\Q$ in which $p$ completely splits, under the assumption
\begin{itemize}
\item $p>2$;
\item If $p=3$, then $(\bar\rho|_{G_{\Q_p}})^{ss}$ is not of the form $\eta\oplus\eta\omega$. 
\end{itemize}
Hence one can prove Corollary \ref{modwt1} under these assumptions, if Theorem \ref{classicality1} can be extended to the case of Hilbert modular variety over a totally real field $F$ in which $p$ completely splits.  It seems (at least to me) that it's reasonable to expect such a generalization.
\end{rem}

We can also determine the possible systems of Hecke eigenvalues appearing in spaces of overconvergent modular forms.

\begin{thm} \label{ocmfwt}
Suppose $\rho=\rho_\lambda$ is irreducible and pro-modular for some tame level $K^p$. Let $a,b$ be the Hodge-Tate-Sen weights of $\rho|_{G_{\Q_p}}$.
\begin{enumerate}
\item If $M^{\dagger}_{(n_1,n_2)}(K^p)[\kp_\lambda]\neq 0$, then $(n_1,n_2)=(-a,-b-1)$ or $(-b,-a-1)$. In particular, if $M^\dagger_k(K^p)[\kp_\lambda]\neq 0$ for some $k\in\Z$, then $\{a,b\}=\{0,k-1\}$.
\item Conversely, if $\rho|_{G_{\Q_p}}$ is irreducible, 
then $M^{\dagger}_{(n_1,n_2)}(K^p)[\kp_\lambda]\neq 0$ for $(n_1,n_2)=(-a,-b-1)$ and $(-b,-a-1)$.
\end{enumerate}
\end{thm}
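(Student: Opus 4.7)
The plan is to combine the identifications from Theorem \ref{hwvg} (and Theorem \ref{weight1} in the singular case $\chi(h)=1$) realizing $M^\dagger_\chi(K^p)$ as a piece of $\tilde{H}^1(K^p,C)^{\la}_\mu$ with the infinitesimal character constraint from Proposition \ref{icht}, and, for the converse direction, with the non-vanishing of $\mu$-isotypic parts of $\Pi(\rho|_{G_{\Q_p}})^{\la}$ coming from Colmez's Kirillov model (Remark \ref{csColKir}). The numerological point is that for $\chi=(n_1,n_2)$, setting $\mu:=\chi-2\rho=(n_1-1,n_2+1)$ makes the summand $M_{\mu,w}\subset\tilde{H}^1(K^p,C)^{\la}_\mu$ isomorphic to $M^\dagger_\chi(K^p)$ as a $\T(K^p)$-module (these differ only by a $B\times G_{\Q_p}$-twist by $\kn^*$, invisible to Hecke operators at primes away from $p$), while $M_{\mu,w}$ carries pure Hodge-Tate-Sen weight $1-(n_2+1)=-n_2$.

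For Part (1), assume $M^\dagger_{(n_1,n_2)}(K^p)[\kp_\lambda]\neq 0$ and set $\mu=(n_1-1,n_2+1)$. The identification just described yields a non-zero (possibly generalized) $\kp_\lambda$-eigenvector inside $\tilde{H}^1(K^p,C)^{\la}_\mu$. Since $\rho_\lambda$ is pro-modular and irreducible, Corollary \ref{promodprocoh} makes it pro-cohomological, and Proposition \ref{icht} identifies the infinitesimal character of $\tilde{H}^1(K^p,C)^{\la}[\kp_\lambda]$ with the Weyl orbit $\{(-a-1,-b),(-b-1,-a)\}$; because $Z(U(\mathfrak{g}))$ commutes with $\T(K^p)$, the same constraint applies to any generalized $\kp_\lambda$-eigenvector. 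Equating $(n_1-1,n_2+1)$ with an element of this orbit yields exactly $(n_1,n_2)\in\{(-a,-b-1),(-b,-a-1)\}$.

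For Part (2), the (absolute) irreducibility of $\rho|_{G_{\Q_p}}$ lets me invoke Corollary \ref{promodprocoh}: $\tilde{H}^1(K^p,\overbar\Q_p)[\kp_\lambda]\cong \rho_\lambda(-1)\otimes_{\overbar\Q_p}W$ for some $\GL_2(\Q_p)$-representation $W\cong\Pi(\rho|_{G_{\Q_p}})^{\oplus d'}$ with trivial Galois action. Remark \ref{csColKir}, a consequence of Colmez's Kirillov model, gives $\Pi(\rho|_{G_{\Q_p}})^{\la}_{\mu_i}\neq 0$ for both $\mu_1:=(-a-1,-b)$ and $\mu_2:=(-b-1,-a)$, so $\tilde{H}^1(K^p,C)^{\la}_{\mu_i}[\kp_\lambda]\neq 0$. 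In the non-singular case $a\neq b$, Theorem \ref{hwvg} splits this as $M_{\mu_i,1}[\kp_\lambda]\oplus M_{\mu_i,w}[\kp_\lambda]$ according to the two distinct Hodge-Tate-Sen weights $a+1,b+1$ of $\rho_\lambda(-1)|_{G_{\Q_p}}$; since $W$ is Galois-trivial, each Hodge-Tate-Sen summand picks up the same factor and both summands have positive dimension. Hence $M_{\mu_i,w}[\kp_\lambda]\neq 0$, and the identification from the first paragraph translates this into $M^\dagger_{(-a,-b-1)}(K^p)[\kp_\lambda]\neq 0$ and $M^\dagger_{(-b,-a-1)}(K^p)[\kp_\lambda]\neq 0$.

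The main obstacle will be the singular case $a=b$, where $\mu_1=\mu_2=(-a-1,-a)$ has $\mu(h)=-1$ and the clean splitting is replaced by the extension of Theorem \ref{weight1}: $0\to N_1\cdot e_1^{-1}t^{k_1+1}\to \tilde{H}^1(K^p,C)^{\la}_{\mu_1}\to M^\dagger_1(K^p)\cdot e_1^{-1}t^{k_1+1}\to 0$. I would divide on whether $\rho|_{G_{\Q_p}}$ is Hodge-Tate. If so, a cyclotomic twist by $\varepsilon^a$ reduces to the weight $(0,0)$ situation and Theorem \ref{classicality1} supplies a classical weight-one eigenform; Proposition \ref{defnocmf} converts it back to a non-zero element of $M^\dagger_{(-a,-a-1)}(K^p)[\kp_\lambda]$. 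If not, the generalized Sen operator on $\tilde{H}^1(K^p,C)^{\la}_{\mu_1}[\kp_\lambda]$ is non-zero, and Theorem \ref{Senkh} combined with the description of this operator in the paragraph following Theorem \ref{w1i} (it factors through the surjection onto $M^\dagger_1(K^p)\cdot e_1^{-1}t^{k_1+1}$ and maps into $M^\dagger_1/M_1\cdot e_1^{-1}t^{k_1+1}$) forces $(M^\dagger_1(K^p)/M_1(K^p))[\kp_\lambda]\neq 0$, hence $M^\dagger_1(K^p)[\kp_\lambda]\neq 0$.
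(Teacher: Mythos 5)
Your proposal follows essentially the same route as the paper: identify $M^\dagger_{(n_1,n_2)}(K^p)$ with a Hecke-module summand of $\tilde{H}^1(K^p,C)^{\la}_{\mu}$ for $\mu=(n_1-1,n_2+1)$ via Theorem \ref{hwvg} (and Theorem \ref{weight1} when $\mu(h)=-1$), then use Proposition \ref{icht} for the forward direction and Corollary \ref{promodprocoh} together with Remark \ref{csColKir} for the converse. The overall strategy is sound, and your treatment of the $a=b$ case, while slightly different from the paper's (you split into Hodge-Tate and non-Hodge-Tate sub-cases and appeal to the nilpotent Sen operator for the latter; the paper extracts $M^\dagger_1[\kp_\lambda]\neq 0$ directly from the localization argument in the proof of Theorem \ref{classicality1} without that split), is a legitimate variant.

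There is, however, one gap in your part (2) argument that needs to be filled. You assert in the first paragraph that $M_{\mu,w}$ is isomorphic to $M^\dagger_\chi(K^p)$ as a $\T(K^p)$-module for $\mu=\chi-2\rho$, and you rely on this identification to pass from $M_{\mu_i,w}[\kp_\lambda]\neq 0$ to $M^\dagger_{(-a,-b-1)}(K^p)[\kp_\lambda]\neq 0$. But Theorem \ref{hwvg} gives $N_{k,w}\cong M^\dagger_{2-k}(K^p)$ \emph{only when} $k\neq 2$; when $k=2$ (equivalently $\chi(h)=n_1-n_2=0$, i.e. $a-b=\pm 1$ for the target weight), $N_{2,w}$ is a non-split extension $0\to M^\dagger_0/M_0\to N_{2,w}\to M_0\to 0$. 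The case $|a-b|=1$ falls under your ``non-singular $a\neq b$'' discussion, so it is not covered by your $a=b$ paragraph either, and your identification is not an isomorphism of $\T(K^p)$-modules in this range. The gap is minor: the fix is to observe that $M_0(K^p)$ is supported entirely at Eisenstein maximal ideals, so $M_0(K^p)[\kp_\lambda]=0$ when $\rho_\lambda$ is irreducible; combining this with the two short exact sequences $0\to M^\dagger_0/M_0\to N_{2,w}\to M_0\to 0$ and $0\to M_0\to M^\dagger_0\to M^\dagger_0/M_0\to 0$ gives $N_{2,w}[\kp_\lambda]\cong(M^\dagger_0/M_0)[\kp_\lambda]$ and $M^\dagger_0[\kp_\lambda]\cong(M^\dagger_0/M_0)[\kp_\lambda]$, recovering the isomorphism after localization. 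This is exactly the remark the paper makes to close the $k=2$ case; you should incorporate it explicitly.
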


\begin{proof}
To see the first part, if $M^{\dagger}_{(n_1,n_2)}(K^p)[\kp_\lambda]\neq 0$ and $n_1\neq n_2+1$, then by Theorem \ref{hwvg}, we have $\tilde{H}^1(K^p,C)^{\la}_{(n_1-1,n_2+1)}[\kp_\lambda]\neq 0$. Hence Proposition \ref{icht} implies that $\rho_\lambda$ has Hodge-Tate-Sen weights $-n_1,-1-n_2$. The case $n_1=n_2+1$ can be proved in a similar way. We omit the details here.

For the second part, if $a=b=0$, the proof of Theorem \ref{classicality1} shows that $M^{\dagger}_{(0,-1)}(K^p)[\kp_\lambda]\neq 0$. For the general case $a=b$, one can twist by some $t^a$ as in \ref{twisttn1} to reduce to the case $a=0$.

Now assume $a\neq b$. By Theorem \ref{promodprocoh} and Remark \ref{csColKir}, both
\[\tilde{H}^1(K^p,C)^{\la}_{(-a-1,-b)}[\kp_\lambda]\;\mbox{and}\;\tilde{H}^1(K^p,C)^{\la}_{(-b-1,-a)}[\kp_\lambda]\]
are non-zero.  By symmetry, it's enough to consider $\tilde{H}^1(K^p,C)^{\la}_{(-a-1,-b)}[\kp_\lambda]$. Theorem \ref{hwvg} provides a natural decomposition of this space according to the Hodge-Tate-Sen weights. Since $\tilde{H}^1(K^p,\overbar{\Q}_p)[\kp_\lambda]$ is $\rho_\lambda(-1)$-isotypic, the Hodge-Tate-Sen weight-$(b+1)$ component of $\tilde{H}^1(K^p,C)^{\la}_{(-a-1,-b)}[\kp_\lambda]$ is  non-zero. Hence $M^{\dagger}_{(-a,-b-1)}(K^p)[\kp_\lambda]\neq 0$ if $-a\neq -b-1$ by Theorem \ref{hwvg} (i.e. $k\neq 2$). If $-a=-b-1$, then we claim we still have $M_{(-a-1,-b),w}[\kp_\lambda]\cong M^{\dagger}_{(-a,-b-1)}(K^p)[\kp_\lambda]$ (as $\T(K^p)$-modules). This is because the difference between $M_{(-a-1,-b),w}$ and $M^{\dagger}_{(-a,-b-1)}(K^p)$ is essentially $M_0(K^p)\cdot t^{-a}$, which can be decomposed as 
\[\bigoplus_{\kq\in\mathrm{Max}(\T(K^p)[\frac{1}{p}])} M_0(K^p)\cdot t^{-a}[\kq]\]
as a Hecke-module, and $M_0(K^p)\cdot t^{-a}[\kp_\lambda]=0$ since  $M_0(K^p)\cdot t^{-a}[\kp_{\lambda'}]\neq 0$ only when $\rho_{\lambda'}$ is reducible. This proves the second part.
\end{proof}

\begin{rem}
We sketch another proof of Theorem \ref{classicality1} in the non-ordinary case (i.e. $\rho|_{G_{\Q_p}}$ is irreducible) without using our work on the Sen operator (Theorem \ref{Senkh}). Suppose $\rho$ is not classical. Then as in the proof of Theorem \ref{cl1}, we have the following exact sequence by Theorem \ref{weight1}
\[0\to M^\dagger_1(K^p)[\kp_\lambda]\cdot e_1^{-1}t\to \tilde{H}^1(K^p,C)_{(0,1)}^{\la}[\kp_{\lambda\cdot t}]\to M^\dagger_1(K^p)[\kp_\lambda]\cdot e_1^{-1}t.\]
For simplicity, let's assume $K^p=\prod_{l\neq p}\GL_2(\Z_l)$. In general, we can use Hecke operators at ramified places as in Lemma \ref{keymult1}. Now the key point is that $\tilde{H}^1(K^p,C)[\kp_{\lambda\cdot t}]$ contains a copy of $\rho\otimes \Pi(\rho(1))$. Hence by Remark \ref{csColKir} (with $V(-1)=\rho$ here), 
\[\tilde{H}^1(K^p,C)_{(0,1)}^{\la}[\kp_\lambda]^{P_0,U_p=0}=\tilde{H}^1(K^p,C)^{\la}[\kp_\lambda]^{P_0,U_p=0}\]
has dimension (over $C$) at least $(\dim_{\overbar\Q_p}\rho)\times (\dim_{\overbar\Q_p} (\rho\otimes C)^{G_{\Q_p}})=4$.  But this contradicts the exact sequence above as $\dim_C (M^\dagger_1(K^p)[\kp_\lambda]\cdot e_1^{-1}t)^{P_0,U_p=0}=1$ by $q$-expansion principle and taking $P_0$-invariants and kernel of $U_p$ are left-exact. Hence $\rho$ has to be classical. 
\end{rem}

\bibliographystyle{amsalpha}

\bibliography{bib}

\providecommand{\bysame}{\leavevmode\hbox to3em{\hrulefill}\thinspace}
\providecommand{\MR}{\relax\ifhmode\unskip\space\fi MR }
\providecommand{\MRhref}[2]{%
  \href{http://www.ams.org/mathscinet-getitem?mr=#1}{#2}
}
\providecommand{\href}[2]{#2}
\begin{thebibliography}{DLLZ19}

\bibitem[AGT16]{AGT}
Ahmed Abbes, Michel Gros, and Takeshi Tsuji, \emph{The {$p$}-adic {S}impson
  correspondence}, Annals of Mathematics Studies, vol. 193, Princeton
  University Press, Princeton, NJ, 2016. \MR{3444777}

\bibitem[AIS14]{AIS14}
Fabrizio Andreatta, Adrian Iovita, and Glenn Stevens, \emph{Overconvergent
  modular sheaves and modular forms for {${\bf GL}_{2/F}$}}, Israel J. Math.
  \textbf{201} (2014), no.~1, 299--359. \MR{3265287}

\bibitem[Ard17]{Ard17}
Konstantin Ardakov, \emph{Equivariant $\mathcal{D}$-modules on rigid analytic
  spaces}, 2017.

\bibitem[BB83]{BB83}
Alexander Be{\u \i}linson and Joseph Bernstein, \emph{A generalization of
  {C}asselman's submodule theorem}, Representation theory of reductive groups
  ({P}ark {C}ity, {U}tah, 1982), Progr. Math., vol.~40, Birkh\"{a}user Boston,
  Boston, MA, 1983, pp.~35--52. \MR{733805}

\bibitem[BB10]{BB10}
Laurent Berger and Christophe Breuil, \emph{Sur quelques repr\'esentations
  potentiellement cristallines de {${\rm GL}_2(\bold Q_p)$}}, Ast\'erisque
  (2010), no.~330, 155--211. \MR{2642406}

\bibitem[BC08]{BC08}
Laurent Berger and Pierre Colmez, \emph{Familles de repr\'{e}sentations de de
  {R}ham et monodromie {$p$}-adique}, no. 319, 2008, Repr\'{e}sentations
  $p$-adiques de groupes $p$-adiques. I. Repr\'{e}sentations galoisiennes et
  $(\phi,\Gamma)$-modules, pp.~303--337. \MR{2493221}

\bibitem[BC16]{BC16}
\bysame, \emph{Th\'{e}orie de {S}en et vecteurs localement analytiques}, Ann.
  Sci. \'{E}c. Norm. Sup\'{e}r. (4) \textbf{49} (2016), no.~4, 947--970.
  \MR{3552018}

\bibitem[BE10]{BE10}
Christophe Breuil and Matthew Emerton, \emph{Repr\'esentations {$p$}-adiques
  ordinaires de {${\rm GL}_2(\bold Q_p)$} et compatibilit\'e local-global},
  Ast\'erisque (2010), no.~331, 255--315. \MR{2667890}

\bibitem[Be{\u \i}84]{Bei84}
Alexander Be{\u \i}linson, \emph{Localization of representations of reductive
  {L}ie algebras}, Proceedings of the {I}nternational {C}ongress of
  {M}athematicians, {V}ol. 1, 2 ({W}arsaw, 1983), PWN, Warsaw, 1984,
  pp.~699--710. \MR{804725}

\bibitem[BLR91]{BLR91}
Nigel Boston, Hendrik~W. Lenstra, Jr., and Kenneth~A. Ribet, \emph{Quotients of
  group rings arising from two-dimensional representations}, C. R. Acad. Sci.
  Paris S\'{e}r. I Math. \textbf{312} (1991), no.~4, 323--328. \MR{1094193}

\bibitem[Bou60]{Bou60}
N.~Bourbaki, \emph{\'{E}l\'{e}ments de math\'{e}matique. {XXVI}. {G}roupes et
  alg\`ebres de {L}ie. {C}hapitre 1: {A}lg\`ebres de {L}ie}, Actualit\'{e}s
  Sci. Ind. No. 1285. Hermann, Paris, 1960. \MR{0132805}

\bibitem[BT99]{BT99}
Kevin Buzzard and Richard Taylor, \emph{Companion forms and weight one forms},
  Ann. of Math. (2) \textbf{149} (1999), no.~3, 905--919. \MR{1709306}

\bibitem[BW00]{BW00}
A.~Borel and N.~Wallach, \emph{Continuous cohomology, discrete subgroups, and
  representations of reductive groups}, second ed., Mathematical Surveys and
  Monographs, vol.~67, American Mathematical Society, Providence, RI, 2000.
  \MR{1721403}

\bibitem[CD14]{CD14}
Pierre Colmez and Gabriel Dospinescu, \emph{Compl\'{e}t\'{e}s universels de
  repr\'{e}sentations de {$\text{GL}_2(\Bbb{Q}_p)$}}, Algebra Number Theory
  \textbf{8} (2014), no.~6, 1447--1519. \MR{3267142}

\bibitem[CDP14]{CDP14}
Pierre Colmez, Gabriel Dospinescu, and Vytautas Pa\v{s}k\={u}nas, \emph{The
  {$p$}-adic local {L}anglands correspondence for {${\rm GL}_2(\Bbb Q_p)$}},
  Camb. J. Math. \textbf{2} (2014), no.~1, 1--47. \MR{3272011}

\bibitem[CE12]{CE12}
Frank Calegari and Matthew Emerton, \emph{Completed cohomology---a survey},
  Non-abelian fundamental groups and {I}wasawa theory, London Math. Soc.
  Lecture Note Ser., vol. 393, Cambridge Univ. Press, Cambridge, 2012,
  pp.~239--257. \MR{2905536}

\bibitem[CG18]{CG18}
Frank Calegari and David Geraghty, \emph{Modularity lifting beyond the
  {T}aylor-{W}iles method}, Invent. Math. \textbf{211} (2018), no.~1, 297--433.
  \MR{3742760}

\bibitem[Che14]{Che14}
Ga\"{e}tan Chenevier, \emph{The {$p$}-adic analytic space of pseudocharacters
  of a profinite group and pseudorepresentations over arbitrary rings},
  Automorphic forms and {G}alois representations. {V}ol. 1, London Math. Soc.
  Lecture Note Ser., vol. 414, Cambridge Univ. Press, Cambridge, 2014,
  pp.~221--285. \MR{3444227}

\bibitem[CHJ17]{CHJ17}
Przemys\l~aw Chojecki, D.~Hansen, and C.~Johansson, \emph{Overconvergent
  modular forms and perfectoid {S}himura curves}, Doc. Math. \textbf{22}
  (2017), 191--262. \MR{3609197}

\bibitem[Cla66]{Clark66}
D.~N. Clark, \emph{A note on the {$p$}-adic convergence of solutions of linear
  differential equations}, Proc. Amer. Math. Soc. \textbf{17} (1966), 262--269.
  \MR{186895}

\bibitem[CM98]{CM98}
R.~Coleman and B.~Mazur, \emph{The eigencurve}, Galois representations in
  arithmetic algebraic geometry ({D}urham, 1996), London Math. Soc. Lecture
  Note Ser., vol. 254, Cambridge Univ. Press, Cambridge, 1998, pp.~1--113.
  \MR{1696469}

\bibitem[Col10]{Col10}
Pierre Colmez, \emph{Repr\'{e}sentations de {${\rm GL}_2(\bold Q_p)$} et
  {$(\phi,\Gamma)$}-modules}, Ast\'{e}risque (2010), no.~330, 281--509.
  \MR{2642409}

\bibitem[CS17]{CS17}
Ana Caraiani and Peter Scholze, \emph{On the generic part of the cohomology of
  compact unitary {S}himura varieties}, Ann. of Math. (2) \textbf{186} (2017),
  no.~3, 649--766. \MR{3702677}

\bibitem[Del71]{De71}
Pierre Deligne, \emph{Formes modulaires et repr\'{e}sentations {$l$}-adiques},
  S\'{e}minaire {B}ourbaki. {V}ol. 1968/69: {E}xpos\'{e}s 347--363, Lecture
  Notes in Math., vol. 175, Springer, Berlin, 1971, pp.~Exp. No. 355, 139--172.
  \MR{3077124}

\bibitem[DLB17]{DLB17}
Gabriel Dospinescu and Arthur-C\'{e}sar Le~Bras, \emph{Rev\^{e}tements du
  demi-plan de {D}rinfeld et correspondance de {L}anglands {$p$}-adique}, Ann.
  of Math. (2) \textbf{186} (2017), no.~2, 321--411. \MR{3702670}

\bibitem[DLLZ18]{DLLZ2}
Hansheng Diao, Kai-Wen Lan, Ruochuan Liu, and Xinwen Zhu, \emph{{L}ogarithmic
  {R}iemann-{H}ilbert correspondences for rigid varieties}, 2018.

\bibitem[DLLZ19]{DLLZ1}
\bysame, \emph{{L}ogarithmic adic spaces: some foundational results}, 2019.

\bibitem[Dos12]{Dos12}
Gabriel Dospinescu, \emph{Actions infinit\'{e}simales dans la correspondance de
  {L}anglands locale {$p$}-adique}, Math. Ann. \textbf{354} (2012), no.~2,
  627--657. \MR{2965255}

\bibitem[DPS20]{DPS20}
Gabriel Dospinescu, Vytautas Pa{\v s}k{\=u}nas, and Benjamin Schraen,
  \emph{Infinitesimal characters in arithmetic families}, 2020.

\bibitem[DS74]{DS74}
Pierre Deligne and Jean-Pierre Serre, \emph{Formes modulaires de poids {$1$}},
  Ann. Sci. \'{E}cole Norm. Sup. (4) \textbf{7} (1974), 507--530 (1975).
  \MR{379379}

\bibitem[DS13]{DS13}
Gabriel Dospinescu and Benjamin Schraen, \emph{Endomorphism algebras of
  admissible {$p$}-adic representations of {$p$}-adic {L}ie groups}, Represent.
  Theory \textbf{17} (2013), 237--246. \MR{3053464}

\bibitem[Elk90]{El90}
R.~Elkik, \emph{Le th\'{e}or\`eme de {M}anin-{D}rinfel'd}, no. 183, 1990,
  S\'{e}minaire sur les Pinceaux de Courbes Elliptiques (Paris, 1988),
  pp.~59--67. \MR{1065155}

\bibitem[Eme06a]{Eme06C}
Matthew Emerton, \emph{A local-global compatibility conjecture in the
  {$p$}-adic {L}anglands programme for {${\rm GL}_{2/{\Bbb Q}}$}}, Pure Appl.
  Math. Q. \textbf{2} (2006), no.~2, Special Issue: In honor of John H. Coates.
  Part 2, 279--393. \MR{2251474}

\bibitem[Eme06b]{Eme06}
\bysame, \emph{On the interpolation of systems of eigenvalues attached to
  automorphic {H}ecke eigenforms}, Invent. Math. \textbf{164} (2006), no.~1,
  1--84. \MR{2207783}

\bibitem[Eme11]{Eme1}
\bysame, \emph{Local-global compatibility in the p-adic {L}anglands programme
  for {$GL_2/Q$}}, 2011.

\bibitem[EPW]{EPW}
Matthew Emerton, Robert Pollack, and Tom Weston, \emph{Explicit reciprocity
  laws and {I}wasawa theory for modular forms}.

\bibitem[Fal87]{Fa87}
Gerd Faltings, \emph{Hodge-{T}ate structures and modular forms}, Math. Ann.
  \textbf{278} (1987), no.~1-4, 133--149. \MR{909221}

\bibitem[Fal05]{Fa05}
\bysame, \emph{A {$p$}-adic {S}impson correspondence}, Adv. Math. \textbf{198}
  (2005), no.~2, 847--862. \MR{2183394}

\bibitem[Fon04]{Fo04}
Jean-Marc Fontaine, \emph{Arithm\'{e}tique des repr\'{e}sentations galoisiennes
  {$p$}-adiques}, no. 295, 2004, Cohomologies $p$-adiques et applications
  arithm\'{e}tiques. III, pp.~xi, 1--115. \MR{2104360}

\bibitem[Gou94]{Gou88}
Fernando~Q. Gouv\^{e}a, \emph{Continuity properties of {$p$}-adic modular
  forms}, Elliptic curves and related topics, CRM Proc. Lecture Notes, vol.~4,
  Amer. Math. Soc., Providence, RI, 1994, pp.~85--99. \MR{1260956}

\bibitem[Gro57]{Gro57}
Alexander Grothendieck, \emph{Sur quelques points d'alg\`ebre homologique},
  Tohoku Math. J. (2) \textbf{9} (1957), 119--221. \MR{102537}

\bibitem[Han16]{Han16}
D.~Hansen, \emph{Quotients of adic spaces by finite groups}, 2016.

\bibitem[Hid89]{Hid89}
Haruzo Hida, \emph{Nearly ordinary {H}ecke algebras and {G}alois
  representations of several variables}, Algebraic analysis, geometry, and
  number theory ({B}altimore, {MD}, 1988), Johns Hopkins Univ. Press,
  Baltimore, MD, 1989, pp.~115--134. \MR{1463699}

\bibitem[How21]{Ho21}
Sean Howe, \emph{Overconvergent modular forms are highest-weight vectors in the
  hodge-tate weight zero part of completed cohomology}, Forum of Mathematics,
  Sigma \textbf{9} (2021), e17.

\bibitem[Hub96]{Hu96}
Roland Huber, \emph{\'{E}tale cohomology of rigid analytic varieties and adic
  spaces}, Aspects of Mathematics, E30, Friedr. Vieweg \& Sohn, Braunschweig,
  1996. \MR{1734903}

\bibitem[Kat73]{Katz73}
Nicholas~M. Katz, \emph{{$p$}-adic properties of modular schemes and modular
  forms}, Modular functions of one variable, {III} ({P}roc. {I}nternat.
  {S}ummer {S}chool, {U}niv. {A}ntwerp, {A}ntwerp, 1972), 1973, pp.~69--190.
  Lecture Notes in Mathematics, Vol. 350. \MR{0447119}

\bibitem[KM85]{KM85}
Nicholas~M. Katz and Barry Mazur, \emph{Arithmetic moduli of elliptic curves},
  Annals of Mathematics Studies, vol. 108, Princeton University Press,
  Princeton, NJ, 1985. \MR{772569}

\bibitem[KS94]{KS94}
Masaki Kashiwara and Wilfried Schmid, \emph{Quasi-equivariant {${\mathscr
  D}$}-modules, equivariant derived category, and representations of reductive
  {L}ie groups}, Lie theory and geometry, Progr. Math., vol. 123,
  Birkh\"{a}user Boston, Boston, MA, 1994, pp.~457--488. \MR{1327544}

\bibitem[LZ17]{LZ17}
Ruochuan Liu and Xinwen Zhu, \emph{Rigidity and a {R}iemann-{H}ilbert
  correspondence for {$p$}-adic local systems}, Invent. Math. \textbf{207}
  (2017), no.~1, 291--343. \MR{3592758}

\bibitem[Mum77]{Mum77}
D.~Mumford, \emph{Hirzebruch's proportionality theorem in the noncompact case},
  Invent. Math. \textbf{42} (1977), 239--272. \MR{471627}

\bibitem[Pan19]{Pan19}
Lue Pan, \emph{{T}he {F}ontaine-{M}azur conjecture in the residually reducible
  case}, 2019.

\bibitem[Pan20]{Pan20}
\bysame, \emph{A note on overconvergence of hecke action}, 2020.

\bibitem[Pa{\v s}13]{Pas13}
Vytautas Pa{\v s}k\=unas, \emph{The image of {C}olmez's {M}ontreal functor},
  Publ. Math. Inst. Hautes \'Etudes Sci. \textbf{118} (2013), 1--191.
  \MR{3150248}

\bibitem[Pa{\v s}16]{Pas16}
\bysame, \emph{On 2-dimensional 2-adic {G}alois representations of local and
  global fields}, Algebra Number Theory \textbf{10} (2016), no.~6, 1301--1358.
  \MR{3544298}

\bibitem[Pas18]{Pas18}
Vytautas Paskunas, \emph{{O}n some consequences of a theorem of {J}. {L}udwig},
  2018.

\bibitem[Pil13]{Pil13}
Vincent Pilloni, \emph{Overconvergent modular forms}, Ann. Inst. Fourier
  (Grenoble) \textbf{63} (2013), no.~1, 219--239. \MR{3097946}

\bibitem[PS16]{PS16}
Vincent Pilloni and Beno\^{\i}t Stroh, \emph{Surconvergence, ramification et
  modularit\'{e}}, Ast\'{e}risque (2016), no.~382, 195--266. \MR{3581178}

\bibitem[PT21]{PT21}
Vytautas Pa{\v s}k\=unas and Shen-Ning Tung, \emph{Finiteness properties of the
  category of mod $p$ representations of $\mathrm{GL}_2(\mathbb{Q}_p)$}, 2021.

\bibitem[Sch11]{Sch11}
Peter Schneider, \emph{{$p$}-adic {L}ie groups}, Grundlehren der Mathematischen
  Wissenschaften [Fundamental Principles of Mathematical Sciences], vol. 344,
  Springer, Heidelberg, 2011. \MR{2810332}

\bibitem[Sch12]{Sch12}
Peter Scholze, \emph{Perfectoid spaces}, Publ. Math. Inst. Hautes \'{E}tudes
  Sci. \textbf{116} (2012), 245--313. \MR{3090258}

\bibitem[Sch13a]{Sch13}
\bysame, \emph{{$p$}-adic {H}odge theory for rigid-analytic varieties}, Forum
  Math. Pi \textbf{1} (2013), e1, 77. \MR{3090230}

\bibitem[Sch13b]{PerfSur}
\bysame, \emph{Perfectoid spaces: a survey}, Current developments in
  mathematics 2012, Int. Press, Somerville, MA, 2013, pp.~193--227.
  \MR{3204346}

\bibitem[Sch15]{Sch15}
\bysame, \emph{On torsion in the cohomology of locally symmetric varieties},
  Ann. of Math. (2) \textbf{182} (2015), no.~3, 945--1066. \MR{3418533}

\bibitem[Sch16]{Sch13e}
\bysame, \emph{{$p$}-adic {H}odge theory for rigid-analytic
  varieties---corrigendum [ {MR}3090230]}, Forum Math. Pi \textbf{4} (2016),
  e6, 4. \MR{3535697}

\bibitem[Sen81]{Sen80}
Shankar Sen, \emph{Continuous cohomology and {$p$}-adic {G}alois
  representations}, Invent. Math. \textbf{62} (1980/81), no.~1, 89--116.
  \MR{595584}

\bibitem[ST02]{ST02}
Peter Schneider and Jeremy Teitelbaum, \emph{Locally analytic distributions and
  {$p$}-adic representation theory, with applications to {${\rm GL}_2$}}, J.
  Amer. Math. Soc. \textbf{15} (2002), no.~2, 443--468. \MR{1887640}

\bibitem[ST03]{ST03}
\bysame, \emph{Algebras of {$p$}-adic distributions and admissible
  representations}, Invent. Math. \textbf{153} (2003), no.~1, 145--196.
  \MR{1990669}

\bibitem[{Sta}20]{SP}
The {Stacks project authors}, \emph{The stacks project},
  \url{https://stacks.math.columbia.edu}, 2020.

\bibitem[Wei77]{Wei77}
James Weisinger, \emph{S{OME} {RESULTS} {ON} {CLASSICAL} {EISENSTEIN} {SERIES}
  {AND} {MODULAR} {FORMS} {OVER} {FUNCTION} {FIELDS}}, ProQuest LLC, Ann Arbor,
  MI, 1977, Thesis (Ph.D.)--Harvard University. \MR{2940742}

\end{thebibliography}

\end{document}